\newtheorem{definition}{Definition}
\newtheorem{remark}{Remark}
\newtheorem{proposition}{Proposition}[section]
\newtheorem{theorem}{Theorem}[section]
\newtheorem{lemma}{Lemma}[section]
\numberwithin{equation}{section}
\newenvironment{proof}{\smallskip\noindent\emph{Proof.}\hspace{1pt}}%
{\hspace{-5pt}{\nobreak\quad\nobreak\hfill\nobreak$\square$\vspace{8pt}%
		\par}\smallskip\goodbreak}
\newcommand{\pbar}{\overline{p}}
\newcommand{\hnabla}{\widehat{\nabla}}
\newcommand{\Lbar}{\underline{L}}
\newcommand{\Hbar}{\underline{H}}
\newcommand{\aln}{(\al \snabla)^i}
\newcommand{\upr}{\lvert u^\prime \rvert}
\newcommand{\tbeta}{\tilde{\beta}}
\newcommand{\tbetabar}{\tilde{\betabar}}
\newcommand{\omegabar}{\underline{\omega}}
\newcommand{\sumitm}{\sum_{i_1+i_2+i_3=i-1}}
\newcommand{\ubarprime}{\ubar^{\prime}}
\newcommand{\Vl}{\mathcal{V}\ell}
\newcommand{\scaleinfinitySu}[1]{\lVert{#1} \rVert_{\mathcal{L}^\infty_{(sc)}(S_{u,\ubar})}}
\newcommand{\sumifi}{\sum_{i_1+i_2+i_3+i_4+i_5=i}}
\newcommand{\sumifim}{\sum_{i_1+i_2+i_3+i_4+i_5=i-1}}
\newcommand{\modu}{\lvert u \rvert}
\newcommand{\Te}{\slashed{T}}
\newcommand{\chihat}{\hat{\chi}}
\newcommand{\pslash}{\slashed{p}}
\newcommand{\al}{a^{\frac{1}{2}}}
\newcommand{\chibar}{\underline{\chi}}
\newcommand{\chibarhat}{\underline{\hat{\chi}}}
\newcommand{\ubar}{\underline{u}}
\newcommand{\db}{\dione \psi_g^{i_2}\dit }
\newcommand{\tr}{\operatorname{tr}} % trace operator
\newcommand{\be}{\begin{equation}}
\newcommand{\ee}{\end{equation}}
\newcommand{\bm}{\begin{multline}}
\newcommand{\enm}{\end{multline}}
\newcommand{\supp}{\operatorname{supp}}
\newcommand{\scaleinfinitySuprime}[1]{\lVert{#1} \rVert_{\mathcal{L}^\infty_{(sc)}(S_{u^{\prime},\ubar})}}
\newcommand{\scalefourSuprime}[1]{\lVert{#1} \rVert_{\mathcal{L}^4_{(sc)}(S_{u^{\prime},\ubar})}}
\newcommand{\scalefourSuzero}[1]{\lVert{#1} \rVert_{\mathcal{L}^4_{(sc)}(S_{u_{\infty},\ubar})}}
\newcommand{\scaleinftySu}[1]{\lVert{#1} \rVert_{\mathcal{L}^{\infty}_{(sc)}(S_{u,\ubar})}}
\newcommand{\scalefourSuubarprime}[1]{\lVert{#1} \rVert_{\mathcal{L}^4_{(sc)}(S_{u,\ubar^\prime})}}
\newcommand{\scaleinftySuubarprime}[1]{\lVert{#1} \rVert_{\mathcal{L}^{\infty}_{(sc)}(S_{u,\ubar^\prime})}}
\newcommand{\scaleinftySuprime}[1]{\lVert{#1} \rVert_{\mathcal{L}^{\infty}_{(sc)}(S_{u^{\prime},\ubar})}}
\newcommand{\tildetr}{\widetilde{\tr \chibar}}
\newcommand{\Y}{\mathrm{\Upsilon}}
\newcommand{\alell}{(\al)^{\ell}}
\newcommand{\twoSu}[1]{\lVert{#1} \rVert_{L^2(S_{u,\ubar})}}
\newcommand{\oneSu}[1]{\lVert{#1} \rVert_{L^1(S_{u,\ubar})}}
\newcommand{\inftySu}[1]{\lVert{#1} \rVert_{L^{\infty}(S_{u,\ubar})}}
\newcommand{\fourSu}[1]{\lVert{#1} \rVert_{L^4(S_{u,\ubar})}}
\newcommand{\psisone}{\psi^{(s_1)}}
\newcommand{\psistwo}{\psi^{(s_2)}}
\newcommand{\fourSuinf}[1]{\lVert{#1} \rVert_{L^4(S_{u_{\infty},\ubar})}}
\newcommand{\scaletwoSu}[1]{\lVert{#1} \rVert_{\mathcal{L}^2_{(sc)}(S_{u,\ubar})}}
\newcommand{\scalefourSu}[1]{\lVert{#1} \rVert_{\mathcal{L}^4_{(sc)}(S_{u,\ubar})}}
\newcommand{\scaletwoSuprime}[1]{\lVert{#1} \rVert_{\mathcal{L}^2_{(sc)}(S_{u^\prime,\ubar})}}
\newcommand{\scaleoneSuprimeubarprime}[1]{\lVert{#1} \rVert_{\mathcal{L}^1_{(sc)}(S_{u^\prime,\ubar^\prime})}}
\newcommand{\scaletwoSuprimeubarprime}[1]{\lVert{#1} \rVert_{\mathcal{L}^2_{(sc)}(S_{u^\prime,\ubar^\prime})}}
\newcommand{\scaleinfinitySuubarprime}[1]{\lVert{#1} \rVert_{\mathcal{L}^{\infty}_{(sc)}(S_{u,\ubar^\prime})}}
\newcommand{\scalefourSuinf}[1]{\lVert{#1} \rVert_{\mathcal{L}^4_{(sc)}(S_{u_{\infty},\ubar})}}
\newcommand{\scaletwoSuubarprime}[1]{\lVert{#1} \rVert_{\mathcal{L}^2_{(sc)}(S_{u,\ubar^\prime})}}
\newcommand{\ScaletwoSuubarprime}[1]{\Big \lVert{#1} \Big \rVert_{\mathcal{L}^2_{(sc)}(S_{u,\ubar^\prime})}}
\newcommand{\alphabar}{\underline{\alpha}}
\newcommand{\betabar}{\underline{\beta}}
\newcommand{\etabar}{\underline{\eta}}
\newcommand{\scaletwoHzero}[1]{\lVert{#1} \rVert_{\mathcal{L}^2_{(sc)}(H_{u_{\infty}}^{(0,\underline{u})})}}
\newcommand{\scaletwoHu}[1]{\lVert{#1} \rVert_{\mathcal{L}^2_{(sc)}(H_u^{(0,\underline{u})})}}
\newcommand{\scaletwoHbarzero}[1]{\lVert{#1} \rVert_{\mathcal{L}^2_{(sc)}(\underline{H}_{0}^{(u_{\infty},u)})}}
\newcommand{\scaletwoHbaru}[1]{\lVert{#1} \rVert_{\mathcal{L}^2_{(sc)}(\underline{H}_{\underline{u}}^{(u_{\infty},u)})}}
\newcommand{\sumthreetwo}{\sum_{i_1+i_2+i_3=2}}
\newcommand{\sumfourone}{\sum_{i_1+i_2+i_3+i_4=1}}
\newcommand{\sumfourtwo}{\sum_{i_1+i_2+i_3+i_4=2}}
\newcommand{\sumfiveone}{\sum_{i_1+i_2+i_3+i_4+i_5=1}}
\newcommand{\sumsixone}{\sum_{i_1+i_2+i_3+i_4+i_5+i_6=1}}
\newcommand{\Hor}{\text{Hor}}
\newcommand{\psis}{\psi^{(s)}}
\newcommand{\Psis}{\Psi^{(s)}}
\newcommand{\Vls}{\Vl^{(s)}}
\newcommand{\Hodge}[1]{\prescript{*}{}{#1}}
\newcommand{\sumit}{\sum_{i_1+i_2+i_3=i}}
\newcommand{\dione}{\mathcal{D}^{i_1}}
\newcommand{\sumif}{\sum_{i_1+i_2+i_3+i_4=i}}
\newcommand{\sumifm}{\sum_{i_1+i_2+i_3+i_4=i-1}}
\newcommand{\dit}{\mathcal{D}^{i_3}}
\newcommand{\dif}{\mathcal{D}^{i_4}}
\newcommand{\difi}{\mathcal{D}^{i_5}}
\newcommand{\intu}{\int_{u_{\infty}}^u}
\newcommand{\intubar}{\int_0^{\ubar}}
\newcommand{\duprime}{\hspace{.5mm} \text{d}u^{\prime}}
\newcommand{\dubarprime}{\hspace{.5mm} \text{d} \ubar^{\prime}}
\newcommand{\bal}{\begin{align}}
\newcommand{\eal}{\end{align}}
\newcommand{\ub}{\underline{u}}
\newcommand{\Lb}{\underline{L}}
\newcommand{\Tslash}{\slashed{T}}
\newcommand{\di}{\mathcal{D}^{i_1}\psi_g^{i_2}\mathcal{D}^{i_3}}
\def\ub {\underline{u}}
\def\Lb {\underline{L}}
\def\Hb {\underline{H}}
\def\f {\frac}
\def\Hu{H_u^{(0,\underline{u})}}
\def\Hbu{\underline{H}_{\underline{u}}^{(u_{\infty},u)}}
\renewcommand{\div}{\mbox{div }}
\newcommand{\curl}{\mbox{curl }}
\newcommand{\De}{\mathcal{D}}
\newcommand{\Ve}{\mathcal{V}}
\newcommand{\hsp}{\hspace{.5mm}}
\newcommand{\gslash}{\slashed{g}}
\newcommand{\sdiv}{\mbox{div}\mkern-19mu /\,\,\,\,}
\newcommand{\scurl}{\mbox{curl}\mkern-19mu /\,\,\,\,}
\newcommand{\Omegaterm}{\big(\frac{1}{\Omega^2}-1\big)}
\newcommand{\uinf}{\lvert u_{\infty}\rvert}
\newcommand{\snabla}{\slashed{\nabla}}
\newcommand{\Gammaslash}{\slashed{\Gamma}}
\newcommand{\nablasl}{\slashed{\nabla}}
\newcommand\restri[2]{{% we make the whole thing an ordinary symbol
		\left.\kern-\nulldelimiterspace % automatically resize the bar with \right
		#1 % the function
		%\vphantom{\big|} % pretend it's a little taller at normal size
		\right|_{#2} % this is the delimiter 
}}
\definecolor{ffqqqq}{rgb}{1.,0.,0.}
\definecolor{uuuuuu}{rgb}{0.26666666666666666,0.26666666666666666,0.26666666666666666}
\title{The Einstein-Vlasov system in a Large Data Regime}
\author{Nikolaos Athanasiou\thanks{\text{nathanai@math.auth.gr}} -- Puskar Mondal\thanks{\text{pushkarmondal@gmail.com}} -- Shing-Tung Yau\thanks{\text{yau@math.harvard.edu}}}
\date{}
\begin{document}

\maketitle

\begin{abstract}
\noindent % We prove a large-data semi-global existence theorem and the dynamical formation of trapped surfaces for the Einstein–massless Vlasov system in $3+1$ dimensions, without any symmetry assumptions. The analysis critically hinges on a finely calibrated hierarchy of estimates for the Weyl curvature, Ricci coefficients, and velocity moments of the distribution function, designed to capture the delicate interaction between the geometric and kinetic structures of the coupled system. The presence of the Vlasov field introduces significant analytic challenges, both in terms of integrability and regularity. These are circumvented through a refined renormalization of the Ricci coefficients, a strategic restriction of the need for elliptic estimates to a minimal number of Ricci coefficients and a precise commutator calculus adapted to the geometry of the mass-shell of the tangent bundle of the dynamical spacetime. This technique allows for controlling the Vlasov matter without any use of Jacobi fields, which has been the standard technique for more than a decade. Moreover, we work in optimal regularity, which is two derivatives of curvature (not one). The results constitute the first large-data, symmetry-free construction of dynamical black hole formation in the context of kinetic matter. Beyond its immediate application to the Einstein-Vlasov system in regimes of strong gravitational interaction and absence of symmetry, we anticipate that this framework will prove useful in a broader context, including, for instance, simplified approaches to the proof of nonlinear stability of Minkowski spacetime with massless Vlasov matter, approaching cosmic censorship etc.   

\noindent In this article, we study the Einstein-Vlasov system for massless particles. Our main contribution is the provision of a novel method for controlling the Vlasov matter in a double null gauge, relying purely on vector field commutation and bypassing the need for introducing Jacobi fields, which has so far been the only existing technique outside of symmetry in this gauge. To obtain it, we have to overcome stringent regularity issues that exist along this path. Because it relies solely on vector field commutation, our technique is fundamentally simple and flexible enough to be applicable in any data regime. We thus anticipate this to be a helpful tool for many subsequent problems regarding the Einstein-Vlasov system, including, for instance, simplified approaches to the proof of nonlinear stability of Minkowski spacetime with massless Vlasov matter, scattering problems for data close to black holes among others. Within the present double-null commutator framework, we show that two derivatives of curvature suffice to close the coupled Einstein--Vlasov hierarchy, without derivative loss, given smooth initial data. Using this method, we obtain a large data semi-global existence theorem and a dynamical trapped surface formation statement for our system. 
\end{abstract}

\textit{MSC2020 Mathematics Subject Classification: 35Q83, 83C55, 83C57, 83C75. }
\tableofcontents
\section{Introduction}
In the present work, we investigate the problems of large data semi-global existence and dynamical trapped surface formation for the Einstein-Vlasov system for a $(3+1)-$dimensional connected and time-oriented Lorentzian manifold $\left(\mathcal{M},g\right)$. Local coordinates $(x^{\mu})$ on $\mathcal{M}$ induce coordinates $(x^{\mu},p^{\mu})$ on the tangent bundle $T\mathcal{M}$, where the fibre variables $p^{\mu}$ represent the components of a tangent vector $p^{\mu}\partial_{x^{\mu}}|_{x}\in T_{x}\mathcal{M}$. The \emph{mass shell} $\mathcal{P}\subset T\mathcal{M}$ is defined by  
\[
   \mathcal{P} := \bigl\{ (x,p) \;\big|\; g_x(p,p)=0,\; p \text{ future-directed} \bigr\}.
\]

\noindent A nonnegative function $f:\mathcal{P}\to \mathbb{R}_{\geq 0}$ is said to solve the (massless) Vlasov equation if it is preserved by the geodesic flow on $T\mathcal{M}$. Equivalently,
\begin{equation}\label{eq:Vlasov}
   X(f)=0,
\end{equation}
where $X$ denotes the restriction to $\mathcal{P}$ of the geodesic spray:
\[
   \tilde{X}= p^{\mu}\partial_{\mu} - \Gamma^{\lambda}_{\alpha\beta}p^{\alpha}p^{\beta}\partial_{p^{\lambda}} \;\in \Gamma(TT\mathcal{M}).
\]

\noindent The \emph{Einstein--Vlasov system} consists of \eqref{eq:Vlasov} coupled to the Einstein field equations
\begin{equation}\label{eq:Einstein}
   \operatorname{Ric}(g) - \tfrac{1}{2} R(g)\, g \;=\;  T,
\end{equation}
(in the units $8\pi G=1=c$, $c$ is the light speed) where the energy--momentum tensor is given by
\begin{equation}\label{energymomentumtensor}
   T_{\mu\nu}(x)
   = \int_{0}^{\infty}\!\!  \int_{\mathbb{R}^2}
     f(x,p)\, p_{\mu}p_{\nu}\;
     \frac{1}{p^{3}}\,\sqrt{\det\slashed{g}}\,
     dp^{1}dp^{2}dp^{3}.
\end{equation}
Here, $p^{4}$ is determined from $(p^{1},p^{2},p^{3})$ and the induced metric $\slashed{g}_{AB}$ on the topological $2-$spheres (the leaves of the double null foliation) by the mass--shell constraint
\begin{equation}\label{massshell}
   \slashed{g}_{AB} p^{A}p^{B} - 4p^{3}p^{4} = 0.
\end{equation}
In the massless setting under consideration, the scalar curvature vanishes and \eqref{eq:Einstein} reduces to
\[
   R_{\mu\nu}=T_{\mu\nu}.
\]

\vspace{3mm}
    \begin{figure}
    \begin{center}
    \begin{tikzpicture}
		\draw [white](3,-1)-- node[midway, sloped, below,black]{$H_{u_{\infty}}(u=u_{\infty})$}(4,0);
		
		%\draw [white](0.5,1.5)-- node[midway,sloped,above,black]{$H_{b \delta \at}$}(1.5,2.5);
		\draw [white](2,2)--node [midway,sloped,above,black] {$\Hb_1(\ub=1)$}(4,0);
		\draw [white](1,1)--node [midway,sloped, below,black] {$\Hb_{0}(\ub=0)$}(3,-1);
		\draw [dashed] (0, 4)--(0, -4);
		\draw [dashed] (0, -4)--(4,0)--(0,4);
		\draw [dashed] (0,0)--(2,2);
		%\draw [dashed] (0,1)--(1.5,2.5);
		\draw [dashed] (0,-4)--(2,-2);
		\draw [dashed] (0,2)--(3,-1);
		\draw [very thick] (1,1)--(3,-1)--(4,0)--(2,2)--(1,1);
		%\draw [very thick] (1,1)--(0.5,1.5)--(1.5,2.5)--(2,2)--(1,1);
		\fill [black!35!white]  (1,1)--(3,-1)--(4,0)--(2,2)--(1,1);
		%\fill[yellow!30!red](1,1)--(0.5,1.5)--(1.5,2.5)--(2,2)--(1,1);
		\draw [white](1,1)-- node[midway,sloped,above,black]{$H_{u}$}(2,2);
		\draw [->] (3.3,-0.6)-- node[midway, sloped, above,black]{$L'$}(3.6,-0.3);
		\draw [->] (1.4,1.3)-- node[midway, sloped, below,black]{$L'$}(1.7,1.6);
		\draw [->] (3.3,0.6)-- node[midway, sloped, below,black]{$\Lb'$}(2.7,1.2);
		\draw [->] (2.4,-0.3)-- node[midway, sloped, above,black]{$\Lb'$}(1.7,0.4);
		\end{tikzpicture}
			\caption{Schematic depiction of the spacetime region of existence}
	\end{center}
	\end{figure}

\subsection{Historical background} \noindent Trapped surfaces have been an object of significant interest within the classical theory of General Relativity for almost sixty years. After Schwarzschild's discovery of his eponymous metric in 1915, it took almost twenty years before researchers came to realize the existence, within it, of a region $\mathcal{B}$ with the following surprising yet salient features: First of all, observers situated inside $\mathcal{B}$ cannot send signals to observers situated at an ideal \textit{conformal boundary at infinity}, called $\mathcal{I}^+$.
Furthermore, any observer located inside $\mathcal{B}$ lives only for finite proper time\footnote{Sbierski \cite{Sb} moreover showed that the termination of the observer's proper time manifests in a particularly ferocious way, as they, in fact, get torn apart by infinitely strong tidal forces.} (geodesic incompleteness). 
The characteristics of this region (which later came to be known as a black hole) took most of the researchers of the time aback. The consensus seemed to be that these observed phenomena have to be accidents; pathologies, only present because of the strong (spherical) symmetry inherent in the Schwarzschild solution and that, in general solutions to the Einstein equations\footnote{The meaning of this phrase was not rigorous at the time, as the setup for the initial value problem in General Relativity had not yet been discovered.}, such phenomena would not arise. However, in the 60's, this belief was spectacularly falsified by Roger Penrose through his celebrated incompleteness theorem\footnote{This theorem, in fact, was the main reason why he was awarded the Nobel Prize in Physics back in 2020, "for the discovery that black hole formation is a robust prediction of the general theory of relativity".}. It was Penrose \cite{P73} who introduced the notion of a \textit{trapped suface}, without which one cannot state his eponymous, celebrated, incompleteness theorem :

\begin{definition}
Given a $(3+1)$- dimensional Lorentzian manifold $(\mathcal{M}, g)$, a closed spacelike $2-$surface $S$ is caled \textbf{trapped} if the following two fundamental forms $\chi$ and $\chibar$ have everywhere pointwise negative expansions on $S$:

\[  \chi(X,Y) := g(D_X L, Y), \hspace{2mm} \chibar(X, Y) := g(D_X \Lbar, Y).    \]Here $D$ denotes the Levi-Civita connection of $g$, $L$ and $\Lbar$ denote a null basis of the 2-dimensional orthogonal complement of $T_p S$ in $T_p \mathcal{M}$, extended as smooth vector fields and $X, Y$ are arbitrary $S-$tangent vector fields. 
\end{definition}In other words, a surface is called trapped if both $\tr\chi$ and $\tr\chibar$ are pointwise negative everywhere on $S$. These traces signify the infinitesimal changes in area along the null generators normal to $S$, whence one can interpret trapped surfaces as closed, spacelike $2-$surfaces that infinitesimally decrease in area "along any possible future direction".

\vspace{3mm}

\par\noindent The incompleteness theorem is now presented.

\begin{theorem}[Penrose Incompleteness] Let $(\mathcal{M} ,g)$ be a spacetime containing a non-compact Cauchy hypersurface. If $(\mathcal{M}, g)$ moreover satisfies the null energy codition and contains a closed trapped surface, it is geodesically incomplete. 

\end{theorem}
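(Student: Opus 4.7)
The plan is a proof by contradiction: assuming $(\mathcal{M},g)$ is future null geodesically complete, I will show that the non-compact Cauchy hypersurface $\Sigma$ must contain a non-empty compact open subset, forcing $\Sigma$ to be compact.

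\textbf{Focusing estimate.} By compactness of the trapped surface $S$, there exists $\theta_{0}<0$ with $\tr\chi,\,\tr\chibar\leq\theta_{0}$ on $S$. For any future-directed null geodesic $\gamma$ emanating orthogonally from $S$, with tangent $k$ and expansion $\theta(s)$, the null Raychaudhuri equation
\[
\frac{d\theta}{ds} \;=\; -\tfrac{1}{2}\theta^{2} \;-\; |\hat{\sigma}|^{2} \;-\; R_{\mu\nu}k^{\mu}k^{\nu},
\]
combined with the null energy condition (which, for the Einstein--Vlasov system, follows at once from the non-negativity of $f$ and the explicit form \eqref{energymomentumtensor} of $T_{\mu\nu}$), yields $\theta'\leq -\tfrac{1}{2}\theta^{2}$. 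Comparison with the Riccati ODE then drives $\theta(s)\to -\infty$ at some affine time $s_{*}(\gamma)\leq 2/|\theta_{0}|$, uniformly in the starting point on $S$.

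\textbf{Compactness of the achronal boundary.} Global hyperbolicity, ensured by the existence of the Cauchy hypersurface $\Sigma$, implies that $\partial J^{+}(S)$ is generated by null geodesics normal to $S$; past a first focal point, a generator enters $I^{+}(S)$ and hence leaves $\partial J^{+}(S)$. Null geodesic completeness guarantees that each such generator actually attains its focal parameter $s_{*}$ inside $\mathcal{M}$. Consequently $\partial J^{+}(S)$ is the continuous image of the compact set of pairs $(q,s)$ with $q\in S$ and $0\leq s\leq s_{*}(q)$ in each of the two null normal directions, and is therefore compact. It is, moreover, a closed achronal topological $3$-manifold without boundary.

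\textbf{Topological contradiction.} Fix a smooth complete timelike vector field $T$ on the time-oriented manifold $\mathcal{M}$; every integral curve of $T$ meets $\Sigma$ in exactly one point by the Cauchy property. Define $\pi:\partial J^{+}(S)\to\Sigma$ by following the flow of $T$ to that intersection. Continuity of $\pi$ follows from smooth ODE dependence, and injectivity from the achronality of $\partial J^{+}(S)$ together with the timelike character of the integral curves of $T$. Invariance of domain, applied between $3$-manifolds of equal dimension, then makes $\pi$ open, so $\pi(\partial J^{+}(S))\subset\Sigma$ is non-empty, compact and open. Connectedness of $\Sigma$ (or of the relevant non-compact component) forces $\pi(\partial J^{+}(S))=\Sigma$, whence $\Sigma$ is compact, contradicting the hypothesis. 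The principal obstacle is the passage from the pointwise focusing estimate to the global compactness of $\partial J^{+}(S)$: one must rule out generators that would escape to infinity before reaching their focal parameter, and this is precisely where null geodesic completeness and the Cauchy property cooperate. The closing topological step is comparatively clean, depending only on invariance of domain and connectedness of $\Sigma$.
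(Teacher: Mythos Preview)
The paper does not supply a proof of this theorem; it is recorded in the introduction as historical background and attributed to Penrose. Your argument is the standard one and is correct.

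Two small refinements worth noting. First, the set $\{(q,s): q\in S,\ 0\le s\le s_*(q)\}$ need not be compact, since $s_*$ need not be continuous in $q$; it is cleaner to use the uniform bound $s_*(q)\le 2/|\theta_0|$ and observe that $\partial J^+(S)$ is a closed subset of the compact image of $S\times[0,2/|\theta_0|]\times\{+,-\}$ under the null normal exponential map, which is everywhere defined precisely because of the assumed completeness. Second, your parenthetical about connectedness of $\Sigma$ is the right concern; the standard resolution is that a spacetime is connected by convention, and global hyperbolicity gives $\mathcal{M}\cong\Sigma\times\mathbb{R}$, forcing $\Sigma$ to be connected as well.
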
The existence of a trapped surface is a stable feature in the context of dynamics. Indeed, sufficiently small perturbations of Schwarzschild initial data must also contain such surfaces, by Cauchy stability. As such, incompleteness is not an accident, but rather a recurring theme in the dynamics of the Einstein equations. 

\vspace{3mm}

\par\noindent At the time, the existence of a trapped surface in a spacetime was too strong an assumption to begin with. In fact, the only way back then to guarantee its existence was to assume it at the level of initial data, but this itself can be a highly non-trivial question. The first trapped surface formation result at the level of initial data was given 
 in \cite{SY83}. Later, \cite{yau2001geometry} provided a stronger result by proving the existence of a black hole at the level of the initial data set because of the boundary effect. In particular, this result did not require the inclusion of matter density (in fact, matter density could be allowed to be negative, even though such a case may be outside of physical interest; the latter two authors are pursuing this approach in the evolutionary framework). This, however, begs the question of whether trapped surfaces are dynamical objects, meaning whether they can be formed in evolution starting with data devoid of trapped surfaces. This problem bears high physical significance and serves as a \textit{test of reality} of black holes, in the following sense. The mathematical definition of a black hole region would be without physical meaning if it did not accurately capture what physicists perceive as black holes (this is more meaningful now than ever, as scientists recently succeeded in capturing the first-ever image of a black hole). Hence if "mathematical" black holes describe "physical" black holes, they should mathematically verify certain physical properties, one of which is dynamical formation. 

\vspace{3mm} \noindent The first results along this direction were obtained by Christodoulou for the Einstein equations coupled to a massless scalar field in spherical symmetry. Through a series of works \cite{C91}, \cite{C93}, \cite{C94}, and \cite{C99}, Christodoulou managed to not only prove trapped surface formation, but to understand the picture of gravitational collapse in its entirety for the given model and under the given symmetry. The breakthrough in the absence of symmetry came in \cite{C09} by the same author. In this work, Christodoulou introduced a hierarchy of small and large components in the initial data which (almost) persists under the evolution of the Einstein equations. He termed his method the \textit{short pulse} method. After Christodoulou, the work \cite{Kl-Rod} by Klainerman-Rodnianski reduces the size of Christodoulou's work from about 600 to approximately 120 pages, by using a slightly different hierarchy. Moreover, it reduces the number of derivatives of curvature required to prove semi-global existence from two to one. A few years later, An  \cite{AnThesis} introduces the signature for decay rates $s_2$ on his way to proving an extension of \cite{Kl-Rod} from a finite region to a region close to past null infinity. In 2014, An and Luk \cite{AL17} prove the first \textit{scale-critical} trapped surface formation criterion for the vacuum equations in the absence of symmetry. While Christodoulou's data in \cite{C09} were  large in $\dot{H}^1(\mathbb{R}^3)$, An and Luk give data which only have to be large in $\dot{H}^{\frac{3}{2}}(\mathbb{R}^3)$, which is a scale-critical norm for the initial data. Taking advantage of the scale criticality in \cite{AL17}, An \cite{A17} constructs initial data that give rise not merely to trapped surfaces, but an \textit{apparent horizon}, a smooth 3-dimensional hypersurface consisting of marginally outer trapped surfaces. In 2019, An \cite{A19} produces a 55-page proof of trapped surface formation for the vacuum equations, making use of the signature for decay rates and obtaining an existence result from a region close to past null infinity. In \cite{AnAth}, An and the first author extend \cite{A19} to the case of the Einstein-Maxwell system. Later, the present authors proved the large data scale-invariant semi-global existence and trapped surface formation result in \cite{AnAth}.

\subsection{The Einstein-Vlasov System}
First of all, we emphasize the physical importance of the Vlasov matter in general relativity. Most large-scale astrophysical structures- galaxies, galaxy clusters, dark matter halos are made of matter whose mean free path is extremely long. Particles (e.g. stars in a galaxy, or dark matter particles) interact predominantly via gravity, not collisions. Thus, the collisionless Vlasov equation is the natural model. The formation and stability of galaxies and clusters can be described via the Einstein–Vlasov (in the relativistic regime) or Vlasov–Poisson (in  the Newtonian regime) equations. Lastly and most importantly, Vlasov matter provides a \textit{clean} matter model (no artificial pressures, no exotic assumptions) that allows the study of whether self-gravitating distributions can undergo collapse, form singularities, or settle into equilibrium. It is central in rigorous mathematical relativity because:
\vspace{3mm}

a. It satisfies all standard energy conditions (dominant, strong, weak),

\vspace{3mm}

b. It avoids unphysical pathologies that occur in some fluid models,

\vspace{3mm}

c. It leads to nontrivial dynamics, all the while remaining mathematically tractable.

\vspace{3mm}

\noindent The Einstein–Vlasov system also models physically realistic systems (galaxies, dark matter). It thus constitutes a natural testing ground for the major conjectures of general relativity. It has been the subject of extensive investigation in recent decades. In the case of spherical symmetry, Rein and Rendall \cite{rendall}-\cite{rendallerratum} established the global existence of small-data solutions. Subsequently, the first breakthrough in the absence of symmetry was made by Taylor \cite{taylor} who proved the global nonlinear stability of Minkowski spacetime when coupled to massless (null) Vlasov matter. His proof employs the Sasaki metric on the mass shell together with estimates on Jacobi fields, and relies on small-data characteristic initial value theory. The argument further incorporates the vacuum stability results to conclude the global stability. Taylor’s work is of particular relevance here, since his analysis is carried out in double null gauge and yields gauge-invariant estimates for the Weyl curvature; our approach is in part motivated by this framework. Nevertheless, as we shall see, the large-data regime introduces substantial new difficulties. In subsequent work, Lindblad and Taylor \cite{TL} established the global stability of Minkowski spacetime for the massive Einstein--Vlasov system. Their analysis is conducted in spacetime harmonic coordinates, where the field equations reduce to a system of quasilinear wave equations for the metric satisfying the weak null condition, coupled to a transport equation for the particle distribution function. Simultaneously, Fajman, Joudioux, and Smulevici \cite{fajman} extended these results significantly by proving stability without assuming compact support in the velocity variable, thereby broadening the admissible class of initial data.

In the context of gravitational collapse, Dafermos and Rendall \cite{dafermos} established the first rigorous result on black hole formation from the evolution of collisionless matter, proving that the collapse of Vlasov matter from suitably regular initial data in spherical symmetry leads to the formation of black holes. This result was further improved  by Andreasson, Kunze and Rein \cite{andr1}, who showed global existence (in Schwarzschild coordinates) of initial data leading to trapped surface formation. Andreasson also exhibited that the spacetimes created in \cite{andr2} possess a complete regular past. Recently, Andreasson and Rein generalize the classical result by Oppenheimer and Snyder for collapsing dust to Vlasov matter \cite{andr3}. Further investigations of the Einstein--Vlasov system, both in asymptotically flat and cosmological settings (sometimes coupled with additional fields), can be found in \cite{fajman1, fajman2, An}. The purpose of the present work is to go beyond symmetry-restricted settings by considering an open class of large characteristic initial data prescribed on past null infinity $\mathscr{I}^{-}$. This provides the first symmetry-free large-data analysis of the Einstein--Vlasov system and, in particular, establishes a dynamical trapped surface formation result. Our approach introduces several new techniques, including the replacement of Jacobi field methods with a direct analysis of commuted transport equations. The argument crucially exploits the null structure inherent in the commuted Vlasov equations. The main theorem, together with the underlying difficulties and innovations of the proof, will be described in the subsequent sections.

%\subsection{Schematic form of the equations (zero-th order)}

\subsection{Main Theorem}

In this section, we state our main results. Our first result is a semi-global existence result for the Einstein-Vlasov system for large data. The second, complementing the first, is a formation of trapped surfaces statement.

\begin{theorem}[Semi--Global Existence and Trapped Surface Formation]
\label{mainone}
Fix a smooth double null foliation of a four--dimensional Lorentzian manifold $(\mathcal{M},g)$ with optical functions $(u,\ubar)$ such that the level sets $H_u$ and $H_{\ubar}$ are outgoing and incoming null hypersurfaces, respectively, and
\[
S_{u,\ubar} := H_u \cap  H_{\ubar}
\]
denotes the corresponding two--sphere sections equipped with the induced metric $\slashed{g}$. Let $\snabla$ and $\snabla_4$ denote the angular and null derivatives associated with the null frame
\[
\{ e_1, e_2, e_3,e_4\},
\]
and let $f$ be a smooth nonnegative distribution function on the future mass--shell $\mathcal{P}\subset T\mathcal{M}$ satisfying the Vlasov equation
\[
X[f] = 0,
\]
where $X$ is the geodesic spray.

\vspace{3mm}

\noindent\textbf{(a) Semi--Global Existence.}
For every fixed constant $\mathcal{I}>0$, there exists $a_0=a_0(\mathcal{I})\gg 1$ such that the following holds.  
Let $a>a_0$ and let $N\in\mathbb{N}$ be sufficiently large. Suppose that a smooth initial data set
\[
(\hat{\chi}_0, f_0)
\]
is prescribed along the characteristic hypersurfaces $H_{u_\infty}$ and $\Hbar_{0}$, where $u_\infty<0$ is large in magnitude, satisfying:

\item[\textup{(i)}] (\textit{Shear bound along $u=u_\infty$})
\[
\sum_{0\le i+m\le N} a^{-\frac{1}{2}}
  \Big\| \snabla_4^m \big( |u_\infty| \snabla \big)^i \hat{\chi}_0 \Big\|_{L^\infty(S_{u_\infty,\ubar})}
  \le \mathcal{I},
\]
for all $\ubar\in[0,1]$.

\item[\textup{(ii)}] (\textit{Minkowskian incoming data})
All geometric data along $\ubar=0$ coincide with the corresponding Minkowskian values and
and the Vlasov distribution vanishes: $f|_{\ubar=0}=0$.

\item[\textup{(iii)}] (\textit{Momentum support and regularity of $f_0$})
The function $f_0$ is smooth on the restriction $\mathcal{P}|_{u=u_\infty}$ and obeys the uniform bound
\[
\frac{1}{a} \sup_{u=u_\infty} \sum_{k=0}^{3} 
\sum_{i_1,\dots,i_k=1}^{7} \big| F_{i_1}\cdots F_{i_k} f_0 \big|
\le \mathcal{I},
\]
where $\{F_1,\dots,F_7\}$ denotes the orthonormal frame on $\mathcal{P}$ defined in~\eqref{eq:frame}.  
Moreover, its momentum support satisfies for fixed constants $C_{p^A},C_{p^3},C_{p^4}>0$ independent of $u_\infty$,
\begin{equation}\label{eq:momentum_bounds}
0 < p^3 \le C_{p^3}, \qquad
0 \le |u|^2 p^4 \le C_{p^4} p^3, \qquad
|u|^2 |p^A| \le C_{p^A} p^3, \quad A=1,2,
\end{equation}
for all $(x,p)\in\supp(f_0)\cap \mathcal{P}_{u=u_\infty}$.

\noindent Then there exists a unique smooth solution $(g,f)$ of the Einstein--Vlasov system in the region
\[
\mathcal{D} := \big\{ (u,\ubar) \mid u_\infty \le u \le -\tfrac{a}{4},\;\; 0\le \ubar \le 1 \big\},
\]
satisfying the constraint and transport equations, with all geometric quantities $\psi\in\{\tr\chi,\hat{\chi},\chibarhat,\tr\chibar,\eta,\etabar,\alpha,\betabar,\rho,\sigma\}$ remaining uniformly controlled by constants depending only on $(\mathcal{I},C_{p^A},C_{p^3},C_{p^4})$.

\vspace{1em}
\noindent\textbf{(b) Formation of a Trapped Surface.}
Suppose, in addition to \textup{(i)--(iii)}, that the initial energy flux along $C_{u_\infty}$ satisfies the uniform lower bound
\begin{equation}\label{eq:trapped_condition}
\int_0^1 \Big( |u_\infty|^2 |\hat{\chi}_0|^2 + |u_\infty|^{2}\, T_{44}(u_\infty,\ubar') \Big)\,\mathrm{d}\ubar'
\ge a,
\end{equation}
uniformly for every direction along $u=u_{\infty}$, where $T_{44}$ denotes the energy–momentum density component associated with the Vlasov field. Then the outgoing null hypersurface $H_{-a/4}$ possesses a closed two–sphere
\[
S_{-a/4,\,1} = H_{-a/4}\cap \Hbar_1
\]
such that
\[
\tr\chi(S_{-a/4,\,1})<0 \quad\text{and}\quad \tr\chibar(S_{-a/4,\,1})<0.
\]
Hence $S_{-a/4,\,1}$ is a trapped surface in $(\mathcal{M},g)$.
\end{theorem}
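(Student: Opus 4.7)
The plan is to follow the short-pulse/signature-for-decay-rates framework of Christodoulou, Klainerman--Rodnianski, and An--Luk, suitably enriched to accommodate the Vlasov field. I would set up a scale-invariant hierarchy of norms for the Ricci coefficients $\psi\in\{\tr\chi,\chihat,\chibarhat,\tr\chibar,\eta,\etabar,\omega,\omegabar\}$ and Weyl curvature $\{\alpha,\beta,\rho,\sigma,\betabar,\alphabar\}$ along each $S_{u,\ubar}$ (weighted in $a^{1/2}$ and in powers of $|u|$) together with parallel norms for the velocity moments of $f$ and the mass-shell derivatives of $f$ itself. The argument runs by a continuity/bootstrap method: assume that the norms are bounded by $\mathcal{I}^{3/4}\cdot a^{1/2}$ (or the analogous anomalous scaling for $\chihat$, $\alpha$) throughout $\mathcal{D}$, then close by improving each estimate with a constant depending only on $\mathcal{I}$ and the momentum-support constants, and use continuity in $(u,\ubar)$ together with the local theory to conclude that $\mathcal{D}$ is exactly the maximal development.

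The heart of part (a) is to treat the Vlasov field in parallel with the geometry. I would introduce the orthonormal frame $\{F_i\}$ on $\mathcal{P}$ referenced in the statement and commute the transport equation $X[f]=0$ with $F_i$; the resulting equations produce source terms that are linear combinations of Ricci coefficients and curvature components contracted with momenta, whose null decomposition is crucial. The momentum-support bounds \eqref{eq:momentum_bounds} must be propagated: along the flow of $X$, the evolution of $p^3,p^4,p^A$ is governed by the Ricci coefficients, so the bootstrap estimates on $\psi$ plug back into the Vlasov transport to preserve the shell $|u|^2|p^A|\lesssim p^3$, $|u|^2 p^4\lesssim p^3$, $p^3\lesssim 1$. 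This integrability structure is then what makes the components of $T_{\mu\nu}$ decay at the correct rate: $T_{33}$ inherits the good decay of $(p^3)^2$, while $T_{44}$ is the anomalous component (matching the size of $|\chihat|^2$ along $H_{u_\infty}$), and the angular and mixed components enjoy the $|u|^{-1}$ or $|u|^{-2}$ gains dictated by \eqref{eq:momentum_bounds}. With these moments bounded, I would then run the null structure equations for the Ricci coefficients as transport equations in the $L$ or $\underline{L}$ direction, using the Codazzi/Gauss identities where necessary, and renormalize $\tr\chibar,\omegabar$ (and possibly $\eta$) in the spirit of Klainerman--Rodnianski so that each renormalized quantity sits at a strictly better signature-for-decay-rate than the raw one. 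Weyl curvature estimates are obtained by Bel--Robinson energies on the null hypersurfaces $H_u$ and $\Hbar_{\ubar}$: integration by parts in the Bianchi equations yields new non-vacuum error terms $\nabla T$, which by the argument above reduce to moments of $\snabla_{F_i}f$ and of curvature contracted with $f$, and are handled by the Vlasov moment bounds at the same scaling.

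The main obstacle is the closing of the bootstrap in the presence of the Vlasov source. Two sub-difficulties stand out. First, the energy-momentum tensor is only as regular as one derivative of $f$ along $\mathcal{P}$, so the Bianchi system loses a derivative unless one commutes $X$ with $\snabla$ and $\snabla_4$ in a form that avoids requiring elliptic estimates on more than a minimal number of Ricci coefficients; this is where the ``refined renormalization'' and ``precise commutator calculus adapted to the mass shell'' advertised in the abstract must be deployed. Second, the anomalous $L^\infty$ largeness of $\chihat$ and of $T_{44}$ along $u=u_\infty$ propagates into terms like $\int_0^{\ubar}|\chihat|^2\,\text{d}\ubar'$ in the Raychaudhuri equation for $\tr\chi$; one must show that no higher power of this anomaly ever appears, by a careful bookkeeping of signatures. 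Once part (a) is in hand, part (b) is essentially immediate: I would integrate the Raychaudhuri equation
\[
\snabla_4 \tr\chi + \tfrac{1}{2}(\tr\chi)^2 = -|\chihat|^2 - T_{44}
\]
along $H_{-a/4}$ from $\ubar=0$ to $\ubar=1$, starting from the Minkowskian value $\tr\chi=\frac{2}{|u|}$ at $\ubar=0$; hypothesis \eqref{eq:trapped_condition} together with the estimates of part (a) (which guarantee that $|\chihat|^2+T_{44}$ on $H_{-a/4}$ is a small perturbation of its value on $H_{u_\infty}$, transported along $\snabla_3$) forces $\tr\chi(-a/4,1)<0$. The negativity of $\tr\chibar$ on $S_{-a/4,1}$ follows from the fact that $\tr\chibar$ stays close to its Minkowskian value $-\frac{2}{|u|+\ubar}$, which is already negative, so no extra work is needed there. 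Combining the two inequalities produces the trapped surface.
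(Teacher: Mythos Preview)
Your overall architecture and part (b) match the paper and are sound. There is, however, a genuine gap in the Vlasov commutation step that would prevent the bootstrap from closing.

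Commuting $X[f]=0$ with the frame $\{F_i\}$ as stated does not work: since $F_A=e_A$, the commutator $[X,e_A]$ contains the raw term $e_A(\Gamma^\lambda_{\mu\nu})p^\mu p^\nu$, a single derivative of the connection. Iterated three times this yields $\partial^3\Gamma$, in particular fourth derivatives of the shift $b$ (via $\Gamma^B_{3A}={\chibar_A}^B-e_A(b^B)$), one more than the transport equation $\snabla_4 b\sim(\eta-\etabar)+\chi\cdot b$ together with the available elliptic theory can supply. The paper's fix is to commute instead with vector fields built from \emph{horizontal lifts}, e.g.\ $V_{(A)}=\mathrm{Hor}(e_A)-\tfrac{p^3}{|u|}\partial_{\pbar^A}$: in $[X,\mathrm{Hor}(e_\alpha)]$ the connection derivatives appear only in the antisymmetric combination $e_\alpha(\Gamma^\beta_{\mu\nu})-e_\mu(\Gamma^\beta_{\alpha\nu})={R^\beta}_{\nu\alpha\mu}+\Gamma\cdot\Gamma$, so the error is \emph{curvature}, not $\partial\Gamma$, and no regularity is lost; the $-\tfrac{p^3}{|u|}\partial_{\pbar^A}$ correction is further tuned to cancel a non-integrable $\tfrac14(\tr\chibar)^2(p^3)^2\sim|u|^{-2}$ term that would otherwise obstruct the $u$-integration. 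A second, related issue you do not address: because $du/ds=p^3\sim 1$ whereas $d\ubar/ds\sim|u|^{-2}$, the top-order Vlasov norms are controllable only on $H_u$, never on $\Hbar_{\ubar}$. This forbids elliptic estimates for $\tr\chibar,\chibarhat,\omegabar,\etabar$ (the $\snabla_3\tr\chibar$ equation has source $-T_{33}$, whose top derivatives would have to live on $\Hbar$). The paper shows that only $(\chihat,\tr\chi,\eta)$ require elliptic treatment, and decouples $\eta$ from $\etabar$ via a modified mass aspect $\mu=-\sdiv\eta-\rho+\tfrac12\chihat\cdot\chibarhat+\tfrac14\tr\chi\,\widetilde{\tr\chibar}$ whose $\snabla_4$-equation no longer contains the problematic $\chi\cdot\snabla\etabar$ term. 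Without these two mechanisms your scheme does not close.
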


\begin{remark}
Notice that one can impose a weaker condition on the Vlasov data in terms of the integral over the null hypersurface $H_{u_{\infty}}$ instead of a point-wise control, since that is exactly what we need in the ultimate estimates for the Vlasov equations. Nevertheless, the stronger assumption on the Vlasov initial data automatically implies the weaker integral bound. In addition the initial condition  \[0< p^3\leq C_{p^3}, \hspace{2mm} 0\leq \modu^2 p^4 \leq C_{p^4} p^3, \hspace{2mm} \lvert u^2 p^A\rvert \leq C_{p^A}p^3, \hspace{2mm} \text{for}\hsp A=1,2,\] in $\restri{\supp f}{\restri{\mathcal{P}}{u=u_\infty}}$, for some fixed constants $C_{p_1},\dots, C_{p^4}$ independent of $u_{\infty}$ is precisely the \textbf{focusing} condition for the Vlasov matter.    
\end{remark}

%\noindent As a trivial consequence, our result implies that in a spherically symmetric spacetime ($\chihat =0$), the mere presence of the radiation of massless Vlasov particles (we call it such since they are massless) is enough to form a trapped surface, as long as this radiation is, of course, suitably large.
%\begin{remark}
 %Collisionsless matter collapses to form a trapped surface in a spherically symmetric spacetime.    
%\end{remark}
\noindent Importantly, we address the issue of the existence of initial data that satisfy the conditions of Theorem \ref{mainone}. Based on the conformal methods developed in \cite{C09} and also inspired by \cite{LukNull}, we prescribe a seed datum $\Psi$ together with an initial Vlasov seed function $F$ which, together, uniquely prescribe the entirety of initial data, in a manner consistent with the requirements of Theorem 1.2. It therefore follows that the initial data set we impose is not the null set, but rather an open set in the moduli space of initial data. The construction itself is found in Section \ref{initialdatasection}.

\noindent A further interesting property we obtain along the way is that we are able to restrain the spread, in a sense, of the Vlasov matter upon evolution. This is the content of the following Lemma:

\begin{lemma} \label{spreadlemma}
Assume a characteristic initial value problem with initial data as given in Theorem 
\ref{mainone}. Assume that $\pi(\restri{\supp f}{\restri{\mathcal{P}}{u=u_\infty}})
\subset 
\{ (u,\ubar) \hsp \mid u= u_{\infty},\hspace{1mm} 0\leq \ubar \leq \ubar_0 <1, \}$, for some constant $\ubar_0$ in $(0,1)$. Then, for $a_0$ large enough and for the spacetime obtained in Theorem \ref{mainone},  there exists a uniform constant $
\tilde{\ubar}_0$ in $(0,1)$, depending only on $
\ubar_0$, such that \[  \pi(\restri{\supp f}{\mathcal{P}}) \subset \big\{ (u,\ubar) \hsp \mid u_{\infty}\leq u \leq -\frac{a}{4}, 0\leq \ubar \leq \tilde{\ubar}_0\big. \}    \]Here by $\pi$ we denote the natural projection $T\mathcal{M}\to \mathcal{M}$.
\end{lemma}
\begin{figure}
\begin{center}
\begin{tikzpicture}
\draw [white](3,-1)-- node[midway, sloped, below,black]{$H_{u_{\infty}}(u=u_{\infty})$}(4,0);
\draw [white](2,2)--node [midway,sloped,above,black] {$\mathcal{H}_1(\bar{u}=1)$}(4,0);
\draw [white](1,1)--node [midway,sloped, below,black] {$\mathcal{H}_{0}(\bar{u}=0)$}(3,-1);
\draw [dashed] (0, 4)--(0, -4);
\draw [dashed] (0, -4)--(4,0)--(0,4);
\draw [dashed] (0,0)--(2,2);
\draw [dashed] (0,-4)--(2,-2);
\draw [dashed] (0,2)--(3,-1);

% --- Light Gray Shading: Bounded by bottom-left vertex (1,1), top-left vertex (2,2), and the curve ---
\fill [black!15!white] (1,1) -- (2,2) -- (1.78,1.7) 
    .. controls (2.5, 1.1) and (3.3, 0.5) .. (3.6, -0.4) 
    -- (3,-1) -- (1,1);

\draw [very thick] (1,1)--(3,-1)--(4,0)--(2,2)--(1,1);
\draw [white](1,1)-- node[midway,sloped,above,black]{$H_{u}$}(2,2);

% --- Monotonic Curve Translated Up & Right ---
\draw [thick, ->] (3.6, -0.4)
.. controls (3.3, 0.5) and (2.5, 1.1) ..
(1.78, 1.7);
\end{tikzpicture} 
\caption{Schematic depiction of the ultimate behaviour of Vlasov matter. The shaded region corresponds to the projection of the support of $f$ to the spacetime. Along the incoming geodesic, $\ubar$ always increases, but negligibly so, ultimately. }
\end{center}
\end{figure}\vspace{3mm}

\noindent The above lemma is shown in Section \ref{subsectiondecaymomentum} and essentially dictates that initial data follow an almost-null incoming path-- and that there is no leakage of Vlasov radiation towards future null infinity. This is shown in the picture above.

\subsection{Summary of the Novelty and Impact of the Results}
We briefly summarize the novel aspects and impact of our work.
\vspace{3mm}

\noindent\textbf{A new commutator calculus on the mass shell.}
A central innovation of the paper is the introduction of an adapted family of commutation vector fields on the mass shell, tailored to the null geometry and to the geodesic spray operator. We prove that, when the transport operator is commuted with these vector fields, derivatives of the connection coefficients arise only through antisymmetric combinations that can be rewritten in terms of curvature and lower-order quadratic expressions. As a consequence, top-order commutators exhibit a curvature-type structure and avoid uncontrolled derivatives of Ricci coefficients. Whether such an approach is feasible was raised as a question in \cite{taylor}. This restores both integrability and regularity at the highest derivative level and replaces the classical Jacobi-field–based approach by a direct commuted-transport framework compatible with the null structure of the Einstein equations. By replacing Jacobi–field–based phase–space analysis with a direct commuted–transport approach adapted to null geometry, the paper provides a technically simpler and more flexible high–order framework. This approach is likely to be useful in stability and asymptotic problems for massless kinetic matter where characteristic foliations are natural.

\medskip
\noindent\textbf{Reduced elliptic scheme adapted to kinetic coupling.}
The analysis identifies a structural obstruction to the use of full elliptic estimates for all null connection coefficients in the Einstein--Vlasov setting: such an approach would force top-order control of matter terms that cannot be propagated within the available transport regularity. We overcome this difficulty by developing a reduced elliptic scheme in which elliptic estimates are applied only to a minimal subset of geometric quantities on selected null hypersurfaces, together with renormalized curvature–connection combinations whose evolution equations eliminate the dangerous top-order source terms. This selective elliptic strategy is specific to kinetic coupling and constitutes a structural reorganization of the null geometry suitable for phase-space matter models.

\medskip
\noindent\textbf{Optimal regularity for the coupled hierarchy.}
We need to be very careful about what we mean by regularity here. We do not mean regularity of initial data, which are in general considered to be smooth (naturally, it is a well-known property of the Einstein equations that an imposition of $C^k$ data for $k$ large enough would hint the existence of a spacetime with slightly less $C^j-$regularity. This phenomenon is called persistence of regularity). Instead, here by regularity we mean the question of how many derivatives are needed to close the energy estimates and obtain the semi-global existence theorem. In most matter models, the optimal number of derivatives of curvature required is $1$ (see for example \cite{Kl-Rod} or \cite{M-Y}). Instead, here, we show that two derivatives of curvature are required-- and that any attempt to lower this would require a small breakthrough. The proof isolates the precise regularity balance between curvature, Ricci coefficients, and velocity averages of derivatives of the distribution function that is necessary to close the nonlinear hierarchy. In particular, the argument shows how top-order geometric control can be achieved while requiring only a minimal, directionally distributed set of phase-space derivatives of the Vlasov field, propagated in norms that are compatible with transport along null hypersurfaces. This provides a sharp regularity framework for large-data problems in Einstein–kinetic systems.

\medskip
\noindent\textbf{A sharp trapped-surface criterion with kinetic flux.}
The trapped-surface formation mechanism is driven by a lower bound on an incoming null flux that combines shear energy and Vlasov stress-energy density in a scale-consistent way. This yields a clean quantitative criterion for dynamical trapping expressed directly in terms of characteristic initial data and extends the short-pulse paradigm to kinetic matter. What would be expected of the matter model is obtained: Our Main Theorem implies in addition that the focusing alone of massless Vlasov matter in a spherically symmetric spacetime would also lead to a trapped surface (compare this to the Andr\'easson--Kunze-Rein results \cite{andr1}-\cite{andr3} in the presence of spherical symmetry) . 

\medskip
\noindent
\justifying Together, these contributions introduce new structural tools—commuted mass-shell transport, curvature-type commutator identities, and reduced elliptic control—that are expected to be applicable beyond the present theorem, including in stability and long-time problems for Einstein equations coupled to kinetic matter models.

\subsection{Global estimates for the derivatives of \texorpdfstring{$f$}{TEXT} without the use of Jacobi fields}

The first main contribution of this work concerns the delicate interplay between the regularity of the matter variables and the curvature components, as dictated by the contracted Bianchi identities. To set notation, let $\Psi$ denote the collection of null components of the Weyl curvature tensor relative to a double null frame, and let $\mathcal{T}$ denote the corresponding null components of the energy--momentum tensor induced by the Vlasov field. As is well known from the structure of the Bianchi equations, the source terms on the right-hand side involve derivatives of $\mathcal{T}$. Consequently, a priori estimates for $\Psi$ necessitate control of one additional derivative of $\mathcal{T}$. 

\vspace{3mm}

\noindent In the present framework, our analysis requires two derivatives of the curvature $\Psi$ in order to close the nonlinear energy hierarchy. It therefore follows that one must establish uniform control of three derivatives of the Vlasov distribution function $f$. This forms the starting point of our regularity analysis.

\vspace{3mm}

\noindent We now recall the governing transport equation for the Vlasov distribution function. Let $(\mathcal{M},g)$ be a globally hyperbolic Lorentzian manifold, and denote by $\mathcal{P}\subset T\mathcal{M}$ the mass shell corresponding to massive or massless particles. The Vlasov distribution function is a nonnegative scalar function
\[
f : T\mathcal{M} \to \mathbb{R}_{\geq 0},
\]
assumed to be sufficiently regular and compactly supported in the momentum variables. It is transported along the geodesic flow, that is, it is invariant under the Liouville vector field $X$. Explicitly,
\begin{equation}\label{eq:vlasov_transport}
X[f] = 0.
\end{equation}

\noindent Let $\{e_4,e_3,e_1,e_2\}$ be a double null frame on $M$, with $e_4,e_3$ null and normalized so that $g(e_3,e_4) = -2$. Extend this frame by adjoining the natural coordinate derivatives in the momentum variables, $\{\partial_{p^4}, \partial_{p^3}, \partial_{p^1}, \partial_{p^2}\}$. Relative to this frame, the Liouville operator takes the form
\begin{equation}\label{eq:liouville}
X = p^\mu e_\mu - \Gamma^{\mu}_{\nu\lambda} p^\nu p^\lambda \partial_{p^\mu},
\end{equation}
where $p^\mu$ are the fiber coordinates on the tangent bundle, constrained by the mass shell relation $g_{\mu\nu} p^\mu p^\nu = -m^2$ (with $m=0$ in the massless case-which is the case here), and $\Gamma^{\mu}_{\nu\lambda}$ are the Christoffel symbols of $g$. Accordingly, the transport equation \eqref{eq:vlasov_transport} reads in components:
\begin{equation}\label{eq:transport_components}
p^\mu e_\mu(f) - \Gamma^{\mu}_{\nu\lambda} p^\nu p^\lambda \, \partial_{p^\mu} f = 0.
\end{equation}

\noindent Equation \eqref{eq:transport_components} is the precise analytic formulation of the free-streaming of Vlasov particles in curved spacetime. Its hyperbolic--transport structure is the fundamental mechanism by which derivatives of $f$ interact with the connection coefficients, and thereby enter into the higher-order commutator estimates necessary for controlling the derivatives of the curvature tensor.
 In order to obtain control of higher derivatives of the distribution function $f$, a natural idea is to commute the transport equation \eqref{eq:vlasov_transport} with carefully chosen vector fields adapted to the null geometry and then integrate along the geodesic flow. Concretely, suppose $V$ is such a commutation vector field. Then the commuted equation reads
\[
X[Vf] =  [X,V]f.
\]
The error term $[X,V]f$ contains contributions of the form $V[\Gamma^{\mu}_{\nu\lambda}]$, which, at the heuristic level, lie at the same regularity as the Weyl curvature components. However, as is already emphasized in \cite{taylor}, certain components of these commutators present significant technical difficulties. 

\noindent For instance, terms of the form $V[\Gamma^{A}_{3B}]$ involve derivatives of the shift vector $b$, since
\begin{equation}\label{eq:gamma_shift}
\Gamma^{A}_{3B} = \chibar^{A}_{\;\;B} - e_A(b^B).
\end{equation}
Thus, $V[\Gamma^{A}_{3B}]$ generates terms with two derivatives of $b$. To estimate $b$, one relies on its transport equation,
\begin{equation}\label{eq:shift_transport}
\snabla_4 b \sim \eta - \etabar + \chi \cdot b,
\end{equation}
which couples $b$ to the torsion coefficients $\eta, \etabar$ and the null second fundamental form $\chi$.

\noindent As discussed, closing the energy estimates requires control of two derivatives of the Weyl curvature, which in turn necessitates control of three derivatives of $f$. Thus, one is led to commute the Vlasov transport equation \eqref{eq:vlasov_transport} three times with $V$, schematically:
\[
X[V^3 f] =  [X,V^3] f+.......
\]
This produces error terms of the type $V^3(\Gamma^{A}_{3B})$, which, by \eqref{eq:gamma_shift}, involve \emph{four derivatives} of the shift $b$. However, the transport equation \eqref{eq:shift_transport}, together with available elliptic estimates on the connection coefficients, yields control of at most \emph{three} derivatives. Hence a derivative gap of one order appears, obstructing the closure of the argument. 

\noindent An analogous difficulty arises in controlling terms such as $V^3(\Gammaslash^{A}_{BC})$, where $\Gammaslash$ denotes the Levi--Civita connection coefficients of the induced metric on the $S_{u,\ubar}$ spheres. These coefficients obey an equation of the schematic form
\begin{equation}\label{eq:spherical_connection}
\snabla_4 \Gammaslash \sim \nablasl \chi + \Gammaslash \cdot \Gamma,
\end{equation}
so that $V^3(\Gammaslash)$ also requires control of four derivatives of $\chi$, which again exceeds the regularity threshold permitted in our framework.

\noindent Taylor \cite{taylor} circumvented this derivative loss by avoiding direct commutation altogether. Instead, he introduced Jacobi fields associated with the geodesic flow, which permit control of higher derivatives of $f$ without producing uncontrolled higher derivatives of the connection (note that the Jacobi fields are more fundamental than the shift vector $b$). One fundamental structural distinction between the stability problem and the problem of trapped surface formation concerns the analytical role played by Jacobi fields along the mass shell. In the global small-data regime considered by Taylor, the use of Jacobi fields is particularly well-suited to capturing the long-time dispersive decay of the Vlasov field. By contrast, in the regime of large, concentrated initial data---precisely the regime relevant to the formation of trapped surfaces-the same framework breaks down, essentially because the curvature and shear amplify rather than decay.

\noindent To recall, let $P \subset T\mathcal{M}$ denote the future mass shell of the underlying spacetime $(\mathcal{M},g)$, equipped with the Sasaki metric. The geodesic spray on $P$ generates the characteristic flow of the Vlasov equation. If $X$ denotes the horizontal lift of the geodesic vector field, then a Jacobi field $J$ along the flow satisfies the second-order variational equation
\begin{align}
\label{eq:jacobi}
 \hnabla_{X}\hnabla_{X}J \;=\; \widehat{R}(X,J)X,    
\end{align}
where $\hnabla$ is the Levi--Civita connection induced by the Sasaki metric and $\widehat{R}$ its associated curvature tensor. The Jacobi fields thus encode the infinitesimal linearization of the geodesic flow on phase space and constitute the principal tool for propagating regularity of the distribution function $f$.

\noindent In the small-data regime of perturbations of Minkowski spacetime, the ambient curvature remains uniformly weak. Consequently, the right-hand side of the Jacobi equation acts only as a perturbative source term, and Jacobi fields exhibit at most mild growth along the flow. Taylor exploits this boundedness to propagate uniform estimates for the derivatives of $f$. This, in turn, yields the dispersive decay of the matter terms in the energy--momentum tensor, which is the cornerstone of the global stability analysis.

\noindent Our method introduces a completely different analytic framework for the Einstein--Vlasov system. It not only streamlines the derivation of higher-order estimates but, more importantly, provides a systematic avenue for addressing the large-data regime. This encompasses not only the problem of trapped surface formation but also a broader class of dynamical phenomena that one could hope to study in the large data regime. The central innovation lies in the identification of an adapted system of commutation vector fields and the concomitant development of a commutator calculus intrinsically suited to the null geometric structure of the problem. A key observation is that the commutators of the form  
\[
[X,V]
\]  
generate derivatives of the connection coefficients exclusively through antisymmetric combinations. Concretely, the fundamental identity reads  
\begin{equation}\label{eq:curvature_identity}
e_\mu(\Gamma^{\alpha}_{\beta\nu}) - e_\beta(\Gamma^{\alpha}_{\mu\nu}) 
= R^{\alpha}_{\;\;\nu\mu\beta} 
- \Gamma^{\lambda}_{\beta\nu}\Gamma^{\alpha}_{\mu\lambda} 
+ \Gamma^{\lambda}_{\mu\nu}\Gamma^{\alpha}_{\beta\lambda} 
+ \bigl(\Gamma^{\lambda}_{\mu\beta} - \Gamma^{\lambda}_{\beta\mu}\bigr)\Gamma^{\alpha}_{\lambda\nu},
\end{equation}
so that derivatives of the connection never appear in isolation, but only in the curvature combination \eqref{eq:curvature_identity}, together with quadratic lower-order terms. Let us provide a concrete example of how the issue of both regularity and integrability is miraculously restored. Consider, for example, $V=V_{(A)}= \Hor(e_A)-\frac{p^3}{\modu}\partial_{\pbar^A}$ (see section \ref{vlasov} for the definition of the commuting $V$ vector fields) and observe the following in the standard basis (see section \ref{vlasov} for the exact definition):
\begin{align} [X, Hor(e_A)-\frac{p^3}{\modu}\partial_{\pbar^A}]^{4+B}&= -X(\Gamma^B_{A\nu}p^{\nu}) +Hor(e_A)(\Gamma^B_{\mu\nu}p^{\mu}p^{\nu}) -X \left(\frac{p^3}{\lvert u \rvert}{\delta_A}^B\right) -\frac{p^3}{\modu}\partial_{\pbar^A}(\Gamma^B_{\mu\nu}p^{\mu}p^{\nu}) \notag \\ &= \underbrace{(e_A(\Gamma^B_{\mu\nu})-e_{\mu}(\Gamma^B_{A\nu}))\hsp p^{\mu}p^{\nu}}_\text{I}+\Gamma^C_{\mu\nu} \hsp p^{\mu}\hsp p^{\nu} \partial_{\pbar^C}(p^\lambda)\Gamma^B_{A\lambda} + \Gamma^3_{\mu\nu}p^{\mu}p^{\nu} \partial_{\pbar^3}(p^\lambda) \Gamma^B_{A\lambda} \notag \\ &\underbrace{-\Gamma^C_{A\lambda}p^{\lambda}\Gamma^B_{\mu\nu} \partial_{\pbar^C}(p^{\mu}p^{\nu})}_\text{II}-\Gamma^3_{A\lambda}p^{\lambda}\Gamma^B_{\mu\nu}\partial_{\pbar^3}(p^{\mu}p^{\nu}) +\left(\underbrace{-\frac{(p^3)^2}{\modu^2}{\delta_A}^B}_\text{I} +\frac{{\delta_A}^B}{\modu}\Gamma^3_{\mu\nu}p^{\mu}p^{\nu}\right)\notag \\&\underbrace{-\frac{p^3}{\modu} \partial_{\pbar^A}(p^{\mu}p^{\nu})\Gamma^B_{\mu\nu}}_\text{II}.\end{align}Each pair of terms in brackets labelled (I) and (II) leads to a cancellation that improves the order of decay. Indeed, the first term in bracket (II), is $o(\modu^{-3})-{\chibar_A}^C\hsp p^3 \hsp \Gamma^B_{\mu\nu}\hsp \partial_{\pbar^C}(p^{\mu}p^{\nu}) = -\frac{1}{2}\tr\chibar\hsp p^3\hsp \Gamma^B_{\mu\nu}\hsp \partial_{\pbar^A}({p^{\mu}p^{\nu}})+o(\modu^{-3})$. By itself, this term is only $o(\modu^{-2})$. When combined, however, with the second term in bracket $(II)$, the whole bracket becomes
 \[   (II) =   -\frac{1}{2}p^3 \hsp \big(\tr\chibar+\frac{2}{\modu}\big)\Gamma^B_{\mu\nu}\partial_{\pbar^A}(p^{\mu}p^{\nu})+ o(\modu^{-3})=o(\modu^{-3}).            \]As for the terms in bracket (I), it is here where not only the issue of integrability, but also the issue of regularity in the terms is overcome. Indeed, notice that

\begin{equation}\label{fundid}
     e_A(\Gamma^B_{\mu\nu})-e_{\mu}(\Gamma^B_{A\nu}) = {R^B}_{\nu A \mu}- \Gamma^{\epsilon}_{\mu\nu}\Gamma^{B}_{A\epsilon} +\Gamma^{\epsilon}_{A\nu}\Gamma^B_{\mu \epsilon} +(\Gamma^{\epsilon}_{A\mu}-\Gamma^{\epsilon}_{\mu A})\Gamma^B_{\epsilon \nu}.
\end{equation}In other words, throughout the entire argument, \textit{derivatives of Christoffel symbols will always appear in differences that give rise to Riemann curvature components (up to terms of the form $\Gamma\Gamma$, which are not only harmless, but indispensable as we shall briefly explain! )}.

\vspace{3mm}
\par\noindent As a consequence, the first term in $(I)$ is $o(\modu^{-3}) + \Gamma^{\epsilon}_{A\nu}\Gamma^B_{\mu\epsilon}p^{\mu}p^{\nu}$, where the latter fails to be $o(\modu^{-3})$ precisely when $\mu=\nu=3$. In that case, the worst term is \[   \Gamma^C_{A3}\Gamma^{B}_{3C} p^3 p^3 \hspace{1mm} \text{(summation over C=1,2)} =\frac{1}{4}(\tr\chibar)^2 (p^3)^2 =o(\modu^{-2}).   \]When combined, however, with the second term in bracket (I), the whole bracket becomes \[ (I) = \frac{1}{4} (p^3)^2 \big( \tr\chibar + \frac{2}{\modu} \big) \big(\tr\chibar - \frac{2}{\modu }\big){\delta_A}^B +o(\modu^{-3}) =o(\modu^{-3}).       \]
\noindent It thus becomes apparent that \textit{the error terms arising in \eqref{fundid} are neither negligible nor useless, but rather provide the missing link for ensuring the overall required integrability of the commutator components.} This property is consistent, for all commutator components, throughout all seven basis vector fields $V$ introduced later on in Section 9.

\vspace{3mm}

\noindent Another important observation is that, \textit{in this framework, the quantities $e_A(b^B)$ never arise independently, but always in the coupled form with $\chibar^{A}_{B}$, thereby combining into the connection coefficient $\Gamma^{A}_{3B}$ associated with the endomorphism structure of the tangent bundle of the leaves $S_{u,\ubar}$}. This structural cancellation is decisive: once the commutation vector fields $V$ are aligned with the null frame, the apparent derivative loss is eliminated and a significant reduction in the associated elliptic estimates is achieved. This is novel, and we discuss this in the subsequent sections.  

\noindent As a consequence, curvature terms can be controlled precisely at the target order, while the quadratic connection contributions, being effectively of lower differential order, enjoy stronger decay. These improved decay properties ensure that no obstruction to integrability emerges in the higher-order energy estimates.  

\noindent The development of this commutator calculus in the adapted $V$-basis (introduced systematically in Chapter~\ref{vlasov}) constitutes one of the main novelties of our method. Beyond its immediate application to the Einstein-Vlasov system in regimes of strong gravitational interaction and absence of symmetry, we anticipate that this framework will prove useful in a broader context, including, for instance, simplified approaches to the proof of nonlinear stability of Minkowski spacetime with massless Vlasov matter.

\noindent To summarize, our new approach yields a novel  framework in the large data and symmetry-free regime (which is also conceptually simple) that avoids derivative losses and provides additional technical advantages: a reduction in the reliance on elliptic estimates, sharp control at the optimal regularity level (in the sense we explained), and greater transparency in the energy hierarchy. We will exploit these features systematically in the sections that follow. 

\subsection{Fundamental analytic obstruction in the characteristic treatment of the Vlasov system in the absence of symmetries}

Beyond the high–order regularity issues discussed in the preceding subsection, a principal analytic obstruction in the present characteristic formulation of the Einstein–Vlasov system is the lack of \emph{top–order} control of the Vlasov field along the incoming null hypersurface. In the framework under consideration, the characteristic initial value problem is posed with nontrivial data prescribed on a past outgoing null hypersurface $\mathcal{H}_{u_{\infty}}$ asymptotic to past null infinity, and \emph{trivial} geometric data induced from Minkowski space on the intersecting past incoming null hypersurface $\underline{\mathcal{H}}_{\ubar_{\infty}}$.

\noindent Unlike the pure vacuum setting or with other matter models such as Einstein-Maxwell, Einstein-Yang-Mills, or Einstein-Scalar system, where one can avoid the need to estimate all derivatives (derivatives tangential to the spheres as well as null derivatives) of the Ricci coefficients, the Weyl curvature components, and the stress-energy tensors if the regularity level is assumed to be sufficiently high. This is different in the case of the Vlasov matter. Heuristically speaking, the Vlasov stress-energy tensor 
\begin{align}
 T_{\mu\nu}:=\int_{0}^{\infty} \!\! \int_{\mathbb{R}^2}
     f(x,p)\, p_{\mu}p_{\nu}\;
     \frac{1}{p^{3}}\,\sqrt{\det\slashed{g}}\,
     dp^{1}dp^{2}dp^{3}.  
\end{align}
is a non-local entity in the tangent bundle $T\mathcal{M}$ of the physical spacetime $\mathcal{M}$. In particular, the distribution function $f$ is a function on the mass-shell $P\subset T\mathcal{M}$, a seven-dimensional variety of the tangent bundle. Therefore, in order to obtain higher order estimates for $f$, one needs to devise the vector fields $V$ defined on the mass shell or a section of the double tangent bundle $TT\mathcal{M}$. Any appropriate vector field that verifies the regularity closure property, as illustrated in the previous section, would contain every derivative in its projection onto $T\mathcal{M}$. Now, in the higher order estimates ($\geq 2$) for the Vlasov distribution function $f$ through commuting the transport equation 
\begin{align}
X[f]=0,
\end{align}
as we shall observe in section \ref{vlasov}, would produce derivatives of the connection coefficients and Weyl curvature components along every direction $e_{A},e_{4},e_{3}$. This necessitates the control of every derivative of the Ricci coefficients and the Weyl curvature components, irrespective of the regularity level under consideration. This complicates the analysis and requires careful renormalization of Weyl curvature and Ricci coefficients at appropriate stages. Moreover, this necessitates treating $|u|$ as a weighted function with signature $s_{2}(|u|)=-1$ and $s_{2}(|u|^{-1})=+1$. As we shall explain in later sections, this causes an apparent obstruction at the top order due to the fact that the vlasov source can only be controlled on $H$ at the top order in this framework.      

\noindent Since the Vlasov equation is a transport equation along null geodesics in the mass shell, it admits integration along incoming null geodesics provided one has a suitable control on the relation between the affine parameter $s$ and the optical function $u$. Concretely, if $\gamma(s)$ denotes the projection to the spacetime manifold of the null geodesic in the mass–shell generated by the geodesic spray $X$, then
\begin{equation}\label{eq:support_annals}
  \frac{p^{3}(s)}{\frac{du}{ds}} =1,
\end{equation}
where $u(s) := u(\gamma(s))$ and $(p^{4},p^{3},p^{1},p^{2})$ denote the components of the momentum in the chosen double–null frame $\{e_{1},e_{2},e_{3},e_{4}\}$, with $e_{3}$ tangent to $\underline{\mathcal{H}}_{\ubar}$ and $e_{4}$ tangent to $\mathcal{H}_{u}$.

\noindent In this gauge, the zero–th order bootstrap bounds for the momentum components take the form
\begin{align}
\label{eq:bootstrap_p3}
 & \frac{1}{4} \, p^3(0) \ \leq \ p^3(s) \ \leq \ 2 \, p^3(0), \\
\label{eq:bootstrap_p4}
 & 0 \ \leq \ |u(s)|^{2} \, p^4(s) \ \leq \ 2 \, C_{p^4} \, p^3(0), \\
\label{eq:bootstrap_pA}
 & |u(s)|^{2} \, |p^A(s)| \ \leq \ 2 \, C_{p^A} \, p^3(0),
\end{align}
for constants $C_{p^4},C_{p^A}>0$ fixed by the initial data, where indices $A \in \{1,2\}$ refer to the angular momentum components if the initial momentum component $p^{3}(0)$ satisfies
\begin{equation}\label{eq:momentum_support}
  C_{1} \ < \ p^{3}(0) \ \leq \ C_{2},~0\leq |u_{\infty}|^{2}p^{4}(u_{\infty})\leq Cp^{3}(0),~|u_{\infty}|^{2} \, |p^A(u_{\infty})| \ \leq \ 2 \, C_{p^A} \, p^3(0)
\end{equation}
for constants $0 < C_{1} < C_{2} < \infty$. %Moreover, it follows that the variation of $\ubar(s)$ is uniformly bounded:
%\[
%|\ubar(s) - \ubar(s_{\infty})| \ \leq \ O(a^{-1})
%\]
%for all $s \in (s_{\infty},s_{0}]$.
\vspace{3mm}

\noindent The Vlasov equation in geometric form reads
\begin{equation}
 X[f] = 0,
\end{equation}
where $X$ is the geodesic spray vector field on the mass–shell. Bounds \eqref{eq:support_annals}–\eqref{eq:bootstrap_pA} imply that \eqref{eq:vlasov_transport} admits direct integration only along the \emph{incoming} ($u$–direction) null geodesics. However, in the coupled Einstein–Vlasov system, the energy–momentum tensor
\[
T_{\alpha\beta}(x) = \int_{\mathcal{P}_{x}} f(x,p) \, p_{\alpha} \, p_{\beta} \, \mathrm{d}\mu_{\mathcal{P}_{x}}(p)
\]
enters as a source term in the transport equations for the Ricci coefficients. Estimating these Ricci coefficients therefore requires integration in \emph{both} the $u$– and $\ubar$–directions.
\vspace{3mm}

\noindent For instance, the Raychaudhuri equation for $\tr\chibar$ takes the form
\begin{equation}\label{eq:raychaudhuri_trchibar}
 \snabla_{3} \tr\chibar + \frac12 \, (\tr\chibar)^{2}
 = - |\chihat|^{2} - 2\, \omegabar \, \tr\chibar - T_{33}.
\end{equation}
To control $\|\snabla^{2} \tr\chibar\|_{L^{2}(S_{u,\ubar})}$, one must commute \eqref{eq:raychaudhuri_trchibar} with angular derivatives and thus estimate $\snabla^{2} T_{33}$ in $L^{2}(\Hbar_{\ubar})$. By the definition of $T_{33}$ and \eqref{eq:vlasov_transport}, this necessitates $L^{2}(\Hbar_{\ubar}) L^{2}(\mathcal{P}_{x})$–control of two derivatives of $f$ on $\underline{\mathcal{H}}_{\ubar}$, whereas \eqref{eq:support_annals} only permits direct control of such norms along $\mathcal{H}_{u}$. 

\noindent Consequently, a naive approach fails: top–order angular derivatives of $T_{33}$ cannot be propagated along $\underline{\mathcal{H}}_{\ubar}$ using only \eqref{eq:vlasov_transport}. The resolution lies in identifying the \emph{correct functional norms} of $f$ to be propagated, and determining \emph{which submanifolds of the mass–shell} those norms should be estimated on. In the present setting—where control of two derivatives of the Weyl curvature is required—one must control \emph{three} derivatives of $f$. Crucially, the derivatives of $f$ appear in different norms over different hypersurfaces:
Controlling $\snabla^{2} \Te_{33}$ in $L^{2}(S_{u,\ubar})$ requires $f \in L^{2}(S_{u,\ubar})L^{2}(\mathcal{P}_{x})$.
To bound $V^{2} f$ (with $V$ one of the commutation vector fields) in $L^{2}(S_{u,\ubar})L^{2}(\mathcal{P}_{x})$, one needs uniform $L^{2}(\mathcal{P}_{x})$ control of $V f$ over $S_{u,\ubar}$.
This in turn implies the need to control $V^{3}f$ in $L^{2}(\mathcal{H}_{u})L^{2}(\mathcal{P}_{x})$, a norm that \emph{is} directly accessible from \eqref{eq:vlasov_transport}.

\noindent In this way, different derivative orders of $f$ are controlled on distinct families of null hypersurfaces in the mass–shell, and the principal technical challenge is to propagate these norms in a manner compatible with the geometry of the characteristic initial value problem.

\noindent An additional obstruction appears at the elliptic estimate stage: In the Einstein equations, elliptic estimates on $S_{u,\ubar}$ are employed to control top–order angular derivatives of the Ricci coefficients. For ``good'' Ricci coefficients, this is compatible with the Vlasov propagation, but for ``bad'' coefficients such as $\tr\chibar$, elliptic estimates would require $L^{2}(\underline{\mathcal{H}}_{\ubar})L^{2}(\mathcal{P}_{x})$–control of $V^{3}f$, which is not directly available from \eqref{eq:vlasov_transport}. 

\noindent Our approach circumvents this difficulty by a refined commutator analysis: by carefully commuting \eqref{eq:vlasov_transport} with appropriately chosen $V$, and tracking the precise structure of the resulting error terms, we eliminate the need for elliptic estimates for $\tr\chibar$ (and other \textit{bad} ones) at top order. The analysis reveals an intrinsic asymmetry between the $e_{4}$ and $e_{3}$ directions in the Vlasov propagation (breaking the naive $e_{4} \leftrightarrow e_{3}$ symmetry present in the vacuum Einstein equations modulo a $\mathbb{Z}_{2}$ symmetry of certain Ricci coefficients), which in turn underlines the fundamental obstruction to simultaneous top–order control of Ricci coefficients on both $\mathcal{H}_{u}$ and $\underline{\mathcal{H}}_{\ubar}$.

\subsection{Reduced Elliptic Estimates}

This section is devoted to the implementation of the second principal structural innovation underpinning our analysis. We uncover a novel geometric feature intrinsic to the Einstein--Vlasov system that enables the selective application of elliptic estimates to a proper subset of the null connection coefficients, specifically \((\chihat, \tr\chi, \eta)\), restricted to the outgoing null hypersurface \(H\). This refinement permits us to dispense entirely with elliptic control of the remaining Ricci coefficients, thereby circumventing the full elliptic theory traditionally invoked for all connection variables in energy-type arguments. We stress on the fact that this is absolutely essential since elliptic estimates for Ricci coefficients such as $\tr\chibar$ causes significant obstruction that we describe towards the end of this section. 

\noindent Let us introduce precise notation to describe the relevant geometric quantities. Denote by
\[
\psi := \{\chihat, \tr\chi, \chibarhat, \tr\chibar, \eta, \etabar, \omega = 0, \omegabar, \Gammaslash\}
\]
the full set of Ricci coefficients (i.e., connection coefficients associated with the double null foliation), where the notation is standard and follows the conventions of the null frame formalism. The Weyl curvature components are encoded in the set
\[
\Psi := \{\alpha, \alphabar, \beta, \betabar, \rho, \sigma\},
\]
while we denote by \(\mathfrak{M}\) the matter degrees of freedom associated with the Vlasov field, which include the distribution function \(f\) and its geometric derivatives along the fibers of the mass shell bundle.

\noindent It is a fundamental feature of the Einstein-Vlasov system that the coupling between the matter and curvature variables is nontrivial at the level of derivatives: the Bianchi equations, when written in the null frame formalism, contain terms involving first-order derivatives of the matter field \(f\) along the null and sphere directions. Consequently, any attempt to close an energy hierarchy for the matter variables necessarily requires control of connection coefficients at a level commensurate with that of \(f\), which—owing to its transport nature in both base and momentum variables—typically requires higher regularity than the curvature components \(\Psi\) themselves.

\noindent The classical approach to this problem entails establishing elliptic estimates for all elements of \(\psi\). This strategy ensures that one can gain sufficient regularity to estimate \(f\) via the Vlasov equation and simultaneously control the geometric side via the Einstein equations. At the level of energy estimates for the geometric variables, one typically seeks to estimate
\[
\|\mathcal{D}^{N} \Psi\|_{L^{2}(H)} + \|\mathcal{D}^{N} \Psi\|_{L^{2}(\underline{H})},~ \quad N \geq 1,
\]
where \(\mathcal{D}\) denotes a combination of angular derivatives and projected null derivatives compatible with the geometry of the foliation. Note that the curvature components \(\alpha\) and \(\alphabar\) admit control only on the outgoing and incoming null hypersurfaces \(H\) and \(\underline{H}\), respectively, due to their signature and the hyperbolic nature of the Bianchi system. We remark that even in this optimal setting, the current analytic techniques are insufficient to reach full \(N=1\) regularity for the Weyl curvature field, a point which we elaborate upon in subsequent sections.

\noindent The structure of the null Bianchi equations governing the curvature tensor requires the estimation of
\[
\|\mathcal{D}^{N+1} T_{\mu\nu}\|_{L^{2}(H)}\]
where \(T_{\mu\nu}\) denotes the energy-momentum tensor derived from the Vlasov distribution function. Since \(T_{\mu\nu}\) contains velocity moments of \(f\), and these moments involve integration over the mass shell coupled to the spacetime geometry, the control of \(T_{\mu\nu}\) at order \(N+1\) through the transport equation imposes stringent requirements on the differentiability of the metric coefficients. More precisely, it necessitates estimates of
\[
\|\mathcal{D}^{N+1} \psi\|_{L^{2}(H)} + \|\mathcal{D}^{N+1} \psi\|_{L^{2}(\underline{H})}
\]
which cannot, in general, be achieved via transport equations alone. The propagation equations for \(\psi\) along the null directions involve losing one derivative. As such, one must appeal to the elliptic structure inherent in the sphere-level constraint equations—specifically, the \(\div\)--\(\curl\) systems for \(\chihat\), \(\tr\chi\), and \(\eta\)—to gain the necessary derivative and close the hierarchy. This highlights the geometric necessity of invoking elliptic estimates and motivates the structural refinement we exploit in this section, which allows us to localize these elliptic estimates exclusively to the subset \((\chihat, \tr\chi, \eta)\) along \(H\), while avoiding their application to the remaining connection variables.

\noindent However, we uncover a key structural simplification in the Einstein–Vlasov system that permits us to drastically reduce the number of connection coefficients requiring elliptic control. Specifically, we demonstrate that it suffices to derive elliptic estimates only for \((\chihat, \tr\chi, \eta)\) on the null hypersurface \(H\). This is in sharp contrast to prior works, where elliptic control over the entire set \(\psi\) was deemed indispensable.

\noindent To elucidate this point, recall that in order to estimate derivatives of the Vlasov distribution function \(f\), one must commute the transport equation
\begin{equation}
X[f] = 0,
\end{equation}
with a suitable system of vector fields \(V\). Upon commutation, one obtains
\begin{equation}
X[Vf] = [X,V]f.
\end{equation}
Schematically, the commutator term takes the form
\begin{equation}
[X, V]f \sim (\Psi, \psi) \cdot Vf,
\end{equation}
indicating that the derivatives of both the Weyl curvature components and the connection coefficients enter at the same differential level in the evolution of \(Vf\). This structural feature, specific to the Vlasov system, distinguishes it from other matter models—such as Maxwell or Yang–Mills fields—whose hyperbolic character leads to fundamentally different regularity hierarchies.
\vspace{3mm}

\noindent In this study, we consider $N=2$, meaning the Weyl curvature is controlled up to the second derivative. Commuting to the top order, we encounter terms of the schematic form
\begin{equation}
X[V^3 f] \sim (\Psi, \psi) V^3 f + \mathcal{D}^2 (\Psi, \psi) V f + \mathcal{D} (\Psi, \psi) V^2 f,
\end{equation}
revealing the appearance of up to second derivatives of the connection and curvature components. One might thus surmise that, as long as the curvature is controlled at second order, elliptic estimates for the connection coefficients could be dispensed with. However, the difficulty lies in certain components of \(\psi\) that are not expressible directly in terms of curvature and whose higher derivatives cannot be obtained without elliptic analysis.

\noindent A concrete manifestation of this obstruction is the appearance of the term \(e_A( b^B)\), where \(b^A\) denotes the shift vector field tangential to the sphere $S_{u,\ubar}$. Since this expression involves a derivative of the shift, it introduces a need to control third derivatives of \(b\) while estimating the desired \(
\|\mathcal{D}^{N+1} T_{\mu\nu}\|_{L^{2}(H)},~N=2\). From the evolution equation
\begin{equation}
\snabla_4 b \sim (\eta - \etabar) + \chi \cdot b,
\end{equation}
we see that controlling \(\mathcal{D}^3 b\) necessitates control over \(\mathcal{D}^3 \chi\), \(\mathcal{D}^3 \eta\), and \(\mathcal{D}^3 \etabar\). In parallel, the connection \(\Gammaslash\) satisfies the equation
\begin{equation}
\snabla_4 \Gammaslash \sim \nablasl \chi + \Gammaslash \cdot \chi,
\end{equation}
which again involves third derivatives of \(\chi\). These considerations suggest that elliptic control over \((\chihat, \tr\chi, \eta, \etabar)\) is required. But the elliptic estimates for $\etabar$ require elliptic estimates for $\tr\chibar$ and $\chibar$, which in turn require elliptic estimates for $\omegabar$. Therefore, it appears one needs elliptic estimates for all the connection coefficients. 

\noindent However, a key observation allows us to avoid this conclusion. \textit{The potentially dangerous term \(e_A (b^B) \) never appears in isolation.} Instead, it always appears in the combination
\begin{equation}
\Gamma^B_{3A} = {\chibar_A}^B - e_A( b^B),
\end{equation}
that is, as a component of the connection in the null frame.  
Let us give a concrete examples of how this term appears. For the geodesic vector field $X:=p^{\mu}e_{\mu}-\Gamma^{\overline{\lambda}}_{\alpha\beta}p^{\alpha}p^{\beta}\frac{\partial}{\partial \pbar^{\lambda}}, \lambda=1,2,3$ and a vector field $V_{A}:=\text{Hor}(e_{A})-\frac{p^{3}}{|u|}\frac{\partial}{\partial \pbar^{A}}$, if one computes $[X,V_{A}]$, then semi-schematically 
\begin{align}
[X,V_{A}]= \Bigg(-p^{3}e_{A}(b^{B})+\Gamma^{B}_{A\nu}p^{\nu}+\frac{p^{3}}{|u|}\delta^{B}_{A}\Bigg)V_{B}+....   
\end{align}
Notice the appearance of $e_{A}(b^{B})$ in the commutation as the error term (and it appears in the exact same fashion everywhere else). But we can write the full error term $-p^{3}e_{A}(b^{B})+\Gamma^{B}_{A\nu}p^{\nu}+\frac{p^{3}}{|u|}\delta^{B}_{A}$ as follows 
\begin{align}
-p^{3}e_{A}(b^{B})+\Gamma^{B}_{A\nu}p^{\nu}+\frac{p^{3}}{|u|}\delta^{B}_{A}=\underbrace{\Gamma^{B}_{A3}p^{3}+\frac{p^{3}}{|u|}\delta^{B}_{A}-p^{3}e_{A}(b^{B})}_{I}+\Gamma^{B}_{A4}p^{4}+\Gammaslash^{B}_{AC}p^{C}.    
\end{align}
Now consider the term $I$ (the remaining terms have good enough integrability), which involves a non-integrable term $\frac{p^{3}}{|u|}$ (note that $p^{3}$ does not decay) and the term involving the shift vector field. First, recall the following 
\begin{align}
 \Gamma^{B}_{A3}=\Gamma^{B}_{3A}+e_{A}(b^{B}),   
\end{align}
which yields cancellation of the term $-p^{3}e_{A}(b^{B})$. One can proceed to obtain an equation for $\Gamma^{B}_{3A}$. Denoting $\Gamma^{B}_{3A}$ by $\mathcal{G}^{B}_{A}$ (technically speaking we will control $\Gamma[g]^{B}_{3A}-\Gamma[\eta]^{B}_{3A}$ since $\Gamma^{B}_{3A}$ is not a tensor rather can be interpreted as an endomorphism of the tangent bundle of the sphere $S_{u,\ubar}$), schematically, such an equation reads 
\begin{align}\label{Gintroduction}
\snabla_4 \mathcal{G} \sim \nablasl \eta + \mathcal{G} \cdot \chi + \omegabar \cdot \chi + \Gammaslash \cdot (\eta, \etabar) + \eta \cdot \etabar + (\rho,\sigma)+\Tslash,    
\end{align}
where one notices that the only derivative term that appears is $\nablasl\eta$. This hints that one may estimate $\mathcal{G}$ by the elliptic estimates for $\eta$ (and $(\chihat,\tr\chi)$ since they appear for the elliptic estimates for $\eta$). However, one still needs to take care of the non-integrable term $\frac{p^{3}}{|u|}$. To this end, we make the following set of manipulations 
\begin{align}
 \mathcal{G}^{A}_{B}=\widehat{\mathcal{G}}^{A}_{B}+\frac{1}{2}\tr[\mathcal{G}]\delta^{A}_{B},
\end{align}
where explicitly 
\begin{align}
\widehat{\mathcal{G}}^{A}_{B}=\widehat{\Gamma}^{A}_{B3}-[e_{A}(b^{B})-\frac{1}{2}e_{C}(b^{C})\delta^{A}_{B}],~\tr[\mathcal{G}]=\tr\chibar-e_{C}(b^{C})    
\end{align}
and note that 
\begin{align} \notag
 p^{3}\Gamma^{A}_{B3}-e_{A}(b^{B})p^{3}+\frac{p^{3}}{|u|}=&p^{3}\Bigg(\widehat{\Gamma}^{A}_{B3}-[e_{A}(b^{B})-\frac{1}{2}e_{C}(b^{C})\delta^{A}_{B}]+\frac{1}{2}(\tr\chibar+\frac{2}{|u|}-e_{C}(b^{C}))\delta^{A}_{B}\Bigg) \\
 =&p^{3}\widehat{\mathcal{G}}^{A}_{B}+\frac{1}{2}p^{3}\Bigg(\tr[\mathcal{G}]+\frac{2}{|u|}\Bigg)\delta^{A}_{B}.
\end{align}
Schematically, the equation for $\tr[\mathcal{G}]+\frac{2}{|u|}$ and $\widehat{\mathcal{G}}$ reads 
\begin{align}
\snabla_{4}\Bigg(tr[\mathcal{G}]+\frac{2}{|u|}\Bigg)\sim& \chihat \hsp \widehat{\mathcal{G}}+\div\eta+|\eta|^{2}+\eta\hsp \etabar+\tr\chi\tr\chibar+\omegabar\hsp \tr\chi+\chihat\hsp \chibarhat+\Gammaslash(\eta,\etabar)+(\rho,\sigma)+\Tslash,\\
\snabla_{4}\widehat{\mathcal{G}}\sim& \nablasl \eta + \widehat{\mathcal{G}} \cdot \chi+\chi \chi + \omegabar \hsp \chi + \Gammaslash \cdot (\eta, \etabar) + \eta (\eta,\etabar) + (\rho,\sigma)+\Tslash. 
\end{align}

\noindent Consequently, controlling \(\mathcal{D}^2 \widehat{\mathcal{G}},\hsp \mathcal{D}^{2}(tr[\mathcal{G}]+\frac{2}{|u|})\) in \(L^2(S)\) requires only control over \(\mathcal{D}^3 \eta\) along \(H\), together with lower-order control over the remaining terms. This is the central insight: The entire analytic burden reduces to deriving elliptic estimates for \(\eta\) along \(H\), eliminating the need for such estimates on \((\chibarhat, \tr\chibar, \omegabar, \etabar)\). We now illustrate how we can decouple the elliptic estimates for $\eta$ from those of $\etabar$ and subsequently $(\chibarhat,\tr\chibar,\omegabar)$.

\noindent To estimate \(\eta\), one invokes elliptic estimates associated to \((\chihat, \tr\chi)\). Crucially, the elliptic estimates for \((\chihat, \tr\chi)\) do not propagate derivatives in the incoming null direction, and are therefore entirely compatible with working on \(H\) alone. Thus, we are able to establish full regularity for the matter distribution \(f\), up to the requisite top order, while performing elliptic estimates exclusively for \((\chihat, \tr\chi, \eta)\) along \(H\). However, there is a subtlety in the elliptic estimates for $\eta$. Since the equation for $\eta$ contains Weyl curvature $\beta$, one can not directly use this transport equation. Instead, the standard approach is to define the mass aspect function 
\begin{align}
 \mu:=-\div\eta-\rho+\frac{1}{2}\chihat\cdot\chibarhat   
\end{align}
which, upon being acted on by $\snabla_{4}$ and through the use of the equation for $\rho$, eliminates the dangerous $\snabla\beta$ term. In the current framework, however, this is not enough to decouple $\eta$ and $\etabar$ since $\div(\nabla_{4}\eta)$ would contain terms $\chi\cdot \nablasl\etabar$ potentially obstructing this route. To this end, we define a new $\mu$ as follows 
\begin{align}
 \mu:=-\div\eta-\rho+\frac{1}{2}\chihat\cdot\chibarhat+\frac{1}{4}\tr\chi\widetilde{\tr\chibar}   
\end{align}
which precisely cancels out the extra $\nablasl\etabar$ term:
\begin{align}
 \snabla_{4}\mu\sim (\eta,\etabar)\nablasl(\chihat,\tr\chi)+\chihat\nablasl\eta+\snabla_{4}T_{43}+\snabla_{3}T_{44}+l.o.t.   
\end{align}
This, together with the equation for $\curl\eta$, immediately yields the top elliptic estimate for $\eta$ without the need for elliptic estimates for $\etabar$. Since the elliptic estimates for $\etabar$ are unnecessary in this context, one avoids having to establish the elliptic estimates for $\chibarhat,\tr\chibar,$ and $\omegabar$, which would have to be obtained first. In addition, there would be serious complications in working with elliptic estimates for $\chibarhat$ and $\tr\chibar$ in the present framework, where the Vlasov matter source is only estimated on $H$ at the top order. This is because of the structure of the equation $\tr\chibar$
\begin{align}
    \snabla_{3} \tr\chibar + \frac12 \, (\tr\chibar)^{2}
 = - |\chihat|^{2} - 2\, \omegabar \, \tr\chibar - T_{33}
\end{align}
Note that controlling $\nablasl^{3}\tr\chibar$ over $\Hbar_{\ubar}$ would require controlling the topmost order norm $\nablasl^{3}T_{33}$ over $\Hbar_{\ubar}$ which is impossible. Therefore, without this aforementioned reduction in elliptic estimates, this program is doomed to fail. 

\noindent This reduction in analytic complexity—made possible by careful exploitation of the geometric structure of the Einstein–Vlasov system—constitutes a key technical innovation of our work and, to the best of our knowledge, has not been previously observed in the literature.

\subsection{Regularity closure for the coupled Einstein-Vlasov hierarchy}

\noindent In this section, we address what we call the regularity issue, namely what is the least number of  derivatives of curvature and of Vlasov matter that one requires control on to close energy estimates (in the smooth initial data setting). This is an important question. To place the issue in context, recall that in the short-pulse analysis of the vacuum Einstein equations in double-null gauge, Klainerman and Rodnianski \cite{Kl-Rod} obtained control of one derivative of the Weyl curvature tensor in the scale-invariant norm \( \mathcal{L}^{2}_{sc}(H \slash \Hbar) \) in a breakthrough work. Their analysis relies crucially on refined trace estimates on codimension-two submanifolds, which compensate for the borderline failure of the Sobolev embedding
\[
        H^{1}(S)\not\hookrightarrow L^{\infty}(S)
\]
in two dimensions. In the present Einstein--Vlasov setting, the same regularity level encounters a new obstruction coming from the kinetic nature of the matter field. The point is not that such a lower-regularity theory is impossible in principle, but rather that it is not compatible with the specific hierarchy of estimates used here.

\noindent Let us explain this mechanism. Given a tensor field
\(\varphi\in \Gamma({}^{K}\otimes TS_{u,\ubar})\), recall the trace norms
\begin{align}
\|\varphi\|_{\operatorname{tr}(H)}
    := \sup_{S_{u,\ubar}}
       \left( \int_{0}^{\ubar} |\varphi|^{2}\,d\ubar' \right)^{1/2},
    \qquad
\|\varphi\|_{\operatorname{tr}(\Hbar)}
    := \sup_{S_{u,\ubar}}
       \left( \int_{0}^{u} |\varphi|^{2}\,du' \right)^{1/2}.
\end{align}
These norms are designed to capture the borderline behavior of curvature components along null hypersurfaces. Suppose one attempts to close the coupled hierarchy with only one angular derivative of the Weyl curvature in the scale-invariant energy space \( \mathcal{L}^{2}_{sc}(H\slash \Hbar) \). Since the null Bianchi equations contain derivatives of the stress-energy tensor, this would require corresponding control of angular derivatives of the Vlasov stress-energy tensor. Because the stress-energy tensor is a velocity average of the distribution function \(f\), this in turn requires estimates for derivatives of \(f\) on the mass shell.

\noindent The difficulty is that the Vlasov field is propagated by the geodesic flow in phase space. At top order, the useful propagation is along the incoming null direction from the initial hypersurface \(H_{u_{\infty}}\). Indeed, for a null geodesic \(s\mapsto \gamma(s)\), with \(u(s)=u(\gamma(s))\), the momentum component satisfies schematically
\begin{align}
        \frac{p^{3}(s)}{\frac{du}{ds}} = 1.
\end{align}
Thus the Vlasov estimates may be integrated along the incoming direction in a way compatible with the characteristic geometry. However, this also means that one cannot freely recover missing transverse regularity without producing derivative losses.

\noindent  We now isolate the obstruction in the present argument. Let \(X\) denote the geodesic spray on the mass shell and let \(V\) be one of the commutation vector fields adapted to the null frame. Schematically, the commuted Vlasov equation contains terms of the form
\begin{align}
X[Vf] &\sim \alpha \cdot \pslash \cdot Vf+\text{l.o.t.},\\
X[V^{2}f] &\sim (\mathcal{D}\alpha)\cdot \pslash \cdot Vf
              + \alpha\cdot V^{2}f+\text{l.o.t.},
\end{align}
where \(\alpha\) denotes the outgoing curvature component and \(\mathcal{D}\alpha\) denotes an angular derivative. If one tries to estimate the second-order phase-space energy of \(f\), one is led to terms of the schematic form
\begin{align}
II :=
\int_{u_{\infty}}^{u}
\int_{0}^{1}
\int_{S_{u,\ubar}}
\int_{\mathcal{P}_{x}}
        V^{2}f \cdot (\mathcal{D}\alpha)\cdot \pslash \cdot Vf
        \,d\mu_{\mathcal{P}_{x}}\,d\mu_{S_{u,\ubar}}\,d\ubar\,du .
\end{align}
At this regularity level, \(\mathcal{D}\alpha\) is controlled in an \(L^{2}\)-type curvature energy, and hence the remaining Vlasov factor must be estimated in a uniform-in-\(S_{u,\ubar}\) norm, for instance through
\begin{align}
\sup_{\ubar\in[0,1]}\sup_{S_{u,\ubar}}
\left(
    \int_{\mathcal{P}_{x}} |Vf|^{2}\,d\mu_{\mathcal{P}_{x}}
\right)^{1/2}.
\end{align}
To propagate this quantity, one integrates the first commuted transport equation. This produces the borderline term
\begin{align}
III :=
\int_{u_{\infty}}^{u}
\int_{\mathcal{P}_{x}}
        \alpha \cdot \pslash \cdot |Vf|^{2}
        \,d\mu_{\mathcal{P}_{x}}\,du .
\end{align}
A natural attempt would be to use the codimension-two trace estimate and write
\begin{align}
|III|
    \lesssim
    \sup_{S_{u,\ubar}}
    \left(
        \int_{u_{\infty}}^{u} |\alpha|^{2}\,du'
    \right)^{1/2}
    \left(
        \int_{u_{\infty}}^{u} |u'|^{-2}\,du'
    \right)^{1/2}
    \sup_{u}
    \int_{\mathcal{P}_{x}} |Vf|^{2}\,d\mu_{\mathcal{P}_{x}},
\end{align}
or equivalently
\begin{align}
|III|
    \lesssim
    \|\alpha\|_{\operatorname{tr}(\Hbar)}
    |u|^{-1/2}
    \sup_{u}
    \int_{\mathcal{P}_{x}} |Vf|^{2}\,d\mu_{\mathcal{P}_{x}} .
\end{align}
The obstruction is that, in the curvature hierarchy used here, \(\alpha\) is controlled along outgoing null hypersurfaces but not in the incoming trace norm
\(\operatorname{tr}(\Hbar)\). Thus the above estimate does not close within the present scheme.

\noindent For this reason, our argument works one derivative higher in curvature. With two derivatives of the Weyl curvature available in the scale-invariant energy norms, one obtains the required \(L^{\infty}(S_{u,\ubar})\) control of the curvature components by combining the \(L^{4}(S_{u,\ubar})\) control of first angular derivatives with the codimension-one trace estimates, modulo lower-order terms. Correspondingly, the Vlasov hierarchy is propagated up to three derivatives in
\[
        L^{2}(H)L^{2}(\mathcal{P}_{x}).
\]
This is the regularity level at which the top-order commuted transport estimates, the curvature energy estimates, and the reduced elliptic estimates are mutually compatible.

\noindent To illustrate the closure, consider for simplicity the vector field \(V_{3}\). The relevant norms are
\begin{align}
 A &:= \int_{\mathcal{P}_{x}} |V_{3}f|^{2}\,d\mu_{\mathcal{P}_{x}},\\
 B &:= \int_{S_{u,\ubar}}\int_{\mathcal{P}_{x}}
        |V_{3}^{2}f|^{2}\,d\mu_{\mathcal{P}_{x}}\,d\mu_{S_{u,\ubar}},\\
 C &:= \int_{0}^{1}\int_{S_{u,\ubar}}\int_{\mathcal{P}_{x}}
        |V_{3}^{3}f|^{2}\,d\mu_{\mathcal{P}_{x}}\,d\mu_{S_{u,\ubar}}\,d\ubar .
\end{align}
The most delicate estimate is the one for \(C\). From the transport equation,
\begin{align}
 C(u)
 \leq C(u_{\infty})
 +2\int_{u_{\infty}}^{u}
    \int_{0}^{1}\int_{S_{u,\ubar}}\int_{\mathcal{P}_{x}}
        V_{3}^{3}f\, X[V^{3}f]
        \,d\mu_{\mathcal{P}_{x}}\,d\mu_{S_{u,\ubar}}\,d\ubar\,du
 + \text{Gronwall terms}.
\end{align}
Since schematically
\begin{align}
        X[V_{3}f]\sim \alpha_{AB}p^{A}p^{B}V_{3}f+\cdots ,
\end{align}
one obtains
\begin{align}
X[V^{3}f]
    \sim V_{3}^{2}(\alpha Vf)+\cdots
    \sim
    \underbrace{\alpha_{AB}p^{A}p^{B}V^{3}f}_{I}
    +
    \underbrace{\mathcal{D}\alpha_{AB}p^{A}p^{B}V_{3}^{2}f}_{II}
    +
    \underbrace{\mathcal{D}^{2}\alpha_{AB}p^{A}p^{B}V_{3}f}_{III}
    +\cdots .
\end{align}
The term \(I\) is controlled by placing \(\alpha\) in \(L^{\infty}(S_{u,\ubar})\). The term \(III\) is controlled by placing \(\mathcal{D}^{2}\alpha\) in the curvature energy norm and using the uniform control of \(A\). The middle term \(II\) is the most delicate one, but it is precisely here that the hierarchy of \(A\), \(B\), and \(C\), together with the improved curvature control available at two derivatives, gives the necessary interpolation and trace bounds. Thus the Vlasov and curvature estimates close simultaneously.

\noindent The conclusion is therefore not that two derivatives of curvature are universally optimal for the Einstein--Vlasov system. Rather, the conclusion is that two derivatives of curvature, together with three derivatives of the Vlasov distribution function in the adapted commuted-transport norms, form the regularity threshold at which the present large-data double-null argument closes.

\vspace{3mm}

\noindent Thus, the Einstein--Vlasov system presents a genuinely new analytic challenge. While the failure of closure at this critical level (note that this `critical' is unrelated to the criticality of the norm required for the Einstein's equations in $3+1$ dimensions) seems to preclude a direct extension of the characteristic Cauchy framework, it remains conceivable that a novel method---possibly circumventing the need for trace control of \( \alpha \) or exploiting the structure of the Vlasov equation in a more refined way---could provide an alternate resolution to the characteristic initial value problem in this setting.
\\

\noindent This difficulty is sharply contrasted with other matter models such as the Yang--Mills equations coupled to Einstein, wherein the energy-momentum tensor is hyperbolic and possesses a better compatibility with the curvature structure. In those settings, the trace norms of the null curvature components are controllable, and one-derivative estimates on the Weyl tensor suffice to propagate two derivatives of the matter field; see \cite{M-Y} for the relevant analysis.

\subsection{Acknowledgements}
\noindent P.M. is supported by the Center of Mathematical Sciences and Applications of Mathematics Department at Harvard University and Beijing Institute of Mathematical Sciences and Applications of Yau Mathematical Science Center at Tsinghua University. Moreover, N.A. wishes to thank Mihalis Dafermos for useful discussions and comments on the project.

\section{Basic setup}
	
	\subsection{Construction of the double null gauge}
We begin by assuming that the functions $u$ and $v$ satisfy 

\[ g^{\mu\nu}\partial_{\mu}u \partial_{\nu}u=0, \hspace{2mm} g^{\mu\nu}\partial_{\mu}\ubar \partial_{\nu}\ubar =0. \]Define null vector fields
\[ L^{\mu} = -2 g^{\mu\nu}\partial_{\nu}u, \hspace{2mm} \Lbar^{\mu} = -2g^{\mu\nu}\partial_{\nu}\ubar .\]  Also define $\Omega >0$ by \[ \Omega^{-2} = -\frac{1}{2}g(L, \Lbar). \]To define $(\theta^1, \theta^2)$ on the spacetime, we proceed as follows. Let $(\theta^1, \theta^2)$ be a coordinate system in some open subset $U_1$ of the initial sphere $S_{u_{\infty},0}.$ Define $(\theta^1, \theta^2)$ on $\begin{Bmatrix} u= u_{\infty} \end{Bmatrix}$ by solving $L(\theta^A)=0, \hspace{1mm} A=1,2$ and then extend to $u > u_{\infty}$ by solving $\Lbar(\theta^A)=0, \hspace{1mm} A=1,2.$ \vspace{3mm}

\noindent As such, we have defined coordinates $(u,v,\theta^1, \theta^2)$ on a region $D(U_1)$, defined as the image of $U_1$ under the diffeomorphisms of $L$ on $\begin{Bmatrix} u = u_{\infty} \end{Bmatrix}$ and then the diffeomorphisms generated by $\Lbar$. By repeating the procedure for a complementary open set $U_2$ of $S_{u_{\infty},0}$ such that $U_1 \cup U_2 = S_{u_{\infty},0}$, we get coordinate charts that cover the entire spacetime region that will be of interest. The spheres of constant $u$ and $v$ will be denoted by $S_{u,v}$ and the restriction of $g$ on these spheres will be denoted by $\gslash$. Throughout this work, we shall be working with the following double null frame:

\be \label{ourframe} e_A= \partial_{\theta^A}, \hspace{2mm} \text{for} \hspace{1mm} A=1,2, \hspace{3mm} e_{3} := \partial_u+ b^A \hsp \partial_{\theta^A}, \hspace{3mm} e_4 = \frac{1}{\Omega^2}\partial_{\ubar} . \ee In our case, the reason why this choice is convenient is that $\omega$, to be defined later, is zero in this frame and the torsion $\zeta$ equals $-\etabar$.

%However, having one less Ricci coefficient to control is not the main reason why working with this frame is beneficial in our setting. The most important one is that $\nabla_{e_3}e_3=0$ and this means that, later on when we estimate the Vlasov matter, we shall never encounter any non-zero term of the form $\Gamma^{i}_{\mu\nu}p^{\mu}p^{\nu}$ with $\mu=\nu=3$; such terms would have been a serious obstruction to closing the estimates.

\vspace{3mm}

\noindent In these coordinates, the metric then takes the following form:

\be g=-2\Omega^2(\text{d}u \otimes \text{d}v + \text{d}v \otimes \text{d}u) +\gslash_{AB}(\text{d}\theta^A - b^A \text{d}u)\otimes (\text{d}\theta^B-b^B du), \ee where we impose that the shift $b$, tangent to the spheres $S_{u,v}$ be vanishing at $\begin{Bmatrix} u= u_{\infty}\end{Bmatrix} \hsp \cup \hsp \begin{Bmatrix} \ubar =0 \end{Bmatrix}$ and that the lapse $\Omega \equiv 1$ on the same set. Integration of a function $\varphi$ is $S_{u,\ubar}$ is defined as 

\[ \int_{S_{u,\ubar}}\varphi \hsp \text{d}\mu_{S_{u,\ubar}}=\sum_{i=1,2}\int_{\theta^1}\int_{\theta^2}\varphi \hsp \tau_i \sqrt{\det\gslash}
\hsp \text{d}\theta^1 \text{d}\theta^2,     \]where $\tau_1,\tau_2$ is a partition of unity subordinate to $D_{U_1},D_{U_2}$ at $u,\ubar$.

\subsection{Coordinates on the mass shell and the mass shell relation} Let $(p^{\mu}: \hsp \mu=1,2,3,4)$ be the conjugate coordinate, on each tangent space, to the double null frame just given, so that $(x^{\mu}, \hsp p^{\mu})$ denotes the point

\[ \restri{p^{\mu}e_{\mu}}{x} \in T_x\mathcal{M}.\]This then gives a frame $\begin{Bmatrix} e_{\mu}, \hsp \partial_{p^{\mu}} \hsp \mid \mu=1,2,3,4 \end{Bmatrix}$ on $T\mathcal{M}$. The Vlasov equation, written in this frame, then reads

\[  p^{\mu}e_{\mu}(f) - \Gamma^\lambda_{\mu\nu} p^{\mu}p^{\nu}\partial_{p^{\lambda}}(f) =0.    \]When $f$ is viewed as a function on the mass shell $\mathcal{P}$, this rewrites as

\[ p^{\mu}e_{\mu}(f) -\Gamma^{\hat{\lambda}}_{\mu\nu}p^{\mu}p^{\nu}\partial_{\pbar^{\hat{\lambda}}}(f) =0, \]where $\hat{\lambda} = 1,2,3$ and $p^{\hat{\lambda}}$ denotes the restriction of the coordinate $p^{\lambda}$ to $\mathcal{P}$. Finally, the $\partial_{\pbar^i}$, for $i=1,2,3$, denote the partial derivatives with respect to this restricted coordinate system. It is easy to verify, using \eqref{massshell}, that the following hold true:

\be \partial_{\pbar^A} = \partial_{p^A} + \frac{\gslash_{AB}p^B}{2p^3}\partial_{p^4}, \hspace{2mm} \partial_{\pbar^3}= \partial_{p^3}-\frac{p^4}{p^3}\partial_{p^4}. \ee
\vspace{3mm}
\noindent We recall that the mass shell $P \subset T\mathcal{M}$ is defined as the set of all future-pointing null vectors. Using the form of the metric given in the previous subsection together with the definition of the coordinates $p^{\mu}$ and the fact that the particles are massless, we obtain the relation

\be \label{ms}
\gslash_{AB}p^Ap^B -4p^3p^4 =0,  \hspace{3mm}\text{throughout the support of $f$.} \ee The above is known as the mass shell relation. In particular, $P\subset T\mathcal{M}$ is a $7-$dimensional hypersurface in $T\mathcal{M}$ and can be parametrised by coordinates $(u,v,\theta^1, \theta^2, p^1, p^2, p^3)$, with $p^4$ given by \eqref{ms}.

\begin{proposition}
\label{integrate}
 Along a geodesic, the following holds in our choice of frame
 \begin{align}
  \frac{p^{3}(s)}{\frac{du}{ds}}= 1,   
 \end{align}
 where $u(s):=u(\gamma(s))$.  
\end{proposition}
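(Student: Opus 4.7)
The proposition is essentially a structural identity built into the chosen double null frame, so the plan is to trace through the definitions rather than perform any real computation. First I would recall that a geodesic $\gamma(s)$ in $\mathcal{M}$ lifts canonically to a curve $s\mapsto(\gamma(s),\dot\gamma(s))$ on $T\mathcal{M}$, and that the fiber coordinates $(p^\mu)$ were introduced precisely as the components of a tangent vector with respect to the double null frame $\{e_1,e_2,e_3,e_4\}$. Consequently, by construction,
\[
\dot\gamma(s) \;=\; p^\mu(s)\, e_\mu\big|_{\gamma(s)}.
\]
This is not a dynamical statement; it is the definition of the $p^\mu$ along the lifted curve in $\mathcal{P}$.

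Next I would apply the one-form $du$ to both sides of this identity. On the left, by the chain rule and the convention $u(s):=u(\gamma(s))$,
\[
du\bigl(\dot\gamma(s)\bigr)\;=\;\frac{d}{ds}\,u\bigl(\gamma(s)\bigr)\;=\;\frac{du}{ds}.
\]
On the right, I would evaluate $du(e_\mu)$ for each $\mu$ using the explicit frame \eqref{ourframe}. Since $e_A=\partial_{\theta^A}$ and $e_4=\Omega^{-2}\partial_{\ubar}$, one has $du(e_A)=0$ and $du(e_4)=0$. For $e_3=\partial_u+b^A\partial_{\theta^A}$, the shift term is tangent to the spheres and annihilated by $du$, leaving $du(e_3)=du(\partial_u)=1$.

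Putting these together yields
\[
\frac{du}{ds}\;=\;p^\mu\,du(e_\mu)\;=\;p^3,
\]
and since $p^3>0$ throughout the support of $f$ by the momentum restriction in \eqref{eq:momentum_bounds} (and is preserved to be positive by the bootstrap \eqref{eq:bootstrap_p3}), dividing gives the claimed identity $p^3(s)/(du/ds)=1$.

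There is, in fact, no genuine obstacle: the entire content of the proposition is that the frame \eqref{ourframe} was chosen so that $du(e_3)=1$ while $du$ annihilates $e_4$ and $e_A$. The only point that deserves explicit mention is that the shift $b^A\partial_{\theta^A}$ introduced in $e_3$ does not affect the pairing with $du$, so the usual identification of $p^3$ with $du/ds$ survives in this coordinatization of the mass shell, and this is what permits integration of the Vlasov transport equation along the incoming $u$-direction in the subsequent analysis.
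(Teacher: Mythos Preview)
Your proof is correct and follows the same idea as the paper's proof; in fact, you give a more detailed justification. The paper simply asserts $\frac{du}{ds}=p^3(s)$ as an immediate consequence of the frame choice and moves on, whereas you unpack why $du(e_3)=1$ while $du(e_A)=du(e_4)=0$ from the explicit expressions in \eqref{ourframe}.
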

\begin{proof}
Making use of the fact that $\dot{u}(s):=\frac{du}{ds} =  p^{3}(s)$ and that, given $s\mapsto \gamma(s)$ is an incoming geodesic emanating from $u=u_{\infty}$, we may use $\restri{u}{\gamma}= \restri{u}{\gamma}(s)$ as a parameter for the geodesic.The conclusion follows.
\end{proof}

\begin{remark}
 This relation allows us to convert the inegration along the geodesic parameter $s$ to the double null retarded coordinate parameter $u$ i.e., $|\int f(u(s)) ds|=|\int f(s)\frac{ds}{du}du|\lesssim \int |f(u)|du$ in light of the estimates verified by $p^{3}(s)$ (see proposition \ref{momentum}) since $|\Omega^{-1}-1|\lesssim \Gamma a|u|^{-2}$.  
\end{remark}
\subsection{Volume form on the mass shell}Recall that the \textbf{mass shell} $\mathcal{P}\subset T\mathcal{M}$ is defined to be the set of all future-pointing null vectors.  If a given $(x,p)\in \mathcal{P}$, then $p=\restri{p^{\mu}e_{\mu}}{x}$ as a vector on $T_x\mathcal{M}$ satisfies $g_x(p,p)=0$, which translates to the mass shell relation \be \gslash_{AB}p^A p^B -4p^3 p^4=0. \ee To make sense of the integral in the definition of the energy-momentum tensor, we need to define a suitable volume form (Leray form) on $\mathcal{P}_x$ over each  $x$. We seek a one-form that is normal to $\mathcal{P}_{x}$. A canonical such choice is the differential of the function $\Lambda_x:T_{x}\mathcal{M}\to \mathbb{R}$:

\[ \Lambda_x(X)=g(X,X).      \]This is the function that measures the length of each vector in $T_x\mathcal{M}$. Taking the normal $-\f12 \text{d}\Lambda_x$ to $\mathcal{P}_x$, the volume form in the $(u,\ubar,\theta^1,\theta^2,p^1,p^2,p^3)$ system can be defined as 

\[  \frac{\sqrt{\det\gslash}}{p^3}\text{d}p^3\wedge \text{d}p^1 \wedge \text{d}p^2.      \]This is the unique volume form on $\mathcal{P}_x$ \textit{compatible} with the normal $-\f12 \text{d}\Lambda_x$ (see also \cite{taylor}). The energy momentum tensor at $x\in\mathcal{M}$ thus takes the form

\[ T_{\mu\nu}(x)= \int_0^{\infty}\int_{-\infty}^{\infty}\int_{-\infty}^{\infty}f \hsp p_{\mu}\hsp p_{\nu} \frac{\sqrt{\det\gslash}}{p^3}\text{d}p^1 \text{d}p^2\text{d}p^3.      \]

\subsection{Ricci coefficients and curvature components} Let $A, B$ take values in $\begin{Bmatrix} 1,2 \end{Bmatrix}$. We define the following (Weyl) null curvature components:

    \[ \alpha_{AB} := W(e_A, e_4, e_B, e_4), \hspace{2mm} \alphabar_{AB}:= W(e_A, e_3, e_B, e_3),       \] \[ \beta_A := \frac{1}{2} W(e_A, e_4 , e_3 ,e_4), \hspace{2mm} \betabar_A := \frac{1}{2} W(e_A, e_3, e_3, e_4),     \] \[  \rho := \frac{1}{4} W(e_3, e_4, e_3, e_4), \hspace{2mm} \sigma  = \frac{1}{4} \Hodge{W}(e_3, e_4, e_3 ,e_4).      \]Moreover, for reasons to be explained later on, we define the \textit{renormalized curvature components }

    \be \label{tbetatbetabar} \tbeta_A = \beta_A - \frac{1}{2}R_{A4}, \hspace{2mm} \tbetabar_A = \betabar_A + \frac{1}{2} R_{A3}.       \ee For the Ricci coefficients in this frame, we decompose as follows:

    \[  \chi_{AB} := g(D_A e_4, \hsp e_B), \hspace{2mm} \chibar_{AB} := g(D_A e_3, e_B),            \] \[   \eta _A := -\frac{1}{2} g(D_3 e_A , \hsp e_4), \hspace{2mm} \etabar_A := -\frac{1}{2} g(D_4 e_A, e_3), \]\[  \omega := \frac{1}{2} g(D_4 e_3 ,\hsp e_4)=0,  \hspace{2mm} \omegabar :=\frac{1}{2}g(D_3 e_4, e_3),                \]\[ \zeta_A := \frac{1}{2} g(D_A e_4, \hsp e_3) = -\etabar_A.  \]Moreover, if $\gamma$ denotes the induced metric on $S_{u,\ubar}$, we make the following further decomposition:

    \[ \chi = \chihat + \frac{1}{2} \tr\chi  \gamma, \hspace{2mm} \chibar = \chibarhat + \frac{1}{2} \tr\chibar \hsp \gamma.    \]The Christoffel symbols of $(S_{u,v},\gslash)$ with respect to $\begin{Bmatrix} e_1, e_2 \end{Bmatrix}$ are denoted by $\Gammaslash^C_{AB}:$
    \be \nablasl_{e_A}e_B = \Gammaslash^C_{AB}\hsp e_C. \ee We also let $\big(\Gammaslash^{\circ}\big)^C_{AB}$ denote the spherical Christoffel symbols for the Minkowski metric (on the initial incoming cone). 
    
    \vspace{3mm}
    
    \noindent For the Vlasov part, define $\slashed{T}$ to be the restriction of the energy-momentum tensor defined in equation \eqref{energymomentumtensor} to vector fields  tangent to spheres $S_{u,\ubar}$:
\begin{equation}
\slashed{T}(Y,Z)= Y^A Z^B \int_{\mathcal{P}_{x}}f\hsp p_A \hsp p_B, \hspace{4mm}\text{for}\hspace{2mm} Y=Y^A e_A,\hsp Z=Z^B e_B.
\end{equation}We denote by $\slashed{T}_3, \hsp \slashed{T}_4$ the $S_{u,\ubar}$ 1-forms defined by restricting the $1-$forms $T(e_3,\cdot ), \hsp T(e_4, \cdot)$ to vector fields tangent to the spheres $S_{u,\ubar}$:

\be \Tslash_3 (Y) = Y^A \int_{\mathcal{P}_{x}} f \hsp p_3 \hsp p_A, \hspace{4mm} \Tslash_4(Y) =Y^A \int_{\mathcal{P}_{x}} f\hsp p_4 \hsp p_A.\ee Finally, we define the functions $\Tslash_{33}, \hsp \Tslash_{34}$ and $\Tslash_{44}$ 
by
\be \Tslash_{33}=\int_{\mathcal{P}_{x}} f\hsp p_3 \hsp p_3, \hspace{3mm} \Tslash_{34}=\int_{\mathcal{P}_{x}} f\hsp p_3 \hsp p_4, \hspace{3mm} \Tslash_{44}=\int_{\mathcal{P}_{x}} f\hsp p_4 \hsp p_4. \ee

\subsection{The renormalized structure and Bianchi equations}

We have the following table of Ricci coefficients (here and throughout this paper, $\nabla=D$ will be used to denote the spacetime connection):
\begin{gather} \label{tableone}
\nabla_{e_A}e_B = 
\Gammaslash^C_{AB}\hsp e_C + \frac{1}{2}\chi_{AB}\hsp e_3 +\frac{1}{2}\chibar_{AB}\hsp  e_4, \\ \nabla_{e_A}e_3 = {
\chibar_A}^B e_B - \etabar_A \hsp e_3, \hspace{3mm} \nabla_{e_A}e_4 = {\chi_A}^B \hsp e_B + \etabar_A \hsp e_4, \\ \nabla_3 e_A = 
\big({\chibar_A}^B-e_A(b^B)\big) \hsp e_B + \eta_A \hsp e_3, \hspace{3mm}  \nabla_{e_4}e_A =  {\chi_A}^B \hsp e_B +\etabar_A \hsp e_4, \\ \nabla_{e_3}e_4= 2 \eta^A \hsp e_A, \hspace{3mm} \nabla_4 e_3 = 2\etabar^B \hsp e_B, \\ \nabla_3 e_3 =\omegabar \hsp e_3, \hspace{3mm} \nabla_4 e_4 = 0. \label{tablefinal}
\end{gather}For the massless (and the massless only) Vlasov energy-momentum tensor, there holds $g^{\alpha\beta}T_{\alpha\beta}=0$, whence it follows that the scalar curvature vanishes and the Einstein equations reduce to $R_{\mu\nu}=T_{\mu\nu}$. Using this, the renormalized Bianchi equations take the form:

\begin{equation}
	    \begin{split}
	        \snabla_3 \alpha + \frac{1}{2}\tr\chibar \alpha = &\snabla \hat{\otimes}\beta + 4 \omegabar \alpha - 3\left(\chihat \rho + \Hodge{\chihat} \sigma \right)+ (\zeta+4\eta)\hat{\otimes}\beta \\ & - D_4 R_{AB} + \frac{1}{2}\left(D_B R_{4A} + D_A R_{4B}\right) + \frac{1}{2}\left(D_4 R_{43} - D_3 R_{44}\right)\gslash_{AB},
	    \end{split}
	\end{equation}
	\begin{equation}
	        \snabla_4 \beta + 2 \tr\chi \beta = \slashed{\text{div}} \alpha -2 \omega \beta + \left(\eta - 2 \zeta\right)\cdot \alpha + \frac{1}{2} \left(D_4 R_{4A} - D_A R_{44}\right),
	\end{equation}
	\begin{equation}
	    \snabla_3 \beta + \tr\chibar \beta = \snabla \rho+ \Hodge{\snabla}\sigma + 2\omegabar \beta +2 \chihat \cdot \betabar + 3 \left(\eta \rho+ \Hodge{\eta}\sigma \right) + \frac{1}{2} \left( D_A R_{43} - D_4 R_{3A}\right),
	\end{equation}
	
	\begin{equation}
	    \snabla_4 \sigma + \frac{3}{2} \tr\chi \sigma = - \slashed{\text{div}} \Hodge{\beta} + \frac{1}{2}\hsp \chibarhat \cdot \Hodge{\alpha} - (\zeta+2 \etabar) \cdot \Hodge{\beta} - \frac{1}{4}\left( D_{\mu} R_{4\nu} - D_{\nu}R_{4\mu}\right){\epsilon^{\mu\nu}}_{34},
	\end{equation}
	\begin{equation}
	    \snabla_3 \sigma + \frac{3}{2}\tr\chibar \sigma = -\slashed{\text{div}}\Hodge{\betabar} + \frac{1}{2}\hsp\chihat \cdot \Hodge{\alphabar} -(\zeta + 2\eta)\cdot \Hodge{\betabar} +\frac{1}{4}\left( D_{\mu} R_{3\nu} - D_{\nu}R_{3\mu}\right){\epsilon^{\mu\nu}}_{34},
	\end{equation}
	\begin{equation}
	    \snabla_4 \rho + \frac{3}{2}\tr\chi \rho = \slashed{\text{div}}
        \beta - \frac{1}{2}\chibarhat \cdot \alpha + (\zeta + 2\etabar) \cdot \beta - \frac{1}{4}\left(D_3 R_{44} - D_4 R_{43} \right), 
	\end{equation}
	\begin{equation}
	    \snabla_3 \rho + \frac{3}{2}\tr\chibar \rho = -\slashed{\text{div}}\betabar - \frac{1}{2}\chihat \cdot \alphabar +(\zeta-2\eta)\cdot \betabar + \frac{1}{4} \left(D_3 R_{34} - D_4 R_{33}\right), 
	\end{equation}
	
	\begin{equation}
	    \snabla_4 \betabar + \tr\chi \betabar = -\snabla \rho + \Hodge{\snabla} \sigma + 2 \omega \betabar + 2\chibarhat \cdot \beta -3\left(\etabar \rho - \Hodge{\etabar}\sigma \right) - \frac{1}{2}\left(D_A R_{43} - D_3R_{4A} \right), 
	\end{equation}
	\begin{equation}
	    \snabla_3 \betabar + 2 \tr\chibar\hsp  \betabar = -\slashed{\text{div}}\alphabar -2 \omegabar \hsp  \betabar + \etabar\cdot\alphabar +\frac{1}{2}\left(D_A R_{33} - D_3 R_{3A}\right),
	\end{equation}
	\begin{equation}
	    \begin{split}
	        \snabla_4 \alphabar + \frac{1}{2}\tr\chi \hsp \alphabar = &-\snabla \hat{\otimes}\betabar +4 \omega\hsp \alphabar -3\left(\chibarhat \rho - \Hodge{\chibarhat}\sigma \right) +\left(\zeta - 4\etabar\right)\hat{\otimes}\betabar \\ &- D_3 R_{AB} + \frac{1}{2}\left(D_A R_{3B} + D_B R_{3A}\right) + \frac{1}{2}\left(D_3 R_{34} - D_4 R_{43}\right) \slashed{g}_{AB}.
	    \end{split}
	\end{equation}We introduce several notations. For an $S_{u,\ubar}$ 1-form or a $(0,2) \hsp S_{u,\ubar}-$tensor the left Hodge dual${}^*$ is defined as 
    \[ \prescript{*}{}{\xi}_A = \slashed{\epsilon}_{AB}\gslash^{BC}\xi_C \hspace{3mm} \text{and}\hspace{3mm} \prescript{*}{}{\xi}_{AB}=\slashed{\epsilon}_{AC}\gslash^{CD}\xi_{DB},  \]respectively. Moreover, $\slashed{\epsilon}$ denotes the volume form associated with the metric $\gslash$ and for a $(0,2)$ $S_{u,\ubar}$ tensor $\xi$, \[ \slashed{\epsilon}\cdot \xi = \slashed{\epsilon}^{AB}\xi_{AB}.\]The symmetric traceless product of two $S_{u,\ubar}$ 1 forms is defined by 
    \[ (\xi \hat \otimes \xi^{\prime})_{AB} = \xi_A \hsp \xi^{\prime}_B +\xi_B \hsp \xi^{\prime}_A-\gslash_{AB}(\gslash^{CD}\hsp\xi_C \hsp \xi^{\prime}_D),     \]while the antisymmetric products are defined by \[ \xi \wedge \xi^{\prime} =\slashed{\epsilon}^{AB}\xi_A \xi^{\prime}_B \hspace{3mm} \text{and} \hspace{3mm} \xi \wedge \xi^{\prime} =\slashed{\epsilon}^{AB}\gslash^{CD}\xi_{AC}\xi^{\prime}_{BD}, \]for two $S_{u,\ubar}$ one-forms and $S_{u,\ubar}$ $(0,2)-$tensors respectively. Also, \[(\xi \times \xi^{\prime})_{AB}=\gslash^{CD}\xi_{AC}\xi^{\prime}_{BD},\] for $S_{u,\ubar}$ $(0,2)-$tensors $\xi,\xi^{\prime}.$ The symmetric trace-free derivative of an $S_{u,\ubar}$ $1-$form is defined as 
\[ (\snabla\hat{\otimes}\xi)_{AB} =\snabla_A \xi_B +\snabla_B \xi_A -(\sdiv \xi)\gslash_{AB}.      \]Finally, define the $\gslash$-inner product of two $(0,n)$ $S_{u,\ubar}$-tensors by \[ (\xi,\xi^{\prime}) = \gslash^{A_1B_1}\cdot \gslash^{A_nB_n}\xi_{A_1\dots A_n}\xi^{\prime}_{B_1\dots B_n}. \]The norm of a $(0,n)$ $S_{u,\ubar}-$tensors is then given by \[ \lvert \xi\rvert ^2 = \gslash^{A_1B_1}\dots \gslash^{A_nB_n}\xi_{A_1\dots A_n}\xi_{B_1\dots B_n}.    \]The notation $\lvert \cdot \rvert$ will also later be used, when applied to components of $S_{u,\ubar}$ tensors, to denote the standard absolute value on $\mathbb{R}$. It will always be clear from the context which one is meant. For example, if $\xi$ is an $S_{u,\ubar}$ $1-$form, then $\lvert \xi \rvert$ denotes the $\gslash$ norm as above, whilst $\lvert \xi_A \rvert$ denotes the absolute value of $\xi(e_{A})$. We now give the null structure and the metric quantities below.

\be
\snabla_3 \etabar +\frac{1}{2}\tr\chibar\hsp \etabar = \frac{1}{2}\tr\chibar \eta -\chibarhat\cdot(\etabar-\eta)+\betabar-\frac{1}{2}\Te_3, 
\ee
\be
    \snabla_3 \tr\chibar +\frac{1}{2}(\tr\chibar)^2 = -\lvert \chibarhat \rvert^2 -\Te_{33}, 
\ee
\be
\snabla_3 \chibarhat +\tr\chibar \hsp \chibarhat =\omegabar \chibarhat-\alphabar, 
\ee
\be \snabla_3\big(1-\frac{1}{\Omega^2}\big)= \omegabar -\omegabar\hsp \big(1-\frac{1}{\Omega^2}\big), \ee

\be \snabla_4\omegabar = 2\hsp \eta\cdot \etabar-\lvert \eta \rvert^2-\rho-\f12 \Te_{34}, \ee
\be
\snabla_4 \eta+\f12 \tr\chi \hsp \eta = \frac{1}{2}\tr\chi \hsp \etabar -\chihat\cdot(\eta-\etabar)-\beta-\f12 \Te_4,
\ee
\be
\snabla_4 \tr\chi+\frac{1}{2}(\tr\chi)^2 = -\lvert \chihat \rvert^2 -\Te_{44},
\ee
\be
    \snabla_4 \chihat +\tr\chi \hsp \chihat = -\alpha,
\ee 
\be \snabla_4 b = 2(\etabar-\eta)+\chi\cdot b, \ee 
The additional propagation equations for $\chibarhat$, $\tr\chibar$,

\be \snabla_4 \chibarhat+\frac{1}{2}\tr\chi\hsp \chibarhat = \snabla \hat{\otimes}\etabar -\frac{1}{2}\tr\chibar \chihat +\etabar \hat{\otimes}\etabar +\hat{\slashed{T}}, \ee
\be \snabla_4 \tr\chi = 2 \hsp \sdiv\etabar + 2 \lvert \etabar \rvert^2 +\chihat \cdot \chibarhat +\frac{1}{2}\tr\chi\tr\chibar +\rho-\frac{1}{2}\Te_{34}, \ee will also be used later in Section \ref{ellipticsection} to derive a propagation equation for the (renormalized) aspect function $\mu$. Here $\hat{\Te}=\Te-\Te_{34}\gslash$ is the trace-free part of $\Te$. The following first variational formula for the induced metric on the spheres will also be used,

\be \mathcal{L}_{e_3}\gslash=2\chibar, \ee \be \mathcal{L}_{e_4}\gslash = 2\chi, \ee where $\mathcal{L}$ denotes the Lie derivative.
\vspace{3mm}

\noindent \textbf{An equation for $\Gamma^A_{3B}$} 

\vspace{3mm}
\noindent Adjoint to the above equations, the equation for $\Gamma^A_{3B}= {\chibar_A}^B-e_A(b^B)$ will be pivotal to our analysis. We derive it below. First of all, there holds

\begin{align}
\Gamma^A_{3B}= \gslash^{AC} \langle \nabla_3 e_B, e_C\rangle.
\end{align}Therefore,

\be \nabla_4 \Gamma^A_{3B}= \langle \nabla_3 e_B, e_C\rangle \hsp\nabla_4 (\gslash^{AC}) +\gslash^{AC}(\langle \nabla_4 \nabla_3 e_B,e_C)+\langle \nabla_3 e_B, \nabla_4 e_C\rangle). \ee
 We have \[\gslash^{AC}\langle \nabla_4 \nabla_3 e_B, e_C\rangle = \gslash^{AC}\big(\langle \nabla_3\nabla_4 e_B,e_C\rangle +R_{43BC} +\langle \nabla_{[e_4,e_3]}e_B, e_C\rangle\big).\] However,
\begin{itemize}
\item The third term \[ [e_4,e_3] = 2(\etabar^D-\eta^D)e_D+\omegabar \hsp e_4,\]hence \be \gslash^{AC} \langle\nabla_{[e_4,e_3]}e_B,e_C\rangle = \gslash^{AC} \big(2(\etabar^D-\eta^D)\hsp \Gammaslash^E_{DB}\hsp \gslash_{EC} +\omegabar \hsp \chi_{BC}\big) =2(\etabar^D-\eta^D)\hsp \Gammaslash^A_{DB}+ \omegabar \hsp {\chi_B}^A , \ee
\item The second term  $\gslash^{AC} R_{43BC}={R^A}_{B34}$,

\item The first term

\begin{align}\gslash^{AC}
\langle \nabla_3 \nabla_4 e_B, e_C \rangle =& \gslash^{AC} \langle \nabla_3({\chi_B}^D e_D +\etabar_B e_4), e_C\rangle = \gslash^{AC}( \gslash_{DC}\nabla_3 {\chi_B}^D+{\chi_B}^D \Gammaslash^{E}_{3D}\gslash_{EC}+\etabar_B \eta^D\gslash_{DC})\notag \\ =& \nabla_3 {\chi_B}^A +{\chi_B}^D \Gammaslash^A_{3D}+\etabar_B \eta^A. 
\end{align}
\end{itemize}For the term $\gslash^{AC}\langle \nabla_3 e_B, \nabla_4 e_C\rangle$ a straightforward computation gives \be  \gslash^{AC} \hsp \langle \nabla_3 e_B, \nabla_4 e_C\rangle = {\chi_D}^A \hsp \Gammaslash^D_{3B}- 2\hsp \eta_B\hsp  \etabar^A \ee 
Putting everything together, we get the important equation

\begin{align}
\nabla_4 \Gamma^A_{3B}= -{\chi_D}^{A}\Gamma^D_{3B} +\nabla_3 {\chi_B}^A+{\chi_B}^D \Gamma^A_{3D} +\etabar_B \eta^A +{R^A}_{B34}+2(\etabar^D-\eta^D)\Gammaslash^A_{DB}+\omegabar \hsp{\chi_B}^A -2\eta_B \etabar^A.
\end{align}Through it, one can avoid having to obtain elliptic estimates for most of the Ricci coefficients as well as for $b$ itself. Indeed, the crucial observation is that $\nabla_3 \chi$ contains only $\snabla \eta$ in it.

\subsection{Integration and $L^{p}$ Norms}

Let $(\mathcal{M},g)$ be a smooth Lorentzian manifold foliated by a double null gauge, with null coordinates $(u,\ubar,\theta^1,\theta^2)$ and associated null hypersurfaces 
\[
H_{u} := \{ (u,\ubar,\theta^1,\theta^2) : \ubar \geq 0 \}, 
\qquad \Hbar_{\ubar} := \{ (u,\ubar,\theta^1,\theta^2) : u \geq u_\infty \}.
\] 
For each pair $(u,\ubar)$, we denote by $S_{u,\ubar}$ the spacelike $2$–sphere of intersection 
\[
S_{u,\ubar} := H_{u} \cap \Hbar_{\ubar}.
\] 
The induced Riemannian metric on $S_{u,\ubar}$ is denoted by $\gamma = \gslash_{AB}\,d\theta^A \otimes d\theta^B$, with determinant $\det\gslash$.  
 
 \vspace{3mm}
 
\noindent Fix a smooth coordinate chart $U \subset S_{u,\ubar}$ and a smooth partition of unity $\{p_U\}$ subordinate to such charts. For a measurable scalar function $\phi : S_{u,\ubar} \to \mathbb{R}$ we define the surface integral by
\begin{equation}\label{eq:surface_integral}
    \int_{S_{u,\ubar}} \phi 
    := \sum_{U} \int_{U} \phi(\theta^1,\theta^2)\, p_U(\theta^1,\theta^2)\, \sqrt{\det\gslash(\theta)} \, d\theta^1 d\theta^2,
\end{equation}
which is independent of the chosen partition.
The induced metric on the null hypersurface is degenerate. We define the volume form on $H_u$ induced by $(g,\gamma)$ as follows
\[
d\mu_{H_u} = 2\Omega \, \sqrt{\det\gslash} \, d\ubar\, d\theta^1 d\theta^2,
\]
where $\Omega$ denotes the null lapse. For $\phi : H_u \to \mathbb{R}$, we set
\begin{equation}\label{eq:null_integral}
    \int_{H_u} \phi 
    := \sum_{U} \int_{0}^{\ubar} \int_{U} \phi(u,\ubar',\theta)\, p_U(\theta)\, 2\Omega(u,\ubar',\theta)\, \sqrt{\det\gslash(u,\ubar',\theta)} \, d\theta^1 d\theta^2\, d\ubar'.
\end{equation}
Similarly, on the incoming null hypersurface $\Hbar_{\ubar}^{(u_\infty,u)} := \{ \ubar = \mathrm{const}, \, u_\infty \leq u' \leq u \}$, we define the volume form as
\[
d\mu_{\Hbar_{\ubar}} = 2\Omega \, \sqrt{\det\gslash}\, du\, d\theta^1 d\theta^2,
\]
and we define
\begin{equation}
    \int_{\Hbar_{\ubar}^{(u_\infty,u)}} \phi 
    := \sum_{U} \int_{u_\infty}^{u} \int_{U} \phi(u',\ubar,\theta)\, p_U(\theta)\, 2\Omega(u',\ubar,\theta)\,\sqrt{\det\gslash(u',\ubar,\theta)} \, d\theta^1 d\theta^2\, du'.
\end{equation}
  For the spacetime slab
\[
\mathcal{D}_{u,\ubar} := \{ (u',\ubar',\theta^1,\theta^2) \mid u_\infty \leq u' \leq u,\ 0 \leq \ubar' \leq \ubar \},
\]
the natural $4$–volume form induced by $g$ in double null coordinates is
\[
d\mu_{g} = \Omega^2 \sqrt{\det \gslash}\, du\, d\ubar\, d\theta^1 d\theta^2.
\]
Accordingly, for $\phi : \mathcal{D}_{u,\ubar} \to \mathbb{R}$, we set
\begin{equation}
    \int_{\mathcal{D}_{u,\ubar}} \phi 
    := \sum_{U} \int_{u_\infty}^{u} \int_{0}^{\ubar} \int_{U} \phi(u',\ubar',\theta)\, p_U(\theta)\, \Omega^2(u',\ubar',\theta)\, \sqrt{\det \gslash(u',\ubar',\theta)} \, d\theta^1 d\theta^2\, d\ubar'\, du'.
\end{equation}
 Let $\phi$ be a tensor field tangent to $S_{u,\ubar}$, and let $\langle \cdot,\cdot\rangle_\gslash$ denote the pointwise inner product induced by $\gslash$. For $1 \leq p < \infty$ we define
\begin{align}
    \|\phi\|_{L^p(S_{u,\ubar})}^p &:= \int_{S_{u,\ubar}} \langle \phi,\phi \rangle_\gslash^{\frac{p}{2}}, \\
    \|\phi\|_{L^p(H_u)}^p &:= \int_{H_u} \langle \phi,\phi \rangle_\gslash^{\frac{p}{2}}, \\
    \|\phi\|_{L^p(\Hbar_{\ubar})}^p &:= \int_{\Hbar_{\ubar}} \langle \phi,\phi \rangle_\gslash^{\frac{p}{2}}.
\end{align}
For the case $p=\infty$, we set
\begin{equation}
    \|\phi\|_{L^\infty(S_{u,\ubar})} := \sup_{(\theta^1,\theta^2)\in S_{u,\ubar}} \langle \phi,\phi \rangle_\gslash^{1/2}(\theta^1,\theta^2).
\end{equation}
All norms are understood with respect to the appropriate induced measures defined above.

\subsection{Signature for decay rates and scale-invariant norms}

\noindent Perhaps the most challenging aspect of trapped surface formation results, historically, has been the attempt to find initial data that are, in an appropriate sense, large (this is by necessity, as is implied by the monumental work of \cite{ChrKl}) but also small enough to allow for an existence result of a spacetime region that gives trapped surfaces the time they would require to form. The first such initial data set, in the absence of symmetries, was given by \cite{C09}. Later contributions include \cite{Kl-Rod}, \cite{AL17} and \cite{A17}. Moreover, one would have to construct norms which preserve, at least approximately, the hierarchy present in the initial data upon evolution of the Einstein equations. The signature for decay rates, which was first introduced in \cite{AnThesis}, is the tool we will use in the present paper to build \textit{scale-invariant norms}. These will be norms that, upon evolution of the initial data, remain bounded above by a uniform constant (with the exception of a few anomalous terms). For another application of this framework, see \cite{AnAth}.

\vspace{3mm}

\noindent To each $\phi \in \begin{Bmatrix}
\alpha, \alphabar, \tbeta, \tbetabar, \rho, \sigma, \eta, \etabar, \chi, \chibar, \omega, \omegabar, \zeta,\gamma
\end{Bmatrix}$ we associate its \textit{signature for decay rates} $s_2(\phi)$:

\[ s_2(\phi) = 0\cdot N_4(\phi) + \frac{1}{2}N_A(\phi) + 1\cdot N_3(\phi)-1.\]Here $N_\alpha(\phi)$ $(\alpha = 1,2,3,4)$ denotes the number of times $e_\alpha$ appears in the definition of $\phi$. We also extend the definition of $s_2$ in the same way to the set of energy-momentum tensor components. For the Ricci coefficients and curvature components, we get the following table of signatures.

{\renewcommand{\arraystretch}{1.25}
\begin{center}
\begin{tabular}{||c || c c c c c c c c c c c c c c c||} 
 \hline$\phi$ &
 $\alpha$ & $\alphabar$ & $\beta$ & $\betabar$ &$\rho$ &$\sigma$ & $\eta$ & $\etabar$ & $\chi$ &$\chibar$ & $\omegabar$ & $\zeta$ & $\gslash$ & $\modu$ \\ [1ex]  \hline\hline
 $s_2(\phi)$ & 0 & 2 & 0.5 & 1.5 & 1  & 1 & 0.5 & 0.5  &0 & 1 & 1 &0.5 &0 & -1 \\ 
 \hline
\end{tabular}
\end{center}Accordingly, for the matter components, we get the following table of signatures: 

\begin{center}
\begin{tabular}{||c || c c c c c c||}
\hline $T_{\mu\nu}$ & $\Te_{44}$ & $\Te_{34}$ & $\Te_{33}$ & $\Te_3$ & $\Te_4$ & $\Te$ \\ \hline\hline $s_2(T_{\mu\nu})$ & 0& 1 & 2 & $\f32$ & $\f12$ & 1 \\ \hline
\end{tabular}
\end{center}

\par\noindent Several properties of $s_2$ follow:

\[ s_2(\snabla_4 \phi) = s_2(\phi), \]\[s_2(\snabla \phi) = s_2(\phi) +\frac{1}{2} \hspace{2mm} s_2(\snabla_3 \phi) = s_2(\phi) + 1.\]Finally, perhaps the most important property of $s_2$ is \textit{signature conservation}: \be \label{sc} s_2(\phi_1 \cdot\phi_2) = s_2(\phi_1)+ s_2(\phi_2), \hspace{2mm} \ee This allows for the (almost)-preservation of the scale-invariant norms upon evolution, as we shall see. 

\vspace{3mm}
\noindent  {\begin{center}\textbf{An aside: Assigning signature to $\modu, \frac{1}{\modu}$}\end{center}}

\vspace{3mm}

\noindent In the paper \cite{A19} that introduced the current scale-invariant framework, the only type of derivative that required control at the level of Bianchi or structure equations was that of an angular derivative $\snabla$. Indeed, in vacuum, there exists no extra inhomogeneity in the Bianchi equations stemming from matter error terms. Therefore, to obtain control on the Ricci coefficients up to top order (and hence to obtain energy control of the curvature components), it was enough to only use angular derivatives\footnote{Note that, strictly speaking, the spacetime existence theorem only requires control on angular derivatives of the curvature components. However, in non-vacuum models and in particular the Einstein-Yang-Mills system, for example, obtaining such control and closing the energy arguments requires control on $\snabla_3\alphabar_F, \snabla_4 \alpha_F$, where $\alphabar_F$ and $\alpha_F$ correspond to the Yang-Mills source term (see \cite{A-M-Y}).}. In our current setting, obtaining control on two derivatives of curvature requires control on the Ricci coefficients up to second order (up to third order for $\eta,\chihat,\tr\chi$), which in turn (through the use of the structure equations) necessitates control on the matter terms and ultimately on $\De^i T_{\mu\nu}$, where $\De= \{\modu \snabla_3, \snabla_4 , \al \snabla \}$ and $i=0,\dots,3$. Therefore, $\modu$ appears as part of error terms for various equations and it is useful to assign a signature to it. Because \[ \snabla_3 \modu = -1,\]the function $\modu$ inherits a signature $s_2(\modu)=-1$. Similar considerations\footnote{Interestingly, the term $\frac{1}{\modu}$ does not appear as part of any error terms, but that is only because of the frame choice. In the frame $e_A=\partial_{\theta^A}, e_{3} := \partial_u+ b^A \hsp \partial_{\theta^A}, \hspace{3mm} e_4 = \frac{1}{\Omega^2}\partial_v $, say, terms like $\frac{\Omega^{-1}-1}{\modu^2}$ would come up (this particular one would appear in the structure equation for $\tildetr$, for example). This is another simplification arising from our choice of frame.} imply $s_2\big(\frac{1}{\modu}\big)=1$.

\vspace{3mm}

\noindent For any horizontal tensor-field or stress-energy-momentum tensor component $\phi$, we define the following norms:
\begin{equation}
    \scaleinfinitySu{\phi} := a^{-s_2(\phi)} \lvert u \rvert^{2s_2(\phi)+1}\inftySu{\phi},
\end{equation}
\begin{equation}
       \scaletwoSu{\phi } := a^{-s_2(\phi )} \lvert u \rvert^{2s_2(\phi)}\twoSu{\phi},
\end{equation}
and more generally 
\begin{align}
    ||\varphi||_{L^{p}(S_{u,\ubar})}=\frac{a^{s_{2}(\varphi})}{|u|^{2s_{2}(\varphi)+\frac{p-2}{p}}}||\varphi||_{L^{p}_{sc}(S_{u,\ubar})},~~1\leq p<\infty  
\end{align}
Notice the difference in the $u$-weights amongst the definitions. 

\vspace{3mm}

\noindent A crucial property of the above norms is the \textit{scale-invariant H\"older's inequalities} that they satisfy. For $\Y$ denoting an arbitrary $S-$tensor field, the following hold:
\begin{equation}
||\Y_1 \cdot \Y_2||_{L^{1}_{sc}(S_{u,\ubar})} \leq \frac{1}{\lvert u \rvert} \scaletwoSu{\Y_1}\scaletwoSu{\Y_2},
\end{equation}
\begin{equation}
   ||\Y_1 \cdot \Y_2||_{L^{1}_{sc}(S_{u,\ubar})} \leq \frac{1}{\lvert u \rvert} \scaleinfinitySu{\Y_1}||\Y_2||_{L^{1}_{sc}(S_{u,\ubar})},
\end{equation}
\begin{equation}
    \label{257}\scaletwoSu{\Y_1 \cdot \Y_2} \leq \frac{1}{\lvert u \rvert} \scaleinfinitySu{\Y_1}\scaletwoSu{\Y_2},
\end{equation}
and 
\begin{equation}
||\Y_1 \cdot \Y_2||_{L^{2}_{sc}(S_{u,\ubar})}\lesssim \frac{1}{\lvert u \rvert}||\Y_1||_{L^{4}_{sc}(S_{u,\ubar})}    ||\Y_2||_{L^{4}_{sc}(S_{u,\ubar})}.    
\end{equation}
Notice that this is possible partly thanks to the signature conservation property \eqref{sc}. In the region of study, the factor $\frac{1}{\lvert u\rvert}$ plays the role that $\delta^{\frac{1}{2}}$ plays in the definition of the corresponding norms\footnote{See Section 2.17 in \cite{Kl-Rod}.} in [Kl-Rod], namely that of measuring the \textit{smallness} of the nonlinear terms. The above inequalities are the primary tools that will be used to close the bootstrap argument required for the existence part.

\subsection{Norms} 
\label{bootstrapassumption}
Following An's \cite{AnThesis}, we define $\prescript{(S)}{}{\mathcal{O}_{s,p}}$ norms on a given $S_{u,\ubar}$. 

\begin{align}
\prescript{(S)}{}{\mathcal{O}_{0,\infty}}(u,\ubar) := &\scaleinfinitySu{\eta,\etabar, \omega }+\frac{1}{\al}\scaleinfinitySu{\chihat}+\frac{\al}{\modu}\scaleinfinitySu{\chibarhat}\notag \\ +&\frac{a}{\modu}\scaleinfinitySu{\tildetr} +\frac{a}{\modu^2}\scaleinfinitySu{\tr\chibar}, 
\end{align}

\begin{align}
\prescript{(S)}{}{\mathcal{O}_{0,4}}(u,\ubar) := &\scalefourSu{\eta,\etabar, \omega }+\frac{1}{\al}\scaleinfinitySu{\chihat}+\frac{\al}{\modu}\scalefourSu{\chibarhat}\notag \\ +&\frac{a}{\modu}\scalefourSu{\tildetr} +\frac{a}{\modu^2}\scalefourSu{\tr\chibar}, 
\end{align}

\begin{align}
\prescript{(S)}{}{\mathcal{O}_{1,4}}(u,\ubar) := \sum_{\mathcal{D}\in\{\modu\snabla_3,\snabla_4, \al\snabla\}} &\scalefourSu{\mathcal{D}(\eta,\etabar, \omega) }+\frac{1}{\al}\scalefourSu{\mathcal{D}\chihat}+\frac{\al}{\modu}\scalefourSu{\mathcal{D}\chibarhat}\notag \\ +&\frac{a}{\modu}\scalefourSu{\mathcal{D}\tildetr} +\frac{a}{\modu^2}\scalefourSu{\mathcal{D}\tr\chibar},
\end{align}

\begin{align}
\prescript{(S)}{}{\mathcal{O}_{2,2}}(u,\ubar) := \sum_{\mathcal{D}\in\{\modu\snabla_3,\snabla_4, \al\snabla\}} &\scaletwoSu{\mathcal{D}^2(\eta,\etabar, \omegabar) }+\frac{1}{\al}\scaletwoSu{\mathcal{D}^2\chihat}+\frac{\al}{\modu}\scaletwoSu{\mathcal{D}^2\chibarhat}\notag \\ +&\frac{a}{\modu}\scaletwoSu{\mathcal{D}^2\tildetr} +\frac{a}{\modu^2}\scaletwoSu{\mathcal{D}^2\tr\chibar}. 
\end{align}We also define the elliptic norm
\begin{align}
\prescript{(S)}{}{\mathcal{O}_{3,2}}(u,\ubar) := \sum_{\mathcal{D}\in\{\modu\snabla_3,\snabla_4, \al\snabla\}} &\scaletwoHu{\mathcal{D}^2 \snabla(\eta,\tr\chi,\frac{1}{\al}\chihat)}.
\end{align}\noindent We denote by $\mathcal{O}_{0,\infty}, \mathcal{O}_{0,4}, \mathcal{O}_{1,4}, \mathcal{O}_{2,4}, \mathcal{O}_{3,2}$ the suprema of the corresponding norms, respectively, over all values $(u,\ubar)$, where $u_{\infty}\leq u\leq -a/4$ and $0\leq \ubar\leq 1$.  Finally, we define the total Ricci norm

\[  \mathcal{O}:= \mathcal{O}_{0,\infty}+ \mathcal{O}_{0,4}+
\mathcal{O}_{1,4}+ \mathcal{O}_{2,2}+ \mathcal{O}_{3,2},  \]while by $\mathcal{O}^{(0)}$ we denote the total norm on the initial hypersurface $H_{u_{\infty}}$. Moreover, we introduce the following norms on curvature that will be estimated.

\begin{align}
\mathcal{R}_0 = \frac{1}{\al}\scaletwoHu{\alpha} + \scaletwoHu{(\tbeta,\rho,\sigma,\tbetabar)},
\end{align}
\begin{align}
\underline{\mathcal{R}}_0=\frac{1}{\al}\scaletwoHbaru{\tbeta}+\scaletwoHbaru{(\rho,\sigma,\tbetabar,\alphabar)},
\end{align}
\begin{align}
\mathcal{R}_1 := \sum_{\mathcal{D}\in \{ \modu \snabla_3, \snabla_4,\al\snabla \}  } \frac{1}{\al}\scaletwoHu{\mathcal{D}\alpha} + \scaletwoHu{\mathcal{D}(\tbeta,\rho,\sigma,\tbetabar)},\end{align}
\begin{align}
\underline{\mathcal{R}}_1=\sum_{\mathcal{D}\in \{ \modu \snabla_3, \snabla_4,\al\snabla \}}  \frac{1}{\al}\scaletwoHbaru{\mathcal{D}\tbeta}+\scaletwoHbaru{\mathcal{D}(\rho,\sigma,\tbetabar,\alphabar)},
\end{align}
\begin{align}
\mathcal{R}_2 := \sum_{\mathcal{D}\in \{ \modu \snabla_3, \snabla_4,\al\snabla \}  } \frac{1}{\al}\scaletwoHu{\mathcal{D}^2\alpha} + \scaletwoHu{\mathcal{D}^2(\tbeta,\rho,\sigma,\tbetabar)},
\end{align}
\begin{align}
\underline{\mathcal{R}}_2=\sum_{\mathcal{D}\in \{ \modu \snabla_3, \snabla_4,\al\snabla \}}  \frac{1}{\al}\scaletwoHbaru{\mathcal{D}^2\tbeta}+\scaletwoHbaru{\mathcal{D}^2(\rho,\sigma,\tbetabar,\alphabar)},
\end{align}
\noindent We set $\mathcal{R}_0, \mathcal{R}_1$ and $\mathcal{R}_2$ to be the supremum over all $(u,\ubar)$ in the spacetime slab of $\mathcal{R}_0(u,\ubar), \mathcal{R}_1(u,\ubar)$ and $\mathcal{R}_2(u,\ubar)$ respectively. Similarly, we define $\underline{\mathcal{R}}_0, \underline{\mathcal{R}}_1$ and $\underline{\mathcal{R}}_2$. We define the total curvature norms \be \mathcal{R}= \mathcal{R}_0+\mathcal{R}_1+\mathcal{R}_2, \hspace{3mm} \underline{\mathcal{R}}= \underline{\mathcal{R}}_0+ \underline{\mathcal{R}}_1+\underline{\mathcal{R}}_2.\ee Finally, we define $\mathcal{R}^{(0)}$ to be the initial value of the norm:

\be \mathcal{R}^{(0)} :=\sup_{0\leq u \leq 1}\big(\mathcal{R}_0(u_{\infty},u)+ \mathcal{R}_1(u_{\infty},u)+ \mathcal{R}_2(u_{\infty},u)\big).   \ee

\noindent For the Vlasov norm, we define, similarly, the following quantities:
\be \mathcal{V}_0 = \frac{\modu^2}{a} \lvert \Te_{44}\rvert + \frac{\modu^4}{a}\lvert \Te_{43} \rvert+\frac{\modu^6}{a}\lvert \Te_{33}\rvert + \frac{\modu^5}{a}\lvert \Te_5 \rvert + \frac{\modu^3}{a}\lvert \Te_3\rvert + \frac{\modu^4}{a}\lvert \Te \rvert .  \ee For $j=1,2,3,$ let us first introduce the following vector fields:

\begin{gather}
V_{(A)}:= Hor(e_A)- \frac{p^3}{\lvert u \rvert} \partial_{\pbar^A}, \hsp V_{(3)} := Hor(e_4), \hspace{3mm} V_{(4)} := p^3 \partial_{\pbar^3} - \lvert u \rvert Hor(e_3), \hspace{3mm} V_{(4+A)}:= \frac{p^3}{\lvert u \rvert^2}\partial_{\pbar^A},
\end{gather}together with the vector field \be V_{(0)}:= \lvert u \rvert Hor(e_3). \ee
We define the set 

\be \label{tildeVfirstappearance} \widetilde{V} := \{ \modu V_{0}, \modu V_{(4+A)}, V_{(A)}, V_{(3)},V_{(4)} \}. \ee We introduce the Vlasov norms

\be \mathfrak{V}_1(u,\ubar) := \frac{\modu^2}{a^2} \sum_{\widetilde{V}_{(j)}\in \widetilde{V}} \sup_{S_{u,\ubar}} \int_{\mathcal{P}_x}\lvert \widetilde{V}_{(j)}f \rvert^2 \hsp \sqrt{\det\gslash}\hsp \frac{\text{d}p^1\text{d}p^2\text{d}p^3}{p^3},  \ee
\begin{eqnarray}
\mathfrak{V}_{2}(u,\ubar):=\frac{1}{a^{2}}\sum_{V_{(1)},V_{(2)}\in \tilde{V}}\int_{S_{u,\ubar}}\nonumber \int_{\mathcal{P}_{x}}|\widetilde{V}_{(1)}\widetilde{V}_{(2)} f|^{2}\sqrt{\det\gslash}\frac{dp^{1}dp^{2}dp^{3}}{p^{3}}\sqrt{\det\gslash}\hsp d^{2}\theta 
\end{eqnarray} and 
\begin{eqnarray}
\mathfrak{V}_{3}(u):=\frac{1}{a^{2}}\sum_{\tilde{V}_{(1)},\tilde{V}_{(2)}, \tilde{V}_{(3)}\in \tilde{V}}\int_0^1 \int_{S_{u,\ubar^{\prime}}}\nonumber \int_{\mathcal{P}_{x}}|\tilde{V}_{(1)}\tilde{V}_{(2)} \tilde{V}_{(3)} f|^{2}\sqrt{\det\gslash}\hsp \frac{dp^{1}dp^{2}dp^{3}}{p^{3}}\sqrt{\det\gslash}\hsp d^{2}\theta \dubarprime.
\end{eqnarray}
We let $\mathcal{V}_{(i)}$, for $i=0$ to $3$, denote the suprema of the corresponding-order Vlasov norms respectively over $(u,\ubar)$ in the spacetime slab and define the total norm
%\be  \mathcal{V}_0= \frac{\modu}{a}\scaletwoHu{\slashed{T}_4}+\frac{\modu}{a}\scaletwoHu{\slashed{T}_{44}}, \ee
%\be \underline{\mathcal{V}}_0 = \scaletwoHbaru{\slashed{T}_{34}}+\scaletwoHbaru{\slashed{T}_{33}}+\scaletwoHbaru{\slashed{T}_3},\footnote{{This could go up to $a\modu\scaletwoHbaru{\slashed{T}_{33}}$.}} \ee
%while for $j=1,2,3$ we define 

%\[\mathcal{V}_j = \sum_{\mathcal{D} \in \{\modu\snabla_3,\snabla_4,\al\snabla\}}\big(\frac{\modu}{a}\scaletwoHu{\mathcal{D}^j\slashed{T}_4} +\frac{\modu}{a}\scaletwoHu{\mathcal{D}^j\slashed{T}_{44}}\big),\] \[ \underline{\mathcal{V}}_j = \sum_{\mathcal{D} \in \{\modu\snabla_3,\snabla_4,\al\snabla\}} \big(\frac{\modu}{a}\scaletwoHu{\mathcal{D}^j \slashed{T}_{34}}+\frac{\modu}{a}\scaletwoHu{\mathcal{D}^j\slashed{T}_{33}}+\frac{\modu}{a}\scaletwoHu{\mathcal{D}^j(\slashed{T}_{3},\Te)}\big). \]As above, we let $\mathcal{V}_0,\mathcal{V}_1,\mathcal{V}_2$ and $\mathcal{V}_3$ denote the suprema over all $(u,\ubar)$ in the spacetime slab of $\mathcal{V}_i(u,\ubar)$, $i=0,1,2,3$ respectively and similarly we define $\underline{\mathcal{V}}_0,\underline{\mathcal{V}}_1,\underline{\mathcal{V}}_2,\underline{\mathcal{V}}_3.$ We define the total Vlasov norms

\[\mathcal{V}= \mathcal{V}_0 +\mathcal{V}_1+\mathcal{V}_2+\mathcal{V}_3.\]We proceed to make the fundamental bootstrap assumptions:\be \label{boundsbootstrap}\mathcal{O}\leq O, \hspace{3mm} \mathcal{R}+\underline{\mathcal{R}}\leq R, \hspace{3mm} \mathcal{V}\leq V, \ee where\footnote{Let us note that throughout the text, we might also use $\Gamma$ as a symbol for $O$} $O,R,V$ are large constants such that, however, $(O+R+V)^{20}\ll a^{\frac{1}{16}}.$ 

\vspace{3mm}

\section{Preliminary estimates} \label{s3}
\subsection{Preliminary bootstrap assumptions}
\label{section:s31}
\noindent We will be employing a bootstrap argument to obtain a priori bounds on $\Gamma$, $\mathcal{R}$ and $\mathcal{V}$. Along the initial hypersurfaces $H_{u_{\infty}}$ and $\Hbar_0$, an analysis of the initial data using transport equations (see for example, \cite{luklocal}) yields
 
 \[ \Gamma_0 + \mathcal{R}_0 + \mathcal{V}_0 \lesssim \mathcal{I}. \]Our goal is to show that in the entire region \[ \mathcal{D}:= \begin{Bmatrix} (u,\ubar,\theta^1, \theta^2) \hspace{1mm} \mid \hspace{1mm} u_{\infty}\leq u \leq -\frac{a}{4}, \hsp 0\leq \ubar \leq 1 \end{Bmatrix}\] there exists a constant $c(\mathcal{I}) = \mathcal{I}^4+\mathcal{I}^2 +\mathcal{I}+1$ such that \[ \Gamma + \mathcal{R} + \mathcal{V} \lesssim c(\mathcal{I}). \] We assume, as a bootstrap assumption the following:
 
 \begin{equation}\label{bootstrap}
 \Gamma \leq \Gamma, \hspace{2mm}\mathcal{R}\leq R, \hspace{2mm} \mathcal{V} \leq M,
 \end{equation}where $\Gamma, \hsp R$ and $M$ are large so that 
 
 \[ \mathcal{I}^4 +\mathcal{I}^2 +\mathcal{I}+1 \ll \text{min}\begin{Bmatrix}
 \Gamma, \hsp R,\hsp M\end{Bmatrix}\]but also such that \[ (\Gamma+R+M)^{20}\leq a^{\frac{1}{16}}.\]

\noindent We first show that $\Omega$ is always close to 1 in the supremum norm.

\begin{proposition}\label{Omegabounds}
There holds \[ \big\lVert 1- \frac{1}{\Omega^2} \big\rVert_{L^{\infty}(S_{u,\ubar})} \lesssim \frac{a \hsp O^2}{\modu^2}       \] everywhere in the slab of existence. 
\end{proposition}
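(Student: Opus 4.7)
The strategy is to integrate the structure equation
\[
\snabla_3\Big(1-\tfrac{1}{\Omega^2}\Big) = \omegabar - \omegabar\Big(1-\tfrac{1}{\Omega^2}\Big)
\]
along integral curves of $e_3$ from the initial outgoing hypersurface $H_{u_\infty}$, on which the gauge condition $\Omega\equiv 1$ yields the vanishing initial datum $1-1/\Omega^2\equiv 0$. Solving this first-order linear ODE along $e_3$ (with integrating factor $\exp(\int\omegabar)$) yields the closed-form representation
\[
\Big(1-\tfrac{1}{\Omega^2}\Big)(u,\ubar,\theta) \;=\; 1-\exp\!\left(-\int_{u_\infty}^{u}\omegabar(u',\ubar,\gamma(u'))\,du'\right),
\]
where $\gamma$ is the $e_3$-integral curve terminating at $(u,\ubar,\theta)$. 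Provided that $\int_{u_\infty}^u|\omegabar|\,du'$ is small, this quantity is controlled by $\int_{u_\infty}^u|\omegabar|\,du'$ up to a harmless multiplicative constant.

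The task is thus reduced to bounding $\omegabar$ pointwise. Since $\omegabar$ is absent from the zeroth-order $L^\infty$ bootstrap norm $\prescript{(S)}{}{\mathcal{O}_{0,\infty}}$, the natural route is to exploit the structure equation
\[
\snabla_4\omegabar \;=\; 2\eta\cdot\etabar - |\eta|^2 - \rho - \tfrac{1}{2}\Te_{34},
\]
integrated from $\ubar = 0$, where the Minkowskian initial data enforce $\omegabar = 0$. The bootstrap bound $\prescript{(S)}{}{\mathcal{O}_{0,\infty}}\leq O$ combined with the signature $s_2(\eta)=\tfrac{1}{2}$ gives $\|\eta\|_{L^\infty(S)}\leq a^{1/2}O/|u|^2$, whence $\|\eta\cdot\etabar\|_{L^\infty(S)}+\|\eta\|_{L^\infty(S)}^2\lesssim aO^2/|u|^4$. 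The curvature term $\rho$ is controlled in $L^\infty(S_{u,\ubar})$ by a Sobolev embedding on the sphere from the curvature bootstrap $\mathcal{R}$, while $|\Te_{34}|$ is controlled directly by $\mathcal{V}_0\leq M$. Under the bootstrap hierarchy $(\Gamma+R+M)^{20}\leq a^{1/16}$, all these contributions are absorbed under the single label $O^2$, and integration over $\ubar\in[0,1]$ yields $\|\omegabar\|_{L^\infty(S_{u,\ubar})}\lesssim aO^2/|u|^3$.

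Substituting back,
\[
\int_{u_\infty}^u \|\omegabar\|_{L^\infty(S_{u',\ubar})}\,du' \;\lesssim\; \int_{u_\infty}^u \frac{aO^2}{|u'|^3}\,du' \;\lesssim\; \frac{aO^2}{|u|^2}.
\]
Since $|u|\geq a/4$, this is bounded by $\lesssim O^2/a$, which is small by $(\Gamma+R+M)^{20}\leq a^{1/16}$, so the exponential factor in the closed-form expression contributes only a harmless constant. Taking the supremum over $\theta\in S_{u,\ubar}$ yields the claimed bound $\|1-1/\Omega^2\|_{L^\infty(S_{u,\ubar})}\lesssim aO^2/|u|^2$. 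The main point of subtlety is ensuring that the $L^\infty(S)$-bound on $\rho$ obtained by Sobolev embedding from the $L^2$-type curvature norms is consistent with the target scaling $aO^2/|u|^3$ for $\omegabar$; once that is in place, the rest reduces to a routine Gronwall integration in the favorable region $|u|\geq a/4$.
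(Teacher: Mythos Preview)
Your argument is essentially correct, but it takes a longer route than the paper's, for a reason rooted in a typo: the $\omega$ appearing in the definition of $\prescript{(S)}{}{\mathcal{O}_{0,\infty}}$ should read $\omegabar$ (recall $\omega\equiv 0$ in this frame). The paper therefore has the pointwise bootstrap $\scaleinftySu{\omegabar}=a^{-1}\modu^3\|\omegabar\|_{L^\infty}\leq O$ at its disposal from the outset, and combines it with an auxiliary bootstrap $\|1-1/\Omega^2\|_{L^\infty}\leq O$ to absorb the nonlinear term $\omegabar(1-1/\Omega^2)$; a single integration in $u$ then gives the claim in two lines.

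Your approach differs in two respects. First, you solve the linear ODE for $1-1/\Omega^2$ exactly via an integrating factor, which neatly sidesteps the auxiliary bootstrap on $1-1/\Omega^2$. Second, believing $\omegabar$ absent from the bootstrap, you recover $\|\omegabar\|_{L^\infty}\lesssim aO^2/\modu^3$ from its $\snabla_4$-equation. This works, but two points deserve care. The step ``$\rho$ is controlled in $L^\infty(S)$ by Sobolev from $\mathcal{R}$'' is not immediate: the curvature bootstrap lives in $\mathcal{L}^2_{(sc)}(H_u)$, not $\mathcal{L}^2_{(sc)}(S_{u,\ubar})$, so you must pass through the codimension-one trace inequality to get $L^4(S)$ and then use the $L^4$--$L^\infty$ Sobolev embedding; you flag this correctly as the main subtlety. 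There is also a mild circularity in integrating $\snabla_4\omegabar=\Omega^{-2}\partial_{\ubar}\omegabar$, since the conversion factor $\Omega^2$ is precisely the object under study; this is harmless once one restores an auxiliary bootstrap on $\Omega$ (as the paper does) or runs a short continuity argument. Once those points are in place, both proofs yield the same bound.
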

\begin{proof}
Using the auxiliary bootstrap assumption $\big\lVert 1- \frac{1}{\Omega^2} \big\rVert_{L^{\infty}(S_{u,\ubar})} \leq O,$ we use the equation 
\be \snabla_3\big(1-\frac{1}{\Omega^2}\big) = \omegabar -\omegabar \big(1-\frac{1}{\Omega^2}\big), \ee we obtain

\be \big\lVert 1- \frac{1}{\Omega^2} \big\rVert_{L^{\infty}(S_{u,\ubar})} \lesssim \intu \lVert \omegabar \rVert_{L^{\infty}(S_{u^\prime,\ubar})}(1+O) \duprime \leq  \intu \frac{a \hsp O}{\upr^3}(1+O)\duprime \leq \frac{a\hsp O^2}{\modu^2}. \ee Here we have made use of the bootstrap assumption $\scaleinftySu{\omegabar}= a^{-1}\modu^3 \inftySu{\omegabar}\leq O$.
\end{proof}
\noindent We proceed to obtain control on the induced metric $\gslash$ on $S_{u,\ubar}$.
\begin{proposition} \label{propdetgslash}
Under the assumptions of Theorem \ref{mainone} and the bootstrap assumptions \eqref{boundsbootstrap}, for the induced metric $\gslash$ on $S_{u,\ubar}$ we have 
\[   c^{\prime}\leq \frac{\det \gslash(u,\ubar,\theta^1,\theta^2)}{\modu^4} \leq C^{\prime}.    \]Here $c^{\prime}$ and $C^{\prime}$
are constants depending only on the initial data. Moreover, throughout the slab of existence there holds 

\[ \lvert \gslash_{AB}(u,\ubar,\theta^1,\theta^2)\rvert \leq C^{\prime} \modu^2, \hspace{3mm}\lvert \gslash^{AB}(u,\ubar,\theta^1,\theta^2) \rvert \leq \frac{C^{\prime}}{\modu^2} .\]     \end{proposition}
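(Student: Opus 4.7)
The plan is to transport the induced metric from the initial incoming null hypersurface $\Hbar_0$ (where it coincides with the Minkowski induced metric) along the outgoing null direction, using the first variational formula $\mathcal{L}_{e_4}\gslash=2\chi$, and to close the argument via a short bootstrap on the coordinate components of $\gslash$ fed by the $L^\infty$ bounds on the Ricci coefficients implied by \eqref{boundsbootstrap}.

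First I would derive the scalar ODE satisfied by $\det\gslash$. Since $e_4=\Omega^{-2}\partial_{\ubar}$ and $e_4(|u|)=0$, the variational formula gives
\[
\partial_{\ubar}\log\det\gslash = 2\Omega^2\,\tr\chi,
\qquad
\partial_{\ubar}\log\!\Big(\tfrac{\det\gslash}{|u|^4}\Big) = 2\Omega^2\,\tr\chi.
\]
Along $\ubar=0$ the data is Minkowskian, so $\det\gslash(u,0,\theta)=|u|^4\det\widetilde{\gamma}(\theta)$ with $\widetilde{\gamma}$ the round unit-sphere metric, which is pinched between two positive constants in any chosen chart. Decomposing $\tr\chi=\tildetr+(\tr\chi)_{\mathrm{Mink}}$ and using the bootstrap assumptions together with Proposition \ref{Omegabounds}, one has $\|\tr\chi\|_{L^\infty(S_{u,\ubar})}\lesssim \Gamma |u|^{-1}$ and $|\Omega^2-1|\lesssim a\Gamma^2|u|^{-2}$. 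Integrating over $\ubar\in[0,1]$ yields
\[
\Big|\log\!\Big(\tfrac{\det\gslash(u,\ubar,\theta)}{|u|^4\det\widetilde{\gamma}(\theta)}\Big)\Big|\;\lesssim\; \int_0^{\ubar}\frac{\Gamma}{|u|}\,d\ubar'\;\lesssim\; \frac{\Gamma}{|u_\infty|}\ll 1,
\]
which immediately produces the two-sided bound on $\det\gslash/|u|^4$.

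Next I would treat the coordinate components $\gslash_{AB}$. Introduce the auxiliary bootstrap assumption $|\gslash_{AB}(u,\ubar,\theta)|\le 2C'|u|^2$ and $|\gslash^{AB}|\le 2C'|u|^{-2}$ in the slab, where $C'$ is chosen so that the bounds hold with a factor $2$ slack on $\ubar=0$. Under this auxiliary assumption, the coordinate components of $\chi$ satisfy $|\chi_{AB}|\lesssim |u|^2|\chi|_\gslash\lesssim |u|^2\cdot(a^{1/2}|u|^{-1}+|u|^{-1})\lesssim a^{1/2}|u|$, since $|\chihat|_\gslash\lesssim a^{1/2}\Gamma|u|^{-1}$ and $|\tr\chi|\lesssim\Gamma|u|^{-1}$ from the scale-invariant norms. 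Integrating $\partial_{\ubar}\gslash_{AB}=2\Omega^2\chi_{AB}$ from $\ubar=0$ and using $|\gslash_{AB}(u,0,\theta)|\le C'|u|^2$ gives
\[
|\gslash_{AB}(u,\ubar,\theta)|\le C'|u|^2+ \int_0^1 2\Omega^2|\chi_{AB}|\,d\ubar'\;\le\; C'|u|^2+\mathcal{O}(a^{1/2}\Gamma\,|u|).
\]
Since $|u|\ge a/4\gg a^{1/2}\Gamma$ in the slab, the correction is $\ll |u|^2$ and the bootstrap on $\gslash_{AB}$ is strictly improved. The bound on $\gslash^{AB}$ follows either by inverting the $2\times 2$ matrix, using the already established lower bound on $\det\gslash$ and upper bound on $\gslash_{AB}$, or directly by integrating the variational equation for $\gslash^{AB}$, namely $\partial_{\ubar}\gslash^{AB}=-2\Omega^2\chi^{AB}$, again starting from Minkowskian data on $\ubar=0$.

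The only mildly delicate points are that $\gslash$ is only a local tensor field in a coordinate chart, so the estimates must be carried out chart-by-chart subordinate to the partition of unity introduced in the definition of the surface integral, and that one must keep $|u|\gg a^{1/2}$ throughout so that the transported corrections are dominated by the Minkowskian background values. Both issues are handled by the smallness parameter $a/|u|\le 4$ and the hierarchy $(O+R+V)^{20}\le a^{1/16}$ already in force. This structure of the argument suggests that the main task is really to keep the bootstrap on $|\gslash_{AB}|$ self-improving throughout the slab, which is precisely what the above integration ensures.
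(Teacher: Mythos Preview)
Your approach matches the paper's: both transport $\gslash$ along $e_4$ from the Minkowskian data on $\Hbar_0$, first obtaining the two-sided bound on $\det\gslash/|u|^4$ from $\partial_{\ubar}\log\det\gslash = 2\Omega^2\tr\chi$, then running a bootstrap on the coordinate components $\gslash_{AB}$ fed by $|\chi_{AB}|\lesssim \Lambda\,|\chi|_{\gslash}$, and finally deducing the bound on $\gslash^{AB}$ from the upper bound on $\gslash_{AB}$ together with the lower bound on $\det\gslash$.

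Two minor slips to correct. First, the inequality $\int_0^{\ubar}\Gamma/|u|\,d\ubar'\lesssim \Gamma/|u_\infty|$ goes the wrong way, since $|u|\le |u_\infty|$; the correct bound is $\Gamma/|u|\le 4\Gamma/a\ll 1$, using $|u|\ge a/4$ and $\Gamma\ll a^{1/16}$. Second, the symbol $\tildetr$ in this paper denotes $\tr\chibar+2/|u|$, not a renormalization of $\tr\chi$; no decomposition is needed here since the bootstrap on $\mathcal{O}_{0,\infty}$ already gives $\|\tr\chi\|_{L^\infty(S_{u,\ubar})}\lesssim O/|u|$ directly. Neither slip affects the substance of the argument.
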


\begin{proof}
We argue through the first variational formula $\slashed{\mathcal{L}}_{e_4}\gslash_{AB} = 2\chihat_{AB}$. This implies 

\begin{align}
\frac{\partial}{\partial \ubar} \log(\det \gslash) = 2 \tr\chi. 
\end{align}Let $\gslash_0(u,\ubar,\theta^1,\theta^2)=\gslash(u,0,\theta^1,\theta^2).$ Then with $\lvert 2\tr\chi \rvert \leq O/\modu$, it follows

\[  \frac{\det\gslash}{\det\gslash_0} = \mathrm{e}^{\intubar 2\tr\chi \dubarprime}\leq \mathrm{e}^{\frac{O}{a}}.   \]By Taylor expansion, this implies

\[  \lvert \det \gslash- \det\gslash_0\rvert \leq \det\gslash_0\lvert 1 -\mathrm{e}^{\frac{O}{a}}\rvert\lesssim \frac{O}{a}\det\gslash_0,   \] which gives the upper and lower bounds for $\det\gslash$. For $\gslash$, assume a bootstrap assumption of the form 

\[ \lvert \gslash_{AB} \rvert \leq \modu^2 G,     \]where $G$ is a large constant with $G\ll a$.
let $\Lambda$ denote the greater eigenvalue. We have\[ \Lambda \leq \sup_{A,B=1,2}\gslash_{AB} \leq \modu^2 G,\]\[ \sum_{A,B=1,2}\lvert \chi_{AB} \rvert \leq \Lambda \inftySu{\chi},\]\[ \lvert \gslash_{AB}-(\gslash_0)_{AB}\rvert \leq \intubar \lvert \chi_{AB} \rvert \dubarprime \leq \Lambda \frac{\al}{\modu}O \leq \modu^2 \frac{O\hsp G}{\al} \leq \modu^2.   \]Given the value of $(\gslash_0)_{AB}$, this improves the bootstrap assumption and proves the result. We further bound $\gslash^{AB}$ from above by using the upper bound for $\lvert \gamma_{AB}\rvert$ and the lower bound for $\det\gslash$.
\end{proof}
\noindent For the metric $\gslash$, we will also need the following:

\begin{proposition}
\label{31}
We continue to work under the assumptions of Theorem \ref{mainone} and the bootstrap assumptions \eqref{boundsbootstrap}. Fix a point $(u,\theta^1,\theta^2)$ on the initial hypersurface $\Hbar_0$. Along the outgoing null geodesics emitting from $(u,\theta^1,\theta^2),$ denote by $\Lambda(\ubar)$ and $\lambda(\ubar)$ the larger and smaller eigenvalues of $\gslash^{-1}(u,\ubar=0,\theta^1,\theta^2)\gslash(u,\ubar,\theta^1,\theta^2)$ respectively. Then we have

\[ \lvert \Lambda(\ubar)-1\rvert + \lvert \lambda(\ubar)-1\rvert \leq \frac{1}{\al}.   \]\end{proposition}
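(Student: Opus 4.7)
The plan is to derive and integrate a matrix ODE for the endomorphism
$M^A{}_B(\ubar) := \gslash^{AC}(u,0,\theta)\,\gslash_{CB}(u,\ubar,\theta)$,
which satisfies $M(0) = \mathrm{Id}$, is self-adjoint with respect to the fixed inner product $\gslash(u,0,\theta)$, and whose eigenvalues are precisely $\Lambda(\ubar)$ and $\lambda(\ubar)$. Since along outgoing null geodesics emanating from a point of $\Hbar_0$ the angular coordinates $(\theta^1,\theta^2)$ are preserved by the gauge construction (the $\theta^A$ are Lie-propagated by $\Lbar$ into the slab), the first variational formula $\mathcal{L}_{e_4}\gslash = 2\chi$ combined with $e_4 = \Omega^{-2}\partial_{\ubar}$ yields
\begin{equation*}
\partial_{\ubar}\gslash_{CB}(u,\ubar,\theta) = 2\,\Omega^2 \chi_{CB}(u,\ubar,\theta), \qquad \partial_{\ubar} M^A{}_B(\ubar) = 2\,\Omega^2\, \gslash^{AC}(u,0,\theta)\,\chi_{CB}(u,\ubar,\theta).
\end{equation*}
Integrating from $0$ to $\ubar$ gives $M(\ubar) = \mathrm{Id} + 2\int_0^{\ubar}\Omega^2\,\gslash^{-1}(u,0)\cdot\chi(u,\ubar')\,d\ubar'$.

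The next step is to bound the coordinate operator norm of the right-hand side by invoking Propositions \ref{Omegabounds} and \ref{propdetgslash}. From Proposition \ref{propdetgslash} one has $|\gslash^{AB}(u,0)|\lesssim |u|^{-2}$ and $|\gslash_{AB}|\lesssim |u|^{2}$; decomposing $\chi = \chihat + \tfrac{1}{2}\tr\chi\,\gslash$ and invoking the bootstrap $\mathcal{O}_{0,\infty}\le O$ to infer $|\chihat|_{\gslash}\lesssim O\al/|u|$ and $|\tr\chi|\lesssim |u|^{-1}$, one obtains the coordinate bound $|\chi_{AB}|\lesssim |u|(1+O\al)$. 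Combined with $|\Omega^{2}-1|\lesssim aO^2/|u|^2$ from Proposition \ref{Omegabounds} and the region constraint $|u|\ge a/4$, this leads to
\begin{equation*}
\big|M^A{}_B(\ubar)-\delta^A_B\big| \;\lesssim\; \int_0^{1}\frac{1+O\al}{|u|}\,d\ubar' \;\lesssim\; \frac{O\al}{|u|}+\frac{1}{|u|} \;\lesssim\; \frac{1}{\al},
\end{equation*}
where in the final step the factor $O$ is absorbed into the initial-data constant $c(\mathcal{I})$ and $a$ is taken sufficiently large depending on $\mathcal{I}$.

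Finally, since $M(\ubar)$ is self-adjoint with respect to the fixed metric $\gslash(u,0,\theta)$, its eigenvalues $\Lambda(\ubar)$ and $\lambda(\ubar)$ are real and depend $1$-Lipschitz on $M$ in the operator norm (Weyl's inequality). Consequently
\begin{equation*}
|\Lambda(\ubar)-1| + |\lambda(\ubar)-1| \;\lesssim\; \|M(\ubar)-\mathrm{Id}\|_{\mathrm{op}} \;\lesssim\; \frac{1}{\al},
\end{equation*}
which is the desired estimate. The main (and borderline) obstacle is the anomalous $\al$-scaling of $\chihat$: its $L^{\infty}$ bound is only $O\al/|u|$, so the requisite gain of $1/\al$ in the final estimate is extracted purely from the region constraint $|u|\ge a/4$ together with the scale-invariant bootstrap hierarchy. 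Any weakening of the $|u|$-weight in the bootstrap on $\chihat$ would destroy the closure of this estimate.
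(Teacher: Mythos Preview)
Your argument is correct and reaches the same conclusion, but by a genuinely different route than the paper. The paper follows Christodoulou's derivation (equation~(5.93) in \cite{C09}): it introduces the eccentricity $\nu(\ubar):=\sqrt{\Lambda(\ubar)/\lambda(\ubar)}$, derives the differential inequality $\nu(\ubar)\le 1+\int_0^{\ubar}|\chihat|_{\gslash}\,\nu\,d\ubar'$ (note that only the \emph{trace-free} part of $\chi$ enters), applies Gr\"onwall, and then combines $|\nu-1|\lesssim \al O/|u|$ with the determinant estimate $\Lambda\lambda=\det\gslash/\det\gslash_0$ from Proposition~\ref{propdetgslash} to extract the individual eigenvalues. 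Your approach instead integrates the full matrix ODE for $M=\gslash_0^{-1}\gslash$ and invokes Weyl's perturbation inequality for the eigenvalues of the self-adjoint operator $M$. This is more elementary and avoids the auxiliary quantity $\nu$, at the cost of not separating the geometric roles of $\chihat$ (eccentricity) and $\tr\chi$ (area element); both routes give a bound of order $O/\al$. One minor remark: your claim that ``the factor $O$ is absorbed into the initial-data constant $c(\mathcal{I})$'' is premature at this stage of the argument, since Proposition~\ref{31} is a preliminary estimate used \emph{within} the bootstrap, before $O$ has been improved to $c(\mathcal{I})$. The honest bound you obtain is $\lesssim O/\al$, which is entirely sufficient for every downstream use (isoperimetric constant, Sobolev embedding), and the paper's own proof yields the same.
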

\begin{proof}
Define $\nu(\ubar):=\sqrt{\frac{\Lambda(\ubar)}{\lambda(\ubar)}}$. Following \cite{A19} or the derivation of $(5.93)$ in \cite{C09}, by the first variational formula we can derive

\[ \nu(\ubar)\leq 1 + \intubar \lvert \chihat(\ubar^{\prime})\rvert_{\gslash}\hsp \nu(\ubar^{\prime})\dubarprime.    \]By Gr\"onwall's inequality, this implies \[  \lvert \nu(\ubar)\rvert \lesssim 1 \hspace{3mm} \text{and}\hspace{3mm}\lvert \nu(\ubar)-1\rvert \leq \frac{\al\hsp O}{\modu^2}\leq \frac{1}{a}.  \]The desired estimate then follows.
\end{proof}The two Propositions above also imply control on the area of the spheres $S_{u,\ubar}$:

\begin{proposition}\label{34}
Under the assumptions of Theorem \ref{mainone} and the bootstrap assumptions \eqref{boundsbootstrap}, there holds
\[\sup_{\ubar}\lvert \text{Area}(S_{u,\ubar})-\text{Area}(S_{u,0})\rvert \leq \frac{O^{\f12}}{\al}\modu^2.\]
\end{proposition}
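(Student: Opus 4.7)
The plan is to reduce the area difference to an integral of $\tr\chi$ against the area element, exactly in the spirit of the argument used in Proposition~\ref{propdetgslash}. The starting point is the first variational formula $\mathcal{L}_{e_{4}}\gslash = 2\chi$, which gives the pointwise identity
\[
\partial_{\ubar}\sqrt{\det\gslash}(u,\ubar,\theta) \;=\; \tr\chi(u,\ubar,\theta)\,\sqrt{\det\gslash}(u,\ubar,\theta).
\]
Integrating in $\ubar'\in[0,\ubar]$ and then integrating against a partition of unity over the sphere will express $\text{Area}(S_{u,\ubar})-\text{Area}(S_{u,0})$ as a double integral of $\tr\chi\,\sqrt{\det\gslash}$. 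Because the initial slice $\ubar=0$ carries Minkowskian data, $\sqrt{\det\gslash}(u,0,\cdot)$ is comparable to $\modu^{2}$, and the upper bound $\sqrt{\det\gslash}\lesssim \modu^{2}$ from Proposition~\ref{propdetgslash} persists throughout the evolution.

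The next step is to insert the bootstrap pointwise bound $|\tr\chi|\lesssim O\,\modu^{-1}$, exactly the input already used in the proof of Proposition~\ref{propdetgslash} (this encodes the fact that $\tr\chi$ remains close to its Minkowskian value $2/\modu$ in the strong field regime under consideration). Combining this with $\sqrt{\det\gslash}\lesssim \modu^{2}$ and $\ubar\le 1$ yields the pointwise control
\[
\bigl|\sqrt{\det\gslash}(u,\ubar,\theta)-\sqrt{\det\gslash}(u,0,\theta)\bigr| \;\lesssim\; \int_{0}^{\ubar}\frac{O}{\modu}\cdot\modu^{2}\,d\ubar' \;\lesssim\; O\,\modu,
\]
and integration against the reference measure $d\theta^{1}d\theta^{2}$ in each coordinate chart of the partition of unity produces
\[
\bigl|\text{Area}(S_{u,\ubar}) - \text{Area}(S_{u,0})\bigr| \;\lesssim\; O\,\modu.
\]
Taking supremum in $\ubar\in[0,1]$ is then immediate.

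The last issue will be to convert the crude bound $O\,\modu$ into the claimed bound $O^{1/2}\modu^{2}/\al$. This is an algebraic consequence of the bootstrap hierarchy: the desired inequality $O\modu\le O^{1/2}\modu^{2}/\al$ is equivalent to $O^{1/2}a^{1/2}\le \modu$, and since $\modu\ge a/4$ and the bootstrap enforces $(O+R+V)^{20}\le a^{1/16}$, we have $O^{1/2}a^{1/2}\le a^{1/640}\cdot a^{1/2}\ll a/4\le \modu$ for $a$ sufficiently large. I do not anticipate a genuine obstacle in this argument; the only delicacy is to make sure the $L^{\infty}$ bound on $\tr\chi$ invoked is the one already established (or assumable) from the preliminary bootstrap, so that this result can be used afterwards without circularity. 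An alternative, equally short route would be to invoke Proposition~\ref{31} directly: the eigenvalue bound $|\Lambda(\ubar)-1|+|\lambda(\ubar)-1|\le 1/\al$ gives $|\Lambda\lambda-1|\lesssim 1/\al$, whence $|\sqrt{\det\gslash(u,\ubar)}-\sqrt{\det\gslash(u,0)}|\lesssim \modu^{2}/\al$, which integrates to the stated bound with the $O^{1/2}$ factor being a mild weakening that accommodates the implicit constants.
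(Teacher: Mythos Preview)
Your proposal is correct and follows essentially the same route as the paper: the paper's one-line proof simply cites the area formula and the bounds on $\det\gslash$ from Proposition~\ref{propdetgslash}, and your first argument is precisely an unpacking of that citation---the identity $\partial_{\ubar}\log\det\gslash = 2\tr\chi$ together with $|\tr\chi|\lesssim O/\modu$ and $\sqrt{\det\gslash}\lesssim\modu^{2}$ yields the pointwise bound $O\modu$, which you then correctly weaken to $O^{1/2}\modu^{2}/\al$ using the bootstrap hierarchy. Your alternative route via Proposition~\ref{31} (eigenvalue bounds giving $|\Lambda\lambda-1|\lesssim 1/\al$ and hence $|\sqrt{\det\gslash}-\sqrt{\det\gslash_{0}}|\lesssim\modu^{2}/\al$) is equally valid and arguably cleaner, since it lands directly on the target form without the algebraic conversion step.
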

\begin{proof}
The result follows from the fact that \[ \text{Area}(S_{u,\ubar})=\int_{S_{u,\ubar}}\sqrt{\det\gslash}\hsp\text{d}\theta^1 \text{d}\theta^2,\]together with the bounds on $\det\gslash$ obtained in Proposition \ref{propdetgslash}.
\end{proof}
Regarding the Vlasov matter, we finish this section by showing the bounds that our initial data assumptions in Theorem \ref{mainone} impose on the support of $f$.

\subsection{Decay for the momentum vector components and the support of $f$} \label{subsectiondecaymomentum}
In this section, we discuss the behaviour of the various energy-momentum tensor components for the Vlasov matter (in the zeroth order). To this end, we establish a control on the size of the support of $f$. 

\vspace{3mm}
\noindent Throughout this section,  $\mathfrak{\gamma}$ will denote a null geodesic emanating from $\begin{Bmatrix} u= u_{\infty}\end{Bmatrix}$, meaning $u(\gamma(0)) =u_{\infty}$. Assume that $(\gamma(0), \hsp \dot{\gamma}(0)) \in \hsp \text{supp}(f).$ The tangent vector to $\gamma$ at time $s$ is then written 

\begin{equation}
    \dot{\gamma}(s) = \hsp p^A(s) \hsp e_A + p^3(s)\hsp e_3 +p^4(s) \hsp e_4.
\end{equation}By assumption, we note that $(\gamma(0),\dot{\gamma}(0) )\in \hsp \text{supp}(f)$ implies that there exist constants $C_{p^A}, \hsp C_{p^3}, \hsp C_{p^4}>0$, independent of $u_{\infty}$ (here A=1,\hsp 2), such that $\text{max}\begin{Bmatrix}C_{p^A},\hsp  C_{p^3},\hsp C_{p^4}\end{Bmatrix} \ll a$ and 

\be \label{initial}   0<  p^3(0) \leq C_{p^3}, \hsp 0 \leq \lvert u_{\infty} \rvert^{2} \hsp p^4(0) \leq C_{p^4} \hsp  \hsp p^3(0),     \ee \be \label{tprove} \lvert u_{\infty}\rvert^{2} \lvert p^A(0)\rvert \leq C_{p^A} \hsp p^3(0) .   \ee
The main result of this section is a propagation-of-decay statement for the momentum components. This is the content of the following Proposition.  
\begin{proposition}
\label{momentum}
Along such a geodesic $\gamma$, given the initial conditions \eqref{initial}-\eqref{tprove}, the following must hold:

\[ \frac{1}{2}\hsp p^3(0) \leq \hsp p^3(s) \leq \frac{3}{2} p^3(0), \hspace{3mm} 0\leq  \lvert u(s) \rvert^{2} \hsp p^4(s) \leq \frac{3}{2} \hsp C_{p^4} \hsp p^3(0),   \]\[ \lvert u(s)\rvert^{2} \hsp \lvert p^A(s) \rvert \leq \frac{3}{2} \hsp C_{p^A} \hsp p^3(0),  \]for $A=1, \hsp 2$ and for all $s$ such that $\gamma(s)$ lies in the slab of existence $\mathcal{D}$.
\end{proposition}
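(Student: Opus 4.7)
The statement is obtained by a continuity/bootstrap argument along the incoming null geodesic $\gamma$. Denote by $s_{\max}\in(0,\infty]$ the largest parameter value with $\gamma(s)\in\mathcal{D}$, and let $s_*\in(0,s_{\max}]$ be the supremum of those $s$ for which the \emph{weak} inequalities
\[
\tfrac14 p^3(0)\le p^3(s)\le 4\,p^3(0),\qquad 0\le |u(s)|^2 p^4(s)\le 4\,C_{p^4}p^3(0),\qquad |u(s)|^2|p^A(s)|\le 4\,C_{p^A}p^3(0)
\]
hold. The initial conditions \eqref{initial}--\eqref{tprove} and continuity of $p^\mu(\cdot)$ give $s_*>0$. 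The goal is to improve these to the $\tfrac12,\tfrac32$ bounds on the full interval $[0,s_*]$, after which the maximality of $s_*$ together with the openness of the improved estimates forces $s_*=s_{\max}$.

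From the Ricci decomposition \eqref{tableone}--\eqref{tablefinal} one reads off the Christoffel symbols of $g$ in the null frame and obtains the geodesic equations $\dot p^\lambda = -\Gamma^\lambda_{\mu\nu}p^\mu p^\nu$. Using the mass--shell identity $\gslash_{AB}p^A p^B = 4p^3 p^4$ together with the splitting $\chibar=\chibarhat+\tfrac12\tr\chibar\,\gslash$, these read
\begin{align*}
\dot p^3 &= -\tfrac12\chi_{AB}p^A p^B -(\eta_A-\etabar_A)p^A p^3 -\omegabar(p^3)^2,\\
\dot p^4 &= -\tfrac12\chibarhat_{AB}p^A p^B -\tr\chibar\,p^3 p^4 -2\etabar_A p^A p^4,\\
\dot p^B &= -\Gammaslash^B_{AC}p^A p^C -2\chibarhat^B_A p^A p^3 -\tr\chibar\,p^B p^3 + e_A(b^B)p^A p^3 -2\chi^B_A p^A p^4 -2(\eta^B+\etabar^B)p^3p^4.
\end{align*}
The key observation is that $\dot u = p^3$ by Proposition~\ref{integrate}, hence $\dot{|u|}=-p^3$; writing $\tr\chibar=-\tfrac{2}{|u|}+\tildetr$, the Minkowskian leading term exactly cancels the chain--rule contribution when differentiating $|u|^2 p^4$ and $|u|^2 p^B$, yielding
\begin{align*}
\tfrac{d}{ds}\!\bigl(|u|^2 p^4\bigr) &= -\tfrac12 |u|^2\chibarhat_{AB}p^A p^B -|u|^2\tildetr\,p^3 p^4 -2|u|^2\etabar_A p^A p^4,\\
\tfrac{d}{ds}\!\bigl(|u|^2 p^B\bigr) &= -|u|^2\Gammaslash^B_{AC}p^A p^C -2|u|^2\chibarhat^B_A p^A p^3 -|u|^2\tildetr\,p^B p^3 +|u|^2 e_A(b^B)p^A p^3 \\
&\qquad -2|u|^2\chi^B_A p^A p^4 -2|u|^2(\eta^B+\etabar^B)p^3 p^4.
\end{align*}

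Integrating each equation from $0$ to $s$ along $\gamma$, converting $ds=du/p^3$ via Proposition~\ref{integrate}, pointwise bounds on every Ricci component follow from the bootstrap $\mathcal{O}\le O$: typically $|\chihat|\lesssim Oa^{1/2}|u|^{-1}$, $|\chibarhat|,|\eta|,|\etabar|\lesssim Oa^{1/2}|u|^{-2}$, $|\tildetr|\lesssim O|u|^{-2}$, $|\omegabar|\lesssim Oa|u|^{-3}$; the coordinate components of $\gslash$, $\gslash^{-1}$, and $\Gammaslash$ (the latter measured against its Minkowski reference) are controlled by Propositions~\ref{propdetgslash}--\ref{31}, and $\Omega$ by Proposition~\ref{Omegabounds}. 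Combining these with the weak bootstrap on $(p^3,p^A,p^4)$, each integrand is dominated by $|u'|^{-k}$ for some $k\ge 2$ (up to powers of $O$, $a^{1/2}$, $p^3(0)$, $C_{p^A}$, $C_{p^4}$), and every time integral reduces to $\int_{u_\infty}^{u(s)}|u'|^{-k}\,du'\lesssim |u_\infty|^{1-k}$. Since $|u_\infty|\ge a/4$ and $(O+R+V)^{20}\ll a^{1/16}$ by the global bootstrap, the total contributions are smaller than $\tfrac14 p^3(0)$, $\tfrac14 C_{p^4}p^3(0)$, $\tfrac14 C_{p^A}p^3(0)$ respectively, improving the weak bootstrap to the claimed $\tfrac12,\tfrac32$ bounds.

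The only nontrivial point is the term $e_A(b^B)$ in $\Gamma^B_{3A}$, which formally involves one extra derivative of the shift. This is handled by the transport equation $\snabla_4 b = 2(\etabar-\eta)+\chi\cdot b$ with $b\equiv 0$ on $\Hbar_0$: integration in $\ubar\in[0,1]$ together with Gr\"onwall's inequality applied with the bootstrap on $\eta,\etabar,\chi$ gives $\|b\|_{L^\infty(S_{u,\ubar})}\lesssim Oa^{1/2}|u|^{-1}$, and commuting once with an angular derivative (using $\snabla b\equiv 0$ on $\Hbar_0$) yields $\|\snabla b\|_{L^\infty(S_{u,\ubar})}\lesssim Oa^{1/2}|u|^{-2}$, which is exactly the decay rate of $\chibarhat$. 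Consequently this term enters on equal footing with the other Ricci contributions and is absorbed in the estimate above.
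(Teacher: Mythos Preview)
Your argument is correct and follows essentially the same route as the paper: a bootstrap on $(p^3,|u|^2 p^4,|u|^2 p^A)$, the explicit geodesic equations in the null frame, the key cancellation obtained by writing $\tr\chibar=-\tfrac{2}{|u|}+\tildetr$ when differentiating the weighted quantities, and closure via the $L^\infty$ Ricci-coefficient bounds together with $|u|\ge a/4$. Your treatment of the $e_A(b^B)$ contribution via the transport equation for $b$ is in fact more explicit than the paper's, which simply absorbs it into the bootstrap constant.

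One slip to correct: the inequality $\int_{u_\infty}^{u(s)}|u'|^{-k}\,du'\lesssim |u_\infty|^{1-k}$ is false as written, since $|u_\infty|>|u(s)|$; the integral is controlled by $|u(s)|^{1-k}$, and it is the lower bound $|u(s)|\ge a/4$ (not $|u_\infty|\ge a/4$) that yields the required smallness.
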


\begin{proof}
The evolution of the various momentum components along the geodesic is given by the \textit{geodesic equations}:

\[ \dot{p}^{\mu}(s) + \Gamma^{\mu}_{\alpha \beta}(s) \hsp p^{\alpha}(s)\hsp p^{\beta}(s) =0.    \]In the frame constructed, they read as follows:

\begin{equation}
\begin{split}
\dot{p}^4(s) = -\frac{1}{4}\tr\chibar \hsp \gslash_{AB} \hsp p^A(s) \hsp p^B(s) - \frac{1}{2}\hsp \chibarhat_{AB} \hsp p^A(s) \hsp p^B(s) -2 \hsp \etabar_A \hsp p^A(s) p^4(s),
\end{split}\label{pfourd}
\end{equation}
\begin{equation}
\begin{split}\label{pthreed}
    \dot{p}^3(s) = -\frac{1}{4}\tr\chi \hsp \gslash_{AB} \hsp p^A(s) \hsp p^B(s) - \frac{1}{2}\hsp \chihat_{AB} \hsp p^A(s) \hsp p^B(s) -(\eta_A - \etabar_A) \hsp p^A(s) p^3(s)-\omegabar \hsp p^3(s)p^3(s),
\end{split}
\end{equation}

\vspace{1mm}

\be \label{pAd} \begin{split} \dot{p}^A(s) =& -\slashed{\Gamma}^A_{BC} p^B \hsp p^C - \tr\chibar \hsp p^A \hsp p^3 - \tr\chi \hsp p^A \hsp p^4 -2 \hsp \chibarhat^A_B \hsp p^B \hsp p^3 - 2 \hsp \chihat^A_B \hsp p^B \hsp p^4 +(\slashed{\nabla}_{e_B}b)^A \hsp p^B \hsp p^4 \\&- b^C \hsp \hsp \slashed{\Gamma}^A_{BC} \hsp p^B \hsp p^4 -2 \hsp p^3 \hsp p^4 \hsp(\eta_A + \etabar_A).
\end{split} \ee At this point, we note the control on the various Ricci coefficients and the metric coefficient $\gamma$ stemming from the hierarchy:
 \be  \begin{split} \lvert \tr\chibar \rvert \lesssim &\frac{1}{\lvert u \rvert}, \hspace{3mm} \lvert \chibarhat \rvert \lesssim \frac{\al}{\lvert u\rvert^2} , \hspace{3mm} \lvert \etabar \rvert \lesssim \frac{\al}{\lvert u \rvert^2}, \hspace{3mm}   \lvert \omegabar \rvert \lesssim \frac{a}{\lvert u \rvert^3},      \\  &\lvert \tr\chi \rvert \lesssim \frac{1}{\lvert u \rvert}, \hspace{3mm} \lvert \chihat \rvert \lesssim \frac{\al}{\lvert u\rvert}, \hspace{3mm}  \lvert \eta \rvert \lesssim \frac{\al}{\lvert u \rvert^2},\\& \inftySu{\Gammaslash} \lesssim \frac{\al}{\lvert u \rvert^2}, \hsp \inftySu{b} \lesssim \frac{1}{\al}.  \end{split}  \label{bounds}   \ee
We use the following bootstrap assumptions:
\be \label{zeroorderbootstrap1}  \frac{1}{4}\hsp p^3(0) \leq \hsp p^3(s) \leq 2 p^3(0), \hspace{3mm} 0\leq  \lvert u(s) \rvert^{2} \hsp p^4(s) \leq 2 \hsp C_{p^4} \hsp p^3(0),   \ee \be \lvert u(s)\rvert^{2} \hsp \lvert p^A(s) \rvert \leq 2 \hsp C_{p^A} \hsp p^3(0), \label{zeroorderbootstrap2} \ee for $A=1, \hsp 2$ and for all $s$ such that $\gamma(s)$ lies in the slab of existence $\mathcal{D}$.

\vspace{3mm}

\noindent 
We begin with $p^3$. Given the control above, we arrive at

\be \lvert \dot{p}^3(s) \rvert \leq \frac{4\hsp C_{p^4} \left(p^3(0)\right)^2}{\lvert u\rvert^3 }\ee Since the geodesic is incoming, one could use the restricted function $\restri{u}{\gamma}$ as a different parameter for the geodesic. The relation between $u$ and $s$ is given by \be \label{du}  \frac{\text{d}u}{\text{d}s}= p^3(s)   \ee This implies that \be \frac{\text{d}p^3}{\text{d}u} =  \hsp \frac{1}{p^3} \hsp \dot{p}^3.\ee We thus arrive at 
\begin{equation}
   \big\lvert  \frac{\text{d}p^3 }{\text{d}u} \big\rvert=  \big \lvert \frac{\dot{p}^3}{p^3} \big \rvert \leq \frac{4\hsp C_{p^4} \left(p^3(0)\right)^2}{\frac{1}{4} \hsp p^3(0) \hsp \lvert u\rvert^3 } \leq \frac{16 \hsp C_{p^4} \hsp p^3(0)}{\lvert u \rvert^3}.\label{313}
\end{equation}Integrating along the $e_3$ direction, we thus obtain

\be \lvert p^3(u)-p^3(u_{\infty}) \rvert \leq \frac{16 \hsp C_{p^4} \hsp p^3(u_{\infty})}{\lvert u \rvert^2}, \ee whence we arrive at 

\begin{equation}
\left( 1- \frac{16 \hsp C_{p^4}}{\lvert u \rvert^2} \right) \hsp p^3(u_{\infty})\leq p^3(u) \hsp \leq \left( 1+ \frac{16 \hsp C_{p^4}}{\lvert u \rvert^2} \right) \hsp p^3(u_{\infty}),
\end{equation}which implies the desired control on $p^3$. For $p^4,$ have 

\begin{equation}
\begin{split}
\frac{d (\lvert u \rvert^2 \hsp p^4)}{du} &= \lvert u \rvert^2 \hsp \frac{\dot{p}^4}{p^3} -2 \lvert u \rvert \hsp p^4 = \lvert u \rvert^2  \hsp \left( -(\tr\chibar+ \frac{2}{\lvert u \rvert})\hsp p^4 - \frac{1}{2}\hsp \frac{\chibarhat_{AB}\hsp p^A \hsp p^B}{p^3}- \hsp \frac{2 \hsp \etabar_A \hsp p^A \hsp p^4}{p^3}- \hsp \omegabar \hsp  p^3 \right). 
\end{split}
\end{equation}Using the bound $\big \lvert \tr\chibar + \frac{2}{\lvert u \rvert}\big\rvert \leq \frac{O}{\lvert u \rvert^2}$ , where $O$ is the bootstrap constant for the total Ricci norm (TBD), we have the following:

\begin{equation}
\big \lvert \left(\tr\chibar + \frac{2}{\lvert u \rvert}\right) \hsp p^4 \big\rvert \leq \frac{2 \hsp C_{p^4} \hsp p^3(u_{\infty})}{\lvert u\rvert^4},
\end{equation}

\begin{equation}
\big\lvert \hsp \frac{\chibarhat_{AB}\hsp p^A \hsp p^B}{p^3} \big\rvert \leq \frac{\inftySu{\chibarhat} \hsp \slashed{g}_{AB} \hsp p^A \hsp p^B}{p^3}\leq \frac{2 \hsp \al \hsp C_{p^4} \hsp p^3(u_{\infty})}{\lvert u \rvert^4},
\end{equation}

\be 
\big \lvert \frac{\etabar_A \hsp p^A \hsp p^4}{ p^3} \big\rvert \leq \frac{\inftySu{\etabar}\hsp \sqrt{\gslash_{AB} \hsp p^A \hsp p^B} \hsp p^4}{p^3} \leq \frac{4 \al \hsp C_{p^4}\hsp \sqrt{3\hsp C_{p^4}} \hsp p^3(u_{\infty})}{\lvert u \rvert^5},\ee

\be \big \lvert \omegabar \hsp p^3 \big\rvert \leq 2p^3(u_{\infty})\inftySu{\omegabar}\leq 2p^3(u_{\infty}) \frac{a \hsp O}{\modu^3}. \ee

%\begin{equation}
%\lvert 2 \hsp (\Omega^2-1)\hsp p^4 \rvert < \frac{17 \hsp C_{p^4} p^3(u_{\infty}) O}{\lvert u \rvert^3}.
%\end{equation}
Putting everything together, we arrive at
\begin{align}
\frac{d|u|^{2}p^{4}}{du}\lesssim&  \frac{2 \hsp C_{p^4} \hsp p^3(u_{\infty})}{\lvert u\rvert^2}+ \frac{2 \hsp \al \hsp C_{p^4} \hsp p^3(u_{\infty})}{\lvert u \rvert^2}+ \frac{4 \al \hsp C_{p^4}\hsp \sqrt{3\hsp C_{p^4}} \hsp p^3(u_{\infty})}{\lvert u \rvert^3}+\frac{8 \hsp C_{p^4} \hsp p^3(u_{\infty})}{\lvert u \rvert^3} \nonumber \\ 
+&\frac{17 \hsp C_{p^4} p^3(u_{\infty}) O}{\lvert u \rvert^3}
\end{align}
and therefore 
\begin{eqnarray}
 |u|^{2}p^{4}(u)\lesssim |u_{\infty}|^{2}p^{4}(u_{\infty})+ \frac{\al}{\modu}.   
\end{eqnarray}
Moving on to $p^A$, in a way similar to what we did for $p^4$, we arrive at the following:

\begin{equation}
\begin{split}
\frac{\text{d}\left( \lvert u \rvert^2 p^A \right)}{\text{d}u}&= \hsp \lvert u \rvert^2  \bigg( \frac{-\slashed{\Gamma}^A_{BC} \hsp p^B \hsp p^C}{p^3} - \left( \tr\chibar +\frac{2}{\lvert u \rvert} \right) \hsp p^A - \hsp \frac{\tr\chi \hsp p^A \hsp p^4}{p^3} - \hsp \chibarhat^A_{B}\hsp p^B - \hsp \frac{2 \chihat^A_B \hsp p^B \hsp p^4}{p^3} \\ &+ \frac{(\slashed{\nabla}_{e_B}b)^A \hsp p^B \hsp p^4}{p^3}- \hsp \frac{b^C \hsp \slashed{\Gamma}^A_{BC} \hsp p^B \hsp p^4}{p^3} - \hsp 2 \hsp p^4 \hsp (\eta_A +\etabar_A)\bigg) %+2 \hsp %(\Omega^2-1)\hsp \lvert u \rvert \hsp p^A.
\end{split}
\end{equation}Using the bootstrap assumptions \eqref{zeroorderbootstrap1}-\eqref{zeroorderbootstrap2} along with the bound $\big\lvert \tr\chibar + \frac{2}{\lvert u \rvert}\big\rvert \leq \frac{1}{\lvert u \rvert^2},$ we arrive at the bound

\begin{equation} \label{325}
 \frac{\text{d}\left( \lvert u \rvert^2 \lvert p^A \rvert \right)}{\text{d}u}  \leq \big\lvert \frac{\text{d}\left( \lvert u \rvert^2 p^A \right)}{\text{d}u} \big\rvert \leq \frac{20 \hsp C_{p^A}\hsp p^3(0)}{\lvert u \rvert^2}.
\end{equation}Integrating \eqref{325}, we arrive at

\begin{equation}
|u|^{2}|p^{A}(u)|\leq |u_{\infty}|^{2}|p^{A}(\infty)|+\frac{20 \hsp C_{p^A}\hsp p^3(0)}{\lvert u \rvert}.
\end{equation}	This concludes the bootstrap argument.
\end{proof}
\noindent We are now in a position to prove Lemma \ref{spreadlemma}.

\vspace{3mm}

\begin{proof}
Consider, as above, an incoming geodesic $(\theta^A(s), u(s), \ubar(s), p^A(s), p^3(s)).$ We want to calculate $\frac{d\ubar}{\text{d}u}$. We have \[ \frac{\text{d}\ubar(s)}{\text{d}s} = \frac{p^4(s)}{\Omega^2}.     \]Therefore, essentially,

\[  \frac{d\ubar}{\text{d}u} = \frac{\frac{\text{d}\ubar(s)}{\text{d}s}}{\frac{\text{d}u}{\text{d}s}}= \frac{p^4(s(u))}{\Omega^2 p^3}.  \]Given the bound \[ p^4(u)\leq \frac{C_{p^4}}{\lvert u\rvert^2}\]and the almost-constant bounds on $\Omega^2, p^3$, we have, for this given geodesic,

\begin{align}
\ubar(-\frac{a}{4}) = \ubar(u_{\infty}) +\int_{u_{\infty}}^{-\frac{a}{4}} \frac{p^4}{\Omega^2 p^3} \text{d}u^{
\prime}
 \lesssim \ubar(u_{\infty})+ \frac{C}{\modu},
\end{align}where $C$ is a constant which is very small compared to $a$ for $a$ large enough (since it only relies on $C_{p^4}, C_{p^3})$. Since $\frac{C}{\modu}$ is negligible compared to $1- \ubar(u_{\infty})$,  it follows that $0\leq \tilde{\ubar}_0 \leq 1$
. Finally, let us note that since $p^3>0$ always, we have $\frac{\text{d}\ubar}{\text{d}u}$ is always positive, whence the incoming geodesic always tilts towards $\ubar=1$, but never reaches it for our given set of data.\end{proof}

%\subsection{Estimates on the metric components}
%\vspace{3mm}
%For the metric component $\Omega$, the induced metric $\gamma$ of $S_{u,\ubar}$ and  for the area of $S_{u,\ubar}$ the following propositions hold:

\subsection{Transport inequalities}
\begin{proposition}
Under the assumptions of Theorem \ref{mainone} and the bootstrap assumptions \eqref{boundsbootstrap}, there holds

\be \twoSu{\phi} \lesssim \lVert \phi \rVert_{L^2(S_{u,\ubar^{\prime}})}+ \int_{\ubar^{\prime}}^{\ubar} \lVert \snabla_4 \phi \rVert_{L^2(S_{u,\ubar^{\prime\prime}})}\text{d}\ubar^{\prime\prime}, \ee

\be \lVert{\phi}\rVert_{L^4(S_{u,\ubar})} \lesssim \lVert \phi \rVert_{L^4(S_{u,\ubar^{\prime}})}+ \int_{\ubar^{\prime}}^{\ubar} \lVert \snabla_4 \phi \rVert_{L^4(S_{u,\ubar^{\prime\prime}})}\text{d}\ubar^{\prime\prime},\ee for an $S_{u,\ubar}-$tangent tensor $\phi$ of arbitrary rank. 
\end{proposition}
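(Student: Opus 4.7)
The plan is to derive a scalar differential inequality for $\ubar\mapsto\lVert\phi\rVert_{L^p(S_{u,\ubar})}$ along outgoing null hypersurfaces and then integrate. Using $e_4=\Omega^{-2}\partial_{\ubar}$, writing the surface volume form as $d\mu_{\gslash}=\sqrt{\det\gslash}\,d\theta^1 d\theta^2$, and invoking the first variational formula $\slashed{\mathcal L}_{e_4}\gslash=2\chi$, which gives $\partial_{\ubar}\log\sqrt{\det\gslash}=\Omega^2\tr\chi$, for any scalar $F$ one obtains the commutation identity
\[
\frac{d}{d\ubar}\int_{S_{u,\ubar}}F\,d\mu_{\gslash}=\int_{S_{u,\ubar}}\Omega^2\bigl(e_4(F)+\tr\chi\, F\bigr)\,d\mu_{\gslash}.
\]
Specializing to $F=|\phi|^p$ and using metric compatibility of $\snabla$, which yields the pointwise identity $e_4|\phi|^p=p|\phi|^{p-2}\langle\snabla_4\phi,\phi\rangle$ wherever $\phi\neq 0$, Cauchy–Schwarz (for $p=2$) and Hölder with exponents $4/3$ and $4$ (for $p=4$) produce
\[
\frac{d}{d\ubar}\lVert\phi\rVert_{L^p(S_{u,\ubar})}^p\le p\lVert\Omega^2\rVert_{L^\infty(S_{u,\ubar})}\lVert\phi\rVert_{L^p(S_{u,\ubar})}^{p-1}\lVert\snabla_4\phi\rVert_{L^p(S_{u,\ubar})}+\lVert\Omega^2\tr\chi\rVert_{L^\infty(S_{u,\ubar})}\lVert\phi\rVert_{L^p(S_{u,\ubar})}^p.
\]

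Dividing by $p\lVert\phi\rVert_{L^p(S_{u,\ubar})}^{p-1}$ where positive (and handling the zero set of $\phi$ by the routine regularization $|\phi|\leadsto \sqrt{|\phi|^2+\varepsilon}$ with $\varepsilon\to 0^+$ to keep the derivative well-defined) yields
\[
\frac{d}{d\ubar}\lVert\phi\rVert_{L^p(S_{u,\ubar})}\lesssim \lVert\snabla_4\phi\rVert_{L^p(S_{u,\ubar})}+\lVert\tr\chi\rVert_{L^\infty(S_{u,\ubar})}\lVert\phi\rVert_{L^p(S_{u,\ubar})},
\]
with an implicit constant depending only on $\lVert\Omega^2\rVert_{L^\infty}$. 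By Proposition \ref{Omegabounds}, $\lVert\Omega^{-2}-1\rVert_{L^\infty}\lesssim aO^2/\modu^2\ll 1$ on the slab of existence, so $\Omega^2$ is uniformly bounded above and below by absolute constants.

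To conclude, the bootstrap bound $\lVert\tr\chi\rVert_{L^\infty(S_{u,\ubar})}\lesssim O/\modu$ together with $\ubar\in[0,1]$ and $\modu\ge a/4\gg O$ give $\int_{\ubar'}^{\ubar}\lVert\tr\chi\rVert_{L^\infty}\,d\ubar''\lesssim O/\modu\ll 1$, so Grönwall applied on $[\ubar',\ubar]$ turns the exponential into an absolute multiplicative constant and produces
\[
\lVert\phi\rVert_{L^p(S_{u,\ubar})}\lesssim \lVert\phi\rVert_{L^p(S_{u,\ubar'})}+\int_{\ubar'}^{\ubar}\lVert\snabla_4\phi\rVert_{L^p(S_{u,\ubar''})}\,d\ubar''
\]
for $p=2,4$. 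The only mildly delicate point is the nondifferentiability of $|\phi|^p$ on the zero set, handled by the standard $\varepsilon$-regularization; everything else follows directly from the variational identity together with the uniform bootstrap control on $\tr\chi$ and $\Omega$, and no further obstacle arises.
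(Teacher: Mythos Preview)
Your proof is correct and follows essentially the same approach as the paper: differentiate $\lVert\phi\rVert_{L^p}^p$ via the first variational formula, apply Cauchy--Schwarz/H\"older to extract $\lVert\snabla_4\phi\rVert_{L^p}$, and close with Gr\"onwall using the bootstrap bounds on $\tr\chi$ and $\Omega$. Your treatment is slightly more explicit in handling the $\varepsilon$-regularization on the zero set and in stating the H\"older exponents for $p=4$, but there is no substantive difference.
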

\begin{proof}
For a scalar $f$, we use the $1^{\text{st}}$ variational formula to get

\[ \frac{\text{d}}{\text{d}\ubar}\int_{S_{u,\ubar}}f \hsp \text{d}\mu_{S_{u,\ubar}} =  \int_{S_{u,\ubar}} \bigg(\frac{\text{d}f}{\text{d}\ubar}+ \Omega^2 \tr\chi \hsp f\bigg) \hsp  \text{d}\mu_{S_{u,\ubar}} = \int_{S_{u,\ubar}}\Omega^2(e_4(f)+\tr\chi \hsp f) \text{d}\mu_{S_{u,\ubar}}.\]
\noindent Taking $f=\lvert \phi\rvert_{\gslash}^2$, applying the Cauchy-Schwartz inequality on the sphere and utilising the bounds on $\Omega$ and $\tr\chi$, we have
\[ 2\twoSu{\phi} \hsp \frac{\text{d}}{\text{d}\ubar}\twoSu{\phi} \lesssim \twoSu{\phi}\twoSu{\snabla_4 \phi}+\frac{O}{\modu}\twoSu{\phi}^2, \]which implies 
\[\frac{\text{d}}{\text{d}\ubar}\twoSu{\phi} \lesssim \twoSu{\snabla_4 \phi}+\frac{O}{\modu}\twoSu{\phi}.\] An application of Gr\"onwall's inequality now yields the desired result. Taking $f=\lvert \phi\rvert_{\gslash}^4$ yields the corresponding result for the $L^4-$norm.\end{proof}

\begin{proposition}
Under the assumptions of Theorem \ref{mainone} and the bootstrap assumptions \eqref{boundsbootstrap}, there holds

\be \twoSu{\phi} \lesssim \lVert \phi \rVert_{L^2(S_{u^{\prime},\ubar})}+ \int_{u^{\prime}}^{u} \lVert \snabla_3 \phi \rVert_{L^2(S_{u^{\prime\prime}\ubar)}}\text{d}u^{\prime\prime}, \ee

\be \fourSu{\phi} \lesssim \lVert \phi \rVert_{L^4(S_{u^{\prime},\ubar})}+ \int_{u^{\prime}}^{u} \lVert \snabla_3 \phi \rVert_{L^4(S_{u^{\prime\prime}\ubar)}}\text{d}u^{\prime\prime}, \ee for an $S_{u,\ubar}-$tangent tensor $\phi$ of arbitrary rank.
\end{proposition}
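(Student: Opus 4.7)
\emph{Proof plan.} The strategy mirrors the proof of the preceding proposition for the $e_4$--direction, but with two additional technical wrinkles stemming from the fact that in our frame $e_3=\partial_u+b^A\partial_{\theta^A}$ carries a nontrivial shift, whereas $e_4=\Omega^{-2}\partial_{\ubar}$ does not. The idea is to differentiate the scalar integral $\int_{S_{u,\ubar}}|\phi|_{\gslash}^p\,\mathrm{d}\mu_{S_{u,\ubar}}$ in the parameter $u$ (at fixed $\ubar$), use the first variational formula $\mathcal{L}_{e_3}\gslash=2\chibar$, absorb the shift contributions via integration by parts on the sphere, and close via Gr\"onwall's inequality.

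\noindent First, I would compute, for a smooth scalar $f$, the derivative
\[
\frac{\mathrm{d}}{\mathrm{d}u}\int_{S_{u,\ubar}} f\,\sqrt{\det\gslash}\,\mathrm{d}\theta^1\mathrm{d}\theta^2
= \int_{S_{u,\ubar}}\bigl(\partial_u f+\tfrac{1}{2}f\,\partial_u\log\det\gslash\bigr)\sqrt{\det\gslash}\,\mathrm{d}\theta^1\mathrm{d}\theta^2.
\]
Using $\partial_u=e_3-b^A\partial_{\theta^A}$ and the variational consequence $e_3(\log\det\gslash)=2\,\tr\chibar$, I would rewrite the right-hand side in terms of $e_3(f)$, $\tr\chibar\,f$, and shift-terms of the form $b^A\partial_{\theta^A}f$ and $b^A\partial_{\theta^A}(\sqrt{\det\gslash})\,f$. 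These shift-terms combine into a total derivative $\partial_{\theta^A}(b^A f\sqrt{\det\gslash})$ modulo a remainder proportional to $(\partial_{\theta^A}b^A)\,f\sqrt{\det\gslash}$; the total derivative integrates to zero on the closed surface $S_{u,\ubar}$, yielding
\[
\frac{\mathrm{d}}{\mathrm{d}u}\int_{S_{u,\ubar}} f\,\mathrm{d}\mu_{S_{u,\ubar}}
=\int_{S_{u,\ubar}}\!\Big(e_3(f)+\bigl(\tr\chibar+\slashed{\mathrm{div}}\,b\bigr) f\Big)\mathrm{d}\mu_{S_{u,\ubar}}.
\]

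\noindent Next, I would take $f=|\phi|_{\gslash}^2$ (respectively $|\phi|_{\gslash}^4$ for the $L^4$ version), apply the Cauchy--Schwarz inequality on the sphere, and use the bootstrap bounds $|\tr\chibar|\lesssim|u|^{-1}$ and $|b|\lesssim a^{-1/2}$ from \eqref{bounds} together with control of $\snabla b$ inherited from the norms $\mathcal{O}$. This produces
\[
\frac{\mathrm{d}}{\mathrm{d}u}\twoSu{\phi}\lesssim \twoSu{\snabla_3\phi}+\frac{C}{|u|}\twoSu{\phi},
\]
and an analogous inequality in the $L^4$ norm. Since $u$ ranges over the bounded interval $[u_\infty,-a/4]$ with $|u|\geq a/4$, the Gr\"onwall factor $\exp\!\bigl(\int_{u'}^{u}\tfrac{C}{|u''|}\mathrm{d}u''\bigr)$ is uniformly bounded, so integrating in $u$ yields the advertised inequalities.

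\noindent The only genuinely delicate point, and the one which does not arise in the $e_4$ proof, is the correct handling of the shift $b^A\partial_{\theta^A}$ in $e_3$; all other steps are completely parallel. Provided one carries out the integration by parts on the sphere cleanly and tracks that the auxiliary term $\slashed{\mathrm{div}}\,b$ is controlled uniformly (which it is, by the Ricci norms together with the bound on $\partial_{\theta^A}\log\det\gslash$ implied by Proposition~\ref{propdetgslash} and Proposition~\ref{31}), the proof closes directly, with no further obstacle.
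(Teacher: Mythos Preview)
Your overall strategy is sound, but there is a genuine gap in the Gr\"onwall step, and a minor miscalculation in the variational formula.

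\textbf{The variational formula.} Your assertion $e_3(\log\det\gslash)=2\tr\chibar$ is not quite right in this frame: since $[e_3,e_A]=-e_A(b^C)e_C\neq 0$, the Lie derivative identity $\mathcal{L}_{e_3}\gslash=2\chibar$ gives $e_3(\gslash_{AB})=2\chibar_{AB}-\gslash_{CB}\partial_A b^C-\gslash_{AC}\partial_B b^C$, hence $e_3(\log\det\gslash)=2\tr\chibar-2\partial_A b^A$. If you carry this extra term through, \emph{all} shift contributions combine into the exact divergence $-\partial_A(b^A f\sqrt{\det\gslash})$, leaving no remainder. The clean identity is
\[
\frac{\mathrm{d}}{\mathrm{d}u}\int_{S_{u,\ubar}} f\,\mathrm{d}\mu=\int_{S_{u,\ubar}}\big(e_3(f)+\tr\chibar\,f\big)\,\mathrm{d}\mu,
\]
with no $\slashed{\mathrm{div}}\,b$ term. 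This is what the paper writes.

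\textbf{The real gap.} You bound $|\tr\chibar|\lesssim |u|^{-1}$ and invoke Gr\"onwall, asserting the factor $\exp\!\big(\int_{u'}^{u}\tfrac{C}{|u''|}\,\mathrm{d}u''\big)$ is uniformly bounded because ``the interval is bounded''. But the interval $[u_\infty,-a/4]$ has length $|u_\infty|-a/4$, which is \emph{not} uniformly bounded: the paper requires constants independent of $u_\infty$ (data prescribed near past null infinity). Your Gr\"onwall factor is $(|u'|/|u|)^{C}$, which blows up when $u'=u_\infty$ and $|u_\infty|\to\infty$. The coefficient $1/|u|$ is simply not integrable on the relevant range.

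\textbf{The fix.} The paper avoids Gr\"onwall entirely by using the \emph{sign} of $\tr\chibar$: under the bootstrap one has $\tr\chibar<0$ throughout, so $\int_{S_{u,\ubar}}\tr\chibar|\phi|^2\,\mathrm{d}\mu\le 0$ and the term is simply dropped, yielding $\frac{\mathrm{d}}{\mathrm{d}u}\twoSu{\phi}\le\twoSu{\snabla_3\phi}$ directly. Integrating gives the claim with constant $1$. This sign observation is the one ingredient your argument is missing.
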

\begin{proof}
We again take the $1^{\text{st}}$ variational formula in the $u-$direction to obtain

\be \underline{L}\int_{S_{u,\ubar}} \hsp f \hsp \text{d}\mu_{S_{u,\ubar}} = \int_{S_{u,\ubar}} \big(\frac{\text{d}f}{\text{d}u} + \tr\chibar \hsp f\big)   \hsp \text{d}\mu_{S_{u,\ubar}} = \int_{S_{u,\ubar}} (e_3(f)+\tr\chibar \hsp f) \hsp \text{d}\mu_{S_{u,\ubar}}. \ee Working as before and using the fact that $\tr\chibar<0$, we obtain 

\[ \twoSu{\phi}\frac{\text{d}}{\text{d}u }\twoSu{\phi} \lesssim \twoSu{\phi}\twoSu{\snabla_3\phi}.    \]This implies \[\frac{\text{d}}{\text{d}u }\twoSu{\phi} \lesssim \twoSu{\snabla_3\phi},\]which in turn implies the result. 
\end{proof}For the purposes of our analysis, it will be necessary to obtain and exploit certain finer bounds, along the $e_3$-direction, than those given by the above Proposition. These bounds will come from incorporating weights into the norms, weights which themselves depend on the coefficients in front of the linear term with a $\tr\chibar$-factor. The essential observation is that, under the bootstrap assumptions \eqref{boundsbootstrap}, $\tr\chibar$ may 
 be viewed as $-\frac{2}{\modu}$, so that $\tr\chibar + \frac{2}{\modu}$ is, roughly speaking, $O(\modu^{-2})$ and hence, itself, integrable along the $u-$direction. 

 \begin{proposition}\label{evolemma}(Evolution Lemma) We continue to work under the assumptions of Theorem \ref{mainone} and the bootstrap assumptions \eqref{boundsbootstrap}. Assume that $\phi$ and $F$ are $S_{u,\ubar}-$tangent tensor fields of rank $k$ satisfying the following transport equation

 \[ \snabla_3 \phi_{A_1\dots A_k}+ \lambda_0 \tr\chibar \phi_{A_1\dots A_k}=F_{A_1\dots A_k}.    \]Let $p \in \{2,4\}$. Denoting $\lambda_1:= 2(\lambda_0-\frac{1}{p})$, the following inequality holds true:

 \[ \lvert u \rvert^{\lambda_1}\lVert \phi \rVert_{L^p(S_{u,\ubar})} \lesssim \lvert u_{\infty}\rvert^{\lambda_1}\lVert \phi \rVert_{L^p(S_{u_{\infty},\ubar})} +\intu \lvert u^{\prime}\rvert^{\lambda_1}\lVert F \rVert_{L^p(S_{u^{\prime},\ubar})}\duprime .   \]

 \end{proposition}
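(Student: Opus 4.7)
The plan is to run a weighted $L^p$ energy estimate along the incoming null direction, exploiting the near-cancellation between the linear $\lambda_0\tr\chibar$-term and the weight $|u|^{\lambda_1}$. I would begin by applying the first variational formula along $e_3$. For a scalar $f$ this gives
\[
\frac{d}{du}\int_{S_{u,\ubar}} f\,d\mu_{S_{u,\ubar}}
= \int_{S_{u,\ubar}}\bigl(e_3(f)+\tr\chibar\,f\bigr)\,d\mu_{S_{u,\ubar}}.
\]
Taking $f=|\phi|_{\slashed g}^{p}$ and using the transport equation $\snabla_3\phi=F-\lambda_0\tr\chibar\,\phi$, I get
\[
\frac{d}{du}\|\phi\|_{L^p(S_{u,\ubar})}^{p}
=\int_{S_{u,\ubar}}\Bigl(p\,|\phi|^{p-2}\langle\phi,F\rangle+(1-p\lambda_0)\tr\chibar\,|\phi|^{p}\Bigr)\,d\mu_{S_{u,\ubar}}.
\]

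Next I would insert the weight. Since $|u|=-u$ along the incoming direction, $\frac{d}{du}|u|^{\lambda_1 p}=-\lambda_1 p\,|u|^{\lambda_1 p-1}$. Writing
\[
(1-p\lambda_0)\tr\chibar\;=\;\frac{p\lambda_1}{|u|}\;+\;(1-p\lambda_0)\Bigl(\tr\chibar+\tfrac{2}{|u|}\Bigr),
\]
the choice $\lambda_1=2(\lambda_0-\tfrac{1}{p})$ is exactly what is needed to cancel the linear contribution from the weight differentiation. By Proposition~\ref{31} and the bootstrap assumptions \eqref{boundsbootstrap}, one has $|\tr\chibar+2/|u||\lesssim O/|u|^{2}$, which is integrable along $u$. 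Combining these observations,
\[
\frac{d}{du}\Bigl(|u|^{\lambda_1 p}\|\phi\|_{L^p(S_{u,\ubar})}^{p}\Bigr)
\lesssim \frac{1}{|u|^{2}}\,|u|^{\lambda_1 p}\|\phi\|_{L^p(S_{u,\ubar})}^{p}
+ p\,|u|^{\lambda_1 p}\int_{S_{u,\ubar}}|\phi|^{p-1}|F|\,d\mu_{S_{u,\ubar}}.
\]
Hölder's inequality on $S_{u,\ubar}$ gives $\int |\phi|^{p-1}|F|\le \|\phi\|_{L^p(S_{u,\ubar})}^{p-1}\|F\|_{L^p(S_{u,\ubar})}$, so setting $X(u):=|u|^{\lambda_1}\|\phi\|_{L^p(S_{u,\ubar})}$ and dividing by $p\,X^{p-1}$ yields the scalar differential inequality
\[
\frac{dX}{du}\;\lesssim\;\frac{1}{|u|^{2}}X(u)\;+\;|u|^{\lambda_1}\|F\|_{L^p(S_{u,\ubar})}.
\]

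Finally I would apply Grönwall's inequality. The integrating factor $\exp\bigl(C\int_{u_\infty}^{u}|u'|^{-2}\,du'\bigr)\le \exp(C/|u|)$ is uniformly bounded in the slab $\mathcal D$, so
\[
X(u)\;\lesssim\;X(u_\infty)\;+\;\int_{u_\infty}^{u}|u'|^{\lambda_1}\|F\|_{L^p(S_{u',\ubar})}\,du',
\]
which is exactly the claimed estimate. The step I expect to require the most care is the weight-cancellation computation, specifically verifying that the residual term $(1-p\lambda_0)(\tr\chibar+2/|u|)$ indeed decays like $|u|^{-2}$ uniformly (this uses the bootstrap control on the renormalized trace $\widetilde{\tr\chibar}$ rather than on $\tr\chibar$ directly), and the careful treatment of $\frac{d}{du}\|\phi\|_{L^p}^p$ at points where $\phi$ vanishes (handled by the standard regularization $(|\phi|^2+\varepsilon)^{p/2}$ and $\varepsilon\to 0$). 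Everything else is a direct consequence of the first variational formula and Grönwall.
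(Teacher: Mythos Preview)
Your proof is correct and follows the standard weighted-$L^p$ transport argument that the paper invokes by reference to \cite{AnThesis}; the key cancellation $p\lambda_1 = 2(p\lambda_0-1)$ and the Gr\"onwall step are exactly as expected. One minor point: the bound $|\tr\chibar+2/|u||\lesssim O/|u|^2$ follows directly from the bootstrap on $\widetilde{\tr\chibar}$ (as you note at the end), not from Proposition~\ref{31}, which concerns the eigenvalues of $\gslash^{-1}(u,0)\gslash(u,\ubar)$.
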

\begin{proof}
This is the same as in \cite{AnThesis}, the only exception being the different bootstrap assumption on $\tr\chibar+\frac{2}{\modu}$. 
\end{proof}
In addition, as we work in a low regularity regime, we also have to establish so-called \textit{codimension-1 trace inequalities}, in order to control the $L^{4}(S_{u,\ubar})-$norm of a tensorfield in terms of $L^{2}(H/\Hbar)-$norms. 
\begin{proposition}[Codimension-1 trace inequality along incoming null hypersurfaces]
\label{L4rough}
Let the assumptions of Theorem~\ref{mainone} hold, and suppose the bootstrap assumptions~\eqref{boundsbootstrap} are in force. Let $\varphi$ be a scalar (or tensorial) function defined on the domain of dependence associated with the double null foliation, and suppose that $\varphi$ is sufficiently regular so that the quantities below are well-defined. Then the following codimension-one trace inequality holds:
\begin{align}
\label{eq:L4trace}
\|\varphi\|_{L^4(S_{u,\ubar})}
&\lesssim \|\varphi\|_{L^4(S_{u_\infty,\ubar})}
+ \|\snabla_3 \varphi\|^{1/2}_{L^2(\Hbar_{\ubar}^{(u_{\infty},u)})}
\left( \left\| |u'|^{-1} \varphi \right\|^{1/2}_{L^2(\Hbar_{\ubar}^{(u_{\infty},u)})}
+ \|\slashed{\nabla} \varphi\|^{1/2}_{L^2(\Hbar_{\ubar}^{(u_{\infty},u)})} \right).
\end{align}
\end{proposition}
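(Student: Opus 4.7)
The plan is to differentiate $\|\varphi\|^{4}_{L^{4}(S_{u',\ubar})}$ along the $e_3$ direction using the first variational formula, integrate from $u_\infty$ to $u$, and then reduce the main nonlinear error term via a Cauchy--Schwarz on $\Hbar_{\ubar}^{(u_\infty,u)}$ combined with the two-dimensional Gagliardo--Nirenberg inequality on the sphere $S^{2}_{|u'|}$. First, the first variational formula yields
\[
\frac{d}{du'}\!\!\int_{S_{u',\ubar}}\!|\varphi|^{4}\,d\mu_{S} = \int_{S_{u',\ubar}}\!\bigl(4|\varphi|^{2}\varphi\snabla_3\varphi+\tr\chibar\,|\varphi|^{4}\bigr) d\mu_{S},
\]
and the bootstrap bound $|\tr\chibar|\lesssim |u'|^{-1}$ together with the slab-uniform bound on $\int_{u_\infty}^{u}|u'|^{-1}du'$ shows, via a Gr\"onwall absorption, that the $\tr\chibar$-term contributes only a harmless multiplicative constant. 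This reduces the task to estimating
\[
I := \int_{u_\infty}^{u}\!\!\int_{S_{u',\ubar}}\!|\varphi|^{3}|\snabla_3\varphi|\,d\mu_{S}\,du'.
\]

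Second, I would apply Cauchy--Schwarz on $\Hbar_{\ubar}^{(u_\infty,u)}$ to obtain $I \lesssim \|\snabla_3\varphi\|_{L^{2}(\Hbar_{\ubar})}\|\varphi\|^{3}_{L^{6}(\Hbar_{\ubar})}$, and then bound the $L^{6}(\Hbar)$-norm fiberwise via the 2D Gagliardo--Nirenberg inequality on $S^{2}_{|u'|}$ in the form
\[
\|\varphi\|^{6}_{L^{6}(S_{u',\ubar})} \lesssim \|\varphi\|^{4}_{L^{4}(S_{u',\ubar})}\bigl(\|\snabla\varphi\|^{2}_{L^{2}(S_{u',\ubar})}+|u'|^{-2}\|\varphi\|^{2}_{L^{2}(S_{u',\ubar})}\bigr).
\]
The $|u'|^{-2}$ weight is dictated precisely by the scaling of GN on the sphere of radius $|u'|$, and the uniformity of the GN constant in $u'$ is guaranteed by Proposition~\ref{propdetgslash}, which makes the induced metric $\gslash$ comparable to the round metric up to a factor of $|u'|^{2}$. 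Integrating in $u'$ and pulling out the supremum on the right yields
\[
\|\varphi\|^{6}_{L^{6}(\Hbar_{\ubar})} \lesssim \sup_{u'}\|\varphi\|^{4}_{L^{4}(S_{u',\ubar})}\bigl(\|\snabla\varphi\|^{2}_{L^{2}(\Hbar_{\ubar})}+\||u'|^{-1}\varphi\|^{2}_{L^{2}(\Hbar_{\ubar})}\bigr).
\]

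Combining with Step 1 gives
\[
\|\varphi\|^{4}_{L^{4}(S_{u,\ubar})} \lesssim \|\varphi\|^{4}_{L^{4}(S_{u_\infty,\ubar})} + \sup_{u'}\|\varphi\|^{2}_{L^{4}(S_{u',\ubar})}\,\|\snabla_3\varphi\|_{L^{2}(\Hbar_{\ubar})}\bigl(\|\snabla\varphi\|_{L^{2}(\Hbar_{\ubar})}+\||u'|^{-1}\varphi\|_{L^{2}(\Hbar_{\ubar})}\bigr).
\]
Taking the supremum on the left-hand side over $u \in [u_\infty,u]$ and absorbing $\sup\|\varphi\|^{2}_{L^{4}(S)}$ via Young's inequality $ab\le \tfrac{1}{2}a^{2}+\tfrac{1}{2}b^{2}$ yields a bound of the form $\|\varphi\|^{4}_{L^{4}(S)} \lesssim \|\varphi\|^{4}_{L^{4}(S_{u_\infty})} + \|\snabla_3\varphi\|^{2}_{L^{2}(\Hbar)}\bigl(\|\snabla\varphi\|_{L^{2}(\Hbar)}+\||u'|^{-1}\varphi\|_{L^{2}(\Hbar)}\bigr)^{2}$, whose fourth root is precisely the stated inequality.

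The principal obstacle lies in the interplay between the sphere-level GN estimate and the global $\Hbar$-level Cauchy--Schwarz: one must choose the H\"older exponents so that the supremum of $\|\varphi\|^{2}_{L^{4}(S)}$ produced on the right can be absorbed to the left without loss, while simultaneously ensuring that the $|u'|^{-1}$ weight generated by the sphere scaling matches the weight in the statement. Were the 2D Sobolev embedding $H^{1}(S)\hookrightarrow L^{\infty}(S)$ available, one could trivialize the estimate by an $L^{\infty}\cdot L^{2}$ H\"older split; its failure is precisely what forces the $L^{6}(S)$/GN detour above, and it is the scaling mismatch inherent in this detour that must be offset by the bootstrap bounds on $\gslash$ and $\tr\chibar$ from Section~\ref{section:s31} and Proposition~\ref{propdetgslash}.
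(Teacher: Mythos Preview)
Your overall architecture is exactly that of the paper: first-variational transport of $\|\varphi\|_{L^4(S)}^4$ along $e_3$, Cauchy--Schwarz on $\Hbar_{\ubar}$ to produce $\|\varphi\|_{L^6(\Hbar)}^3\|\snabla_3\varphi\|_{L^2(\Hbar)}$, the sphere-level Gagliardo--Nirenberg (called ``isoperimetric'' in the paper) to relate $L^6(S)$ to $L^4(S)$ and the weighted $H^1(S)$ norm, and a Young absorption to close. The paper executes these steps in a slightly different order but the algebra is equivalent.

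There is, however, a genuine error in your handling of the $\tr\chibar$ term. You claim that $|\tr\chibar|\lesssim|u'|^{-1}$ combined with a ``slab-uniform bound on $\int_{u_\infty}^{u}|u'|^{-1}\,du'$'' allows a Gr\"onwall absorption. That integral equals $\log(|u_\infty|/|u|)$, which is \emph{not} uniformly bounded in the regime $|u_\infty|\to\infty$ that the paper works in; the resulting Gr\"onwall factor would blow up. The correct argument is to exploit the sign: since $\tr\chibar<0$ throughout the region (it is $-2/|u'|$ to leading order), the contribution $\int_S\tr\chibar|\varphi|^4$ is nonpositive and may simply be dropped when bounding $\|\varphi\|_{L^4(S_{u,\ubar})}^4$ from above. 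Equivalently, as the paper does, decompose $\tr\chibar=(\tr\chibar+2/|u'|)-2/|u'|$; the first piece is $O(|u'|^{-2})$ by the bootstrap and hence genuinely integrable, while the second has the favorable sign. Once this is corrected your proof goes through.
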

\begin{proof}
We assume the setting of Theorem~\ref{mainone}, including the bootstrap assumptions \eqref{boundsbootstrap}, and work in the double null foliation framework. Let $\varphi$ be a scalar (or more generally tensorial) field defined on the spacetime domain under consideration. All norms and volume forms are defined with respect to the induced geometric structures on the spheres $S_{u,\ubar}$ and the incoming null hypersurface $\Hbar_{\ubar}$.

\vspace{1ex}

\noindent We begin by applying the geometric isoperimetric inequality on the sphere $S_{u,\ubar}$:
\begin{equation} \label{eq:isoperimetric}
\int_{S_{u,\ubar}} |\varphi|^6 \, d\mu_{\slashed{g}}
\lesssim
\left( \int_{S_{u,\ubar}} |\varphi|^4 \, d\mu_{\slashed{g}} \right)
\left( \int_{S_{u,\ubar}} \left( |u|^{-2} |\varphi|^2 + |\slashed{\nabla} \varphi|^2 \right) \, d\mu_{\slashed{g}} \right),
\end{equation}
where $d\mu_{\slashed{g}}$ denotes the volume form induced on $S_{u,\ubar}$ by the metric $\slashed{g}$. Integrating \eqref{eq:isoperimetric} in $u' \in [u_\infty, u]$ along the incoming null hypersurface $\Hbar_{\ubar}$, we obtain:
\begin{equation} \label{eq:isop-integrated}
\int_{u_\infty}^{u} \int_{S_{u',\ubar}} |\varphi|^6 \, d\mu_{\slashed{g}} \, du'
\lesssim
\sup_{u' \in [u_\infty, u]} \left( \int_{S_{u',\ubar}} |\varphi|^4 \, d\mu_{\slashed{g}} \right)
\int_{u_\infty}^{u} \int_{S_{u',\ubar}} \left( |u'|^{-2} |\varphi|^2 + |\slashed{\nabla} \varphi|^2 \right) \, d\mu_{\slashed{g}} \, du'.
\end{equation}

\noindent Next, we estimate the supremum in the right-hand side of \eqref{eq:isop-integrated} via a transport inequality derived from integrating along the incoming null generator $\snabla_3$ on $\Hbar_{\ubar}$. Using the fundamental theorem of calculus and the structure of null hypersurfaces, we compute:
\begin{align}
\label{eq:transport-L4}
\|\varphi\|_{L^4(S_{u,\ubar})}^4
&\lesssim
\|\varphi\|^4_{L^4(S_{u_\infty,\ubar})}
+ \int_{u_\infty}^{u} \frac{d}{du'} \left( \int_{S_{u',\ubar}} |\varphi|^4 \, d\mu_{\slashed{g}} \right) du' \nonumber \\
&= \|\varphi\|^4_{L^4(S_{u_\infty,\ubar})}
+ \int_{u_\infty}^{u} \int_{S_{u',\ubar}} \left( 4 |\varphi|^2 \langle \varphi, \snabla_3 \varphi \rangle + \tr\chibar |\varphi|^4 \right) \, d\mu_{\slashed{g}} \, du'.
\end{align}

\noindent To control the term involving $\tr\chibar$, we decompose
\[
\tr\chibar = \left( \tr\chibar + \frac{2}{|u|} \right) - \frac{2}{|u|},
\]
and observe that the anomalous term $\tr\chibar + \frac{2}{|u|}$ is integrable in $u'$ along $\Hbar_{\ubar}$ under the bootstrap assumptions. Applying Grönwall's inequality (if needed) and using Cauchy–Schwartz and Hölder, we estimate:
\begin{align}
\int_{u_\infty}^{u} \int_{S_{u',\ubar}} |\varphi|^2 |\snabla_3 \varphi| \, d\mu_{\slashed{g}} \, du'
&\leq \|\varphi\|_{L^6(\Hbar_{\ubar}^{(u_{\infty},u)})}^3 \|\snabla_3 \varphi\|_{L^2(\Hbar_{\ubar}^{(u_{\infty},u)})}, \nonumber
\end{align}
and conclude from \eqref{eq:transport-L4}:
\begin{equation} \label{eq:L4growth}
\|\varphi\|^4_{L^4(S_{u,\ubar})}
\lesssim
\|\varphi\|^4_{L^4(S_{u_\infty,\ubar})}
+ \|\varphi\|^3_{L^6(\Hbar_{\ubar}^{(u_{\infty},u)})} \|\snabla_3 \varphi\|_{L^2(\Hbar_{\ubar}^{(u_{\infty},u)})}.
\end{equation}

\noindent We now substitute the bound \eqref{eq:L4growth} into the right-hand side of \eqref{eq:isop-integrated} to deduce:
\begin{align}
&\int_{u_\infty}^{u} \int_{S_{u',\ubar}} |\varphi|^6 \, d\mu_{\slashed{g}} \, du' \notag\\
\lesssim&  
\left(
\|\varphi\|^4_{L^4(S_{u_\infty,\ubar})}
+ \|\varphi\|^3_{L^6(\Hbar_{\ubar}^{(u_{\infty},u)})} \|\snabla_3 \varphi\|_{L^2(\Hbar_{\ubar}^{(u_{\infty},u)})}
\right)
\int_{\Hbar_{\ubar}} \left( |u'|^{-2} |\varphi|^2 + |\slashed{\nabla} \varphi|^2 \right) \, d\mu_{\slashed{g}} \, du'.
\end{align}
\noindent We now apply Young's inequality to control the mixed term and deduce:
\begin{align}
\int_{\Hbar_{\ubar}} |\varphi|^6 \, d\mu_{\slashed{g}} \, du'
&\lesssim
\|\snabla_3 \varphi\|_{L^2(\Hbar_{\ubar}^{(u_{\infty},u)})}^2
\left(
\int_{\Hbar_{\ubar}} \left( |u|^{-2} |\varphi|^2 + |\slashed{\nabla} \varphi|^2 \right) \, d\mu_{\slashed{g}} \, du'
\right)^2 \nonumber \\
&\quad+
\|\varphi\|^4_{L^4(S_{u_\infty,\ubar})}
\int_{\Hbar_{\ubar}} \left( |u|^{-2} |\varphi|^2 + |\slashed{\nabla} \varphi|^2 \right) \, d\mu_{\slashed{g}} \, du'.
\end{align}
\noindent Substituting this estimate back into \eqref{eq:L4growth} and taking appropriate roots yields the desired inequality:
\[
\|\varphi\|_{L^4(S_{u,\ubar})}
\lesssim
\|\varphi\|_{L^4(S_{u_\infty,\ubar})}
+ \|\snabla_3 \varphi\|^{1/2}_{L^2(\Hbar_{\ubar}^{(u_{\infty},u)})}
\left(
\||u|^{-1} \varphi\|^{1/2}_{L^2(\Hbar_{\ubar}^{(u_{\infty},u)})}
+ \|\slashed{\nabla} \varphi\|^{1/2}_{L^2(\Hbar_{\ubar}^{(u_{\infty},u)})}
\right),
\]
which completes the proof.
\end{proof}

\begin{proposition}
\label{codimension11}
Let the assumptions of Theorem~\ref{mainone} hold, and suppose the bootstrap assumptions~\eqref{boundsbootstrap} are in force. Let $\varphi$ be a scalar (or tensorial) function defined on the domain of dependence associated with the double null foliation, and suppose that $\varphi$ is sufficiently regular so that the quantities below are well-defined. The following scale-invariant codimension 1 trace inequality holds: 
 \begin{eqnarray}
 ||\varphi||_{L^{4}_{sc}(S_{u,\ubar})}\lesssim ||\varphi||_{L^{4}_{sc}(S_{u_{\infty},\ubar})}+||a^{\frac{1}{2}}\nablasl_{3}\varphi||^{\frac{1}{2}}_{L^{2}_{sc}(\Hbar)}\left(a^{-\frac{1}{4}}||\varphi||^{\frac{1}{2}}_{L^{2}_{sc}(\Hbar)}+||\nablasl\varphi||^{\frac{1}{2}}_{L^{2}_{sc}(\Hbar)}\right).      
 \end{eqnarray}
\end{proposition}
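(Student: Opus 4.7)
The plan is to deduce the scale-invariant inequality directly from the unscaled codimension-one trace inequality established in Proposition~\ref{L4rough}. Concretely, I will multiply both sides of the conclusion of Proposition~\ref{L4rough} by the weight $|u|^{2s_2(\varphi)+\frac{1}{2}}a^{-s_2(\varphi)}$ that converts $\|\varphi\|_{L^4(S_{u,\ubar})}$ into $\|\varphi\|_{L^4_{sc}(S_{u,\ubar})}$, and then redistribute the resulting powers of $a$ and $|u|$ among the three terms on the right-hand side so that each factor matches the scale-invariant $L^p$ norm associated with the corresponding signature.

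For the first term on the right-hand side, I use the monotonicity $|u|\leq |u_{\infty}|$ throughout the slab, together with the convention that the scale-invariant weight has non-negative exponent $2s_{2}(\varphi)+\tfrac{1}{2}$ for the classes of horizontal tensor fields to which the proposition is applied, to replace $\|\varphi\|_{L^{4}(S_{u_{\infty},\ubar})}$ by $\|\varphi\|_{L^{4}_{sc}(S_{u_{\infty},\ubar})}$ at the cost of an absolute constant. For the $\snabla_{3}\varphi$ and $\snabla\varphi$ factors, I invoke the signature identities $s_{2}(\snabla_{3}\varphi)=s_{2}(\varphi)+1$ and $s_{2}(\snabla\varphi)=s_{2}(\varphi)+\tfrac{1}{2}$, together with the definition of $\|\cdot\|_{L^{2}_{sc}(\Hbar)}$, to rewrite the weights; the explicit $a^{1/2}$ prefactor attached to $\snabla_{3}\varphi$ in the statement accounts precisely for the single power of $a$ mismatch arising from the signature shift.

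The slightly delicate point is the conversion of the term $\||u'|^{-1}\varphi\|_{L^{2}(\Hbar_{\ubar}^{(u_{\infty},u)})}$ into the scale-invariant quantity $a^{-1/4}\|\varphi\|_{L^{2}_{sc}(\Hbar)}$. This is achieved via the pointwise inequality $|u'|^{-1}\lesssim a^{-1/4}$, valid throughout the slab of existence because $|u'|\geq a/4$ (hence, for $a$ sufficiently large, $|u'|\geq a^{1/4}$). Inserting this pointwise bound inside the integral that defines the $L^{2}$ norm on $\Hbar_{\ubar}^{(u_{\infty},u)}$ and then applying the scale-invariant weight produces exactly the factor $a^{-1/4}\|\varphi\|^{1/2}_{L^{2}_{sc}(\Hbar)}$ appearing in the conclusion.

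The principal obstacle is purely bookkeeping: one must track the $u$- and $a$-weights consistently across the three ambient norms (on $S_{u,\ubar}$, on $S_{u_{\infty},\ubar}$, and on $\Hbar_{\ubar}^{(u_{\infty},u)}$), and in particular one must verify that when the outer weight $|u|^{2s_{2}(\varphi)+1/2}a^{-s_{2}(\varphi)}$ is pulled through the integral $\int_{u_{\infty}}^{u}\,du'$ implicit in $\|\cdot\|_{L^{2}_{sc}(\Hbar)}$, the residual $|u'|$-weights inside the integrand agree with those prescribed by the scale-invariant convention. This is compatible with the inequality $|u'|\geq |u|$ along the integration path and with the non-negativity of the relevant exponents, so all weight manipulations only cost absolute constants and the inequality reduces to that of Proposition~\ref{L4rough}.
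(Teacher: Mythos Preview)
Your overall strategy—start from Proposition~\ref{L4rough} and rescale both sides by the signature weight $|u|^{2s_2+\frac12}a^{-s_2}$—is exactly what the paper does, and your treatment of the initial-data term and of the $\snabla_3\varphi$, $\snabla\varphi$ factors is correct. The gap lies in the $\||u'|^{-1}\varphi\|_{L^2(\Hbar)}^{1/2}$ term: the prefactor $a^{-1/4}$ in the statement does \emph{not} arise from the crude pointwise bound $|u'|^{-1}\lesssim a^{-1/4}$; it comes purely from the weight algebra. Concretely, writing
\[
|u'|^{-2}\|\varphi\|_{L^2(S_{u',\ubar})}^2 \;=\; a^{2s_2}|u'|^{-(4s_2+2)}\|\varphi\|_{\mathcal{L}^2_{(sc)}(S_{u',\ubar})}^2 \;=\; a^{2s_2-1}\cdot\tfrac{a}{|u'|^2}\cdot|u'|^{-4s_2}\|\varphi\|_{\mathcal{L}^2_{(sc)}(S_{u',\ubar})}^2,
\]
the explicit $|u'|^{-2}$ supplies precisely the weight $a/|u'|^2$ of the scale-invariant $\Hbar$-norm, up to the factor $a^{-1}$ whose fourth root is the $a^{-1/4}$ you are after; the residual $|u'|^{-4s_2}$ is then bounded by $|u|^{-4s_2}$ via $|u'|\ge|u|$, exactly as in your treatment of the other terms. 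This is how the paper proceeds.

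If instead you first invoke $|u'|^{-1}\lesssim a^{-1/4}$, you discard that $|u'|^{-2}$ and are left needing to convert the \emph{unweighted} norm $\|\varphi\|_{L^2(\Hbar)}$ to $\|\varphi\|_{\mathcal{L}^2_{(sc)}(\Hbar)}$. That conversion leaves a residual factor $|u'|^{2-4s_2}$ inside the integral, which for $s_2<\tfrac12$ grows without bound as $|u'|\to|u_\infty|$; even for $s_2\ge\tfrac12$, tracking the powers shows your route yields the target only up to an extra factor of order $a^{-1/8}|u|^{1/2}\gtrsim a^{3/8}\gg 1$, so the stated inequality is not recovered. The remedy is simply to drop the pointwise bound and perform the direct substitution: the $a^{-1/4}$ then falls out automatically.
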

\begin{proof}
We begin by recalling the relevant scale-invariant norms and signature conventions. For a tensorial quantity $\varphi$ defined on the 2-sphere $S_{u,\ubar}$ in a double null foliation, let $s_2(\varphi)$ denote the scale weight associated to $\varphi$ under the rescaling symmetry. Then, by definition of the scale-invariant $L^p$ norm, we have:
\begin{equation} \label{eq:sc-Lp-def}
\|\varphi\|_{L^p(S_{u,\ubar})} = \frac{a^{s_2(\varphi)}}{|u|^{2s_2(\varphi) + \frac{p-2}{p}}} \|\varphi\|_{L^p_{sc}(S_{u,\ubar})},
\end{equation}
where $\|\cdot\|_{L^p_{sc}}$ denotes the scale-invariant norm. The scale weights for derivatives obey:
\begin{equation} \label{eq:sc-weights}
s_2(\slashed{\nabla} \varphi) = s_2(\varphi) + \frac{1}{2}, \quad s_3(\snabla_3 \varphi) = s_2(\varphi) + 1.
\end{equation}

\noindent Let us now recall from Proposition~\ref{L4rough} the scale-invariant $L^4$ inequality for $\varphi$ derived previously, restated with the above normalization:
\begin{align}
\|\varphi\|_{L^4(S_{u,\ubar})}
&\lesssim
\|\varphi\|_{L^4(S_{u_\infty,\ubar})}
+
\|\snabla_3 \varphi\|^{1/2}_{L^2(\Hbar_{\ubar}^{(u_{\infty},u)})}
\left( \||u|^{-1} \varphi\|^{1/2}_{L^2(\Hbar_{\ubar}^{(u_{\infty},u)})}
+ \|\slashed{\nabla} \varphi\|^{1/2}_{L^2((\Hbar_{\ubar}^{(u_{\infty},u)})} \right).
\end{align}

\noindent We now express this inequality entirely in terms of the scale-invariant norms. Using \eqref{eq:sc-Lp-def}, we write:
\begin{align}
\frac{a^{s_2}}{|u|^{2s_2 + \frac{1}{2}}} \|\varphi\|_{L^4_{sc}(S_{u,\ubar})}
&\lesssim
\frac{a^{s_2}}{|u_\infty|^{2s_2 + \frac{1}{2}}} \|\varphi\|_{L^4_{sc}(S_{u_\infty,\ubar})}
+ \left( \int_{u_\infty}^{u} \frac{a^{2(s_2+1)}}{|u'|^{4(s_2+1)}} \|\snabla_3 \varphi\|^2_{\mathcal{L}^2_{(sc)}(S_{u^{\prime},\ubar})} \, \duprime \right)^{1/4} \nonumber \\
\quad &\times \left[
\left( \int_{u_\infty}^{u} \frac{a^{2s_2}}{|u'|^{4s_2+2}} \|\varphi\|^2_{\mathcal{L}^2_{(sc)}(S_{u',\ubar})} \, du' \right)^{1/4}
+ \left( \int_{u_\infty}^{u} \frac{a^{2(s_2+\frac{1}{2})}}{|u'|^{4(s_2+\frac{1}{2})}} \|\slashed{\nabla} \varphi\|^2_{\mathcal{L}^2_{(sc)}(S_{u',\ubar})} \, du' \right)^{1/4}
\right].
\end{align}

\noindent Multiplying both sides by $|u|^{2s_2 + \frac{1}{2}} a^{-s_2}$, we obtain:
\begin{align}
\|\varphi\|_{L^4_{sc}(S_{u,\ubar})}
&\lesssim
\frac{|u|^{2s_2 + \frac{1}{2}}}{|u_\infty|^{2s_2 + \frac{1}{2}}} \|\varphi\|_{L^4_{sc}(S_{u_\infty,\ubar})}
+ a^{\frac{1}{4}} \left( \int_{u_\infty}^{u} \frac{a}{|u'|^2} \|\snabla_3 \varphi\|^2_{\mathcal{L}^2_{(sc)}(S_{u',\ubar})} \frac{|u|^{4s_2 + 2}}{|u'|^{4s_2 + 2}} \, du' \right)^{1/4} \nonumber \\
\quad &\times \left[
a^{-\frac{1}{4}} \left( \int_{u_\infty}^{u} \frac{a}{|u'|^2} \|\varphi\|^2_{\mathcal{L}^2_{(sc)}(S_{u',\ubar})} \frac{|u|^{4s_2}}{|u'|^{4s_2}} \, du' \right)^{1/4}
+
\left( \int_{u_\infty}^{u} \frac{a}{|u'|^2} \|\slashed{\nabla} \varphi\|^2_{\mathcal{L}^2_{(sc)}(S_{u',\ubar})} \frac{|u|^{4s_2}}{|u'|^{4s_2}} \, du' \right)^{1/4}
\right].
\end{align}

\noindent Now, observe that for all $u' \in [u_\infty, u]$, we have the inequality
\[
\frac{|u|}{|u'|} \leq 1,
\]
which implies
\[
\frac{|u|^{4s_2 + 2}}{|u'|^{4s_2 + 2}} \leq 1, \quad
\frac{|u|^{4s_2}}{|u'|^{4s_2}} \leq 1.
\]
Thus, all the multiplicative factors involving $\frac{|u|}{|u'|}$ are bounded above by $1$, and we conclude:
\begin{eqnarray}
 ||\varphi||_{L^{4}_{sc}(S_{u,\ubar})}\lesssim  ||\varphi||_{L^{4}_{sc}(S_{u_{\infty},\ubar})}+a^{\frac{1}{4}}\left(\int_{u_{\infty}}^{u}\frac{a}{|u^{\prime}|^{2}}||\nablasl_{3}\varphi||^{2}_{L^{2}_{sc}(S_{u^{\prime},\ubar}}\frac{|u|^{4s_{2}+2}}{|u^{\prime}|^{4s_{2}+2}}\text{d}u^{\prime}\right)^{\frac{1}{4}}\\\nonumber 
 \left[a^{-\frac{1}{4}}\left(\int_{u_{\infty}}^{u}\frac{a}{|u^{\prime}|^{2}}||\varphi||^{2}_{L^{2}_{sc}(S_{u^{\prime},\ubar})}\frac{|u|^{4s_{2}}}{|u^{\prime}|^{4s_{2}}}\text{d}u^{\prime} \right)^{\frac{1}{4}}+\left(\int_{u_{\infty}}^{u}\frac{a}{|u^{\prime}|^{2}} ||\nablasl\varphi||^{2}_{L^{2}_{sc}(S_{u^{\prime},\ubar})}\frac{|u|^{4s_{2}}}{|u^{\prime}|^{4s_{2}}}\text{d}u^{\prime} \right)^{\frac{1}{4}}\right].
\end{eqnarray}
as claimed.
\end{proof}

\begin{proposition}\label{codimensiononetraceineq}
Let the assumptions of Theorem~\ref{mainone} hold, and suppose the bootstrap assumptions~\eqref{boundsbootstrap} are in force. Let $\varphi$ be a scalar (or tensorial) function defined on the domain of dependence associated with the double null foliation, and suppose that $\varphi$ is sufficiently regular so that the quantities below are well-defined. The following codimension 1 trace inequality holds 
\begin{eqnarray}
  ||\varphi||_{L^{4}(S_{u,\ubar})}\lesssim ||\varphi||_{L^{4}(S_{u,0})}+||\snabla_{4}\varphi||^{\frac{1}{2}}_{L^{2}(H)}\left(|u|^{-\frac{1}{2}}||\varphi||^{\frac{1}{2}}_{L^{2}(H)}+||\nablasl\varphi||^{\frac{1}{2}}_{L^{2}(H)}\right)  
\end{eqnarray}
\end{proposition}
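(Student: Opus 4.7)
The plan is to mirror the argument used for Proposition \ref{L4rough}, with the roles of the $e_3$ and $e_4$ directions exchanged and the integration hypersurface changed from the incoming null cone $\Hbar_{\ubar}$ to the outgoing null cone $H_u$. The starting point is again the isoperimetric inequality on the 2-sphere $S_{u,\ubar}$:
\[
\int_{S_{u,\ubar}} |\varphi|^6 \, d\mu_{\slashed{g}}
\lesssim
\left( \int_{S_{u,\ubar}} |\varphi|^4 \, d\mu_{\slashed{g}} \right)
\left( \int_{S_{u,\ubar}} \left( |u|^{-2} |\varphi|^2 + |\slashed{\nabla}\varphi|^2 \right) d\mu_{\slashed{g}} \right).
\]
Instead of integrating this in $u' \in [u_\infty,u]$, I would integrate it in $\ubar' \in [0,\ubar]$ against the outgoing null measure induced on $H_u$, which produces a sup-in-$\ubar'$ of the $L^4(S)$ norm multiplied by an $L^2(H_u)$ quantity involving $|u|^{-1}\varphi$ and $\snabla\varphi$.

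Next, I would establish the transport-type bound for $\|\varphi\|_{L^4(S_{u,\ubar})}^4$ along outgoing generators. Using the first variational formula $\frac{d}{d\ubar}\int_{S_{u,\ubar}} f \, d\mu = \int_{S_{u,\ubar}} \Omega^2 (e_4(f) + \tr\chi\, f)\, d\mu$ with $f = |\varphi|^4$, and observing that under the bootstrap assumptions \eqref{boundsbootstrap} we have $|\tr\chi|\lesssim |u|^{-1}$ and $|\Omega^2-1|\lesssim a|u|^{-2}$, I would obtain
\[
\|\varphi\|_{L^4(S_{u,\ubar})}^4 \lesssim \|\varphi\|_{L^4(S_{u,0})}^4
+ \int_0^{\ubar} \!\!\int_{S_{u,\ubar'}} |\varphi|^3 |\snabla_4 \varphi|\, d\mu \, d\ubar'
+ \frac{1}{|u|}\int_0^{\ubar}\!\!\int_{S_{u,\ubar'}} |\varphi|^4 \, d\mu \, d\ubar'.
\]
The last term is absorbable into the left-hand side (for $|u|$ large, or by a standard Gr\"onwall step, using that $\ubar\in[0,1]$), while H\"older yields
\[
\int_0^{\ubar}\!\!\int_{S_{u,\ubar'}} |\varphi|^3 |\snabla_4\varphi|\, d\mu\, d\ubar'
\leq \|\varphi\|_{L^6(H_u)}^3 \, \|\snabla_4\varphi\|_{L^2(H_u)}.
\]

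Substituting this $L^4$ growth estimate into the integrated isoperimetric inequality and applying Young's inequality to the cross term produces
\[
\int_{H_u}|\varphi|^6 \, d\mu
\lesssim \|\varphi\|_{L^4(S_{u,0})}^4\!\!\int_{H_u}\!\big(|u|^{-2}|\varphi|^2 + |\snabla\varphi|^2\big)\, d\mu
+ \|\snabla_4\varphi\|_{L^2(H_u)}^2\!\Bigl(\int_{H_u}\!\big(|u|^{-2}|\varphi|^2 + |\snabla\varphi|^2\big)\, d\mu\Bigr)^{\!2}.
\]
Taking sixth roots and re-inserting into the $L^4(S)$ bound yields the claimed inequality. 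The main delicate point, as in the incoming case, will be to ensure that the linear-in-$\tr\chi$ term produced by the first variation does not spoil the estimate; however, since $\ubar$ ranges only over $[0,1]$ and $|\tr\chi|\lesssim |u|^{-1}$ is uniformly bounded along $H_u$, this term contributes only an acceptable $O(|u|^{-1})$ factor that can be absorbed through a standard Gr\"onwall argument, making the outgoing case if anything easier than the incoming analogue already proved.
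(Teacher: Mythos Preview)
Your proposal is correct and follows exactly the approach the paper indicates: the paper's own proof reads in full ``Exactly similar only we integrate in the $e_{4}$ direction,'' and you have carried out precisely that, mirroring the argument of Proposition~\ref{L4rough} with $e_3 \leftrightarrow e_4$ and $\Hbar_{\ubar} \leftrightarrow H_u$. Your observation that the $\tr\chi$ term is in fact easier to handle here than the $\tr\chibar$ term in the incoming case (since $\ubar\in[0,1]$ is a bounded interval and no renormalization $\tr\chi \to \tr\chi - 2/|u|$ is needed) is correct and a nice remark.
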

\begin{proof}
 Exactly similar only we integrate in the $e_{4}$ direction. 
\end{proof}
The following scale-invariant version holds 
\begin{proposition}
\label{codimension12}
Let the assumptions of Theorem~\ref{mainone} hold, and suppose the bootstrap assumptions~\eqref{boundsbootstrap} are in force. Let $\varphi$ be a scalar (or tensorial) function defined on the domain of dependence associated with the double null foliation, and suppose that $\varphi$ is sufficiently regular so that the quantities below are well-defined. The following scale-invariant trace inequality holds 
 \begin{eqnarray}
  ||\varphi||_{L^{4}_{sc}(S_{u,\ubar})}\lesssim ||\varphi||_{L^{4}_{sc}(S_{u,0})}+||\snabla_{4}\varphi||^{\frac{1}{2}}_{L^{2}_{sc}(H)}\left(|||\varphi||^{\frac{1}{2}}_{L^{2}_{sc}(H)}+||a^{\frac{1}{2}}\nablasl\varphi||^{\frac{1}{2}}_{L^{2}_{sc}(H)}\right) 
\end{eqnarray}
\end{proposition}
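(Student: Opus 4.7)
The plan is to deduce the scale-invariant statement directly from its non-scale-invariant counterpart, Proposition~\ref{codimensiononetraceineq}, by converting each norm via the defining relation
\[
\|\varphi\|_{L^p(S_{u,\ubar})}=\frac{a^{s_{2}(\varphi)}}{|u|^{2s_{2}(\varphi)+\frac{p-2}{p}}}\|\varphi\|_{L^{p}_{sc}(S_{u,\ubar})},
\]
combined with the signature calculus $s_2(\snabla_4\varphi)=s_2(\varphi)$ and $s_2(\snabla\varphi)=s_2(\varphi)+\tfrac{1}{2}$. The guiding observation is that all norms on the right-hand side involving integration over $H_u=\{u=\mathrm{const}\}$ are taken with $|u|$ held fixed, so the $|u|$-weights behave as overall multiplicative constants and no rearrangement inside the integrals is required. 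This is a genuine simplification relative to Proposition~\ref{codimension11}, where the integration variable $u'$ coincided with the weight function and forced the bound $|u|/|u'|\le 1$ on the anisotropic factors.

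First I would start from Proposition~\ref{codimensiononetraceineq},
\[
\|\varphi\|_{L^{4}(S_{u,\ubar})}\lesssim \|\varphi\|_{L^{4}(S_{u,0})}+\|\snabla_{4}\varphi\|^{1/2}_{L^{2}(H)}\Bigl(|u|^{-1/2}\|\varphi\|^{1/2}_{L^{2}(H)}+\|\snabla\varphi\|^{1/2}_{L^{2}(H)}\Bigr),
\]
and rewrite each factor. Using the natural convention $\|\psi\|^{2}_{L^{2}_{sc}(H_u)}=\int_{0}^{\ubar}\|\psi\|^{2}_{L^{2}_{sc}(S_{u,\ubar'})}\,d\ubar'$, together with the fact that $|u|$ is constant along this integration, one obtains
\[
\|\snabla_{4}\varphi\|^{1/2}_{L^{2}(H)}=\frac{a^{s_{2}(\varphi)/2}}{|u|^{s_{2}(\varphi)}}\,\|\snabla_{4}\varphi\|^{1/2}_{L^{2}_{sc}(H)},
\]
\[
|u|^{-1/2}\|\varphi\|^{1/2}_{L^{2}(H)}=\frac{a^{s_{2}(\varphi)/2}}{|u|^{s_{2}(\varphi)+1/2}}\,\|\varphi\|^{1/2}_{L^{2}_{sc}(H)},\qquad
\|\snabla\varphi\|^{1/2}_{L^{2}(H)}=\frac{a^{(s_{2}(\varphi)+1/2)/2}}{|u|^{s_{2}(\varphi)+1/2}}\,\|\snabla\varphi\|^{1/2}_{L^{2}_{sc}(H)}.
\]
Factoring the $H$-term then yields
\[
\|\snabla_{4}\varphi\|^{1/2}_{L^{2}(H)}\Bigl(|u|^{-1/2}\|\varphi\|^{1/2}_{L^{2}(H)}+\|\snabla\varphi\|^{1/2}_{L^{2}(H)}\Bigr)=\frac{a^{s_{2}(\varphi)}}{|u|^{2s_{2}(\varphi)+1/2}}\,\|\snabla_{4}\varphi\|^{1/2}_{L^{2}_{sc}(H)}\Bigl(\|\varphi\|^{1/2}_{L^{2}_{sc}(H)}+\|a^{1/2}\snabla\varphi\|^{1/2}_{L^{2}_{sc}(H)}\Bigr),
\]
after absorbing the solitary factor $a^{1/4}$ into the angular derivative via its natural homogeneity.

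Since the two $S$-norms on the left and on the first term of the right both come with the identical prefactor $a^{s_{2}(\varphi)}/|u|^{2s_{2}(\varphi)+1/2}$, dividing through produces precisely the claimed inequality. The only subtle point is adherence to a consistent definition of the scale-invariant $L^{2}$ norm on $H_{u}$; with the convention above, the weight translation is an exact factorization, in contrast to the incoming version (Proposition~\ref{codimension11}) where one had to bound $|u|^{4s_{2}+2}/|u'|^{4s_{2}+2}\leq 1$ pointwise inside the integrals. No Grönwall argument, no refined transport estimate, and no new geometric input is needed; the proof reduces to a careful bookkeeping of scale weights on top of Proposition~\ref{codimensiononetraceineq}, and the main (mild) obstacle is simply ensuring that the $a$-homogeneity of $\snabla$ is matched so that $a^{1/4}\,\|\snabla\varphi\|^{1/2}_{L^{2}_{sc}(H)}$ is recognized as $\|a^{1/2}\snabla\varphi\|^{1/2}_{L^{2}_{sc}(H)}$, which is the form appearing in the statement.
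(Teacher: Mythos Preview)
Your proposal is correct and follows exactly the approach the paper takes for the analogous incoming-direction statement (Proposition~\ref{codimension11}): convert Proposition~\ref{codimensiononetraceineq} term by term into scale-invariant norms via the defining relation and the signature rules, then divide through by the common prefactor $a^{s_2(\varphi)}/|u|^{2s_2(\varphi)+1/2}$. Your observation that the $|u|$-weight is constant along $H_u$ (so no $|u|/|u'|\le 1$ step is needed, unlike in Proposition~\ref{codimension11}) is precisely the simplification the outgoing case enjoys; the paper itself leaves this proof implicit.
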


\subsection{Sobolev embedding}
\noindent With the derived estimates for the metric $\gamma$, we can obtain a bound on the isoperimetric constant for a topological $2-$sphere $S$: 
\begin{eqnarray}
I(S):=\sup_{U\subset S,~\partial U\in C^{1}}\frac{\min\left\{Area(U),Area(U^{c})\right\}}{[Perimeter(\partial U)]^{2}}.
\end{eqnarray}
The following proposition yields an upper bound for $I(S)$.\\
\begin{proposition}\textit{Under the assumption on the initial data and the bootstrap assumption (2.10), the isoperimetric constant obeys the following bound 
\begin{eqnarray} \label{37}
I(S_{u,\underline{u}})\leq \frac{1}{\pi}
\end{eqnarray}
for $u\in [u_{\infty},-\frac{a}{4}]$ and $\underline{u}\in [0,1]$.
}\end{proposition}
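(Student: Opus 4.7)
The plan is to exploit that the spheres $S_{u,\underline{u}}$ are, in the appropriate sense, small perturbations of the standard round metric of radius $|u|$, for which the isoperimetric constant is exactly $\frac{1}{2\pi}$, strictly below the stated threshold. The initial-data condition (ii) of Theorem~\ref{mainone} asserts that at $\underline{u}=0$ the geometry coincides with the Minkowskian one, hence $\slashed{g}(u,0,\theta)$ is precisely the round metric of radius $|u|$ on each $S_{u,0}$. Proposition~\ref{31} then provides the key perturbative control: the eigenvalues $\Lambda(\underline{u})$, $\lambda(\underline{u})$ of $\slashed{g}^{-1}(u,0,\theta)\slashed{g}(u,\underline{u},\theta)$ satisfy $|\Lambda(\underline{u})-1|+|\lambda(\underline{u})-1|\leq 1/a$ throughout $\underline{u}\in[0,1]$.

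The first step is to translate these pointwise eigenvalue bounds into comparability of area and perimeter between $(S_{u,\underline{u}},\slashed{g})$ and $(S_{u,0},\slashed{g}_0)$. For any $C^1$ domain $U\subset S_{u,\underline{u}}$, the volume form satisfies $d\mu_{\slashed{g}}=\sqrt{\det(\slashed{g}_0^{-1}\slashed{g})}\,d\mu_{\slashed{g}_0}=\sqrt{\lambda\Lambda}\,d\mu_{\slashed{g}_0}\leq \Lambda\,d\mu_{\slashed{g}_0}$, while the line element obeys $ds_{\slashed{g}}\geq \sqrt{\lambda}\,ds_{\slashed{g}_0}$. Consequently, for every measurable $U$ and $C^1$ boundary $\partial U$,
\[
\mathrm{Area}_{\slashed{g}}(U)\leq \Lambda\,\mathrm{Area}_{\slashed{g}_0}(U),\qquad \mathrm{Perimeter}_{\slashed{g}}(\partial U)\geq \sqrt{\lambda}\,\mathrm{Perimeter}_{\slashed{g}_0}(\partial U),
\]
which, upon passing to the supremum in the definition of $I$, yields the comparison
\[
I(S_{u,\underline{u}})\leq \frac{\Lambda(\underline{u})}{\lambda(\underline{u})}\,I(S_{u,0}).
\]

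The second step invokes the classical isoperimetric inequality on the round sphere: for the standard metric of radius $|u|$ on $S_{u,0}$, the supremum in the definition of $I$ is achieved (in the limit) by hemispheres, giving $I(S_{u,0})=\frac{1}{2\pi}$. Combining the two steps,
\[
I(S_{u,\underline{u}})\leq \frac{\Lambda(\underline{u})}{\lambda(\underline{u})}\cdot\frac{1}{2\pi}\leq \frac{1+1/a}{1-1/a}\cdot\frac{1}{2\pi},
\]
and for $a$ sufficiently large (guaranteed by the hypothesis $a\geq a_0(\mathcal{I})\gg 1$) the right-hand side is bounded by $\frac{1}{\pi}$, as claimed.

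There is essentially no hard step here; the only subtlety is making sure that Proposition~\ref{31} is applicable uniformly across the full slab $u\in[u_\infty,-a/4]$, $\underline{u}\in[0,1]$. This is guaranteed because the bootstrap assumption $\scaleinfinitySu{\chihat}\leq a^{1/2}O\cdot a^{1/2}/|u|$ fuels the estimate in Proposition~\ref{31} independently of the particular $(u,\theta)$ chosen on the initial incoming cone. Once that uniformity is noted, the conclusion follows from the two-line computation above, with the factor $\frac{1}{\pi}$ arising simply as a safe upper bound on $\frac{1}{2\pi}(1+O(a^{-1}))$.
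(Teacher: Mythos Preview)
Your proof is correct and follows essentially the same approach as the paper: compare perimeter via the smaller eigenvalue $\lambda$, compare area via the eigenvalue product (equivalently the determinant ratio $\det\slashed{g}/\det\slashed{g}_0$), and reduce to the round-sphere value $I(S_{u,0})=\tfrac{1}{2\pi}$. The only cosmetic difference is that the paper phrases the area bound directly in terms of $\det(\gamma_{\underline{u}})/\det(\gamma_0)$ and invokes Proposition~\ref{propdetgslash} alongside Proposition~\ref{31}, whereas you go straight through $\sqrt{\lambda\Lambda}\le\Lambda$; the arguments are equivalent. (Minor note: Proposition~\ref{31} as stated gives the eigenvalue deviation $\le a^{-1/2}$ rather than $a^{-1}$, but this is immaterial for the conclusion.)
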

\begin{proof}  Fix a $u$. For $U_{\ubar}$ a subset of $S_{u,\ubar}$, denote by $U_0 \subset S_{u,0}$ the backward image of $U_{\ubar}$ under the diffeomorphism generated by the equivariant vector field $L.$ Using Propositions \ref{31} and \ref{34} and their proof, we can obtain the estimates

\[      \frac{\text{Perimeter}(\partial U_{\ubar}) }{\text{Perimeter}(\partial U_0)}\geq \sqrt{\inf_{S_{u,0}}\lambda(\ubar) }  \]and

\[ \frac{\text{Area}(U_{\ubar})}{\text{Area}(U_0)} \leq \sup_{S_{u,0}}\frac{\det(\gamma_{\ubar})}{\det(\gamma_0)}, \hspace{2mm}\frac{\text{Area}(U_{\ubar}^c)}{\text{Area}(U_0^c)} \leq \sup_{S_{u,0}}\frac{\det(\gamma_{\ubar})}{\det(\gamma_0)}. \]The conclusion then follows from the fact that $I(S_{u,0}) = \frac{1}{2\pi}$ and the bounds from Propositions \ref{31}, \ref{34}, and \ref{propdetgslash}.       \end{proof}
Throughout this work, we will be using an $L^2-L^{\infty}$ Sobolev estimate. To obtain it, utilizing the basic estimates above, we may proceed to write down the following gauge-invariant Sobolev inequalities for the topological $2-$ sphere $S$.\\
\begin{proposition}
Let $(S,\gamma)$ be a compact Riemannian $2$--manifold with isoperimetric constant $I(S)$ and area $Area(S)$. For any tensor field $\mathcal{G}\in \Gamma(\,^{N}\!\otimes T^{*}S)$ and any $p\in (2,\infty)$, one has
\begin{equation}\label{eq:Lp}
Area(S)^{-\frac{1}{p}}\|\mathcal{G}\|_{L^{p}(S)}
\;\leq\; C_{p}\,\big(\max\{1,I(S)\}\big)^{\tfrac12}
\left(\|\snabla \mathcal{G}\|_{L^{2}(S)} + Area(S)^{-\tfrac12}\|\mathcal{G}\|_{L^{2}(S)}\right),
\end{equation}
where $C_{p}$ depends only on $p$.
\end{proposition}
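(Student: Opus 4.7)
The plan is a standard reduction-and-bootstrap argument. First, I would reduce to the scalar case: by the pointwise Kato inequality $|\nabla|\mathcal{G}|_\gamma|\leq |\nabla\mathcal{G}|_\gamma$, valid a.e. where $|\mathcal{G}|_\gamma>0$, it suffices to prove the inequality for the nonnegative scalar $\phi:=|\mathcal{G}|_\gamma$. Thus the task reduces to establishing
\[ \text{Area}(S)^{-1/p}\|\phi\|_{L^p(S)} \leq C_p\,(\max\{1,I(S)\})^{1/2}\bigl(\|\nabla\phi\|_{L^2(S)}+\text{Area}(S)^{-1/2}\|\phi\|_{L^2(S)}\bigr). \]

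Second, the baseline is a scalar Poincaré--Sobolev bound of the form
\[ \|\phi\|_{L^2(S)} \leq C\, I(S)^{1/2}\|\nabla\phi\|_{L^1(S)}+C\,\text{Area}(S)^{-1/2}\|\phi\|_{L^1(S)}, \]
which I would derive by combining the co-area formula $\int_S |\nabla\phi|=\int_0^\infty \text{Perimeter}(\{\phi>t\})\,dt$ with the isoperimetric inequality $\min(\text{Area}(U),\text{Area}(U^c))\leq I(S)\,\text{Perimeter}(\partial U)^2$, via a standard layer-cake argument applied to $\phi-\phi_{1/2}$ (with $\phi_{1/2}$ the median of $\phi$) and the triangle inequality to replace $\phi_{1/2}$ by an average. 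Applying this baseline to $\phi^2$ and invoking Cauchy--Schwarz on the resulting gradient term yields the Gagliardo--Nirenberg-type estimate
\[ \|\phi\|_{L^4(S)}^2 \leq C\, I(S)^{1/2}\|\phi\|_{L^2(S)}\|\nabla\phi\|_{L^2(S)} + C\,\text{Area}(S)^{-1/2}\|\phi\|_{L^2(S)}^2, \]
which, after normalization by $\text{Area}(S)^{1/4}$, gives the target inequality at $p=4$.

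Third, for general $p\in(2,\infty)$, I would apply the baseline $L^2$ estimate to $\psi:=\phi^{p/2}$. Using Cauchy--Schwarz, $\|\nabla\phi^{p/2}\|_{L^1}\leq (p/2)\|\phi\|_{L^{p-2}}^{(p-2)/2}\|\nabla\phi\|_{L^2}$, and then Hölder to control the intermediate norms $\|\phi\|_{L^{p-2}}$ and $\|\phi\|_{L^{p/2}}$ by $\|\phi\|_{L^p}$ with the natural area weights, after absorbing the resulting factor $\|\phi\|_{L^p}^{(p-2)/2}$ on both sides, the inequality follows with a $p$-dependent constant $C_p$. An equivalent route, which I find cleaner, is to iterate the $L^4$ Gagliardo--Nirenberg bound on $\phi^{q_k}$ with $q_k=2^k$: each application doubles the integrability exponent, so every $p<\infty$ is reached in finitely many steps, and the constants assemble into a convergent geometric series controlled by $p$.

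The main technical obstacle is to preserve the optimal weight $(\max\{1,I(S)\})^{1/2}$ throughout the iteration: a naive bootstrap would multiply by a further $I(S)^{1/2}$ at each stage. I would resolve this by a scaling normalization—rescale $\gamma$ so that $\text{Area}(S)=1$, treating the cases $I(S)\leq 1$ and $I(S)>1$ separately, and running the Gagliardo--Nirenberg iteration on the normalized manifold where the constants remain uniform in the iteration index. Unwinding the rescaling then restores the area weights on both sides of the inequality, yielding the statement with the isoperimetric factor appearing only to the power $1/2$.
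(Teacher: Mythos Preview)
The paper does not prove this proposition; it is stated without proof as a standard Sobolev inequality on $2$--manifolds with controlled isoperimetric constant (the result goes back essentially to Christodoulou--Klainerman). So there is no paper proof to compare against, and your outline must stand on its own.

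Your direct approach is correct and complete: reduce to scalars by Kato, establish the $W^{1,1}\to L^2$ baseline via coarea and the isoperimetric inequality, then apply it once to $|\mathcal G|^{p/2}$, use Cauchy--Schwarz on the gradient term, interpolate the resulting $L^{p-2}$ and $L^{p/2}$ norms between $L^2$ and $L^p$, and absorb by Young. A point worth making explicit is that this single application of the baseline introduces $I(S)^{1/2}$ exactly once; after interpolation and Young the power of $M:=\max\{1,I(S)\}$ that survives is $M^{(p-2)/(2p)}\le M^{1/2}$, so the stated weight is automatic.

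Your last paragraph, however, is misdirected. The worry about accumulating factors of $I(S)^{1/2}$ is a feature of the iterated doubling scheme you propose as an ``equivalent route'', not of the direct single-shot argument. Moreover, the fix you suggest---rescaling $\gamma$ so that $\mathrm{Area}(S)=1$---does nothing to address it, because the isoperimetric constant $I(S)$ is scale-invariant: rescaling the metric changes areas and perimeters but leaves $I(S)$ unchanged, so the iteration would still pick up a factor $I(S)^{1/2}$ at each step. The iterated approach also runs into trouble at the step $\|\phi\nabla\phi\|_{L^2}$, which requires $L^\infty$ control of $\phi$ that is not yet available. Simply drop the iterated route and the final paragraph; the direct argument already delivers the claim with the correct $(\max\{1,I(S)\})^{1/2}$ weight.
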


\begin{proposition} \textit{Let $(S,\gamma)$ be a Riemannian $2-$manifold with the isoperimetric constant $I(S)$. Then the following Sobolev embedding holds for any $\mathcal{G}\in \Gamma(^{N}\otimes T^{*}S)$}
\begin{eqnarray}
\label{eq:Linfty}
||\mathcal{G}||_{L^{\infty}(S)}\leq C_{p}\left(\max(1,I(S))\right)^{\frac{1}{2}}[Area(S)]^{\frac{1}{2}-\frac{1}{p}}\left(||\snabla\mathcal{G}||_{L^{p}(S)}+Area(S)^{-\frac{1}{2}}||\mathcal{G}||_{L^{p}(S)}\right)
\end{eqnarray}
for any $p\in (2,\infty)$.\end{proposition}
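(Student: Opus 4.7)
My plan is to reduce the tensorial estimate to the scalar case via Kato's inequality and then to derive a Morrey-type bound by integrating $\nabla f$ along radial geodesics, using that $p>2$ precisely provides the borderline integrability of the relevant radial weight in two dimensions.

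First, for $\varepsilon>0$, I set $f_{\varepsilon}:=(|\mathcal{G}|^{2}+\varepsilon)^{1/2}$. A direct computation using the metric-compatibility of $\nabla$ gives the pointwise Kato inequality $|\nabla f_{\varepsilon}|_{\gamma}\leq|\nabla\mathcal{G}|_{\gamma}$ almost everywhere, so passing to the limit $\varepsilon\downarrow 0$ it suffices to prove the inequality for a non-negative scalar $f\in W^{1,p}(S)$.

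Next I would fix an arbitrary point $x_{0}\in S$ and choose a radius $R$ comparable to $\mathrm{Area}(S)^{1/2}$, consistently with the isoperimetric control. Introducing geodesic polar coordinates $(r,\theta)$ on the geodesic ball $B_{R}(x_{0})\subset S$ and writing, for $y=\exp_{x_{0}}(r\theta)$,
$$f(y)-f(x_{0})=\int_{0}^{r}\partial_{s}f(\exp_{x_{0}}(s\theta))\,ds,$$
integrating over $B_{R}(x_{0})$, and applying H\"older with the conjugate exponent $q=p/(p-1)<2$ (where the strict inequality $p>2$ is crucial for integrability of the radial weight $s^{-1}$ in $L^{q}$), one obtains a pointwise Morrey bound
$$|f(x_{0})-f_{B_{R}(x_{0})}|\leq C_{p}\, R^{1-2/p}\,\|\nabla f\|_{L^{p}(B_{R}(x_{0}))}.$$
Combining with the elementary H\"older estimate $|f_{B_{R}(x_{0})}|\leq \mathrm{Area}(B_{R}(x_{0}))^{-1/p}\|f\|_{L^{p}(B_{R}(x_{0}))}$ for the mean value, and lower-bounding $\mathrm{Area}(B_{R}(x_{0}))$ via the isoperimetric inequality $\mathrm{Area}(B_{R})\gtrsim\min\{R^{2},\mathrm{Area}(S)/I(S)\}$, one concludes
$$|f(x_{0})|\leq C_{p}\max(1,I(S))^{1/2}\,\mathrm{Area}(S)^{1/2-1/p}\bigl(\|\nabla f\|_{L^{p}(S)}+\mathrm{Area}(S)^{-1/2}\|f\|_{L^{p}(S)}\bigr),$$
and taking the supremum over $x_{0}\in S$ yields \eqref{eq:Linfty}.

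The main technical obstacle is ensuring that the chosen geodesic ball has controlled geometry, in particular an area bounded below and injective exponential map, purely in terms of the isoperimetric constant $I(S)$ and $\mathrm{Area}(S)$: this is precisely where the factor $\max(1,I(S))^{1/2}$ enters. An alternative and essentially equivalent route, which avoids invoking injectivity radius bounds, is to iterate the $L^{p}$ Sobolev inequality \eqref{eq:Lp} on super-level sets of $f$ via a De Giorgi--Stampacchia argument, with the super-linear gain at each step guaranteed by the strict inequality $p>2$; this furnishes the same estimate at the cost of a slightly lengthier but more robust iteration.
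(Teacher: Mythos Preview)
Your Kato reduction to the scalar case is correct. The primary Morrey-type argument, however, has a real gap: working in geodesic polar coordinates on a ball of radius $R\sim\mathrm{Area}(S)^{1/2}$ requires injectivity of $\exp_{x_0}$ on that scale together with two-sided Jacobian bounds $J(r,\theta)\sim r$, and neither follows from a bound on $I(S)$ alone---the isoperimetric constant is a global integral quantity that does not preclude short closed geodesics or degeneration of the exponential map. You rightly flag this as ``the main technical obstacle,'' but the assertion that ``this is precisely where the factor $\max(1,I(S))^{1/2}$ enters'' is unsubstantiated; that factor is inherited from \eqref{eq:Lp}, not produced by any injectivity-radius mechanism.

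Your alternative route is correct and is essentially what the paper has in mind when it cites ``the standard $L^\infty$--$L^p$ Sobolev inequality'' (the paper gives no self-contained argument). Apply \eqref{eq:Lp}---which depends only on $I(S)$ via the coarea formula---to the truncations $(|f|-s)_+$; H\"older converts $\|\nabla f\cdot\mathbf{1}_{A_s}\|_{L^2}\le\|\nabla f\|_{L^p}\,|A_s|^{1/2-1/p}$, and choosing the target exponent $q>2p/(p-2)$ in \eqref{eq:Lp} produces a recursion $|A_t|\le C(t-s)^{-q}|A_s|^{\beta}$ with $\beta=q\bigl(\tfrac12-\tfrac1p\bigr)>1$, so Stampacchia's lemma terminates at a finite level comparable to the right-hand side of \eqref{eq:Linfty}. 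Lead with this and drop the geodesic-ball argument unless you can independently supply the missing local geometric control.
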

\begin{proof} A calculation similar to the previous one and the standard $L^{\infty}-L^{p}$ Sobolev inequality on the Riemannian manifold $(S,\gamma)$ yield the result. \end{proof}

\noindent The two inequalities above, together with Propositions \ref{31}-\ref{34}, allow us to control the $L^{2}$-norm of $\mathcal{G}$ in terms of its $H^{2}$-norm. Following the area estimates, we have $Area(S_{u,\underline{u}})\approx u^{2}$. Therefore, we obtain the following important inequality.\\ 
\begin{proposition}\label{Sobolev}
Let the hypotheses of Theorem \ref{mainone} hold and assume the bootstrap bounds \eqref{bootstrap}. 
Assume moreover that, for each $(u,\ub)$ in the region under consideration, the $2$--surface
$S_{u,\ub}$ is diffeomorphic to $S^2$ and its induced metric $\gslash$ satisfies the uniform comparison from propositions \ref{31},\ref{34}, and \ref{propdetgslash}.
Under these hypotheses, for every tensor field
$\mathcal G\in\Gamma(^{N}\otimes T^*S_{u,\ub})$ the following holds:
\begin{equation}\label{eq:Sobolev-weighted}
\|\mathcal G\|_{L^\infty(S_{u,\ub})}
\lesssim  \sum_{I=0}^2 \big\||u|^{\,I-1}\,\snabla^{\,I}\mathcal G\big\|_{L^2(S_{u,\ub})}.
\end{equation}
Equivalently, in the scale--invariant norms used in the paper,
\begin{equation}\label{eq:Sobolev-scale-inv}
\|\mathcal G\|_{L^\infty_{sc}(S_{u,\ub})}
\lesssim  \sum_{I=0}^2 \big\|(a^{1/2}\snabla)^{I}\mathcal G\big\|_{\mathcal{L}^2_{(sc)}(S_{u,\ub})}.
\end{equation}
\end{proposition}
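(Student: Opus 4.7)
The plan is to iterate the two Sobolev estimates \eqref{eq:Lp} and \eqref{eq:Linfty} already established on $(S_{u,\ub},\gslash)$, feeding the output of one into the input of the other, and then to insert the uniform bounds on the isoperimetric constant and area provided by propositions~\ref{31}, \ref{34}, and \ref{propdetgslash}.

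First I would apply \eqref{eq:Linfty} with exponent $p=4$ to the tensor field $\mathcal{G}$, which yields
\begin{equation*}
\|\mathcal{G}\|_{L^{\infty}(S_{u,\ub})}
\lesssim Area(S_{u,\ub})^{\,1/4}\,\|\nabla \mathcal{G}\|_{L^{4}(S_{u,\ub})}
+ Area(S_{u,\ub})^{-1/4}\,\|\mathcal{G}\|_{L^{4}(S_{u,\ub})},
\end{equation*}
where the prefactor $(\max\{1,I(S_{u,\ub})\})^{1/2}$ is absorbed into the implicit constant by \eqref{37}. Next I would apply the $L^{p}$--Sobolev inequality \eqref{eq:Lp} with $p=4$ to the two tensor fields $\mathcal{G}$ and $\nabla\mathcal{G}$ separately, obtaining
\begin{equation*}
\|\mathcal{G}\|_{L^{4}(S_{u,\ub})}
\lesssim Area(S_{u,\ub})^{1/4}\Big(\|\nabla\mathcal{G}\|_{L^{2}(S_{u,\ub})} + Area(S_{u,\ub})^{-1/2}\|\mathcal{G}\|_{L^{2}(S_{u,\ub})}\Big),
\end{equation*}
and the analogous bound with $\mathcal{G}$ replaced by $\nabla \mathcal{G}$, producing the $\nabla^{2}\mathcal{G}$ term. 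Substituting these two bounds into the $L^{\infty}$ estimate and invoking the uniform area comparison $Area(S_{u,\ub})\asymp|u|^{2}$ from proposition~\ref{34}, the factors of $Area(S_{u,\ub})^{1/2}$, $1$ and $Area(S_{u,\ub})^{-1/2}$ collapse respectively to $|u|$, $1$ and $|u|^{-1}$, producing exactly the three weighted summands
\begin{equation*}
\|\mathcal{G}\|_{L^{\infty}(S_{u,\ub})}
\lesssim
|u|\,\|\nabla^{2}\mathcal{G}\|_{L^{2}(S_{u,\ub})}
+ \|\nabla\mathcal{G}\|_{L^{2}(S_{u,\ub})}
+ |u|^{-1}\,\|\mathcal{G}\|_{L^{2}(S_{u,\ub})},
\end{equation*}
which is precisely \eqref{eq:Sobolev-weighted}.

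For \eqref{eq:Sobolev-scale-inv}, I would simply translate \eqref{eq:Sobolev-weighted} into the scale--invariant normalisation. Using the definition $\|\varphi\|_{L^{\infty}(S_{u,\ub})}=\tfrac{a^{s_{2}(\varphi)}}{|u|^{2s_{2}(\varphi)+1}}\|\varphi\|_{\mathcal{L}^{\infty}_{(sc)}(S_{u,\ub})}$ and $\|\varphi\|_{L^{2}(S_{u,\ub})}=\tfrac{a^{s_{2}(\varphi)}}{|u|^{2s_{2}(\varphi)}}\|\varphi\|_{\mathcal{L}^{2}_{(sc)}(S_{u,\ub})}$ together with the signature rules $s_{2}(\nabla\mathcal{G})=s_{2}(\mathcal{G})+\tfrac12$ and the identity $|u|^{I}\|\nabla^{I}\mathcal{G}\|_{L^{2}}= \tfrac{a^{s_{2}(\mathcal{G})}}{|u|^{2s_{2}(\mathcal{G})+1-1}}\cdot|u|^{-1}\cdot a^{I/2}\|\nabla^{I}\mathcal{G}\|_{\mathcal{L}^{2}_{(sc)}}$, one checks that each of the three summands $|u|^{I-1}\|\nabla^{I}\mathcal{G}\|_{L^{2}(S_{u,\ub})}$, for $I=0,1,2$, carries the common prefactor $\tfrac{a^{s_{2}(\mathcal{G})}}{|u|^{2s_{2}(\mathcal{G})+1}}$ and leaves behind exactly $\|(a^{1/2}\nabla)^{I}\mathcal{G}\|_{\mathcal{L}^{2}_{(sc)}(S_{u,\ub})}$. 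Dividing through by this common prefactor then gives \eqref{eq:Sobolev-scale-inv}.

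There is no serious obstacle in this argument: the only thing one must be careful about is the uniform applicability of the Sobolev estimates, which is guaranteed by the fact that $I(S_{u,\ub})$ is uniformly bounded by~\eqref{37} and that $\det\gslash$ is uniformly pinched between multiples of $|u|^{4}$ by proposition~\ref{propdetgslash}; together with the area comparison of proposition~\ref{34}, these control all the geometric constants entering \eqref{eq:Lp}--\eqref{eq:Linfty}, so that every implicit constant above is genuinely independent of $(u,\ub)$ throughout the slab~$\mathcal{D}$.
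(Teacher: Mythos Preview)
Your proof is correct and follows exactly the approach indicated in the paper: apply \eqref{eq:Linfty} with $p=4$, feed the resulting $L^4$ norms of $\mathcal{G}$ and $\nabla\mathcal{G}$ back through \eqref{eq:Lp} with $p=4$, and then invoke the uniform isoperimetric bound \eqref{37} and the area comparison $Area(S_{u,\ub})\asymp|u|^2$ from Proposition~\ref{34}. The paper's own proof is a one-line reference to precisely these ingredients, and your explicit scale-invariant translation for \eqref{eq:Sobolev-scale-inv} is a correct unpacking of what the paper leaves implicit.
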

\begin{proof} Substitute $p=4$ in the previous proposition (\ref{eq:Linfty}) and estimate the right hand side by means of (\ref{eq:Lp}), the estimate on the area given by Proposition \ref{34} and on the isoperimetric constant of $S_{u,\underline{u}}$ given by Proposition \ref{37}. \end{proof}

\subsection{The schematic form of the Equations and Commutation}\label{commute} Let us restate that, in order to obtain the desired Theorem \ref{mainone} it is vital that higher derivatives of both Ricci coefficients as well as curvature components be estimated in $L^2$. In order to deal with the nonlinearities manifesting in the equations, certain error terms are estimated in $L^{\infty}$ on the spheres. These $L^{\infty}$ bounds themselves are obtained not through a direct bootstrap argument, but rather through $L^2$ or $L^4$ estimates for higher derivatives, using Sobolev inequalities. These 
higher order estimates are obtained through commuting the null structure and Bianchi equations respectively with suitable differential operators. Let us mention here the important fact that the Vlasov matter system necessitates that these differential operators are more than one. For example, in \cite{AnAth} or the more recent \cite{A-M-Y}, only the angular derivative $\snabla$ was required. However, in our case, control of the Vlasov matter necessitates control on $Vf$ and its derivatives $VVf$ and so on, where the $V'$s are appropriately chosen vector fields (chosen so as to remain essentially constant under the evolution of the geodesic flow). The very definition of these vector fields implies that estimates on $VVf, VVVf$  necessitate control on $\De(\psi,\Psi)$ and $\De^2(\psi,\Psi)$, where $\De$ now denotes not only an angular derivative, but rather any of a triple of differential operators in the set \[S_\mathcal{D}:= \{ \modu \snabla_3, \snabla_4,\al\snabla\}.\] Notice, of course, that each of the operators $\snabla_3,\snabla_4, \snabla$ could be separately multiplied by a different function, giving rise to another set $S^{\prime}_{\mathcal{D}}$. However, the set $S_{\mathcal{D}}$ is so defined such that an arbitrary $\mathcal{D}\in S_{\mathcal{D}}$ should preserve the rate of decay of any tensorfield that it is applied to. This is summarized in the informal:  

\begin{center}
\textit{Commutation Principle: Applying an arbitrary operator $\mathcal{D}$ to any of the $\Gamma,\Psi,\mathcal{T}$ should not alter their respective rate of decay.}
\end{center}See also \cite{Dafermosscattering} and \cite{taylor}. For an $S_{u,\ubar}$-tensor field $\varphi$, let $\mathcal{D}^k\varphi$ denote any fixed $k-$tuple $\mathcal{D}_k\De_{k-1}\dots\De_1$ applied to $\varphi$, where each $\De_i\in S_{\mathcal{D}}$. The following commutation lemmata will be used to derive expressions for the commuted structure/Bianchi equations in the schematic notation. 

\begin{lemma}
If $\xi$ is a $(0,k)$ $S_{u,\ubar}-$tensor, then

\begin{gather*}
(\snabla_3 \xi)_{A_1\dots A_k} = e_3(\xi_{A_1\dots A_k})- \sum_{i=1}^k\big({\chibar_{A_i}}^B -\snabla_A b^B +b^C \Gammaslash^B_{AC}\big)\xi_{A_1 \dots A_{i-1}B A_{i+1}\dots A_k}, \\ (\snabla_4 \xi)_{A_1\dots A_k}= e_4(\xi_{A_1\dots A_k})-\sum_{i=1}^k {\chi_{A_i}}^B \hsp \xi_{A_1 \dots A_{i-1}B A_{i+1}\dots A_k}, \\ (\snabla_B \xi)_{A_1\dots A_k} = e_B(\xi_{A_1\dots A_k})-\sum_{i=1}^k \Gammaslash^C_{BA_i}\hsp \xi_{A_1 \dots A_{i-1}B A_{i+1}\dots A_k}.
\end{gather*}

\end{lemma}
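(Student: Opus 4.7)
The plan is to carry out a direct computation, starting from the definition of the induced connection $\snabla$ on horizontal tensors and systematically using the table of Ricci coefficients \eqref{tableone}--\eqref{tablefinal}. Recall that for a horizontal $(0,k)$-tensor $\xi$ and any spacetime vector field $X$, the induced derivative satisfies
\[
(\snabla_X \xi)(Y_1,\dots,Y_k)
= X\!\bigl(\xi(Y_1,\dots,Y_k)\bigr)
- \sum_{i=1}^{k}\xi\!\bigl(Y_1,\dots,(\nabla_X Y_i)^{\parallel},\dots,Y_k\bigr),
\]
whenever $Y_1,\dots,Y_k$ are horizontal, where $(\cdot)^{\parallel}$ denotes the orthogonal projection onto $TS_{u,\ubar}$. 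Thus, the proof reduces to evaluating the horizontal projection of $\nabla_X e_{A_i}$ for $X\in\{e_3,e_4,e_B\}$.

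First I would handle the easier cases. Applying the formula with $X=e_B$ and using $\nabla_{e_B}e_{A}=\Gammaslash^{C}_{BA}\,e_C+\tfrac{1}{2}\chi_{BA}e_3+\tfrac{1}{2}\chibar_{BA}e_4$, only the first term is horizontal, so $(\nabla_{e_B}e_{A_i})^{\parallel}=\Gammaslash^{C}_{BA_i}e_C$. Substituting and reading off components yields the formula for $\snabla_B\xi$. For $\snabla_4$, the table gives $\nabla_{e_4}e_{A}={\chi_A}^{B}e_B+\etabar_A e_4$, whose horizontal part is ${\chi_{A}}^{B}e_B$, which immediately produces the displayed expression.

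The case of $\snabla_3$ is the only one requiring actual care and is where the main (purely bookkeeping) obstacle lies. The table gives
\[
\nabla_{e_3}e_{A} = \bigl({\chibar_A}^{B}-e_A(b^{B})\bigr)e_B + \eta_A e_3,
\]
the $b$-term appearing precisely because $e_3=\partial_u+b^A\partial_{\theta^A}$ carries a tangential component, so $\nabla_{e_3}$ mixes with angular derivatives of $b$. Horizontally projecting and plugging into the Leibniz expansion produces
\[
(\snabla_3\xi)_{A_1\dots A_k} = e_3(\xi_{A_1\dots A_k}) - \sum_{i=1}^{k}\bigl({\chibar_{A_i}}^{B} - e_{A_i}(b^{B})\bigr)\xi_{A_1\dots B\dots A_k}.
\]
Finally, I would rewrite the ordinary frame derivative $e_{A_i}(b^{B})$ in terms of the intrinsic sphere-covariant derivative by means of the identity $e_{A_i}(b^{B})=\snabla_{A_i}b^{B}-b^{C}\Gammaslash^{B}_{A_i C}$, which holds because $b$ is an $S_{u,\ubar}$-tangent vector field and $\Gammaslash$ are the Christoffel symbols of $\gslash$. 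Substituting yields exactly the stated formula, completing the proof. No delicate analytic estimates or commutator cancellations are needed; the content of the lemma is purely an identification of the horizontal connection in the chosen frame \eqref{ourframe}, and the only source of potential confusion is the separation of the intrinsic part ${\chibar_{A_i}}^{B}$ from the shift-induced correction $e_{A_i}(b^{B})$ in $\nabla_{e_3}e_{A_i}$.
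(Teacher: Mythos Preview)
The proposal is correct and takes essentially the same approach as the paper, which simply says the lemma follows from the definition of the covariant derivative on tensor fields together with the table \eqref{tableone}--\eqref{tablefinal} of connection coefficients. Your write-up is just a fleshed-out version of that one-line argument, including the standard rewriting of $e_{A_i}(b^{B})$ via $\snabla_{A_i}b^{B}=e_{A_i}(b^{B})+\Gammaslash^{B}_{A_iC}b^{C}$.
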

\begin{proof}
This follows from the definition of how a covariant derivative acts on a tensor field, after taking into account the table \eqref{tableone}-\eqref{tablefinal} of connection coefficients.
\end{proof}
The above lemma leads us to commutation formulae for the projected covariant derivatives $\snabla_4,\snabla_3$ and $\snabla$. The proof can be found, for example, in \cite{taylor}, \cite{ChrKl}, \cite{Dafermosscattering}, among others.
\begin{lemma}\label{preliminarycommutationlemma} If $\xi$ is a $(0,k)$ $S_{u,\ubar}-$tensor, then

\begin{align}
[\snabla_4,\snabla_B]\xi_{A_1\dots A_k} =& -{\chi_B}^C \snabla_C \xi_{A_1\dots A_k} \notag \\ +&\sum_{i=1}^k \big(\chi_{A_iB}\etabar^C-\etabar_{A_i}{\chi_B}^C +\Hodge{\beta}_B{\slashed{\epsilon}_{A_i}}^C +\f12 {\gslash_B}^C\Te_{4A_i}-\f12 \gslash_{A_iB}{\Te_4}^C\big)\xi_{A_1\dots A_{i-1}CA_{i+1}\dots A_k}, \label{4Bcommutationf}
\end{align}
\begin{align}
[\snabla_3,\snabla_B]\xi_{A_1\dots A_k} =&(\eta_B+\etabar_B)\snabla_3 \xi_{A_1\dots A_k}-{\chibar_B}^C\snabla_C \xi_{A_1\dots A_k} \notag \\ +& \sum_{i=1}^k\big(\chibar_{A_iB}\eta^C-\eta_{A_i}{\chibar_B}^C -\Hodge{\betabar}_B{\slashed{\epsilon}_{A_i}}^C +\frac{1}{2}{\gslash_B}^C \Te_{3A_i}-\frac{1}{2}\gslash_{A_iB}{\Te_3}^C\big)\xi_{A_1\dots A_{i-1}CA_{i+1}\dots A_k}, \label{3Bcommutationf}
\end{align}
\begin{align}
[\snabla_3,\snabla_4]\xi_{A_1\dots A_k} =& 2\big(\eta^C-\etabar^C\big) \snabla_C \xi_{A_1\dots A_k} -\omegabar \snabla_4\xi_{A_1\dots A_k}\notag \\ +&2 \sum_{i=1}^k \big(\etabar_{A_i}\eta^C - \eta_{A_i}\etabar^C - \sigma {\slashed{\epsilon}_{A_i}}^C\big)\xi_{A_1\dots A_{i-1}CA_{i+1}\dots A_k}. \label{34 commutationf}
\end{align}Finally, for $B,C \in \{1,2\}$, the angular commutation relation 
\be [\snabla_B,\snabla_C]\xi_{A_1\dots A_k} = K \sum_{i=1}^k(\gslash_{BA_i}\xi_{A_1\dots A_{i-1}CA_{i+1}\dots A_k}-\gslash_{CA_i}\xi_{A_1\dots A_{i-1}BA_{i+1}\dots A_k})    \ee holds, where we recall that  $K$ is the Gauss curvature of $(S_{u,\ubar},\gslash)$. 
\end{lemma}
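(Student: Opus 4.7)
The plan is to derive all three projected commutation formulas from a single source, namely the spacetime Ricci identity $(D_\mu D_\nu - D_\nu D_\mu)\xi_{\rho_1\dots\rho_k}=-\sum_i R_{\mu\nu\rho_i}{}^{\sigma}\xi_{\rho_1\dots\sigma\dots\rho_k}$, and then project carefully to $S_{u,\ubar}$ using the connection table \eqref{tableone}–\eqref{tablefinal} together with the preceding component lemma. In each case I would first write $\snabla_X\snabla_Y\xi$ and $\snabla_Y\snabla_X\xi$ out in components via the formulas just derived, subtract, and organize the result into (a) a \emph{transport correction} coming from the non-$S$-tangent part of $D_X e_Y$, and (b) a \emph{pure curvature piece} obtained by recognizing derivative differences like $e_\mu(\Gamma^\alpha_{\beta\nu})-e_\beta(\Gamma^\alpha_{\mu\nu})$ as components of the Riemann tensor modulo quadratic $\Gamma\cdot\Gamma$ terms, exactly as in identity \eqref{eq:curvature_identity}.

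For $[\snabla_4,\snabla_B]$ the correction $-\chi_B{}^C\snabla_C\xi$ appears because $D_4 e_B=\chi_B{}^C e_C+\etabar_B e_4$ has a tangential part, so the projection of $\snabla_4\snabla_B$ onto $S$ picks up a $\chi_B{}^C\snabla_C$ contribution. The curvature piece is then the projection of $R_{4B A_i}{}^C$ to $S$, which I would decompose in the null frame: the trace part produces the $\beta$-contribution (appearing in the formula as $\Hodge{\beta}_B\slashed{\epsilon}_{A_i}{}^C$ through the antisymmetric combination in $(B,A_i)$), while the Ricci part $R_{4A}=T_{4A}$, obtained from the Einstein equation $R_{\mu\nu}=T_{\mu\nu}$ (valid since $g^{\mu\nu}T_{\mu\nu}=0$ for massless Vlasov), produces the $\frac12{\gslash_B}^C\Te_{4A_i}-\frac12\gslash_{A_iB}\Te_4{}^C$ traceless–trace split. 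The $\chi\cdot\etabar$ bilinear in the formula comes from the quadratic $\Gamma\cdot\Gamma$ pieces left over after the Riemann identification, using $D_{A_i}e_4=\chi_{A_i}{}^C e_C+\etabar_{A_i}e_4$.

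For $[\snabla_3,\snabla_B]$ the computation is analogous, with two structural differences. First, $D_B e_3=\chibar_B{}^C e_C-\etabar_B e_3$ has an $e_3$-component, and $\nabla_3 e_B=(\chibar_B{}^C-e_B(b^C))e_C+\eta_B e_3$ has one as well; the net $e_3$-coefficient $\eta_B+\etabar_B$ is exactly what produces the $(\eta_B+\etabar_B)\snabla_3\xi$ term. Second, the curvature piece involves $R_{3B A_i}{}^C$, whose null decomposition contributes $\betabar$ (with the sign-reversal $-\Hodge{\betabar}_B\slashed{\epsilon}_{A_i}{}^C$ arising from the opposite signature of $e_3$) together with the Ricci term $R_{3A}=T_{3A}$. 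For $[\snabla_3,\snabla_4]$, both $D_3 e_4=2\eta^A e_A$ and $D_4 e_3=2\etabar^B e_B$ are tangential; subtraction gives the $2(\eta-\etabar)^C\snabla_C\xi$ transport term. The $-\omegabar\snabla_4\xi$ correction originates from $\nabla_3 e_3=\omegabar e_3$: when $\snabla_3$ is expanded in components and then $\snabla_4$ is applied, the projection back onto $S$ picks up the $e_4$-coefficient of $\omegabar$ via the commutation of $e_3$ with the $\chibar-\nablasl b$ tensor. The remaining curvature piece is $R_{34A_i}{}^C$, whose decomposition yields $\etabar_{A_i}\eta^C-\eta_{A_i}\etabar^C$ together with the Hodge-dual term $-\sigma\,\slashed{\epsilon}_{A_i}{}^C$.

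The angular identity $[\snabla_B,\snabla_C]\xi=K(\gslash_{BA_i}\xi-\gslash_{CA_i}\xi)$ is simply the intrinsic Gauss identity on the two-dimensional Riemannian manifold $(S_{u,\ubar},\gslash)$ and requires no reference to the ambient curvature beyond the defining relation between the intrinsic and extrinsic geometries. The most delicate part of the argument will be the third formula: correctly tracing the $-\omegabar\snabla_4\xi$ term, because it does not arise from a frame Lie bracket (in our gauge both $D_3 e_4$ and $D_4 e_3$ are tangential), but rather from the interplay between $\nabla_3 e_3=\omegabar e_3$ and the non-symmetric roles of $e_3$ and $e_4$ in the definitions of $\snabla_3$ versus $\snabla_4$ in the preceding lemma. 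Once this asymmetry is accounted for, the remaining Ricci-to-matter substitutions are straightforward applications of $R_{\mu\nu}=T_{\mu\nu}$.
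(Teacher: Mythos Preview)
Your overall approach is the standard one and matches what the paper indicates by citing \cite{taylor}, \cite{ChrKl}, \cite{Dafermosscattering}: expand $\snabla_X\snabla_Y\xi$ in components using the preceding lemma, subtract, and identify the curvature piece via the Ricci identity together with the matter substitution $R_{\mu\nu}=T_{\mu\nu}$. The paper itself does not supply a proof beyond these references, so your outline is appropriate.

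However, your explanation of the $-\omegabar\,\snabla_4\xi$ term in \eqref{34 commutationf} is off. You claim it does \emph{not} arise from the frame Lie bracket because ``both $D_3e_4$ and $D_4e_3$ are tangential,'' and you then look for it in the interaction of $\nabla_3 e_3=\omegabar e_3$ with the $\chibar-\snabla b$ structure. This is not the right mechanism. The connection table \eqref{tableone}--\eqref{tablefinal} as printed omits a term: consistency with the definition $\omegabar=\tfrac12 g(D_3 e_4,e_3)$ forces
\[
\nabla_3 e_4 = 2\eta^A e_A - \omegabar\, e_4,
\]
and indeed the paper itself uses $[e_4,e_3]=2(\etabar^D-\eta^D)e_D+\omegabar\,e_4$ a few lines later when deriving the transport equation for $\Gamma^A_{3B}$. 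With this correction, the Lie bracket $[e_3,e_4]=2(\eta^A-\etabar^A)e_A-\omegabar\,e_4$ has an explicit $-\omegabar e_4$ component, and the term $-\omegabar\,e_4(\xi_{A_1\dots A_k})$ in the commutator, once rewritten via the preceding lemma as $-\omegabar(\snabla_4\xi)_{A_1\dots A_k}$ plus a zeroth-order $\omegabar\,\chi\cdot\xi$ contribution absorbed into the sum, produces exactly what you need. So the term \emph{does} come from the Lie bracket; your alternative route through $\nabla_3 e_3$ is unnecessary and, as you phrased it, not correct.
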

\subsubsection{Commutation formulae}
Suppose $\mathcal{D} \in \begin{Bmatrix} \modu \snabla_3,\snabla_4,\al \snabla\end{Bmatrix}$. In this subsection we obtain commutation formulae.
\begin{lemma}\label{commutationlemma1}
Suppose $\Phi$ satisfies an equation of the form
\be \snabla_3 \Phi +k \tr\chibar \hsp \Phi =\mathcal{Y}. \ee Then, there holds
\begin{align*}\nonumber \snabla_3 \big(\modu \snabla_3 \Phi\big) + k \tr\chibar \modu \snabla_3 \Phi = \modu \snabla_3 \mathcal{Y} +k \tr\chibar \hsp \Phi \hsp \bigg[ \frac{\modu}{2}\tildetr \bigg] + k\modu (\lvert \chibarhat \rvert^2 -\omegabar \tr\chibar  +\slashed{T}_{33}) \Phi -\mathcal{Y}. \end{align*}

%\begin{align*}\nonumber \snabla_3 \big(\modu \snabla_3 \phi\big) + k \tr\chibar \modu \snabla_3 \phi &= \modu \snabla_3 \mathcal{Y} +k \tr\chibar \hsp \Phi \hsp \bigg[ \big(\frac{1}{\Omega^2}-1\big) +\frac{\modu}{2}\tildetr \bigg] \\+& k\modu (\lvert \chibarhat \rvert^2 +\slashed{T}_{33}) \Phi -\frac{1}{\Omega^2}\mathcal{Y}. \end{align*} 
\end{lemma}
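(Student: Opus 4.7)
The plan is a direct computation, organized in three steps, exploiting the transport structure of the hypothesis together with the Raychaudhuri equation.

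First, I would apply $\snabla_3$ to the hypothesis $\snabla_3 \Phi + k \tr\chibar \, \Phi = \mathcal{Y}$, obtaining
\begin{equation*}
\snabla_3 \snabla_3 \Phi \;+\; k\, \snabla_3(\tr\chibar)\, \Phi \;+\; k\tr\chibar\, \snabla_3 \Phi \;=\; \snabla_3 \mathcal{Y},
\end{equation*}
and then substitute into this identity the Raychaudhuri equation for $\tr\chibar$, which expresses $\snabla_3(\tr\chibar)$ in terms of $(\tr\chibar)^2$, $\lvert\chibarhat\rvert^2$, the lapse contribution $\omegabar\tr\chibar$, and the Vlasov source $\slashed{T}_{33}$. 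This eliminates the a priori uncontrolled second-order quantity $\snabla_3(\tr\chibar)$ in favour of quadratic-in-connection and matter terms.

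Next, I would multiply the resulting identity by $\modu$. Using $e_3(\modu) = -1$, which is immediate from $u<0$ together with $e_3 u = 1$ in the frame \eqref{ourframe}, one has the elementary identity
\begin{equation*}
\modu\, \snabla_3 \snabla_3 \Phi \;=\; \snabla_3(\modu\, \snabla_3 \Phi) \;+\; \snabla_3 \Phi,
\end{equation*}
which rewrites the leading second derivative in the desired commuted form and produces a residual $\snabla_3 \Phi$. Reinserting the hypothesis $\snabla_3 \Phi = \mathcal{Y} - k\tr\chibar\, \Phi$ into this residual yields the $-\mathcal{Y}$ summand on the right-hand side of the claimed identity, together with a leftover $+k\tr\chibar\, \Phi$ contribution.

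Finally, I would regroup: the leftover $+k\tr\chibar\, \Phi$ combines with the $\tfrac{k\modu}{2}(\tr\chibar)^2\, \Phi$ piece arising from Raychaudhuri into the single renormalized term $k\tr\chibar\, \Phi \cdot \bigl[\tfrac{\modu}{2}\tildetr\bigr]$ via the identity $\tildetr = \tr\chibar + \tfrac{2}{\modu}$. The remaining terms $k\modu\bigl(\lvert\chibarhat\rvert^2 - \omegabar\tr\chibar + \slashed{T}_{33}\bigr)\Phi$ and $\modu\, \snabla_3 \mathcal{Y}$ appear verbatim, and transferring $k\tr\chibar \cdot \modu\, \snabla_3 \Phi$ to the left completes the identity. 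There is no genuine analytic obstacle; the only delicate point is the use of $\tildetr$, which is precisely the renormalization that converts the otherwise non-scale-invariant $\tfrac{1}{\modu}$-scaled linear-in-$\tr\chibar$ remainder into a quantity compatible with the $s_2$ signature bookkeeping of the paper.
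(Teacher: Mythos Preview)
Your proposal is correct and follows essentially the same route as the paper, which simply records the computation as ``simple computation using the formula $[\snabla_3,\modu\snabla_3]=-\snabla_3$''. Your product-rule identity $\modu\,\snabla_3^2\Phi=\snabla_3(\modu\,\snabla_3\Phi)+\snabla_3\Phi$ is exactly this commutator unpacked, and the subsequent substitution of Raychaudhuri together with the regrouping via $\tildetr=\tr\chibar+\tfrac{2}{\modu}$ is precisely what the paper's one-line proof suppresses.
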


\begin{proof}
Simple computation using the formula $[\snabla_3,\modu\snabla_3 ]=-\snabla_3$ (recall that $e_3=\partial_u+b^A \partial_{\theta^A}$ in our frame).
\end{proof}

\begin{lemma}\label{commutationlemma2}Suppose $\Phi$ satisfies an equation of the form $\snabla_3 \Phi +k \tr\chibar \Phi = \mathcal{Y}.$ Then \begin{align*}\nonumber \snabla_3 \snabla_4 \Phi + k \tr\chibar \snabla_4 \Phi =& \snabla_4 \mathcal{Y}  -k(\snabla_4 \tr\chibar)\Phi  + (\eta,\etabar)\snabla \Phi +(\eta,\etabar)(\eta,\etabar)\Phi \notag  +\sigma \hsp \Phi + \omegabar \snabla_4 \Phi \\ =& \snabla_4\mathcal{Y} -k\big(2\sdiv \etabar+2\lvert \etabar \rvert^2 +\chihat\cdot \chibarhat +\frac{1}{2}\tr\chi\tr\chibar +\rho-\frac{1}{2}\Te_{34}\big)\Phi +(\eta,\etabar)\snabla\Phi +(\eta,\etabar)^2\Phi \\ +& \sigma \Phi+\omegabar \snabla_4 \Phi.\end{align*}
\end{lemma}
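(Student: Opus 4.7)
The conclusion is a direct consequence of applying $\snabla_4$ to the transport equation $\snabla_3\Phi + k\tr\chibar\,\Phi = \mathcal{Y}$ and then interchanging the order of the resulting null derivatives by means of the commutator identity \eqref{34 commutationf} from Lemma~\ref{preliminarycommutationlemma}. After this swap, every error term produced lies in one of the four schematic groups appearing in the statement of the lemma.

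More concretely, differentiating the hypothesis by $\snabla_4$ and applying the Leibniz rule produces
\[
\snabla_4\snabla_3\Phi \;+\; k(\snabla_4\tr\chibar)\,\Phi \;+\; k\tr\chibar\,\snabla_4\Phi \;=\; \snabla_4\mathcal{Y}.
\]
Next I substitute \eqref{34 commutationf} in the rearranged form
\[
\snabla_4\snabla_3\Phi \;=\; \snabla_3\snabla_4\Phi \,-\, 2(\eta^C-\etabar^C)\snabla_C\Phi \,+\, \omegabar\,\snabla_4\Phi \,+\, \mathcal{E}(\Phi),
\]
where the tensorial residue $\mathcal{E}(\Phi)$, read off directly from the summation in \eqref{34 commutationf}, comprises contractions of the algebraic factors $\etabar_{A_i}\eta^C - \eta_{A_i}\etabar^C - \sigma{\slashed{\epsilon}_{A_i}}^C$ with the components of $\Phi$ and is hence of schematic type $(\eta,\etabar)^2\Phi + \sigma\Phi$. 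Rearranging and absorbing numerical constants and signs into the schematic labels $(\eta,\etabar)\snabla\Phi$, $(\eta,\etabar)^2\Phi$, $\sigma\Phi$, and $\omegabar\snabla_4\Phi$ gives the first identity. The second, explicit form follows by substituting the null-structure propagation equation for the relevant trace recorded just above the lemma, namely $\snabla_4\tr\chi = 2\sdiv\etabar + 2|\etabar|^2 + \chihat\cdot\chibarhat + \tfrac12\tr\chi\tr\chibar + \rho - \tfrac12\Te_{34}$, into the coefficient of $\Phi$.

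The computation presents no genuine analytic obstacle; it is a tensor-calculus bookkeeping argument parallel in spirit to the proof of Lemma~\ref{commutationlemma1}, relying only on the commutator formula \eqref{34 commutationf} and the basic null-structure equation for the $\snabla_4$-derivative of the relevant trace. The sole notational subtlety is the schematic identification of $\mathcal{E}(\Phi)$ with the grouped placeholders $(\eta,\etabar)^2\Phi$ and $\sigma\Phi$, which is the standing convention adopted throughout this section.
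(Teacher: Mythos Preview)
Your proposal is correct and follows exactly the approach the paper takes: its own proof reads in full ``Simple computation, using the commutation formula \eqref{34 commutationf} for $[\snabla_3,\snabla_4]$.'' Your write-up supplies the details (differentiate by $\snabla_4$, swap via \eqref{34 commutationf}, collapse the residue into the schematic placeholders, then substitute the $\snabla_4$-propagation equation for $\tr\chibar$ into the coefficient of $\Phi$), and your remark that signs and constants are absorbed into the schematic labels is precisely the convention in force.
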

\begin{proof}
Simple computation, using the commutation formula \eqref{34 commutationf} for $[\snabla_3,\snabla_4]$. %$\sigma$ is that of Riemann curvature?

\end{proof}
\begin{lemma}\label{commutationlemma3}
Suppose $\Phi$ satisfies an equation of the form $\snabla_3 \Phi +k \tr\chibar \Phi = \mathcal{Y}.$ Then \begin{align*} \nonumber 
\snabla_3 \snabla\Phi +\big(k+\frac{1}{2}\big)\tr\chibar \snabla \Phi =& \snabla \mathcal{Y} -k \big(\snabla \tildetr\big) \Phi- k(\eta,\etabar)\tr\chibar \hsp \Phi +(\eta,\etabar)\mathcal{Y} \\ -& \chibarhat \snabla \Phi +\eta(\chibarhat,\tr\chibar)\Phi +\betabar \Phi +\slashed{T}_3 \Phi.
\end{align*}
\end{lemma}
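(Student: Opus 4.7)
The plan is to proceed exactly as in Lemmas \ref{commutationlemma1}--\ref{commutationlemma2}: apply an angular derivative to the hypothesis, permute the derivatives with the help of the commutation identity \eqref{3Bcommutationf} of Lemma \ref{preliminarycommutationlemma}, and then substitute the hypothesis back in to re-express the error terms. Explicitly, applying $\snabla$ to $\snabla_3\Phi + k\,\tr\chibar\,\Phi = \mathcal{Y}$ yields
\[
\snabla(\snabla_3\Phi) + k\,(\snabla\tr\chibar)\,\Phi + k\,\tr\chibar\,\snabla\Phi \;=\; \snabla\mathcal{Y},
\]
and the aim is to convert the first term into $\snabla_3\snabla\Phi$ modulo the terms listed on the right-hand side of the lemma.

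For this, I would invoke \eqref{3Bcommutationf} with $\xi=\Phi$, which produces
\[
[\snabla_3,\snabla_B]\Phi \;=\; (\eta_B+\etabar_B)\snabla_3\Phi \;-\; {\chibar_B}^C \snabla_C\Phi \;+\; \big(\eta\,\chibar \;-\; \Hodge{\betabar} \;+\; \tfrac12\slashed{T}_3\big)\!\cdot\!\Phi,
\]
where the last parenthesis is a schematic rewriting of the curvature/matter tail in \eqref{3Bcommutationf}; the tensor-rank-dependent contractions in that formula contribute only terms of the listed schematic type, so the passage to the schematic notation is harmless. Substituting $\snabla_3\Phi = \mathcal{Y} - k\,\tr\chibar\,\Phi$ in the first commutator term produces exactly $(\eta,\etabar)\mathcal{Y}$ and $-k(\eta,\etabar)\tr\chibar\,\Phi$. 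Finally, splitting $\chibar = \chibarhat + \tfrac12 \tr\chibar\,\gslash$ in $-{\chibar_B}^C\snabla_C\Phi$ delivers both the structural term $-\tfrac12 \tr\chibar\,\snabla\Phi$, which combines with $k\tr\chibar\,\snabla\Phi$ on the left to give the $(k+\tfrac12)\tr\chibar\,\snabla\Phi$ coefficient, and the error term $-\chibarhat\,\snabla\Phi$.

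The last bookkeeping step is to replace $\snabla\tr\chibar$ by $\snabla\tildetr$. Since $\tildetr = \tr\chibar + \tfrac{2}{\modu}$ differs from $\tr\chibar$ by a function of $u$ alone, one has $\snabla\tr\chibar = \snabla\tildetr$ identically on each $S_{u,\ubar}$, and hence the contribution $-k(\snabla\tr\chibar)\Phi$ is already the term $-k(\snabla\tildetr)\Phi$ appearing in the conclusion. Collecting the pieces and absorbing the $\eta\,\chibar$ commutator residue as $\eta(\chibarhat,\tr\chibar)\Phi$, one arrives at the stated identity.

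I do not foresee a genuine obstacle: this is a schematic commutation computation of the same flavor as Lemmas \ref{commutationlemma1}--\ref{commutationlemma2}, and the only points requiring some care are (i) the accurate implementation of the $\eta\cdot\chibar$, $\betabar$, and $T_3$ contractions from \eqref{3Bcommutationf} in the rank-independent schematic form displayed in the statement, and (ii) the recognition that the apparent loss in $\snabla\tr\chibar$ is in fact no loss, because it equals $\snabla\tildetr$ and the latter is the natural renormalized quantity controlled by the norms $\prescript{(S)}{}{\mathcal{O}_{s,p}}$. These are exactly the manipulations repeatedly exploited throughout Section \ref{s3}, so the argument is essentially linear algebra once \eqref{3Bcommutationf} is on the table.
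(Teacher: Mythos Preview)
Your proposal is correct and follows exactly the approach indicated in the paper's one-line proof: apply $\snabla$ to the hypothesis, invoke the commutation identity \eqref{3Bcommutationf}, substitute $\snabla_3\Phi = \mathcal{Y} - k\,\tr\chibar\,\Phi$ into the $(\eta+\etabar)\snabla_3\Phi$ term, and split $\chibar$ into its trace and trace-free parts. The observation that $\snabla\tr\chibar = \snabla\tildetr$ (since $\tildetr - \tr\chibar = 2/\modu$ is constant on each $S_{u,\ubar}$) is exactly the right way to justify the appearance of $\snabla\tildetr$ in the conclusion.
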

\begin{proof}
    Simple computation, using the commutation formula \eqref{3Bcommutationf} for $[\snabla_3,\snabla_B]$.
\end{proof}We now give the remaining commutation lemmata that describe the pair $(\snabla_4,\mathcal{D})$.
\begin{lemma}\label{commutationlemma4}
Suppose $\Phi$ satisfies an equation of the form $\snabla_4 \Phi =\mathcal{X}$. Then
\begin{align*} \snabla_4(\modu\snabla_3 \Phi)=& \modu \snabla_4\snabla_3 \Phi = \modu \snabla_3 \mathcal{X} +\modu\bigg[ (\eta,\etabar)\snabla \Phi +(\eta,\etabar)(\eta,\etabar)\Phi+\sigma\Phi  \bigg] +\bcancel{\omega \big(\modu \snabla_3 \Phi\big)}-\modu\hspace{.5mm}\omegabar \hsp\mathcal{X}. \end{align*}
\end{lemma}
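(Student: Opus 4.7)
The argument is a short computation combining two observations: first, that $|u|$ is an affine parameter along the $e_{3}$-direction only, and second, the commutator identity \eqref{34 commutationf} for $[\snabla_{3},\snabla_{4}]$ established in Lemma \ref{preliminarycommutationlemma}.

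The first step is to verify that $\snabla_{4}|u|=0$. This follows directly from the frame choice in \eqref{ourframe}: since $e_{4}=\Omega^{-2}\partial_{\ub}$ and $|u|$ is a function of $u$ alone, the derivative $e_{4}(|u|)$ vanishes identically, whence
\[
\snabla_{4}(|u|\snabla_{3}\Phi) \;=\; |u|\,\snabla_{4}\snabla_{3}\Phi,
\]
giving the first equality in the statement.

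The second step is to rewrite $\snabla_{4}\snabla_{3}\Phi$ using the commutator formula \eqref{34 commutationf}. Writing
\[
\snabla_{4}\snabla_{3}\Phi \;=\; \snabla_{3}\snabla_{4}\Phi \;-\; [\snabla_{3},\snabla_{4}]\Phi,
\]
and invoking both the hypothesis $\snabla_{4}\Phi=\mathcal{X}$ and the identity
\[
[\snabla_{3},\snabla_{4}]\Phi \;=\; 2(\eta-\etabar)\cdot\snabla\Phi \;-\;\omegabar\,\snabla_{4}\Phi \;+\;\bigl\{\text{terms schematically of the form }(\eta,\etabar)^{2}\Phi + \sigma\,\Phi\bigr\},
\]
one obtains
\[
\snabla_{4}\snabla_{3}\Phi \;=\; \snabla_{3}\mathcal{X} \;+\;(\eta,\etabar)\,\snabla\Phi \;+\;(\eta,\etabar)(\eta,\etabar)\Phi \;+\;\sigma\,\Phi \;+\;\omega(\snabla_{3}\Phi) \;-\;\omegabar\,\mathcal{X},
\]
where the $\omega(\snabla_{3}\Phi)$ term would arise from the analogous $[\snabla_{4},\snabla_{3}]$ contribution if $\omega$ did not vanish. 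Crucially, in the frame chosen in \eqref{ourframe}, one has $\omega\equiv 0$, so this term is identically zero, consistent with the \verb|\bcancel| appearing in the statement.

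The final step is to multiply through by $|u|$ and collect the schematic terms, producing exactly the right-hand side of the lemma. The only genuinely ``new'' feature compared with the previous three commutation lemmas is the role of the vanishing of $\omega$ in the chosen frame; every other ingredient is directly available from the commutator identity \eqref{34 commutationf} and the substitution $\snabla_{4}\Phi=\mathcal{X}$. There is no analytic obstacle here, since the computation is purely algebraic once the frame properties and the commutator lemma are in hand.
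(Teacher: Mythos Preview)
Your proof is correct and follows essentially the same approach as the paper, which simply notes that the result is a ``simple computation, using the commutation formula \eqref{34 commutationf} for $\snabla_3,\snabla_4$ and the fact that $\snabla_4\modu=0$.'' Your write-up is more explicit than the paper's one-line proof, but the ingredients (the vanishing of $\snabla_4|u|$ in the chosen frame, the commutator identity, the substitution $\snabla_4\Phi=\mathcal{X}$, and the cancellation of the $\omega$-term since $\omega\equiv 0$) are identical.
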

\begin{proof}
Simple computation, using the commutation formula \eqref{34 commutationf} for $\snabla_3,\snabla_4$ and the fact that $\snabla_4 \modu =0$.
\end{proof}
\begin{lemma}\label{commutationlemma5} Suppose $\Phi$ satisfies an equation of the form $\snabla_4 \Phi =\mathcal{X}$. Then

\[ \snabla_4 \snabla \Phi = \snabla \mathcal{X}+(\chihat,\tr\chi)\snabla\Phi +(\chihat,\tr\chi)\hsp\etabar \hsp\Phi + (\beta-\frac{1}{2}\slashed{T}_4)\Phi. \]
\end{lemma}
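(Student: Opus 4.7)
The plan is to derive this commutation identity by direct application of Lemma \ref{preliminarycommutationlemma}, specifically formula \eqref{4Bcommutationf}, followed by schematic simplification of the commutator terms using the decomposition of $\chi$ into its trace and trace-free parts. Since $\Phi$ is an $S_{u,\ubar}$-tangent tensor of some rank $k$ and $\snabla_4 \Phi = \mathcal{X}$ is given as a primary evolution equation, I would first write
\[
\snabla_4 \snabla_B \Phi \;=\; \snabla_B \snabla_4 \Phi \;+\; [\snabla_4,\snabla_B]\Phi \;=\; \snabla_B \mathcal{X} \;+\; [\snabla_4,\snabla_B]\Phi,
\]
so that the entire content of the identity reduces to expanding the commutator on the right-hand side.

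Next I would apply \eqref{4Bcommutationf} directly. The commutator produces five types of contributions: (i) a transport-type term $-{\chi_B}^C \snabla_C \Phi$ coming from the $\snabla$-action on $\Phi$ itself, which is schematically of the form $\chi \cdot \snabla \Phi$; (ii) terms of the form $\chi_{A_iB}\etabar^C \Phi_{\cdots C \cdots}$ and $-\etabar_{A_i}{\chi_B}^C \Phi_{\cdots C \cdots}$, both schematically $\chi \cdot \etabar \cdot \Phi$; (iii) a curvature contribution $\Hodge{\beta}_B {\slashed{\epsilon}_{A_i}}^C \Phi_{\cdots C \cdots}$ of schematic form $\beta \cdot \Phi$; and (iv) the Vlasov-induced terms $\tfrac12 {\gslash_B}^C \Te_{4 A_i}\Phi_{\cdots C \cdots}$ and $-\tfrac12 \gslash_{A_iB}{\Te_4}^C \Phi_{\cdots C \cdots}$, collectively of schematic form $\Te_4 \cdot \Phi$.

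To put the expression into the stated schematic shape, I would then substitute $\chi = \chihat + \tfrac12 \tr\chi\, \gslash$ into the commutator terms: the $-{\chi_B}^C\snabla_C\Phi$ part becomes $(\chihat,\tr\chi)\snabla\Phi$ (absorbing the $\gslash$-contraction as an index rearrangement on $\snabla\Phi$), the $\chi\cdot\etabar\cdot\Phi$ terms combine into $(\chihat,\tr\chi)\cdot \etabar\cdot \Phi$, and the curvature and Vlasov contributions combine as $(\beta-\tfrac12 \slashed{T}_4)\Phi$. Summing with the already-computed $\snabla_B\mathcal{X}$ yields exactly
\[
\snabla_4\snabla\Phi \;=\; \snabla\mathcal{X} \;+\; (\chihat,\tr\chi)\,\snabla\Phi \;+\; (\chihat,\tr\chi)\,\etabar\,\Phi \;+\; \bigl(\beta - \tfrac12 \slashed{T}_4\bigr)\Phi.
\]

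There is no genuine analytic obstacle here, since the proof is purely algebraic once Lemma \ref{preliminarycommutationlemma} is available; the only point requiring attention is bookkeeping — correctly tracking the antisymmetry of $\slashed{\epsilon}$ in the $\beta$-term, the trace/trace-free splitting of $\chi$, and the sign/combinatorial factor $-\tfrac12$ in front of $\slashed{T}_4$ obtained from the last two Vlasov terms. Once that bookkeeping is done, the schematic identity follows immediately, and it is this compact schematic form that will be invoked repeatedly in the subsequent higher-order energy and transport estimates.
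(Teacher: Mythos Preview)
Your proposal is correct and takes essentially the same approach as the paper, which simply states that this is a ``simple computation, using the commutation formula \eqref{4Bcommutationf} for $\snabla_4$ and $\snabla$.'' Your expansion of the commutator terms and schematic regrouping via $\chi = \chihat + \tfrac12 \tr\chi\, \gslash$ is exactly the intended computation, and your bookkeeping of the $\beta$ and $\Te_4$ contributions matches the stated formula.
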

\begin{proof}
Simple computation, using the commutation formula for \eqref{4Bcommutationf} $\snabla_4$ and $\snabla$.
\end{proof}
Combining the results of Lemmata \ref{commutationlemma1}-\ref{commutationlemma5}, we arrive at two important commutation formulae, along each of the $e_4$ and $e_3$ directions, which we give in the form of two Propositions. 

\begin{proposition}
\label{commute4}
Assume a tensor-field $\phi$ satisfies $\snabla_4 \phi = F_0$. Then, if $\mathcal{D} \in \{\modu \snabla_3, \snabla_4, \al \snabla \}$,  there holds $\snabla_4 \mathcal{D}^i \phi :=F_i,$ where

\begin{align}
\nonumber F_i =& \sum_{i_1+i_2+i_3=i}\mathcal{D}^{i_1}(\modu\omegabar)^{i_2}\mathcal{D}^{i_3}F_0  \\  \nonumber +& \frac{1}{\al} \sum_{i_1+i_2+i_3+i_4+i_5=i-1}\mathcal{D}^{i_1}\psi_g^{i_2}\mathcal{D}^{i_3}\modu \mathcal{D}^{i_4}(\eta,\etabar) \mathcal{D}^{i_5+1}\phi \\ \nonumber +&\sum_{i_1+i_2+i_3+i_4+1=i}\mathcal{D}^{i_1}\psi_g^{i_2}\mathcal{D}^{i_3}(\tr\chi,\chihat)\mathcal{D}^{i_4+1}\phi \\ \nonumber &+ \al \sumifim \dione \psi_g^{i_2} \dit(\chihat,\tr\chi)\dif \etabar \difi \phi \\ \nonumber +& \al \sumifm \dione \psi_g^{i_2}\dit (\beta,\slashed{T}_{4}) \dif \phi  \\  \nonumber+&\sum_{i_1+i_2+i_3+i_4+i_5=i-1}\mathcal{D}^{i_1}\psi_g^{i_2}\mathcal{D}^{i_3}\modu \mathcal{D}^{i_4}\sigma\mathcal{D}^{i_5}\phi \\ +& \sum_{i_1+\dots+i_6=i-1}\mathcal{D}^{i_1}\psi_g^{i_2}\mathcal{D}^{i_3}\modu \mathcal{D}^{i_4}(\eta,\etabar)\mathcal{D}^{i_5}(\eta,\etabar)\mathcal{D}^{i_6}\phi. \label{commutationformulanabla4}
\end{align}We recall that here $\psi_g= \modu\hsp \omegabar$.
\end{proposition}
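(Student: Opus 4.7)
The strategy is a straightforward induction on $i$, using the three prior commutation Lemmas \ref{commutationlemma1}--\ref{commutationlemma5} as the atomic building blocks. The base case $i=0$ is immediate: the identity $\snabla_4 \phi = F_0$ already matches the schematic form with $i_1=i_2=i_3=0$ and the remaining index sums empty. For the inductive step, assume the formula holds for $i-1$ with source term $F_{i-1}$ of the stated form, and write $\mathcal{D}^i \phi = \mathcal{D}\,(\mathcal{D}^{i-1}\phi)$ where the outermost operator $\mathcal{D}$ ranges over $\{\modu\snabla_3,\snabla_4,\al\snabla\}$. I would then split into three cases according to which operator is applied last.

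In the trivial case $\mathcal{D}=\snabla_4$ one simply differentiates the inductive identity in the $e_4$ direction, and the structural form of $F_i$ is inherited from that of $F_{i-1}$. In the case $\mathcal{D}=\modu\snabla_3$, Lemma \ref{commutationlemma4} applied with $\Phi=\mathcal{D}^{i-1}\phi$ and $\mathcal{X}=F_{i-1}$ produces the principal term $\modu\snabla_3 F_{i-1}$, the curvature/Ricci error $\modu\bigl[(\eta,\etabar)\snabla\Phi + (\eta,\etabar)^2\Phi + \sigma\Phi\bigr]$, and the $\modu\omegabar\,\mathcal{X}$ term (the $\omega$ cancellation is automatic in our gauge). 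Re--expanding each factor of $F_{i-1}$ using the inductive hypothesis and absorbing the $\modu$ prefactor into one of the $\mathcal{D}^{i_3}\modu$ slots, every resulting term lands in one of the six multi--index sums in \eqref{commutationformulanabla4}. In the case $\mathcal{D}=\al\snabla$, Lemma \ref{commutationlemma5} yields the analogous decomposition, where now the factor $\al$ explicitly appears in front of the $(\chihat,\tr\chi)\etabar$ and $(\beta-\tfrac12\slashed{T}_4)$ error terms, matching the fourth and fifth lines of \eqref{commutationformulanabla4} (the factor $1/\al$ in the second line compensates when combined with an inner $\modu\snabla_3$ factor, which is the only way an $\al^{-1}$ weight can arise from iterating the $(\eta,\etabar)$ terms produced by the $[\snabla_4,\modu\snabla_3]$ commutator).

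The accounting is then purely combinatorial: each application of a commutator either converts one slot of type $\mathcal{D}^{i_k}(\cdot)$ into a curvature/Ricci factor multiplied by a lower--order block, or shifts a derivative from $\phi$ to a connection coefficient, and the total ``weight'' $i_1+i_2+\cdots$ is preserved (equalling $i$ for the principal and $\psi_g^{i_2}$ type terms, and $i-1$ for the curvature/Ricci error terms, which is consistent because one derivative has been ``spent'' producing the error). In particular, the factor $\psi_g = \modu\omegabar$ appears precisely as the $\omegabar$ error in Lemma \ref{commutationlemma4} combined with $\modu$, and the power $i_2$ encodes the number of $\modu\snabla_3$ operators that have been commuted past inner $\snabla_4$'s.

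The only mildly subtle point---and the step where one should be careful---is to verify that no new structural term arises outside the six listed families when the commutators are iterated. Concretely, when a factor of $(\eta,\etabar)$ or $\chi$ generated at an early stage is later hit by another $\mathcal{D}$, one must re--expand using Lemma \ref{preliminarycommutationlemma} and check that the resulting contribution (schematically of type $\mathcal{D}^{j}\psi$ for $\psi\in\{(\eta,\etabar),\chi,\sigma,\beta,\slashed{T}_4,\ldots\}$) fits one of the listed templates after relabelling indices. This is a bookkeeping exercise, entirely parallel to the analogous commutation formulae in \cite{AnThesis,AnAth,A-M-Y}; the new matter contributions $\slashed{T}_4$ in the fifth sum and $\slashed{T}_{4A}$ inside the inhomogeneities are simply propagated through since they already appear in Lemma \ref{commutationlemma5}. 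The induction closes at step $i$, proving the claim.
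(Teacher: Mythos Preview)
Your proposal is correct and follows essentially the same approach as the paper's proof: induction on $i$, splitting into the three cases $\mathcal{D}\in\{\modu\snabla_3,\snabla_4,\al\snabla\}$ for the outermost operator, and invoking Lemmas \ref{commutationlemma4} and \ref{commutationlemma5} to generate the error terms. The paper verifies the base case at $i=1$ rather than $i=0$ and writes out the inductive step slightly more explicitly for each choice of outermost $\mathcal{D}$, but the logical structure and the combinatorial bookkeeping are identical.
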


\begin{proof}
We use Lemmata \ref{commutationlemma4} and \ref{commutationlemma5} For the base case:
If $\mathcal{D}=\al\snabla,$ then
\be \snabla_4 (\al\snabla) \phi = \al \snabla F_0 + (\chihat,\tr\chi)(\al\snabla) \phi +\al \etabar(\chihat,\tr\chi)\phi +\al (\beta,\slashed{T}_4)\phi, \ee which, as we see, agrees with our claim. If $\mathcal{D}=\modu \snabla_3$, we have \begin{equation}
\snabla_4(\modu\snabla_3\phi) = \modu\snabla_3 F_0 +\modu (\eta,\etabar)\snabla\phi + \modu (\eta,\etabar)(\eta,\etabar)\phi+\modu \sigma \phi + \bcancel{\omega (\modu\snabla_3)\phi}-\modu \omegabar \snabla_4 \phi,
\end{equation}which also agrees with \eqref{commutationformulanabla4}, since $\snabla_4\phi=F_0$. Assume it holds for $i-1$. Then

\begin{align}\snabla_4 \mathcal{D}^i\phi = [\snabla_4, \mathcal{D}]\mathcal{D}^{i-1}\phi + \mathcal{D}F_{i-1}. \end{align}We again distinguish between two cases. If the top-order $\mathcal{D}$ operator is $\al\snabla$, then

\begin{align}\snabla_4(\al\snabla)\mathcal{D}^{i-1}\phi =& (\chihat,\tr\chi)(\al\snabla)\mathcal{D}^{i-1}\phi + \al \etabar(\chihat,\tr\chi)\mathcal{D}^{i-1}\phi + \al(\beta,\Te_4) \mathcal{D}^{i-1}\phi +(\al\snabla) F_{i-1} \nonumber \\ =& (\chihat,\tr\chi)(\al\snabla)\mathcal{D}^{i-1}\phi + \al \etabar(\chihat,\tr\chi)\mathcal{D}^{i-1}\phi + \al(\beta,\Te_4) \mathcal{D}^{i-1}\phi \nonumber \\+& \sum_{i_1+i_2+i_3=i}\mathcal{D}^{i_1}(\modu \omegabar)^{i_2}\mathcal{D}^{i_3}F_0 \notag \\  \nonumber +& \frac{1}{\al} \sum_{i_1+i_2+i_3+i_4+i_5=i-1}\mathcal{D}^{i_1}\psi_g^{i_2}\mathcal{D}^{i_3}\modu \mathcal{D}^{i_4}(\eta,\etabar) \mathcal{D}^{i_5+1}\phi \\ \nonumber +&\sum_{i_1+i_2+i_3+i_4+1=i}\mathcal{D}^{i_1}\psi_g^{i_2}\mathcal{D}^{i_3}(\tr\chi,\chihat)\mathcal{D}^{i_4+1} \phi \\ \nonumber &+ \al \sumifim \dione \psi_g^{i_2} \dit(\chihat,\tr\chi)\dif \etabar \difi \phi \\ \nonumber +& \al \sumifm \dione \psi_g^{i_2}\dit (\beta,\slashed{T}_{4}) \dif \phi  \\  \nonumber+&\sum_{i_1+i_2+i_3+i_4+i_5=i-1}\mathcal{D}^{i_1}\psi_g^{i_2}\mathcal{D}^{i_3}\modu \mathcal{D}^{i_4}\sigma\mathcal{D}^{i_5}\phi \\ +& \sum_{i_1+\dots+i_6=i-1}\mathcal{D}^{i_1}\psi_g^{i_2}\mathcal{D}^{i_3}\modu \mathcal{D}^{i_4}(\eta,\etabar)\mathcal{D}^{i_5}(\eta,\etabar)\mathcal{D}^{i_6}\phi, \end{align}which agrees exactly with \eqref{commutationformulanabla4}.
Similarly, for $\mathcal{D}=\modu \snabla_3$, we have

\begin{align}\nonumber
    \snabla_4 (\modu \snabla_3)\mathcal{D}^{i-1}\phi =& \modu\bigg[ (\eta,\etabar)\snabla \mathcal{D}^{i-1}\phi +(\eta,\etabar)(\eta,\etabar)\mathcal{D}^{i-1}\phi+\sigma\mathcal{D}^{i-1}\phi  \bigg] -\modu \omegabar \snabla_4 \mathcal{D}^{i-1}\phi +(\modu\snabla_3)F_{i-1} \\ \nonumber =& \modu\bigg[ (\eta,\etabar)\snabla \mathcal{D}^{i-1}\phi +(\eta,\etabar)(\eta,\etabar)\mathcal{D}^{i-1}\phi+\sigma\mathcal{D}^{i-1}\phi  \bigg] -\modu \omegabar \snabla_4 \mathcal{D}^{i-1}\phi\\+& \sum_{i_1+i_2+i_3=i}\mathcal{D}^{i_1}(\eta,\etabar,\modu \omegabar)^{i_2}\mathcal{D}^{i_3}F_0 \notag \\  \nonumber +& \frac{1}{\al} \sum_{i_1+i_2+i_3+i_4+i_5=i-1}\mathcal{D}^{i_1}\psi_g^{i_2}\mathcal{D}^{i_3}\modu \mathcal{D}^{i_4}(\eta,\etabar) \mathcal{D}^{i_5+1}\phi \\ \nonumber +&\sum_{i_1+i_2+i_3+i_4+1=i}\mathcal{D}^{i_1}\psi_g^{i_2}\mathcal{D}^{i_3}(\tr\chi,\chihat)\mathcal{D}^{i_4+1}\phi \\ \nonumber &+ \al \sumifim \dione \psi_g^{i_2} \dit(\chihat,\tr\chi)\dif \etabar \difi \phi \\ \nonumber +& \al \sumifm \dione \psi_g^{i_2}\dit (\beta,\slashed{T}_{4}) \dif \phi  \\  \nonumber+&\sum_{i_1+i_2+i_3+i_4+i_5=i-1}\mathcal{D}^{i_1}\psi_g^{i_2}\mathcal{D}^{i_3}\modu \mathcal{D}^{i_4}\sigma\mathcal{D}^{i_5}\phi \\ +& \sum_{i_1+\dots+i_6=i-1}\mathcal{D}^{i_1}\psi_g^{i_2}\mathcal{D}^{i_3}\modu \mathcal{D}^{i_4}(\eta,\etabar)\mathcal{D}^{i_5}(\eta,\etabar)\mathcal{D}^{i_6}\phi, \notag 
\end{align}which again agrees with \eqref{commutationformulanabla4}. Finally, for $\mathcal{D}=\snabla_4,$ we have
\begin{align}
\snabla_4 \snabla_4 \mathcal{D}^{i-1}\phi = \snabla_4 F_{i-1},
\end{align}
which, again, yields \eqref{commutationformulanabla4}. The result follows.
\end{proof}\noindent We finally provide the commutation formula for $\snabla_3$.

\begin{proposition}
\label{propositionnabla3}
Assume a tensorfield $\phi$ satisfies an equation of the form \[ \snabla_3\phi+ a \tr\chibar \phi = G_0.    \]Then if $\ell$ is the number of times $(\al\snabla)$ appears in $\mathcal{D}^{k+\ell}$ and $i:=k+\ell$, there holds:

\begin{align}
\snabla_3 \mathcal{D}^{k+\ell}\phi + \big(\frac{\ell}{2}+a\big)\tr\chibar \mathcal{D}^{k+\ell}\phi = G_i, \notag\end{align}where

\begin{align}
\nonumber G_i&= \sum_{i_1+i_2+i_3=i}\mathcal{D}^{i_1}\big(\eta,\etabar,1 \big)^{i_2} \mathcal{D}^{i_3}G_0 \\ \notag  +& \sum_{i_1+i_2+i_3+i_4+1=i}\dione\psi_g^{i_2}\dit \omegabar \mathcal{D}^{i_4+1}\phi \\ \notag &+\frac{1}{\al}\sumifm \dione \psi_g^{i_2}\dit(\eta,\etabar,\chibarhat)\mathcal{D}^{i_4+1} \phi\\  \notag +&\sumifm \dione \psi_g^{i_2}\dit \sdiv \etabar \dif \phi\\ \notag +& \sumifim \dione \psi_g^{i_2}\dit \chihat \dif \chibarhat \difi \phi \\ \notag +& \sumifim \dione \psi_g^{i_2}\dit \tr\chi \dif \tr\chibar \difi \phi \\ \notag +& \sumifm \dione \psi_g^{i_2}\dit(\rho,\Te_{34})\dif \phi \\ \notag +& \sumifm \dione \psi_g^{i_2}\dit (\betabar,\Te_3)\dif \phi   \\ \notag +&\frac{1}{\al} \sumifm \dione \psi_g^{i_2}\mathcal{D}^{i_3+1}\tildetr \dif \phi \\ \notag &+\sum_{i_1+i_2+i_3+i_4+i_5=i-1}\mathcal{D}^{i_1}\psi_g^{i_2}\mathcal{D}^{i_3}(\eta,\etabar)\mathcal{D}^{i_4}(\eta,\etabar,\chibarhat, \tr\chibar)\mathcal{D}^{i_5}\phi \nonumber \\&+ \nonumber \sum_{i_1+i_2+i_3+i_4=i-1}\mathcal{D}^{i_1}\psi_g^{i_2}\mathcal{D}^{i_3}\sigma\mathcal{D}^{i_4}\phi \notag \\ &+\nonumber\sum_{i_1+i_2+i_3+i_4+i_5+i_6=i-1}\mathcal{D}^{i_1}\psi_g^{i_2}\mathcal{D}^{i_3}\tr\chibar\mathcal{D}^{i_4}\modu \mathcal{D}^{i_5}\tildetr\mathcal{D}^{i_6}\phi \\ &+\nonumber \sum_{i_1+i_2+i_3+i_4+i_5+i_6=i-1}\mathcal{D}^{i_1}\psi_g^{i_2}\mathcal{D}^{i_3} \modu\mathcal{D}^{i_4}\chibarhat\mathcal{D}^{i_5}\chibarhat\mathcal{D}^{i_6}\phi \notag \\ &+\nonumber \sum_{i_1+i_2+i_3+i_4+i_5+i_6=i-1}\mathcal{D}^{i_1}\psi_g^{i_2}\mathcal{D}^{i_3} \modu\mathcal{D}^{i_4}\omegabar\mathcal{D}^{i_5}\tr\chibar\mathcal{D}^{i_6}\phi  \\&+ \sum_{i_1+i_2+i_3+i_4+i_5+i_6=i-1}\mathcal{D}^{i_1}\psi_g^{i_2}\mathcal{D}^{i_3} \modu\mathcal{D}^{i_4}\slashed{T}_{33}\mathcal{D}^{i_5}\phi. \label{commutationformulanabla3}
\end{align}
Here, $\psi_g \in \{\eta,\etabar,1\}.$
\end{proposition}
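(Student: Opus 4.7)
The argument proceeds by strong induction on the total order $i = k+\ell \geq 0$, the base case $i=0$ being the hypothesis $\snabla_3 \phi + a\tr\chibar \phi = G_0$. For the inductive step, factorize $\mathcal{D}^{k+\ell} = \mathcal{D}\,\mathcal{D}^{k+\ell-1}$, where the outermost operator $\mathcal{D} \in \{|u|\snabla_3,\snabla_4, \al\snabla\}$ selects one of three subcases, and compute
\begin{equation*}
\snabla_3 \mathcal{D}(\mathcal{D}^{i-1}\phi) = \mathcal{D}\bigl(\snabla_3 \mathcal{D}^{i-1}\phi\bigr) + [\snabla_3,\mathcal{D}](\mathcal{D}^{i-1}\phi),
\end{equation*}
substituting the inductive equation $\snabla_3\mathcal{D}^{i-1}\phi = G_{i-1} - (\tfrac{\ell_0}{2}+a)\tr\chibar\, \mathcal{D}^{i-1}\phi$, with $\ell_0$ the angular count at step $i-1$, into the first term. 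The required commutators are furnished by Lemmata~\ref{commutationlemma1}--\ref{commutationlemma3}.

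In the case $\mathcal{D} = \al\snabla$, Lemma~\ref{commutationlemma3} applied with $k \mapsto \tfrac{\ell-1}{2}+a$ delivers the crucial $\tfrac{1}{2}\tr\chibar$ contribution arising from $[\snabla_3,\snabla]$, which combines with the inductive coefficient to yield precisely $(\tfrac{\ell}{2}+a)\tr\chibar$ on the new left-hand side. The remaining contributions are: the pushed-forward source $\al\snabla G_{i-1}$; the derivative of the coefficient, $\al\snabla(a\tr\chibar)\cdot \mathcal{D}^{i-1}\phi$, which after using $\snabla(2/|u|)=0$ collapses $\snabla\tr\chibar$ to $\snabla\tildetr$ and thereby produces the $\tfrac{1}{\al}\mathcal{D}^{i_3+1}\tildetr$ class; and the lower-order commutator families $(\eta,\etabar,\chibarhat)\cdot \snabla$, $\eta(\chibarhat,\tr\chibar)$, $\betabar$, $\slashed{T}_3$ from Lemma~\ref{commutationlemma3}. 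Each of these matches one of the schematic classes in $G_i$.

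For $\mathcal{D} = |u|\snabla_3$, Lemma~\ref{commutationlemma1} is applied with $k \mapsto \tfrac{\ell}{2}+a$; here $\ell$ is unchanged, the commutator-generated $-\snabla_3 \mathcal{D}^{i-1}\phi$ is reabsorbed into $G_{i-1}$ and the linear term via the inductive hypothesis, and the factor $|u|\snabla_3 \tr\chibar$ is substituted through the Raychaudhuri identity $\snabla_3 \tr\chibar = -\tfrac{1}{2}\tr\chibar^2 - |\chibarhat|^2 - \slashed{T}_{33}$, producing the $\chibarhat\cdot\chibarhat$, $\omegabar\,\tr\chibar$, and $\slashed{T}_{33}$ classes together with a $\tr\chibar\cdot|u|\cdot\tildetr$ contribution. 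For $\mathcal{D} = \snabla_4$, Lemma~\ref{commutationlemma2} supplies the commutator families $(\eta,\etabar)\snabla$, $(\eta,\etabar)^2$, $\sigma$ and $\omegabar\snabla_4$, while the derivative of the coefficient is processed via the $\snabla_4 \tr\chibar$ equation, accounting for the $\sdiv\etabar$, $\chihat\cdot\chibarhat$, $\tr\chi\tr\chibar$, $\rho$, and $\slashed{T}_{34}$ classes.

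The main technical hurdle is not any single computation but rather combinatorial bookkeeping: one must verify that every term produced by commutators, by differentiation of the varying coefficient $(\tfrac{\ell_0}{2}+a)\tr\chibar$, and by the iterated propagation $G_{i-2}\mapsto \mathcal{D}G_{i-2}$ falls into one of the schematic classes in \eqref{commutationformulanabla3}. The $(\eta,\etabar,1)^{i_2}$ factor in the first sum and the $\psi_g^{i_2} = (|u|\omegabar)^{i_2}$ patterns are the bookkeeping devices that absorb, respectively, the iterated commutator terms from $[\snabla_3,\snabla]$ and those of the $\omegabar\snabla_4$ type from $[\snabla_3,\snabla_4]$; meanwhile the weight $1/\al$ on the $\snabla\tildetr$ class and the placement of $\modu$ in the quadratic classes are dictated entirely by the scale-invariant normalization of $\mathcal{D} = \al\snabla$ and $\mathcal{D} = |u|\snabla_3$. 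A parallel but simpler induction, already laid out in Proposition~\ref{commute4}, handles the case of the $\snabla_4$ equation.
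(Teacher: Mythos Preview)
Your approach is correct and matches the paper's, which simply records the result as ``an inductive argument similar to the above'' (i.e., parallel to the proof of Proposition~\ref{commute4}): induct on $i$, peel off the outermost $\mathcal{D}$, and invoke Lemmata~\ref{commutationlemma1}--\ref{commutationlemma3} in the three cases, with the structure equations for $\snabla_3\tr\chibar$ and $\snabla_4\tr\chibar$ supplying the $\chibarhat\cdot\chibarhat$, $\slashed{T}_{33}$, $\sdiv\etabar$, $\chihat\cdot\chibarhat$, $\tr\chi\tr\chibar$, $\rho$, $\Te_{34}$ classes. One small slip: in this proposition the bookkeeping symbol $\psi_g$ ranges over $\{\eta,\etabar,1\}$, not $\modu\omegabar$ (that was the convention in Proposition~\ref{commute4}); the $\omegabar\snabla_4$ terms from Lemma~\ref{commutationlemma2} land in the separate $\dit\omegabar\,\mathcal{D}^{i_4+1}\phi$ class rather than being absorbed into $\psi_g^{i_2}$.
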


\begin{proof}
    This is an inductive argument similar to the above.
\end{proof}\noindent

\section{Construction of characteristic seed data}
\label{initialdatasection}

In this section, we explain how the characteristic data assumed in
Theorem~\ref{mainone} may be constructed from free seed data.  The
construction is the Einstein--Vlasov analogue of the characteristic
constraint construction of Christodoulou and Luk.  The only essential
new point is that the Raychaudhuri equation contains the Vlasov null
energy component \(T_{44}\).

\noindent We prescribe the data on the outgoing initial null hypersurface
\(H_{u_\infty}\), with \(u_\infty<0\) large in absolute value.  The data
on the incoming hypersurface \(\underline H_0\) are taken to be
Minkowskian, and the Vlasov distribution is assumed to vanish there.

\noindent Let \(S_{u_\infty,0}\) be the initial sphere.  In stereographic
coordinates \(\theta=(\theta^1,\theta^2)\), we write the induced metric
on the sections \(S_{u_\infty,\underline u}\) in the form
\[
        \gamma_{AB}(u_\infty,\underline u,\theta)
        =
        |u_\infty|^2 \Phi(\underline u,\theta)^2
        \widehat\gamma_{AB}(\underline u,\theta),
\]
where
\[
        \widehat\gamma_{AB}
        =
        \frac{m_{AB}}{\left(1+\frac14|\theta|^2\right)^2},
        \qquad
        m=\exp \Psi,
        \qquad
        \operatorname{tr}\Psi=0.
\]
Thus \(\det m=1\), so that the conformal class is carried by
\(\widehat\gamma\), while the area expansion is carried by the scalar
factor \(\Phi\).

\noindent We choose the conformal seed \(\Psi\) smooth, supported away from the
corner \(\underline u=0\), and satisfying, for \(N\) sufficiently large,
\[
        \sum_{|I|\leq N} |\partial_\theta^I \Psi|
        +
        \sum_{|I|\leq N}
        |\partial_\theta^I \partial_{\underline u}\Psi|
        \lesssim
        \frac{a^{1/2}}{|u_\infty|}.
\]
The corner conditions at the sphere $S_{u_{\infty},0}$ are
\[
        \Psi|_{\underline u=0}=0,
        \qquad
        \Phi(0,\theta)=1,
        \qquad
        \partial_{\underline u}\Phi(0,\theta)=\frac{1}{|u_\infty|}.
\]
These conditions ensure compatibility with the Minkowskian incoming
initial hypersurface.

\noindent Next, prescribe a smooth nonnegative Vlasov seed
\[
        F=F(\underline u,\theta,q)
\]
on the mass shell over \(H_{u_\infty}\), compactly supported in the
renormalized momentum variables
\[
        q^3=p^3,
        \qquad
        q^A=|u_\infty|^2 p^A.
\]
The physical distribution \(f_0\) is obtained by pulling \(F\) back to
the mass shell determined by the metric \(\gamma\).  We assume that on
\(\operatorname{supp} f_0\),
\[
        0<p^3\leq C_{p^3},
        \qquad
        0\leq |u_\infty|^2 p^4 \leq C_{p^4}p^3,
        \qquad
        |u_\infty|^2 |p^A|\leq C_{p^A}p^3,
\]
where the constants are independent of \(u_\infty\).  These are precisely
the focusing support assumptions used later in the Vlasov propagation
argument.

\noindent The outgoing null second fundamental form is
\[
        \chi_{AB}
        =
        \frac12 \partial_{\underline u}\gamma_{AB}.
\]
Since \(\det \widehat\gamma\) is independent of \(\underline u\), we have
\[
        \operatorname{tr}\chi
        =
        2\Phi^{-1}\partial_{\underline u}\Phi,
\]
and
\[
        \widehat\chi_{AB}
        =
        \frac{|u_\infty|^2\Phi^2}{2}
        \partial_{\underline u}\widehat\gamma_{AB}.
\]
Consequently,
\[
        |\widehat\chi|_\gamma^2
        =
        \frac14
        \widehat\gamma^{AC}\widehat\gamma^{BD}
        \partial_{\underline u}\widehat\gamma_{AB}
        \partial_{\underline u}\widehat\gamma_{CD}.
\]

\noindent The scalar factor \(\Phi\) is now determined by the Raychaudhuri
constraint equation.  In the Einstein--Vlasov system this equation is
\[
        \partial_{\underline u}\operatorname{tr}\chi
        +
        \frac12(\operatorname{tr}\chi)^2
        =
        -|\widehat\chi|^2
        -
        T_{44}.
\]
Since \(\operatorname{tr}\chi=2\Phi^{-1}\partial_{\underline u}\Phi\),
this becomes the linear ODE
\[
        \partial_{\underline u}^2\Phi
        +
        \frac12
        \bigl(
            |\widehat\chi|^2+T_{44}
        \bigr)\Phi
        =
        0,
\]
with initial conditions
\[
        \Phi(0,\theta)=1,
        \qquad
        \partial_{\underline u}\Phi(0,\theta)=\frac{1}{|u_\infty|}.
\]
Here
\[
        T_{44}
        =
        \int_{P_x} f_0\,p_4p_4
\]
is computed from the already prescribed Vlasov seed. There is a mild dependence on $\gamma$ because the mass-shell is metric dependent but this does not chnage the main construction.

\noindent For \(|u_\infty|\) sufficiently large relative to \(a\), the coefficient
\(|\widehat\chi|^2+T_{44}\) has small integral along each null generator on $u=u_{\infty}$.
Thus the straightforward ODE analysis of $\partial_{\underline u}^2\Phi
        +
        \frac12
        \bigl(
            |\widehat\chi|^2+T_{44}
        \bigr)\Phi
        =
        0$ with the prescribed initial condition gives
\[
        \frac12 \leq \Phi \leq 2,
\]
and, after commuting with angular derivatives,
\[
        \sum_{|I|\leq N}
        |\partial_\theta^I \Phi|
        +
        \sum_{|I|\leq N}
        |\partial_\theta^I \partial_{\underline u}\Phi|
        \lesssim 1.
\]
It follows that the induced metric, its inverse, \(\operatorname{tr}\chi\),
and \(\widehat\chi\) obey the initial norms required in
Theorem~\ref{mainone}.

\noindent It remains to recover the torsion and the remaining constrained
quantities.  The torsion one-form is obtained from the Codazzi transport
constraint on \(H_{u_\infty}\), schematically
\[
        \partial_{\underline u}\zeta
        +
        \operatorname{tr}\chi\,\zeta
        \sim
        \operatorname{div}\chi
        -
        \nabla\operatorname{tr}\chi
        +
        \mathcal T_4,
\]
where \(\mathcal T_4\) denotes the matter one-form determined by the
restriction of \(T(e_4,\cdot)\) to the spheres.  The right-hand side is
already known in terms of \(\gamma\), \(\chi\), and \(f_0\).  The momentum
support assumptions imply the required bounds for the matter term.  Hence
Gronwall's inequality gives
\[
        \sum_{j\leq N-1}
        \|\nabla^j\zeta\|_{L^\infty(S_{u_\infty,\underline u})}
        \lesssim 1.
\]

\noindent The curvature components on \(H_{u_\infty}\) are then defined through the
null structure equations.  In particular, \(\alpha\) is obtained from the
transport equation for \(\widehat\chi\), \(\beta\) from the Codazzi
equation, and \(\rho,\sigma\) from the Gauss and curl equations.  The
corresponding matter terms are all velocity moments of the same Vlasov
seed \(f_0\).  Therefore no additional free geometric data are introduced.

\noindent Finally, the lower flux condition is obtained by choosing the seeds so
that, uniformly in \(\theta\),
\[
        \int_0^1
        |u_\infty|^2
        \bigl(
            |\widehat\chi|^2
            +
            T_{44}
        \bigr)
        (u_\infty,\underline u,\theta)
        \,d\underline u
        \geq a.
\]
For the shear contribution, one chooses finitely many smooth trace-free
tensor fields \(E_j\) on \(S^2\) and profiles \(h_j\) supported away from
the corner, with
\[
        \sum_j |E_j(\theta)|^2 \geq c>0,
\]
and sets
\[
        \Psi(\underline u,\theta)
        =
        \frac{a^{1/2}}{|u_\infty|}
        \sum_j h_j(\underline u)E_j(\theta).
\]
Then
\[
        \int_0^1
        |u_\infty|^2|\widehat\chi|^2
        \,d\underline u
        \gtrsim a.
\]
Alternatively, one may place part of the focusing in the Vlasov seed by
choosing
\[
        F(\underline u,\theta,q)=a\,G(\underline u,\theta,q),
\]
with \(G\geq 0\), smooth, compactly supported in the admissible momentum
region, and normalized so that
\[
        \int_0^1 |u_\infty|^2 T_{44}\,d\underline u
        \gtrsim a.
\]

\noindent Thus, the characteristic data used in Theorem~\ref{mainone} arise from
free seed data \((\Psi,F)\) satisfying the stated support and regularity
conditions.  The full constrained characteristic initial data set is
obtained by solving the Raychaudhuri ODE for \(\Phi\), recovering
\(\chi\), \(\operatorname{tr}\chi\), and \(\zeta\), and then defining the
curvature components through the null structure equations.  The Vlasov
field enters only through positive velocity moments in the constraint
hierarchy, and these moments are controlled by the same phase-space
support assumptions used later in the evolution.

\section{$\prescript{(S)}{}{\mathcal{O}_{0,4}}$ estimates for the Ricci coefficients}\label{sectionO04}
In this section we begin the estimates on the Ricci coefficients. We commence with an estimate for $\omegabar$:

\begin{proposition}
\label{omegabar04estimate} Under the assumptions of Theorem \ref{mainone} and the bootstrap assumptions \eqref{boundsbootstrap}, there holds

\[  \scalefourSu{\omegabar}\lesssim \mathcal{R}_0^{\frac{1}{2}}[\rho]\mathcal{R}_1^{\frac{1}{2}}[\rho]+ \mathcal{R}_0[\rho]+1.  \] 
\end{proposition}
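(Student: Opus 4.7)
The plan is to integrate the null structure equation
\[
\snabla_4 \omegabar \;=\; 2\,\eta\cdot\etabar \;-\;|\eta|^2 \;-\;\rho \;-\; \tfrac{1}{2}\Te_{34}
\]
along the outgoing null direction, exploiting the fact that, by the Minkowskian incoming data assumption (ii) of Theorem \ref{mainone}, $\omegabar$ vanishes on $\Hbar_0$. Since the equation contains no linear term in $\omegabar$ on the right-hand side, the $L^4$ transport inequality from the preliminary transport lemma in Section \ref{s3} immediately gives
\[
\lVert \omegabar \rVert_{L^4(S_{u,\ubar})} \;\lesssim\; \int_0^{\ubar} \big( \lVert \rho \rVert_{L^4(S_{u,\ubar'})} + \lVert \eta\cdot\etabar \rVert_{L^4(S_{u,\ubar'})} + \lVert |\eta|^2 \rVert_{L^4(S_{u,\ubar'})} + \lVert \Te_{34}\rVert_{L^4(S_{u,\ubar'})}\big) d\ubar'.
\]
Since $s_2(\omegabar)=s_2(\rho)=s_2(\eta\cdot\etabar)=s_2(\Te_{34})=1$, signature conservation ensures that rewriting the above in the scale-invariant norms $\scalefourSu{\cdot}$ introduces no net weight mismatch between the two sides.

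The quadratic Ricci terms $\eta\cdot\etabar$ and $|\eta|^2$ are handled by the scale-invariant H\"older inequality together with the bootstrap assumption $\prescript{(S)}{}{\mathcal{O}_{0,\infty}}\leq O$, which yields contributions of order $O^2/|u| \ll 1$ after integration. Similarly, the matter term $\Te_{34}$ is controlled pointwise by the zeroth-order Vlasov norm $\mathcal{V}_0$ (and hence by a constant depending on the initial momentum-support data), contributing at most $\lesssim 1$ to the estimate. These account for the $+1$ on the right-hand side of the statement.

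The main technical point is the treatment of the curvature term $\rho$, which is not directly controlled in $L^4(S_{u,\ubar})$ but only through the energy norms $\mathcal{R}_0[\rho]$, $\mathcal{R}_1[\rho]$ on the outgoing null hypersurface $H_u$. To bridge this gap, I would invoke the scale-invariant codimension-one trace inequality of Proposition \ref{codimension12}. Since the Minkowskian data on $\Hbar_0$ forces $\rho\vert_{\ubar=0}=0$, this yields
\[
\scalefourSu{\rho}\big|_{\ubar'} \;\lesssim\; \lVert \snabla_4 \rho \rVert_{L^2_{sc}(H)}^{\frac{1}{2}} \Big( \lVert \rho \rVert_{L^2_{sc}(H)}^{\frac{1}{2}} + \lVert a^{\frac{1}{2}}\snabla \rho \rVert_{L^2_{sc}(H)}^{\frac{1}{2}} \Big) \;\lesssim\; \mathcal{R}_0[\rho]^{\frac{1}{2}}\,\mathcal{R}_1[\rho]^{\frac{1}{2}} + \mathcal{R}_0[\rho],
\]
where in the second step I apply Young's inequality to the mixed term while identifying the curvature norms with the appropriate pieces of $\mathcal{R}_0$ and $\mathcal{R}_1$.

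The hardest part is not any single estimate, but rather the orchestration: ensuring that the trace inequality of the correct geometric signature is used (here codimension-1 from $H_u$ rather than $\Hbar_{\ubar}$, because $\snabla_4$ controls $\rho$ along $H_u$), and that the Minkowskian initial value on $\Hbar_0$ is exploited precisely at the right step to eliminate the boundary contribution in the trace estimate. Combining the three contributions and observing that the $\ubar$-integration over $[0,1]$ preserves the estimate, one obtains
\[
\scalefourSu{\omegabar} \;\lesssim\; \mathcal{R}_0^{\frac{1}{2}}[\rho]\,\mathcal{R}_1^{\frac{1}{2}}[\rho] + \mathcal{R}_0[\rho] + 1,
\]
as claimed.
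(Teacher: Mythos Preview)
Your approach is sound and in the same spirit as the paper's, but it differs in one key tool and there is a bookkeeping slip in your final step.

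For the $\rho$ term, the paper does not invoke the codimension-one trace inequality. Instead it applies the sphere-level $L^4$--$L^2$ interpolation
\[
\scalefourSu{\psi}\;\lesssim\;\scaletwoSu{\psi}^{1/2}\,\scaletwoSu{(\al\snabla)\psi}^{1/2}+\scaletwoSu{\psi}
\]
to $\psi=\rho$ at each $\ubar'$, and then uses Cauchy--Schwarz in $\ubar'$ twice to pass to the flux norms. Because the second term on the right is the \emph{undifferentiated} $\scaletwoSu{\rho}$, this produces exactly the combination $\mathcal{R}_0^{1/2}[\rho]\,\mathcal{R}_1^{1/2}[\rho]+\mathcal{R}_0[\rho]$ as stated.

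Your route via Proposition~\ref{codimension12} also works, but look at what it actually gives: the outside factor $\lVert\snabla_4\rho\rVert_{L^2_{sc}(H)}^{1/2}$ belongs to $\mathcal{R}_1$, so expanding yields
\[
\mathcal{R}_1^{1/2}\bigl(\mathcal{R}_0^{1/2}+\mathcal{R}_1^{1/2}\bigr)=\mathcal{R}_0^{1/2}\mathcal{R}_1^{1/2}+\mathcal{R}_1,
\]
and no application of Young's inequality converts that last $\mathcal{R}_1$ into $\mathcal{R}_0$. This bound is equally useful for closing the bootstrap---both $\mathcal{R}_0[\rho]$ and $\mathcal{R}_1[\rho]$ are pieces of the total curvature norm $\mathcal{R}$---so your argument establishes an equally serviceable variant of the proposition, just not the inequality exactly as written. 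Your treatment of the quadratic Ricci terms and of $\Te_{34}$ matches the paper.
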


\begin{proof}
Recall that $\omegabar$ satisfies the equation \[ \snabla_4 \omegabar = 2 \eta\cdot \etabar - \lvert \eta\rvert^2-\rho-\frac{1}{2}\Te_{34}.  \]As such, we have

\begin{align}
\scalefourSu{\omegabar}\lesssim& \intubar \scalefourSuubarprime{\eta\cdot (\eta,\etabar)} \dubarprime +\scalefourSuubarprime{\rho,\Te_{34}}\dubarprime \notag \\ \lesssim& \frac{1}{\modu}\intubar \scaleinftySuubarprime{\eta,\etabar}\scalefourSuubarprime{\eta}\dubarprime + \intubar \scalefourSuubarprime{\rho,\Te_{34}}\dubarprime.
\end{align}At this point we use the following formula from Section 4.3 of \cite{AnThesis}:

\begin{align}
\lVert \psi \rVert_{L^4(S_{u,\ubar})}\lesssim \lVert \psi \rVert_{L^2(S_{u,\ubar})}^{\frac{1}{2}}\lVert \snabla \psi \rVert_{L^2(S_{u,\ubar})}^{\frac{1}{2}} + \frac{1}{\modu^{\frac{1}{2}}} \lVert \psi \rVert_{L^2(S)}
    \end{align}Translating to scale-invariant norms, we have

\begin{align*} \notag &a^{-s_2(\psi)}\modu^{2s_2(\psi)+\frac{1}{2}} \lVert \psi \rVert_{L^4(S)}\\ \lesssim &\big( a^{-s_2(\psi)}\modu^{2s_2(\psi)} \lVert \psi \rVert_{L^2(S_{u,\ubar})}\big) ^\frac{1}{2}\big( a^{-s_2(\psi)-\frac{1}{2}}\modu^{2s_2(\psi)+1} \lVert \snabla\psi \rVert_{L^2(S_{u,\ubar})}\big) ^\frac{1}{2} \cdot a^{\frac{1}{4}}+ a^{-s_2(\psi)}\modu^{2s_2(\psi)}\lVert \psi \rVert_{L^2(S)},
\end{align*}or, equivalently,

\begin{align}
\scalefourSu{\psi}\lesssim \scaletwoSu{\psi}^{\frac{1}{2}}\scaletwoSu{(\al\snabla) \psi}^{\frac{1}{2}}+\scaletwoSu{\psi}.
\end{align}As such, we obtain

\begin{align}
\scaletwoSu{\omegabar}\lesssim& \frac{O^2}{\modu}+ \intubar \scaletwoSuubarprime{\rho}^{\f12}\scaletwoSuubarprime{(\al\snabla)\rho}^{\f12}+\scaletwoSuubarprime{\rho} \dubarprime +\intu \scalefourSuubarprime{\Te_{34}}\dubarprime  \notag \\ \lesssim& \frac{O^2}{\modu}+\big(\intubar \scaletwoSuubarprime{\rho}\dubarprime\big)^\frac{1}{2} \big(\intubar \scaletwoSuubarprime{(\al\snabla)\rho}\dubarprime\big)^\frac{1}{2} \notag \\ +& \intubar \scaletwoSuubarprime{\rho}\dubarprime +\intubar \scalefourSuubarprime{\Te_{34}}\dubarprime \notag \\ \lesssim& 1+ \big(\intubar \scaletwoSuubarprime{\rho}^2\dubarprime\big)^\frac{1}{4} \big(\intubar \scaletwoSuubarprime{(\al\snabla)\rho}^2\dubarprime\big)^\frac{1}{4} \notag  + \big(\intubar \scaletwoSuubarprime{\rho}^2 \dubarprime\big)^{\f12} \notag \\+&\intubar \scalefourSuubarprime{\Te_{34}}\dubarprime.
\end{align}For $\Te_{34}$, since $s_2(\Te_{34})=1$, we have \[\scalefourSu{\Te_{34}} = a^{-1}\modu^{\frac{5}{2}}\big(\int_{S_{u,\ubar}} \lvert \Te_{34} \rvert^4 \sqrt{\det \gslash}\text{d}\theta^1\text{d}\theta^2\big)^{\f14}. \]However, \[ \lvert \Te_{34}(u,\ubar,\theta^1,\theta^2) \rvert =4 \int_{\mathcal{P}_{(u,\ubar,\theta^1,\theta^2)}} f\hsp p^4 \hsp \sqrt{\det\gslash}\hsp \text{d}p^1 \text{d}p^2\text{d}p^3 \lesssim \frac{a}{\modu^4},\]given the bounds on $f, \sqrt{\det \gslash}$ and the momentum components. As such, there holds 
\[ \intubar \scalefourSuubarprime{\Te_{34}}\dubarprime \lesssim \frac{1}{\modu}\lesssim 1.  \]The result follows.
\end{proof}

\noindent We continue with an estimate for $\chihat$.

\begin{proposition}\label{propchihat} Under the assumptions of Theorem \ref{mainone} and the bootstrap assumptions \eqref{boundsbootstrap}, there holds: 

\[ \frac{1}{\al}\scalefourSu{\chihat}\lesssim \mathcal{R}_0^{\frac{1}{2}}[\alpha]\mathcal{R}^{\frac{1}{2}}_1[\alpha]+\mathcal{R}_0[\alpha]+1.   \]
\end{proposition}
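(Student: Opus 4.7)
The governing transport equation is $\snabla_4 \chihat + \tr\chi\,\chihat = -\alpha$, and by hypothesis $\chihat|_{\ubar=0}=0$ since the incoming data are Minkowskian. The plan is to mirror the strategy just carried out for $\omegabar$ in Proposition~\ref{omegabar04estimate}: apply the transport inequality in the $e_4$–direction, treat the linear coupling $\tr\chi\,\chihat$ as a harmless Gr\"onwall term using the bootstrap control on $\tr\chi$, and then reduce the curvature source $\alpha$ to the energy norms $\mathcal{R}_0[\alpha]$ and $\mathcal{R}_1[\alpha]$ via the standard $L^4$–interpolation on $S_{u,\ubar}$.

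Concretely, passing to scale–invariant norms (noting $s_2(\chihat)=s_2(\alpha)=0$, so the factor $a^{-1/2}$ is the natural weight suggested by the desired estimate) and using the first variational formula, one obtains
\[
\tfrac{1}{\al}\scalefourSu{\chihat}
\lesssim \tfrac{1}{\al}\scalefourSuo{\chihat}
+ \intubar \tfrac{1}{\al}\scalefourSuubarprime{\tr\chi\,\chihat}\,\dubarprime
+ \intubar \tfrac{1}{\al}\scalefourSuubarprime{\alpha}\,\dubarprime.
\]
The first term on the right vanishes by the Minkowskian initial condition. For the nonlinear term one invokes the scale–invariant H\"older inequality \eqref{257} and the bootstrap bound $\scaleinftySuubarprime{\tr\chi}\lesssim O$, which yields a contribution of size $O^{2}\modu^{-1}\lesssim 1$.

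The principal step is the treatment of $\alpha$. Here one applies the same interpolation inequality used for $\rho$ in Proposition~\ref{omegabar04estimate},
\[
\scalefourSuubarprime{\alpha}\lesssim \scaletwoSuubarprime{\alpha}^{1/2}\scaletwoSuubarprime{(\al\snabla)\alpha}^{1/2}+\scaletwoSuubarprime{\alpha},
\]
and integrates in $\ubar'\in[0,1]$. The Cauchy–Schwarz inequality together with the definitions of the curvature norms on $H_u$ gives
\[
\intubar \tfrac{1}{\al}\scalefourSuubarprime{\alpha}\,\dubarprime
\lesssim \Big(\tfrac{1}{\al}\scaletwoHu{\alpha}\Big)^{1/2}
\Big(\tfrac{1}{\al}\scaletwoHu{(\al\snabla)\alpha}\Big)^{1/2}
+ \tfrac{1}{\al}\scaletwoHu{\alpha}
\lesssim \mathcal{R}_0^{1/2}[\alpha]\,\mathcal{R}_1^{1/2}[\alpha]+\mathcal{R}_0[\alpha],
\]
which is exactly the form appearing on the right–hand side of the claim.

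The main (mild) obstruction is book–keeping of the scale–invariant weights: one must verify that the $\modu$–weights appearing when converting $L^4(S)$ bounds for $\alpha$ into $L^2(H_u)$ bounds really integrate to a harmless constant, i.e.\ that the measure $\dubarprime$ on $H_u$ combined with the $s_2$ signatures produces no anomalous power of $a$ or $\modu$. This is where the choice of the weight $\frac{1}{\al}$ in the definition of $\prescript{(S)}{}{\mathcal{O}_{0,4}}[\chihat]$ and in $\mathcal{R}_0[\alpha],\mathcal{R}_1[\alpha]$ matches exactly. Once this is verified, combining the three contributions closes the estimate with the claimed bound $\mathcal{R}_0^{1/2}[\alpha]\mathcal{R}_1^{1/2}[\alpha]+\mathcal{R}_0[\alpha]+1$.
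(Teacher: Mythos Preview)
Your proposal is correct and follows essentially the same approach as the paper: apply the $e_4$–transport inequality to $\snabla_4\chihat+\tr\chi\,\chihat=-\alpha$, kill the initial term using $\chihat|_{\ubar=0}=0$, bound the $\tr\chi\,\chihat$ contribution by $O^2/\modu\lesssim 1$ via scale–invariant H\"older and bootstrap, and reduce the $\alpha$–integral to $\mathcal{R}_0^{1/2}[\alpha]\mathcal{R}_1^{1/2}[\alpha]+\mathcal{R}_0[\alpha]$ via the $L^4$–$L^2$ interpolation on $S_{u,\ubar}$ followed by Cauchy–Schwarz in $\ubar'$. The weight check you flag as a ``mild obstruction'' is immediate from $s_2(\chihat)=s_2(\alpha)=0$ and the definition of the $H_u$ flux norm.
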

\begin{proof}
There holds \be\snabla_4 \chihat +\tr\chi \hsp \chihat = -\alpha. \ee

\noindent As such, there holds:

\begin{equation}
\frac{1}{\al} \scalefourSu{\chihat}\lesssim \frac{1}{\al}\lVert \chihat \rVert_{\mathcal{L}^4_{(sc)}(S_{u,0})}+ \frac{1}{\al} \intubar  \lVert \tr\chi \hsp\chihat  \rVert_{\mathcal{L}^4_{(sc)}(S_{u,\ubar^{\prime}})}\dubarprime +\frac{1}{\al} \intubar \lVert \alpha \rVert_{\mathcal{L}^4_{(sc)}(S_{u,\ubar^{\prime}})}\dubarprime 
\end{equation}
We therefore have, since $\chihat$ vanishes on the incoming cone,

\begin{align}\notag 
&\frac{1}{\al}\scalefourSu{\chihat}\lesssim \frac{1}{\al}\cdot \frac{1}{\modu}O^2 + \frac{1}{\al}\intubar \scaletwoSuubarprime{\alpha}^{\frac{1}{2}}\scaletwoSuubarprime{(\al\snabla){\alpha}}^{\frac{1}{2}}+\scaletwoSuubarprime{\alpha}\dubarprime \\ \lesssim &1 + \bigg( \frac{1}{\al}\intubar \scaletwoSuubarprime{\alpha }\dubarprime\bigg)^{\frac{1}{2}}\bigg(\frac{1}{\al}\intubar \scaletwoSuubarprime{(\al\snabla)\alpha}\dubarprime \bigg)^{\frac{1}{2}}+ \frac{1}{\al}\scaletwoHu{\alpha}\notag \\ \lesssim &1+\bigg(\frac{1}{\al}\scaletwoHu{\alpha}\bigg)^{\frac{1}{2}}\bigg(\frac{1}{\al} \scaletwoHu{(\al\snabla)\alpha}\bigg)^{\frac{1}{2}}+\frac{1}{\al}\scaletwoHu{\alpha} \notag \\ \lesssim & \mathcal{R}_0^{\frac{1}{2}}[\alpha]\mathcal{R}^{\frac{1}{2}}_1[\alpha]+\mathcal{R}_0[\alpha]+1.
\end{align}The claim follows.
\end{proof}
\begin{proposition}
Under the assumptions of Theorem \ref{mainone} and the bootstrap assumptions \eqref{boundsbootstrap}, there holds:  \[ \scalefourSu{\eta}\lesssim \mathcal{R}_0^{\frac{1}{2}}[\beta]\mathcal{R}_1^{\frac{1}{2}}[\beta]+\mathcal{R}_0[\beta]+1.\]
\end{proposition}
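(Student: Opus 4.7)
The plan is to mimic the strategy of Proposition \ref{propchihat}, integrating the transport equation for $\eta$ along the $e_4$ direction. Recall that
\[
\snabla_4 \eta + \tfrac{1}{2}\tr\chi\,\eta \;=\; \tfrac{1}{2}\tr\chi\,\etabar - \chihat\cdot(\eta-\etabar) - \beta - \tfrac{1}{2}\Te_4,
\]
and that by assumption (ii) of Theorem \ref{mainone} the incoming data are Minkowskian, so $\eta|_{\ubar=0}=0$. Applying the evolution lemma (Proposition \ref{evolemma}) in the $L^4(S)$ norm and then converting to scale--invariant quantities using signature conservation, we obtain the schematic bound
\[
\scalefourSu{\eta} \;\lesssim\; \intubar \Bigl( \scalefourSuubarprime{\tr\chi\,(\eta,\etabar)} + \scalefourSuubarprime{\chihat\cdot(\eta,\etabar)} + \scalefourSuubarprime{\beta} + \scalefourSuubarprime{\Te_4} \Bigr) \dubarprime .
\]

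The first two brackets are handled by the scale--invariant H\"older inequality \eqref{257} together with the zeroth--order bootstrap assumptions for $\tr\chi,\chihat,\eta,\etabar$, producing terms of size $O^2/\modu$ which integrate in $\ubar\in[0,1]$ to yield a constant of order $1$. For the curvature term, I would apply, as in Proposition \ref{propchihat} the interpolation inequality
\[
\scalefourSuubarprime{\beta} \;\lesssim\; \scaletwoSuubarprime{\beta}^{1/2}\,\scaletwoSuubarprime{(\al\snabla)\beta}^{1/2} + \scaletwoSuubarprime{\beta},
\]
and then Cauchy--Schwarz in $\ubar^\prime$ to convert to the $\mathcal{L}^2_{(sc)}(H_u)$ energy norms of $\beta$ and $(\al\snabla)\beta$, giving a contribution bounded by $\mathcal{R}_0^{1/2}[\beta]\mathcal{R}_1^{1/2}[\beta] + \mathcal{R}_0[\beta]$. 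Notice that here one must use $\tbeta$ rather than $\beta$ and absorb the difference $\f12 R_{A4}=\f12 \Te_{4A}$ into the matter error term, since the energy norms in $\mathcal{R}_0,\mathcal{R}_1$ are defined using the renormalised curvature $\tbeta$.

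For the Vlasov term $\Te_4$, I would exploit the pointwise bounds on the momentum support derived in Proposition \ref{momentum} together with the volume form on $\mathcal{P}_x$, which give $|\Te_{4A}(u,\ubar,\theta)| \lesssim a/\modu^{5}$ (reflecting $s_2(\Te_4)=1/2$). Translating to the scale--invariant norm and integrating in $\ubar^\prime\in[0,1]$ produces a contribution of size $\lesssim 1$. The main obstacle I expect is keeping track of the precise weights in $a$ and $\modu$ when passing between the renormalised Bianchi variables $\tbeta$ and $\beta$, and ensuring that the matter contributions $\Te_4$ and $\Te_{4A}$ (absorbed both directly and through the renormalisation) only produce harmless factors at this order. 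Putting the three contributions together yields the claimed estimate
\[
\scalefourSu{\eta}\;\lesssim\; \mathcal{R}_0^{1/2}[\beta]\,\mathcal{R}_1^{1/2}[\beta] + \mathcal{R}_0[\beta] + 1.
\]
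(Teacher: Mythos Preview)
Your proposal is correct and follows essentially the same route as the paper: integrate the $\snabla_4$ transport equation for $\eta$ in $\ubar$, bound the quadratic Ricci terms by $\mathcal{L}^{\infty}_{(sc)}$--$\mathcal{L}^{4}_{(sc)}$ H\"older and the bootstrap, interpolate $\scalefourSu{\beta}$ between $\scaletwoSu{\beta}$ and $\scaletwoSu{(\al\snabla)\beta}$ and pass to the $H_u$ flux, and bound the Vlasov term pointwise using the momentum--support decay. Your remark that one should replace $\beta$ by $\tbeta$ and absorb the difference $\tfrac12 T_{4A}$ into the matter error is a good observation that the paper leaves implicit. One minor slip: the pointwise bound $|\Te_{4A}|\lesssim a/\modu^{5}$ is too strong; the paper obtains $|T_{4A}|\lesssim a/\modu^{2}$ as a component (equivalently $|\Te_4|_{\gslash}\lesssim a/\modu^{3}$), which still gives $\intubar\scalefourSuubarprime{\Te_4}\dubarprime\lesssim \al/\modu\lesssim 1$, so your conclusion is unaffected.
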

\begin{proof}
We recall the equation for $\eta$:

\[   \snabla_4\eta = \frac{1}{2}\tr\chi (\etabar-\eta) + \chihat\cdot(\etabar-\eta)-\beta-\frac{1}{2}\slashed{T}_4.    \]Taking into account that $\eta$ vanishes on the incoming cone, there holds

\begin{align}
\scalefourSu{\eta}\lesssim \frac{\al O^2}{\modu}+ \intubar \big( \lVert{\beta}\rVert_{\mathcal{L}^4_{(sc)}(S_{u,\ubar^{\prime}})}+\lVert {\slashed{T}_4}\rVert_{\mathcal{L}^4_{(sc)}(S_{u,\ubar^{\prime}})}\big) \dubarprime.
\end{align}For $\beta$ we bound the integral as follows: 
\be \intubar \lVert {\beta}\rVert_{\mathcal{L}^4_{(sc)}(S_{u,\ubar^{\prime}})}\dubarprime  \lesssim \scaletwoHu{\beta}^{\frac{1}{2}}\scaletwoHu{(\al\snabla)\beta}^{\frac{1}{2}} +\scaletwoHu{\beta}. \ee By using the bootstrap assumptions, the result holds.
We now control $\scalefourSuubarprime{\slashed{T}_4}$, keeping in mind that the structure equation for $\eta$ implies that $s_2(\Te_{4})=\frac{1}{2}$. There holds 
\[ \scalefourSuubarprime{\slashed{T}_4}= a^{-\frac{1}{2}} \modu^{\frac{3}{2}}\bigg( \int_{S_{u,\ubar^{\prime}}} \lvert \slashed{T}_4 \rvert^4 \sqrt{\det\gslash}\text{d}\theta^1\text{d}\theta^2 \bigg)^{\frac{1}{4}}.   \]For a fixed $A\in\{1,2\}$, there holds $\lvert T_{4A} \rvert = \lvert 2 \gslash_{AA^{\prime}}\int_{\mathcal{P}_x} f p^{A^{\prime}} \sqrt{\det\gslash}\hsp \text{d}p^1 \text{d}p^2\text{d}p^3 \rvert \lesssim \frac{a}{\modu^2}$. Here we have used the fact that \[ \sup_{(x,p)\in \mathcal{P}} \lvert f(x,p) \rvert  \leq a.\] Consequently, $  \slashed{g}^{AC}\slashed{T}_{4A}\slashed{T}_{4C}  \lesssim \frac{a^2}{\modu^6}$. This means that, given the fact that $\sqrt{\det{\gslash}}\lesssim \modu^2$, we have the estimate

\be \intubar \scalefourSuubarprime{\slashed{T}_4}\dubarprime \lesssim \frac{\al}{\modu} \lesssim 1. \ee 
\end{proof}
We move on with estimates for $\tr\chi$.

\begin{proposition}\label{propositiontrchi}
Under the assumptions of Theorem \ref{mainone} and the bootstrap assumptions \eqref{boundsbootstrap}, there holds: \[ \scalefourSu{\tr\chi}\lesssim \frac{a}{\modu}\big(\mathcal{R}_0^{\f12}[\alpha]\mathcal{R}_1^{\f12}[\alpha]+\mathcal{R}_0[\alpha]+1\big)\mathcal{O}_{0,\infty}[\chihat] +1 .\]
\end{proposition}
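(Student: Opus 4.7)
The plan is to integrate the Raychaudhuri-type transport equation
\[
\snabla_4\tr\chi+\tfrac12(\tr\chi)^2=-|\chihat|^2-\Te_{44}
\]
along the $e_4$ direction, starting from the initial incoming cone $\{\ubar=0\}$, where by assumption (ii) of Theorem \ref{mainone} the geometric data are Minkowskian, so $\tr\chi=2/|u|$ on $\Sou$. Consequently $\lVert\tr\chi\rVert_{\mathcal L^4_{(sc)}(S_{u,0})}\lesssim 1$, which will take care of the constant term ``$+1$'' on the right hand side of the target inequality. The three remaining tasks are to control, uniformly in $\ubar'\in[0,1]$, the scale-invariant $L^4$ norms of $(\tr\chi)^2$, $|\chihat|^2$ and $\Te_{44}$ on $S_{u,\ubar'}$, and to absorb them into a Grönwall-type argument.

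For the nonlinear term $(\tr\chi)^2$, the signature $s_2(\tr\chi)=0$ and the bootstrap bound $\scaleinftySu{\tr\chi}\lesssim 1$ inherited from $\mathcal{O}_{0,\infty}$ (the Minkowskian value of $\tr\chi$ dominates, with $\tildetr$ controlled by $O$) allow me to invoke the scale-invariant Hölder inequality to write
\[
\scalefourSuubarprime{(\tr\chi)^2}\lesssim \tfrac{1}{|u|}\scaleinftySuubarprime{\tr\chi}\scalefourSuubarprime{\tr\chi},
\]
which is absorbed on the left after integration in $\ubar'$ using the boundedness of the interval $[0,1]$ (compared with the factor $|u|$ which is large). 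For the Vlasov term $\Te_{44}$, the pointwise momentum bounds from Proposition \ref{momentum} combined with the size of $f$ give $|\Te_{44}|\lesssim a/|u|^2$, whence a direct computation in the scale-invariant norm ($s_2(\Te_{44})=0$) yields $\scalefourSuubarprime{\Te_{44}}\lesssim a/|u|$, which again integrates to a harmless contribution bounded by $1$ (after using $a\ll|u|^2$).

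The decisive contribution — and the source of the factor $\frac{a}{|u|}\mathcal{O}_{0,\infty}[\chihat]\big(\mathcal{R}_0^{1/2}[\alpha]\mathcal R_1^{1/2}[\alpha]+\mathcal R_0[\alpha]+1\big)$ in the statement — comes from $|\chihat|^2$. Applying the scale-invariant Hölder inequality $\scalefourSu{\Y_1\cdot\Y_2}\lesssim \tfrac{1}{|u|}\scaleinftySu{\Y_1}\scalefourSu{\Y_2}$, together with the pointwise bound $\scaleinfinitySuubarprime{\chihat}\leq \al\,\mathcal{O}_{0,\infty}[\chihat]$ and the $L^4$ estimate $\frac{1}{\al}\scalefourSuubarprime{\chihat}\lesssim \mathcal{R}_0^{1/2}[\alpha]\mathcal{R}_1^{1/2}[\alpha]+\mathcal{R}_0[\alpha]+1$ proved in Proposition \ref{propchihat}, I obtain
\[
\scalefourSuubarprime{|\chihat|^2}\lesssim \tfrac{a}{|u|}\,\mathcal{O}_{0,\infty}[\chihat]\big(\mathcal R_0^{1/2}[\alpha]\mathcal R_1^{1/2}[\alpha]+\mathcal R_0[\alpha]+1\big),
\]
which, after integrating in $\ubar'\in[0,1]$, gives precisely the advertised term.

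Piecing everything together and invoking Grönwall in the $\ubar'$ variable to absorb the $\scalefourSuubarprime{\tr\chi}$ term generated by $(\tr\chi)^2$, the claim follows. The main obstacle, which is conceptually mild here, is correctly tracking the signatures in order to recognise that the product $\chihat\cdot\chihat$ yields exactly one factor of $\al$ from each of $\scaleinfinitySu{\chihat}$ and $\scalefourSu{\chihat}$, which combine to $a$, and then one factor of $\frac{1}{|u|}$ from the scale-invariant Hölder inequality: this is what produces the $\frac{a}{|u|}$ prefactor and justifies why the nonlinear quadratic self-interaction does not destabilise the estimate despite the large-data regime.
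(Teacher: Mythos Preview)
Your proof is correct and follows essentially the same approach as the paper: integrate the Raychaudhuri equation along $e_4$ from the Minkowskian data on $\{\ubar=0\}$, bound $|\chihat|^2$ via the scale-invariant H\"older inequality together with Proposition~\ref{propchihat}, and use the pointwise Vlasov bounds for $\Te_{44}$. The only cosmetic difference is that the paper dispatches the quadratic term $(\tr\chi)^2$ directly by the bootstrap bound $\scalefourSuubarprime{(\tr\chi)^2}\lesssim O^2/|u|\lesssim 1$, whereas you phrase it as a Gr\"onwall absorption; both are equivalent here. (A tiny remark: for $\Te_{44}$ the relevant smallness is $a/|u|\lesssim 1$ since $|u|\ge a/4$, not $a\ll |u|^2$.)
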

\begin{proof}
There holds 
\[ \snabla_4 \tr\chi + \frac{1}{2}(\tr\chi)^2 = -\lvert \chihat \rvert^2 -\slashed{T}_{44}.\]Consequently, 
\begin{align}
\scalefourSu{\tr\chi}\lesssim& \intubar \scalefourSuubarprime{\tr\chi \hsp\tr\chi}+\frac{1}{\modu}\scalefourSuubarprime{\chihat}\scaleinfinitySuubarprime{\chihat} +\scalefourSuubarprime{\slashed{T}_{44}} \dubarprime 
\end{align}For the first term, we have

\[ \intubar \scalefourSuubarprime{(\tr\chi)^2}\dubarprime \lesssim \frac{O^2}{\modu}\lesssim 1.   \]For the term involving $\chihat$, we have

\be \frac{1}{\modu}\intubar \scalefourSuubarprime{\chihat}\scaleinftySuubarprime{\chihat}\dubarprime \lesssim  \frac{a}{\modu}\mathcal{O}_{0,\infty}[\chihat]\mathcal{O}_{0,4}[\chihat]\lesssim \frac{a}{\modu}\big(\mathcal{R}_0^{\f12}[\alpha]\mathcal{R}_1^{\f12}[\alpha]+\mathcal{R}_0[\alpha]+1\big)\mathcal{O}_{0,\infty}[\chihat]. \ee

We proceed to control $\slashed{T}_{44}$. There holds 
\be \scalefourSuubarprime{\slashed{T}_{44}}= \modu^{\frac{1}{2}} \big(\int_{S_{u,\ubar^{\prime}}} \lvert \slashed{T}_{44}\rvert^4 \sqrt{\det{\gslash}}\hsp \text{d}\theta^1 \text{d}\theta^2\big)^{\frac{1}{4}}. \ee However, $\lvert \slashed{T}_{44} \rvert = \int_{\mathcal{P}_{x}} f p^3 \sqrt{\det\gslash}\hsp \text{d}p^1\text{d}p^2\text{d}p^3 \lesssim \frac{a}{\modu^2},$ where we have used the bounds on $f$ and $\sqrt{\det \gslash}$. As a consequence, we have
\[  \intubar \scalefourSuubarprime{\slashed{T}_{44}}\lesssim \frac{a}{\modu}\lesssim 1.  \]

\end{proof}
\begin{remark}
The estimate for $\tr\chi$ will be improved after closing the bootstrap for the supremum norm of $\chihat$, in Section \ref{section0infty}.
\end{remark}
We proceed with components which satisfy equations in the $e_3-$direction.
\begin{proposition}\label{propchibarhat}
Under the assumptions of Theorem \ref{mainone} and the boottstrap assumptions \eqref{boundsbootstrap}, there holds

\[ \frac{\al}{\modu}\scalefourSu{\chibarhat}\lesssim 1.   \]
\end{proposition}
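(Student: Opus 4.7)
The plan is to integrate the null structure equation $\snabla_3 \chibarhat + \tr\chibar\,\chibarhat = \omegabar\,\chibarhat - \alphabar$ along the incoming null direction, in the spirit of the proofs of Propositions \ref{omegabar04estimate}--\ref{propositiontrchi}. I apply Proposition \ref{evolemma} with $\lambda_0 = 1$ and $p = 4$, yielding the weight $\lambda_1 = 3/2$; converting to scale-invariant form using $s_2(\chibarhat)=1$ and dividing through by $a^{1/2}$ gives
\[
\frac{\al}{\modu}\scalefourSu{\chibarhat}
\;\lesssim\; \frac{\al}{\uinf}\scalefourSuinf{\chibarhat}
+ \frac{1}{\al}\intu \upr^{3/2}\Big(\lVert \omegabar\,\chibarhat\rVert_{L^4(S_{u',\ubar})} + \lVert \alphabar\rVert_{L^4(S_{u',\ubar})}\Big)\duprime.
\]

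The initial--data contribution is bounded by $\mathcal{I} \lesssim 1$ by the initial norm estimate $\Gamma_0+\mathcal{R}_0+\mathcal{V}_0 \lesssim \mathcal{I}$ on $H_{u_\infty}$ (Section \ref{section:s31}), itself a consequence of assumptions (i)--(ii) of Theorem \ref{mainone} together with integration of the $\snabla_4\chibarhat$ equation along $H_{u_\infty}$ starting from the Minkowskian datum $\chibarhat|_{\Hbar_0}=0$. The lower--order term involving $\omegabar\,\chibarhat$ is controlled via H\"older on $S_{u',\ubar}$ using the pointwise bound $|\omegabar| \lesssim aO/\upr^{3}$ (from $\scaleinftySu{\omegabar} \le O$) together with the bootstrap $\frac{\al}{\upr}\scalefourSuprime{\chibarhat} \le O$; the resulting $u'$-integral is of order $O^{2}/a$ and is therefore harmless under the convention $O^{20} \le a^{1/16}$.

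The essential term is the $\alphabar$ integral. Rewritten in scale-invariant form, $\upr^{3/2}\lVert \alphabar\rVert_{L^4(S_{u',\ubar})} = \frac{a^{2}}{\upr^{3}}\scalefourSuprime{\alphabar}$. I then apply the scale-invariant Sobolev estimate used in the proof of Proposition \ref{propchihat},
\[
\scalefourSuprime{\alphabar} \;\lesssim\; \scaletwoSuprime{\alphabar}^{1/2}\,\scaletwoSuprime{(\al\snabla)\alphabar}^{1/2} + \scaletwoSuprime{\alphabar},
\]
followed by Cauchy--Schwarz in $u'$ against the weight $\frac{a^{3/2}}{\upr^{3}}$. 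The key observation is that, since $\upr \ge \modu \ge a/4$ throughout the slab,
\[
\intu \frac{a^{3}}{\upr^{6}}\,\duprime \;\le\; \frac{a^{3}}{5\,\modu^{5}} \;\lesssim\; a^{-2},
\]
so Cauchy--Schwarz produces an overall $a^{-1}$ smallness, while the residual factors $(\intu \scaletwoSuprime{\alphabar}^{2}\,\duprime)^{1/2}$ and $(\intu \scaletwoSuprime{(\al\snabla)\alphabar}^{2}\,\duprime)^{1/2}$ are identified with $\underline{\mathcal{R}}_0[\alphabar]$ and $\underline{\mathcal{R}}_1[\alphabar]$ respectively, each bounded by $R$. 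The $\alphabar$ contribution is therefore of order $R/a \ll 1$.

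The main subtle point is the correct pairing of $a$-weights with $\upr$-weights: the extra $a^{1/2}$ factor arising from the large-data scaling must be absorbed by the favorable integrability of $\upr^{-6}$ along the incoming generators, which is available precisely because the domain of existence is cut off at $\modu \ge a/4$. This is also the mechanism that forces one to control $\alphabar$ together with its first angular derivative $(\al\snabla)\alphabar$ through the Sobolev step, thereby invoking the second-order bulk curvature norm $\underline{\mathcal{R}}_1$ at this stage.
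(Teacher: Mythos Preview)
Your overall strategy matches the paper's exactly, but there is a concrete error in the Cauchy--Schwarz step for the $\alphabar$ integral. You place the full weight $\frac{a^{3/2}}{\upr^3}$ into one factor and leave $(\intu \scaletwoSuprime{\alphabar}^{2}\,\duprime)^{1/2}$ as the residual. This quantity is \emph{not} the curvature norm $\underline{\mathcal{R}}_0[\alphabar]$: the scale-invariant flux $\scaletwoHbaru{\alphabar}$ is defined with the weight $\frac{a}{\upr^2}$, i.e.\ $\scaletwoHbaru{\alphabar}^2 = \intu \frac{a}{\upr^2}\scaletwoSuprime{\alphabar}^2\duprime$ (cf.\ the proof of Proposition~\ref{codimension11} and the structure of the energy identity in Proposition~\ref{42}). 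The unweighted integral $\intu \scaletwoSuprime{\alphabar}^{2}\,\duprime$ is not controlled by the bootstrap assumption \eqref{boundsbootstrap}; indeed, since $\upr^2/a$ is unbounded as $u'\to u_\infty$ with $|u_\infty|\to\infty$, this quantity need not be uniform in $u_\infty$.

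The fix is to split the weight as $\frac{a^{3/2}}{\upr^3} = \frac{a^{1/2}}{\upr}\cdot\frac{a}{\upr^2}$ before applying Cauchy--Schwarz, so that the $\alphabar$ factor absorbs $\frac{a^{1/2}}{\upr}$:
\[
\intu \frac{a^{3/2}}{\upr^3}\scaletwoSuprime{\alphabar}\duprime
\;\le\; \Big(\intu \frac{a}{\upr^2}\scaletwoSuprime{\alphabar}^2\duprime\Big)^{1/2}\Big(\intu \frac{a^2}{\upr^4}\duprime\Big)^{1/2}
\;\lesssim\; \frac{a}{\modu^{3/2}}\,\scaletwoHbaru{\alphabar},
\]
and similarly for the cross term. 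Since $\modu \ge a/4$ gives $\frac{a}{\modu^{3/2}} \lesssim a^{-1/2}$, you recover the same $R/a^{1/2}\lesssim 1$ conclusion. This is precisely the H\"older splitting the paper uses; your $a^{-1}$ smallness from $\intu a^3/\upr^6\,\duprime$ is illusory because it is paired with a factor that is not bounded.
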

\begin{proof}
There holds

\be \snabla_3\chibarhat +\tr\chibar \hsp \chibarhat = \omegabar \hsp \chibarhat - \alphabar. \ee Using Proposition \ref{evolemma}, passing to scale-invariant norms and multiplying by $a^{-1/2}$, we have 

\begin{align}  \frac{\al}{\modu} \scalefourSu{\chibarhat}\lesssim& \frac{\al}{\lvert u_{\infty}\rvert} \scalefourSuzero{\chibarhat } +\intu \frac{a^{\f32}}{\upr^3} \big(\scalefourSuprime{\omegabar \hsp \chibarhat} +\scalefourSuprime{\alphabar}\big) \duprime \notag \\ \lesssim& \frac{\al}{\lvert u_{\infty}\rvert} \scalefourSuzero{\chibarhat } + \intu \frac{a^{\f32}}{\upr^3} \cdot \frac{\upr}{\al}\cdot \frac{O^2}{\upr} \duprime \notag \\ +& \intu \frac{a^{\f32}}{\upr^3}\big(\scaletwoSuprime{\alphabar}^{\f12}\scaletwoSuprime{(\al\snabla)\alphabar}^{\f12}+\scaletwoSuprime{\alphabar}\big) \duprime \notag \\ \lesssim& \frac{\al}{\lvert u_{\infty}\rvert} \scalefourSuzero{\chibarhat } + \intu \frac{a^{\f32}}{\upr^3} \cdot \frac{\upr}{\al}\cdot \frac{O^2}{\upr} \duprime \notag \\ +& \frac{a}{\modu^{\f32}}\big(\scaletwoHbaru{\alphabar}^{\f12}\scaletwoHbaru{(\al\snabla)\alphabar}^{\f12}+\scaletwoHbaru{\alphabar}\big)\lesssim 1, \end{align}where in the last line we have used H\"older's inequality. The result follows.

\end{proof}
\begin{proposition} \label{propositiontildetr}
Under the assumptions of Theorem \ref{mainone} and the bootstrap assumptions \eqref{boundsbootstrap}, there holds

\[ \frac{a}{\modu^2}\scalefourSu{\tr\chibar} \lesssim 1, \hspace{3mm} \frac{a}{\modu}\scalefourSu{\tildetr}\lesssim \frac{a}{\modu}\mathcal{O}_{0,\infty}[\chibarhat]+1.     \]
\end{proposition}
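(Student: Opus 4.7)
The plan is to combine the Raychaudhuri equation
\[
\snabla_3 \tr\chibar + \tfrac{1}{2}(\tr\chibar)^2 = -|\chibarhat|^2 - \Te_{33}
\]
with the Evolution Lemma (Proposition~\ref{evolemma}) at $p=4$, using different renormalizations for the two quantities under consideration. For the first estimate I will view the Raychaudhuri equation as $\snabla_3 \phi + \tfrac{1}{2}\tr\chibar \cdot \phi = F$ with $\phi=\tr\chibar$ and $\lambda_0 = 1/2$, which produces the weight $\lambda_1 = 1/2$. The initial-data term $|u_\infty|^{1/2}\|\tr\chibar\|_{L^4(S_{u_\infty,\ubar})}$ is $\mathcal{O}(1)$, since on $H_{u_\infty}$ the quantity $\tr\chibar$ is a controlled perturbation of its Minkowskian value $-2/|u_\infty|$, obtained from the Minkowski data on $S_{u_\infty,0}$ propagated along $H_{u_\infty}$ via the transverse structure equations. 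The two source integrals will be handled as follows: for $|\chibarhat|^2$ I will split $\||\chibarhat|^2\|_{L^4}\le \|\chibarhat\|_{L^\infty}\|\chibarhat\|_{L^4}$ and combine the bootstrap supremum bound with Proposition~\ref{propchibarhat}, producing a contribution $\lesssim O^2 a/|u|^2 \lesssim 1$ by the slab condition $|u|\ge a/4$; for $\Te_{33}$ I will use the pointwise bound $|\Te_{33}|\lesssim a/|u|^6$ derived from the momentum-support estimates of Proposition~\ref{momentum} applied to the mass-shell integral, yielding a contribution of size $\lesssim a/|u|^4 \lesssim 1$. This closes the first inequality.

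For $\tildetr$ I will first derive a renormalized transport equation. Using the scalar identity $\snabla_3(2/|u|)=2/|u|^2$, substituting $\tr\chibar = \tildetr - 2/|u|$ into the Raychaudhuri equation, and finally rewriting the resulting linear term via $2/|u| = \tildetr - \tr\chibar$, one obtains after elementary algebra
\[
\snabla_3\tildetr + \tr\chibar\cdot \tildetr \;=\; \tfrac{1}{2}\tildetr^2 - |\chibarhat|^2 - \Te_{33}.
\]
The point of this renormalization is that the linear coefficient is now $\lambda_0 = 1$, so the Evolution Lemma furnishes the enhanced weight $\lambda_1 = 3/2$. The initial datum is controlled by $\mathcal{I}$; the $\tildetr^2$ contribution is absorbed by the $\mathcal{O}_{0,\infty}$ bootstrap, giving $\lesssim O^2/|u| \lesssim 1$; the $\Te_{33}$ contribution is $\lesssim a/|u|^3 \lesssim 1$. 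The dominant term is the $|\chibarhat|^2$ integral, which I will estimate using $\|\chibarhat\|_{L^\infty}\|\chibarhat\|_{L^4}$ together with $\|\chibarhat\|_{L^\infty}\lesssim \al\,\mathcal{O}_{0,\infty}[\chibarhat]/|u|^2$ and the $L^4$-bound $\|\chibarhat\|_{L^4}\lesssim \al/|u|^{3/2}$ from Proposition~\ref{propchibarhat}, yielding
\[
\intu |u'|^{3/2}\,\cdot\, \frac{a\,\mathcal{O}_{0,\infty}[\chibarhat]}{|u'|^{7/2}}\,\duprime \;\lesssim\; \frac{a}{|u|}\,\mathcal{O}_{0,\infty}[\chibarhat],
\]
which matches precisely the stated right-hand side.

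The main subtlety lies in the choice of renormalization. A direct computation from $\tildetr = \tr\chibar + 2/|u|$ produces $\snabla_3 \tildetr - (2/|u|)\tildetr = -\tfrac{1}{2}\tildetr^2 - |\chibarhat|^2 - \Te_{33}$, whose linear coefficient is the Minkowskian $2/|u|$ rather than $\lambda_0 \tr\chibar$ required by Proposition~\ref{evolemma}. Reabsorbing $2/|u|$ into the full Ricci coefficient via $2/|u| = -\tr\chibar + \tildetr$ upgrades the effective $\lambda_0$ from the Minkowskian value to the full $\tr\chibar$, at the cost only of a harmless quadratic $\tildetr^2$ on the right-hand side. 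This algebraic mechanism is what produces the extra factor of $|u|^{-1}$ in the decay of $\tildetr$ relative to $\tr\chibar$, as reflected in the distinct weights $a/|u|^2$ and $a/|u|$ on the left-hand sides of the two inequalities.
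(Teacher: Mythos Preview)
Your proof is correct and follows essentially the same approach as the paper: the same Raychaudhuri equation with $\lambda_0=1/2$ for $\tr\chibar$, the same renormalized equation $\snabla_3\tildetr + \tr\chibar\,\tildetr = \tfrac{1}{2}\tildetr^2 - |\chibarhat|^2 - \Te_{33}$ with $\lambda_0=1$ for $\tildetr$, and the same $L^\infty$--$L^4$ splitting of $|\chibarhat|^2$ combined with the pointwise bound on $\Te_{33}$. The only cosmetic difference is that you work directly in unweighted $L^4$ norms with the weight $|u'|^{\lambda_1}$, whereas the paper translates everything into its scale-invariant norms (producing the integrand weights $a^2/|u'|^4$ and $a^2/|u'|^3$); the two presentations are equivalent.
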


\begin{proof}
There holds

\[\snabla_3 \tr\chibar+\frac{1}{2}(\tr\chibar)^2 = -\lvert \chibarhat\rvert^2-\slashed{T}_{33}\]Using the evolution lemma from Proposition \ref{evolemma} and passing to scale-invariant norms, we have

\begin{align}
\frac{a}{\modu^2}\scalefourSu{\tr\chibar} \lesssim \frac{a}{\lvert u_\infty \rvert^2}\lVert \tr\chibar \rVert_{\mathcal{L}^4_{(sc)}(S_{u_\infty,\ubar})} + \intu \frac{a^2}{\upr^4}\scalefourSuprime{\lvert \chibarhat\rvert^2, \hsp \slashed{T}_{33}}. \label{trchibarboundputback}
\end{align}For $\slashed{T}_{33},$ given that the signature is $s_2(\slashed{T}_{33})=2,$ we have \[  
\scalefourSu{\slashed{T}_{33}}=a^{-2}\modu^{\frac{9}{2}} \fourSu{\slashed{T}_{33}}=  a^{-2}\modu^{\frac{9}{2}}\bigg(\int_{S_{u,\ubar}}\lvert \slashed{T}_{33}\rvert^4 \sqrt{\det\gslash}\hsp \text{d}\theta^1 \hsp \text{d}\theta^2 \bigg)^{\frac{1}{4}}.    \] However,

\be \lvert \slashed{T}_{33}\rvert =4 \int_{\mathcal{P}_x} f p^4 p^4 \frac{\sqrt{\det \gslash}}{p^3}\text{d}p^1\text{d}p^2\text{d}p^3 \lesssim \frac{a}{\modu^6}, \ee whence \be \scalefourSuprime{\slashed{T}_{33}}\lesssim \frac{1}{a\lvert u^{\prime}\rvert}. \ee Putting this back to  \eqref{trchibarboundputback}, we get the desired result.

\vspace{3mm}

\noindent For $\tildetr$, keeping in mind that $\snabla_3=\partial_u$ in our setting, the following equation holds:

\[ \snabla_3 \tildetr + \tr\chibar \tildetr = \frac{1}{2}\tildetr \tildetr-\lvert \chibarhat\rvert^2- \slashed{T}_{33}.     \]Using the Evolution Lemma \ref{evolemma} and passing to scale-invariant norms, we obtain

\begin{align}
\frac{a}{\modu}\scalefourSu{\tildetr} \lesssim& \frac{a}{\lvert u_{\infty}\rvert} \scalefourSuzero{\tildetr} \notag \\ +& \intu \frac{a^2}{\upr^3}\big(\scalefourSuprime{\tildetr^2}\duprime + \scalefourSuprime{\lvert \chibarhat \rvert^2}+\scalefourSuprime{\slashed{T}_{33}}\big)\duprime .
\end{align}We have

\be \intu \frac{a^2}{\upr^3}\scalefourSuprime{\tildetr \hsp \tildetr}\duprime \lesssim \intubar \frac{a^2}{\upr^3}\cdot \frac{\upr^2}{a^2}\cdot \frac{O^2}{\upr} \duprime = \frac{O^2}{\modu} \lesssim 1, \ee

\be \intu \frac{a^2}{\upr^3}\scalefourSuprime{\lvert \chibarhat\rvert^2}\duprime \lesssim \intu \frac{a^2}{\upr^3}\cdot \frac{\upr^2}{a}\cdot \frac{O^2}{\upr} \duprime = \frac{a \mathcal{O}_{0,\infty}[\chibarhat]\mathcal{O}_{0,4}[\chibarhat]}{\modu} \lesssim \frac{a}{\modu}\mathcal{O}_{0,\infty}[\chibarhat], \ee
\be \intu \frac{a^2}{\upr^3}\scalefourSuprime{\Te_{33}}\duprime \lesssim \intubar \frac{a^2}{\upr^3} \cdot \frac{1}{a\upr}\duprime \lesssim \frac{a}{\modu^3}\lesssim 1. \ee

\end{proof}

\begin{proposition}
Under the assumptions of Theorem \ref{mainone} and the bootstrap assumptions \eqref{boundsbootstrap}, there holds

\be \scalefourSu{\etabar}\lesssim \prescript{(S)}{}{\mathcal{O}}_{0,\infty}[\tr\chibar] \big(\mathcal{R}_0^{\frac{1}{2}}[\beta]\mathcal{R}_1^{\frac{1}{2}}[\beta]+\underline{\mathcal{R}}_0[\beta]+1\big). \ee
\label{etabarint04}
\end{proposition}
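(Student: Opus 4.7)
The plan is to derive a $\snabla_3$ transport equation for $\etabar$ and invoke the Evolution Lemma (Proposition~\ref{evolemma}) with $p=4$ and $\lambda_0 = \tfrac{1}{2}$, which gives $\lambda_1 = \tfrac{1}{2}$. Starting from the null structure equation
\[
\snabla_3 \etabar + \tfrac{1}{2}\tr\chibar\,\etabar \;=\; \tfrac{1}{2}\tr\chibar\,\eta \;-\; \chibarhat\cdot(\etabar - \eta) \;+\; \betabar \;-\; \tfrac{1}{2}\Te_3,
\]
Proposition~\ref{evolemma} yields
\[
|u|^{1/2}\|\etabar\|_{L^4(S_{u,\ubar})} \;\lesssim\; |u_\infty|^{1/2}\|\etabar\|_{L^4(S_{u_\infty,\ubar})} \;+\; \int_{u_\infty}^u |u'|^{1/2}\|F\|_{L^4(S_{u',\ubar})}\,du',
\]
where $F$ denotes the right-hand side. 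Multiplying through by $a^{-1/2}|u|$ and recalling $s_2(\etabar) = \tfrac{1}{2}$ converts the left-hand side to $\|\etabar\|_{\mathcal{L}^4_{(sc)}(S_{u,\ubar})}$, while the initial-data contribution is controlled via assumption~(i) of Theorem~\ref{mainone} and absorbed since $|u|/|u_\infty|\leq 1$.

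The bulk of the argument is the term-by-term estimation of $F$ in the scale-invariant norms. The dominant piece is the linear coupling $\tfrac{1}{2}\tr\chibar\,\eta$: combining scale-invariant H\"older
\[
\|\tr\chibar\,\eta\|_{\mathcal{L}^4_{(sc)}(S)} \;\lesssim\; \frac{1}{|u|}\,\|\tr\chibar\|_{\mathcal{L}^\infty_{(sc)}(S)}\,\|\eta\|_{\mathcal{L}^4_{(sc)}(S)}
\]
with the $\eta$-bound from the preceding proposition (which gives $\|\eta\|_{\mathcal{L}^4_{(sc)}}\lesssim \mathcal{R}_0^{1/2}[\beta]\mathcal{R}_1^{1/2}[\beta]+\mathcal{R}_0[\beta]+1$), the $u'$-integration produces precisely the advertised factor $\prescript{(S)}{}{\mathcal{O}_{0,\infty}}[\tr\chibar]\,(\mathcal{R}_0^{1/2}[\beta]\mathcal{R}_1^{1/2}[\beta]+\mathcal{R}_0[\beta]+1)$. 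The bilinear $\chibarhat\cdot(\etabar-\eta)$ contribution is controlled by the $L^4$ bound on $\chibarhat$ from Proposition~\ref{propchibarhat} together with the bootstrap on $(\eta,\etabar)$; the overall $a^{1/2}/|u|$ factor makes it absorbable into $+1$. The matter source $\Te_3$ is treated pointwise: the momentum-support bounds of Proposition~\ref{momentum} together with the metric bounds of Proposition~\ref{propdetgslash} give $|\Te_3|_\gslash\lesssim a/|u|^5$ on $\supp f$, whence $\|\Te_3\|_{\mathcal{L}^4_{(sc)}(S)}\lesssim 1/(a^{1/2}|u|)$, and the $u'$-integral is negligible.

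The main technical obstacle is the curvature source $\betabar$, for which no pointwise bound is available. Using the renormalization $\betabar = \tbetabar - \tfrac{1}{2}\Te_3$ (which folds the Vlasov Ricci source into $\tbetabar$) and the scale-invariant Sobolev embedding
\[
\|\tbetabar\|_{\mathcal{L}^4_{(sc)}(S)} \;\lesssim\; \|\tbetabar\|_{\mathcal{L}^2_{(sc)}(S)}^{1/2}\|(\al\snabla)\tbetabar\|_{\mathcal{L}^2_{(sc)}(S)}^{1/2} \;+\; \|\tbetabar\|_{\mathcal{L}^2_{(sc)}(S)},
\]
followed by Cauchy--Schwarz in $u'$ against the canonical measure $\tfrac{a}{|u'|^2}\,du'$, one assembles the codimension-one norms $\|\tbetabar\|_{\mathcal{L}^2_{(sc)}(\Hbar_\ubar)}$ and $\|(\al\snabla)\tbetabar\|_{\mathcal{L}^2_{(sc)}(\Hbar_\ubar)}$, which are controlled by $\underline{\mathcal{R}}_0[\beta]$ and $\underline{\mathcal{R}}_1[\beta]$. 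The delicate step is the weight-counting: $s_2(\betabar)=\tfrac{3}{2}$ forces the Evolution-Lemma integrand to carry the weight $a^{3/2}/|u'|^3$, which must be reconciled with the intrinsic $a/|u'|^2$ weight of $\mathcal{L}^2_{(sc)}(\Hbar)$; their ratio $a^{1/2}/|u'|\leq 4\,a^{-1/2}$ inside the slab of existence furnishes an overall $a^{-1/2}$ smallness, so the bootstrap hypothesis $(O+R+V)^{20}\ll a^{1/16}$ of~\eqref{boundsbootstrap} renders the full $\betabar$-contribution $\lesssim \underline{\mathcal{R}}_0[\beta]+1$, which is precisely the last non-trivial term in the advertised bound. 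Assembling the four contributions yields the stated estimate.
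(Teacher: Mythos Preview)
Your approach is essentially the same as the paper's: the $\snabla_3$ structure equation for $\etabar$, the Evolution Lemma with $\lambda_0=\tfrac12$, conversion to scale-invariant norms, the dominant $\tr\chibar\,\eta$ coupling handled via scale-invariant H\"older and the already-established bound on $\|\eta\|_{\mathcal L^4_{(sc)}}$, pointwise control of $\Te_3$, and finally $\betabar$ via the $\mathcal L^4_{(sc)}\!\to\!\mathcal L^2_{(sc)}$ Sobolev embedding followed by Cauchy--Schwarz in $u'$ against the $\Hbar_\ubar$ flux norm.

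One correction to your weight-counting in the $\betabar$ step: you do \emph{not} gain $a^{-1/2}$ smallness. After the Sobolev embedding and Cauchy--Schwarz one finds
\[
\int_{u_\infty}^u \frac{a^{3/2}}{|u'|^3}\,\|\betabar\|_{\mathcal L^2_{(sc)}(S_{u',\ubar})}\,du' \;\lesssim\; \frac{a}{|u|^{3/2}}\,\|\betabar\|_{\mathcal L^2_{(sc)}(\Hbar_\ubar)},
\]
and after dividing by the left-hand-side weight $a^{1/2}/|u|$ the residual factor is $a^{1/2}/|u|^{1/2}\le 2$, which is $O(1)$, not $a^{-1/2}$. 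Consequently the $\betabar$ contribution is $\lesssim \underline{\mathcal R}_0^{1/2}\underline{\mathcal R}_1^{1/2}+\underline{\mathcal R}_0$, not merely $\underline{\mathcal R}_0+1$; you cannot invoke the bootstrap smallness $(O+R+V)^{20}\ll a^{1/16}$ to kill the cross term. This matches what the paper's own proof actually yields (the displayed inequality containing $\scaletwoHbaru{\betabar}^{1/2}\scaletwoHbaru{(\al\snabla)\betabar}^{1/2}$), so the discrepancy with the stated bound is a harmless imprecision in the proposition's statement itself, not a gap in your argument.
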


\begin{proof}
There holds \[\snabla_3 \etabar+ \frac{1}{2}\tr\chibar \hsp \etabar = \frac{1}{2}\tr\chibar \hsp \eta- \chibarhat\cdot(\etabar-\eta)+\betabar-\frac{1}{2}\slashed{T}_3:= F_{\etabar}.  \]We recall that $s_2(\etabar)=\frac{1}{2}$. Using Proposition \ref{evolemma} with $\lambda_0=\frac{1}{2},$ we obtain

\[ \modu^{\frac{1}{2}}\fourSu{\etabar}\lesssim \lvert u_{\infty}\rvert^{\frac{1}{2}} \fourSuinf{\etabar}+ \intu \lvert u^{\prime}\rvert^{\frac{1}{2}}\lVert F_{\etabar} \rVert_{L^4(S_{u^{\prime},\ubar})}\duprime,      \]which rewrites in scale-invariant norms as \begin{align}\label{etabarl4equationone}\notag
    \frac{\al}{\modu}\scalefourSu{\etabar}\lesssim& \frac{\al}{\lvert u_{\infty}\rvert}\scalefourSuinf{\etabar} + \intu \frac{a^{\frac{3}{2}}}{\lvert u^{\prime}\rvert^3} \scalefourSuprime{(\tr\chibar,\chibarhat)(\eta,\etabar)} \duprime \\ +& \intu \frac{a^{\frac{3}{2}}}{\lvert u^{\prime}\rvert^3} \scalefourSuprime{\betabar}\duprime + \intu \frac{a^{\frac{3}{2}}}{\lvert u^{\prime}\rvert^3} \scalefourSuprime{\slashed{T}_3}\duprime .
\end{align} From the first integral, the most dangerous term is \[ \intu \frac{a^{\frac{3}{2}}}{\upr^3} \scalefourSuprime{\tr\chibar \hsp \eta}\duprime \lesssim \intu \frac{a^{\frac{3}{2}}}{\upr^3}\cdot \frac{\upr^2}{a}\cdot\frac{1}{\upr}\bigg(\frac{a}{\upr^2}\scaleinfinitySuprime{\tr\chibar}\bigg)\scalefourSuprime{\eta}\duprime.  \]Using the fact that $\frac{a}{\upr^2}\scaleinfinitySuprime{\tr\chibar}= \prescript{(S)}{}{\mathcal{O}}_{0,\infty}[\tr\chibar]$  and $\scalefourSuprime{\eta}\lesssim \mathcal{R}_0^{\frac{1}{2}}[\beta]\mathcal{R}_1^{\frac{1}{2}}[\beta]+ \mathcal{R}_0[\beta]+1$, it follows that

\be \intu \frac{a^{\frac{3}{2}}}{\lvert u^{\prime}\rvert^3} \scalefourSuprime{\tr\chibar\hsp  \eta} \duprime \lesssim \frac{\al}{\modu}\big(\mathcal{R}^{\frac{1}{2}}_0[\beta]\mathcal{R}^{\frac{1}{2}}_1[\beta]+\mathcal{R}_0[\beta]+1\big). \ee The rest of the terms in the first integral in \eqref{etabarl4equationone} are bounded by $\frac{\al}{\modu}$. For the other two terms, we see that

\be \intu\frac{a^{\frac{3}{2}}}{\upr^3} \scalefourSuprime{\betabar,\slashed{T}_3}\duprime \lesssim  \frac{\al}{\modu}\big( \scaletwoHbaru{\betabar}^{\frac{1}{2}}\scaletwoHbaru{(\al\snabla)\betabar}^{\frac{1}{2}}+\scaletwoHbaru{\betabar} +1\big). \ee We note that 

\be \fourSu{\slashed{T}_3}= \bigg(\int_{S_{u^{\prime},\ubar}} \big\lvert \gslash^{AC}\slashed{T}_{3A}\slashed{T}_{3C}\big\rvert^2 \sqrt{\det\gslash}\hsp \text{d}\theta^1 \text{d}\theta^2 \bigg)^{\frac{1}{4}}.  \ee Using the fact that  \[\lvert \slashed{T}_{3A} \rvert \lesssim \bigg\lvert\gslash_{AA^{\prime}} \int_{\mathcal{P}_x} f p^4 \hsp p^A \frac{\sqrt{\det\gslash}}{p^3}\hsp \text{d}p^1\text{d}p^2\text{d}p^3 \bigg\rvert \lesssim \frac{a}{\upr^4} ,   \]we arrive as before at \be \fourSu{\slashed{T}_3}\lesssim \frac{a}{\modu^{\frac{9}{2}}}, \ee which means that

\be \intu \frac{a^{\frac{3}{2}}}{\upr^3}\scalefourSuprime{\slashed{T}_3}\duprime \lesssim \frac{a}{\modu^3} \lesssim \frac{\al}{\modu}. \ee Dividing the above by $\frac{\al}{\modu}$, we obtain the desired result.
\end{proof}
\begin{remark}
As with other preceding terms, the above estimates for $\tr\chi, \tildetr, \etabar$ will be improved in the section with $\prescript{(S)}{}{\mathcal{O}}_{0,\infty}$ estimates. At this stage, we do not have control on $\scaleinftySu{\chihat},\scaleinftySu{\chibarhat},\scaleinftySu{\tr\chibar}$, as this requires control of their first angular derivatives in $\mathcal{L}^4_{(sc)}$ as well, which is carried out in the next section.
\end{remark}
\noindent We now obtain estimates in $\mathcal{L}^4_{(sc)}$ for the non-tensorial quantity $\mathcal{G}$ (recalling its definition from \eqref{Gintroduction}):
\begin{proposition}
 Under the assumptions of Theorem \ref{mainone} and the bootstrap assumptions \eqref{boundsbootstrap}, there holds
 \be \scalefourSu{\mathcal{G}}\lesssim \scalefourSuzero{\mathcal{G}}+ \prescript{(S)}{}{\mathcal{O}}_{1,4}[\eta]+ 1.   \ee
\end{proposition}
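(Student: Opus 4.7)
The plan is to integrate in $\ubar$ the transport equation for $\mathcal{G}$ derived at the end of Section 2.4. Writing $\Gamma^{A}_{3B}=\chibar^{A}_{\;B}-e_{A}(b^{B})$ and substituting $\snabla_{3}\chi$ by $\snabla\eta$ modulo quadratic Ricci terms and curvature (via the Codazzi/structure identities), the equation reads schematically
\[
\snabla_{4}\mathcal{G} \;\sim\; \snabla\eta \;+\; \chi\!\cdot\!\mathcal{G} \;+\; \omegabar\,\chi \;+\; \Gammaslash\!\cdot\!(\eta,\etabar) \;+\; \eta\!\cdot\!\etabar \;+\; (\rho,\sigma) \;+\; \Tslash.
\]
Integrating in $\ubar'$, estimating in $\mathcal{L}^{4}_{(sc)}(S_{u,\ubar'})$, and using the triangle inequality yields
\[
\scalefourSu{\mathcal{G}} \;\lesssim\; \scalefourSuzero{\mathcal{G}} \;+\; \intubar \scalefourSuubarprime{\snabla\eta}\,\dubarprime \;+\; \intubar \scalefourSuubarprime{\text{remaining RHS}}\,\dubarprime,
\]
where the initial term captures the data already controlled on the outgoing cone $H_{u_{\infty}}$.

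Next, I would estimate each contribution separately. The top-order term reduces to $\intubar \scalefourSuubarprime{\snabla\eta}\,\dubarprime \lesssim \prescript{(S)}{}{\mathcal{O}_{1,4}}[\eta]$, since $\al\snabla\eta$ is one of the components entering $\prescript{(S)}{}{\mathcal{O}_{1,4}}$ and the $\modu^{-1}$ weight produced by the $\dubarprime$-integration exactly compensates the $\al^{-1}$ factor introduced when switching $\snabla\mapsto (\al)^{-1}(\al\snabla)$. The quasi-linear term $\chi\cdot\mathcal{G}$ is absorbed by Gr\"onwall using $\scaleinftySu{\tr\chi,\chihat}$ (from Propositions \ref{propchihat} and \ref{propositiontrchi} combined with Sobolev). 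The remaining quadratic terms $\omegabar\,\chi$, $\Gammaslash\cdot(\eta,\etabar)$, $\eta\cdot\etabar$ are each bounded through a combination of the $\mathcal{L}^{4}_{(sc)}$ and $\mathcal{L}^{\infty}_{(sc)}$ estimates of Propositions \ref{omegabar04estimate}--\ref{etabarint04}, and each contributes at most a universal constant. The curvature term $(\rho,\sigma)$ is controlled via $\mathcal{R}_{0}$, $\mathcal{R}_{1}$ as in the proof of Proposition \ref{omegabar04estimate}; the Vlasov contribution $\Tslash$ is handled by the pointwise bound $|\Tslash|\lesssim a\modu^{-4}$ that follows from Proposition \ref{momentum} together with the uniform bound on $f$. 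Each of these gives $\lesssim 1$ once the scale-invariant weights are properly accounted for.

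The principal difficulty --- and also the \emph{raison d'être} for introducing $\mathcal{G}$ --- is to verify that no $\snabla\etabar$, $\snabla\chibarhat$, $\snabla\tr\chibar$ or $\snabla\omegabar$ secretly reappears on the right-hand side after the substitution $\snabla_{3}\chi\mapsto\snabla\eta+\text{l.o.t.}$, since any such term would demand elliptic estimates on $\Hbar$ that are incompatible with the fact that the top-order Vlasov source is only controllable on $H$. The decomposition $\mathcal{G}=\widehat{\mathcal{G}}+\tfrac{1}{2}\big(\tr[\mathcal{G}]+\tfrac{2}{\modu}\big)\,\mathrm{id}$ described in the Introduction, together with the identity $\snabla_{3}\chi\sim\snabla\eta+\psi\cdot\psi+(\rho,\Tslash)$, is what guarantees this; carefully tracking these cancellations component by component is the only delicate step in the proof, while all remaining estimates are routine applications of the tools already assembled in Section \ref{s3} and in this section.
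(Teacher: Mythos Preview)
Your approach is correct and matches the paper's proof: integrate the $\snabla_4$ transport equation for $\mathcal{G}$, isolate the top-order term $\snabla\eta$, and bound all remaining quadratic, curvature, and Vlasov contributions by $1$ using the bootstrap assumptions and the $\mathcal{L}^{\infty}_{(sc)}$--$\mathcal{L}^4_{(sc)}$ H\"older inequalities. Two minor slips: (i) the initial data term for a $\snabla_4$-transport comes from $\ubar=0$, i.e.\ from the incoming cone $\Hbar_0$, not $H_{u_\infty}$; (ii) your justification that the $\ubar'$-integration produces a $\modu^{-1}$ weight compensating the $a^{-1/2}$ is not the right mechanism --- the $\ubar'$-integral over $[0,1]$ gives no decay, and the correct reason is simply that $\scalefourSuubarprime{\snabla\eta}=a^{-1/2}\scalefourSuubarprime{(\al\snabla)\eta}\le a^{-1/2}\,\prescript{(S)}{}{\mathcal{O}}_{1,4}[\eta]\le \prescript{(S)}{}{\mathcal{O}}_{1,4}[\eta]$. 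Neither point affects the validity of the argument.
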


\begin{proof} 
First of all, by definition, $\mathcal{G}$ inherits the imposed bootstrap assumption on the Ricci coefficients
\[ \scalefourSu{\mathcal{G}}\leq \mathcal{O}.   \]

\noindent Let us recall the transport equation

\begin{align}
\snabla_4 \mathcal{G} \sim \nablasl \eta + \mathcal{G} \cdot \chi + \omegabar \cdot \chi + \Gammaslash \cdot (\eta, \etabar) + \eta \cdot \etabar + (\rho,\sigma)+\Tslash,    
\end{align}from equation \eqref{Gintroduction}. Using the transport inequality along the $e_4$ direction, together with the bootstrap assumptions, we obtain

\begin{align}
\scalefourSu{\mathcal{G}}\lesssim \scalefourSuzero{\mathcal{G}}+\intubar \scalefourSuubarprime{\snabla \eta} \dubarprime +1. 
\end{align}Here, we have used $\mathcal{L}^4\infty_{(sc)}-\mathcal{L}^{4}_{(sc)}$ estimates for the nonlinear terms $\mathcal{G}\cdot \chi, \hsp \omegabar\cdot \chi, \Gammaslash \cdot (\eta,\etabar)$ and $\eta\cdot \etabar$. The result follows.
\end{proof}
\begin{proposition}
    Under the assumptions of Theorem \ref{mainone} and the bootstrap assumptions \eqref{boundsbootstrap}, there holds

\[ \big\lVert \Gammaslash-\Gammaslash^{\circ}\big\rVert_{\mathcal{L}^4_{(sc)}(S_{u,\ubar})} \lesssim \prescript{(S)}{}{\mathcal{O}}_{1,4}[\chihat]+ 1.      \]
    
\end{proposition}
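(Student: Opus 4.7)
The plan is to exploit the fact that on the initial incoming cone $\Hbar_0$ the induced metric is Minkowskian, by hypothesis (ii) of Theorem~\ref{mainone}, so that $\Gammaslash - \Gammaslash^\circ$ vanishes identically at $\ubar=0$, and then propagate this smallness via a transport equation in the $e_4$-direction. The key observation is that the difference of two torsion-free affine connections is a genuine $(1,2)$-tensor on $S_{u,\ubar}$, so the $\mathcal{L}^4_{(sc)}$ transport inequality applies to $\Gammaslash - \Gammaslash^\circ$ despite neither $\Gammaslash$ nor $\Gammaslash^\circ$ being tensorial on its own.

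First I would derive the $e_4$-transport equation. Combining the first variational formula $\mathcal{L}_{e_4}\gslash = 2\chi$ with the standard expression for the Levi--Civita connection of $\gslash$ yields the identity
\[
\snabla_4 \Gammaslash^C_{AB} = \tfrac{1}{2}\gslash^{CD}\big(\snabla_A \chi_{BD} + \snabla_B\chi_{AD} - \snabla_D\chi_{AB}\big).
\]
Since $\Gammaslash^\circ$ is $\ubar$-independent (it is a fixed Minkowskian object on the initial cone), $\snabla_4(\Gammaslash - \Gammaslash^\circ) = \snabla_4\Gammaslash$. Decomposing $\chi = \chihat + \tfrac{1}{2}\tr\chi\,\gslash$ gives schematically
\[
\snabla_4(\Gammaslash - \Gammaslash^\circ) \sim \snabla\chihat + \snabla\tr\chi.
\]

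Next, applying the $\mathcal{L}^4_{(sc)}$ transport inequality along $e_4$, noting that $s_2(\Gammaslash - \Gammaslash^\circ)=0$, one obtains
\[
\scalefourSu{\Gammaslash - \Gammaslash^\circ} \lesssim \scalefourSuzero{\Gammaslash - \Gammaslash^\circ} + \intubar \Big(\scalefourSuubarprime{\snabla\chihat} + \scalefourSuubarprime{\snabla\tr\chi}\Big)\,\dubarprime.
\]
The boundary term at $\ubar'=0$ vanishes by the Minkowski-data hypothesis. The first integrand is precisely $\al^{-1}\prescript{(S)}{}{\mathcal{O}}_{1,4}[\chihat]$-controlled (modulo the weight $\al$ absorbed from the operator $\mathcal{D} = \al\snabla$), and integration in $\ubar' \in [0,\ubar]\subset [0,1]$ produces the desired factor. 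The $\snabla\tr\chi$ contribution is bounded by commuting the Raychaudhuri equation of Proposition~\ref{propositiontrchi} with one angular derivative and repeating the argument there, which yields a bound of the schematic form $\tfrac{a}{\modu}\mathcal{O}_{0,\infty}[\chihat]\prescript{(S)}{}{\mathcal{O}}_{1,4}[\chihat]+ 1$, which is absorbed into the stated right-hand side.

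I do not anticipate a significant obstacle: the argument is structurally parallel to the $\mathcal{L}^4_{(sc)}$ estimate for $\chihat$ in Proposition~\ref{propchihat}. The only conceptual subtlety is recognising that, despite $\Gammaslash$ itself not being a tensor, the difference $\Gammaslash - \Gammaslash^\circ$ is, and this tensorial character combined with the $\ubar$-independence of $\Gammaslash^\circ$ is what permits the clean application of the transport inequality.
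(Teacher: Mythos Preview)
Your approach is the same as the paper's, but there is one gap in the derivation of the transport equation. When you write $\snabla_4(\Gammaslash - \Gammaslash^\circ) = \snabla_4\Gammaslash$ and then invoke the identity $\snabla_4\Gammaslash^C_{AB} = \tfrac12\gslash^{CD}(\snabla_A\chi_{BD}+\cdots)$, you are conflating the action of $e_4$ on the \emph{components} $\Gammaslash^C_{AB}$ with the projected covariant derivative $\snabla_4$ of the \emph{tensor} $\Gammaslash-\Gammaslash^\circ$. The latter picks up extra connection terms from the $(1,2)$-type of the tensor: schematically,
\[
\snabla_4(\Gammaslash-\Gammaslash^\circ)^C_{AB} \;=\; e_4\big((\Gammaslash-\Gammaslash^\circ)^C_{AB}\big) \;+\; \chi\cdot(\Gammaslash-\Gammaslash^\circ),
\]
so the correct transport equation is $\snabla_4(\Gammaslash-\Gammaslash^\circ)\sim \snabla\chi + \psi\cdot(\Gammaslash-\Gammaslash^\circ)$, with $\psi$ a bounded Ricci-coefficient-type quantity (the paper writes it as $\mathcal{G}$). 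Your purely-linear version omits this quadratic piece.

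That said, the gap is harmless: the missing term is bounded via the scale-invariant H\"older inequality and the bootstrap, giving an integrand of size $O/\modu$ times $\scalefourSuubarprime{\Gammaslash-\Gammaslash^\circ}$, which is absorbed by Gr\"onwall exactly as in the paper. With this correction your argument closes and matches the paper's proof.
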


\begin{proof}
We recall the equation 

\begin{align} \notag 
\snabla_4\big(\Gammaslash-\Gammaslash^{\circ}\big)^C_{AB} =& \snabla_A {\chi_B}^C + \snabla_B {\chi_A}^C-\snabla^C {\chi_{BA}} \\ \notag  -&\big({\chi_A}^D - \snabla_A b^D +b^E \Gammaslash^D_{AE}\big)\big(\Gammaslash-\Gammaslash^{\circ}\big)^C_{DB}  \\ \notag  -&\big({\chi_B}^D - \snabla_B b^D +b^E \Gammaslash^D_{BE}\big)\big(\Gammaslash-\Gammaslash^{\circ}\big)^C_{AD} \\ +&\big({\chi_D}^C - \snabla_D b^C +b^E \Gammaslash^C_{DE}\big)\big(\Gammaslash-\Gammaslash^{\circ}\big)^D_{AB} .
\end{align}Notice that \[{\chi_A}^D - \snabla_A b^D +b^E \hsp \Gammaslash^D_{AE}={\chi_A}^D -e_A(b^D) = \Gamma^D_{3A}.\]Hence the above equation rewrites as
\begin{align} \notag 
\snabla_4\big(\Gammaslash-\Gammaslash^{\circ}\big)^C_{AB} =& \snabla_A {\chi_B}^C + \snabla_B {\chi_A}^C-\snabla^C {\chi_{BA}} \\ \notag  -&\Gamma^D_{3A} \big(\Gammaslash-\Gammaslash^{\circ}\big)^C_{DB}   -\Gamma^D_{3B}\big(\Gammaslash-\Gammaslash^{\circ}\big)^C_{AD} +\Gamma^C_{3D}\big(\Gammaslash-\Gammaslash^{\circ}\big)^D_{AB} .
\end{align} Recalling the definition of $\mathcal{G}$ from \eqref{Gintroduction}, we thus have the schematic identity

\be \snabla_4(\Gammaslash-\Gammaslash^{\circ})=\snabla\chi + \mathcal{G}(\Gammaslash-\Gammaslash^{\circ}). \ee As such, there holds 

\begin{align} \notag \fourSu{\Gammaslash-\Gammaslash^{\circ}} \lesssim& \intubar \frac{1}{\al} \scalefourSuubarprime{(\al\snabla) \chihat} + \frac{1}{\modu}\big(\sum_{i=0}^1\scalefourSuubarprime{(\al\snabla)^i \mathcal{G}}\big)\scalefourSuubarprime{\Gammaslash-\Gammaslash^{\circ}} \dubarprime \\ \lesssim& \intubar \frac{1}{\al} \scalefourSuubarprime{(\al\snabla) \chihat} \dubarprime +1.  
\end{align} In the last equation we have used the $\mathcal{L}^{\infty}_{(sc)}-\mathcal{L}^4_{(sc)}$ Sobolev embedding on $\mathcal{G}$.

\end{proof}
\noindent This concludes the $\prescript{(S)}{}{\mathcal{O}}_{0,4}$-estimates for the Ricci coefficients.
%We conclude this section with estimates on the shift $b$.

%\begin{proposition}
%Under the assumptions of Theorem \ref{mainone} and the bootstrap assumptions \eqref{boundsbootstrap}, there holds
%\[\scalefourSu{b} \lesssim 1. \]
%\end{proposition}

%\begin{proof}
%The equation \[\snabla_3 b -\frac{1}{2}\tr\chibar b = 2(\eta,\etabar)+\chibarhat\cdot b\]implies a signature $s_2(b)=-\frac{1}{2}$. Using the Evolution Lemma \ref{evolemma}, we arrive at 

%\be \frac{1}{\modu} \scalefourSu{b}\lesssim \frac{1}{\uinf}\scalefourSuzero{b}+ \intu \frac{a}{\upr^3}\big(\scalefourSuprime{(\eta,\etabar)}+ \scalefourSuprime{\chibarhat\cdot b}\big) \duprime \ee
%Multiplying by $\modu$ and using the fact that $\modu\leq \upr\leq \uinf$ for $u_{\infty}\leq u^{\prime}\leq u$, we arrive at 
%\be  \scalefourSu{b}\lesssim \scalefourSuzero{b}+ \intu \frac{a}{\upr^2}\big(\scalefourSuprime{(\eta,\etabar)}+ \scalefourSuprime{\chibarhat\cdot b}\big) \duprime \ee The claim now follows using the bootstrap assumptions \eqref{boundsbootstrap}.
%\end{proof}

\section{$\prescript{(S)}{}{\mathcal{O}_{1,4}}$ estimates for the Ricci coefficients} \label{sectionO14}

We now move to the $L^4(S_{u,\ubar})$ estimates for the first derivatives of the Ricci coefficients. We recall at this point that $\mathcal{D}$ denotes an arbitrary differential operator in the set $\{ \modu \snabla_3, \snabla_4, \al \snabla\}$. 

\vspace{3mm}

\noindent We note the following: 

\begin{proposition}
Consider a Ricci coefficient $\psi^{(s)}$ with $s_2-$signature $s$ that satisfies an equation of the form

\[ \snabla_4 \psi^{(s)}= \sum_{s_1+s_2=s} \psi^{(s_1)}\psi^{(s_2)}+\Psi^{(s)}+\Vl^{(s)}.\]Then, if $\mathcal{D}$ is as above, there holds

\begin{align}
\snabla_4 \mathcal{D}\psi^{(s)} =& \sum_{s_1+s_2=s}\psisone \mathcal{D}\psistwo +\mathcal{D}\Psi^{(s)} +\mathcal{D}\Vl^{(s)}+ \frac{1}{\al}\modu (\eta,\etabar)\mathcal{D}\psi^{(s)}+ \modu(\eta,\etabar)(\eta,\etabar)\psis \notag \\ +& \modu \sigma \psis +(\chihat,\tr\chi)\mathcal{D}\psis + \al(\chihat,\tr\chi)\etabar \psi^{(s)} +\al(\beta,\slashed{T}_4)\psi^{(s)} \notag \\ +& \modu \hsp \omegabar\hsp \big(\sum_{s_1+s_2=s} \psisone \psistwo+\Psis +\Vls\big) .
\end{align} As such, the following estimate holds:

\begin{align}
&\scalefourSu{\mathcal{D}\psis}\notag \\ \lesssim& \scalefourSuzero{\mathcal{D}\psis} +\sum_{s_1+s_2=s}\sup_{0\leq \ubar^{\prime} \leq 1} \frac{1}{\modu}\scaleinftySuubarprime{\psisone}\scalefourSuubarprime{\mathcal{D}\psistwo} \notag \\ +& \scaletwoHu{\mathcal{D}\Psis}^{\frac{1}{2}}\scaletwoHu{(\al\snabla)\mathcal{D}\Psis}^{\frac{1}{2}}+ \scaletwoHu{\mathcal{D}\Psis}\\ +& \scaletwoHu{\mathcal{D}\Vls}^{\frac{1}{2}}\scaletwoHu{(\al\snabla)\mathcal{D}\Vls}^{\frac{1}{2}}+\scaletwoHu{\mathcal{D}\Vls} \notag \\ +& \frac{\al}{\modu^2}\sup_{0\leq \ubar^{\prime} \leq 1}\scaleinftySuubarprime{(\eta,\etabar)}\scalefourSuubarprime{\mathcal{D}\psi^{(s)}} + \frac{a}{\modu^3} \sup_{0\leq \ubar^{\prime}\leq 1}\scaleinftySuubarprime{(\eta,\etabar)}^2 \scalefourSuubarprime{\psis} \notag \\ +& \frac{a}{\modu^2}\sup_{0\leq \ubar^{\prime}\leq 1}\scaleinfinitySuubarprime{\psis}\big(\scaletwoHu{\sigma}^{\frac{1}{2}}\scaletwoHu{(\al\snabla)\sigma}^{\frac{1}{2}} + \scaletwoHu{\sigma}\big)\notag \\ +& \frac{\al}{\modu} \sup_{0\leq \ubarprime\leq 1}\scaleinftySuubarprime{(\frac{1}{\al}\chihat,\frac{1}{\al}\tr\chi)}\scalefourSuubarprime{\mathcal{D}\psis} \notag \\ +& \frac{\al}{\modu^2}\sup_{0\leq \ubarprime\leq 1}\scaleinftySuubarprime{(\frac{1}{\al}\chihat,\frac{1}{\al}\tr\chi)}\scaleinftySuubarprime{\etabar}\scalefourSuubarprime{\psis}\notag \\+& \frac{\al}{\modu}\sup_{0\leq \ubarprime\leq 1}\scaleinftySuubarprime{\psis} \big( \scaletwoHu{(\beta,\slashed{T}_4)}^{\frac{1}{2}}\scaletwoHu{(\al\snabla)(\beta,\slashed{T}_4)}^{\frac{1}{2}}+ \scaletwoHu{(\beta,\slashed{T}_4)}\big) \notag \\ +& \sum_{s_1+s_2=s}\frac{a}{\modu^3}\sup_{0\leq \ubar^\prime\leq 1}\scaleinftySuubarprime{\psisone}\scaleinftySuprime{\psistwo}\scalefourSuprime{\omegabar} \notag \\ +&\sup_{0\leq \ubarprime\leq 1}\frac{a}{\modu^2}\scaleinftySuubarprime{\omegabar} \big(\scaletwoHu{\Psis}^{\f12}\scaletwoHu{(\al\snabla)\Psis}^{\f12}+\scaletwoHu{\Psis}\big) \notag \\+& \sup_{0\leq \ubarprime\leq 1}\frac{a}{\modu^2}\scaleinftySuubarprime{\omegabar} \big(\scaletwoHu{\Vls}^{\f12}\scaletwoHu{(\al\snabla)\Vls}^{\f12}+\scaletwoHu{\Vls}\big).
\end{align}

\end{proposition}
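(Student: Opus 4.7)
The proof proceeds in two stages: first, derive the commuted transport equation for $\mathcal{D}\psi^{(s)}$ by applying $\mathcal{D}\in\{\modu\snabla_3,\snabla_4,\al\snabla\}$ to both sides of the given equation and invoking the $[\mathcal{D},\snabla_4]$ commutator; second, integrate along $e_4$ and apply scale-invariant H\"older together with the codimension-1 trace inequality to obtain the displayed $\mathcal{L}^4_{(sc)}$ estimate.

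For the commuted equation, apply $\mathcal{D}$ to $\snabla_4\psi^{(s)}=\sum_{s_1+s_2=s}\psi^{(s_1)}\psi^{(s_2)}+\Psi^{(s)}+\Vl^{(s)}$. The right-hand side produces $\sum_{s_1+s_2=s}\psi^{(s_1)}\mathcal{D}\psi^{(s_2)}+\mathcal{D}\Psi^{(s)}+\mathcal{D}\Vl^{(s)}$ by Leibniz (symmetrising the quadratic sum), while the left-hand side is $\snabla_4\mathcal{D}\psi^{(s)}+[\mathcal{D},\snabla_4]\psi^{(s)}$. The commutator is read off from Lemmata \ref{commutationlemma4}--\ref{commutationlemma5} (equivalently from Proposition \ref{commute4} at order $i=1$): for $\mathcal{D}=\modu\snabla_3$ one picks up $\modu(\eta,\etabar)\snabla\psi^{(s)}$, $\modu(\eta,\etabar)^2\psi^{(s)}$, $\modu\sigma\psi^{(s)}$, and $-\modu\omegabar\snabla_4\psi^{(s)}$; the last is substituted using the original equation to yield $\modu\omegabar$ times the full source, producing the final line of the claimed equation. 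For $\mathcal{D}=\al\snabla$ one gets $(\chihat,\tr\chi)\snabla\psi^{(s)}$, $\al(\chihat,\tr\chi)\etabar\psi^{(s)}$, and $\al(\beta,\Te_4)\psi^{(s)}$; for $\mathcal{D}=\snabla_4$ the commutator is trivial. Rewriting $\snabla\psi^{(s)}=\al^{-1}(\al\snabla)\psi^{(s)}$ then gives the $\frac{\modu}{\al}(\eta,\etabar)\mathcal{D}\psi^{(s)}$ and $(\chihat,\tr\chi)\mathcal{D}\psi^{(s)}$ contributions displayed.

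For the $\mathcal{L}^4_{(sc)}$ estimate, integrate the commuted equation from $\ubar=0$ along $e_4$ using the $L^4(S)$ transport inequality from Section \ref{s3}, and pass to scale-invariant norms using $s_2(\mathcal{D}\psi^{(s)})=s_2(\psi^{(s)})$ when $\mathcal{D}\in\{\snabla_4,\modu\snabla_3\}$ and $s_2(\mathcal{D}\psi^{(s)})=s_2(\psi^{(s)})+\tfrac12$ when $\mathcal{D}=\al\snabla$. Each quadratic product $\psi^{(s_1)}\mathcal{D}\psi^{(s_2)}$, as well as the triple products $\omegabar\psi^{(s_1)}\psi^{(s_2)}$, $\modu(\eta,\etabar)^2\psi^{(s)}$, $\al(\chihat,\tr\chi)\etabar\psi^{(s)}$, is handled by applying the scale-invariant H\"older inequality \eqref{257}, placing the lowest-derivative Ricci factor in $\mathcal{L}^\infty_{(sc)}(S)$ and the remaining one in $\mathcal{L}^4_{(sc)}(S)$; the $\ubar^\prime$-integral is then absorbed into a supremum, producing the $\frac{1}{\modu}\scaleinftySuubarprime{\psi^{(s_1)}}\scalefourSuubarprime{\mathcal{D}\psi^{(s_2)}}$-type terms. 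For the genuine source terms $\mathcal{D}\Psi^{(s)}$, $\mathcal{D}\Vl^{(s)}$, together with error contributions of the schematic form $\psi^{(s)}\cdot(\sigma,\beta,\Te_4)$, apply the codimension-1 trace inequality along $H_u$ of Proposition \ref{codimension12}, which produces the advertised interpolation factors $\scaletwoHu{\cdot}^{1/2}\scaletwoHu{(\al\snabla)\cdot}^{1/2}+\scaletwoHu{\cdot}$ after a supremum in $\ubar^\prime$.

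The only obstacle is combinatorial: one must verify term by term that the powers of $\al$ and $\modu$ in the commuted equation match the scale-invariant weight prescribed by $s_2(\mathcal{D}\psi^{(s)})$. Signature conservation \eqref{sc} guarantees that this reconciliation is always possible, but the explicit tracking---especially for the terms where $\snabla\psi^{(s)}$ is rewritten as $\al^{-1}(\al\snabla)\psi^{(s)}$ and subsequently absorbed into a $\mathcal{D}\psi^{(s)}$ factor---requires careful bookkeeping. Once this is carried out, no further analytic input beyond the transport, H\"older and trace machinery of Section \ref{s3} is needed, and the estimate follows line by line from the commuted equation.
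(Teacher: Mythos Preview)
Your proposal is correct and follows essentially the same route as the paper: derive the commuted equation from the $i=1$ case of Proposition~\ref{commute4} (equivalently Lemmata~\ref{commutationlemma4}--\ref{commutationlemma5}), integrate the $L^4$ transport inequality along $e_4$, then bound quadratic Ricci products by scale-invariant H\"older and the curvature/Vlasov terms by passing from $\mathcal{L}^4_{(sc)}(S)$ to $\mathcal{L}^2_{(sc)}(H)$. One minor clarification: the factor $\scaletwoHu{\mathcal{D}\Psi^{(s)}}^{1/2}\scaletwoHu{(\al\snabla)\mathcal{D}\Psi^{(s)}}^{1/2}+\scaletwoHu{\mathcal{D}\Psi^{(s)}}$ arises not from Proposition~\ref{codimension12} directly but from the $L^4$--$L^2$ Sobolev inequality on the sphere (as in the proof of Proposition~\ref{omegabar04estimate}) followed by Cauchy--Schwarz in $\ubar'$; the paper's own one-line proof uses the phrase ``co-dimension~1 trace inequality'' loosely for this step.
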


\begin{proof}
This is a repeated use of the scale-invariant H\"older inequalities together with the co-dimension 1 trace inequality for the $\mathcal{L}^4_{(sc)}(S)-$ norms of $\sigma,\Psis,\Vls, \beta$ and $\slashed{T}_4$.
\end{proof}

\noindent Similarly, for tensor-fields satisfying equations in the $\snabla_3$-direction, the following holds:

\begin{proposition}
Assume a Ricci coefficient $\psis$ with $s_2$-signature $s$ satisfies an equation of the form
\[ \snabla_3 \psis + \lambda(\psis)\tr\chibar \hsp \psis = \sum_{s_1+s_2=s+1}\psisone\psistwo+\Psi^{(s+1)}+\Vl^{(s+1)} .   \]Then, if $\mathcal{D}$ is as above and $\ell$ is as in Proposition \ref{propositionnabla3}, the following holds: 

\begin{align*}
&\snabla_3 \mathcal{D}\psis+ \big(\lambda(\psis)+\frac{\ell}{2}\big) \tr\chibar \mathcal{D}\psis\notag \\ =& \sum_{s_1+s_2=s+1}\mathcal{D}\psisone \hsp \psistwo + \mathcal{D}\Psi^{(s+1)}+\mathcal{D}\Vl^{(s+1)}  \\ +&(\eta,\etabar,1)\big(\sum_{s_1+s_2=s+1}\psisone \psistwo +\Psi^{(s+1)}+\Vl^{(s+1)}\big)  +\modu \tildetr \tr\chibar \psis +\modu(\lvert \chibarhat \rvert^2+ \omegabar \tr\chibar +\slashed{T}_{33})\psis \\+& \frac{1}{\al}(\al \omegabar, \eta,\etabar,\chibarhat)\hsp \De \psis +\sdiv \etabar \hsp  \psis +(\chihat \cdot \chibarhat +\tr\chi\hsp \tr\chibar)\psis +(\rho,\sigma,\betabar)\psis +(\Te_{3},\Te_{34})\psis \notag \\ +& \frac{1}{\al}\De\tildetr \hsp \psis +(\eta,\etabar)(\eta,\etabar,\chibarhat,\tr\chibar)\psis := F_{\psis}. 
\end{align*}Consequently, 

\begin{align*}
    &\modu^{2\lambda(\psis)+\ell-\frac{1}{2}}\fourSu{\mathcal{D}\psis} \\\lesssim& \lvert u_{\infty}\rvert^{2\lambda(\psis)+\ell-\frac{1}{2}}\lVert \mathcal{D}\psis \rVert_{L^4(S_{u_{\infty}},\ubar)}+ \intu \upr^{2\lambda(\psis)+\ell-\frac{1}{2}}\scalefourSuprime{F_{\psis}}\duprime.
\end{align*}
\end{proposition}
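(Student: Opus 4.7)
The plan is to derive the commuted equation by a case-by-case application of the commutation Lemmata \ref{commutationlemma1}--\ref{commutationlemma3} (equivalently, by specialising Proposition \ref{propositionnabla3} to $i=1$), and then invoke the Evolution Lemma \ref{evolemma} with $p=4$ and $\lambda_0=\lambda(\psis)+\ell/2$ to convert the resulting transport identity into the claimed integrated $L^4$-estimate. The hypothesis is that $\psis$ satisfies
\[
\snabla_3 \psis + \lambda(\psis)\tr\chibar\,\psis = \sum_{s_1+s_2=s+1}\psisone\psistwo + \Psi^{(s+1)} + \Vl^{(s+1)}.
\]

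First I would treat the three choices of the commutation operator $\mathcal{D}\in\{\modu\snabla_3,\,\snabla_4,\,\al\snabla\}$ separately. In the angular case $\mathcal{D}=\al\snabla$ (so $\ell=1$), Lemma \ref{commutationlemma3} produces the extra factor of $\tfrac12\tr\chibar\,(\al\snabla)\psis$, together with the error $-\al\snabla\tildetr\,\psis - \al(\eta,\etabar)\tr\chibar\,\psis + \al(\eta,\etabar)\cdot\mathrm{RHS} - \al\chibarhat\cdot\snabla\psis + \al\eta(\chibarhat,\tr\chibar)\psis + \al(\betabar+\slashed{T}_3)\psis$, where RHS denotes the right-hand side of the $\snabla_3$ equation; after distributing $\snabla$ through the nonlinear product $\sum \psisone\psistwo$ one obtains the $\sum_{s_1+s_2=s+1}\mathcal{D}\psisone\,\psistwo$ contribution. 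In the case $\mathcal{D}=\modu\snabla_3$ (so $\ell=0$), Lemma \ref{commutationlemma1} provides the terms $\modu\tildetr\,\tr\chibar\,\psis + \modu(|\chibarhat|^2 - \omegabar\tr\chibar + \slashed{T}_{33})\psis$ together with $\modu\snabla_3$ applied to the right-hand side. Finally, for $\mathcal{D}=\snabla_4$, Lemma \ref{commutationlemma2} generates the $[\snabla_3,\snabla_4]$-error, namely $(\eta,\etabar)\snabla\psis$, $(\eta,\etabar)^2\psis$, $\sigma\,\psis$, $\omegabar\,\snabla_4\psis$, as well as a coefficient contribution $-\lambda(\psis)\snabla_4\tr\chibar\,\psis$ which is expanded through the $\snabla_4\tr\chibar$ equation into $(\sdiv\etabar + |\etabar|^2 + \chihat\cdot\chibarhat + \tr\chi\tr\chibar + \rho + \slashed{T}_{34})\,\psis$. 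Concatenating these three cases exactly reproduces the announced expression for $F_{\psis}$, with the coefficient of $\tr\chibar\,\mathcal{D}\psis$ shifted by $\ell/2$ (which vanishes unless $\mathcal{D}=\al\snabla$).

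Once the commuted transport equation
\[
\snabla_3 \mathcal{D}\psis + \bigl(\lambda(\psis)+\tfrac{\ell}{2}\bigr)\tr\chibar\,\mathcal{D}\psis = F_{\psis}
\]
is established, I would feed it into Proposition \ref{evolemma} with $p=4$ and $\lambda_0=\lambda(\psis)+\ell/2$. This produces the weight $\lambda_1 = 2\lambda_0 - \tfrac12 = 2\lambda(\psis)+\ell-\tfrac12$, and yields directly the claimed integrated inequality
\[
\modu^{2\lambda(\psis)+\ell-\frac12}\fourSu{\mathcal{D}\psis} \lesssim \lvert u_{\infty}\rvert^{2\lambda(\psis)+\ell-\frac12}\lVert \mathcal{D}\psis \rVert_{L^4(S_{u_{\infty},\ubar})} + \intu \upr^{2\lambda(\psis)+\ell-\frac12}\scalefourSuprime{F_{\psis}}\,\duprime.
\]

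The main obstacle is a purely bookkeeping one: ensuring that every error term produced by the three commutators is correctly accounted for in the advertised form of $F_{\psis}$, and, in particular, that the quadratic-in-Ricci terms arising from $\snabla_4\tr\chibar$, from $[\snabla_3,\snabla_4]$, and from the redistribution of the nonlinear source $\sum\psisone\psistwo$ all collapse into the combinations $(\eta,\etabar)(\eta,\etabar,\chibarhat,\tr\chibar)\psis$, $\modu\tildetr\tr\chibar\,\psis$, $\modu(|\chibarhat|^2+\omegabar\tr\chibar+\slashed{T}_{33})\psis$, $(\rho,\sigma,\betabar)\psis$, and so forth, without extra leftover pieces. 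No analytic subtlety enters beyond this: once the structure of $F_{\psis}$ is established and the correct power of $|u|$ identified from the evolution lemma, the inequality follows by direct integration along the $e_3$-direction.
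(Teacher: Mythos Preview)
Your proposal is correct and takes essentially the same approach as the paper: the paper does not provide an explicit proof for this proposition, relying instead on the commutation Lemmata \ref{commutationlemma1}--\ref{commutationlemma3} (equivalently, the $i=1$ case of Proposition \ref{propositionnabla3}) for the schematic identity, followed by a direct application of the Evolution Lemma \ref{evolemma} with $p=4$ and $\lambda_0=\lambda(\psis)+\ell/2$ for the integrated inequality. Your identification of the weight $\lambda_1=2\lambda(\psis)+\ell-\tfrac12$ and your case-by-case bookkeeping of the commutator errors are exactly what the paper expects the reader to carry out.
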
After translating this to scale-invariant norms, we obtain the desired inequalities by using the co-dimension 1 trace inequality on $\Psi^{(s+1)},\Vl^{(s+1)}, \betabar,\slashed{T}_3$ and standard scale-invariant H\"older inequalities.

\vspace{3mm}

\noindent We begin with the estimates for $\omegabar$.
\begin{proposition}
Under the assumptions of Theorem \ref{mainone} and the bootstrap assumptions \eqref{boundsbootstrap}, there holds 
\[ \scalefourSu{\De \omegabar}\lesssim \mathcal{R}^{\f12}_1[\rho]\mathcal{R}^{\f12}_2[\rho]+\mathcal{R}_1[\rho]+1.    \]
\end{proposition}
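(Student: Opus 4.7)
The plan is to commute the transport equation
\[ \snabla_4 \omegabar = 2\,\eta\cdot\etabar - |\eta|^2 - \rho - \tfrac{1}{2}\Te_{34} \]
with an arbitrary operator $\mathcal{D}\in\{\modu\snabla_3,\snabla_4,\al\snabla\}$, and then apply an $L^4$ transport estimate in the $e_4$-direction. Since $\omegabar$ vanishes on the incoming cone $\ubar=0$, no initial contribution is present. Using Proposition \ref{commute4} (in the case $i=1$), we obtain a schematic equation of the form
\[ \snabla_4(\mathcal{D}\omegabar) = \mathcal{D}\bigl(\eta\cdot\etabar\bigr) + \mathcal{D}\bigl(|\eta|^2\bigr) + \mathcal{D}\rho + \tfrac12\mathcal{D}\Te_{34} + \mathrm{err}, \]
where $\mathrm{err}$ collects the commutator terms produced when $\mathcal{D}$ crosses $\snabla_4$; each of these terms is at most quadratic in Ricci coefficients times a single first derivative, or involves one factor of curvature (namely $\sigma$ or $\beta$) or the matter term $\Te_4$, all of which have already been treated at the $\mathcal{L}^4_{(sc)}(S)$ level in the previous section or can be directly controlled by the bootstrap assumptions.

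The quadratic Ricci terms are estimated by a scale-invariant H\"older inequality placing one factor in $\mathcal{L}^\infty_{(sc)}$ and the differentiated factor in $\mathcal{L}^4_{(sc)}$, absorbing them into the bootstrap constants. For the principal curvature term $\mathcal{D}\rho$, I integrate along $e_4$ and apply the codimension-1 trace inequality of Proposition \ref{codimensiononetraceineq} in its scale-invariant form (Proposition \ref{codimension12}), which yields
\[ \int_0^{\ubar}\scalefourSuubarprime{\mathcal{D}\rho}\,\dubarprime \lesssim \scaletwoHu{\mathcal{D}\rho}^{1/2}\,\scaletwoHu{(\al\snabla)\mathcal{D}\rho}^{1/2} + \scaletwoHu{\mathcal{D}\rho}, \]
which is bounded by $\mathcal{R}_1^{1/2}[\rho]\mathcal{R}_2^{1/2}[\rho] + \mathcal{R}_1[\rho]$ after another H\"older step in $\ubar^\prime$.

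For the Vlasov source term $\mathcal{D}\Te_{34}$, I rely on the momentum support propagation (Proposition \ref{momentum}) together with the bound $f\lesssim a$ on the distribution function. When $\mathcal{D}=\al\snabla$, the angular derivative falls either on $\sqrt{\det\gslash}$, on $p_\mu p_\nu$, or implicitly on $f$ (via the vector fields $V$ in the $\mathfrak{V}_1$ norm). In each instance one obtains pointwise bounds of the form $|\mathcal{D}\Te_{34}|\lesssim a/\modu^{4+\frac12}$, multiplied by appropriate $\mathfrak{V}_1^{1/2}$ factors, which upon taking $\mathcal{L}^4_{(sc)}$ norms and integrating in $\ubar^\prime$ contribute only to the constant term. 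The cases $\mathcal{D}=\snabla_4$ and $\mathcal{D}=\modu\snabla_3$ are handled analogously: using the Vlasov transport equation $Xf=0$ one trades $\snabla_4 f$ and $\modu\snabla_3 f$ for spatial derivatives of $f$ multiplied by momentum components and Christoffel symbols, which remain bounded in absolute value by the same quantity $a/\modu^4$ up to the scale weights.

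The step I expect to be the most delicate is the commutator term for $\mathcal{D}=\modu\snabla_3$. Here, crossing $\snabla_4$ with $\modu\snabla_3$ produces a contribution $\modu\,\sigma\,\omegabar$ that requires placing $\omegabar$ in $\mathcal{L}^\infty_{(sc)}$, which is only available via the bootstrap assumption $\mathcal{O}\leq O$ (pending the $\mathcal{O}_{0,\infty}$ estimates of Section \ref{section0infty}); and it also produces $(\eta,\etabar)\,\snabla\omegabar$, which is absorbable only because $\scalefourSu{\snabla\omegabar}$ is itself part of the quantity being estimated, so one closes it via a small bootstrap absorption using the smallness factor $a^{1/2}/\modu$ stemming from the $\mathcal{L}^\infty_{(sc)}$-$\mathcal{L}^4_{(sc)}$ H\"older inequality. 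Once all these pieces are collected, the inequality
\[ \scalefourSu{\mathcal{D}\omegabar}\lesssim \mathcal{R}_1^{1/2}[\rho]\mathcal{R}_2^{1/2}[\rho]+\mathcal{R}_1[\rho]+1 \]
follows.
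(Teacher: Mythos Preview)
Your proof is correct in structure and matches the paper's approach: commute the $\snabla_4\omegabar$ equation with $\De$, integrate along $e_4$ from the trivial data on $\ubar=0$, and isolate $\De\rho$ as the principal term via the codimension-1 trace inequality to produce $\mathcal{R}_1^{1/2}[\rho]\mathcal{R}_2^{1/2}[\rho]+\mathcal{R}_1[\rho]$.

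Two small points. First, the paper handles $\De\Te_{34}$ identically to $\De\rho$: apply the same codimension-1 trace inequality and then invoke the bootstrap on $\scaletwoHu{\De\Te_{34}}$ and $\scaletwoHu{\De^2\Te_{34}}$, which absorbs the matter contribution into the constant $1$. Your direct pointwise route is fine for $\De=\al\snabla$, but your claim that $Xf=0$ lets you ``trade $\snabla_4 f$ for spatial derivatives'' is not right: from $Xf=0$ one gets $e_4 f=-(p^3/p^4)e_3 f+\dots$ with coefficient $p^3/p^4\sim\modu^2$, which is large. The correct mechanism (used implicitly by the paper here and made explicit later in Proposition~\ref{firstL2}) is the decomposition of $e_4,\modu e_3,e_A$ in terms of the $V$ vector fields, not the Vlasov equation itself. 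Second, no absorption is needed for the commutator term $\frac{1}{\al}\modu(\eta,\etabar)\De\omegabar$: the paper simply uses the bootstrap $\scalefourSu{\De\omegabar}\leq O$ directly, giving $\frac{\al O^2}{\modu^2}\lesssim 1$.
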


\begin{proof}
We recall that \[\snabla_4 \omegabar = 2 \eta\cdot \etabar - \lvert \eta \rvert^2-\rho-\frac{1}{2}\Te_{34}.\]As such, there holds:

\begin{align}
\snabla_4 \De \omegabar =&  (\eta,\etabar)\hsp \De \eta + \eta \hsp \De(\eta,\etabar)+\De \rho+ \De\Te_{34} +\modu \hsp \omegabar \hsp (\eta\hsp (\eta,\etabar)+\rho+\Te_{34})+ \frac{1}{\al}\modu (\eta,\etabar)\De \omegabar +\modu (\eta,\etabar)^2\omegabar \notag \\ +& \modu \sigma \hsp \omegabar + (\chihat,\tr\chi)\De \omegabar +\al (\chihat,\tr\chi)\etabar \hsp \omegabar +\al (\beta,\Te_{4})\omegabar.
\end{align}Consequently, since $\De\omegabar$ vanishes on $\Hbar_0$, there holds 

\begin{align}
\scaletwoSu{\De\omegabar}\lesssim \intubar \scalefourSuubarprime{\snabla_4 \De \omegabar}\dubarprime \lesssim \intubar \scalefourSuubarprime{\De \rho} + \scalefourSuubarprime{\De\Te_{34}}\dubarprime +1,
\end{align}where we have used the bootstrap assumptions \eqref{boundsbootstrap} to bound the majority of the terms above by 1.
For the integral on the right-hand side, there holds 

\be \intubar \scalefourSuubarprime{\De(\rho,\Te_{34})}\dubarprime \lesssim \scaletwoHu{\De(\rho,\Te_{34})}^{\f12}\scaletwoHu{\De^2(\rho,\Te_{34})}^{\f12}+\scaletwoHu{\De(\rho,\Te_{34})}.\ee Using the bounds assumed in \eqref{boundsbootstrap} for $\scaletwoHu{\De\Te_{34}},\scaletwoHu{\De^2\Te_{34}},$ the result follows.
\end{proof}

\begin{proposition}\label{propositionDchihat} Under the assumptions of Theorem \ref{mainone} and the bootstrap assumptions \eqref{boundsbootstrap}, there holds

\[  \frac{1}{\al} \scalefourSu{\mathcal{D}\chihat } \lesssim \mathcal{R}_1^{\frac{1}{2}}[\alpha]\mathcal{R}_2^{\frac{1}{2}}[\alpha] +\mathcal{R}_1[\alpha]+1. \]
\end{proposition}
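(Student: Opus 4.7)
The plan is to proceed in direct analogy with the $\mathcal{L}^4_{(sc)}$ estimate for $\chihat$ itself (Proposition~\ref{propchihat}), but at one derivative higher, by commuting the transport equation
\[
\snabla_4 \chihat + \tr\chi\, \chihat = -\alpha
\]
with the operator $\mathcal{D}\in\{\modu\snabla_3,\snabla_4,\al\snabla\}$ and then invoking the general framework developed in the opening of this section (the model proposition for $\mathcal{L}^4_{(sc)}$ control of $\mathcal{D}\psi^{(s)}$ for a Ricci coefficient governed by a $\snabla_4$-equation). More precisely, applying Proposition~\ref{commute4} with $F_0=-\tr\chi\,\chihat - \alpha$ and $i=1$, the resulting transport equation for $\mathcal{D}\chihat$ takes the schematic form
\[
\snabla_4 \mathcal{D}\chihat = -\mathcal{D}\alpha + (\chi,\tr\chi)\,\mathcal{D}\chihat + \chihat\,\mathcal{D}\tr\chi + \text{(lower-order terms)},
\]
where the lower-order terms are precisely those listed in \eqref{commutationformulanabla4} and involve products of Ricci coefficients and curvature components already controlled in $\mathcal{L}^\infty_{(sc)}$ or $\mathcal{L}^4_{(sc)}$ by the previous estimates in Sections~\ref{sectionO04} and the present section.

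The first step is to use the transport inequality in the $e_4$-direction, together with the fact that $\mathcal{D}\chihat$ vanishes on $\Hbar_0$ (since $\chihat|_{\ubar=0}=0$ identically and $\mathcal{D}\chihat|_{\ubar=0}$ therefore also vanishes for the relevant $\mathcal{D}$, or is of zeroth order and can be bounded by data in the case $\mathcal{D}=\modu\snabla_3$), to obtain
\[
\tfrac{1}{\al}\scalefourSu{\mathcal{D}\chihat}
\lesssim \tfrac{1}{\al}\scalefourSuzero{\mathcal{D}\chihat}
+ \tfrac{1}{\al}\int_0^{\ubar} \scalefourSuubarprime{\mathcal{D}\alpha}\,\dubarprime
+ \tfrac{1}{\al}\int_0^{\ubar} \scalefourSuubarprime{\text{error}}\,\dubarprime,
\]
where the initial contribution is bounded by $\mathcal{I}$ (and absorbed into the $+1$) and the error terms are controlled, via scale-invariant Hölder and the Sobolev embedding from Proposition~\ref{Sobolev}, by the bootstrap norms $\mathcal{O}$, $\mathcal{R}$, $\mathcal{V}$, each yielding an $O(1)$ contribution of the shape $(O+R+V)^{\kappa}/\modu^{\gamma}$, which is at most $1$ under the assumption $(O+R+V)^{20}\ll a^{1/16}$.

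The main (and only nontrivial) step is the treatment of the source term $\int_0^{\ubar}\scalefourSuubarprime{\mathcal{D}\alpha}\,\dubarprime$. Here one applies the codimension-1 trace inequality along $H_u$ from Proposition~\ref{codimensiononetraceineq} (scale-invariant version, Proposition~\ref{codimension12}) with $\varphi=\mathcal{D}\alpha$. This gives
\[
\int_0^{\ubar} \scalefourSuubarprime{\mathcal{D}\alpha}\,\dubarprime
\lesssim \scaletwoHu{\mathcal{D}\alpha}^{1/2}\scaletwoHu{(\al\snabla)\mathcal{D}\alpha}^{1/2} + \scaletwoHu{\mathcal{D}\alpha},
\]
modulo initial-data contributions on $S_{u,0}$ which vanish since $\chihat$ and hence $\alpha$ are controlled by the prescribed data there. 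Dividing by $\al$ and recognising that $\tfrac{1}{\al}\scaletwoHu{\mathcal{D}\alpha}\leq \mathcal{R}_1[\alpha]$ and $\tfrac{1}{\al}\scaletwoHu{(\al\snabla)\mathcal{D}\alpha}\leq \mathcal{R}_2[\alpha]$, one obtains precisely the stated bound $\mathcal{R}_1^{1/2}[\alpha]\mathcal{R}_2^{1/2}[\alpha]+\mathcal{R}_1[\alpha]+1$.

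The hardest part conceptually is verifying that each of the nonlinear error terms arising from \eqref{commutationformulanabla4}, in particular those carrying factors of $\modu\omegabar$, $\modu\sigma$, and the quadratic Ricci products, integrates to something bounded by $1$ (and does not contaminate the curvature term); this is a bookkeeping exercise that uses signature conservation to confirm that each such term carries a net weight $\modu^{-\epsilon}$ with $\epsilon>0$ after the $\ubar$-integration and thus contributes at most $(O+R+V)^{\kappa}/\modu\lesssim 1$. Once this is checked, the estimate closes exactly as in the analogous zeroth-order argument of Proposition~\ref{propchihat}.
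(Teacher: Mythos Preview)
Your proposal is correct and follows essentially the same approach as the paper: commute the $\snabla_4$-transport equation for $\chihat$ with $\mathcal{D}$, integrate along $H_u$, isolate the single nontrivial source $\mathcal{D}\alpha$, and bound $\intubar\scalefourSuubarprime{\mathcal{D}\alpha}\dubarprime$ by $\mathcal{R}_1^{1/2}[\alpha]\mathcal{R}_2^{1/2}[\alpha]+\mathcal{R}_1[\alpha]$, with the remaining commutator terms absorbed into the $+1$ via bootstrap. One small remark: the displayed inequality you write for the curvature integral,
\[
\intubar\scalefourSuubarprime{\mathcal{D}\alpha}\,\dubarprime\lesssim \scaletwoHu{\mathcal{D}\alpha}^{1/2}\scaletwoHu{(\al\snabla)\mathcal{D}\alpha}^{1/2}+\scaletwoHu{\mathcal{D}\alpha},
\]
is literally obtained from the sphere Sobolev inequality $\scalefourS{\cdot}\lesssim\scaletwoS{\cdot}^{1/2}\scaletwoS{(\al\snabla)\cdot}^{1/2}+\scaletwoS{\cdot}$ together with Cauchy--Schwarz in $\ubar$ (this is how it appears in Propositions~\ref{omegabar04estimate} and~\ref{propchihat}), rather than from the pointwise trace inequality of Proposition~\ref{codimension12}, which would instead produce $\|\snabla_4\mathcal{D}\alpha\|_{L^2_{sc}(H)}^{1/2}$ in one of the factors. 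Either route closes the estimate, and the paper's own phrasing in the general proposition at the start of Section~\ref{sectionO14} uses the same loose ``codimension-$1$ trace'' terminology, so this is not a genuine discrepancy.
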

\begin{proof}
We recall that \[ \snabla_4 \chihat = \tr\chi\hsp \chihat +\alpha .\] As such, there holds 

\begin{align}
\notag \snabla_4 \mathcal{D}\chihat=& \mathcal{D} \tr\chi \hsp \chihat +\tr\chi\mathcal{D}\chihat + \mathcal{D}\alpha + (\eta,\etabar)\hsp\tr\chi\hsp \chihat +\modu \omegabar\big(\tr\chi \hsp \chihat +\alpha\big) \\ +& \frac{1}{\al}\modu (\eta,\etabar)\mathcal{D}\chihat  + (\chihat,\tr\chi)\mathcal{D}\chihat + \al(\chihat,\tr\chi)\etabar \chihat + \al(\beta,\slashed{T}_4)\chihat \notag \\ +& \modu (\eta,\etabar)(\eta,\etabar)\chihat +\modu \sigma \chihat . \label{first}
\end{align}Let us restate that in the above schematic, for a given (fixed) $\mathcal{D}$, not all terms appear in the actual equation, but rather what is true in any case is that there is \underline{equality of total signature} between the left and right hand sides. For example, when we consider $\snabla_4(\modu \snabla_3)\chihat,$ the term $\frac{1}{\al}\modu(\eta,\etabar) \mathcal{D}\chihat$ does appear and this second time $\mathcal{D}$ is equal to $(\al\snabla)$. If, however, $\De$ on the left-hand side equals $(\al\snabla)$, then on the right-hand side $\frac{1}{\al}\modu(\eta,\etabar) \mathcal{D}\chihat$ does not appear, but instead the term
$\al(\beta,\Te_4)\chihat$ does and so on. It is a convenient fact that the schematic commutation formulae allow us to perform estimates on an arbitrary $\De$-derivative without the need to distinguish cases. This is possible precisely because, in the end, every term on the right-hand side is estimated in its own scale-invariant norm.  

\vspace{3mm}

\noindent Using the $L^{\infty}(S), L^4(S)$ bootstrap bounds on the Ricci coefficients, together with the bootstrap bounds on the curvature norms and the obtained $L^4$ bounds on  $\slashed{T}_4$, we obtain \begin{align} &\frac{1}{\al}\scalefourSu{\mathcal{D}\chihat} \notag \\ \lesssim& \intubar \frac{1}{\al} \scalefourSuubarprime{\snabla_4 \mathcal{D}\chihat}\dubarprime \lesssim \intubar \frac{1}{\al}\scalefourSuubarprime{\De\alpha}\duprime +1 \lesssim \mathcal{R}_1^{\frac{1}{2}}[\alpha]\mathcal{R}_2^{\frac{1}{2}}[\alpha] +\mathcal{R}_1[\alpha]+1. \end{align}
\end{proof}
\begin{remark}\label{improvechihat}
We can now obtain the estimate \be \frac{1}{\al}\scaleinftySu{\chihat} \lesssim \frac{1}{\al}\big(\scalefourSu{\chihat}+\scalefourSu{\De\chihat}\big) \lesssim \big(\mathcal{R}^{\f12}_0[\alpha]+\mathcal{R}^{\f12}_2[\alpha]\big)\mathcal{R}^{\frac{1}{2}}_1[\alpha] +\mathcal{R}_0[\alpha]+\mathcal{R}_1[\alpha]+1.  \ee We will need it in the next Proposition. \label{chihatinfinityr}
\end{remark}
\begin{proposition} Under the assumptions of Theorem \ref{mainone} and the bootstrap assumptions \eqref{boundsbootstrap}, there exist a constant $C$, quadratic in $\mathcal{R}_i[\alpha]$ for $i=0,1,2$, such that there holds

\[  \scalefourSu{\mathcal{D}\tr\chi } \lesssim \frac{a}{\modu} C(\mathcal{R}_0[\alpha],\mathcal{R}_1[\alpha],\mathcal{R}_2[\alpha])+\frac{a}{\modu}(\Ve_1[\Te_{44}]+\Ve_2[\Te_{44}])+1.\]
\end{proposition}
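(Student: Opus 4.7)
The approach mirrors Propositions \ref{propositiontrchi} and \ref{propositionDchihat}, but now commuted to one order. Recall the transport equation
\[
\snabla_4 \tr\chi + \tfrac12 (\tr\chi)^2 = -|\chihat|^2 - \Te_{44}.
\]
Applying the commutation formula of Proposition \ref{commute4} with a generic $\mathcal{D}\in\{\modu\snabla_3,\snabla_4,\al\snabla\}$ yields schematically
\[
\snabla_4 \mathcal{D}\tr\chi \sim \tr\chi\,\mathcal{D}\tr\chi + \chihat \cdot \mathcal{D}\chihat + \mathcal{D}\Te_{44} + \mathcal{E},
\]
where $\mathcal{E}$ collects the lower-order commutator terms produced by Proposition \ref{commute4}, each of which couples Ricci coefficients, curvature components, and matter components that have already been estimated in the previous sections.

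Since the incoming data is Minkowskian, $\mathcal{D}\tr\chi$ vanishes along $\Hbar_0$, so integrating the transport inequality in $\ubar$ yields
\[
\scalefourSu{\mathcal{D}\tr\chi} \lesssim \intubar \scalefourSuubarprime{\snabla_4 \mathcal{D}\tr\chi}\,\dubarprime.
\]
The plan is to estimate each of the terms on the right-hand side as follows. The linear term $\tr\chi\,\mathcal{D}\tr\chi$ is absorbed by Gr\"onwall using the $\scaleinftySu{\tr\chi}\lesssim \modu^{-1}$ bound already established. The bilinear term $\chihat\cdot\mathcal{D}\chihat$ is controlled by the $\mathcal{L}^\infty_{(sc)}-\mathcal{L}^4_{(sc)}$ H\"older inequality combined with the key bound from Remark \ref{chihatinfinityr}, namely $\frac{1}{\al}\scaleinftySu{\chihat}$ is at most quadratic in $\mathcal{R}_i[\alpha]$, together with the $\mathcal{L}^4_{(sc)}$-estimate for $\mathcal{D}\chihat$ derived in Proposition \ref{propositionDchihat}. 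Thus
\[
\intubar \frac{1}{\modu}\scaleinftySuubarprime{\chihat}\scalefourSuubarprime{\mathcal{D}\chihat}\,\dubarprime \lesssim \frac{a}{\modu}\, C(\mathcal{R}_0[\alpha],\mathcal{R}_1[\alpha],\mathcal{R}_2[\alpha]),
\]
which produces the first term on the right-hand side of the claimed estimate. The remaining commutator terms $\mathcal{E}$ are all controlled by the bootstrap assumptions and the $\mathcal{O}_{0,4}$, $\mathcal{O}_{0,\infty}$ estimates of Section \ref{sectionO04}, contributing at most $1$.

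The genuinely delicate step, and the main obstacle, is the control of $\scalefourSu{\mathcal{D}\Te_{44}}$. Unlike the curvature components $\rho$ or $\alpha$, the matter term $\Te_{44}$ cannot be directly integrated in a transport sense: its derivatives stem from derivatives of $f$ on the mass shell. To handle this, I would apply the codimension-$1$ trace inequality of Proposition \ref{codimensiononetraceineq} along the outgoing direction (this is the direction compatible with the Vlasov propagation discussed in the introduction), giving
\[
\intubar \scalefourSuubarprime{\mathcal{D}\Te_{44}}\,\dubarprime \lesssim \scaletwoHu{\mathcal{D}\Te_{44}}^{\f12}\big(\scaletwoHu{\mathcal{D}\Te_{44}}^{\f12}+\scaletwoHu{(\al\snabla)\mathcal{D}\Te_{44}}^{\f12}\big)+\scaletwoHu{\mathcal{D}\Te_{44}}.
\]
These $L^2(H)$ norms are precisely the Vlasov energies $\Ve_1[\Te_{44}]$ and $\Ve_2[\Te_{44}]$, giving the second group of terms in the claimed estimate. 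Combining all contributions yields the desired bound, closing the proof.
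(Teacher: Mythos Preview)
Your approach is correct and essentially the same as the paper's. Two small inaccuracies worth flagging: first, $\mathcal{D}\tr\chi$ does not literally vanish on $\Hbar_0$ when $\mathcal{D}\in\{\snabla_4,\modu\snabla_3\}$ (on Minkowski data $\snabla_4\tr\chi=-\tfrac12(\tr\chi)^2\neq 0$), but the initial value is $\lesssim 1$ in the scale-invariant $L^4$ norm and is absorbed into the $+1$. Second, the displayed bound you wrote for $\intubar\scalefourSuubarprime{\mathcal{D}\Te_{44}}\,\dubarprime$ is not the codimension-$1$ trace inequality of Proposition~\ref{codimensiononetraceineq} (that one involves a $\snabla_4$ derivative), but rather the sphere Sobolev/Gagliardo--Nirenberg interpolation $\scalefourSu{\psi}\lesssim\scaletwoSu{\psi}^{1/2}\scaletwoSu{(\al\snabla)\psi}^{1/2}+\scaletwoSu{\psi}$ followed by Cauchy--Schwarz in $\ubar$---exactly what the paper uses. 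The formula you wrote and its consequence are correct; only the label is off.
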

\begin{proof}
The term $\mathcal{D}\tr\chi$ satisfies a schematic equation of the form

\begin{align}
\snabla_4 \mathcal{D}\tr\chi =& \chihat \mathcal{D}\chihat +\mathcal{D}\slashed{T}_{44} \notag  + \frac{1}{\al}\modu (\eta,\etabar)\mathcal{D}\tr\chi +\modu\omegabar(\lvert \chihat\rvert^2+\Te_{44}) +\modu \sigma \tr\chi \notag \\+& \modu(\eta,\etabar)(\eta,\etabar)\tr\chi  + (\chihat,\tr\chi)\mathcal{D}\tr\chi +\al \etabar(\chihat,\tr\chi)\tr\chi+\al (\beta,\slashed{T}_4)\tr\chi.
\end{align}Integrating along $H_u$, we have

\begin{align}
\scalefourSu{\mathcal{D}\tr\chi}\lesssim& \frac{1}{\modu}  \intubar \scaleinfinitySuubarprime{\chihat} \scalefourSuubarprime{\mathcal{D}\chihat} \dubarprime +\intubar \scalefourSuubarprime{\mathcal{D}\slashed{T}_{44}}\dubarprime \notag \\+& \frac{1}{\al \modu^2} \intubar \scaleinfinitySuubarprime{\modu}\scaleinfinitySuubarprime{(\eta,\etabar)}\scalefourSuubarprime{\mathcal{D}\tr\chi} \dubarprime \notag \\ +&\frac{1}{\modu^3}\intubar \scaleinftySuubarprime{\modu}\scaleinftySuubarprime{\omegabar}\scaleinftySuubarprime{\chihat}\scalefourSuubarprime{\chihat}\dubarprime  \\ +& \frac{1}{\modu^2}\intubar \scaleinftySuubarprime{\modu}\scaleinftySuubarprime{\omegabar}\scalefourSuubarprime{\Te_{44}}\dubarprime   \\ +& \frac{1}{\modu^2}\intubar \scaleinfinitySuubarprime{\modu}\scaleinfinitySuubarprime{\tr\chi}\scalefourSuubarprime{\sigma}\dubarprime \notag
\\ +& \frac{1}{\modu^3}\intubar \scaleinfinitySuubarprime{\modu}\scaleinfinitySuubarprime{(\eta,\etabar)}^2\scalefourSuubarprime{\tr\chi}\dubarprime \notag \\ +& \frac{1}{\modu} \intubar \scaleinfinitySuubarprime{(\chihat,\tr\chi)}\scalefourSuubarprime{\mathcal{D}\tr\chi} \notag \dubarprime \\ \notag +& \frac{\al}{\modu^2}\intubar \scaleinfinitySuubarprime{\etabar}\scaleinfinitySuubarprime{(\chihat,\tr\chi)}\scalefourSuubarprime{\tr\chi} \dubarprime \\ +& \frac{\al}{\modu}\intubar \scaleinfinitySuubarprime{\tr\chi}\scalefourSuubarprime{(\beta,\slashed{T}_4)}\dubarprime.
\end{align}Most of the terms are immediately bounded above by 1 using the bootstrap assumptions. We will only give an explanation for the most difficult terms. For the first term, there holds
\be \frac{1}{\modu}\intubar \scaleinfinitySuubarprime{\chihat}\scalefourSuubarprime{\mathcal{D}\chihat}\dubarprime \lesssim \sup_{0\leq \ubar \leq 1}\frac{a}{\modu} \big(\frac{1}{\al}\scaleinfinitySu{\chihat}\big)\big(\frac{1}{\al}\scalefourSu{\mathcal{D}\chihat}\big). \ee Here a simple bootstrap will not suffice and we need the improvements on $\chihat$, found in Proposition \ref{propositionDchihat} and Remark \ref{chihatinfinityr}. Indeed, we have
\[ \frac{a}{\modu}\mathcal{O}_{0,\infty}[\chihat]\mathcal{O}_{1,4}[\chihat]\lesssim  \frac{a}{\modu}\big( (\mathcal{R}^{\f12}_0[\alpha]+\mathcal{R}^{\f12}_2[\alpha])\mathcal{R}^{\frac{1}{2}}_1[\alpha] +\mathcal{R}_0[\alpha]+\mathcal{R}_1[\alpha]+1\big)\big(\mathcal{R}^{\f12}_1[\alpha]\mathcal{R}^{\frac{1}{2}}_2[\alpha] +\mathcal{R}_0[\alpha]+\mathcal{R}_1[\alpha]+1\big).\]For the trilinear terms involving $\modu$, notice that $\scaleinfinitySu{\modu}=a,$ whence for example

\begin{align}
\frac{1}{\modu^2}\intubar \scaleinfinitySuubarprime{\modu}\scaleinfinitySuubarprime{\tr\chi}\scalefourSuubarprime{\sigma}\dubarprime \lesssim \frac{a \Gamma}{\modu^2}\big(\mathcal{R}_0[\sigma]^{\frac{1}{2}}\mathcal{R}_1[\sigma]^{\frac{1}{2}} + \mathcal{R}_0[\sigma]\big)\lesssim 1,
\end{align}where we have used the bootstrap assumptions on the curvature and so on. Finally, we have

\be \intubar \scalefourSuubarprime{\De\Te_{44}}\dubarprime \lesssim \frac{a}{\modu}\big(\Ve_1^{\f12}[T_{44}]\hsp \Ve_2^{\f12}[T_{44}]+\Ve_1[T_{44}]\big) \lesssim \frac{a}{\modu}(\Ve_1[\Te_{44}]+\Ve_2[\Te_{44}]). \ee 
\end{proof}
\begin{proposition}
There holds
\[ \scalefourSu{\mathcal{D}\eta} \lesssim \mathcal{R}_1^{\frac{1}{2}}[\beta]\mathcal{R}_2^{\frac{1}{2}}[\beta]+\mathcal{V}_1^{\frac{1}{2}}[\slashed{T}_4]\mathcal{V}_2^{\frac{1}{2}}[\slashed{T}_4]+\mathcal{R}_1[\beta]+\mathcal{V}_1[\slashed{T}_4]+1.  \]
\end{proposition}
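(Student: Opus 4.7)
The approach mirrors that of Proposition~\ref{propositionDchihat}, treating the equation
\[ \snabla_4\eta = \tfrac{1}{2}\tr\chi(\etabar-\eta) + \chihat\cdot(\etabar-\eta) - \beta - \tfrac{1}{2}\slashed{T}_4 \]
as a transport equation of the general form $\snabla_4\psis = \sum_{s_1+s_2=s}\psisone\psistwo + \Psis + \Vls$, where here the curvature source is $\Psis=\beta$ and the Vlasov source is $\Vls=\slashed{T}_4$. The plan is to commute with an arbitrary $\mathcal{D}\in\{\modu\snabla_3,\snabla_4,\al\snabla\}$ via Proposition~\ref{commute4}, integrate along the $e_4$-direction from $\ubar=0$ (where $\eta$ and hence $\mathcal{D}\eta$ vanish, thanks to the Minkowskian incoming data assumption), and then bound each term on the right-hand side in the scale-invariant $\mathcal{L}^4_{(sc)}(S)$ norm using either the bootstrap assumptions or a codimension-1 trace inequality.

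Concretely, the commuted equation produces schematically
\[ \snabla_4 \mathcal{D}\eta \sim \psi\,\mathcal{D}\psi + \mathcal{D}\beta + \mathcal{D}\slashed{T}_4 + (\text{lower order}), \]
where the lower-order terms are of the same schematic type already handled in the proof of Proposition~\ref{propositionDchihat} (products involving $\modu\omegabar$, $(\eta,\etabar)$, $\sigma$, $(\chihat,\tr\chi)$, etc., multiplied by lower-order factors of $\psi$ or $\Psi$). All such terms are absorbed into the final $1$ on the right-hand side by the bootstrap assumptions $\mathcal{O}\leq O$, $\mathcal{R}+\underline{\mathcal{R}}\leq R$, $\mathcal{V}\leq V$, together with the scale-invariant H\"older inequalities and the Sobolev embedding on $S_{u,\ubar}$. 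For the quadratic terms $\psi\,\mathcal{D}\psi$, I place one factor in $\mathcal{L}^\infty_{(sc)}(S)$ (where it is $\lesssim O$) and the other in $\mathcal{L}^4_{(sc)}(S)$ (again bounded by the bootstrap), gaining the customary factor $\modu^{-1}$ from the product inequality.

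The two genuinely non-trivial contributions come from the integrals of $\mathcal{D}\beta$ and $\mathcal{D}\slashed{T}_4$ in $\mathcal{L}^4_{(sc)}(S_{u,\ubar^\prime})$ along $H_u$. For these I invoke the codimension-1 trace inequality on outgoing null hypersurfaces (Proposition~\ref{codimension12}):
\[ \intubar \scalefourSuubarprime{\mathcal{D}\beta}\dubarprime \lesssim \scaletwoHu{\mathcal{D}\beta}^{1/2}\bigl(\scaletwoHu{\mathcal{D}\beta}^{1/2} + \scaletwoHu{(\al\snabla)\mathcal{D}\beta}^{1/2}\bigr) + \scaletwoHu{\mathcal{D}\beta}, \]
which by the definition of the curvature norms is controlled by $\mathcal{R}_1^{1/2}[\beta]\mathcal{R}_2^{1/2}[\beta] + \mathcal{R}_1[\beta]$. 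The completely analogous argument applied to $\mathcal{D}\slashed{T}_4$, with $\mathcal{R}_i[\beta]$ replaced by the Vlasov norms $\mathcal{V}_i[\slashed{T}_4]$, supplies the term $\mathcal{V}_1^{1/2}[\slashed{T}_4]\mathcal{V}_2^{1/2}[\slashed{T}_4] + \mathcal{V}_1[\slashed{T}_4]$.

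The main obstacle, as in Proposition~\ref{propositionDchihat}, is verifying that the trace inequality is applicable to $\mathcal{D}\slashed{T}_4$, that is, that the $\mathcal{L}^2_{(sc)}(H_u)$ norm of $(\al\snabla)\mathcal{D}\slashed{T}_4$ is encoded in the total Vlasov norm $\mathcal{V}$. This is precisely the content of the hierarchy set up earlier: since two derivatives of the curvature are being estimated, three derivatives of $f$ (and hence two derivatives of $\slashed{T}_4$) must be propagated on $H_u$ in $L^2L^2$, which is exactly the norm $\mathcal{V}_3$ controls. Assembling the resulting bound with all remaining contributions absorbed into $1$, one arrives at
\[ \scalefourSu{\mathcal{D}\eta} \lesssim \mathcal{R}_1^{1/2}[\beta]\mathcal{R}_2^{1/2}[\beta] + \mathcal{V}_1^{1/2}[\slashed{T}_4]\mathcal{V}_2^{1/2}[\slashed{T}_4] + \mathcal{R}_1[\beta] + \mathcal{V}_1[\slashed{T}_4] + 1, \]
which is the claimed estimate.
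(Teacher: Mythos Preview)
Your proof is correct and follows essentially the same approach as the paper: commute the $\snabla_4$-equation for $\eta$ via Proposition~\ref{commute4}, integrate from $\ubar=0$ where $\mathcal{D}\eta$ vanishes, absorb all lower-order terms into~$1$ via the bootstrap, and isolate $\mathcal{D}\beta$ and $\mathcal{D}\slashed{T}_4$ as the only non-trivial contributions. One small mislabeling: the displayed inequality you invoke for $\intubar\scalefourSuubarprime{\mathcal{D}\beta}\dubarprime$ is not Proposition~\ref{codimension12} (which would put $\snabla_4\mathcal{D}\beta$, not $\mathcal{D}\beta$, in the first factor) but rather the $L^4$--$L^2$ Sobolev embedding on $S_{u,\ubar}$ used in Proposition~\ref{omegabar04estimate}, followed by Cauchy--Schwarz in $\ubar$; the resulting bound is the same and the argument goes through unchanged.
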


\begin{proof}
We obtain the schematic equation for $\mathcal{D}\eta$:

\begin{align}
\notag \snabla_4 \mathcal{D}\eta =& \mathcal{D}(\tr\chi,\chihat)(\eta,\etabar)+(\tr\chi,\chihat)\mathcal{D}(\eta,\etabar) + \mathcal{D}\beta +\mathcal{D}\slashed{T}_4 + \frac{1}{\al} \modu(\eta,\etabar)\mathcal{D}\eta +\modu(\eta,\etabar)(\eta,\etabar)\eta+\modu \sigma\hsp  \eta \notag \\ +& \modu\omegabar\big((\tr\chi,\chihat)(\eta,\etabar)+\beta+\Te_4\big)+(\chihat,\tr\chi)\mathcal{D}\eta + \al(\chihat,\tr\chi)\etabar \eta +\al (\beta, \slashed{T}_4)\eta.
\end{align}By integrating along a given $H_u$, we have

\be \scalefourSu{\mathcal{D}\eta} \lesssim \lVert \mathcal{D}\eta \rVert_{\mathcal{L}^4_{(sc)}(S_{u,0})} + \intubar \scalefourSuubarprime{\snabla_4 \mathcal{D}\eta}\dubarprime . \ee Taking into account that $\mathcal{D}\eta$ vanishes on $S_{u,0}$ and using the bootstrap bounds, all of the terms emerging are bounded by 1 except those involving $\De\beta$ and $\De\Te_{4}$. For this term, we have

\be \intubar \scalefourSuubarprime{\De(\beta,\slashed{T}_4)} \dubarprime \lesssim \mathcal{R}_1^{\frac{1}{2}}[\beta]\mathcal{R}_2^{\frac{1}{2}}[\beta]+\mathcal{V}_1^{\frac{1}{2}}[\slashed{T}_4]\mathcal{V}_2^{\frac{1}{2}}[\slashed{T}_4]+\mathcal{R}_1[\beta]+\mathcal{V}_1[\slashed{T}_4]. \ee The result follows.
\end{proof} \noindent We now turn to quantities that satisfy equations in the ingoing direction.

\begin{proposition} \label{propositionDchibarhat}
Under the assumptions of Theorem \ref{mainone} and the bootstrap assumptions \eqref{boundsbootstrap}, there holds

\[ \frac{\al}{\modu}\scalefourSu{\mathcal{D}\chibarhat}\lesssim \underline{\mathcal{R}}_1^{\frac{1}{2}}[\alphabar]\hsp \underline{\mathcal{R}}_2^{\frac{1}{2}}[\alphabar]+\underline{\mathcal{R}}_1[\alphabar]+1.\]
\end{proposition}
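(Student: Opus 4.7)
The plan is to mirror the strategy used in Proposition~\ref{propchibarhat} and in Proposition~\ref{propositionDchihat}, but now working at one derivative higher. We start from the structure equation
\[
\snabla_3\chibarhat + \tr\chibar\,\chibarhat = \omegabar\,\chibarhat - \alphabar,
\]
which has $s_2(\chibarhat)=1$, $\lambda(\chibarhat)=1$, and commute with an arbitrary $\mathcal{D}\in\{\modu\snabla_3,\snabla_4,\al\snabla\}$. Invoking the general commutation identity of Proposition~\ref{propositionnabla3} (with $\ell\in\{0,1\}$ according to whether $\mathcal{D}=\al\snabla$ or not), the commuted equation takes the schematic form
\[
\snabla_3\mathcal{D}\chibarhat + \bigl(1+\tfrac{\ell}{2}\bigr)\tr\chibar\,\mathcal{D}\chibarhat
\;=\; \mathcal{D}(\omegabar\,\chibarhat) - \mathcal{D}\alphabar + \mathcal{E},
\]
where $\mathcal{E}$ collects all the commutator error terms listed in Proposition~\ref{propositionnabla3}, namely products among $\De(\eta,\etabar,\chibarhat,\tr\chibar,\omegabar,\Gammaslash)$, quadratic lower-order Ricci products, and the curvature/matter contributions $(\rho,\sigma,\betabar,\Te_{3},\Te_{33},\Te_{34})$.

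Next I would apply the Evolution Lemma, Proposition~\ref{evolemma}, in the $L^4$-variant with $\lambda_0=1+\ell/2$, so that the weighted transport inequality takes the form
\[
|u|^{2\lambda_0-\frac{1}{2}}\lVert\mathcal{D}\chibarhat\rVert_{L^4(S_{u,\ubar})}
\lesssim |u_\infty|^{2\lambda_0-\frac{1}{2}}\lVert\mathcal{D}\chibarhat\rVert_{L^4(S_{u_\infty,\ubar})}
+ \int_{u_\infty}^{u} |u'|^{2\lambda_0-\frac{1}{2}}\lVert F\rVert_{L^4(S_{u',\ubar})}\,du',
\]
and then translate into scale-invariant norms, multiplying through by $\al/|u|$. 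The initial-data term on $H_{u_\infty}$ is controlled by $\mathcal{O}^{(0)}\lesssim 1$, exactly as in Proposition~\ref{propchibarhat}. The vast majority of the error terms in $\mathcal{E}$ are then bounded by the bootstrap assumptions \eqref{boundsbootstrap} via repeated use of the scale-invariant Hölder inequalities \eqref{257}, together with the zeroth-order $\mathcal{L}^4_{(sc)}$ estimates already obtained in Section~\ref{sectionO04} (in particular Propositions~\ref{omegabar04estimate}, \ref{propchibarhat}, \ref{propositiontildetr} and \ref{etabarint04}), and the pointwise bounds \eqref{bounds} on the matter components $\Te_{3},\Te_{33},\Te_{34}$ derived from Proposition~\ref{momentum}. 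Each such term contributes at most $1$ to the final estimate.

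The only genuinely non-perturbative contribution comes from the curvature source $\mathcal{D}\alphabar$. For it I would apply the codimension-1 trace inequality of Proposition~\ref{codimension11} along $\Hbar_{\ubar}$ to obtain
\[
\frac{\al}{|u|}\int_{u_\infty}^{u}\frac{a^{3/2}}{|u'|^{3}}\lVert\mathcal{D}\alphabar\rVert_{\mathcal{L}^4_{(sc)}(S_{u',\ubar})}\,du'
\lesssim \lVert\mathcal{D}\alphabar\rVert_{\mathcal{L}^2_{(sc)}(\underline{H}_{\ubar})}^{1/2}\lVert(\al\snabla)\mathcal{D}\alphabar\rVert_{\mathcal{L}^2_{(sc)}(\underline{H}_{\ubar})}^{1/2} + \lVert\mathcal{D}\alphabar\rVert_{\mathcal{L}^2_{(sc)}(\underline{H}_{\ubar})},
\]
which is exactly $\underline{\mathcal{R}}_1^{1/2}[\alphabar]\,\underline{\mathcal{R}}_2^{1/2}[\alphabar]+\underline{\mathcal{R}}_1[\alphabar]$. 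The trilinear term $\mathcal{D}(\omegabar\,\chibarhat)$ is handled by an $\mathcal{L}^\infty_{(sc)}$--$\mathcal{L}^4_{(sc)}$ split, using the Sobolev embedding of Proposition~\ref{Sobolev} applied to $\omegabar$ controlled by the $\mathcal{L}^4_{(sc)}$ estimate from Proposition~\ref{omegabar04estimate}, and absorbs into a constant of order one after integration against the weight $|u'|^{-3}$.

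The main obstacle I anticipate is purely bookkeeping: the $s_2$-signature weights, the explicit power of $|u'|$ generated by the evolution lemma with $\lambda_1=2\lambda_0-1/2$, and the scale weights introduced by the codimension-1 trace inequality must all conspire to produce exactly the prefactor $\al/|u|$ on the left-hand side. As always in this framework, the appearance of $\al$ rather than $a$ in the final estimate reflects the fact that $\alphabar$ satisfies the signature-$2$ norm $\lVert\alphabar\rVert_{\mathcal{L}^2_{(sc)}(\underline{H}_{\ubar})}$, and the square-root structure in the codimension-1 trace inequality is precisely what yields the two half-powers of $\underline{\mathcal{R}}[\alphabar]$ in the statement. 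Once this weight-tracking is carried out carefully, the estimate closes without any bootstrap improvement being needed.
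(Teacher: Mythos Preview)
Your proposal is correct and follows the same approach as the paper. One point worth flagging: the inequality you display for the $\mathcal{D}\alphabar$ contribution,
\[
\lVert\mathcal{D}\alphabar\rVert_{\mathcal{L}^2_{(sc)}(\underline{H}_{\ubar})}^{1/2}\lVert(\al\snabla)\mathcal{D}\alphabar\rVert_{\mathcal{L}^2_{(sc)}(\underline{H}_{\ubar})}^{1/2} + \lVert\mathcal{D}\alphabar\rVert_{\mathcal{L}^2_{(sc)}(\underline{H}_{\ubar})},
\]
is not what the codimension-1 trace inequality of Proposition~\ref{codimension11} produces (that one involves $\snabla_3\mathcal{D}\alphabar$, not $(\al\snabla)\mathcal{D}\alphabar$). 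What you have written is exactly what comes from applying the scale-invariant Sobolev inequality $\scalefourS{\psi}\lesssim \scaletwoS{\psi}^{1/2}\scaletwoS{(\al\snabla)\psi}^{1/2}+\scaletwoS{\psi}$ pointwise on each $S_{u',\ubar}$ and then H\"older in $u'$ --- which is precisely the route the paper takes (cf.\ the analogous step in Proposition~\ref{propchibarhat}). Also, the factor $\tfrac{\al}{|u|}$ in front of your displayed integral is spurious: after the Evolution Lemma and translation to scale-invariant norms, the $\alphabar$-contribution is simply $\int_{u_\infty}^{u}\frac{a^{3/2}}{|u'|^3}\scalefourSuprime{\mathcal{D}\alphabar}\,du'$, with no extra prefactor. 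Neither issue affects the validity of the argument.
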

\begin{proof}
There holds \[ \snabla_3 \chibarhat + \tr\chibar\hsp \chibarhat = \omegabar \hsp \chibarhat -\alphabar. \] As such, we have

\begin{align}
\snabla_3 \De \chibarhat + \big(1+\frac{\ell}{2}\big)\tr\chibar \De \chibarhat =& \omegabar\hsp  \De \chibarhat + \chibarhat \hsp \De \omegabar +\De \alphabar +\modu \omegabar \hsp(\omegabar \hsp \chibarhat +\alphabar) +\frac{1}{\al}\modu(\eta,\etabar)\De \chibarhat + \modu (\eta,\etabar)(\eta,\etabar)\chibarhat \notag \\ +& \modu \sigma \chibarhat +(\chibarhat,\tr\chi)\De \chibarhat +\al(\chihat,\tr\chi)\etabar \hsp \chibarhat + \al(\beta,\Te_4)\chibarhat := T_1+\dots+T_{11}.
\end{align}Using Proposition \ref{evolemma} and passing to scale-invariant norms, we have

\begin{align}
\frac{\al}{\modu}\scaletwoSu{\De\chibarhat}\lesssim \frac{\al}{\lvert u_{\infty}\rvert}\scalefourSuzero{\De \chibarhat}+ \sum_{i=1}^{11} \intu \frac{a^{\f32}}{\upr^3}\scalefourSuprime{T_i}\duprime.
\end{align}The most borderline term is \[ \intu \frac{a^{\f32}}{\upr^3}\scalefourSuprime{\De\alphabar}\duprime \lesssim \underline{\mathcal{R}}_1^{\frac{1}{2}}[\alphabar]\hsp \underline{\mathcal{R}}_2^{\frac{1}{2}}[\alphabar]+\underline{\mathcal{R}}_1[\alphabar].   \]The rest of the terms are bounded by 1 in a straightforward way.
\end{proof}

\begin{remark}
\label{improvechibarhat}The bound \[ \scaleinftySu{\chibarhat}\lesssim \sum_{i=0}^1 \scalefourSu{\De^i\chibarhat}\lesssim C(\underline{\mathcal{R}})   \] is now established and we shall need it in the next Proposition.
\end{remark}

\begin{proposition}
Under the assumptions of Theorem \ref{mainone} and the bootstrap assumptions \eqref{boundsbootstrap}, there holds

\[ \frac{a}{\modu^2}\scalefourSu{\mathcal{D}\tr\chibar}+\frac{a}{\modu}\scalefourSu{\mathcal{D}\tildetr} \lesssim 1.     \]
\end{proposition}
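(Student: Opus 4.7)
The plan is to emulate the strategy of Propositions \ref{propositionDchibarhat} and \ref{propositiontildetr}, supplemented by a careful handling of the matter source $\De\Te_{33}$. First, I would commute the transport equations
\[
\snabla_3 \tr\chibar + \tfrac{1}{2}(\tr\chibar)^2 = -|\chibarhat|^2 - \Te_{33}, \qquad \snabla_3 \tildetr + \tr\chibar\,\tildetr = \tfrac{1}{2}\tildetr^{\,2} - |\chibarhat|^2 - \Te_{33}
\]
with $\De \in \{\modu\snabla_3,\snabla_4,\al\snabla\}$ by means of Proposition \ref{propositionnabla3}. Writing $\ell$ for the number of $(\al\snabla)$ factors in $\De$, this produces $\snabla_3 \De\tr\chibar + (\tfrac{1}{2} + \tfrac{\ell}{2})\tr\chibar\,\De\tr\chibar = F_{\tr\chibar}$ and, analogously, $\snabla_3 \De\tildetr + (1 + \tfrac{\ell}{2})\tr\chibar\,\De\tildetr = F_{\tildetr}$. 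The sources $F_{\tr\chibar}, F_{\tildetr}$ collect the schematic terms catalogued in Proposition \ref{propositionnabla3}: the principal pieces $\chibarhat\,\De\chibarhat$, $\De\Te_{33}$, the self-coupling $\tildetr\,\De\tildetr$ (for the second equation), together with cubic combinations of Ricci coefficients and the lower-order curvature/matter contributions $(\rho,\sigma,\betabar,\Te_{3},\Te_{34})$.

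Next I would apply Proposition \ref{evolemma} with $p = 4$, which, after passage to scale-invariant norms, yields
\[
\frac{a}{\modu^2}\scalefourSu{\De\tr\chibar} \lesssim \frac{a}{\lvert u_\infty\rvert^2}\lVert\De\tr\chibar\rVert_{\mathcal{L}^4_{(sc)}(S_{u_\infty,\ubar})} + \intu \frac{a^2}{\lvert u'\rvert^4}\scalefourSuprime{F_{\tr\chibar}}\,\duprime,
\]
with the analogous estimate carrying the weight $\frac{a}{\modu}$ for $\De\tildetr$. The data term is controlled by the initial shear hypothesis of Theorem \ref{mainone} together with the Minkowskian character of $\Hbar_0$. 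Most of the terms inside $F_{\tr\chibar}, F_{\tildetr}$ then reduce to an $O(1)$ contribution by the scale-invariant Hölder inequalities and the existing bootstrap controls: the cubic $(\eta,\etabar)\cdot(\chibarhat,\tr\chibar)$-type terms use $\scaleinftySu{\eta,\etabar}\lesssim \al/\modu$; the curvature pieces are estimated via the codimension-$1$ trace inequality of Proposition \ref{codimensiononetraceineq}; the bilinear $\chibarhat\,\De\chibarhat$ is closed by combining the $\scaleinftySu{\chibarhat}$-bound of Remark \ref{improvechibarhat} with $\scalefourSu{\De\chibarhat}$ from Proposition \ref{propositionDchibarhat}; and the self-coupling $\tildetr\,\De\tildetr$ is absorbed via a Grönwall step exactly as in Proposition \ref{propositiontildetr}.

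The principal obstacle, as is characteristic of this hierarchy, is the Vlasov source $\De\Te_{33}$, whose signature $s_2(\Te_{33}) = 2$ leaves no margin in the weight $a/\modu^2$. I would treat it by differentiating under the momentum integral in
\[
\Te_{33} = 4\int_{\mathcal{P}_x} f\,(p^4)^2\,\frac{\sqrt{\det\gslash}}{p^3}\,dp^1\,dp^2\,dp^3,
\]
expressing the resulting $\snabla,\snabla_3,\snabla_4$-derivatives of the integrand in the adapted basis $\widetilde{V}$ introduced in Section \ref{vlasov}. The structural cancellation highlighted there ensures that only one $V$-derivative of $f$ appears, together with controlled derivatives of the momentum components (by Proposition \ref{momentum}) and of $\sqrt{\det\gslash}$ (by Proposition \ref{propdetgslash}). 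This should yield a pointwise estimate of the form $\lvert \De\Te_{33}(u',\ubar,\cdot)\rvert \lesssim \frac{a}{\lvert u'\rvert^6}\bigl(1 + \mathfrak{V}_1^{1/2} + \mathfrak{V}_2^{1/2}\bigr)$ on $S_{u',\ubar}$, hence a scale-invariant bound $\scalefourSuprime{\De\Te_{33}}\lesssim \frac{1}{a\,\lvert u'\rvert}\bigl(1 + \mathcal{V}_1 + \mathcal{V}_2\bigr)$.

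Inserting this into the transport integral gives $\intu \frac{a^2}{\lvert u'\rvert^4}\scalefourSuprime{\De\Te_{33}}\,\duprime = O(\modu^{-2})$, which is safely absorbed into the final constant $1$ since $(O+R+M)^{20}\ll a^{1/16}$. The argument for $\De\tildetr$ is identical in structure, the improved decay intrinsic to the renormalization $\tildetr = \tr\chibar + 2/\modu$ exactly compensating for the weaker outer weight $a/\modu$; in particular, the borderline ``$\tr\chibar + 2/|u|$'' contribution appearing when $\De = \modu\snabla_3$ is integrable in $u'$ precisely because of this renormalization, as in the proof of Proposition \ref{propositiontildetr}. Collecting all estimates yields the desired bound.
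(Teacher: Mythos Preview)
Your proposal is correct and follows the same overall architecture as the paper: commute the $\snabla_3$-equations for $\tr\chibar$ and $\tildetr$ via Proposition~\ref{propositionnabla3}, apply the weighted transport inequality of Proposition~\ref{evolemma}, and close the non-matter terms using the scale-invariant H\"older inequalities together with the improved bounds on $\chibarhat$ from Proposition~\ref{propositionDchibarhat} and Remark~\ref{improvechibarhat}.

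The one place you diverge from the paper is the treatment of $\De\Te_{33}$. The paper controls $\scalefourSuprime{\De\Te_{33}}$ by the codimension-one trace inequality on $\Hbar$ (Proposition~\ref{L4rough}), reducing to the bootstrap bounds on $\scaletwoHbaru{\De\Te_{33}}$ and $\scaletwoHbaru{\De^2\Te_{33}}$; this implicitly draws on both $\mathfrak{V}_1$ and $\mathfrak{V}_2$. You instead differentiate under the momentum integral and obtain a direct pointwise bound via $\mathfrak{V}_1$ alone. Your route is in fact the one the paper itself takes later in Proposition~\ref{firstL2}, and it is slightly more economical here since it avoids the trace machinery and the second-order Vlasov norm. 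Two cosmetic points: the $\mathfrak{V}_2^{1/2}$ in your pointwise estimate is superfluous (one $V$-derivative suffices), and the self-coupling $\tildetr\,\De\tildetr$ does not require a Gr\"onwall step---the bootstrap bound $\scalefourSu{\De\tildetr}\leq O$ closes it outright, exactly as in the paper's computation $\intu \frac{a^2}{\upr^3}\cdot\frac{\upr^2}{a^2}\cdot\frac{O^2}{\upr}\,\duprime\lesssim O^2/\modu\lesssim 1$.
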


\begin{proof}
There holds

\[\snabla_3 \tr\chibar+\frac{1}{2}(\tr\chibar)^2 = -\lvert \chibarhat\rvert^2-\slashed{T}_{33}.\]We have, with $\ell$ as before, the schematic

\begin{align}\notag  \snabla_3 \mathcal{D}\tr\chibar + \frac{1+\ell}{2}\tr\chibar \hsp \tr\chibar =& \hsp \chibarhat \mathcal{D}\chibarhat +\mathcal{D}\slashed{T}_{33} +(\eta,\etabar,1)(\lvert \chibarhat\rvert^2+\Te_{33})+\frac{1}{\al}(\al\omegabar,\eta,\etabar,\chibarhat)\De\tr\chibar \notag + \sdiv\etabar \hsp \tr\chibar \\ +& \chihat \cdot \chibarhat \tr\chibar + \tr\chi \tr\chibar^2  +(\rho,\betabar,\Te_3,\Te_{34})\tr\chibar \notag  +(\eta,\etabar)(\eta,\etabar,\chibarhat,\tr\chibar)\tr\chibar +\sigma \tr\chibar \notag \\+& \frac{1}{\al}\De\tildetr \tr\chibar  +\modu\big(\tr\chibar \hsp (\tildetr,\hsp \omegabar) +\chibarhat\cdot \chibarhat\big)\tr\chibar+\modu \Te_{33}\tr\chibar .
\end{align}Passing to scale-invariant norms while using Proposition \ref{evolemma}, we obtain

\begin{align}
\frac{a}{\modu^2}\scalefourSu{\mathcal{D}\tr\chibar}\lesssim \frac{a}{\lvert u_{\infty}\rvert^2} \lVert \mathcal{D}\tr\chibar \rVert_{\mathcal{L}^4_{(sc)}(S_{u_{\infty},\ubar})} + \intu \frac{a^2}{\upr^4}\sum_{i=1}^{16} \scalefourSuprime{T_i}\duprime.
\end{align}Again, we discuss the most difficult terms. The terms
\begin{align} \intu \frac{a^2}{\upr^4}\big(\scaletwoSuprime{\chihat\cdot \chibarhat \hsp \tr\chibar} +\scaletwoSuprime{\tr\chibar^2 \hsp \tr\chi}\big)\duprime   \lesssim \intu \frac{a^2}{\upr^4}\big(\frac{\upr^3}{a}+\frac{\upr^4}{a^2}\big) \frac{O^3}{\upr^2}\duprime \lesssim 1.  \end{align}

\noindent The terms involving $(\sigma,  \betabar,\slashed{T}_3)\tr\chibar$ are also bounded by 1 using the bootstrap assumptions, by bounding $\sigma, \al \betabar, \al \slashed{T}_{3}$ in $\mathcal{L}^4_{(sc)}$ and eventually by their $\scaletwoHbaru{\cdot}-$norms. The first term involving $\chibarhat\mathcal{D}\chibarhat$ is handled by a simple $\mathcal{L}^{\infty}_{(sc)}-\mathcal{L}^4_{(sc)}$ H\"older inequality using the bootstrap assumptions, whereas the term
involving $\mathcal{D}\slashed{T}_{33}$ is bounded by 1 using the bootstrap assumptions on \[\scaletwoHbaru{\mathcal{D}\slashed{T}_{33}}, \scaletwoHbaru{\mathcal{D}^2\slashed{T}_{33}}\] and the codimension 1 trace inequality.
\vspace{3mm}

\noindent For $\tildetr,$ we similarly have the equation

\begin{align}
\notag &\snabla_3 \mathcal{D}\tildetr +(1+\frac{\ell}{2})\tr\chibar \mathcal{D}\tildetr \\ =&  \tildetr \hsp \De \tildetr + \chibarhat \De \chibarhat +\De \Te_{33} + (\eta,\etabar,1)(\tildetr \tildetr +\lvert \chibarhat 
\rvert^2 +\Te_{33}) +\frac{1}{\al}(\al\omegabar,\eta,\etabar,\chibarhat)\De\tildetr \notag  +\sdiv \etabar \hsp  \tildetr \notag \\ +& (\chi\cdot \chibarhat + \tr\chi \hsp \tr\chibar)\tildetr +(\rho,\betabar,\Te_3,\Te_{34})\tildetr +(\eta,\etabar)(\eta,\etabar,\chibarhat,\tr\chibar)\tildetr +\sigma \tildetr +\frac{1}{\al} \tildetr \De \tildetr \notag \\+&\modu\big(\tr\chibar(\tildetr,\omegabar)+\chibarhat\cdot \chibarhat)\big)\tildetr +\modu \Te_{33}\tildetr:=T_1+\dots T_{13}.
\end{align}
Passing to scale-invariant norms, we have 

\begin{align}
\frac{a}{\modu}\scalefourSu{\mathcal{D}\tildetr} \lesssim \frac{a}{\lvert u_{\infty}\rvert}\scalefourSuzero{\mathcal{D}\tildetr} +\intu \frac{a^2}{\upr^3}\sum_{i=1}^{13}\scalefourSuprime{T_i} \duprime.
\end{align}We again examine the top-order terms, as the zero-order ones are bounded by 1 in a straightforward way using the bootstrap assumptions. We have

\be (1,\frac{1}{\al})\intubar \frac{a^2}{\upr^3} \scalefourSuubarprime{\tildetr \hsp \De\tildetr} \dubarprime \lesssim \intu \frac{a^2}{\upr^3}\cdot \frac{\upr^2}{a^2}\cdot \frac{O^2}{\upr}\dubarprime \lesssim \frac{O^2}{\modu}\lesssim 1, \ee
\be\intubar \frac{a^{\f32}}{\upr^3} \scalefourSuubarprime{(\al\omegabar,\eta,\etabar,\chibarhat)\De\tildetr} \dubarprime \lesssim \intubar \frac{a^{\f32}}{\upr^3}\cdot \frac{\upr}{\al} \cdot \frac{\upr}{a}\cdot \frac{ \hsp O^2}{\upr} \dubarprime \lesssim \frac{O^2}{\modu}\lesssim 1, \ee
\be \intu \frac{a^2}{\upr^3}\scalefourSuubarprime{\chibarhat \De \chibarhat}\dubarprime \lesssim \frac{a}{\modu}\mathcal{O}_{0,\infty}[\chibarhat]\mathcal{O}_{1,4}[\chibarhat] \lesssim \underline{\mathcal{R}}_1^{\f12}[\alphabar]\underline{\mathcal{R}}_2^{\f12}[\alphabar]+\underline{\mathcal{R}}[\alphabar]+1, \ee where we have crucially used the improvements from Proposition \ref{propositionDchibarhat} and Remark \ref{improvechibarhat}.

\end{proof}\noindent We move on with an  $\prescript{(S)}{}{\mathcal{O}}_{1,4}$ estimate for $\etabar$.

\begin{proposition} \label{etabar14bound}
Under the assumptions of Theorem \ref{mainone} and the bootstrap assumptions \eqref{boundsbootstrap}, there holds 

\begin{align}
\scalefourSu{\mathcal{D}\etabar} \lesssim& \frac{\al}{\modu^{\frac{1}{2}}}\big(\scaletwoHbaru{\mathcal{D}\beta}^{\frac{1}{4}}\scaletwoHbaru{\mathcal{D}^2\beta}^{\frac{1}{4}} + \scaletwoHbaru{\mathcal{D}\slashed{T}_3}^{\frac{1}{4}}\scaletwoHbaru{\mathcal{D}^2\slashed{T}_3}^{\frac{1}{4}} \notag \\+& \scaletwoHbaru{\mathcal{D}\beta}+\scaletwoHbaru{\mathcal{D}\slashed{T}_3}\big)+1.  \end{align}
\end{proposition}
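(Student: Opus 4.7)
The strategy will mirror that of Proposition~\ref{etabarint04}, performed one derivative higher. The starting point is the transport equation
\[\snabla_3\etabar + \tfrac{1}{2}\tr\chibar\,\etabar = \tfrac{1}{2}\tr\chibar\,\eta - \chibarhat\cdot(\etabar-\eta) + \betabar - \tfrac{1}{2}\Te_3.\]
The plan is to commute this with an arbitrary $\mathcal{D}\in\{\modu\snabla_3,\snabla_4,\al\snabla\}$ by invoking Proposition~\ref{propositionnabla3}, which produces
\[\snabla_3\mathcal{D}\etabar + \Big(\tfrac{1}{2}+\tfrac{\ell}{2}\Big)\tr\chibar\,\mathcal{D}\etabar = \mathcal{D}\betabar - \tfrac{1}{2}\mathcal{D}\Te_3 + F_{\mathcal{D}\etabar},\]
where $\ell\in\{0,1\}$ is the number of $(\al\snabla)$ factors in $\mathcal{D}$ and $F_{\mathcal{D}\etabar}$ collects every commutator and quadratic error term listed schematically in~\eqref{commutationformulanabla3}. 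Applying the Evolution Lemma (Proposition~\ref{evolemma}) with $\lambda_0=\tfrac{1}{2}+\tfrac{\ell}{2}$ and $p=4$, then translating to scale--invariant norms, yields
\[\scalefourSu{\mathcal{D}\etabar} \lesssim \scalefourSuzero{\mathcal{D}\etabar} + \intu \frac{a^{3/2}}{\upr^3}\bigl(\scalefourSuprime{\mathcal{D}\betabar} + \scalefourSuprime{\mathcal{D}\Te_3} + \scalefourSuprime{F_{\mathcal{D}\etabar}}\bigr)\,\duprime.\]

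The bulk of the proof would then be the systematic estimate of the many summands contained in $F_{\mathcal{D}\etabar}$. These are to be handled using the bootstrap assumptions~\eqref{boundsbootstrap}, the scale--invariant H\"older inequalities (in particular~\eqref{257}), and the $\prescript{(S)}{}{\mathcal{O}}_{0,4}$ and $\prescript{(S)}{}{\mathcal{O}}_{1,4}$ bounds that have been established in Sections~\ref{sectionO04}--\ref{sectionO14}. Crucial here are the improved $\mathcal{L}^{\infty}_{(sc)}$ bounds on $\chihat$ (Remark~\ref{improvechihat}) and on $\chibarhat$ (Remark~\ref{improvechibarhat}), together with the already derived $\mathcal{L}^4_{(sc)}$ estimates on $\mathcal{D}\tr\chibar$, $\mathcal{D}\tildetr$, $\mathcal{D}\omegabar$ and $\sdiv\etabar$; the zero--order decay of $\Te_3$ from Section~\ref{subsectiondecaymomentum} deals with every matter--type contribution. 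Each of these many terms will, after one application of $\mathcal{L}^\infty_{(sc)}$--$\mathcal{L}^4_{(sc)}$ H\"older and a straightforward integration in $u'$ of the explicit weight $a^{3/2}/\upr^3$, be dominated by a harmless constant and hence absorbed into the $+1$ on the right--hand side.

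The heart of the argument is the treatment of the two borderline integrals involving $\mathcal{D}\betabar$ and $\mathcal{D}\Te_3$, which are not placeable in $\mathcal{L}^\infty_{(sc)}(S)$ at the present regularity. For these I would first apply Cauchy--Schwarz in $u'$, writing the weight $a^{3/2}/\upr^3$ as the product of two equal square--root factors, and then invoke the codimension--one trace inequality along $\Hbar_{\ubar}$ provided by Proposition~\ref{codimension11}, namely
\[\scalefourSuprime{\mathcal{D}\betabar}\lesssim \scalefourSuzero{\mathcal{D}\betabar} + \|\al\snabla_3\mathcal{D}\betabar\|^{1/2}_{\mathcal{L}^2_{(sc)}(\Hbar)}\bigl(a^{-1/4}\|\mathcal{D}\betabar\|^{1/2}_{\mathcal{L}^2_{(sc)}(\Hbar)} + \|\snabla\mathcal{D}\betabar\|^{1/2}_{\mathcal{L}^2_{(sc)}(\Hbar)}\bigr),\]
where, in scale--invariant language, $\al\snabla_3\mathcal{D}\betabar$ and $\al\snabla\mathcal{D}\betabar$ are both dominated by $\mathcal{D}^2\betabar$ up to harmless factors, and an entirely analogous inequality will apply to $\mathcal{D}\Te_3$. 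The composition of the square root from Cauchy--Schwarz with the square root from the codimension--one trace then produces precisely the fourth--root structure of the claimed right--hand side, while the weight $a^{3/2}/\upr^3$ integrates in $u'$ against itself to yield the prefactor $\al/\modu^{1/2}$.

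The main obstacle I anticipate lies not in any single step but in the bookkeeping of the commutator contributions gathered in $F_{\mathcal{D}\etabar}$---in particular those featuring $\snabla\tildetr/\al$, $\chihat\cdot\chibarhat$, and the mixing term $(\al\omegabar,\eta,\etabar,\chibarhat)\mathcal{D}\etabar$---whose decay profile is borderline and which admit no residual smallness of the form $a^{-\gamma}$. Their successful absorption depends on using the improved $\mathcal{L}^\infty_{(sc)}$ estimates of the preceding propositions at precisely the right places, so that no step incurs a logarithmic loss. Once each term is dispatched in this manner, combining the borderline estimates with the harmless interior contributions yields the bound claimed, completing the proof.
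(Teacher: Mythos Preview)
Your proposal is correct and follows essentially the same route as the paper: commute the $\snabla_3$--equation for $\etabar$ via Proposition~\ref{propositionnabla3}, apply the Evolution Lemma in $L^4$, dispose of the error terms $F_{\mathcal{D}\etabar}$ by the bootstrap assumptions and the already--improved $\prescript{(S)}{}{\mathcal{O}}_{0,\infty}$ and $\prescript{(S)}{}{\mathcal{O}}_{1,4}$ bounds, and isolate $\mathcal{D}\betabar$ and $\mathcal{D}\Te_3$ as the only borderline contributions. The paper treats these borderline terms by the sphere $L^4$--$L^2$ Sobolev inequality followed by Cauchy--Schwarz in $u'$ (exactly as in Proposition~\ref{etabarint04} one order lower), whereas you invoke the codimension--one trace inequality along $\Hbar$; the two devices are interchangeable here and yield the same control in terms of $\scaletwoHbaru{\mathcal{D}(\betabar,\Te_3)}$ and $\scaletwoHbaru{\mathcal{D}^2(\betabar,\Te_3)}$.

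One small point: your argument that ``the square root from Cauchy--Schwarz composed with the square root from the codimension--one trace produces the fourth--root structure'' does not actually give exponents $\tfrac14$---either order of operations yields $\tfrac12,\tfrac12$ on the $\Hbar$--norms. The $\tfrac14$ exponents (and the unbarred $\beta$) in the displayed statement appear to be typographical slips; the paper's own proof writes the same factor twice and clearly intends $\scaletwoHbaru{\mathcal{D}\betabar}^{1/2}\scaletwoHbaru{\mathcal{D}^2\betabar}^{1/2}$, which is what your procedure delivers.
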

\begin{proof}
For $\etabar$ we recall that the following equation holds:

\[ \snabla_3\etabar+\frac{1}{2}\tr\chibar\etabar =\frac{1}{2}\tr\chibar \eta-\chibarhat\cdot(\etabar-\eta)+\betabar-\frac{1}{2}\slashed{T}_3.\]Consequently, 

\begin{align}\notag
\snabla_3 \mathcal{D}\etabar +\frac{1+\ell}{2}\tr\chibar \hsp\mathcal{D}\etabar =& \hsp \tr\chibar \hsp \mathcal{D}\eta +\eta \mathcal{D}\tr\chibar +\chibarhat (\mathcal{D}\eta, \mathcal{D}\etabar)+(\eta,\etabar)\mathcal{D}\chibarhat+\mathcal{D}\betabar+\mathcal{D}\slashed{T}_{3} \\ \notag +&\big(\frac{1}{\Omega^2},\omega,\al\eta\big)(\tr\chibar \eta +\chibarhat\cdot(\etabar,\eta)+\betabar+\slashed{T}_{3}) +\tr\chibar\big(\frac{1}{\Omega^2}-1\big) \etabar \notag \\+& \modu\tr\chibar \tildetr \etabar + \modu(\lvert \chibarhat \rvert^2+\slashed{T}_{33})\etabar +\etabar \mathcal{D}\tildetr +\frac{1}{\al}(\eta,\etabar)\mathcal{D}\etabar +(\eta,\etabar)^2\etabar \notag \\ +& \sigma \etabar +\omega \tr\chibar\hsp  \etabar   + \chibarhat \hsp \mathcal{D}\etabar + \al\etabar\hsp \chibarhat\hsp \etabar +\al(\betabar,\slashed{T}_3)\etabar := T_1+\dots+T_{21} .
\end{align}Passing to scale-invariant norms, we have 
\begin{align}
\frac{1}{\modu}\scalefourSu{\mathcal{D}\etabar}\lesssim \frac{1}{\uinf}\scalefourSuzero{\mathcal{D}\etabar}+ \intu \frac{a}{\upr^3}\sum_{j=1}^{21}\scalefourSuprime{T_j}\duprime.
\end{align}For the first term, we have

\begin{equation*} \intu \frac{a}{\upr^3}\scalefourSuprime{\tr\chibar \mathcal{D}\eta} \duprime \lesssim \frac{1}{\modu}\big(\frac{a}{\modu^2}\sup_{u_{\infty}\leq u^{\prime}\leq u}\scaleinftySuprime{\tr\chibar}\scalefourSuprime {\mathcal{D}\eta}\big). \end{equation*}However, at this point, the bound $\scaleinftySuprime{\tr\chibar}\lesssim\scalefourSuprime{\tr\chibar}+\scalefourSuprime{\mathcal{D}\tr\chibar}$ implies control on $\tr\chibar$, using the improvements. Similarly, the improved control on $\scalefourSu{\mathcal{D}\eta}$ gives a bound dependent on $\mathcal{R}, \Vl$. For the terms involving $\mathcal{D}\betabar, \mathcal{D}\slashed{T}_3$, we have

\begin{align}
&\intu \frac{a}{\upr^3}\scalefourSuprime{\mathcal{D}\betabar,\mathcal{D}\slashed{T}_{3}} \duprime \notag \\\lesssim& \frac{\al}{\modu^{\frac{3}{2}}}\big( \scaletwoHbaru{\mathcal{D}(\beta,\slashed{T}_3)}^{\frac{1}{4}}\scaletwoHbaru{\mathcal{D}(\beta,\slashed{T}_3)}^{\frac{1}{4}} +\scaletwoHbaru{\mathcal{D}(\beta,\slashed{T}_3)}\big).
\end{align}The rest of the terms are bounded above by $\frac{1}{\modu}$ using the bootstrap assumptions \eqref{boundsbootstrap}.

\end{proof}We continue with the estimate for the non-tensorial quantity $\mathcal{G}$.
\begin{proposition}
Under the assumptions of Theorem \ref{mainone} and the bootstrap assumptions \eqref{boundsbootstrap},there holds 

\be \scalefourSu{\De\mathcal{G}}\lesssim \prescript{(S)}{}{\mathcal{O}}_{3,2}[\eta]+ \mathcal{R}_2[\rho,\sigma] +\mathcal{V}_2[\Te]+1. \ee

\end{proposition}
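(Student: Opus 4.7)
The plan is to commute the transport equation
\[
\snabla_4 \mathcal{G} \sim \nablasl \eta + \mathcal{G} \cdot \chi + \omegabar \cdot \chi + \Gammaslash \cdot (\eta, \etabar) + \eta \cdot \etabar + (\rho,\sigma)+\Tslash
\]
with an arbitrary $\mathcal{D}\in\{\modu\snabla_3,\snabla_4,\al\snabla\}$ using the schematic formula of Proposition \ref{commute4}, then integrate along the $e_4$ direction starting from $S_{u,0}$ where $\mathcal{G}$ vanishes. This yields a schematic identity of the form
\[
\snabla_4 \mathcal{D}\mathcal{G} = \mathcal{D}\nablasl\eta + \mathcal{D}(\rho,\sigma) + \mathcal{D}\Tslash + \text{(nonlinear/commutator error)},
\]
where the error is a sum of bilinear expressions in which either $\mathcal{G}$, $\chi$, $\omegabar$, $\Gammaslash-\Gammaslash^\circ$, $\eta$, $\etabar$ carries the derivative, plus the usual commutator contributions $\modu(\eta,\etabar)\mathcal{D}\mathcal{G}$, $\modu\sigma\mathcal{G}$, $(\chihat,\tr\chi)\mathcal{D}\mathcal{G}$, $\al(\beta,\Tslash_4)\mathcal{G}$, and $\modu\omegabar$ applied to the original right-hand side.

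After applying the $e_4$-transport inequality and passing to scale-invariant norms, we obtain
\[
\scalefourSu{\mathcal{D}\mathcal{G}} \lesssim \scalefourSuzero{\mathcal{D}\mathcal{G}} + \int_0^{\ubar} \scalefourSuubarprime{\snabla_4 \mathcal{D}\mathcal{G}}\,\mathrm{d}\ubar'.
\]
The three borderline contributions are handled by the codimension-1 trace inequality (Proposition \ref{codimension12}) along $H_u$: the term $\mathcal{D}\nablasl\eta$ is converted into $\|\mathcal{D}^2\snabla\eta\|_{\mathcal{L}^2_{(sc)}(H_u)}$-type norms, which are precisely bounded by $\prescript{(S)}{}{\mathcal{O}}_{3,2}[\eta]$; the term $\mathcal{D}(\rho,\sigma)$ produces $\mathcal{R}_1^{1/2}\mathcal{R}_2^{1/2}+\mathcal{R}_1$ on the curvature components (hence $\lesssim\mathcal{R}_2[\rho,\sigma]+1$); and $\mathcal{D}\Tslash$ yields $\mathcal{V}_2[\Tslash]+\mathcal{V}_1[\Tslash]$ in a parallel manner.

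For the remaining quadratic/commutator terms, the strategy is standard $\mathcal{L}^\infty_{(sc)}$--$\mathcal{L}^4_{(sc)}$ Hölder on $S_{u,\ubar^{\prime}}$, combined with the earlier $\prescript{(S)}{}{\mathcal{O}}_{0,\infty}$ and $\prescript{(S)}{}{\mathcal{O}}_{1,4}$ bounds and the improved estimates for $\chihat, \chibarhat, \eta, \etabar, \omegabar$ already obtained in Sections \ref{sectionO04} and \ref{sectionO14}; the factor $\frac{1}{|u|}$ gained from the Hölder inequality combined with the $|u|$-weights in the scale-invariant norms ensures $\ubar'$-integrability, and each such term is absorbed into a constant or into the bootstrap bound on $O$. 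The main obstacle is the top-order term $\mathcal{D}\nablasl\eta$: one must verify that the coupling structure identified in Proposition \ref{commute4} never forces $\nablasl\etabar$ (or $\nablasl\chibarhat,\nablasl\tr\chibar,\nablasl\omegabar$) to appear after commutation, since the elliptic theory for those quantities is unavailable in our framework; this relies precisely on the structural observation that $\nablasl\eta$ is the only top derivative entering the $\mathcal{G}$-equation, and that commuting with $\mathcal{D}$ preserves this asymmetry thanks to the cancellation identity \eqref{eq:curvature_identity}. Assuming this, the remaining estimates are routine and the inequality
\[
\scalefourSu{\mathcal{D}\mathcal{G}}\lesssim \prescript{(S)}{}{\mathcal{O}}_{3,2}[\eta]+\mathcal{R}_2[\rho,\sigma]+\mathcal{V}_2[\Tslash]+1
\]
follows.
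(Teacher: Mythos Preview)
The proposal is correct and takes essentially the same route as the paper: commute $\snabla_4\mathcal{G}$ with $\mathcal{D}$ via Proposition~\ref{commute4}, integrate along $e_4$, and isolate $\mathcal{D}\snabla\eta$, $\mathcal{D}(\rho,\sigma)$, $\mathcal{D}\Tslash$ as the borderline contributions. The only cosmetic difference is that the paper passes from $\mathcal{L}^4_{(sc)}(S)$ to $\mathcal{L}^2_{(sc)}(H_u)$ for $\mathcal{D}\snabla\eta$ via the $L^4$--$L^2$ Sobolev embedding on spheres followed by Cauchy--Schwarz in $\ubar'$, whereas you invoke the codimension-1 trace inequality directly; both routes land on $\prescript{(S)}{}{\mathcal{O}}_{3,2}[\eta]$.

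One correction worth noting: your appeal to the curvature identity \eqref{eq:curvature_identity} to explain why commutation with $\mathcal{D}$ introduces no new $\nablasl\etabar$ (or $\nablasl\chibarhat$, $\nablasl\tr\chibar$, $\nablasl\omegabar$) is misplaced. That identity is the structural mechanism behind the Vlasov commutator calculus on the mass shell (Section~\ref{vlasov}), not behind Proposition~\ref{commute4}. In the present estimate the reason is simpler: the original $\snabla_4\mathcal{G}$ equation contains only $\nablasl\eta$ at top order by design, and the commutation formula of Proposition~\ref{commute4} produces only $\mathcal{D}$ applied to the original right-hand side plus lower-order commutator errors --- there is no route by which a top-order $\nablasl\etabar$ could enter.
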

\begin{proof}
Recall the equation \begin{align}
\snabla_4 \mathcal{G} \sim \nablasl \eta + \mathcal{G} \cdot \chi + \omegabar \cdot \chi + \Gammaslash \cdot (\eta, \etabar) + \eta \cdot \etabar + (\rho,\sigma)+\Tslash.    \notag
\end{align}Differentiating, we get 
\begin{align}
    \snabla_4 \De\mathcal{G} =& \De \snabla \eta +(\mathcal{G},\omegabar)\hsp (\chihat,\tr\chi) + (\chihat,\tr\chi)\De(\mathcal{G},\omegabar) +\De(\eta,\Gammaslash) \hsp (\eta,\etabar) +\De(\eta,\etabar) \hsp (\Gammaslash,\eta) +\De(\rho,\sigma)+\De\Te \notag \\ +& \frac{1}{\al}\modu \hsp (\eta,\etabar) \hsp \De \mathcal{G} + \modu \hsp  (\eta,\etabar)^2  \hsp \mathcal{G} + \modu \hsp  \sigma  \hsp \mathcal{G} + (\chihat,\tr\chi) \hsp \De\mathcal{G} + \al(\chihat,\tr\chi) \hsp \etabar  \hsp \mathcal{G} +\al \hsp (\beta,\Te_4)\hsp \mathcal{G} \notag \\ +& \modu\omegabar\big(\nablasl \eta + \mathcal{G} \cdot \chi + \omegabar \cdot \chi + \Gammaslash \cdot (\eta, \etabar) + \eta \cdot \etabar + (\rho,\sigma)+\Tslash\big).
\end{align}
As before, integrating along the $e_4-$direction, we see that the only top-derivative term is $\mathcal{D}\snabla \eta$, which can be bounded by the $\prescript{(S)}{}{\mathcal{O}}_{2,2}$estimate for $\eta$ as well as the elliptic estimate for $\eta$ (here we use $L^4-L^2$ Sobolev embedding):

\begin{align} 
\scalefourSu{\De\mathcal{G}}\lesssim \scalefourSuzero{\De\mathcal{G}}+ \intubar \scalefourSuubarprime{\De\snabla \eta} \dubarprime + l.o.t. \lesssim \prescript{(S)}{}{\mathcal{O}}_{3,2}[\eta] + l.o.t.
\end{align}In particular, the terms $\scalefourSu{\mathcal{D}(\rho,\sigma,\Te)}$ are bounded as before in the current section. The claim follows.
\end{proof}
\begin{proposition}
Under the assumptions of Theorem \ref{mainone} and the bootstrap assumptions \eqref{boundsbootstrap}, there holds \[  \scalefourSu{\De(\Gammaslash-\Gammaslash^{\circ})}\lesssim \prescript{(S)}{}{\mathcal{O}}_{3,2}[\chihat]+1. \]
\end{proposition}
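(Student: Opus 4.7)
The plan is to differentiate the schematic transport equation
\[ \snabla_4(\Gammaslash-\Gammaslash^{\circ})=\snabla\chi + \mathcal{G}\cdot(\Gammaslash-\Gammaslash^{\circ}) \]
once by an operator $\De\in\{\modu\snabla_3,\snabla_4,\al\snabla\}$ and apply the commutation formula of Proposition \ref{commute4} to obtain a schematic identity of the form
\[ \snabla_4\De(\Gammaslash-\Gammaslash^{\circ})=\De\snabla\chi+\De\mathcal{G}\cdot(\Gammaslash-\Gammaslash^{\circ})+\mathcal{G}\cdot\De(\Gammaslash-\Gammaslash^{\circ})+\text{(lower order)}, \]
where the lower-order remainder contains products of $(\chihat,\tr\chi,\eta,\etabar,\modu\omegabar,\sigma,\beta,\slashed{T}_4)$ with at most one $\De$ falling on $\chi$ or $\Gammaslash-\Gammaslash^{\circ}$. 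Integrating from $\ubar=0$, where $\Gammaslash-\Gammaslash^{\circ}$ vanishes by the construction of the initial coordinate system, produces
\[ \scalefourSu{\De(\Gammaslash-\Gammaslash^{\circ})}\lesssim \intubar\scalefourSuubarprime{\De\snabla\chi}\dubarprime+\intubar\scalefourSuubarprime{\text{l.o.t.}}\dubarprime. \]

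Next, I would dispatch the bilinear and lower-order terms term-by-term in $\mathcal{L}^4_{(sc)}$ using the previously obtained $\prescript{(S)}{}{\mathcal{O}}_{0,\infty}$ and $\prescript{(S)}{}{\mathcal{O}}_{1,4}$ bounds together with $\mathcal{L}^\infty_{(sc)}$--$\mathcal{L}^4_{(sc)}$ H\"older inequalities and the Sobolev embedding of Proposition \ref{Sobolev}. In particular, the bilinear piece $\mathcal{G}\cdot\De(\Gammaslash-\Gammaslash^{\circ})$ is absorbed via a Gr\"onwall iteration, since $\scaleinftySu{\mathcal{G}}$ is already controlled by $\scalefourSu{\mathcal{G}}+\scalefourSu{\De\mathcal{G}}$, and the preceding propositions bound both in terms of $\prescript{(S)}{}{\mathcal{O}}_{3,2}[\eta]+1$. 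Similarly, $\De\mathcal{G}\cdot(\Gammaslash-\Gammaslash^{\circ})$ is bounded by the product of the $\prescript{(S)}{}{\mathcal{O}}_{1,4}$-level control of $(\Gammaslash-\Gammaslash^{\circ})$ and the just-established $\mathcal{L}^4_{(sc)}$ bound on $\De\mathcal{G}$. All such contributions end up bounded by $O(1)$ after integration, giving the ``$+1$'' in the claimed estimate.

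The main obstacle is the principal term $\intubar\scalefourSuubarprime{\De\snabla\chi}\dubarprime$, which involves up to three derivatives of $\chihat$ and of $\tr\chi$ and thus cannot be handled by transport alone. My plan is to apply the codimension-1 trace inequality of Proposition \ref{codimension12} along the outgoing null hypersurface $H_u$, producing
\[ \scalefourSuubarprime{\De\snabla\chi}\lesssim \scalefourSuzero{\De\snabla\chi}+\scaletwoHu{\snabla_4\De\snabla\chi}^{1/2}\bigl(\scaletwoHu{\De\snabla\chi}^{1/2}+\scaletwoHu{(\al\snabla)\De\snabla\chi}^{1/2}\bigr), \]
and to trade $\snabla_4\De\snabla\chi$ and $(\al\snabla)\De\snabla\chi$ for a second $\De$-derivative using the commutation formulas together with Proposition \ref{propchihat}/\ref{propositionDchihat}. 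Thus everything is reduced to $\scaletwoHu{\De^2\snabla(\frac{1}{\al}\chihat,\tr\chi)}$, which is precisely the content of the elliptic norm $\prescript{(S)}{}{\mathcal{O}}_{3,2}[\chihat]$ (which also contains the $\tr\chi$ component after the identity $\snabla\tr\chi$ is recovered from $\sdiv\chihat$ via the Codazzi equation to be established later).

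The delicate point is to ensure that the scale-invariant weights match: $s_2(\Gammaslash-\Gammaslash^{\circ})=0$ and $s_2(\De\snabla\chi)=1$, so one loses an $\al$-factor in the weighted codimension-1 trace that is exactly absorbed by the $\frac{1}{\al}$ normalization of $\chihat$ in the definition of $\prescript{(S)}{}{\mathcal{O}}_{3,2}$; I would verify this book-keeping carefully. The contribution from $\scalefourSuzero{\De\snabla\chi}$ is controlled by the initial data bound $\mathcal{I}$. Finally, the Gr\"onwall step on the $\mathcal{G}\cdot\De(\Gammaslash-\Gammaslash^{\circ})$ contribution closes the argument and yields the claimed bound.
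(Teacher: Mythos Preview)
Your proof is correct and follows the same overall strategy as the paper: commute the $\snabla_4$ transport equation for $\Gammaslash-\Gammaslash^{\circ}$ once by $\De$, integrate from $\ubar=0$, dispatch the bilinear and commutator terms by the $\prescript{(S)}{}{\mathcal{O}}_{0,\infty}$, $\prescript{(S)}{}{\mathcal{O}}_{1,4}$ bounds, and reduce the principal piece $\intubar\scalefourSuubarprime{\De\snabla(\chihat,\tr\chi)}\dubarprime$ to the third-order elliptic norm $\prescript{(S)}{}{\mathcal{O}}_{3,2}$.

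The only genuine difference is in how you pass from $\mathcal{L}^4_{(sc)}(S)$ to $\mathcal{L}^2_{(sc)}(H_u)$ on the principal term. The paper simply applies the $L^4$--$L^2$ Sobolev embedding on the sphere,
\[
\scalefourSuubarprime{\De\snabla\chi}\lesssim \scaletwoSuubarprime{\De\snabla\chi}^{1/2}\scaletwoSuubarprime{(\al\snabla)\De\snabla\chi}^{1/2}+\scaletwoSuubarprime{\De\snabla\chi},
\]
and then integrates in $\ubar$ via Cauchy--Schwarz. Your route through the codimension-1 trace inequality (Proposition~\ref{codimension12}) also works, but is slightly less economical: it forces you to control $\scaletwoHu{\snabla_4\De\snabla\chi}$ in addition to $\scaletwoHu{(\al\snabla)\De\snabla\chi}$. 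Since $\snabla_4$ is itself one of the $\De$ operators, this extra term is still covered directly by $\prescript{(S)}{}{\mathcal{O}}_{3,2}[\chihat,\tr\chi]$, so no harm is done. Two small corrections: the signature of $\Gammaslash-\Gammaslash^{\circ}$ is $\tfrac12$ (inherited from $\snabla\chi$), not $0$; and the $\tr\chi$ contribution to the principal term is handled by $\prescript{(S)}{}{\mathcal{O}}_{3,2}[\tr\chi]$ directly (it is part of the norm definition), so you do not need to invoke Codazzi here.
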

\begin{proof}
Recall the schematic equation
\[ \snabla_4 (\Gammaslash-\Gammaslash^{\circ}) = \snabla(\chihat,\tr\chi) +\mathcal{G}\cdot(\Gammaslash-\Gammaslash^{\circ}).  \]As such, we have

\begin{align}
\snabla_4 \De (\Gammaslash-\Gammaslash^{\circ}) =& \De \snabla(\chihat,\tr\chi) + \De \mathcal{G} \hsp (\Gammaslash-\Gammaslash^{\circ}) +\mathcal{G} \hsp  \De(\Gammaslash-\Gammaslash^{\circ}) + \frac{\modu}{\al}(\eta,\etabar)\De (\Gammaslash-\Gammaslash^{\circ})+ \modu \hsp (\eta,\etabar)^2 \hsp (\Gammaslash-\Gammaslash^{\circ}) \notag \\ +& \modu \hsp \sigma \hsp (\Gammaslash-\Gammaslash^{\circ}) + (\chihat,\tr\chi)\hsp \De (\Gammaslash-\Gammaslash^{\circ}) +\al(\chihat,\tr\chi)\hsp \hsp \etabar \hsp (\Gammaslash-\Gammaslash^{\circ}) +\al\hsp (\beta,\Te_4)\hsp (\Gammaslash-\Gammaslash^{\circ})\notag \\ +& \modu \hsp \omegabar \hsp  \big(\snabla(\chihat,\tr\chi) +\mathcal{G}\cdot(\Gammaslash-\Gammaslash^{\circ})\big).
\end{align}Integrating along the $e_4$-direction and using the bootstrap assumptions, we see that the top-order term is \[\intubar \scalefourSuubarprime{\De\snabla(\chihat,\tr\chi)}\dubarprime, \]which is bounded as before by $\mathcal{O}_{3,2}[\chihat] +1$. The rest of the terms are bounded by $1$ as before. The result follows.
\end{proof}

\section{$\prescript{(S)}{}{\mathcal{O}_{0,\infty}}$ estimates for the Ricci coefficients}\label{section0infty}

In this section, we improve the bootstrap assumptions on the $\mathcal{L}^{\infty}_{(sc)}(S)-$norm of the Ricci coefficients. 

\begin{proposition}
There exists a constant $C$ depending on the total norms $\mathcal{R},\Vl$ such that \[ \prescript{(S)}{}{\mathcal{O}}_{0,\infty}\lesssim C.   \]

\end{proposition}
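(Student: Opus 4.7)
The plan is to pass from the already established $\prescript{(S)}{}{\mathcal{O}}_{0,4}$ and $\prescript{(S)}{}{\mathcal{O}}_{1,4}$ estimates to a pointwise bound by means of Sobolev embedding on $S_{u,\ubar}$. Concretely, since each $(S_{u,\ubar},\slg)$ carries uniformly controlled area and isoperimetric constant (Propositions \ref{31}--\ref{34}), the scale--invariant Sobolev inequality yields, for any horizontal tensor $\varphi$,
\[
\scaleinfinitySu{\varphi} \;\lesssim\; \scalefourSu{\varphi} + \scalefourSu{(\al\snabla)\varphi},
\]
which is a direct consequence of the $L^\infty$--$L^4$ embedding in two dimensions (and may be read off from Proposition~\ref{Sobolev} by interpolation, using only the $I=0,1$ terms and the trivial inclusion $\mathcal{L}^2_{(sc)}\subset \mathcal{L}^4_{(sc)}$ via codimension--$1$ trace, or by applying the inequality \eqref{eq:Linfty} with $p=4$ directly to our signature--weighted quantities).

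Applying this separately to each Ricci coefficient $\varphi\in\{\eta,\etabar,\omegabar,\tfrac{1}{\al}\chihat,\tfrac{\al}{\modu}\chibarhat,\tfrac{a}{\modu}\tildetr,\tfrac{a}{\modu^2}\tr\chibar\}$ and inserting the bounds obtained in Section~\ref{sectionO04} (for the $\mathcal{L}^{4}_{(sc)}(S)$ norms of the coefficients themselves) and Section~\ref{sectionO14} (for the $\mathcal{L}^{4}_{(sc)}(S)$ norms of their angular derivatives) gives a pointwise bound of the form
\[
\prescript{(S)}{}{\mathcal{O}}_{0,\infty}(u,\ubar)\;\lesssim\; \prescript{(S)}{}{\mathcal{O}}_{0,4}(u,\ubar) + \prescript{(S)}{}{\mathcal{O}}_{1,4}(u,\ubar)\;\lesssim\; C(\mathcal{R},\underline{\mathcal{R}},\Vl),
\]
where the right-hand side is bounded, term-by-term, by the curvature energies $\mathcal{R}_0,\mathcal{R}_1,\mathcal{R}_2,\underline{\mathcal{R}}_0,\underline{\mathcal{R}}_1,\underline{\mathcal{R}}_2$ and the Vlasov norms $\Vl_0,\Vl_1,\Vl_2$ through the propositions of the two preceding sections (e.g.\ Propositions~\ref{omegabar04estimate}, \ref{propchihat}, \ref{propchibarhat}, \ref{propositiontrchi}, \ref{propositiontildetr}, \ref{etabarint04}, \ref{propositionDchihat}, \ref{propositionDchibarhat}, \ref{etabar14bound}).

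The only delicate point is that some of the $\prescript{(S)}{}{\mathcal{O}}_{0,4}$ estimates are not yet self-contained in terms of $\mathcal{R},\underline{\mathcal{R}},\Vl$: the bounds for $\tr\chi$ and $\tildetr$ involve factors of $\scaleinfinitySu{\chihat}$ and $\scaleinfinitySu{\chibarhat}$, and the bound for $\etabar$ in $\mathcal{L}^{4}_{(sc)}$ involves $\scaleinfinitySu{\tr\chibar}$. These are precisely the quantities we are trying to control, so the estimates must be combined in the correct order. The plan is to proceed hierarchically: first derive $\scaleinfinitySu{\chihat}$ and $\scaleinfinitySu{\chibarhat}$ from the improved $\prescript{(S)}{}{\mathcal{O}}_{1,4}$ estimates of Propositions \ref{propositionDchihat} and \ref{propositionDchibarhat} (as already noted in Remarks \ref{improvechihat} and \ref{improvechibarhat}); then use these to close the bounds on $\tr\chi,\tildetr,\tr\chibar$; and finally combine with the $\mathcal{L}^{4}_{(sc)}$ and derivative estimates for $\eta,\etabar,\omegabar$. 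This cascade produces the desired constant $C=C(\mathcal{R},\underline{\mathcal{R}},\Vl)$, which under the bootstrap assumption $(O+R+V)^{20}\ll a^{1/16}$ is much smaller than the auxiliary bootstrap bound $O$ and therefore improves it. The main obstacle, already circumvented in the two previous sections, is to ensure that this ordering closes without circularity; once the $\mathcal{L}^{4}_{(sc)}$ estimates are in hand, the Sobolev inequality provides only a routine final step.
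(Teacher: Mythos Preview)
The proposal is correct and follows essentially the same approach as the paper: apply the $L^\infty$--$L^4$ Sobolev inequality on $S_{u,\ubar}$ (the paper states it in the interpolated form $\scaleinftySu{\phi}\lesssim \scalefourSu{\phi}^{1/2}\scalefourSu{(\al\snabla)\phi}^{1/2}+\scalefourSu{\phi}$, equivalent to yours by AM--GM), and close the hierarchy in the order $\chihat,\chibarhat\to\tr\chi,\tildetr,\tr\chibar\to\etabar$ exactly as in Remarks~\ref{improvechihat}--\ref{improvechibarhat}. Your parenthetical justification via Proposition~\ref{Sobolev} and an ``inclusion $\mathcal{L}^2_{(sc)}\subset\mathcal{L}^4_{(sc)}$'' is muddled (that proposition needs two derivatives, and no such inclusion holds), but the alternative you mention---applying \eqref{eq:Linfty} with $p=4$---is the correct derivation.
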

\begin{proof}
The scale-invariant inequality 
\be  \label{scaleinvariantfourinfinity}\scaleinftySu{\phi}\lesssim \scalefourSu{\phi}^{\frac{1}{2}}\scalefourSu{\al\snabla\phi}^{\frac{1}{2}}+\scalefourSu{\phi},    \ee holds for an arbitrary horizontal tensor-field. At first instance, Propositions 
\ref{propchihat},\ref{propositionDchihat}, 
\ref{propchibarhat} and \ref{propositionDchibarhat} imply control for $\chihat$ and $\chibarhat$ in the supremum norm, given in Remarks \ref{improvechihat}-\ref{improvechibarhat}. In turn, these estimates on the $\mathcal{L}^{\infty}_{(sc)}$-norm of $\chihat,\hsp \chibarhat$ improve Propositions \ref{propositiontrchi} and
\ref{propositiontildetr} for $\tr\chi$ and $\tildetr$ respectively.
For $\etabar$, Propositions \ref{etabarint04} and \ref{etabar14bound} imply the improvement of the bootstrap assumption on $\prescript{(S)}{}{\mathcal{O}}_{0,\infty}[\etabar]$, through equation \eqref{scaleinvariantfourinfinity}.  Consequently, we get \[\prescript{(S)}{}{\mathcal{O}}_{0,\infty}[\omegabar, \chihat,\tr\chi, \eta, \chibarhat,\tr\chibar,\tildetr, \etabar] \lesssim C(\mathcal{R},\underline{\mathcal{R}},\Ve,\underline{\Ve}),    \]for some constant $C$ depending only on the curvature and Vlasov norms. This concludes the proof of the proposition.
\end{proof}
\section{$\prescript{(S)}{}{\mathcal{O}}_{2,2}$ estimates for the Ricci coefficients}
In this section we improve the bootstrap bounds on two derivatives of Ricci coefficients in $L^2$. For efficiency of presentation, from now on and for the rest of this paper, we opt to  carry computations out in a schematic way, rather than term by term as in \ref{sectionO04}-\ref{section0infty}.  We begin this section with a series of preliminary estimates that will be important in handling the equations schematically. 
\subsection{Preliminary estimates}
\begin{proposition}\label{preliminary}
    Let $\psi$ denote an arbitrary element of the set $\big\{ \frac{1}{a}\modu, \frac{1}{\al}\chihat,\tr\chi,\omegabar,\eta,\etabar,\frac{a}{\modu}\tildetr,\frac{\al}{\modu}\chibarhat, \frac{a}{\modu^2}\tr\chi \big\}$. Let $\psi_g \in \{\modu\omegabar,\eta,\etabar,1\}$ and let $\Psi_{\ubar} \in \big\{\frac{1}{\al}\alpha,\beta,\rho,\sigma,\betabar\big\}$, $\Psi_u \in \big\{\frac{1}{\al}\beta,\rho,\sigma,\betabar,\alphabar\big\}$. Finally, denote an arbitrary element of the set $\big \{\frac{\modu}{a}\Te_{44},\frac{\modu}{a}\Te_4, \Te_{34},\Te_{33},\Te_{3},\Te \big\} $by $\Vl$. Under the assumptions of Theorem \ref{mainone} and the bootstrap assumptions \eqref{boundsbootstrap}, the following hold:

    \begin{equation}
    \sum_{i_1+i_2\leq 1} \scaletwoSu{\dione\psi_g^{i_2}}\lesssim \modu,
    \label{preliminb2}\end{equation}
    \begin{equation}
        \sum_{i_1+i_2+i_3\leq 1}\scaletwoSu{\dione \psi_g^{i_2}\dit (\psi,\Vl)}\lesssim O+V,
    \end{equation}
    \begin{equation}
    \sum_{i_1+i_2+i_3+i_4\leq 1}\scaletwoSu{\dione\psi_g^{i_2}\dit(\psi,\Vl) \dif \psi}\lesssim \frac{O(O+V)}{\modu} \end{equation}
 
    \begin{equation}
     \intubar \sum_{i_1+i_2+i_3+i_4+i_5\leq 1}\scaletwoSuubarprime{\dione \psi_g^{i_2}\dit \psi \dif \psi \difi \Psi_{\ubar}}\dubarprime \lesssim \frac{O^2 \hsp R}{\modu^2},
    \end{equation}
       \begin{equation}
     \intu \sum_{i_1+i_2+i_3+i_4+i_5\leq 1}\frac{a}{\upr^2}\scaletwoSuprime{\dione \psi_g^{i_2}\dit \psi \dif \psi \difi \Psi_{u}}\duprime \lesssim \frac{O^2 \hsp R}{\modu^2},
    \end{equation}
    \begin{equation}\label{prelimfive}
      \intubar \sum_{i_1+i_2+i_3\leq 1}\scaletwoSuubarprime{(\al)^{i_2}\dione \psi_g^{i_2}\dit \Psi_{\ubar}}\dubarprime\lesssim R, 
    \end{equation}
     \begin{equation}\label{prelimsix}
      \intu \frac{a}{\upr^2} \sum_{i_1+i_2+i_3\leq 1}\scaletwoSuprime{(\al)^{i_2}\dione \psi_g^{i_2}\dit \Psi_u} \duprime \lesssim  \frac{\al}{\modu^{\f12}} R, 
    \end{equation}
    \begin{equation}\label{prelimseven}
      \intubar  \sum_{i_1+i_2+i_3+i_4\leq 1}\scaletwoSuubarprime{(\al)^{i_2}\dione \psi_g^{i_2}\dit \psi\dif \Psi_{\ubar}} \dubarprime \lesssim \frac{R \hsp O}{\modu}, 
    \end{equation}
      \begin{equation}\label{prelimeight}
      \intu \frac{a}{\upr^2} \sum_{i_1+i_2+i_3+i_4\leq 1}\scaletwoSuprime{(\al)^{i_2}\dione \psi_g^{i_2}\dit \psi \dif \Psi_u} \duprime \lesssim \frac{R \hsp O}{\modu}, 
    \end{equation}

      \begin{equation}\label{prelimnine}
      \intubar  \sum_{i_1+i_2+i_3+i_4+i_5\leq 1}\scaletwoSuubarprime{(\al)^{i_2}\dione \psi_g^{i_2}\dit \psi\dif \psi \difi \Psi_u} \dubarprime \lesssim \frac{R \hsp O^2}{\modu^2}, 
    \end{equation}
      \begin{equation}\label{prelimten}
      \intu \frac{a}{\upr^2} \sum_{i_1+i_2+i_3+i_4+i_5\leq 1}\scaletwoSuprime{(\al)^{i_2}\dione \psi_g^{i_2}\dit \psi \dif \psi \difi \Psi_u} \duprime \lesssim \frac{R \hsp O^2}{\modu^2}, 
    \end{equation}
    \begin{equation}\label{prelimfour}\sum_{i_1+i_2+i_3+i_4+i_5\leq 1}\scaletwoSu{(\al)^{i_2}\dione \psi_g^{i_2}\dit \psi \dif\psi \difi(\psi,\Vl) }\lesssim \frac{O^2(O+V)}{\modu^2},
    \end{equation}
     \begin{equation} \label{prelimone}
       \sum_{i_1+i_2= 2}\scaletwoSu{(\al)^{i_2}\dione \psi_g^{i_2}\dit \psi \dif \psi} \lesssim \frac{O^2}{\modu}, 
    \end{equation}
    \begin{equation}
\label{preliminb} \sum_{i_1+i_2+i_3=2}\scaletwoSu{(\al)^{i_2} \dione \psi_g^{i_2}\dit \Vl}\lesssim V.
    \end{equation}\begin{equation}\label{prelimtwo}\intubar \sum_{i_1+i_2+i_3= 2}\scaletwoSuubarprime{(\al)^{i_2}\dione \psi_g^{i_2}\dit \Psi_{\ubar}} \lesssim R
    \end{equation}
    \begin{equation}\label{prelimthree}\intu \frac{a}{\upr^2} \sum_{i_1+i_2+i_3= 2}\scaletwoSuprime {(\al)^{i_2}\dione \psi_g^{i_2}\dit \Psi_u} \duprime\lesssim \frac{\al}{\modu^{\f12}}R.
    \end{equation}

\end{proposition}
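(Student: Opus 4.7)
The plan is to reduce every inequality in the list to a systematic application of the scale-invariant Hölder inequalities from Section~2.7 together with the three bootstrap controls $\mathcal{O} \leq O$, $\mathcal{R} + \underline{\mathcal{R}} \leq R$, $\mathcal{V} \leq V$, and --- for those estimates that mix a curvature factor with a product of $\psi$'s --- the codimension-$1$ trace inequalities of Propositions~\ref{codimensiononetraceineq} and~\ref{codimension12}. Because every variable in the statement has been rescaled so that its $\mathcal{L}^\infty_{(sc)}(S)$-norm is controlled by $O$, $R$, or $V$, the decay weights $\modu^{-k}$ appearing on the right-hand sides are produced automatically from the $\modu^{-1}$ factor that each Hölder pairing supplies.

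First I would treat the pointwise (non-integrated) inequalities. The fundamental one is $\scaletwoSu{1} \lesssim \modu$, which follows from the sphere-area estimate $\mathrm{Area}(S_{u,\ubar}) \approx \modu^2$ of Propositions~\ref{31} and~\ref{34}; combined with the $\mathcal{L}^\infty_{(sc)}(S)$-control of $\psi_g$ via $\prescript{(S)}{}{\mathcal{O}}_{0,\infty}$, this immediately gives~\eqref{preliminb2}. The unlabeled bilinear estimate appearing just afterwards is then just $\scaleinfinitySu{\psi,\Vl}\,\scaletwoSu{1}$, and the subsequent trilinear, quadrilinear, and quintilinear bounds (including~\eqref{prelimfour}) follow by iterating the inequality $\scaletwoSu{\Y_1 \cdot \Y_2} \lesssim \modu^{-1}\,\scaleinfinitySu{\Y_1}\scaletwoSu{\Y_2}$, each extra $\psi$-placement producing a gain of $O/\modu$. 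Inequalities~\eqref{prelimone} and~\eqref{preliminb}, which carry two $\mathcal{D}$'s, require the commutation formulae of Propositions~\ref{commute4} and~\ref{propositionnabla3} to redistribute the derivatives across the factors, after which the bootstrap bounds on $\prescript{(S)}{}{\mathcal{O}}_{2,2}$ and $\mathcal{V}_2$ close the estimate without reaching any third-order Ricci quantities.

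For the integrated estimates~\eqref{prelimfive}--\eqref{prelimten} and~\eqref{prelimtwo}--\eqref{prelimthree}, the plan is to keep the curvature factor in its natural null-hypersurface $\mathcal{L}^2_{(sc)}$-norm (along $H$ for $\Psi_{\ubar}$ and along $\Hbar$ for $\Psi_u$), bounded directly by $\mathcal{R}$ or $\underline{\mathcal{R}}$. All Ricci factors are absorbed in $\mathcal{L}^\infty_{(sc)}(S)$ using $\prescript{(S)}{}{\mathcal{O}}_{0,\infty}$; whenever the product of $\psi$'s forces two angular placements, one of them is put in $\mathcal{L}^4_{(sc)}(S)$ via $\prescript{(S)}{}{\mathcal{O}}_{0,4}$. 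The weight $a/\upr^2$ in~\eqref{prelimsix}, \eqref{prelimeight}, \eqref{prelimten} and~\eqref{prelimthree} is precisely the rescaling factor converting $\duprime$-integration into the scale-invariant $\mathcal{L}^2_{(sc)}(\Hbar)$-norm, after which the curvature bound $\underline{\mathcal{R}}$ is available; the outgoing counterparts are identical modulo obvious changes. For the top-order pieces~\eqref{prelimnine}--\eqref{prelimten} and~\eqref{prelimtwo}--\eqref{prelimthree}, a single Hölder in the null direction together with the bound $\|\mathcal{D}^2 \Psi_{\ubar}\|_{\mathcal{L}^2_{(sc)}(H)} \lesssim \mathcal{R}_2$ (and its incoming analogue for $\underline{\mathcal{R}}_2$) suffices.

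The main obstacle I anticipate is the bookkeeping for the $\Psi_u$ contributions in~\eqref{prelimsix}, \eqref{prelimeight}, \eqref{prelimten}, \eqref{prelimthree}: these involve the most singular curvature component $\alphabar$, whose control along $\Hbar$ is marginal, and the scale-invariant weights force a careful matching of signatures --- as in the proofs of Propositions~\ref{propchibarhat} and~\ref{propositionDchibarhat} --- to ensure that the factor $\scaleinftySuprime{\psi} \sim O$ and the curvature $\mathcal{L}^2_{(sc)}(\Hbar)$-norm combine with the right $\al/\modu^{1/2}$ weight to close against the stated right-hand side. Once this pattern is isolated for one representative term, the remaining estimates in the list follow by the same template, so the overall proof of Proposition~\ref{preliminary} reduces to a careful but uniform tabulation rather than a single delicate argument.
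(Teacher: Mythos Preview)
Your approach is essentially the paper's: scale-invariant H\"older inequalities combined with the bootstrap controls on $\mathcal{O}$, $\mathcal{R}+\underline{\mathcal{R}}$, $\mathcal{V}$, placing the curvature factor in its null-hypersurface $\mathcal{L}^2_{(sc)}$-norm and the Ricci/Vlasov factors in $\mathcal{L}^\infty_{(sc)}$ or $\mathcal{L}^4_{(sc)}$. Two small corrections. First, for \eqref{prelimone} and \eqref{preliminb} you do \emph{not} need the commutation formulae of Propositions~\ref{commute4} and~\ref{propositionnabla3}: the schematic expression $\dione\psi_g^{i_2}\dit\psi\dif\psi$ subject to the index constraint already denotes a sum over all possible distributions of derivatives onto the factors, so there is nothing to redistribute --- you simply condition on $i_2$ (or equivalently on where the two $\mathcal{D}$'s land) and estimate each case. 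Second, your iteration of the $\mathcal{L}^\infty_{(sc)}$--$\mathcal{L}^2_{(sc)}$ H\"older alone is not enough for the case $\mathcal{D}\psi\cdot\mathcal{D}\psi$ in \eqref{prelimone}, since neither factor has enough derivatives available to be placed in $\mathcal{L}^\infty_{(sc)}$; here the paper uses $\mathcal{L}^4_{(sc)}$--$\mathcal{L}^4_{(sc)}$ via $\prescript{(S)}{}{\mathcal{O}}_{1,4}$, and similarly puts first-derivative curvature in $\mathcal{L}^4_{(sc)}$ (via codimension-one trace) for the mixed case in \eqref{prelimtwo}--\eqref{prelimthree}.
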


\begin{proof}
Let us note that \eqref{prelimone}-\eqref{prelimthree} correspond to bounds for the highest-order terms encountered in the structure equations for $\snabla_4\De^2\psi, \snabla_3\De^2 \psi$, as indicated in the commutation formulae \eqref{commutationformulanabla4}, \eqref{commutationformulanabla3}, since for example \[ \snabla_4\psi= \psi \psi +\Psi_{\ubar} +\Vl \hspace{3mm}\text{or}\hspace{3mm} \snabla_3\psi+k \tr\chibar \psi =  \psi \psi +\Psi_u +\Vl,  \]whereas the rest of the inequalities correspond to the lower-order terms in the structure equations. For any given inequality above, if there are three terms, they are bounded either in $\mathcal{L}^{\infty}_{(sc)}-\mathcal{L}^4_{(sc)}-\mathcal{L}^4_{(sc)}$ or  $\mathcal{L}^{\infty}_{(sc)}-\mathcal{L}^{\infty}_{(sc)}-\mathcal{L}^2_{(sc)}$. If there are two terms, they are bounded either in  $\mathcal{L}^4_{(sc)}-\mathcal{L}^4_{(sc)}$ or in $\mathcal{L}^{\infty}_{(sc)}-\mathcal{L}^2_{(sc)}$. 

\begin{enumerate}
\item For \eqref{prelimone}, the case $i_2=2$ is straightforward. For $i_2=1,$ 
the operator $\mathcal{D}$ falls on exactly one of the three terms $\psi$, $\psi_g$. We bound that term in $\mathcal{L}^4_{(sc)}$ together with one of the remaining terms and the last term in $\mathcal{L}^{\infty}_{(sc)}.$ This gives \[ \al \hsp (\scaletwoSu{ \psi\psi \De \psi_g}+\scaletwoSu{\psi_g \psi \De\psi}) \lesssim \frac{O^2}{\modu}.    \]For $i_2=0$, we have\begin{align*} &\scaletwoSu{\De\psi \De\psi}+\scaletwoSu{\psi \De^2\psi} \notag \\\lesssim& \frac{1}{\modu}\big(\scalefourSu{\De\psi}\scalefourSu{\De\psi}+\scaleinftySu{\psi}\scaletwoSu{\De^2\psi}\big)\lesssim \frac{O^2}{\modu}.   \end{align*}
\item For \eqref{preliminb}, we can condition with respect to $i_2$ similarly. For $i=2$, there holds \[ a \scaletwoSu{\psi\psi \Vl}\lesssim \frac{a O^2 \hsp V}{\modu^2},   \]where we bound $\Vl$ in $\mathcal{L}^4_{(sc)}$. For $i_2=1$ we have $\al\big(\scaletwoSu{\De \psi \hsp \Vl}+\scaletwoSu{\psi_g \De \Vl}\big) \lesssim 1$, where we bound both terms in $\mathcal{L}^4_{(sc)}$. Finally, for $i_2=0$, there holds by definition \[ \scaletwoSu{\De^2\Vl}\lesssim V.     \]

\item For \eqref{prelimtwo}-\eqref{prelimthree} we follow the same strategy, keeping in mind that a given $\Psi_u$ or $\Psi_{\ubar}$ can be estimated in $\mathcal{L}^4_{(sc)}$ at zeroth and first orders, whereas for second order we control directly the energy norms.
\end{enumerate}
\end{proof}

\subsection{$\prescript{(S)}{}{\mathcal{O}}_{2,2}$ estimates.}

%We begin with $\chihat$. 
\label{sectionO22}
\begin{proposition}
Given the assumptions of Theorem \ref{mainone} and the bootstrap assumptions \eqref{boundsbootstrap}, there holds

\[ \scaletwoSu{\De^2 \omegabar}\lesssim \mathcal{R}_{2}[\rho]+\Ve_2[\Te_{34}]+1.  \]
\end{proposition}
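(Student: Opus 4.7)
The plan is to apply the commutation formula of Proposition \ref{commute4} to the transport equation
\[
\snabla_4\omegabar \;=\; 2\eta\cdot\etabar - |\eta|^2 - \rho - \tfrac{1}{2}\Te_{34} \;=:\; F_0
\]
with $\phi=\omegabar$, producing a schematic identity $\snabla_4(\De^2\omegabar)=F_2$, and then integrate along the outgoing null direction from the incoming cone $\Hbar_0$, where the Minkowskian data assumption (ii) of Theorem \ref{mainone} together with $f|_{\ubar=0}=0$ forces $\De^k\omegabar|_{\ubar=0}=0$ for all $k\le 2$. Thus one obtains
\[
\scaletwoSu{\De^2\omegabar} \;\lesssim\; \int_0^{\ubar}\scaletwoSuubarprime{F_2}\,\dubarprime ,
\]
so the estimate reduces to a termwise bound of $F_2$ in $\scaletwoSuubarprime{\cdot}$.

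Inspecting the right-hand side of \eqref{commutationformulanabla4} with $F_0$ as above, the sum $F_2$ decomposes into three classes of terms: (a) the principal contributions $\De^2 F_0$, which contain the top-order pieces $\De^2\rho$ and $\De^2\Te_{34}$, together with quadratic terms of schematic type $\dione\psi_g^{i_2}\dit\psi\dif\psi$ with $i_1+i_2+i_3+i_4\le 2$ coming from $\De^2(\eta\cdot\etabar-|\eta|^2)$; (b) quadratic and cubic error terms with at most one curvature or matter factor and at most two derivatives distributed across the factors; (c) lower-order terms carrying a $\modu\omegabar$-weight that are already absorbed by the bootstrap on $\prescript{(S)}{}{\mathcal O}_{0,\infty}[\omegabar]$. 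Since $\ubar\in[0,1]$, by Cauchy--Schwarz one has for any tensor $\Phi$
\[
\int_0^{\ubar}\scaletwoSuubarprime{\Phi}\,\dubarprime
\;\lesssim\; \scaletwoHu{\Phi},
\]
so the principal curvature/matter pieces are controlled by
\[
\int_0^{\ubar}\scaletwoSuubarprime{\De^2\rho}\,\dubarprime + \int_0^{\ubar}\scaletwoSuubarprime{\De^2\Te_{34}}\,\dubarprime
\;\lesssim\; \mathcal{R}_2[\rho] + \Ve_2[\Te_{34}] .
\]

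For the remaining terms in class (a)--(c), the claim is that each one integrates to an amount bounded by $1$. Concretely, the mixed nonlinear terms of the form $\dione\psi_g^{i_2}\dit\psi\dif\psi$ with $i_1+i_2+i_3+i_4\le 2$ arising from $\De^2(\eta\cdot\etabar)$ and $\De^2|\eta|^2$ are estimated using \eqref{prelimone} and \eqref{prelimfour} of Proposition \ref{preliminary}, producing a contribution $\lesssim O^2/\modu \lesssim 1$ after $\ubar$-integration. The error terms of the form $\dione\psi_g^{i_2}\dit\modu\dif(\eta,\etabar)\difi\phi$, $\dione\psi_g^{i_2}\dit(\chihat,\tr\chi)\dif\phi$ and the sextilinear tail coming from the last sum in \eqref{commutationformulanabla4} are handled by the corresponding inequalities \eqref{preliminb2}--\eqref{prelimfour} and \eqref{preliminb} of Proposition \ref{preliminary} with $\phi=\omegabar$, noting that $\omegabar$ itself plays the role of a $\psi$ in the preliminary scheme (since $s_2(\omegabar)=1$ and $\modu^3|\omegabar|/a\lesssim \mathcal O_{0,\infty}$). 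The terms involving $\modu\sigma\,\omegabar$ and $\al(\beta,\Te_4)\,\omegabar$ are absorbed using \eqref{prelimfive}--\eqref{prelimeight}.

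The only conceptual subtlety, and what I expect to be the main bookkeeping obstacle, is the fact that some of the schematic error terms coming from \eqref{commutationformulanabla4} with $F_0$ containing $\rho$ and $\Te_{34}$ will, after two commutations, generate expressions involving $\De\rho$, $\De\Te_{34}$ paired with a Ricci coefficient; these must be placed carefully, with the curvature/matter factor estimated in $\mathcal L^2_{(sc)}(S)$ (or in $\mathcal L^4_{(sc)}(S)$ after the codimension-1 trace of Proposition \ref{codimension11}) so that the time-integration is converted to the appropriate $H_u$ energy and controlled by $\mathcal{R}_2[\rho]$ or $\Ve_2[\Te_{34}]$, while the Ricci factor is placed in $\mathcal L^{\infty}_{(sc)}(S)$ using Proposition \ref{Sobolev} and the already-closed bootstrap on $\prescript{(S)}{}{\mathcal O}_{0,\infty}$. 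Once these placements are carried out consistently with Proposition \ref{preliminary}, collecting all contributions yields the stated bound
\[
\scaletwoSu{\De^2\omegabar}\;\lesssim\;\mathcal{R}_2[\rho]+\Ve_2[\Te_{34}]+1,
\]
which closes the bootstrap for this component.
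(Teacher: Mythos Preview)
Your proposal is correct and follows essentially the same approach as the paper: apply the commutation formula of Proposition~\ref{commute4} to the $\snabla_4\omegabar$ equation, use the Minkowskian data on $\Hbar_0$ to kill the initial contribution, isolate the top-order pieces $\De^2\rho$ and $\De^2\Te_{34}$ via Cauchy--Schwarz in $\ubar$, and invoke Proposition~\ref{preliminary} for everything else. The paper's proof is terser---it bundles all sub-principal terms (including the $\psi_g\cdot\De(\rho,\Te_{34})$ cross terms you single out) directly into the citation of Proposition~\ref{preliminary} and the bootstrap, without explicit mention of codimension-1 trace inequalities---but the substance is identical.
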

\begin{proof}
Recall that \[ \snabla_4 \omegabar= 2(\eta,\etabar)\eta +\rho+\Te_{34}.\]Using Proposition \ref{commutationformulanabla4}, we have

\begin{align}
\snabla_4 \De^2 \omegabar =& \sum_{i_1+i_2+i_3+i_4=2}\dione \psi_g^{i_2}\dit \eta \dif (\eta,\etabar) +\sum_{i_1+i_2+i_3=2}\dione \psi_g^{i_2}\dit  (\rho,\Te_{34}) \notag \\ +& \frac{1}{\al}\sum_{i_1+i_2+i_3+i_4+i_5=1} \db\modu \dif(\eta,\etabar)\mathcal{D}^{i_5+1}\omegabar \notag \\ +&\sum_{i_1+i_2+i_3+i_4=1} \db(\tr\chi,\chihat)\mathcal{D}^{i_4+1}\omegabar \notag \\ +&\al \sum_{i_1+i_2+i_3+i_4+i_5=1}\db(\tr\chi,\chihat)\dif \etabar \difi \omegabar \notag \\ +& \al \sum_{i_1+i_2+i_3+i_4=1} \db (\beta,\Te_4)\dif \omegabar \notag \\ +&\sum_{i_1+i_2+i_3+i_4+i_5=1} \db \modu \dif \sigma \difi \omegabar \notag \\ +&\sum_{i_1+i_2+i_3+i_4+i_5+i_6=1} \db \modu \dif(\eta,\etabar)\difi(\eta,\etabar)\mathcal{D}^{i_6}\omegabar:= T_1+\dots +T_8.
\end{align}Thus, since $\De^2 \omegabar$ vanishes on $\Hbar_0$, we have
\be \scaletwoSu{\De^2 \omegabar}\lesssim \sum_{i=1}^8 \intubar \scaletwoSuubarprime{T_i}\dubarprime . \ee
The most dangerous term is

\begin{align} \sum_{i_1+i_2+i_3=2}\intubar \scaletwoSuubarprime{\db(\rho,\Te_{34})}\lesssim \mathcal{R}_2[\rho]+\Ve_2[\Te_{34}]+1.
 \end{align}
\noindent The rest of the terms are bounded by $1$, using the preliminary estimates from Proposition \ref{preliminary} and using the bootstrap assumptions \eqref{boundsbootstrap}. The result follows.

\end{proof}

\begin{proposition}
Given the assumptions of Theorem \ref{mainone} and the bootstrap assumptions \eqref{boundsbootstrap}, there holds

\[  \frac{1}{\al}\scaletwoSu{\De^2\chihat}\lesssim \mathcal{R}_2[\alpha]+1.   \]

\label{propositionD2chihat}

\end{proposition}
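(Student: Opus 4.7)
The plan is to mimic the approach used in the preceding Proposition for $\De^2\omegabar$, only now applied to $\chihat$, whose governing equation is $\snabla_4\chihat+\tr\chi\,\chihat=-\alpha$. Applying the commutation formula from Proposition \ref{commute4} with $\phi=\chihat$ and $i=2$, we obtain a schematic identity of the form
\begin{align}
\snabla_4 \De^2 \chihat
&= \sum_{i_1+i_2+i_3=2}\dione \psi_g^{i_2}\dit\alpha \;+\;\sum_{i_1+i_2+i_3+i_4=2}\dione\psi_g^{i_2}\dit(\tr\chi,\chihat)\dif\chihat \notag\\
&\quad+\frac{1}{\al}\sum_{i_1+i_2+i_3+i_4+i_5=1}\dione\psi_g^{i_2}\dit\modu\,\dif(\eta,\etabar)\,\mathcal{D}^{i_5+1}\chihat \notag\\
&\quad+\text{l.o.t.}, \notag
\end{align}
where the lower-order terms are exactly those appearing in \eqref{commutationformulanabla4}. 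Since $\chihat$ vanishes identically on $\Hbar_0$ by the Minkowskian incoming data assumption, the first-order derivatives $\mathcal{D}\chihat$ along $\Hbar_0$ that contain no $\snabla_4$ likewise vanish, and those containing a $\snabla_4$ are expressible in terms of $\alpha$ and Ricci lower-order products via the original transport equation; the same reduction gives $\De^2\chihat\big|_{\Hbar_0}$ in terms of curvature and lower-order quantities, all of which are controlled by $\mathcal{I}$. Hence
\[
\tfrac{1}{\al}\scaletwoSu{\De^2\chihat}\lesssim \tfrac{1}{\al}\scaletwoSu{\De^2\chihat\big|_{S_{u,0}}}+\tfrac{1}{\al}\intubar \scaletwoSuubarprime{\snabla_4\De^2\chihat}\,\dubarprime.
\]

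The first term on the right is controlled by the initial data bound $\mathcal{I}\lesssim 1$. For the integrated right-hand side, the principal contribution is the curvature source
\[
\tfrac{1}{\al}\intubar \sum_{i_1+i_2+i_3=2}\scaletwoSuubarprime{\dione\psi_g^{i_2}\dit\alpha}\,\dubarprime,
\]
which, upon distributing the $\al$-factor and invoking the analogue of \eqref{prelimtwo} applied to $\alpha$ in place of $\Psi_{\ubar}$, is bounded by $\mathcal{R}_2[\alpha]$. All the other contributions — namely the quadratic nonlinearities $\db(\tr\chi,\chihat)\dif\chihat$, the trilinear connection terms $\db\modu\,\dif(\eta,\etabar)\mathcal{D}^{i_5+1}\chihat$, the $\db(\beta,\Te_4)\dif\chihat$ and $\db\modu\,\dif\sigma\,\difi\chihat$ terms, and the quintic connection strings — are directly covered by the preliminary estimates \eqref{preliminb2}--\eqref{preliminb} of Proposition \ref{preliminary}, yielding bounds of the form $O(O+V)\modu^{-k}$ with $k\ge 1$ that produce a contribution bounded uniformly by $1$ after integration in $\ubar$ over $[0,1]$ in our region $|u|\ge a/4$.

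The main obstacle will be handling the two borderline trilinear curvature–connection combinations, specifically $\db(\chihat,\tr\chi)\dif\chihat$ at $i_1+i_2+i_3+i_4=2$ and the $\al$-weighted quintic term $\al\,\db(\tr\chi,\chihat)\dif\etabar\,\difi\chihat$. In each case, two $\mathcal{D}$'s land on $\chihat$, and one must distribute the derivatives so as to bound the factor carrying two derivatives in $\mathcal{L}^2_{(sc)}$ while bounding the remaining factors in $\mathcal{L}^{\infty}_{(sc)}$; this is where the improved supremum bounds of Section \ref{section0infty} on $\chihat,\tr\chi,\eta,\etabar$ (and the bootstrap assumption $\mathcal{O}\leq O$ for $\prescript{(S)}{}{\mathcal{O}}_{2,2}[\chihat]$) are combined with the scale-invariant Hölder inequalities to close. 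The net decay factor $\modu^{-1}$ in each such term then makes the $\ubar$-integral convergent with constant bounded by $(O+R+V)^{k}\modu^{-1}\lesssim 1$, and the asserted estimate $\frac{1}{\al}\scaletwoSu{\De^2\chihat}\lesssim \mathcal{R}_2[\alpha]+1$ follows.
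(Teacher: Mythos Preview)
Your proposal is correct and follows essentially the same route as the paper: commute $\snabla_4\chihat+\tr\chi\,\chihat=-\alpha$ twice via Proposition~\ref{commute4}, integrate in $\ubar$ from the Minkowskian cone, isolate the $\De^2\alpha$ contribution as $\mathcal{R}_2[\alpha]$, and bound all remaining error terms by $1$ using the preliminary estimates and the bootstrap assumptions. Two cosmetic differences: the paper singles out as ``most borderline'' the commutation terms $\frac{1}{\al}\modu(\eta,\etabar)\mathcal{D}^2\chihat$ and $(\tr\chi,\chihat)\mathcal{D}^2\chihat$ (rather than the $F_0$-term and the $\al$-weighted quintic you highlight, the latter of which is actually far from borderline), and it closes these using only the raw bootstrap $\mathcal{O}\le O$ rather than the improved $\mathcal{L}^{\infty}_{(sc)}$ bounds of Section~\ref{section0infty}---either works here.
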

\begin{proof}
Using the commutation formula \eqref{commutationformulanabla4}, we obtain

\begin{align}
\snabla_4\De^2 \chihat =&\sum_{i_1+i_2+i_3+i_4=2}\dione (\modu\omegabar)^{i_2}\dit \tr\chi \dif \chihat \notag  +\sum_{i_1+i_2+i_3=2}\dione (\modu\omegabar)^{i_2}\dit \alpha\notag \\ +&\frac{1}{\al}\sum_{i_1+i_2+i_3+i_4+i_5= 1}\dione \psi_g^{i_2}\dit\modu \dif(\eta,\etabar)\mathcal{D}^{i_5+1}\chihat +\sum_{i_1+i_2+i_3+i_4=1}\dione \psi_g^{i_2}\dit(\tr\chi,\chihat)\mathcal{D}^{i_4+1}\chihat \notag \\ +& \al \sum_{i_1+i_2+i_3+i_4+i_5= 1} \dione \psi_g^{i_2}\dit(\chihat,\tr\chi)\dif\etabar \difi\chihat +\al \sum_{i_1+i_2+i_3+i_4+i_5= 1}\dione \psi_g^{i_2}\dit(\beta,\Te_4)\dif \chihat \notag \\ +& \sum_{i_1+i_2+i_3+i_4+i_5=1} \dione \psi_g^{i_2}\dit\modu \dif \sigma \difi\chihat  \notag \\ +&\sum_{i_1+i_2+i_3+i_4+i_5+i_6=1}\dione \psi_g^{i_2}\dit\modu \dif (\eta,\etabar)\difi (\eta,\etabar)\mathcal{D}^{i_6}\chihat := T_1+\dots+T_8.
\end{align}As such, we have

\begin{align}
\frac{1}{\al}\scaletwoSu{\De^2\chihat}\lesssim \frac{1}{\al} \sum_{i=1}^8\intubar \scaletwoSuubarprime{T_i }\dubarprime .
\end{align}Using the preliminary estimates from Proposition \ref{preliminary}, we obtain the result. We only further explain the most borderline terms, namely

\be \frac{1}{\al}\intubar \sum_{i_1+i_2+i_3=2}\scaletwoSu{\dione(\modu \omegabar)^{i_2}\dit \alpha} \lesssim \mathcal{R}_2[\alpha]+1, \ee
\begin{align} 
&\frac{1}{\al}\sum_{i_1+i_2+i_3+i_4+i_5= 1}\intubar\scaletwoSuubarprime{\dione \psi_g^{i_2}\dit\modu \dif(\eta,\etabar)\mathcal{D}^{i_5+1}\chihat}\dubarprime \notag \\ \lesssim& \frac{1}{\al}\big(\intubar \scaletwoSuubarprime{\modu(\eta,\etabar)\De^2\chihat} \dubarprime +\sum_{i_1+i_2+i_3+i_4=1}\intubar \scaletwoSuubarprime{\dione \psi_g^{i_2}\dit \modu \dif(\eta,\etabar)\difi \chihat}\dubarprime \big) \notag \\ \lesssim& \frac{a \hsp O^2}{\modu^2} +1\lesssim 1,
\end{align}
\begin{align} 
&\frac{1}{\al}\sum_{i_1+i_2+i_3+i_4= 1}\intubar\scaletwoSuubarprime{\dione \psi_g^{i_2}\dit(\tr\chi,\chihat)\mathcal{D}^{i_4+1}\chihat}\dubarprime \notag \\ \lesssim& \frac{1}{\al}\big(\intubar \scaletwoSuubarprime{(\tr\chi,\chihat)\De^2\chihat} \dubarprime +\sum_{i_1+i_2+i_3=1}\intubar \scaletwoSuubarprime{\dione \psi_g^{i_2}\dit (\tr\chi,\chihat) \De \chihat}\dubarprime \big) \notag \\ \lesssim& \frac{\al \hsp O^2}{\modu} +1\lesssim 1.
\end{align}

\end{proof}
\begin{proposition}
Under the assumptions of Theorem \ref{mainone} and the bootstrap assumptions \eqref{boundsbootstrap}, there holds 

\[ \scaletwoSu{\De^2 \eta}\lesssim \mathcal{R}_2[\beta]+1.   \]
\end{proposition}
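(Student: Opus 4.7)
The plan is to proceed in direct analogy with the proof of Proposition \ref{propositionD2chihat}. First, recall the null structure equation
\[
\snabla_4 \eta + \tfrac{1}{2}\tr\chi\, \eta = \tfrac{1}{2}\tr\chi\,\etabar - \chihat\cdot(\eta-\etabar) - \beta - \tfrac{1}{2}\Te_4,
\]
which schematically reads
\[
\snabla_4 \eta = \psi\cdot\psi + \Psi_{\ubar} + \Vl,
\]
with $\Psi_{\ubar}$ taking the role of $\beta$ and $\Vl$ the role of $\Te_4$. Applying the commutation identity \eqref{commutationformulanabla4} with $\phi=\eta$ and $i=2$, we obtain an expression for $\snabla_4 \De^2\eta$ consisting of a top order term of the form $\sum_{i_1+i_2+i_3=2}\dione(\modu\omegabar)^{i_2}\dit(\beta,\Te_4)$, a quadratic top order term $\sum_{i_1+i_2+i_3+i_4=2}\dione\psi_g^{i_2}\dit(\tr\chi,\chihat)\dif(\eta,\etabar)$ stemming from the $\psi\cdot\psi$ nonlinearity, plus the standard commutator error terms $T_3,\ldots,T_8$ already encountered in Proposition \ref{propositionD2chihat}.

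Since $\eta$ vanishes on the initial incoming cone $\ubar=0$, integration of the transport inequality along $e_4$ yields
\[
\scaletwoSu{\De^2 \eta} \lesssim \intubar \scaletwoSuubarprime{\snabla_4 \De^2 \eta}\,\dubarprime,
\]
and it suffices to estimate each error term in $\mathcal{L}^2_{(sc)}$, $\ubar$-integrated. The lower order and commutator terms $T_3,\ldots,T_8$ are handled exactly as in the proof of Proposition \ref{propositionD2chihat}, by appeal to the preliminary estimates \eqref{prelimfour}--\eqref{prelimone} from Proposition \ref{preliminary}; each such contribution is bounded by $1$ under the bootstrap assumptions \eqref{boundsbootstrap}. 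Similarly, the purely Ricci quadratic top order term is controlled by \eqref{prelimone} and integrates to a quantity of order $\tfrac{aO^2}{\modu}\lesssim 1$.

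The only genuinely borderline contribution is the curvature/matter source term. Using \eqref{prelimtwo} (applied with $\Psi_{\ubar}=\beta$) and the definition of the matter norm, we have
\[
\sum_{i_1+i_2+i_3=2}\intubar \scaletwoSuubarprime{\dione (\modu\omegabar)^{i_2}\dit (\beta,\Te_4)}\,\dubarprime \;\lesssim\; \mathcal{R}_2[\beta]+\Ve_2[\Te_4]+1.
\]
Since $\Ve_2[\Te_4]$ is absorbed in the uniform Vlasov bound and is $\lesssim 1$, we conclude $\scaletwoSu{\De^2\eta}\lesssim \mathcal{R}_2[\beta]+1$, as claimed. The main point to verify carefully is that no term of the form $\dit\De\beta$ or higher in the incoming direction appears here; this is guaranteed by our choice to commute only through $\snabla_4$ and by the fact that in \eqref{commutationformulanabla4} derivatives of $\Psi_{\ubar}$ appear only paired with factors $(\modu\omegabar)^{i_2}$, which are controlled on $H_u$ via the $\mathcal{L}^2_{(sc)}(H)$-norm of $\De^2\beta$ encoded in $\mathcal{R}_2[\beta]$.
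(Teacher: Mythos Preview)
Your approach is the same as the paper's: commute the $\snabla_4$-equation for $\eta$ via \eqref{commutationformulanabla4}, integrate along $e_4$ (using $\eta|_{\ubar=0}=0$), and dispose of the commutator errors through Proposition~\ref{preliminary}. The structure and identification of the borderline term are correct.

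Two points need sharpening. First, the quadratic Ricci term $\sum_{i_1+\dots+i_4=2}\dione\psi_g^{i_2}\dit(\tr\chi,\chihat)\dif(\eta,\etabar)$ contributes at worst $\tfrac{\al O^2}{\modu}$, not $\tfrac{aO^2}{\modu}$; only one anomalous factor $\chihat$ appears, so only one $\al$. The quantity $\tfrac{aO^2}{\modu}\le 4O^2$ is \emph{large}, so the claimed inequality $\tfrac{aO^2}{\modu}\lesssim 1$ is false as written. Second, and more importantly, the sentence ``$\Ve_2[\Te_4]$ is absorbed in the uniform Vlasov bound and is $\lesssim 1$'' skips the actual mechanism. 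Under the bootstrap one only has $\mathcal{V}\le V$ with $V$ large; what makes the matter contribution disappear from the final bound (unlike in the $\omegabar$ estimate, where $\Ve_2[\Te_{34}]$ survives) is that $\Te_4$ carries an extra $\tfrac{\al}{\modu}$ in scale-invariant norm relative to $\Te_{34}$. The paper records this as
\[
\intubar\scaletwoSuubarprime{\De^2\Te_4}\dubarprime \;\lesssim\; \frac{\al}{\modu}\,V \;\lesssim\; 1,
\]
and it is this smallness factor, not a direct bound $\Ve_2[\Te_4]\lesssim 1$, that yields the stated $\mathcal{R}_2[\beta]+1$ rather than $\mathcal{R}_2[\beta]+\Ve_2[\Te_4]+1$.
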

\begin{proof}
Recall the schematic equation 

\[ \snabla_4 \eta = (\tr\chi,\chihat)(\eta,\etabar)+\beta+\Te_{4}.    \]Using Proposition \ref{commutationformulanabla4}, the equation

\begin{align}
\snabla_4 \De^2 \eta =& \sum_{i_1+i_2+i_3+i_4=2}\db(\tr\chi,\chihat)\dif(\eta,\etabar) +\sum_{i_1+i_2+i_3=2}\db (\beta,\Te_{4})\\ +&\frac{1}{\al}\sum_{i_1+i_2+i_3+i_4+i_5= 1}\dione \psi_g^{i_2}\dit\modu \dif(\eta,\etabar)\mathcal{D}^{i_5+1}\eta +\sum_{i_1+i_2+i_3+i_4=1}\dione \psi_g^{i_2}\dit(\tr\chi,\chihat)\mathcal{D}^{i_4+1}\eta \notag \\ +& \al \sum_{i_1+i_2+i_3+i_4+i_5= 1} \dione \psi_g^{i_2}\dit(\chihat,\tr\chi)\dif\etabar \difi\eta +\al \sum_{i_1+i_2+i_3+i_4+i_5= 1}\dione \psi_g^{i_2}\dit(\beta,\Te_4)\dif \eta \notag \\ +& \sum_{i_1+i_2+i_3+i_4+i_5=1} \dione \psi_g^{i_2}\dit\modu \dif \sigma \difi\eta  \notag \\ +&\sum_{i_1+i_2+i_3+i_4+i_5+i_6=1}\dione \psi_g^{i_2}\dit\modu \dif (\eta,\etabar)\difi (\eta,\etabar)\mathcal{D}^{i_6}\eta := T_1+\dots+T_8.
\end{align}Again, we have

\be \scaletwoSu{\De^2 \eta} \lesssim \sum_{i=1}^8 \intubar \scaletwoSuubarprime{T_i}\dubarprime. \ee The most borderline term is 

\begin{align} \sum_{i_1+i_2+i_3=2}\intubar \scaletwoSuubarprime{\db (\beta,\Te_4)}\dubarprime \lesssim& \mathcal{R}_2[\beta]+\intubar \scaletwoSuubarprime{\De^2 \Te_{4}}\dubarprime+1 \notag \\ \lesssim& \mathcal{R}_2[\beta] +\frac{\al}{\modu}\Ve_2[\Te_{34}]+1 \lesssim \mathcal{R}_2[\beta] +\frac{\al}{\modu}V+1 \lesssim \mathcal{R}_2[\beta] +1.  \end{align}The rest are bounded by 1 using the preliminary estimates from Proposition \ref{preliminary} and the bootstrap assumptions. 
\end{proof}

\begin{proposition}
Under the assumptions of Theorem \ref{mainone} and the bootstrap assumptions \eqref{boundsbootstrap}, there exists a constant $C$ depending on the curvature and Vlasov norms, such that there holds 

\[ \scaletwoSu{\De^2 \tr\chi}\lesssim C(\mathcal{R},\underline{\mathcal{R}},\Ve,\underline{\Ve}) .   \]

\end{proposition}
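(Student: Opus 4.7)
The plan is to mirror the strategy employed for $\De^2\eta$, $\De^2\chihat$ and $\De^2\omegabar$ in Section \ref{sectionO22}. Starting from the Raychaudhuri equation $\snabla_4 \tr\chi + \frac{1}{2}(\tr\chi)^2 = -\lvert \chihat \rvert^2 - \Te_{44}$, which is of the schematic form $\snabla_4 \psi = \psi \cdot \psi + \Vl$, I would apply the commutation formula of Proposition \ref{commute4} with $F_0 = -\frac{1}{2}(\tr\chi)^2 - \lvert \chihat \rvert^2 - \Te_{44}$, producing an identity of the form
\begin{align*}
\snabla_4 \De^2 \tr\chi = \sum_{i_1+i_2+i_3+i_4=2}\db(\tr\chi,\chihat)\dif(\tr\chi,\chihat) + \sum_{i_1+i_2+i_3=2}\db \Te_{44} + T_{\ast},
\end{align*}
where $T_{\ast}$ stands for the collection of lower-order terms listed in \eqref{commutationformulanabla4}. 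Because by hypothesis all geometric quantities on $\Hbar_0$ agree with the Minkowskian values, $\De^2 \tr\chi$ vanishes on $\Hbar_0$, so the estimate reduces to controlling $\intubar \scaletwoSuubarprime{\snabla_4 \De^2\tr\chi}\dubarprime$.

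The two top-order contributions are the decisive ones. The matter term $\sum_{i_1+i_2+i_3=2}\intubar \scaletwoSuubarprime{\db\Te_{44}}\dubarprime$ is controlled directly through inequality \eqref{preliminb} by the scale-invariant Vlasov norm, via the bootstrap bound $\Ve_2[\Te_{44}] \leq V$, producing the $\Ve$-dependence in the final constant. For the quadratic Ricci term I would use a scale-invariant H\"older pairing one $\mathcal{L}^{\infty}_{(sc)}$ factor with one $\mathcal{L}^{2}_{(sc)}$ factor: when $\De^2$ lands on a $\chihat$ factor, Proposition \ref{propositionD2chihat} supplies $\frac{1}{\al}\scaletwoSu{\De^2\chihat}\lesssim \mathcal{R}_2[\alpha] + 1$, and the improved bound $\frac{1}{\al}\scaleinftySu{\chihat}\lesssim C(\mathcal{R})$ from Section \ref{section0infty} supplies the matching $\mathcal{L}^{\infty}_{(sc)}$ factor; when the two derivatives split evenly across the two factors, I would instead apply two $\mathcal{L}^{4}_{(sc)}$ estimates coming from Propositions \ref{propositiontrchi} and \ref{propositionDchihat}.

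All remaining contributions coming from $T_{\ast}$ fall precisely under the scope of Proposition \ref{preliminary}: after passing to scale-invariant norms, each matches one of the templates \eqref{prelimfour}--\eqref{prelimten}, hence is controlled by a polynomial expression in $O,R,V$ divided by at least one power of $\modu$, so that the subsequent $\ubar$-integration over $[0,1]$ absorbs them into $C(\mathcal{R},\underline{\mathcal{R}},\Ve,\underline{\Ve})$. The only genuinely delicate point, and in my view the main obstacle, is the pairing $\chihat \cdot \De^2\chihat$: a direct bootstrap estimate would produce a bound quadratic in $O$, obstructing closure; the extraction of an $O$-independent constant requires using the already-improved $\prescript{(S)}{}{\mathcal{O}}_{0,\infty}$ control of $\chihat$, whose $\al^{-1}$ weight is precisely what is needed to balance the $\al^{-1}$ weight appearing in $\prescript{(S)}{}{\mathcal{O}}_{2,2}[\chihat]$. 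Once this pairing is handled cleanly, collecting all contributions yields the claimed bound $\scaletwoSu{\De^2\tr\chi}\lesssim C(\mathcal{R},\underline{\mathcal{R}},\Ve,\underline{\Ve})$.
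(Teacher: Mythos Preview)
Your approach is essentially identical to the paper's: commute via Proposition~\ref{commute4}, integrate in $\ubar$, handle the anomalous quadratic $\chihat$ term through the improved estimates of Proposition~\ref{propositionD2chihat} and Remark~\ref{chihatinfinityr}, bound the matter contribution by $\Ve_2[\Te_{44}]$, and absorb the remaining terms via Proposition~\ref{preliminary}.

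One small correction: your claim that $\De^2\tr\chi$ vanishes on $\Hbar_0$ is not quite right. Unlike $\omegabar,\chihat,\eta$, which are identically zero in Minkowski, $\tr\chi = 2/\modu$ on $\Hbar_0$, so e.g.\ $(\modu\snabla_3)^2\tr\chi = 2/\modu$ there. This is harmless for the argument, since the scale-invariant norm $\lVert \De^2\tr\chi\rVert_{\mathcal{L}^2_{(sc)}(S_{u,0})}$ of the Minkowskian value is $O(1)$ and gets absorbed into the constant $C$; the paper accordingly retains the initial data term rather than dropping it.
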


\begin{proof}
There holds 

\begin{align}
\snabla_4 \De^2 \tr\chi =& \sum_{i_1+i_2+i_3+i_4=2}\db (\tr\chi,\chihat)\dif(\tr\chi,\chihat) +\sum_{i_1+i_2+i_3=2}\db \Te_{44} \notag \\ +&\frac{1}{\al}\sum_{i_1+i_2+i_3+i_4+i_5= 1}\dione \psi_g^{i_2}\dit\modu \dif(\eta,\etabar)\mathcal{D}^{i_5+1}\eta +\sum_{i_1+i_2+i_3+i_4=1}\dione \psi_g^{i_2}\dit(\tr\chi,\chihat)\mathcal{D}^{i_4+1}\eta \notag \\ +& \al \sum_{i_1+i_2+i_3+i_4+i_5= 1} \dione \psi_g^{i_2}\dit(\chihat,\tr\chi)\dif\etabar \difi\eta +\al \sum_{i_1+i_2+i_3+i_4+i_5= 1}\dione \psi_g^{i_2}\dit(\beta,\Te_4)\dif \eta \notag \\ +& \sum_{i_1+i_2+i_3+i_4+i_5=1} \dione \psi_g^{i_2}\dit\modu \dif \sigma \difi\eta  \notag \\ +&\sum_{i_1+i_2+i_3+i_4+i_5+i_6=1}\dione \psi_g^{i_2}\dit\modu \dif (\eta,\etabar)\difi (\eta,\etabar)\mathcal{D}^{i_6}\eta := T_1+\dots+T_8.
\end{align}As such, there holds
\[ \scaletwoSu{\tr\chi}\leq \lVert \tr\chi \rVert_{\mathcal{L}^2_{(sc)}(S_{u,0})} + \sum_{i=1}^8 \intubar \scaletwoSuubarprime{T_i}\dubarprime.    \]
 The most dangerous terms are as follows:

\begin{align}
&\intubar \sum_{i_1+i_2+i_3+i_4=2}\scaletwoSuubarprime{\db(\tr\chi,\chihat)\dif(\tr\chi,\chihat)}\dubarprime \notag \\ \lesssim& \intubar (\scaletwoSuubarprime{\chihat \cdot \De^2 \chihat}+\scaletwoSuubarprime{\De \chihat \cdot \De \chihat} )\dubarprime +1 \notag \\ \lesssim& \frac{a}{\modu}(\mathcal{R}_2[\alpha]+1)\big[\big(\mathcal{R}^{\f12}_0[\alpha]+\mathcal{R}^{\f12}_2[\alpha]\big)\mathcal{R}^{\frac{1}{2}}_1[\alpha] +\mathcal{R}_0[\alpha]+\mathcal{R}_1[\alpha]+1\big] \notag \\ +&\frac{a}{\modu}\big(\mathcal{R}_1^{\f12}[\alpha]\mathcal{R}_2^{\f12}[\alpha]+\mathcal{R}_1[\alpha]+1\big)^2 +1,
\end{align}where we have used the improved estimates on $\chihat$ given in Proposition \ref{propositionDchihat}, Remark \ref{chihatinfinityr} and Proposition \ref{propositionD2chihat}.
For the matter term, we have 

\be \intubar \sum_{i_1+i_2+i_3=2}\scaletwoSuubarprime{\db\Te_{44}}\dubarprime \lesssim \frac{a}{\modu}\Ve_{2}[\Te_{44}]+1, \ee since the slowest-decaying term is the one when two derivatives fall on $\Te_{44}$. The estimates from Proposition \ref{preliminary} and bootstrap assumptions \eqref{boundsbootstrap} bound the rest of the terms, which are non-borderline. The result follows.
\end{proof}
\noindent We now move on to estimates for the components satisfying $\snabla_3$-equations. 

\begin{proposition}
Under the assumptions of Theorem \ref{mainone} and the bootstrap assumptions \eqref{boundsbootstrap}, there holds

\[ \frac{\al}{\modu}\scaletwoSu{\De^2\chibarhat}\lesssim 1.   \]
\end{proposition}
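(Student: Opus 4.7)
The plan is to treat the equation $\snabla_3 \chibarhat + \tr\chibar\,\chibarhat = \omegabar\,\chibarhat - \alphabar$ via the commutation formula of Proposition \ref{propositionnabla3} with $a = 1$, and then integrate along the incoming direction using the Evolution Lemma (Proposition \ref{evolemma}) with $\lambda_0 = 1 + \ell/2$, where $\ell$ counts the number of $(\al\snabla)$ operators in the chosen $\De^2$. Schematically, Proposition \ref{propositionnabla3} gives
\begin{align*}
\snabla_3 \De^2 \chibarhat + \Bigl(1 + \tfrac{\ell}{2}\Bigr)\tr\chibar\, \De^2 \chibarhat
= G_2,
\end{align*}
where $G_2$ is a sum of terms of the schematic types already catalogued in Proposition \ref{preliminary}: the leading term $\sum_{i_1+i_2+i_3=2}\dione(\eta,\etabar,1)^{i_2}\dit \alphabar$ (coming from $\mathcal{D}^{i_3}G_0$ with $G_0 = \omegabar\chibarhat - \alphabar$), mixed $\dione\psi_g^{i_2}\dit\omegabar\,\mathcal{D}^{i_4+1}\chibarhat$ and $\dione\psi_g^{i_2}\dit(\eta,\etabar,\chibarhat)\mathcal{D}^{i_4+1}\chibarhat$ terms from the commutator, plus quadratic and cubic error terms built from $\psi$'s, $\modu$'s, and curvature/Vlasov components $\Psi_u,\Vl$.

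Next I would translate the Evolution Lemma to scale-invariant norms, exactly as in the proof of Proposition \ref{propositionDchibarhat}. Since $s_2(\chibarhat)=1$, the desired norm on the left is $\frac{\al}{\modu}\scaletwoSu{\De^2\chibarhat}$, and integrating yields
\begin{align*}
\frac{\al}{\modu}\scaletwoSu{\De^2\chibarhat}
\lesssim \frac{\al}{\uinf}\lVert \De^2\chibarhat\rVert_{\mathcal{L}^2_{(sc)}(S_{\ui,\ubar})}
+ \intu \frac{a^{3/2}}{\upr^3}\, \bigl\|G_2\bigr\|_{\mathcal{L}^2_{(sc)}(S_{u^\prime,\ubar})}\,\duprime.
\end{align*}
The initial term is controlled by $\mathcal{O}^{(0)}$. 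The borderline term in $G_2$ is the one with all two derivatives on $\alphabar$; applying \eqref{prelimthree} from Proposition \ref{preliminary} and Cauchy--Schwartz in $u^\prime$ gives
\begin{align*}
\intu \frac{a^{3/2}}{\upr^3}\sum_{i_1+i_2+i_3=2}\lVert \dione(\eta,\etabar,1)^{i_2}\dit\alphabar\rVert_{\mathcal{L}^2_{(sc)}(S_{u^\prime,\ubar})}\,\duprime
\lesssim \frac{a}{\modu^{1/2}}\,\underline{\mathcal{R}}_2[\alphabar] \cdot \modu^{-1/2}
\lesssim \underline{\mathcal{R}}_2[\alphabar]+1,
\end{align*}
after absorbing the $a$-weight into the available $\al/\modu$ factor coming from the change of variable in the $u^\prime$-integral — precisely the same mechanism that is used in the proof of Proposition \ref{propositionDchibarhat}, upgraded from $L^4$ to $L^2$ at the two-derivative level.

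The remaining contributions from $G_2$ are all strictly lower order. For the $\mathcal{D}^{i_4+1}\chibarhat$ terms I would peel off the top-order piece $\chibarhat \cdot \De^2\chibarhat$ using $\mathcal{L}^\infty_{(sc)}$-$\mathcal{L}^2_{(sc)}$ Hölder combined with the improved $\scaleinftySu{\chibarhat}$-bound from Remark \ref{improvechibarhat}, producing a factor absorbed by Grönwall after integration in $u^\prime$ against the weight $a^{3/2}/\upr^3$; all the other commutator pieces and the purely quadratic/cubic $\psi\psi\psi$ and $\psi\psi\Vl$ contributions fall directly into the framework of \eqref{preliminb2}--\eqref{prelimthree}, each giving a bound of size $O(1)$ uniformly in $(u,\ubar)$. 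The main obstacle, as in every $\snabla_3$-transport estimate for $\chibarhat$, is the $\alphabar$ source: one has no $L^2(\Hbar)$ control of $\De^2\alphabar$ at a higher weight, so the $a^{3/2}/\upr^3$ weight must be matched exactly by the $s_2$-signature of $\alphabar$ together with the Cauchy--Schwartz loss in $u^\prime$; since $s_2(\alphabar)=2$, this matching is tight but does close, yielding the claimed bound
\[
\frac{\al}{\modu}\scaletwoSu{\De^2\chibarhat}\lesssim \underline{\mathcal{R}}_2[\alphabar] + 1 \lesssim 1.
\]
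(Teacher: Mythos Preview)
Your approach is correct and essentially identical to the paper's: commute the $\snabla_3$-equation for $\chibarhat$ via Proposition \ref{propositionnabla3}, apply the Evolution Lemma with $\lambda_0=1+\ell/2$, and control the source terms using Proposition \ref{preliminary} together with the bootstrap assumptions. One small point: your displayed bound for the top-order $\alphabar$ term is slightly imprecise. Doing Cauchy--Schwartz carefully against the $\Hbar$-flux norm gives $\intu\frac{a^{3/2}}{\upr^3}\scaletwoSuprime{\De^2\alphabar}\,\duprime\lesssim\frac{a}{\modu^{3/2}}\,\underline{\mathcal{R}}_2[\alphabar]$, and it is this extra $\modu^{-3/2}$ (not merely $\modu^{-1}$) that makes the contribution $\lesssim 1$ under the bootstrap $\underline{\mathcal{R}}_2\leq R\ll a^{1/16}$; without that factor the bare $\underline{\mathcal{R}}_2+1$ is not $\lesssim 1$. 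The Gr\"onwall step you mention for the $\chibarhat\cdot\De^2\chibarhat$ term is not actually needed---the bootstrap bound on $\scaletwoSu{\De^2\chibarhat}$ together with the $a^{3/2}/\upr^3$ weight already gives $\lesssim aO^2/\modu^2\lesssim 1$ directly.
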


\begin{proof}
Given the structure equation for $\chibarhat$ and the commutation formula \eqref{commutationformulanabla3}, we have 

\begin{align}
\snabla_3 \De^2 \chibarhat + \big(1+\frac{\ell}{2}\big) \tr\chibar \hsp \chibarhat =& \sum_{i_1+i_2+i_3+i_4=2} \db \omegabar \dif \chibarhat +\sum_{i_1+i_2+i_3=2}\db \alphabar \notag \\ +&\sum_{i_1+i_2+i_3+i_4=1} \db \omegabar \mathcal{D}^{i_4+1}\chibarhat \notag  + \frac{1}{\al}\sum_{i_1+i_2+i_3+i_4=1}\db(\eta,\etabar,\chibarhat)\mathcal{D}^{i_4+1}\chibarhat \notag \\ +&\sum_{i_1+i_2+i_3+i_4=1}\db \sdiv \etabar \dif \chibarhat \notag + \sum_{i_1+i_2+i_3+i_4+i_5=1} \db \chihat \dif \chibarhat \difi \chibarhat \\ +& \sumfiveone \db \tr\chi \dif \tr\chibar \difi \chibarhat + \sumfourone \db (\rho,\Te_{34})\dif \chibarhat  \notag \\ +&\sumfourone \db (\betabar,\Te_3) \dif \chibarhat +\frac{1}{\al}\sumfourone \dione \psi_g^{i_2}\mathcal{D}^{i_3+1}\tildetr \dif \chibarhat \notag \\ +& \sumfiveone \db (\eta,\etabar) \dif(\eta,\etabar,\chibarhat,\tr\chibar)\difi \chibarhat \notag \\ +&\sumfiveone \db \sigma \dif \chibarhat \notag +\sumfiveone \db \modu \dif \Te_{33}\difi \chibarhat \notag \\ +& \sumsixone \db \tr\chibar \dif \modu \difi \tildetr \mathcal{D}^{i_6}\chibarhat \notag \\ +& \sumsixone \db \modu \dif \chibarhat \difi \chibarhat \mathcal{D}^{i_6}\chibarhat \notag \\ +&\sumsixone \db \modu \dif \omegabar \difi \tr\chibar \mathcal{D}^{i_6} \chibarhat := T_1+\dots +T_{16}.
\end{align}Passing to scale-invariant norms and using the Evolution Lemma \ref{evolemma}, we arrive at \begin{align}
\frac{\al}{\modu}\scaletwoSu{\De^2 \chibarhat}\lesssim \frac{\al}{\lvert u_{\infty}\rvert} \lVert \De^2 \chihat \rVert_{\mathcal{L}^2_{(sc)}(S_{u_{\infty},\ubar})} + \sum_{i=1}^{16} \intu \frac{a^{\f32}}{\upr^3}\scaletwoSuprime{T_i}\duprime.
\end{align}The top-order terms are handled as follows:

\begin{align}\sumfourtwo
\intu \frac{a^{\f32}}{\upr^3}\scaletwoSuprime{\db \omegabar \dif\chibarhat} \duprime\lesssim \intu \frac{a}{\upr^2}\cdot \frac{O^2}{\upr}\duprime \lesssim \frac{a \hsp O^2}{\modu^2}\lesssim 1, 
\end{align}For the curvature term, we have 

\be \sumfourtwo \intu \frac{a^{\f32}}{\upr^3}\scaletwoSuprime{\db \alphabar} \duprime \lesssim \intu \frac{a^{\f32}}{\upr^3}\scaletwoSuprime{\De^2\alphabar} \duprime +1 \lesssim \frac{a}{\modu^{\f32}}\underline{\mathcal{R}}_2[\alphabar]+1 \lesssim 1. \ee For the remaining top-order terms, we have

\begin{align}
\sumfourone \intu  \frac{a^{\f32}}{\upr^3}\scaletwoSuprime{\db \big(\omegabar,\eta,\etabar,\frac{1}{\al}\chibarhat\big) \mathcal{D}^{i_4+1}\chibarhat}\duprime \lesssim \intu \frac{a^{\f32}}{\upr^3}\cdot \frac{\al \hsp O^2}{\upr}\duprime \lesssim 1,
\end{align}
\begin{align}
&\sumfourone \intu \frac{a^{\f32}}{\upr^3}\scaletwoSuprime{\db \sdiv \etabar \dif \chibarhat} \duprime \notag \\ \lesssim& \sumfourone \intu \frac{a^{\f32}}{\upr^3}\cdot \frac{1}{\al}\scaletwoSuprime{\dione \psi_g^{i_2}\mathcal{D}^{i_3+1}\etabar \dif \chibarhat} \duprime \lesssim \frac{a^{\f32}O^2}{\modu^3}\lesssim 1.
\end{align} The lower-order terms are treated as in Proposition \ref{preliminary}, using the bootstrap assumptions. This concludes the proof.
\end{proof}
\begin{proposition}
Under the assumptions of Theorem \ref{mainone} and the bootstrap assumptions \eqref{boundsbootstrap}, there holds \[ \frac{a}{\modu^2}\scaletwoSu{\De^2 \tr\chibar}\lesssim 1,\frac{a}{\modu}\scaletwoSu{\De^2 \tildetr} \lesssim  1. \]
\end{proposition}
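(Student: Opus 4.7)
The plan is to mirror the structure of the preceding estimate for $\De^2 \chibarhat$, adapting to the different weights carried by $\tr\chibar$ and $\tildetr$. Both quantities satisfy $\snabla_3$-equations of closely related shape: namely $\snabla_3 \tr\chibar = -\tfrac{1}{2}(\tr\chibar)^2 - |\chibarhat|^2 - \Te_{33}$, which I will feed into Proposition \ref{propositionnabla3} with the linear coefficient $a = 0$, and $\snabla_3 \tildetr + \tr\chibar \tildetr = \tfrac{1}{2}\tildetr^2 - |\chibarhat|^2 - \Te_{33}$, handled with $a = 1$. Commuting with $\De^2$ and collecting the output of Proposition \ref{propositionnabla3} yields the schematic identities
\[
\snabla_3 \De^2 \tr\chibar + \tfrac{\ell}{2}\tr\chibar \De^2 \tr\chibar = G_2^{(\tr\chibar)}, \qquad \snabla_3 \De^2 \tildetr + \left(1+\tfrac{\ell}{2}\right)\tr\chibar \De^2 \tildetr = G_2^{(\tildetr)},
\]
whose right-hand sides consist of exactly the same families of schematic terms appearing in the $\De^2 \chibarhat$ proof (with $\chibarhat$ replaced by $\tr\chibar$ or $\tildetr$), augmented by the renormalization contribution $\frac{1}{\al}\mathcal{D}^{i_3+1}\tildetr \cdot \mathcal{D}^{i_4}(\cdot)$ generated by the $\modu \tildetr \tr\chibar$ correction.

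Next, I will invoke the Evolution Lemma \ref{evolemma} with $\lambda_0 = \ell/2$ for $\tr\chibar$ and $\lambda_0 = 1 + \ell/2$ for $\tildetr$, then pass to scale-invariant norms to obtain inequalities of the form
\[
\frac{a}{\modu^2}\scaletwoSu{\De^2 \tr\chibar} \lesssim \frac{a}{|u_\infty|^2}\|\De^2 \tr\chibar\|_{\mathcal{L}^2_{(sc)}(S_{u_\infty,\ubar})} + \intu \frac{a^2}{|u'|^4}\scaletwoSuprime{G_2^{(\tr\chibar)}}\duprime,
\]
with the corresponding bound for $\tildetr$ carrying the lighter weight $a^2/|u'|^3$. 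The initial-data term is absorbed by the hypotheses of Theorem \ref{mainone}, and each term of $G_2^{(\cdot)}$ is controlled term-by-term via Proposition \ref{preliminary} in conjunction with the bootstrap \eqref{boundsbootstrap}, exactly paralleling the $\chibarhat$ computation. The one mildly new feature is the trilinear $\tildetr \cdot \De \tildetr$ type entry appearing in $G_2^{(\tildetr)}$; this closes by an $\mathcal{L}^\infty_{(sc)}$-$\mathcal{L}^2_{(sc)}$ H\"older argument exploiting the already-established supremum bound on $\tildetr$.

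The principal obstacle I anticipate is the top-order matter source $\sum_{i_1+i_2+i_3=2}\db \Te_{33}$. As emphasized in the Introduction, elliptic estimates on $\tr\chibar$ are deliberately avoided in this framework, precisely because they would require control of $\nablasl \Te_{33}$ along $\Hbar_{\ubar}$, which is inaccessible from the Vlasov transport along $e_3$. Fortunately, here only an $L^2(S_{u,\ubar})$-bound of $\De^2 \Te_{33}$ is needed, and this is provided directly by the preliminary estimate \eqref{preliminb} with $\Vl = \Te_{33}$, yielding $\scaletwoSu{\De^2 \Te_{33}} \lesssim V$. Integration then gives $\int_u^{u_\infty}(a^2/|u'|^4)V\,du' \lesssim a^2 V/|u|^3 \lesssim V/a \ll 1$ thanks to $|u| \geq a/4$ and the bootstrap smallness $V^{20} \leq a^{1/16}$. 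The remaining borderline contributions $\chibarhat \cdot \De^2 \chibarhat$ and $\omegabar \cdot \De^2(\tr\chibar,\tildetr)$ are controlled by $\mathcal{L}^\infty_{(sc)}$-$\mathcal{L}^2_{(sc)}$ H\"older using the supremum bounds from Section \ref{section0infty} and the preceding proposition's $\De^2 \chibarhat$ estimate, and all lower-order terms decay fast enough after integration in $u'$ to be absorbed into $1$. This closes both bounds simultaneously.
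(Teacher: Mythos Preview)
Your overall strategy is exactly the paper's, and for $\tildetr$ (with $a=1$) everything you say is correct. There is, however, a concrete error in your setup for $\tr\chibar$.

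You write the equation as $\snabla_3\tr\chibar = -\tfrac12(\tr\chibar)^2 - |\chibarhat|^2 - \Te_{33}$ and feed it into Proposition~\ref{propositionnabla3} with $a=0$. This is not what the paper does and, as stated, it does not close. The quadratic term $\tfrac12(\tr\chibar)^2 = \tfrac12\tr\chibar\cdot\tr\chibar$ must be read as the linear part, i.e.\ one takes $a=\tfrac12$, so that after commutation the coefficient is $\tfrac{1+\ell}{2}$ and $G_0 = -|\chibarhat|^2 - \Te_{33}$ (no $(\tr\chibar)^2$ in the source). With your choice $a=0$, the Evolution Lemma yields $\lambda_1=\ell-1$ rather than $\ell$, which translates to controlling $\tfrac{a}{|u|^3}\scaletwoSu{\De^2\tr\chibar}$ rather than $\tfrac{a}{|u|^2}\scaletwoSu{\De^2\tr\chibar}$; and the source $G_2^{(\tr\chibar)}$ now carries the top-order term $\tr\chibar\cdot\De^2\tr\chibar$. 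Since $\scaleinftySu{\tr\chibar}\sim|u|^2/a$, this term integrates against the weaker weight to give $\int |u'|^{-1}\cdot\big(\tfrac{a}{|u'|^3}\scaletwoSuprime{\De^2\tr\chibar}\big)\,du'$, a Gr\"onwall term with non-integrable kernel $|u'|^{-1}$---the exponential factor is $\exp\big(\log(|u_\infty|/|u|)\big)$, which is unbounded. So the bootstrap does not improve.

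You then write down the weight $a^2/|u'|^4$ and the left-hand side $\tfrac{a}{|u|^2}$, which are precisely what one obtains with $a=\tfrac12$ (and not with $a=0$, $\lambda_0=\ell/2$). So the inconsistency is between your stated $a=0$ and the inequality you display; correct $a$ to $\tfrac12$ and the argument goes through exactly as you outline, and is then essentially the paper's proof. Your handling of $\De^2\Te_{33}$ via \eqref{preliminb} is a perfectly valid alternative to the paper's Cauchy--Schwarz passage to $\scaletwoHbaru{\De^2\Te_{33}}$; both routes close comfortably. Your identification of $\chibarhat\cdot\De^2\chibarhat$ as the genuinely borderline quadratic term (requiring the already-improved $\chibarhat$ estimates) for the $\tildetr$ equation is also on point.
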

\begin{proof}
We have the following structure equation for $\tr\chibar$:

\begin{align}
&\snabla_3 \De^2 \tr\chibar + \frac{1+\ell}{2} \tr\chibar \De^2 \tr\chibar \notag \\ =& \sum_{i_1+i_2+i_3+i_4=2} 
\db \chibarhat \dif \chibarhat +\sum_{i_1+i_2+i_3=2} \db \Te_{33} \notag \\  +&\sum_{i_1+i_2+i_3+i_4=1} \db \omegabar \mathcal{D}^{i_4+1}\tr\chibar \notag  + \frac{1}{\al}\sum_{i_1+i_2+i_3+i_4=1}\db(\eta,\etabar,\chibarhat)\mathcal{D}^{i_4+1}\tr\chibar \notag \\ +&\sum_{i_1+i_2+i_3+i_4=1}\db \sdiv \etabar \dif \tr\chibar \notag + \sum_{i_1+i_2+i_3+i_4+i_5=1} \db \chihat \dif \chibarhat \difi \etabar \\ +& \sumfiveone \db \tr\chi \dif \tr\chibar \difi \tr\chibar + \sumfourone \db (\rho,\Te_{34})\dif \tr\chibar  \notag \\ +&\sumfourone \db (\betabar,\Te_3) \dif \tr\chibar +\frac{1}{\al}\sumfourone \dione \psi_g^{i_2}\mathcal{D}^{i_3+1}\tildetr \dif \tr\chibar \notag \\ +& \sumfiveone \db (\eta,\etabar) \dif(\eta,\etabar,\chibarhat,\tr\chibar)\difi \tr\chibar \notag \\ +&\sumfiveone \db \sigma \dif \tr\chibar \notag +\sumfiveone \db \modu \dif \Te_{33}\difi \tr\chibar \notag \\ +& \sumsixone \db \tr\chibar \dif \modu \difi \tildetr \mathcal{D}^{i_6}\tr\chibar \notag \\ +& \sumsixone \db \modu \dif \chibarhat \difi \chibarhat \mathcal{D}^{i_6}\tr\chibar \notag \\ +&\sumsixone \db \modu \dif \omegabar \difi \tr\chibar \mathcal{D}^{i_6} \tr\chibar := T_1+\dots +T_{16}.
\end{align}Making use of Proposition \ref{evolemma} and passing to scale-invariant norms, we have 

\be \frac{a}{\modu^2} \scaletwoSu{\De^2 \tr\chibar}\lesssim \frac{a}{\lvert u_{\infty} \rvert^2}\lVert{\De^2 \tr\chibar}\rVert_{\mathcal{L}^2_{(sc)}(S_{u_{\infty},\ubar})}+  \sum_{i=1}^{16} \intu \frac{a^2}{\upr^4} \scaletwoSuprime{T_i}\duprime . \ee For the top-order terms, we have 

\begin{align} \sumfourtwo 
\intu \frac{a^2}{\upr^4}\scaletwoSuprime{\db \chibarhat \dif \chibarhat} \duprime\lesssim \intu \frac{a^2}{\upr^4}\frac{\upr^2}{a} \frac{O^2}{\upr} \duprime \lesssim 1,
\end{align}

\begin{align}
&\sumthreetwo \intu \frac{a^2}{\upr^4}\scaletwoSuprime{\db \Te_{33}} \duprime \notag  \lesssim \intu \frac{a^2}{\upr^4}\scaletwoSuprime{\De^2{T_{33}}}\duprime +1 \notag \\ \lesssim& \frac{a^{\f32}}{\modu^{\f52}}\scaletwoHbaru{\De^2 \Te_{33}}+1 \lesssim 1,
\end{align}
\begin{align}
&\sumfourone \intu \frac{a^2}{\upr^4}\scaletwoSuprime{\db\frac{1}{\al}(\al\omegabar,\eta,\etabar,\chibarhat)\mathcal{D}^{i_4+1}\tr\chibar}\duprime \notag \\ \lesssim& \intu \frac{a}{\upr^2}\frac{O^2}{\upr}\duprime \lesssim 1,
\end{align}

\begin{align}
\sumfourone \intu\frac{a^2}{\upr^4}\scaletwoSuprime{\db \sdiv \etabar \dif\tr\chibar} \lesssim \intu \frac{1}{\al}\frac{a^2}{\upr^4}\frac{\upr^2}{a}\frac{O^2}{\upr}\duprime \lesssim 1.
\end{align}The lower order terms are bounded above by 1 in the usual way, by using the estimates from Proposition \ref{preliminary} together with the bootstrap assumptions \eqref{boundsbootstrap}. For $\tildetr$, we have

\begin{align}
&\snabla_3 \De^2 \tildetr + \big(\frac{\ell}{2}+1\big)  \tr\chibar \De^2 \tildetr \notag \\ =& \sum_{i_1+i_2+i_3+i_4=2} 
\db (\tildetr,\chibarhat) \dif (\tildetr,\chibarhat) +\sum_{i_1+i_2+i_3=2} \db \Te_{33} \notag \\  +&\sum_{i_1+i_2+i_3+i_4=1} \db \omegabar \mathcal{D}^{i_4+1}\tildetr \notag  + \frac{1}{\al}\sum_{i_1+i_2+i_3+i_4=1}\db(\eta,\etabar,\chibarhat)\mathcal{D}^{i_4+1}\tildetr \notag \\ +&\sum_{i_1+i_2+i_3+i_4=1}\db \sdiv \etabar \dif \tildetr \notag + \sum_{i_1+i_2+i_3+i_4+i_5=1} \db \chihat \dif \chibarhat \difi \etabar \\ +& \sumfiveone \db \tr\chi \dif \tr\chibar \difi \tildetr + \sumfourone \db (\rho,\Te_{34})\dif \tildetr  \notag \\ +&\sumfourone \db (\betabar,\Te_3) \dif \tildetr +\frac{1}{\al}\sumfourone \dione \psi_g^{i_2}\mathcal{D}^{i_3+1}\tildetr \dif \tildetr \notag \\ +& \sumfiveone \db (\eta,\etabar) \dif(\eta,\etabar,\chibarhat,\tr\chibar)\difi \tildetr \notag \\ +&\sumfiveone \db \sigma \dif \tildetr \notag +\sumfiveone \db \modu \dif \Te_{33}\difi \tildetr \notag \\ +& \sumsixone \db \tr\chibar \dif \modu \difi \tildetr \mathcal{D}^{i_6}\tildetr \notag \\ +& \sumsixone \db \modu \dif \chibarhat \difi \chibarhat \mathcal{D}^{i_6}\tildetr \notag \\ +&\sumsixone \db \modu \dif \omegabar \difi \tr\chibar \mathcal{D}^{i_6} \tildetr := T_1+\dots +T_{16}.
\end{align}
Passing to scale-invariant norms and using Proposition \ref{evolemma}, we have

\begin{align}
\frac{a}{\modu}\scaletwoSu{\De^2 \tildetr} \lesssim \frac{a}{\lvert u_{\infty}\rvert}\lVert \De^2 \tildetr \rVert_{\mathcal{L}^2_{(sc)}(S_{u_{\infty},\ubar})} + \sum_{i=1}^{16} \intu \frac{a^2}{\upr^3}\scaletwoSuprime{T_i}\duprime . \label{de2ttr}
\end{align}We once again only give details for the top-order terms, which are the most borderline. For the first term,

\begin{align}
&\sumfourtwo \intu\frac{a^2}{\upr^3}\scaletwoSuprime{\db(\tildetr,\chibarhat)\dif(\tildetr,\chibarhat)} \duprime\\ \lesssim& \sumfourtwo \intu \frac{a^2}{\upr^3}\scaletwoSuprime{\db \chibarhat \dif \chibarhat}\duprime +1 \notag \\ \lesssim& \frac{a}{\modu}(\mathcal{O}_{0,\infty}[\chibarhat]\mathcal{O}_{2,2}[\chibarhat]+\mathcal{O}_{1,4}[\chibarhat]^2)+1 \lesssim 1,
\end{align}where we have used the improvements on $\mathcal{O}_{0,\infty}[\chibarhat],\mathcal{O}_{1,4}[\chibarhat],\mathcal{O}_{2,2}[\chibarhat]$. For the next terms, making use of the bootstrap assumptions,  we have 
\begin{align}
\sumthreetwo \intu \frac{a^2}{\upr^3}\scaletwoSuprime{\db \Te_{33}}\duprime \lesssim \frac{a^{\f32}}{\modu^{\f32}} \scaletwoHbaru{\De^2{\Te_{33}}}+1\lesssim 1,
\end{align}

\begin{align}\sumfourone \intu\frac{a^{\f32}}{\upr^3}\scaletwoSuprime{\db(\al\omegabar,\eta,\etabar,\chibarhat)\mathcal{D}^{i_4+1}\tildetr} \duprime \lesssim \frac{O^2}{\modu}\lesssim 1, \end{align}
\begin{align}
\sumfourone\intu\frac{a^{2}}{\upr^3}\scaletwoSuprime{\db \sdiv \etabar \dif \tildetr}\duprime \lesssim \frac{\al O^2}{\modu}\lesssim 1, 
\end{align}where we have used the preliminary estimates from Proposition \ref{preliminary} and the bootstrap assumptions. These also bound all the lower-order terms in \eqref{de2ttr} by 1. The result follows.
\end{proof}

\begin{proposition}Under the assumptions of Theorem \ref{mainone} and the bootstrap assumptions \eqref{boundsbootstrap}, there exists a constant $C$, depending on the curvature and Vlasov norms, such that \[ \scaletwoSu{\De^2 \etabar}\lesssim C(\mathcal{R},\underline{\mathcal{R}},\Ve). \]
\end{proposition}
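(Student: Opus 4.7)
The plan is to mirror the template already established in this section for $\chibarhat$, $\tr\chibar$, and $\tildetr$: derive a commuted transport equation for $\De^2 \etabar$ along $e_3$, apply Proposition~\ref{evolemma}, and reduce the problem to a collection of scale-invariant integral estimates which are then handled either by the preliminary bounds of Proposition~\ref{preliminary} together with the bootstrap~\eqref{boundsbootstrap}, or—for the handful of borderline top-order terms—by the previously established elliptic and energy bounds.

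Concretely, first I would recall the structure equation
\[
\snabla_3 \etabar + \tfrac{1}{2}\tr\chibar\,\etabar \;=\; \tfrac{1}{2}\tr\chibar\,\eta - \chibarhat\cdot(\etabar-\eta) + \betabar - \tfrac{1}{2}\Tslash_3,
\]
and apply Proposition~\ref{propositionnabla3} with $\lambda(\etabar)=\tfrac12$ to obtain an equation of the form
\[
\snabla_3 \De^2 \etabar + \bigl(\tfrac{1}{2}+\tfrac{\ell}{2}\bigr)\tr\chibar\, \De^2 \etabar \;=\; G_2,
\]
where $G_2$ is the schematic sum of error terms listed in \eqref{commutationformulanabla3}, with $\Psi^{(s+1)}\in\{\betabar,\rho,\sigma\}$, $\Vl^{(s+1)}\in\{\Tslash_3,\Tslash_{34}\}$, and the $\psi\psi$ source replaced by $\tr\chibar\,\eta+\chibarhat(\eta,\etabar)$. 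Passing to scale-invariant norms and invoking the evolution lemma with the appropriate weight $\lambda_1=0$, one reaches
\[
\scaletwoSu{\De^2 \etabar} \;\lesssim\; \lVert \De^2 \etabar \rVert_{\mathcal{L}^2_{(sc)}(S_{u_\infty,\ubar})} + \sum_i \intu \frac{a}{\upr^2}\scaletwoSuprime{T_i}\,\duprime.
\]

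The nontrivial task is then to inspect each $T_i$. The genuinely top-order contributions are: (i) $\De^2\betabar$, which is absorbed into $\underline{\mathcal{R}}_2[\betabar]$ after a Cauchy–Schwarz in $\upr$ and application of the codimension-$1$ trace inequality Proposition~\ref{codimensiononetraceineq}; (ii) $\De^2\Tslash_3$ and $\De^2 \Tslash_{34}$, absorbed analogously into $\Ve_2[\Tslash_3]+\Ve_2[\Tslash_{34}]$; (iii) terms of the form $\eta\,\De^2(\tr\chibar,\chibarhat)$ and $(\tr\chibar,\chibarhat)\,\De^2(\eta,\etabar)$, handled by the already-established $\mathcal{O}_{2,2}$ bounds on $\De^2\tr\chibar$, $\De^2\chibarhat$ and $\De^2\eta$ coupled with the $\mathcal{O}_{0,\infty}$ improvements of Section~\ref{section0infty}; and (iv) mixed first-derivative products $\De(\tr\chibar,\chibarhat)\cdot \De(\eta,\etabar)$, bounded via $\mathcal{O}_{1,4}\times\mathcal{O}_{1,4}$ using the scale-invariant H\"older inequality~\eqref{257}. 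Everything else in $G_2$ is a lower-order multilinear expression in $\{\psi,\psi_g,\modu,\Psi,\Vl\}$ and falls directly under the schematic estimates \eqref{preliminb2}--\eqref{prelimthree}.

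The only point requiring genuine care—and what I expect to be the main obstacle—is the appearance of $\tfrac{1}{2}\tr\chibar\,\eta$ on the right-hand side of the $\etabar$ equation. After two commutations this generates the borderline product $\tr\chibar\cdot \De^2\eta$ and, via the antisymmetric commutator pieces inside~\eqref{commutationformulanabla3}, the dual product $\eta\cdot \De^2\tr\chibar$. A naive H\"older estimate would force these terms onto $\Hbar_{\ubar}$ in the wrong trace norm. The resolution is exactly as in the $\mathcal{L}^4_{(sc)}$ analysis of Proposition~\ref{etabar14bound}: one bounds the first factor in $\mathcal{L}^\infty_{(sc)}$ using the improved Section~\ref{section0infty} estimates on $\tr\chibar$ (respectively $\eta$), reducing the integral to an $\mathcal{L}^2_{(sc)}$ bound on $\De^2\eta$ (respectively $\De^2\tr\chibar$), both of which have been closed earlier in this section. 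After collecting all contributions and invoking the $u'$-integration with the decisive weight $a/\upr^2$, each error is either of order $\modu^{-1}$ or bounded by a polynomial in $(\mathcal R,\underline{\mathcal R},\Ve)$, yielding the stated inequality.
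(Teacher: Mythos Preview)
Your strategy and identification of the borderline terms are exactly the paper's. The gap is in the scale-invariant weight you record after the evolution lemma. With $\lambda_0=\tfrac{1+\ell}{2}$ in the commuted equation, passing to scale-invariant norms yields
\[
\frac{1}{\modu}\scaletwoSu{\De^2\etabar}\;\lesssim\;\frac{1}{|u_\infty|}\lVert\De^2\etabar\rVert_{\mathcal{L}^2_{(sc)}(S_{u_\infty,\ubar})}+\sum_i\intu\frac{a}{\upr^{3}}\scaletwoSuprime{T_i}\,\duprime
\]
(the paper multiplies this through by $\al$, writing $\tfrac{\al}{\modu}$ on the left and $\tfrac{a^{3/2}}{\upr^3}$ inside, and divides out at the end). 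Your version with no left-hand weight and integrand $\tfrac{a}{\upr^{2}}$ is a legitimate weakening, but it is too coarse precisely for the term you single out as borderline: since $\scaleinftySuprime{\tr\chibar}\lesssim\tfrac{\upr^{2}}{a}$ and $\scaletwoSuprime{\De^2\eta}\lesssim C(\mathcal R)$, the scale-invariant H\"older inequality gives $\scaletwoSuprime{\tr\chibar\,\De^2\eta}\lesssim\tfrac{\upr}{a}C$, and then $\intu\tfrac{a}{\upr^{2}}\cdot\tfrac{\upr}{a}C\,\duprime$ diverges logarithmically in $|u_\infty|$. The extra $\upr^{-1}$ in the correct weight is exactly what makes this term integrable; after dividing by $\tfrac{1}{\modu}$ one obtains the claimed $C(\mathcal R,\underline{\mathcal R},\Ve)$ bound. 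The companions $\eta\,\De^2\tr\chibar$ and $\De\tr\chibar\,\De\eta$ have the same $\tfrac{\upr}{a}$ scale-invariant size and need the same correction.

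Two smaller points, not affecting correctness once the weight is fixed: no $\De^2\Te_{34}$ arises at top order here (only $\De^{\leq 1}\Te_{34}$ from the commutator; the top-order Vlasov term is $\De^2\Te_3$ from the source), and for $\De^2\betabar$ the codimension-$1$ trace inequality is unnecessary---a direct Cauchy--Schwarz in $u'$ against $\scaletwoHbaru{\cdot}$ suffices. Finally, the piece $\chibarhat\,\De^2\etabar$ in your item (iii) is handled by the bootstrap on $\De^2\etabar$ together with smallness from the weight, not by an already-closed $\mathcal{O}_{2,2}$ estimate.
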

\begin{proof}
We have the following schematic identity:

\begin{align}
&\snabla_3 \De^2 \etabar + \frac{\ell+1}{2}\tr\chibar \De^2 \etabar \notag \\=& \sumfourtwo \db \tr\chibar \dif \eta +\sumfourtwo \db \chibarhat \dif(\eta,\etabar)  \notag  + \sumthreetwo \db (\betabar,\Te_{3})  \\ +&\sum_{i_1+i_2+i_3+i_4=1} \db \omegabar \mathcal{D}^{i_4+1}\etabar \notag  + \frac{1}{\al}\sum_{i_1+i_2+i_3+i_4=1}\db(\eta,\etabar,\chibarhat)\mathcal{D}^{i_4+1}\etabar \notag \\ +&\sum_{i_1+i_2+i_3+i_4=1}\db \sdiv \etabar \dif \etabar \notag + \sum_{i_1+i_2+i_3+i_4+i_5=1} \db \chihat \dif \chibarhat \difi \etabar \\ +& \sumfiveone \db \tr\chi \dif \tr\chibar \difi \etabar + \sumfourone \db (\rho,\Te_{34})\dif \etabar  \notag \\ +&\sumfourone \db (\betabar,\Te_3) \dif \etabar +\frac{1}{\al}\sumfourone \dione \psi_g^{i_2}\mathcal{D}^{i_3+1}\tildetr \dif \etabar \notag \\ +& \sumfiveone \db (\eta,\etabar) \dif(\eta,\etabar,\chibarhat,\tr\chibar)\difi \etabar \notag \\ +&\sumfiveone \db \sigma \dif \etabar \notag +\sumfiveone \db \modu \dif \Te_{33}\difi \etabar \notag \\ +& \sumsixone \db \tr\chibar \dif \modu \difi \tildetr \mathcal{D}^{i_6}\etabar \notag \\ +& \sumsixone \db \modu \dif \chibarhat \difi \chibarhat \mathcal{D}^{i_6}\etabar \notag \\ +&\sumsixone \db \modu \dif \omegabar \difi \tr\chibar \mathcal{D}^{i_6} \etabar := T_1+\dots +T_{17}.
\end{align}Using Proposition \ref{evolemma} and passing to scale-invariant norms, we have 

\begin{align}
\frac{\al}{\modu}\scalefourSu{\De^2 \etabar}\lesssim \frac{\al}{\lvert u_{\infty}\rvert}\scalefourSuzero{\De^2 \etabar}+ \sum_{i=1}^{17}\intu \frac{a^{\f32}}{\upr^3}\scaletwoSuprime{T_i}\duprime.
\end{align} For the first term, we have 

\begin{align} &\sumfourtwo \intu \frac{a^{\f32}}{\upr^3} \scaletwoSuprime{\db \tr\chibar \dif \eta} \duprime \\ \lesssim& \intu \frac{a^{\f32}}{\upr^3}\big( \scaletwoSuprime{\tr\chibar \De^2\eta}+ \scaletwoSuprime{\De\tr\chibar \De \eta }+\scaletwoSuprime{\eta \De^2 \tr\chibar}\big)\duprime +\frac{\al}{\modu} \notag \\ \lesssim& \frac{\al}{\modu}\big( \sup_{S_{u^{\prime},\ubar^{\prime}}} \scaletwoSuprimeubarprime{\De^2\eta}+ \sup_{S_{u^{\prime},\ubar^{\prime}}} \lVert \eta \rVert_{\mathcal{L}^{\infty}_{(sc)}(S_{u^{\prime},\ubar^{\prime}})} +\sup_{S_{u^{\prime},\ubar^{\prime}}} \lVert \De \eta \rVert_{\mathcal{L}^{4}_{(sc)}(S_{u^{\prime},\ubar^{\prime}})}\big)+\frac{\al}{\modu}.  \end{align}Here we have made use of the improved estimates on $\De^2 \eta, \hsp \De^2\tr\chibar $ as well as the $\mathcal{L}^4_{(sc)}, \mathcal{L}^{\infty}_{(sc)}$ estimates from Sections \ref{sectionO14} and \ref{section0infty}.  The second term is bounded by

\begin{align}
\sumfourtwo \intu \frac{a^{\f32}}{\upr^3}\scaletwoSuprime{\db \chibarhat \dif(\eta,\etabar)}\duprime \lesssim \intu \frac{a^{\f32}}{\upr^3}\cdot \frac{\al O^2}{\upr}\duprime \lesssim \frac{\al}{\modu}.
\end{align}For the third term, we have 
    \be \sumthreetwo \intu \frac{a^{\f32}}{\upr^3}\scaletwoSuprime{\db(\betabar,\Te_3)} \lesssim \frac{\al}{\modu}(\underline{\mathcal{R}}_2[\betabar]+\underline{\Ve}_2[\Te_{3}]) +\frac{\al}{\modu}. \ee The fourth, fifth and sixth terms, which also can be considered top-order, are bounded by $\frac{\al}{\modu}$, together with the rest of the terms using the results of Proposition \ref{preliminary}. Ultimately, we obtain

    \be \frac{\al}{\modu}\scaletwoSu{\De^2\etabar} \lesssim \frac{\al}{\lvert u_{\infty}\rvert}\lVert \De^2 \etabar \rVert_{\mathcal{L}^2_{(sc)}(S_{u_{\infty},\ubar})} +\frac{\al}{\modu}C(\mathcal{R},\underline{\mathcal{R}},\Ve). \ee Dividing by $\frac{\al}{\modu}$, we get the desired result.

\end{proof}
\begin{proposition}
Under the assumptions of Theorem \ref{mainone} and the bootstrap assumptions \eqref{boundsbootstrap}, there  exists a constant $C$ depending on the total norms $\mathcal{R},\underline{\mathcal{R}},\mathcal{V}$ such that there holds 

\[ \scaletwoSu{\De^2\mathcal{G}} \lesssim \prescript{(S)}{}{\mathcal{O}}_{3,2}[\eta] + C(\mathcal{R},\underline{\mathcal{R}},\mathcal{V}). \]
\end{proposition}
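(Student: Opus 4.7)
The plan is to mirror the strategy used throughout Section \ref{sectionO22} for the other $\prescript{(S)}{}{\mathcal{O}}_{2,2}$ estimates, with the key new feature being that the top-order angular-derivative term on the right-hand side can only be closed via the elliptic bound $\prescript{(S)}{}{\mathcal{O}}_{3,2}[\eta]$ along $H_u$. Starting from the schematic transport equation
\[
\snabla_4 \mathcal{G} \sim \snabla \eta + \mathcal{G}\cdot\chi + \omegabar\cdot \chi + \Gammaslash \cdot (\eta,\etabar) + \eta\cdot \etabar + (\rho,\sigma) + \Tslash,
\]
I would apply $\mathcal{D}^{2}$ and use the commutation formula of Proposition \ref{commute4} to derive a schematic equation for $\snabla_4 \mathcal{D}^{2}\mathcal{G}$. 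Since $\mathcal{G}$ is not a tensor, the commutation is interpreted componentwise using the endomorphism structure recalled in the introductory discussion after \eqref{Gintroduction}, so the same schematic calculus applies.

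The resulting expression splits into: (i) the genuinely top-order piece $\mathcal{D}^{2}\snabla\eta$, (ii) the top-order matter/curvature pieces $\mathcal{D}^{2}(\rho,\sigma)$ and $\mathcal{D}^{2}\Tslash$, (iii) products of the form $\sum_{i_1+i_2+i_3+i_4=2}\db(\mathcal{G},\omegabar,\Gammaslash-\Gammaslash^{\circ})\dif(\chi,\eta,\etabar)$ arising from the quadratic nonlinearities, together with the product $\sum_{i_1+\cdots+i_5=2}\db\eta\dif\etabar$, and (iv) the lower-order commutator contributions generated by Proposition \ref{commute4}, which are of exactly the form catalogued in Proposition \ref{preliminary}. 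After integrating along $e_4$ from $S_{u,0}$, we obtain
\[
\scaletwoSu{\mathcal{D}^{2}\mathcal{G}} \lesssim \scaletwoSu[S_{u,0}]{\mathcal{D}^{2}\mathcal{G}} + \intubar \scaletwoSuubarprime{\mathcal{D}^{2}\snabla\eta}\dubarprime + \text{(lower order)},
\]
where all lower-order integrals are bounded by $C(\mathcal{R},\underline{\mathcal R},\mathcal V)$ by invoking the preliminary estimates \eqref{preliminb2}--\eqref{prelimthree}, the $\prescript{(S)}{}{\mathcal{O}}_{0,\infty}$ and $\prescript{(S)}{}{\mathcal{O}}_{1,4}$ improvements of Sections \ref{sectionO14}--\ref{section0infty}, and the already-established $\prescript{(S)}{}{\mathcal{O}}_{2,2}$ bounds for $(\omegabar,\chihat,\tr\chi,\eta,\etabar,\chibarhat,\tr\chibar,\tildetr)$. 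Terms of the form $\mathcal{D}^{2}(\rho,\sigma,\Tslash)$ are handled by the codimension-one trace inequality of Proposition \ref{codimensiononetraceineq} followed by the bootstrap bounds on $\mathcal{R}_2,\underline{\mathcal R}_2$ and $\mathcal{V}_2$, which in turn yields an $O(1)$ contribution after the $\dubarprime$ integration.

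The main step is therefore the treatment of the top-order angular term $\intubar \scaletwoSuubarprime{\mathcal{D}^{2}\snabla\eta}\dubarprime$. Here one cannot absorb this into the transport estimate alone because $\snabla\eta$ genuinely represents three derivatives of $\eta$, which is one order beyond what the $\prescript{(S)}{}{\mathcal{O}}_{2,2}$ hierarchy provides. The resolution is the reduced elliptic scheme highlighted in the introduction: on $H_u$ the $\snabla\eta$ expression is controlled by the elliptic estimate for $\eta$ coming from the mass-aspect function $\mu$ defined in the discussion following \eqref{Gintroduction}, giving precisely
\[
\intubar \scaletwoSuubarprime{\mathcal{D}^{2}\snabla\eta}\dubarprime \lesssim \scaletwoHu{\mathcal{D}^{2}\snabla\eta} \lesssim \prescript{(S)}{}{\mathcal{O}}_{3,2}[\eta].
\]
The hard part of the argument is ensuring that no other top-order term requires $\mathcal{D}^{3}$ of a Ricci coefficient other than $\eta,\tr\chi,\chihat$; this is guaranteed precisely by the structure of the $\mathcal{G}$-equation derived in the introduction, where $\snabla\eta$ is the only principal-order derivative of a Ricci coefficient appearing on the right-hand side.

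Combining the three contributions yields
\[
\scaletwoSu{\mathcal{D}^{2}\mathcal{G}} \lesssim \prescript{(S)}{}{\mathcal{O}}_{3,2}[\eta] + C(\mathcal{R},\underline{\mathcal R},\mathcal V),
\]
as claimed. The only subtlety to verify carefully is that in the products of type (iii) every occurrence of $\mathcal{G}$ or $\Gammaslash-\Gammaslash^{\circ}$ at order $\leq 1$ is controlled in $\mathcal{L}^4_{(sc)}$ from Section \ref{sectionO14} (which depends on $\prescript{(S)}{}{\mathcal{O}}_{3,2}[\eta]$ and $\prescript{(S)}{}{\mathcal{O}}_{3,2}[\chihat]$ respectively), so that no circular dependence arises in the bootstrap hierarchy.
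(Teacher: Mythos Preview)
Your proposal is correct and follows essentially the same approach as the paper: commute the $\snabla_4$-equation for $\mathcal{G}$ with $\mathcal{D}^2$ via Proposition \ref{commute4}, integrate along $e_4$, bound all lower-order products through Proposition \ref{preliminary} and the previously established $\prescript{(S)}{}{\mathcal{O}}_{0,\infty},\prescript{(S)}{}{\mathcal{O}}_{1,4},\prescript{(S)}{}{\mathcal{O}}_{2,2}$ estimates, and close the single genuine top-order term $\intubar\scaletwoSuubarprime{\mathcal{D}^2\snabla\eta}\dubarprime$ by $\scaletwoHu{\mathcal{D}^2\snabla\eta}\lesssim\prescript{(S)}{}{\mathcal{O}}_{3,2}[\eta]$. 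One small correction: the terms $\mathcal{D}^2(\rho,\sigma,\Tslash)$ do not require the codimension-one trace inequality---Cauchy--Schwarz in $\ubar$ already gives $\intubar\scaletwoSuubarprime{\mathcal{D}^2(\rho,\sigma,\Tslash)}\dubarprime\lesssim\scaletwoHu{\mathcal{D}^2(\rho,\sigma,\Tslash)}$, which is directly bounded by $\mathcal{R}_2$ and $\mathcal{V}_2$ (this is exactly \eqref{prelimtwo} and \eqref{preliminb}).
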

\begin{proof}
There holds \begin{align}
\snabla_4 \De^2 \mathcal{G} =& \sum_{i_1+i_2+i_3=2}\dione \psi_g^{i_2}\dit  \snabla \eta +\sum_{i_1+i_2+i_3+i_4=2}\dione \psi_g^{i_2}\dit  (\omegabar,\mathcal{G})\dif (\chihat,\tr\chi) \notag \\ +&\sum_{i_1+i_2+i_3+i_4=2}\dione \psi_g^{i_2}\dit  (\eta, \Gammaslash)\dif (\eta,\etabar) +\sum_{i_1+i_2+i_3+i_4=2}\dione \psi_g^{i_2}\dit  (\rho,\sigma,\Te) \notag \\ +& \frac{1}{\al}\sum_{i_1+i_2+i_3+i_4+i_5=1} \db\modu \dif(\eta,\etabar)\mathcal{D}^{i_5+1}\mathcal{G} \notag \\ +&\sum_{i_1+i_2+i_3+i_4=1} \db(\tr\chi,\chihat)\mathcal{D}^{i_4+1}\mathcal{G} \notag \\ +&\al \sum_{i_1+i_2+i_3+i_4+i_5=1}\db(\tr\chi,\chihat)\dif \etabar \difi \mathcal{G} \notag \\ +& \al \sum_{i_1+i_2+i_3+i_4=1} \db (\beta,\Te_4)\dif \mathcal{G} \notag \\ +&\sum_{i_1+i_2+i_3+i_4+i_5=1} \db \modu \dif \sigma \difi \mathcal{G} \notag \\ +&\sum_{i_1+i_2+i_3+i_4+i_5+i_6=1} \db \modu \dif(\eta,\etabar)\difi(\eta,\etabar)\mathcal{D}^{i_6}\mathcal{G} \end{align}The first term is the one of highest order and is bounded as before by 

\be \intubar \scaletwoSuubarprime{ \De^2 \snabla \eta}\dubarprime \lesssim  \mathcal{O}_{3,2}[\eta].\ee The rest of the terms are bounded in total by a constant $C(\mathcal{R},\underline{\mathcal{R}},\Ve)$ as before. The claim follows.
\end{proof}
Finally, we conclude with $\prescript{(S)}{}{\mathcal{O}}_{2,2}$ estimates for $\Gammaslash-\Gammaslash^{\circ}$.

\begin{proposition}
Under the assumptions of Theorem \ref{mainone} and the bootstrap assumptions \eqref{boundsbootstrap}, there  exists a constant $C$ depending on the total norms $\mathcal{R},\underline{\mathcal{R}},\mathcal{V}$ such that there holds 

\[ \scaletwoSu{\De^2 (\Gammaslash-\Gammaslash^{\circ})}\lesssim \prescript{(S)}{}{\mathcal{O}}_{3,2}[\chihat]+C(\mathcal{R},\underline{\mathcal{R}},\Ve).   \]
\end{proposition}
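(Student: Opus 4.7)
The plan is to mirror the argument used for $\De^2 \mathcal{G}$ in the preceding proposition. I start from the schematic transport equation
\[
\snabla_4 (\Gammaslash-\Gammaslash^{\circ}) = \snabla(\chihat,\tr\chi) +\mathcal{G}\cdot(\Gammaslash-\Gammaslash^{\circ}),
\]
commute twice with $\De$ by applying Proposition \ref{commute4}, and integrate along the $e_4$ direction starting from the initial incoming cone $\Hbar_0$, on which $\Gammaslash-\Gammaslash^{\circ}=0$ by the choice of gauge. The commuted equation has the schematic form
\begin{align*}
\snabla_4 \De^2(\Gammaslash-\Gammaslash^{\circ}) =& \sum_{i_1+i_2+i_3=2}\dione \psi_g^{i_2}\dit \snabla(\chihat,\tr\chi)+ \sum_{i_1+i_2+i_3+i_4=2}\dione \psi_g^{i_2}\dit \mathcal{G}\dif(\Gammaslash-\Gammaslash^{\circ}) \\
&+\ \text{commutator error terms as in \eqref{commutationformulanabla4}},
\end{align*}
where the commutator errors are of the same structure as those appearing in the $\De^2\mathcal{G}$ equation and hence are handled by the preliminary estimates of Proposition \ref{preliminary} together with the bootstrap assumptions \eqref{boundsbootstrap}.

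Integrating in $\ubar$ gives
\[
\scaletwoSu{\De^2(\Gammaslash-\Gammaslash^{\circ})} \lesssim \intubar \scaletwoSuubarprime{\De^2\snabla(\chihat,\tr\chi)}\,\dubarprime + \intubar \scaletwoSuubarprime{\De^2\big(\mathcal{G}\cdot(\Gammaslash-\Gammaslash^{\circ})\big)}\,\dubarprime + (\text{lower order}).
\]
For the first (principal) term, Cauchy–Schwarz in $\ubar$ and the definition of $\mathcal{O}_{3,2}$ give
\[
\intubar \scaletwoSuubarprime{\De^2\snabla(\chihat,\tr\chi)}\,\dubarprime
\lesssim \scaletwoHu{\De^2\snabla(\chihat,\tr\chi)}
\lesssim \prescript{(S)}{}{\mathcal{O}}_{3,2}[\chihat,\tr\chi].
\]
For the product term $\De^2(\mathcal{G}\cdot(\Gammaslash-\Gammaslash^{\circ}))$, I distribute the two $\De$'s via the Leibniz rule. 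The two summands deserving care are
$\De^2\mathcal{G}\cdot(\Gammaslash-\Gammaslash^{\circ})$ and $\mathcal{G}\cdot\De^2(\Gammaslash-\Gammaslash^{\circ})$. For the first I bound $(\Gammaslash-\Gammaslash^{\circ})$ in $\mathcal{L}^{\infty}_{(sc)}$ (inherited from the bootstrap and the $\mathcal{L}^{\infty}_{(sc)}$--$\mathcal{L}^4_{(sc)}$ embedding already established) and invoke the previous proposition to get a contribution of the form $\mathcal{O}_{3,2}[\eta]+C(\mathcal{R},\underline{\mathcal{R}},\mathcal{V})$; the second produces a Grönwall-type factor that is harmless since $\mathcal{G}\in\mathcal{L}^{\infty}_{(sc)}$ is uniformly bounded over $\ubar\in[0,1]$. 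The mixed term $\De\mathcal{G}\cdot\De(\Gammaslash-\Gammaslash^{\circ})$ is handled by an $\mathcal{L}^4_{(sc)}$--$\mathcal{L}^4_{(sc)}$ Hölder inequality, using the $\mathcal{O}_{1,4}$-bounds for $\mathcal{G}$ and for $\Gammaslash-\Gammaslash^{\circ}$ obtained in Sections \ref{sectionO14}.

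The main obstacle is the product $\De^2\mathcal{G}\cdot(\Gammaslash-\Gammaslash^{\circ})$: a priori $\De^2\mathcal{G}$ itself requires the top-order elliptic norm $\mathcal{O}_{3,2}[\eta]$, and one must verify that no \emph{further} derivative loss is generated upon a second multiplication. This is the point at which the structural reduction of elliptic estimates to $(\chihat,\tr\chi,\eta)$ on $H$, highlighted in the introduction, becomes indispensable: because $\De^2\mathcal{G}$ is already controlled by $\mathcal{O}_{3,2}[\eta]$ plus a constant depending on $(\mathcal{R},\underline{\mathcal{R}},\mathcal{V})$, and because $\Gammaslash-\Gammaslash^{\circ}$ is subcritical in $\mathcal{L}^{\infty}_{(sc)}$, the Leibniz expansion closes at the same regularity level. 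Collecting all contributions yields
\[
\scaletwoSu{\De^2(\Gammaslash-\Gammaslash^{\circ})}\lesssim \prescript{(S)}{}{\mathcal{O}}_{3,2}[\chihat]+C(\mathcal{R},\underline{\mathcal{R}},\mathcal{V}),
\]
where the $\prescript{(S)}{}{\mathcal{O}}_{3,2}[\eta]$ contribution from the $\mathcal{G}$-term is absorbed into the constant $C$ (since $\prescript{(S)}{}{\mathcal{O}}_{3,2}[\eta]$ is bounded in terms of $\mathcal{R}, \underline{\mathcal{R}}, \mathcal{V}$ via the elliptic estimates for $\eta$ established separately).
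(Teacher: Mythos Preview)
Your approach is essentially the same as the paper's: commute the transport equation for $\Gammaslash-\Gammaslash^{\circ}$ with $\De^2$ via Proposition~\ref{commute4}, integrate along $e_4$, isolate $\De^2\snabla(\chihat,\tr\chi)$ as the principal term (bounded by $\prescript{(S)}{}{\mathcal{O}}_{3,2}[\chihat,\tr\chi]$ via Cauchy--Schwarz on $H_u$), and control the remaining product and commutator terms by the preliminary estimates and bootstrap assumptions.

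One minor point: in your final paragraph you justify absorbing the $\prescript{(S)}{}{\mathcal{O}}_{3,2}[\eta]$ contribution (coming from $\De^2\mathcal G$) into $C(\mathcal R,\underline{\mathcal R},\mathcal V)$ by forward-referencing the elliptic estimates for $\eta$. At this point in the argument those elliptic estimates have not yet been closed, so this reasoning is circular. The cleaner justification---which the paper uses implicitly---is that the term $\De^2\mathcal G\cdot(\Gammaslash-\Gammaslash^{\circ})$ carries a $\tfrac{1}{\modu}$ factor from the scale-invariant H\"older inequality; combining this with the bootstrap bounds $\scaleinftySu{\Gammaslash-\Gammaslash^{\circ}}\le O$ and $\prescript{(S)}{}{\mathcal{O}}_{3,2}[\eta]\le O$ gives $\tfrac{O^2}{\modu}\lesssim 1$, which is absorbed directly without appealing to the (yet-to-be-proved) elliptic control of $\eta$.
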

\begin{proof}
There holds
\begin{align}
\snabla_4 \De^2(\Gammaslash-\Gammaslash^{\circ}) =& \sum_{i_1+i_2+i_3=2}\dione \psi_g^{i_2}\dit  \snabla (\chihat,\tr\chi) +\sum_{i_1+i_2+i_3+i_4=2}\dione \psi_g^{i_2}\dit  \mathcal{G}\dif (\Gammaslash-\Gammaslash^{\circ}) \notag \\+& \frac{1}{\al}\sum_{i_1+i_2+i_3+i_4+i_5=1} \db\modu \dif(\eta,\etabar)\mathcal{D}^{i_5+1}(\Gammaslash-\Gammaslash^{\circ}) \notag \\ +&\sum_{i_1+i_2+i_3+i_4=1} \db(\tr\chi,\chihat)\mathcal{D}^{i_4+1}(\Gammaslash-\Gammaslash^{\circ}) \notag \\ +&\al \sum_{i_1+i_2+i_3+i_4+i_5=1}\db(\tr\chi,\chihat)\dif \etabar \difi (\Gammaslash-\Gammaslash^{\circ})\notag \\ +& \al \sum_{i_1+i_2+i_3+i_4=1} \db (\beta,\Te_4)\dif (\Gammaslash-\Gammaslash^{\circ}) \notag \\ +&\sum_{i_1+i_2+i_3+i_4+i_5=1} \db \modu \dif \sigma \difi (\Gammaslash-\Gammaslash^{\circ}) \notag \\ +&\sum_{i_1+i_2+i_3+i_4+i_5+i_6=1} \db \modu \dif(\eta,\etabar)\difi(\eta,\etabar)\mathcal{D}^{i_6}(\Gammaslash-\Gammaslash^{\circ}) \end{align}

The highest-order term is the first one. We bound this term by 

\be \intubar \scaletwoSuubarprime{\De^2\snabla(\chihat,\tr\chi)}\dubarprime \lesssim \scaletwoHu{\De^2\snabla(\chihat,\tr\chi)} \lesssim \prescript{(S)}{}{\mathcal{O}}_{3,2}[\chihat,\tr\chi].\ee
The rest of the error terms are bounded by $C(\mathcal{R},\underline{\mathcal{R}},\Ve)$ as in previous propositions. The result follows. 
\end{proof}

\section{Elliptic estimates for $\chihat,\tr\chi, \eta$}\label{ellipticsection}
We begin by proving two propositions about curvature components. 

\subsection{$\mathcal{L}^4_{(sc)}(S_{u,\ubar})$ estimates for Curvature Components}\label{section40curv}
For the curvature components $\beta,\rho,\sigma,\betabar$ we have the following equations:

\begin{gather}
    \snabla_4 \beta +2 \tr\chi \hsp \beta = \slashed{\div}\alpha +\omega \hsp \beta + (\eta^\# - 2\etabar^\#)\cdot \alpha - \frac{1}{2}(\snabla \slashed{T}_{44} -\snabla_4 \slashed{T}_4 -2\chi\cdot \slashed{T}_4 +\omega \hsp \slashed{T}_4 - \slashed{T}_{44}\eta), \\ \snabla_4 \rho + \frac{3}{2}\tr\chi \rho =  \slashed{\div}\beta -\frac{1}{2} \hsp \chibarhat^\# \cdot \alpha +(\etabar, \beta)- \frac{1}{4}(\snabla_3 \slashed{T}_{44} -\snabla_4 \slashed{T}_{34} -2 \eta \cdot \slashed{T}_4 +\omega \hsp \slashed{T}_{34}), \\ \snabla_4 \sigma + \frac{3}{2}\tr\chi \sigma  = -\slashed{\curl}\beta - \etabar \wedge \beta +\frac{1}{2}\hsp \chibarhat \wedge \alpha - \frac{1}{2}(\slashed{\epsilon}\cdot \snabla \slashed{T}_4 -\slashed{\epsilon} \cdot(\chi \times \slashed{T})- \slashed{\epsilon}\cdot(\etabar \otimes \slashed{T}_4)),
\end{gather}
\begin{align}
  \snabla_4 \betabar +\tr\chi \betabar =& -\snabla \rho + \Hodge{\snabla}\sigma -\omega \betabar +2\chibarhat^\#\cdot \beta -3 (\etabar \hsp \rho -\Hodge{\etabar}\hsp \sigma) \notag \\-& \frac{1}{2}(\snabla \slashed{T}_{34} -\snabla_3 \slashed{T}_4 -\chibar\cdot \slashed{T}_4 -\chi\cdot \slashed{T}_3 +\slashed{T}_{34}\eta +2\eta\cdot\slashed{T}). 
\end{align}
For $\beta,$ we have

\begin{align} \notag
&\scalefourSu{\beta} \\ \notag \lesssim& \intubar \frac{1}{\modu}\scaleinftySu{\omega,\tr\chi}\scalefourSuubarprime{\beta} \dubarprime + \big( \frac{1}{\al} \scaletwoHu{(\al\snabla)\alpha}\big)^{\frac{1}{2}}\big(\frac{1}{\al} \scaletwoHu{(\al\snabla)^2\alpha}\big)^{\frac{1}{2}} \\ +& \frac{1}{\al}\scaletwoHu{(\al\snabla)\alpha} + \frac{\al}{\modu}\sup_{0\leq \ubar \leq 1}\scaleinftySuubarprime{(\eta,\etabar)}\big(\mathcal{R}_0[\alpha]^{\frac{1}{2}}\mathcal{R}_1[\alpha]^{\frac{1}{2}} +\mathcal{R}_{0}[\alpha]\big) \notag \\ +& \frac{1}{\al}\big(\scaletwoHu{\mathcal{D}\slashed{T}_{44}}^{\frac{1}{2}}\scaletwoHu{\mathcal{D}^2\slashed{T}_{44}}^{\frac{1}{2}} + \scaletwoHu{\mathcal{D}\slashed{T}_{44}}\big) +\scaletwoHu{\mathcal{D}\slashed{T}_{4}}^{\frac{1}{2}}\scaletwoHu{\mathcal{D}^2\slashed{T}_{4}}^{\frac{1}{2}} \notag \\ +& \scaletwoHu{\mathcal{D}\slashed{T}_4} + \frac{\al}{\modu}\sup_{0\leq \ubar \leq 1} \scaleinftySuubarprime{(\frac{1}{\al}\chihat,\omega,\tr\chi)}\scalefourSuubarprime{\slashed{T}_4} \notag \\ +& \frac{1}{\modu}\sup_{0\leq \ubar\leq 1}\scalefourSuubarprime{\slashed{T}_{44}}\scaleinftySuubarprime{\eta}.
\end{align}

\begin{proposition} \label{propositionK-0}
Under the assumptions of Theorem \ref{mainone} and the bootstrap assumptions \eqref{boundsbootstrap}, there holds:

\begin{align*}   \scalefourSu{K-\frac{1}{\modu^2}}\lesssim& \frac{a}{\modu}\scaletwoHu{\mathcal{D}\slashed{T}_{44}}^{\frac{1}{2}}\scaletwoHu{\mathcal{D}^2\slashed{T}_{44}}^{\frac{1}{2}} +\frac{a}{\modu}\scaletwoHu{\mathcal{D}\slashed{T}_{44}}\notag \\ +& \scaletwoHu{\mathcal{D}\slashed{T}_{34}}^{\frac{1}{2}}\scaletwoHu{\mathcal{D}^2\slashed{T}_{34}}^{\frac{1}{2}} +\scaletwoHu{\mathcal{D}\slashed{T}_{34}}+1.    \end{align*}
\end{proposition}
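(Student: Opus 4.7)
The plan is to reduce the estimate to a bound on $\rho$ via the Gauss equation, and then propagate $\rho$ from the Minkowskian incoming cone $\Hbar_0$ through the scale-invariant codimension-$1$ trace inequality of Proposition \ref{codimension12}. Invoking the Gauss equation $K = -\rho + \tfrac{1}{2}\chihat\cdot\chibarhat - \tfrac{1}{4}\tr\chi\tr\chibar$ and inserting $\tr\chibar = \tildetr - \tfrac{2}{|u|}$ yields the decomposition
\[
K - \tfrac{1}{|u|^2} = -\rho + \tfrac{1}{2}\chihat\cdot\chibarhat - \tfrac{1}{4}\tr\chi\,\tildetr + \tfrac{1}{2|u|}\bigl(\tr\chi - \tfrac{2}{|u|}\bigr).
\]
The two bilinear Ricci products are controlled by the $\mathcal{L}^{\infty}_{(sc)}$--$\mathcal{L}^4_{(sc)}$ estimates of Sections \ref{sectionO04}--\ref{section0infty} and contribute $O(1)$. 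For the last term, one notes that $\tr\chi - \tfrac{2}{|u|}$ vanishes on $\Hbar_0$ by Minkowskian data and satisfies
\[
\snabla_4\bigl(\tr\chi - \tfrac{2}{|u|}\bigr) + \tfrac{2}{|u|}\bigl(\tr\chi - \tfrac{2}{|u|}\bigr) = -\tfrac{1}{2}\bigl(\tr\chi - \tfrac{2}{|u|}\bigr)^2 - |\chihat|^2 - \Te_{44} - \tfrac{2}{|u|^2},
\]
so integration in $e_4$ via Proposition \ref{evolemma}, combined with the bootstrap bounds, produces an $O(1)$ scale-invariant contribution.

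The principal task is then to estimate $\scalefourSu{\rho}$. I would apply Proposition \ref{codimension12}; the boundary term $\lVert \rho \rVert_{\mathcal{L}^4_{(sc)}(S_{u,0})}$ vanishes by the Minkowskian data, leaving
\[
\scalefourSu{\rho} \lesssim \scaletwoHu{\snabla_4\rho}^{\tfrac{1}{2}}\Bigl(\scaletwoHu{\rho}^{\tfrac{1}{2}} + \scaletwoHu{\al\snabla\rho}^{\tfrac{1}{2}}\Bigr).
\]
Substituting the Bianchi equation
\[
\snabla_4\rho + \tfrac{3}{2}\tr\chi\,\rho = \sdiv\beta - \tfrac{1}{2}\chibarhat\cdot\alpha + (\etabar,\beta) - \tfrac{1}{4}\bigl(\snabla_3\Te_{44} - \snabla_4\Te_{34} - 2\eta\cdot\Te_4 + \omega\,\Te_{34}\bigr)
\]
decomposes $\scaletwoHu{\snabla_4\rho}$ into curvature contributions (controlled by $\mathcal{R}_1[\beta]$, $\mathcal{R}_0[\alpha]$, and the bootstrap) and matter contributions which, re-expressed through the signature-adapted $\mathcal{D}$-operators, enter at the level $\tfrac{1}{|u|}\scaletwoHu{\mathcal{D}\Te_{44}}$ and $\scaletwoHu{\mathcal{D}\Te_{34}}$. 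Multiplied by $\scaletwoHu{\al\snabla\rho}^{1/2} \lesssim \mathcal{R}_1[\rho]^{1/2}$, these recover the linear contributions $\tfrac{a}{|u|}\scaletwoHu{\mathcal{D}\Te_{44}}$ and $\scaletwoHu{\mathcal{D}\Te_{34}}$ present in the statement.

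To obtain the interpolated products $\scaletwoHu{\mathcal{D}\Te}^{1/2}\scaletwoHu{\mathcal{D}^2\Te}^{1/2}$, I would refine the treatment of the matter source by applying the codimension-$1$ trace inequality a second time, now to $\snabla_3\Te_{44}$ and $\snabla_4\Te_{34}$ themselves: their $\mathcal{L}^4_{(sc)}(S)$ norms decompose into Minkowskian initial traces (vanishing in view of $f|_{\ubar=0} = 0$) and bulk products of exactly the form $\scaletwoHu{\mathcal{D}\Te}^{1/2}\scaletwoHu{\mathcal{D}^2\Te}^{1/2}$. The main obstacle will be tracking the scale-invariant weights through the nested applications of the trace inequality, in particular ensuring that the distinct signatures $s_2(\Te_{44})=0$ and $s_2(\Te_{34})=1$ reproduce the asymmetric coefficients $a/|u|$ versus unit appearing in the final estimate, and that the mixed-signature contribution from the $\tr\chi - \tfrac{2}{|u|}$ transport estimate is cleanly absorbed into the $O(1)$ error term.
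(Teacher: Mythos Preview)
Your route through the Gauss equation is a reasonable alternative idea, but the step where you claim to recover the interpolated products $\scaletwoHu{\mathcal{D}\Te}^{1/2}\scaletwoHu{\mathcal{D}^2\Te}^{1/2}$ via a ``second application'' of the trace inequality does not work as described. After applying Proposition~\ref{codimension12} to $\rho$ and substituting the Bianchi equation into $\scaletwoHu{\snabla_4\rho}$, the matter sources $\snabla_3\Te_{44}$ and $\snabla_4\Te_{34}$ sit inside an $L^2(H)$ norm. The codimension-$1$ trace inequality produces $L^4(S)$ control \emph{from} $L^2(H)$ data, not the reverse, so there is no $\mathcal{L}^4_{(sc)}(S)$ norm of $\snabla_3\Te_{44}$ available to decompose at this point; your argument yields only mixed products of the form $\scaletwoHu{\snabla_3\Te_{44}}^{1/2}\,\mathcal{R}_1[\rho]^{1/2}$, not pure matter interpolations. (Note also that the Gauss equation carries the matter correction $-\tfrac12\Te_{34}$ arising from $R_{3434}=W_{3434}+2\Te_{34}$, which you omit; this term is easily absorbed but should be present.)

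The paper proceeds differently: it writes down a $\snabla_4$-transport equation for $K$ itself (hence for $K-\modu^{-2}$, whose initial trace vanishes on $\Hbar_0$), integrates it along $H_u$, and bounds each source term in $\mathcal{L}^4_{(sc)}(S)$. In that framework the matter derivatives $\snabla_3\Te_{44}$ and $\snabla_4\Te_{34}$ genuinely appear as $L^4(S)$ quantities, and applying Proposition~\ref{codimension12} to \emph{them} produces exactly the interpolation $\scaletwoHu{\mathcal{D}\Te}^{1/2}\scaletwoHu{\mathcal{D}^2\Te}^{1/2}$ in the statement. Your approach can be repaired by integrating the $\snabla_4\rho$ Bianchi equation as a transport equation (bounding $\scalefourSu{\rho}$ by $\int_0^{\ubar}\scalefourSuubarprime{\text{RHS}}\,\dubarprime$) rather than first applying trace to $\rho$---but at that point you have essentially reproduced the paper's method, modulo the preliminary Gauss-equation reduction.
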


\begin{proof} We begin with the bootstrap assumption \be \label{bootstrapK0} \frac{a}{\modu}\scalefourSu{K}\leq B_0, \ee where $B_0$ is a large constant. For $K$ there holds:

%\be K-\frac{1}{\modu^2} = \frac{1}{2}\chihat\cdot\chibarhat -\frac{1}{4}\big(\tr\chi - \frac{2}{\modu}\big)\big(\tr\chibar+\frac{2}{\modu}\big) +\frac{1}{2\modu}\big(\tr\chi-\frac{2}{\modu}\big) - \frac{1}{2\modu}\big(\tr\chibar+\frac{2}{\modu}\big) -\rho +\frac{1}{2}\slashed{T}_{34}.\ee

There holds \begin{align} \snabla_4 K +\tr\chi K=& - \slashed{\div}\beta -(\zeta+2\etabar)\cdot \beta +\frac{1}{2}\chihat \cdot \snabla \hat{\otimes}\etabar + \frac{1}{2}\chihat\cdot(\etabar \hat{\otimes}\etabar) -\frac{1}{2}\tr\chi \slashed{\div}\etabar- \frac{1}{2}\tr\chi \lvert \etabar \rvert^2 \notag \\ +& \chihat\cdot\slashed{T} +\frac{1}{4}\tr\chi\slashed{T}_{34}+\frac{1}{4}\tr\chibar\slashed{T}_{44}- \frac{1}{4}\big(\snabla_3\slashed{T}_{44}-2\eta\cdot\slashed{T}_{4}+\omega \slashed{T}_{34}\big)+\frac{3}{4}\snabla_4 \slashed{T}_{34}. 
\end{align}Hence, making use of the results of Sections \ref{sectionO04}-\ref{section0infty},

\begin{align}
\scalefourSu{K}\lesssim& \lVert {K}\rVert_{\mathcal{L}^4_{(sc)}(S_{u,0})}+ \intubar \frac{1}{\al}\scalefourSuubarprime{(\al\snabla) \beta}\dubarprime + \frac{a}{\modu}\intubar\scalefourSuubarprime{(\modu\snabla_3)\slashed{T}_{44}} \dubarprime \notag \\ +& \intubar \scalefourSuubarprime{\snabla_4 \slashed{T}_{34}}\dubarprime +1\notag \\ \lesssim& \lVert {K}\rVert_{\mathcal{L}^4_{(sc)}(S_{u,0})}+\frac{1}{\al}\big(\scaletwoHu{\mathcal{D} \beta}^{\frac{1}{2}}\scaletwoHu{\mathcal{D}^2\beta}^{\frac{1}{2}}+\scaletwoHu{\mathcal{D}\beta}\big) \notag \\ +& \scaletwoHu{\mathcal{D} (\slashed{T}_{34},\slashed{T}_{44})}^{\frac{1}{2}}\scaletwoHu{\mathcal{D}^2(\slashed{T}_{34},\slashed{T}_{44})}^{\frac{1}{2}}+\scaletwoHu{\mathcal{D}(\slashed{T}_{34},\slashed{T}_{44})} +1.
\end{align}Taking into account that $K=\frac{1}{\modu^2}$ on $S_{u,0}$, we have $\lVert K \rVert_{\mathcal{L}^4_{(sc)}(S_{u,0})} = \frac{\modu}{a}$, hence

\be \frac{a}{\modu}\scalefourSu{K}\lesssim 1. \ee 
This implies an improved estimate for $K-\frac{1}{\modu^2}$ via the equation

\begin{align}
\snabla_4 \big(K-\frac{1}{\modu^2}\big) =& - \slashed{\div}\beta -(\zeta+2\etabar)\cdot \beta +\frac{1}{2}\chihat \cdot \snabla \hat{\otimes}\etabar + \frac{1}{2}\chihat\cdot(\etabar \hat{\otimes}\etabar) -\frac{1}{2}\tr\chi \slashed{\div}\etabar- \frac{1}{2}\tr\chi \lvert \etabar \rvert^2 \notag \\ +& \chihat\cdot\slashed{T} +\frac{1}{4}\tr\chi\slashed{T}_{34}+\frac{1}{4}\tr\chibar\slashed{T}_{44}- \frac{1}{4}\big(\snabla_3\slashed{T}_{44}-2\eta\cdot\slashed{T}_{4}+\omega \slashed{T}_{34}\big)+\frac{3}{4}\snabla_4 \slashed{T}_{34}-\tr\chi K. 
\end{align}As such, 

\begin{align}
\scalefourSu{K-\frac{1}{\modu^2}}\lesssim& \frac{a}{\modu}\scaletwoHu{\mathcal{D}\slashed{T}_{44}}^{\frac{1}{2}}\scaletwoHu{\mathcal{D}^2\slashed{T}_{44}}^{\frac{1}{2}} +\frac{a}{\modu}\scaletwoHu{\mathcal{D}\slashed{T}_{44}}\notag \\ +& \scaletwoHu{\mathcal{D}\slashed{T}_{34}}^{\frac{1}{2}}\scaletwoHu{\mathcal{D}^2\slashed{T}_{34}}^{\frac{1}{2}} +\scaletwoHu{\mathcal{D}\slashed{T}_{34}}+1.
\end{align}
\end{proof}
\subsection{$\mathcal{L}^2_{(sc)}(S_{u,\ubar})$ estimates for First Derivatives of Curvature}Under the assumptions of Theorem \ref{mainone} and the bootstrap assumptions \ref{boundsbootstrap}, there exists a constant $C$ depending only on $\mathcal{R}_1,\mathcal{R}_2,\mathcal{V}_1$ and $\mathcal{V}_2$ such that, for first derivatives of $\beta,\rho,\sigma,\betabar$, there holds:

\begin{align} \sum_{\mathcal{D}\in \{\modu \snabla_3, \snabla_4,\al\snabla\}}
\scaletwoSu{\mathcal{D}(\beta,\rho,\sigma,\betabar) } \lesssim C(\mathcal{R}_1,\mathcal{R}_2, \mathcal{V}_1,\mathcal{V}_2).
\end{align}

\begin{proof}
We begin with the bootstrap assumptions
\be \label{bootstrapcurvoneltwo} \sum_{\De \in \{\modu \snabla_3,\snabla_4,\al\snabla\}} \scaletwoSu{\De(\beta,\rho,\sigma,\betabar)}\leq C_2, \ee where $C_2$ is a large constant. The following equations hold, 

\begin{align}
\snabla_4 \mathcal{D}\beta =& \frac{1}{\al}\mathcal{D}^2\alpha +\mathcal{D}(\omega,\tr\chi)\beta + (\omega,\tr\chi)\mathcal{D}\beta +\mathcal{D}(\eta,\etabar)\alpha + (\eta,\etabar)\mathcal{D}\alpha + \frac{1}{\al}\mathcal{D}^2\slashed{T}_{44} \notag \\ +& \mathcal{D}^2 \slashed{T}_{4} + (\chihat,\tr\chi,
\omega)\mathcal{D}\slashed{T}_4+ \mathcal{D}(\chihat,\tr\chi)\slashed{T}_4 + \eta \mathcal{D}\slashed{T}_{44}+\De \eta \slashed{T}_{44} +\frac{1}{\al}\modu (\eta,\etabar)\mathcal{D}\beta \notag \\+& \modu (\eta,\etabar)(\eta,\etabar)\beta +\modu \sigma \beta +\omega \mathcal{D}\beta + \frac{1}{\al}(\chihat,\tr\chi)\mathcal{D}\beta + (\chihat,\tr\chi)\etabar \beta+ \al(\beta,\slashed{T}_{4})\beta,
\end{align}

\begin{align}
\snabla_4 \De \rho =& \frac{1}{\al}\mathcal{D}^2 \beta + \mathcal{D}\tr\chi \rho +\tr\chi \De \rho + \chibarhat \hsp \mathcal{D}\alpha + \mathcal{D}\chibarhat\hsp  \alpha + \mathcal{D}\big(\frac{1}{\modu}\mathcal{D}\slashed{T}_{44} \big)+ \mathcal{D}^2 \slashed{T}_{34} \notag \\ +& \mathcal{D}\etabar \hsp \beta +\etabar \hsp \mathcal{D}\beta + \mathcal{D}\eta \hsp \slashed{T}_{4}+\eta \hsp \De \Te_4  + \De \omega \hsp \Te_{34}+\omega \De \Te_{34}  +\frac{1}{\al}\modu(\eta,\etabar)\mathcal{D}\rho \notag \\ +& \modu (\eta,\etabar)(\eta,\etabar)\rho + \modu \sigma \rho + \omega \De \rho + \frac{1}{\al}(\chihat,\tr\chi)\De \rho +(\chihat,\tr\chi)\etabar \rho +\al(\beta,\Te_4)\rho, 
\end{align}
\begin{align}
\snabla_4 \De \sigma =&\frac{1}{\al}\De^2 \beta + \De \tr\chi \sigma + \tr\chi \De \sigma + \etabar \De \beta + \De \etabar \hsp \beta + \chibarhat \De \alpha + \De \chibarhat \hsp \alpha + \frac{1}{\al}\De^2 \Te_4 \notag \\ +&\De (\chihat,\tr\chi)\hsp \slashed{T} +(\chihat,\tr\chi)\De \Te +\De \etabar \Te_4 + \etabar \De \Te_4 + \frac{1}{\al}\modu(\eta,\etabar)\De \sigma +\modu (\eta,\etabar)(\eta,\etabar)\sigma \notag \\ +& \modu \sigma^2 +\omega \De \sigma +\frac{1}{\al}(\chihat,\tr\chi)\De \sigma +(\chihat,\tr\chi)\etabar \sigma +\al(\beta,\slashed{T}_{4})\sigma, 
\end{align}

\begin{align}
\snabla_4 \mathcal{D}\betabar =& \frac{1}{\al}(\De^2 \rho+\De^2 \sigma) +(\tr\chi,\omega)\De \betabar +\betabar \De(\tr\chi,\omega) +\chibarhat \De \beta +\beta \De \chibarhat +(\rho,\sigma)\De \etabar + \etabar \De(\rho,\sigma) \notag \\ +& \frac{1}{\al} \mathcal{D}^2 \slashed{T}_{34} +\frac{1}{\modu}\mathcal{D}^2 \slashed{T}_{4} + \De(\chibarhat,\tr\chibar)\slashed{T}_{4}+(\chibarhat,\tr\chibar)\De\Te_4 +(\chihat,\tr\chi)\De \Te_3 +\Te_3 \De(\chihat,\tr\chi) \notag \\ +&\eta \De (\Te_{34},\Te) +(\Te_{34},\Te)\De \eta  + \frac{1}{\al}\modu (\eta,\etabar)\De \betabar + \modu (\eta,\etabar)(\eta,\etabar)\betabar+\modu \sigma \betabar+\omega \De \betabar\notag \\+&\frac{1}{\al}(\chihat,\tr\chi)\De\betabar +(\chihat,\tr\chi)\etabar \hsp \betabar +\al(\beta,\Te_4)\betabar. 
\end{align}For $\Psi \in\{\beta,\rho,\sigma,\betabar\}$, we bound 
\begin{align}
&\intubar \big(\frac{1}{\al} \scaletwoSuubarprime{\modu(\eta,\etabar)\De \Psi} + \scaletwoSuubarprime{\modu (\eta,\etabar)^2\Psi}\big)\dubarprime+\scaletwoSuubarprime{\modu\sigma\Psi}\big)\dubarprime \notag \\+&\intubar\big(\scaletwoSuubarprime{\omega\De\Psi} +\frac{1}{\al}\scaletwoSuubarprime{(\chihat,\tr\chi)\De \Psi}+\scaletwoSuubarprime{(\chihat,\tr\chi)\etabar \Psi}\big)\dubarprime \notag \\+& \al \intubar \scaletwoSuubarprime{(\beta,\Te_4)\Psi}\big)\dubarprime \lesssim 1,
\end{align}using the bootstrap assumptions on $\scaletwoSu{\mathcal{D}\Psi}$, together with the results of Sections \ref{sectionO04} and \ref{section0infty} and Subsection \ref{section40curv}. In particular, we bound \begin{align} \notag \intubar \scaletwoSuubarprime{\modu(\eta,\etabar)\De\Psi}\dubarprime \lesssim& \intubar \frac{1}{\modu^2}\scaleinftySuubarprime{\modu}\scaleinftySuubarprime{(\eta,\etabar)}\scaletwoSuubarprime{\De \Psi}\dubarprime \lesssim 1,    \end{align}where we have used the bootstrap $\scaletwoSu{\De \Psi}\leq C_2$ and the results on $(\eta,\etabar)$ from section \ref{section0infty} (even though just using the bootstrap assumption for $(\eta,\etabar)$ would also be enough). Similarly,

\[ \intubar \scaletwoSuubarprime{\modu(\eta,\etabar)^2\Psi}\dubarprime \lesssim \frac{1}{\modu^3} \scaleinftySuubarprime{\modu}\scaleinftySuubarprime{\eta,\etabar}\scalefourSuubarprime{(\eta,\etabar)}\scalefourSuubarprime{\Psi}\dubarprime \lesssim 1,    \]where we also use the results of Section \ref{section40curv} on curvature in $\mathcal{L}^4_{(sc)}$. Moreover, \[  \intubar \scaletwoSuubarprime{\modu\sigma\Psi}\dubarprime \lesssim  \frac{a}{\modu^2} \intubar \scalefourSuubarprime{\sigma}\scalefourSuubarprime{\Psi}\lesssim 1. \]For the terms involving $\scaletwoSuubarprime{\psi \De \Psi}$, we bound $\psi$ in $\mathcal{L}^{\infty}_{(sc)}$ and $\De \Psi$ in $\mathcal{L}^4_{(sc)}$, while for the term $\intubar \scaletwoSuubarprime{(\chihat,\tr\chi)\etabar \Psi}\dubarprime$, we bound $\etabar$ in $\mathcal{L}^{\infty}_{(sc)}$ and $(\chihat,\tr\chi)$ together with $\Psi$ in $\mathcal{L}^4_{(sc)}$. Finally, for the term involving $\scaletwoSuubarprime{(\beta,\slashed{T}_{4})\Psi}$, we bound both terms in $\mathcal{L}^4_{(sc)}.$ For each $\Psi \in \{\beta,\rho,\sigma,\betabar\}$, we give an explanation for the most difficult terms. For $\beta$, we have

\be \frac{1}{\al}\intubar \scaletwoSuubarprime{\De^2 \alpha}\dubarprime \lesssim \frac{1}{\al}\scaletwoHu{\De^2 \alpha}\lesssim \mathcal{R}_{2}[\alpha]. \ee Similarly,

\be \frac{1}{\al}\intubar \scaletwoSuubarprime{\De^2 \Te_{44}}\dubarprime \lesssim \frac{1}{\al}\cdot \frac{a}{\modu}\mathcal{V}_2[\Te_{44}] \lesssim \frac{\al V}{\modu}\lesssim 1. \ee For $\De^2 \Te_4$, we have 

\be
\intubar \scaletwoSuubarprime{\De^2 \Te_4}\dubarprime \lesssim \scaletwoHu{\De^2 \Te_4}=\mathcal{V}_2[\Te_4]. \ee For $\rho$, we have \be \intubar \frac{1}{\al}\scaletwoSuubarprime{\De^2 \beta}\dubarprime \lesssim \frac{1}{\al}\scaletwoHu{\De^2 \beta}\lesssim \frac{1}{\al}\mathcal{R}_2[\beta]\lesssim \frac{R}{\al}\lesssim 1, \ee
while for the borderline term
\begin{align}
&\intubar \scaletwoSuubarprime{\De\big(\frac{1}{\modu}\De \Te_{44}\big)} \dubarprime \notag \\ \lesssim& \intubar \big( \scaletwoSuubarprime{\frac{1}{\modu} \De^2 \Te_{44}}+ \scaletwoSuubarprime{\frac{1}{\modu}\Omegaterm \De\Te_{44}}\big)\dubarprime   + \intubar \scaletwoSuubarprime{\frac{1}{\modu}\De \Te_{44}}\dubarprime \notag \\ \lesssim& \frac{\modu}{a}\big(\scaletwoHu{\De \Te_{44}}+\scaletwoHu{\De^2 \Te_{44}}\big) +1\lesssim \Ve_{1}[\Te_{44}]+\Ve_2[\Te_{44}]+1.    
\end{align}Moreover, 

\be \intubar \scaletwoSuubarprime{\De^2 \Te_{34}}\dubarprime \lesssim \scaletwoHu{\De^2 \Te_{34}}\lesssim \Ve_{2}[\Te_{34}]. \ee
For $\sigma$, we have 

\be \frac{1}{\al}\intubar\big(\scaletwoSuubarprime{\De^2 \beta} +\scaletwoSuubarprime{\De^2\Te_4}\big)\dubarprime  \lesssim \frac{1}{\al} \big(\mathcal{R}_2[\beta]+\frac{a}{\modu}\Ve_{2}[\Te_{4}]\big) \lesssim 1.\ee For both $\rho$ and $\sigma$, we have the last borderline term

\be \intubar \scaletwoSuubarprime{\chibarhat\De\alpha}+\scaletwoSuubarprime{\alpha \De \chibarhat}\dubarprime \lesssim \mathcal{O}_{0,\infty}[\chibarhat]\mathcal{R}_1[\alpha]+\mathcal{O}_{1,4}[\chibarhat]\big(\mathcal{R}_0^{\frac{1}{2}}[\alpha]\mathcal{R}_1^{\frac{1}{2}}[\alpha]+\mathcal{R}_0[\alpha]\big),
\ee which is bounded by a constant depending on $\mathcal{R}_0,\mathcal{R}_1,\mathcal{R}_2$ because of the improved control on $\mathcal{O}_{0,\infty}[\chibarhat]$ and $\mathcal{O}_{1,4}[\chibarhat]$ obtained in Sections \ref{sectionO14}, \ref{section0infty}. For $\betabar$, we have

\be \intubar \frac{1}{\al}\scaletwoSuubarprime{\De^2(\rho,\sigma)}\dubarprime \lesssim \frac{1}{\al}\mathcal{R}_2[\rho,\sigma]\lesssim \frac{R}{\al}\lesssim 1. \ee The last difficult term is \be \intubar \scaletwoSuubarprime{\frac{1}{\modu}\De^2\Te_{4}}\dubarprime \lesssim \frac{\modu}{a}\scaletwoHu{\De^2  \Te_4}= \Ve_{2}[\Te_{4}]. \ee
\noindent It is easy to see that all other remaining terms are $\lesssim 1$. \end{proof}
\noindent We continue with an estimate for $\mathcal{D}K$.

\begin{proposition}
Under the assumptions of Theorem \ref{mainone} and the bootstrap assumptions \ref{boundsbootstrap}, there holds \[  \scaletwoSu{\De (K- \frac{1}{\modu^2})}\lesssim \Ve_1[\Te_{44}]+\Ve_2[\Te_{44}]+\Ve_{2}[\Te_{34}]+1.  \]
\end{proposition}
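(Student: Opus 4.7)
The plan is to apply $\mathcal{D}\in\{\modu\snabla_3,\snabla_4,\al\snabla\}$ to the transport equation for $K-\frac{1}{\modu^2}$ derived in the proof of Proposition \ref{propositionK-0},
\begin{align*}
\snabla_4(K-\tfrac{1}{\modu^2}) =& -\sdiv\beta-(\zeta+2\etabar)\cdot\beta+\tfrac12\chihat\cdot\snabla\hat\otimes\etabar+\tfrac12\chihat\cdot(\etabar\hat\otimes\etabar)\\
&-\tfrac12\tr\chi\,\sdiv\etabar-\tfrac12\tr\chi|\etabar|^2+\chihat\cdot\Tslash+\tfrac14\tr\chi\,\Tslash_{34}+\tfrac14\tr\chibar\,\Tslash_{44}\\
&-\tfrac14\bigl(\snabla_3\Tslash_{44}-2\eta\cdot\Tslash_4+\omega\Tslash_{34}\bigr)+\tfrac34\snabla_4\Tslash_{34}-\tr\chi\bigl(K-\tfrac{1}{\modu^2}\bigr),
\end{align*}
and then integrate the resulting equation along the $e_4$ direction from $\Hbar_0$ (where $K-\frac{1}{\modu^2}$ vanishes since the incoming data is Minkowskian) up to $S_{u,\ubar}$. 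Using the commutation formulae of Proposition \ref{commute4}, the commutator $[\snabla_4,\mathcal{D}]$ produces only lower order error terms of the types appearing in Proposition \ref{preliminary}, all of which are controlled by $1$ under the bootstrap assumptions \eqref{boundsbootstrap} and the improved $\prescript{(S)}{}{\mathcal{O}}_{0,\infty}$, $\prescript{(S)}{}{\mathcal{O}}_{1,4}$, $\prescript{(S)}{}{\mathcal{O}}_{2,2}$ estimates obtained in Sections \ref{sectionO14}--\ref{sectionO22}.

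The principal contributions come from the top-order source terms in the equation above. For the curvature/Ricci source, we need to bound $\intubar\scaletwoSuubarprime{\mathcal{D}\sdiv\beta}\dubarprime$, $\intubar\scaletwoSuubarprime{\mathcal{D}(\chihat\cdot\snabla\hat\otimes\etabar)}\dubarprime$ and $\intubar\scaletwoSuubarprime{\mathcal{D}(\tr\chi\,\sdiv\etabar)}\dubarprime$. For the first, we write $\mathcal{D}\sdiv\beta=\tfrac{1}{\al}\mathcal{D}(\al\snabla)\beta$, apply the commutation formula for $[\mathcal{D},\snabla]$, and estimate the result by $\tfrac{1}{\al}\scaletwoHu{\mathcal{D}^2\beta}\lesssim \tfrac{R}{\al}\lesssim 1$ via Cauchy--Schwarz. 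The Ricci terms reduce to bounding $\mathcal{D}\snabla\etabar$ in $L^2(H_u)$; these are controlled by $\prescript{(S)}{}{\mathcal{O}}_{2,2}[\etabar]\lesssim C$, together with an $\mathcal{L}^\infty_{(sc)}-\mathcal{L}^2_{(sc)}$ splitting for the $\chihat$ and $\tr\chi$ factors. For the matter source, the crucial terms are $\intubar\scaletwoSuubarprime{\mathcal{D}\snabla_3\Tslash_{44}}\dubarprime$ and $\intubar\scaletwoSuubarprime{\mathcal{D}\snabla_4\Tslash_{34}}\dubarprime$: rewriting $\snabla_3\Tslash_{44}=\frac{1}{\modu}(\modu\snabla_3)\Tslash_{44}$, commuting $\mathcal{D}$ through $\frac{1}{\modu}$ (the commutator falls in the harmless class already estimated), and then identifying $\mathcal{D}(\modu\snabla_3)\Tslash_{44}$ and $\mathcal{D}\snabla_4\Tslash_{34}$ as second-order $\mathcal{D}^2\Tslash$ expressions, one controls these by $\Ve_2[\Tslash_{44}]$, $\Ve_2[\Tslash_{34}]$, plus lower-order contributions scaling like $\Ve_1[\Tslash_{44}]$. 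The remaining matter terms $\mathcal{D}(\tr\chi\,\Tslash_{34}),\mathcal{D}(\tr\chibar\,\Tslash_{44}),\mathcal{D}(\chihat\cdot\Tslash)$ are bounded by $1$ through bootstrap together with Proposition \ref{preliminary}.

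The main obstacle lies in the correct extraction of the matter source: the dangerous structure $\frac{1}{\modu}\mathcal{D}\snabla_3\Tslash_{44}$ is borderline because of the $\modu^{-1}$ weight, and one must carefully recognise that upon commutation $\mathcal{D}\snabla_3\Tslash_{44}=\frac{1}{\modu}\mathcal{D}(\modu\snabla_3)\Tslash_{44}+\mathcal{D}(\frac{1}{\modu})\cdot(\modu\snabla_3)\Tslash_{44}$ and the second piece is lower-order. Once this is done, the integration $\intubar \scaletwoSuubarprime{\frac{1}{\modu}\mathcal{D}^2\Tslash_{44}}\dubarprime\lesssim \frac{\modu}{a}\scaletwoHu{\mathcal{D}^2\Tslash_{44}}\lesssim \Ve_2[\Tslash_{44}]$ precisely produces the weight indicated in the statement. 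The final last piece, $\mathcal{D}\tr\chi\cdot(K-\frac{1}{\modu^2})$, is absorbed via a Gr\"onwall argument against the bootstrap assumption on $\scaletwoSu{\mathcal{D}(K-\frac{1}{\modu^2})}$, thereby closing the estimate
\[
\scaletwoSu{\mathcal{D}(K-\tfrac{1}{\modu^2})}\lesssim \Ve_1[\Tslash_{44}]+\Ve_2[\Tslash_{44}]+\Ve_2[\Tslash_{34}]+1.
\]
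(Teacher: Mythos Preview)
Your approach is correct and follows the same overall plan as the paper: commute the $\snabla_4$-transport equation for $K-\tfrac{1}{\modu^2}$ with $\mathcal{D}$, integrate from $\Hbar_0$ (where the quantity vanishes), and control the source and commutator terms. Two small slips are worth flagging. First, the equation from Proposition~\ref{propositionK-0} has $-\tr\chi\,K$ as its last term, not $-\tr\chi(K-\tfrac{1}{\modu^2})$; rewriting $\tr\chi\,K=\tr\chi(K-\tfrac{1}{\modu^2})+\tfrac{\tr\chi}{\modu^2}$ produces an extra source $\tfrac{\tr\chi}{\modu^2}$, but one checks directly that $\intubar\scaletwoSuubarprime{\mathcal{D}(\tfrac{\tr\chi}{\modu^2})}\dubarprime\lesssim 1$. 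Second, the term that genuinely needs Gr\"onwall/bootstrap absorption is $\tr\chi\cdot\mathcal{D}(K-\tfrac{1}{\modu^2})$ (together with the commutator contributions $\tfrac{1}{\al}(\chihat,\tr\chi)\mathcal{D}(K-\tfrac{1}{\modu^2})$ from Proposition~\ref{commute4}), not $\mathcal{D}\tr\chi\cdot(K-\tfrac{1}{\modu^2})$; the latter is handled directly via the $\mathcal{L}^4_{(sc)}$ bound on $K-\tfrac{1}{\modu^2}$ established in Proposition~\ref{propositionK-0}.

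The paper organizes the argument differently: it first proves the weaker estimate $\tfrac{a}{\modu}\scaletwoSu{\mathcal{D}K}\lesssim 1$ via a separate bootstrap (using that $K=\tfrac{1}{\modu^2}$ on $\Hbar_0$, so the initial data contributes $\tfrac{a}{\modu}\lVert\mathcal{D}K\rVert_{\mathcal{L}^2_{(sc)}(S_{u,0})}\lesssim 1$), and only then turns to $\mathcal{D}(K-\tfrac{1}{\modu^2})$. This intermediate step allows the paper to keep the source as $\tr\chi\,K$ and control $\tr\chi\,\mathcal{D}K+K\,\mathcal{D}\tr\chi$ directly. Your one-step route---rewriting $\tr\chi\,K$ and running Gr\"onwall on $\mathcal{D}(K-\tfrac{1}{\modu^2})$ alone---is slightly more direct and equally valid once the missing $\tfrac{\tr\chi}{\modu^2}$ term is accounted for.
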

\begin{proof} Similarly to Proposition \ref{propositionK-0}, we begin by making the bootstrap assumption \be \label{bootstrapDK} \frac{a}{\modu} \scaletwoSu{\De K}\lesssim B_1, \ee where $B_1$ is a large constant.
There holds 

\begin{align}
\snabla_4 \De K =& \frac{1}{\al}\De^2 \beta + (\eta,\etabar)\De \beta + \beta \De(\eta,\etabar) + \frac{1}{\al}(\chihat \De^2 \etabar + \De \chihat \De \etabar) +\etabar^2 \De (\chihat, \tr\chi) +2(\chihat,\tr\chi) \hsp \etabar \De \etabar \notag \\+& \frac{1}{\al}(\tr\chi \De^2\etabar + \De \tr\chi \De \etabar) +\chihat\De \Te + \Te \De\chihat + \tr\chi \De\Te_{34}+\Te_{34}\De \tr\chi +\tr\chibar \De \Te_{44}+ \Te_{44}\De \tr\chibar \notag \\ +& \De\big(\frac{1}{\modu}\De \Te_{44}\big) +\eta \De \Te_4 +\Te_4 \De \eta +\omega \De \Te_{34}+\Te_{34}\De \omega + \De^2 \Te_{34}+\tr\chi \De K +K \De \tr\chi \notag \\ +& \frac{1}{\al}\modu (\eta,\etabar)\De K+ \modu (\eta,\etabar)^2 K +\modu \sigma K+\omega \De K +\frac{1}{\al}(\chihat,\tr\chi)\De K + (\chihat,\tr\chi)\etabar K\notag \\+&\al(\beta,\Te_{4}) K :=F.
\end{align}Consequently, 

\be \frac{a}{\modu}\scaletwoSu{\De K}\lesssim \frac{a}{\modu}\lVert \De K \rVert_{\mathcal{L}^2_{(sc)}(S_{u,0})} +\frac{a}{\modu}\intubar \scaletwoSuubarprime{F}\dubarprime. \ee It is easy to check that $\De K =\frac{2}{\modu^2 \Omega^2}$ when $\De=\modu \snabla_3$ and $\De K=0$ otherwise, whence \[\frac{a}{\modu} \lVert \De K \rVert_{\mathcal{L}^2_{(sc)}(S_{u,0})} \lesssim 1.\]Using the results of Sections \ref{sectionO14}, \ref{section0infty} and \ref{section40curv} together with the bootstrap assumption \eqref{bootstrapDK} it follows that 

\begin{align}
\frac{a}{\modu}\intubar &\big(\frac{1}{\al}\scaletwoSuubarprime{\modu(\eta,\etabar)\De K}+ \scaletwoSuubarprime{\modu (\eta,\etabar)^2 K} +\scaletwoSuubarprime{\modu \sigma K}+\scaletwoSuubarprime{\omega \De K}\notag \\ +&\frac{1}{\al}\scaletwoSuubarprime{(\chihat,tr\chi)\De K} +\scaletwoSuubarprime{(\chihat,\tr\chi)\etabar K}+\al \scaletwoSuubarprime{(\beta,\Te_4) K}\big)\dubarprime \lesssim 1. \end{align}
For the rest of the terms, we have:

\be \frac{a}{\modu}\intubar \frac{1}{\al}\scaletwoSuubarprime{\De^2 \beta} \dubarprime \lesssim \frac{\al}{\modu}\scaletwoHu{\De^2 \beta}\lesssim \frac{\al}{\modu}\mathcal{R}_2[\beta]\lesssim \frac{\al R}{\modu}\lesssim 1. \ee The most dangerous terms are:

\be \frac{a}{\modu}\intubar \scaletwoSuubarprime{\frac{1}{\modu}\De^2 \Te_{44}}\dubarprime \lesssim \scaletwoHu{\De^2 \Te_{44}}\lesssim \frac{a}{\modu}\Ve_2[\Te_{44}], \ee
\begin{align} &\frac{a}{\modu} \intubar \scaletwoSuubarprime{\frac{1}{\modu \Omega^2} \De \Te_{44}}\dubarprime \notag \\ \lesssim& \frac{a}{\modu} \intubar \big(\scaletwoSuubarprime{\frac{1}{\modu}\Omegaterm \De\Te_{44}} + \scaletwoSuubarprime{\frac{1}{\modu}\De\Te_{44}}\big)\dubarprime \lesssim \scaletwoHu{\De \Te_{44}}\notag \\ \lesssim& \frac{a}{\modu}\Ve_1[\Te_{44}]\lesssim 1.\end{align}

\be \intubar \frac{a}{\modu}\scaletwoSuubarprime{\De^2 \Te_{34}}\dubarprime \lesssim \frac{a}{\modu}\scaletwoHu{\De^2 \Te_{34}}\lesssim \frac{a}{\modu} \Ve_2[\Te_{34}], \ee

\be \frac{a}{\modu} \intubar \scaletwoSuubarprime{\tr\chibar \De \Te_{44}}\dubarprime \lesssim \scaletwoHu{\De \Te_{44}}\lesssim \frac{a}{\modu} \Ve_1[\De\Te_{44}], \ee

\be \intubar \scaletwoSuubarprime{\Te_{44}\De\tr\chibar} \lesssim \frac{\modu}{a}\scaletwoSu{\Te_{44}}\lesssim 1. \ee The rest of the terms are bounded above by $1$ using the bootstrap assumptions. For $\De \big(K-\frac{1}{\modu^2}\big)$

\begin{align}
\snabla_4\De\big( K-\frac{1}{\modu^2} \big) =& \frac{1}{\al}\De^2 \beta + (\eta,\etabar)\De \beta + \beta \De(\eta,\etabar) + \frac{1}{\al}(\chihat \De^2 \etabar + \De \chihat \De \etabar) +\etabar^2 \De (\chihat, \tr\chi) +2(\chihat,\tr\chi) \hsp \etabar \De \etabar \notag \\+& \frac{1}{\al}(\tr\chi \De^2\etabar + \De \tr\chi \De \etabar) +\chihat\De \Te + \Te \De\chihat + \tr\chi \De\Te_{34}+\Te_{34}\De \tr\chi +\tr\chibar \De \Te_{44}+ \Te_{44}\De \tr\chibar \notag \\ +& \De\big(\frac{1}{\modu}\De \Te_{44}\big) +\eta \De \Te_4 +\Te_4 \De \eta +\omega \De \Te_{34}+\Te_{34}\De \omega + \De^2 \Te_{34}+\tr\chi \De K +K \De \tr\chi \notag \\ +& \frac{1}{\al}\modu (\eta,\etabar)\De \big(K-\frac{1}{\modu^2}\big)+ \modu (\eta,\etabar)^2 \big(K-\frac{1}{\modu^2}\big) +\modu \sigma \big(K-\frac{1}{\modu^2}\big)+\omega \De\big(K-\frac{1}{\modu^2}\big) \notag \\ +&\frac{1}{\al}(\chihat,\tr\chi)\De \big(K-\frac{1}{\modu^2}\big) + (\chihat,\tr\chi)\etabar \big(K-\frac{1}{\modu^2}\big)+\al(\beta,\Te_{4})\big(K-\frac{1}{\modu^2}\big).
\end{align}The process for bounding $\scaletwoSu{\De\big(K-\frac{1}{\modu^2}\big)}$ is exactly the same as for $\scaletwoSu{\De K}$. The only difference is that, since $K-\frac{1}{\modu^2}\equiv 0$ on the entire $\Hbar_0$, we have $\lVert K-\frac{1}{\modu^2}\rVert_{\mathcal{L}^2_{(sc)}(S_{u,0})} =0$ and hence there is no longer a need for an $\frac{a}{\modu}$-term in front of our bootstrap assumption. Instead, we begin with the bootstrap assumption 
\be \label{bootstrapK-1} \scaletwoSu{\De\big(K-\frac{1}{\modu^2}\big)}\leq \tilde{B}_1. \ee This concludes the proof.
\end{proof}
\begin{remark}
    Essentially, however, we are only going to need control only on the $(\al\snabla)-$derivatives of $K-\frac{1}{\modu^2}$. 
\end{remark} Let us recall, at this point, the definitions of divergence and curl for a symmetric, covariant tensor $\phi$ of arbitrary rank:

\[ (\slashed{\div} \phi)_{A_{1}\dots A_r}   = \snabla^B \phi_{BA_1\dots A_r},   \]\[ (\slashed{\curl} \phi)_{A_1\dots A_r}= \slashed{\epsilon}^{BC}\snabla_B \phi_{CA_1\dots A_r}.\]Let us, finally, recall the definition for the trace:

\[ (\tr\hsp \phi)_{A_1\dots A_{r-1}} = \gslash^{BC}\phi_{BCA_1\dots A_{r-1}}.       \]We are ready to state the main elliptic estimate for Hodge systems.

\begin{lemma}[Main Elliptic Estimate for Hodge Systems] \label{mainellipticlemma} We continue to work under the assumptions of Theorem \ref{mainone} and the bootstrap assumptions \eqref{boundsbootstrap}. Let $\phi$ be a totally symmetric, $(r+1)-$covariant tensor field on a metric $2-$sphere $(
\mathbb{S}^2,\gamma)$ satisfying \[ \slashed{\div} \phi = f , \hspace{3mm} 
\slashed{\curl} \phi = g, \hspace{3mm} \tr\hsp  \phi = h. \]Then, there holds

\[  \scaletwoSu{(\al\snabla)^3 \phi}\lesssim  \al \sum_{j=0}^{2} \scaletwoSu{(\al \snabla)^j(f,g)}+  \sum_{j=0}^{2}\scaletwoSu{(\al \snabla)^j (\phi,h)}. \]
\end{lemma}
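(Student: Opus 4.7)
My plan is to reduce the statement to the classical $L^{2}$ Hodge estimate on the two-sphere and then iterate it twice, using the commutator calculus to trade divergence/curl control for control of one more angular derivative at each step. The base inequality is the Bochner identity: for any totally symmetric $(r+1)$-covariant tensor $\phi$ on $(S_{u,\ubar},\slashed g)$,
\begin{equation}\label{eq:hodge-base}
\int_{S_{u,\ubar}} |\snabla\phi|^{2} + (r+1)\int_{S_{u,\ubar}} K\,|\phi|^{2}
\;\lesssim\; \int_{S_{u,\ubar}}\bigl(|\slashed{\div}\phi|^{2}+|\slashed{\curl}\phi|^{2}+|\snabla \tr\phi|^{2}+|\tr\phi|^{2}\bigr),
\end{equation}
which follows from integrating by parts $\snabla^{A}\phi_{A B_{1}\dots B_{r}}$ against itself on the sphere and using $[\snabla_{A},\snabla_{B}]=K(\gslash_{AC}\delta_{B}^{D}-\gslash_{BC}\delta_{A}^{D})$ to regroup terms. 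Passing this to the scale-invariant norms (taking into account the weight $a^{1/2}$ per angular derivative and $|u|^{-1}$ per factor of $\phi$) yields
\[
\|(\al\snabla)\phi\|_{\mathcal{L}^{2}_{(sc)}(S_{u,\ubar})} \;\lesssim\; a^{1/2}\|(f,g)\|_{\mathcal{L}^{2}_{(sc)}(S_{u,\ubar})}+\|(\phi,h)\|_{\mathcal{L}^{2}_{(sc)}(S_{u,\ubar})}+\|(\al\snabla)h\|_{\mathcal{L}^{2}_{(sc)}(S_{u,\ubar})}+\text{l.o.t.},
\]
provided the curvature term $\int K|\phi|^{2}$ is controlled; for this one splits $K = \modu^{-2}+(K-\modu^{-2})$, the first summand being exactly at the right scale and the second being controlled by Proposition \ref{propositionK-0} in $\mathcal{L}^{4}_{(sc)}(S_{u,\ubar})$, absorbed via scale-invariant H\"older.

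Next, I would commute once with $(\al\snabla)$ to obtain the intermediate estimate on $(\al\snabla)^{2}\phi$. The tensor $(\al\snabla)\phi$ satisfies a Hodge-type system whose data are
\[
\slashed{\div}\bigl((\al\snabla)\phi\bigr) \;=\; (\al\snabla)f + [\slashed{\div},\al\snabla]\phi,\qquad
\slashed{\curl}\bigl((\al\snabla)\phi\bigr) \;=\; (\al\snabla)g + [\slashed{\curl},\al\snabla]\phi,
\]
and the commutators are purely curvature terms of the form $\al\,K\cdot\phi$. Applying \eqref{eq:hodge-base} to $(\al\snabla)\phi$ and using again the splitting of $K$ together with $\|K-\modu^{-2}\|_{\mathcal{L}^{4}_{(sc)}(S)}\lesssim 1$ gives
\[
\|(\al\snabla)^{2}\phi\|_{\mathcal{L}^{2}_{(sc)}(S)} \;\lesssim\; a^{1/2}\sum_{j=0}^{1}\|(\al\snabla)^{j}(f,g)\|_{\mathcal{L}^{2}_{(sc)}(S)}+\sum_{j=0}^{1}\|(\al\snabla)^{j}(\phi,h)\|_{\mathcal{L}^{2}_{(sc)}(S)}.
\]
Iterating the procedure once more furnishes the estimate for $(\al\snabla)^{3}\phi$; this time the commutator $[\slashed{\div},(\al\snabla)^{2}]\phi$ produces a term of the form $\al^{2}\,(\snabla K)\cdot\phi$, controlled by the $\mathcal{L}^{2}_{(sc)}(S)$ bound on $\mathcal{D}\bigl(K-\modu^{-2}\bigr)$ established in the preceding proposition (placing $\phi$ in $\mathcal{L}^{\infty}_{(sc)}(S)$ via the Sobolev embedding of Proposition \ref{Sobolev} applied to lower-order terms already estimated), together with $K\cdot(\al\snabla)\phi$ terms that are handled as before.

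The main obstacle I expect is the curvature commutator at the \emph{top} order, namely the $\al^{2}\,(\snabla K)\cdot\phi$ piece. Unlike in the vacuum treatments where $K-\modu^{-2}$ enjoys smallness, here it is controlled but not small, and its angular derivative $\mathcal{D}K$ is merely bounded in $\mathcal{L}^{2}_{(sc)}(S)$ by the bootstrap constants times factors involving the Vlasov norms $\mathcal V_{1}[T_{44}],\mathcal V_{2}[T_{44}]$ (via Proposition for $\mathcal{D}(K-\modu^{-2})$). Consequently this term must be paired with $\phi$ in $\mathcal{L}^{\infty}_{(sc)}(S)$, and the $\mathcal{L}^{\infty}_{(sc)}(S)$-norm of $\phi$ must itself be controlled by the right-hand side of the target estimate — schematically by $\|(\al\snabla)^{\leq 1}\phi\|_{\mathcal{L}^{4}_{(sc)}(S)}$ via Sobolev embedding, then by $\|(\al\snabla)^{\leq 2}\phi\|_{\mathcal{L}^{2}_{(sc)}(S)}$. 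Since $\|(\al\snabla)^{2}\phi\|_{\mathcal{L}^{2}_{(sc)}(S)}$ has already been bounded in the previous step in the correct form, the closure is achieved without circularity, and a final collection of the error terms produces the inequality as stated.
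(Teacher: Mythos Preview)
Your proposal follows essentially the same strategy as the paper: iterate the base $L^2$ Hodge identity, track curvature commutators, decompose $K = \modu^{-2} + (K-\modu^{-2})$, and use the bounds on $K-\modu^{-2}$ in $\mathcal{L}^4_{(sc)}$ and on $\snabla(K-\modu^{-2})$ in $\mathcal{L}^2_{(sc)}$ to close. You also correctly identify the top-order $\snabla K\cdot\phi$ term as the main obstacle and handle it the way the paper does (pair $\snabla K$ in $\mathcal{L}^2_{(sc)}$ with $\phi$ in $\mathcal{L}^\infty_{(sc)}$ via Sobolev, the latter being controlled by the already-estimated lower orders).

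The one technical gap is the iteration step itself. You write ``Applying \eqref{eq:hodge-base} to $(\al\snabla)\phi$'', but your base identity is stated for \emph{totally symmetric} tensors, and $\snabla\phi$ is not symmetric in its first slot; as written, the identity does not apply. The paper addresses this by passing to Christodoulou's symmetrized derivative $(\snabla\phi)^s$, which \emph{is} totally symmetric and satisfies an explicit div--curl--tr system with data built from $(\snabla f)^s$, $({}^*\snabla g)^s$, $K\phi$, $K(\gslash\otimes^s h)$, and $(\snabla h)^s$; iterating the base identity on $(\snabla\phi)^s$ twice yields the third-order bound directly. Your commutator formulation can be salvaged once you observe that the antisymmetric part of $\snabla\phi$ is pointwise controlled by $g$ (and lower traces), so $\snabla\phi$ and $(\snabla\phi)^s$ differ only by data terms already on the right-hand side---but this bridging remark is missing and is exactly the content the symmetrization machinery supplies.
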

\begin{proof}
Recall the following identity from Chapter 7 in \cite{C09}, that for $\phi, f,g$ and $h$ as above, there holds 

\be \label{ellipticidentity} \int_{S_{u,\ubar}} \big( \lvert \snabla \phi \rvert^2 + (r+1)K \lvert \phi \rvert^2\big) \text{d} \mu_{\gslash} = \int_{S_{u,\ubar})} \big( \lvert f \rvert^2 + \lvert g \rvert^2 + r \hsp  K \hsp \lvert h \rvert^2 \big)  \text{d} \mu_{\gslash}.    \ee To obtain the estimate, we proceed iteratively. Recall, from \cite{C09}, that the symmetrized angular derivative of $\phi$, defined by 

\[  (\snabla\phi)^s_{BA_1\dots A_{r+1}} := \frac{1}{r+2}\big(\snabla_B \phi_{A_1\dots A_r}+\sum_{i=1}^{r+1}\snabla_{A_i}\phi_{A_1\dots <A_i>B\dots A_{r+1}}\big),   \]satisfies the div-curl system

\[ \slashed{\div} (\snabla \phi)^s = (\snabla f)^s - \frac{1}{r+2}(\Hodge{\snabla}g)^s +(r+1)K\phi - \frac{2K}{r+1}(\gslash\otimes^s h),    \]\[ \slashed{\curl} (\snabla \phi)^s = \frac{r+1}{r+2}(\snabla g)^s +(r+1)\hsp K(\Hodge{\phi})^s,   \]\[\tr (\snabla\phi)^s = \frac{2}{r+2}f +\frac{r}{r+2}(\snabla h)^s,   \]where \[ (\gslash \otimes^s h)_{A_1\dots A_{r+1}}= \gslash_{A_iA_j} \sum_{i<j=1,\dots,r+1}h_{A_1\dots <A_i>\dots<A_j>\dots A_{r+1}}
 \]and \[ (\Hodge{\phi})^s_{A_1\dots A_{r+1}} := \frac{1}{r+1}\sum_{i=1}^{r+1}{\slashed{\epsilon}_{A_i}}^B \phi_{A_1\dots<A_i>B\dots A_{r+1}}.    \]Using the main elliptic identity \eqref{ellipticidentity}, we obtain
 \begin{align} \twoSu{\snabla^2 \phi}^2 \lesssim& \twoSu{\snabla f}^2+\twoSu{\snabla g}^2 + \oneSu{K(\lvert \snabla \phi\rvert^2 +\lvert f \rvert^2 +\lvert \snabla h \rvert^2}\notag \\ +&\twoSu{K\phi}^2+\twoSu{Kh}^2.  \end{align}Iterating this procedure and using \eqref{ellipticidentity}, we hence have 

 \begin{align}
    \twoSu{\snabla^3 \phi}^2\lesssim& \twoSu{\snabla^2 f}^2+\twoSu{\snabla^2 g}^2 +\twoSu{(\phi,h)\snabla K}^2 +\twoSu{K(\snabla \phi,\snabla h)}^2 \notag \\ +& \twoSu{Kf}^2 +\oneSu{K(\snabla f)^2}+\oneSu{K(\snabla g)^2}+\oneSu{K(\snabla^2\phi)^2} \notag \\ +&\oneSu{K(\snabla^2 h)^2}+\oneSu{K^3(\phi^2,h^2)}.
 \end{align}By decomposing $K=K-\frac{1}{\modu^2}+\frac{1}{\modu^2}$ and using H\"older's inequality, we obtain

\begin{align}
\twoSu{\snabla^3 \phi}^2\lesssim& \twoSu{\snabla^2 f}^2 +\twoSu{\snabla^2 g}^2 +\frac{1}{\modu^4}\twoSu{\snabla \phi,\snabla h}^2 \notag \\ +& \frac{1}{\modu^2}\twoSu{\snabla^2 \phi, \snabla f,\snabla g,\snabla^2 h}^2 + \frac{1}{\modu^6}\twoSu{\phi,h}^2 +\frac{1}{\modu^4}\twoSu{f}^2 \notag \\ +& \twoSu{\snabla(K-\frac{1}{\modu^2})}^2\inftySu{(\phi,h)}^2 +\fourSu{K-\frac{1}{\modu^2}}^2 \fourSu{\snabla \phi,\snabla h,f}^2 \notag \\ +& \modu^{\frac{1}{2}}\twoSu{K-\frac{1}{\modu^2}}\twoSu{\snabla^2\phi,\snabla f,\snabla g,\snabla^2 h}^2 \notag \\+& \modu^{\frac{1}{2}}\fourSu{K-\frac{1}{\modu^2}}^3 \inftySu{\phi,h}^2,
\end{align}which gives the desired result after translation to scale-invariant norms.

\end{proof}
\begin{proposition}
There exists a constant $C(\mathcal{R},\mathcal{V})$, depending only on the curvature and energy norms, such that there holds

\[  \scaletwoHu{\De^2 \snabla \tr\chi}\lesssim C(\mathcal{R},\Ve) , \hsp \frac{1}{\al}\scaletwoHu{\De^2\snabla \chihat}\lesssim C(\mathcal{R},\Ve).   \]
\end{proposition}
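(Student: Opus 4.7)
The plan is to treat $(\chihat,\tr\chi)$ as a coupled elliptic–transport system on each sphere $S_{u,\ubar'}$, and then to integrate along the outgoing generators of $H_u$. Elliptic control of $\chihat$ will be extracted from the Codazzi identity, which, after incorporating the Vlasov source, reads schematically
\[
\sdiv\chihat \;=\; \tfrac12\snabla\tr\chi \;+\; \tfrac12\tr\chi\cdot\etabar \;-\; \chihat\cdot\etabar \;-\; \beta \;-\; \tfrac12\slashed{T}_4,
\]
together with the trace-free symmetric structure of $\chihat$. Correspondingly, $\tr\chi$ will be controlled by the Raychaudhuri transport equation
\[
\snabla_4\tr\chi + \tfrac12(\tr\chi)^2 = -|\chihat|^2 - \slashed{T}_{44}.
\]
Closure of the coupled system will rely on the smallness factor $\al/\modu \lesssim a^{-1/2}$, valid throughout $\mathcal{D}$ since $\modu \geq a/4$.

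First, I would commute the Codazzi identity with $\De^2$ and handle the commutator errors $[\De^2,\sdiv]\chihat$ by means of the preliminary estimates of Proposition~\ref{preliminary} together with the $\prescript{(S)}{}{\mathcal{O}}_{2,2}$-bounds for the Ricci coefficients already established in Section~\ref{sectionO22}. Applying the basic Hodge identity \eqref{ellipticidentity} pointwise in $\ubar'$ to the symmetric trace-free tensor $\De^2\chihat$, and commuting $\snabla$ past $\De^2$ on the left (the resulting commutator being of order at most two and hence controlled), then integrating in $\ubar'\in[0,1]$ and translating to scale-invariant norms yields
\[
\frac{1}{\al}\scaletwoHu{\De^2\snabla\chihat} \;\lesssim\; \scaletwoHu{\De^2\snabla\tr\chi} \;+\; \frac{1}{\al}\scaletwoHu{\De^2\beta} \;+\; \frac{1}{\al}\scaletwoHu{\De^2\slashed{T}_4} \;+\; C(\mathcal{R},\mathcal{V}).
\]
The top-order Weyl and matter sources are controlled by $\mathcal{R}_2[\beta]$ and $\mathcal{V}_2[\slashed{T}_4]$, entirely consistent with the structural principle that the Vlasov matter is estimated on $H_u$ at top order.

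Next, I would commute the Raychaudhuri equation with $\De^2\snabla$. The dominant contributions on the right-hand side are the bilinear $\chihat\cdot\De^2\snabla\chihat$ arising from $\De^2\snabla|\chihat|^2$ and the matter source $\De^2\snabla\slashed{T}_{44}$. Integrating along the $e_4$-direction, applying the scale-invariant Hölder inequality with $\scaleinftySuubarprime{\chihat}\lesssim \al\,C(\mathcal{R})$ from Section~\ref{section0infty}, and using Cauchy–Schwarz in $\ubar'$, one arrives at
\[
\scaletwoHu{\De^2\snabla\tr\chi} \;\lesssim\; \frac{\al}{\modu}\scaletwoHu{\De^2\snabla\chihat} \;+\; \frac{a}{\modu}\scaletwoHu{\De^2\snabla\slashed{T}_{44}} \;+\; C(\mathcal{R},\mathcal{V}).
\]
Since $\modu\geq a/4$, the prefactor $\al/\modu$ is of size $O(a^{-1/2})$ and is therefore arbitrarily small for large $a$. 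Substituting this into the Codazzi-based estimate and absorbing the self-referential term into the left-hand side produces
\[
\frac{1}{\al}\scaletwoHu{\De^2\snabla\chihat} \;+\; \scaletwoHu{\De^2\snabla\tr\chi} \;\lesssim\; C(\mathcal{R},\mathcal{V}),
\]
which is the desired conclusion.

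The main obstacle is the mutual coupling of the two estimates: Codazzi controls $\De^2\snabla\chihat$ only modulo $\De^2\snabla\tr\chi$, while the Raychaudhuri equation controls $\De^2\snabla\tr\chi$ only modulo $\De^2\snabla\chihat$. The loop is closed precisely through the smallness factor $\al/\modu \ll 1$ arising from the $\mathcal{L}^\infty_{(sc)}$-size of $\chihat$ combined with the lower bound $\modu\geq a/4$. A secondary, essentially bookkeeping, burden is the control of the many commutator terms generated when $\De^2$ is commuted past $\sdiv$ and $\snabla$; each such term either has lower differential order (hence is bounded by the $\prescript{(S)}{}{\mathcal{O}}_{2,2}$-estimates of Section~\ref{sectionO22}) or carries a factor $K - \modu^{-2}$ whose smallness was already quantified in Section~\ref{section40curv}.
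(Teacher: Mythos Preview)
Your overall strategy is correct and coincides with the paper's: Codazzi supplies an elliptic estimate for $\chihat$ in terms of $\snabla\tr\chi$, Raychaudhuri supplies a transport estimate for $\snabla\tr\chi$ in terms of $\chihat$, and the two are combined and closed via a smallness factor stemming from the anomalous $\al$ in $\chihat$.

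However, your step~1 contains a scaling error that prevents the loop from closing. The Codazzi relation $\sdiv\chihat = \tfrac12\snabla\tr\chi - \beta - \tfrac12\slashed{T}_4 + \ldots$ places $\snabla\tr\chi$, $\beta$, and $\slashed{T}_4$ on exactly the same footing (all have signature $\tfrac12$); applying the Hodge estimate and translating to scale-invariant norms therefore yields, in agreement with the paper's \eqref{sevenfiftyfour},
\[
\frac{1}{\al}\scaletwoHu{\De^2\snabla\chihat} \;\lesssim\; \frac{1}{\al}\scaletwoHu{\De^2\snabla\tr\chi} + \frac{1}{\al}\scaletwoHu{\De^2\beta} + \frac{1}{\al}\scaletwoHu{\De^2\slashed{T}_4} + C,
\]
with the $\tfrac{1}{\al}$ appearing on \emph{all} source terms, not just on $\beta$ and $\slashed{T}_4$. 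Your version omits it on the $\tr\chi$ term. Combining your step~1 as written with your (correct) step~2 gives, after substitution,
\[
\frac{1}{\al}\scaletwoHu{\De^2\snabla\chihat} \;\lesssim\; \frac{\al}{\modu}\scaletwoHu{\De^2\snabla\chihat} + C,
\]
so the self-referential coefficient is $\al\cdot\frac{\al}{\modu} = \frac{a}{\modu}$. Since in this region one only has $\modu\geq a/4$, the factor $\frac{a}{\modu}\leq 4$ is \emph{not} small, and the absorption fails. With the corrected step~1, the self-referential coefficient becomes $\frac{\al}{\modu}\lesssim a^{-1/2}$, and the argument closes exactly as you describe and as the paper carries out. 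In short: the idea is right, but the missing $\tfrac{1}{\al}$ on the $\tr\chi$ term is not cosmetic---it is precisely what converts the useless factor $a/\modu$ into the genuinely small $\al/\modu$.
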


\begin{proof}We begin by making the auxiliary bootstrap assumptions

\be \label{elliptictrchichihatbootstrap} \scaletwoHu{\De^2 \snabla \tr\chi} \leq O_3, \hsp \frac{1}{\al} \scaletwoHbaru{\De^2\snabla\chihat}\leq O_3, \ee where $O_3\leq a^{\frac{1}{320}}$.
Recall the transport equation for $\tr\chi$:

\[ \snabla_4 \tr\chi =-\frac{1}{2}(\tr\chi)^2 -\lvert \chihat \rvert^2-\Te_{44}    \]Commuting this equation with $\snabla$ and then with $\De^i$, $i=0,1,2$ yields 

\begin{align} \label{disnablatrchi}
\snabla_4 \De^i\snabla \tr\chi =& \sumif \dione\psi_g^{i_2} \dit (\tr\chi,\chihat)\dif\snabla(\tr\chi,\chihat)\notag \\ +&\sumit \dione\psi_g^{i_2}\dit\snabla\Te_{44} \notag \\ +& \sumifi \dione\psi_g^{i_2}\dit(\chihat,\tr\chi)\dif\etabar \difi\tr\chi \notag \\ +&\sumif \dione\psi_g^{i_2}\dit (\beta,\Te_{4})\dif\tr\chi \notag \\+&\frac{1}{\al} \sumifim\dione \psi_g^{i_2} \dit\modu\dif(\eta,\etabar)\difi(\al\snabla)\snabla \tr\chi \notag \\ +&\sumifm \dione \psi_g^{i_2}\dit(\tr\chi,\chihat)\dif(\al\snabla)\tr\chi \notag \\+& \al \sumifim \dione \psi_g^{i_2} \dit (\chihat,\tr\chi)\dif \etabar \difi \snabla \tr\chi \notag \\ +& \al \sumifm \dione \psi_g^{i_2}\dit(\beta,\Te_4)\dif\snabla \tr\chi \notag \\+& \sumifim \dione \psi_g^{i_2}\dit \modu \dif \sigma \difi \snabla\tr\chi \notag \\ +& \sum_{i_1+i_2+i_3+i_4+i_5+i_6=i-1} \dione\psi_g^{i_2}\dit \modu \dif(\eta,\etabar)\difi (\eta,\etabar)\mathcal{D}^{i_6}\snabla \tr\chi \notag \\ :=& T_1+\dots+T_{10}.
\end{align}We therefore have, for $i=0,1,2$:

\be \scaletwoSu{\De^i\snabla \tr\chi} \lesssim \sum_{j=0}^{10}\intubar \scaletwoSuubarprime{T_j}\dubarprime. \ee
Of the terms on the right-hand side, the most borderline ones are essentially the ones with top-order derivatives. Beginning with the first one, we have:

\begin{align}
&\intubar \sumif \scaletwoSuubarprime{\dione (\psi_g,\modu\omegabar)^{i_2}\dit(\tr\chi,\chihat)\dif\snabla(\tr\chi,\chihat)} \dubarprime \notag \\ \lesssim& \intubar \big(\scaletwoSuubarprime{\De^2(\tr\chi,\chihat)\snabla(\tr\chi,\chihat)} +\scaletwoSuubarprime{(\tr\chi,\chihat)\De^2\snabla(\tr\chi,\chihat)}\big)\dubarprime \notag \\ +&\intubar \big(\scaletwoSuubarprime{\De\snabla(\tr\chi,\chihat)\De(\tr\chi,\chihat)} +\scaletwoSuubarprime{(\psi_g,\modu\omegabar)(\tr\chi,\chihat)\De\snabla(\tr\chi,\chihat)}\big)\dubarprime \notag \\ +&\intubar \big(\scaletwoSuubarprime{(\psi_g,\modu\omegabar)\De(\tr\chi,\chihat)\snabla(\tr\chi,\chihat)} +\scaletwoSuubarprime{\De(\psi_g,\modu\omegabar)(\tr\chi,\chihat)\snabla(\tr\chi,\chihat)}\big)\dubarprime \notag  \\ +&\intubar\big(\scaletwoSuubarprime{(\psi_g,\modu\omegabar)(\tr\chi,\chihat)\snabla(\tr\chi,\chihat)} + \scaletwoSuubarprime{\De(\tr\chi,\chihat)\snabla(\tr\chi,\chihat)}\big)\dubarprime \notag \\ +&\intubar\big(\scaletwoSuubarprime{(\tr\chi,\chihat)\De\snabla(\tr\chi,\chihat)} +\scaletwoSuubarprime{(\tr\chi,\chihat)\snabla(\tr\chi,\chihat)}\big)\dubarprime. \label{sevenfortyseven}
\end{align}Here we have simply conditioned on whether $i=0,1,2$. For the first term herein, there holds:

\begin{align}
&\intubar \scaletwoSuubarprime{\De^2(\tr\chi,\chihat)\snabla(\tr\chi,\chihat)}\dubarprime \notag \\\lesssim& \frac{1}{\modu}\sup_{0\leq \ubar^{\prime\prime}\leq 1}\lVert \De^2(\tr\chi,\chihat)\rVert_{\mathcal{L}^2_{(sc)}(S_{u,\ubar^{\prime\prime}})} \intubar \scaleinftySuubarprime{\snabla(\tr\chi,\chihat)} \dubarprime \notag \\ \lesssim& \frac{1}{\modu}\sup_{0\leq \ubar^{\prime\prime}\leq 1}\lVert \De^2(\tr\chi,\chihat)\rVert_{\mathcal{L}^2_{(sc)}(S_{u,\ubar^{\prime\prime}})}\sum_{j=0}^2\intubar\scaletwoSuubarprime{(\al\snabla)^j\snabla(\tr\chi,\chihat)}\dubarprime \notag \\  \lesssim& \frac{\al}{\modu}(\mathcal{R}_2[\alpha]+1)(\mathcal{R}_2[\alpha]+1+O_3) + \frac{a}{\modu}(\mathcal{R}_2[\alpha]+1)\big(\frac{1}{\al}\intubar \scaletwoSuubarprime{a\snabla^3 \chihat} \dubarprime\big). 
\end{align}Given the $\mathcal{L}^2_{(sc)}$ bounds on up to two derivatives of Ricci coefficients obtained in the previous Sections, we have 

\begin{align} \intubar \scaletwoSuubarprime{\De^2(\tr\chi,\chihat)\snabla(\tr\chi,\chihat)}\dubarprime \lesssim& \frac{\al (R+V)}{\modu} \intubar \scaletwoSuubarprime{a\snabla^3 \tr\chi}\dubarprime \notag \\ +&\frac{a C(\mathcal{R},\mathcal{V})}{\modu}\big(\frac{1}{\al}\intubar\scaletwoSuubarprime{a\snabla^3 \chihat}\dubarprime\big)\notag \\ \lesssim& \frac{a C(\mathcal{R},\mathcal{V})}{\modu}\big(\frac{1}{\al}\intubar\scaletwoSuubarprime{a\snabla^3 \tr\chi}\dubarprime\big)+1. \end{align}
For the second term, there holds:

\begin{align}
&\intubar \scaletwoSuubarprime{ (\tr\chi,\chihat)\De^2\snabla(\tr\chi,\chihat)}\dubarprime \notag \\ \lesssim& \frac{a C(\mathcal{R},\mathcal{V})}{\modu}\big(\frac{1}{\al}\intubar \scaletwoSuubarprime{\De^2\snabla \chihat}\dubarprime\big) +\frac{\al C(\mathcal{R},\mathcal{V})}{\modu}\big(\intubar \scaletwoSuubarprime{\De^2\snabla \tr\chi}\dubarprime\big)+1\notag  \\ \lesssim& \frac{a C(\mathcal{R},\mathcal{V})}{\modu}\big(\frac{1}{\al}\intubar \scaletwoSuubarprime{\De^2\snabla \chihat}\dubarprime
\big)+1, \label{sevenfifty}
\end{align}where we have used the bootstrap assumptions \eqref{boundsbootstrap}, \eqref{elliptictrchichihatbootstrap} and the results of the previous sections. The rest of the terms in \eqref{sevenfortyseven} are controlled above by $1$, using the bootstrap assumptions \eqref{boundsbootstrap} and the results of Sections \ref{sectionO04},\ref{sectionO14}, \ref{section0infty} and \ref{sectionO22}. We finally arrive at

\begin{align}&\sum_{i=0}^2\intubar \sumif  
\scaletwoSu{\De^i (\modu\omegabar)^{i_2}\dit(\tr\chi,\chihat)\dif\snabla(\tr\chi,\chihat)} \dubarprime \notag \\ \lesssim& \frac{a}{\modu}(\mathcal{R}_2[\alpha]+1)\big(\frac{1}{\al}\intubar \scaletwoSuubarprime{a\snabla^3 \chihat}\dubarprime\big)+1. \label{sevenfiftyone}
\end{align} For the second term in \eqref{disnablatrchi}, we have

\be \sum_{i=0}^2 \sumit \intubar \scaletwoSuubarprime{\db \snabla\Te_{44}}\lesssim 1, \ee where we have used the bootstrap assumptions \eqref{boundsbootstrap} involving the energy norms for Vlasov.  The rest of the terms in \eqref{disnablatrchi} are similarly bounded above by 1, using the bootstrap assumptions \eqref{boundsbootstrap} and the results of Sections \ref{sectionO04}-\ref{section0infty} and \ref{sectionO22}. We therefore arrive at the important inequality

\be \intubar \scaletwoSuubarprime{\De^2\snabla\tr\chi}\dubarprime \lesssim \frac{a}{\modu}(\mathcal{R}_2[\alpha]+1)\big(\frac{1}{\al}\intubar \scaletwoSuubarprime{a\snabla^3 \chihat}\dubarprime\big)+1. \label{ellipticchihattrchiintermediateinequality} \ee We are, thus, inclined to obtain control on $\frac{1}{\al}\intubar \scaletwoSuubarprime{\De^2\snabla\chihat}\dubarprime.$ Recall the elliptic equation for $\chihat$:

\be \sdiv \chihat = \frac{1}{2}\snabla \tr\chi -\chihat\cdot (\eta-\etabar)-\frac{1}{2}\tr\chi \gslash\cdot (\eta-\etabar)-\beta-\frac{1}{2}\Te_{4}. \ee Using the elliptic Proposition \ref{mainellipticlemma}, we obtain

\begin{align}
\scaletwoSuubarprime{a\snabla^3 \chihat}\lesssim& \frac{1}{\al}\sum_{0\leq i\leq 3}\scaletwoSuubarprime{(\al\snabla)^3\tr\chi} + \sum_{0\leq i \leq 2}\scaletwoSuubarprime{(\al\snabla)^i\beta} \notag \\ +& \sum_{0\leq i \leq 2}\scaletwoSuubarprime{(\al\snabla)^i\Te_4} +\sum_{\substack{0\leq i\leq 2\\ i_1+i_2=i}}\scaletwoSuubarprime{a^{\frac{i}{2}}\snabla^{i_1}(\eta,\etabar)\snabla^{i_2}\chihat } \notag \\ +& \sum_{0\leq i \leq 2}\frac{1}{\al}\scaletwoSuubarprime{(\al\snabla)^i\chihat}. \label{chihatinbetween}
\end{align} Integrating along $H_u$ and dividing by $\al$, we have 

\be \frac{1}{\al}\intubar \scaletwoSuubarprime{a\snabla^3\chihat}\dubarprime \lesssim \frac{1}{\al}\intubar \scaletwoSuubarprime{a\snabla^3 \tr\chi}\dubarprime +1. \label{sevenfiftyfour} \ee Plugging this back to 
\eqref{sevenfiftyone} and then again back to \eqref{sevenfiftyfour}, we obtain the result in the case $\De^2= (\al\snabla)^2$. If $\snabla_4$ appears in $\De^2$, then we can use the structure equation for $\chihat$ to reduce the number of derivatives by $1$. This only produces terms that have been estimated at previous steps. Similarly, if $\De^2 = (\al\snabla)\modu\snabla_3$, then we use the commutation formula to estimate 

\begin{align} &\intubar \scaletwoSuubarprime{(\al\snabla)\modu\snabla_3 \snabla \chihat}\dubarprime \lesssim \intubar \big(\scaletwoSuubarprime{\al\snabla^2\modu\snabla_3 \chihat} + \scaletwoSuubarprime{\al\snabla \modu [\snabla_3,\snabla]\chihat}\big) \dubarprime \notag\\\lesssim& \intubar \scaletwoSuubarprime{\al\snabla^2(\modu\snabla_3)\chihat}\dubarprime +\text{terms that have been estimated at previous steps}.  
\end{align}To estimate $\intubar \scaletwoSuubarprime{\al\snabla^2(\modu\snabla_3)\chihat}\dubarprime$, we use the commutation formula for $[ \snabla_3,\sdiv]$ along with Proposition \ref{mainellipticlemma}:

\begin{align} &\sdiv(\modu\snabla_3)\chihat \notag \\ =& \modu\snabla_3 \sdiv \chihat +\modu [\snabla_3,\sdiv]\chihat = \modu \snabla_3 \big( \frac{1}{2}\snabla \tr\chi -\chihat\cdot (\eta-\etabar)-\frac{1}{2}\tr\chi \gslash\cdot (\eta-\etabar)-\beta-\frac{1}{2}\Te_{4}\big) \notag \\ +& \modu\big(  -\frac{1}{2}\tr\chibar \hsp \sdiv \chihat -\chibarhat \cdot \snabla \chihat -(\betabar+\frac{1}{2}\Te_3) \chihat +\frac{1}{2}(\eta+\etabar)\snabla_3 \chihat -(\chihat+\frac{1}{2}\tr\chi) \hsp \etabar \hsp \chihat + (\chibarhat+\frac{1}{2}\tr\chibar)\hsp \eta \hsp \chihat \big).\end{align} Hence

\begin{align}
\intubar \scaletwoSuubarprime{\al\snabla^2(\modu\snabla_3)\chihat}\lesssim& \sum_{j=0}^1 \intubar \scaletwoSuubarprime{(\al\snabla)^j(\modu\snabla_3)\snabla \tr\chi} \dubarprime \notag \\ +&\frac{1}{\al}\sum_{j=0}^1\intubar \scaletwoSuubarprime{(\al\snabla)^j(\modu\snabla_3)\chihat}\notag + \text{lower order terms}.
\end{align}Putting everything together, for $\mathcal{D}^2=(\al\snabla)\modu\snabla_3$, we arrive at

\be \frac{1}{\al}\intubar \scaletwoSuubarprime{\De^2\snabla\chihat}\dubarprime \lesssim \frac{1}{\al}\intubar \scaletwoSuubarprime{\De^2 \snabla \tr\chi}\dubarprime +1. \ee Proceeding in the same way, we can show that the above inequality holds for any choice of $\De^2=\De_1\De_2$. We therefore obtain that there exists a constant $C(\mathcal{R},\Ve)$ such that 

\be \frac{1}{\al}\intubar \scaletwoSuubarprime{\De^2 \snabla\chihat}\dubarprime \lesssim C(\mathcal{R},\Ve). \ee Putting this back to \eqref{ellipticchihattrchiintermediateinequality}, we arrive at the result.
\end{proof}
\noindent Elliptic control on $\chihat, \tr\chi$, in turn, allows to obtain control on the elliptic norm of $\eta$. This is the content of the next Proposition.

\begin{proposition}
Under the assumptions of Theorem \ref{mainone} and the bootstrap bounds \eqref{boundsbootstrap}, there exists a constant $C$ depending only the curvature norms $\mathcal{R},\underline{\mathcal{R}}$, such that there holds 

\[ \scaletwoHu{\De^2 \snabla \eta}\lesssim C(\mathcal{R},\underline{\mathcal{R}}). \]

\end{proposition}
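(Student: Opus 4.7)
The plan is to recover the top--order elliptic bound for $\eta$ by using the $(\sdiv,\scurl)$ Hodge system
\begin{align*}
\sdiv\eta &= -\mu - \rho + \tfrac12 \chihat\cdot\chibarhat + \tfrac14 \tr\chi\,\tildetr, \\
\scurl\eta &= \sigma - \tfrac12 \chihat\wedge\chibarhat,
\end{align*}
where $\mu := -\sdiv\eta - \rho + \tfrac12\chihat\cdot\chibarhat + \tfrac14 \tr\chi\,\tildetr$ is the renormalized mass aspect function introduced in the discussion of reduced elliptic estimates. Invoking Lemma~\ref{mainellipticlemma} at three angular derivatives, control of $\scaletwoHu{\De^2\snabla\eta}$ is reduced to control of $\scaletwoHu{\De^2(\mu,\scurl\eta)}$ together with lower--order norms of $\eta$, of $(\chihat,\tr\chi,\chibarhat,\tildetr)$, and of $(\rho,\sigma)$; all of these have already been estimated in Sections~\ref{sectionO14}--\ref{sectionO22} and \ref{section40curv}, and they contribute at most an acceptable constant depending on $\mathcal{R},\underline{\mathcal{R}}$.

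The second step is to derive the $\snabla_4$ transport equation for $\mu$. The dangerous $\sdiv\beta$ contribution produced by commuting $\snabla_4$ past $\sdiv$ in the $\eta$ equation cancels exactly with the corresponding term appearing in the $\snabla_4\rho$ equation, while the addition of the $\frac14\tr\chi\,\tildetr$ term is calibrated precisely to cancel the $\chi\cdot\snabla\etabar$ factor that would otherwise spoil the decoupling from $\etabar$. Schematically,
\[
\snabla_4\mu \sim \chihat\cdot\snabla\eta + (\eta,\etabar)\,\snabla(\chihat,\tr\chi) + \snabla_4\Te_{34} + \snabla_3\Te_{44} + \text{l.o.t.},
\]
and, critically, no $\snabla\etabar$, $\snabla\chibarhat$, $\snabla\tr\chibar$ or $\snabla\omegabar$ terms survive on the right--hand side. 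The same cancellation, combined with the $\snabla_4\sigma$ equation, produces a comparable transport equation for $\scurl\eta$.

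Third, I commute the $\snabla_4$ equation for $\mu$ (and for $\scurl\eta$) with $\mathcal{D}^2$ using Proposition~\ref{commute4}, integrate along $H_u$, and estimate the resulting error terms via the preliminary inequalities of Proposition~\ref{preliminary}, the $\mathcal{L}^\infty_{(sc)}$ estimates of Section~\ref{section0infty}, and the $\mathcal{L}^2_{(sc)}$ estimates on two derivatives of Ricci coefficients from Section~\ref{sectionO22}. The matter source terms $\mathcal{D}^2\snabla_3\Te_{44}$ and $\mathcal{D}^2\snabla_4\Te_{34}$ are absorbed by the bootstrap bounds $\mathcal{V}_2[\Te_{44}],\mathcal{V}_2[\Te_{34}]\le V$, which the hypotheses \eqref{boundsbootstrap} render harmless.

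The main obstacle lies in the term $\chihat\cdot\snabla\eta$ and its descendants $\mathcal{D}^2(\chihat\cdot\snabla\eta)$. When both derivatives land on the $\snabla\eta$ factor, we reproduce $\mathcal{D}^2\snabla\eta$---precisely the quantity being estimated---on the right--hand side. Since $\chihat$ obeys $\frac{1}{\al}\scaleinftySu{\chihat}\lesssim C(\mathcal{R})$ (with an effective smallness $\al/\modu$ after integration along $H_u$), this contribution can be absorbed on the left via a standard bootstrap argument. When, on the other hand, two derivatives fall on $\chihat$, the resulting factor $\mathcal{D}^2\snabla\chihat$ is controlled by the elliptic estimate for $\chihat$ established in the preceding proposition, while $\snabla\eta$ is placed in $\mathcal{L}^{\infty}_{(sc)}$ via Sobolev embedding using the $\mathcal{O}_{2,2}$ bound. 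Intermediate distributions ($\mathcal{D}\chihat\cdot\mathcal{D}\snabla\eta$) are handled by $\mathcal{L}^4_{(sc)}$--$\mathcal{L}^4_{(sc)}$ H\"older. Once the bootstrap closes and is combined with the Hodge estimate, the desired bound $\scaletwoHu{\mathcal{D}^2\snabla\eta}\lesssim C(\mathcal{R},\underline{\mathcal{R}})$ follows.
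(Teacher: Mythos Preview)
Your proposal is correct and follows essentially the same route as the paper: define the renormalized mass aspect $\mu$, exploit the two key cancellations (of $\sdiv\beta$ via the $\snabla_4\rho$ equation, and of $\chi\cdot\snabla\etabar$ via the extra $\tfrac14\tr\chi\,\tildetr$ term), commute the resulting $\snabla_4\mu$ equation with $\mathcal{D}^2$, and close through the Hodge system. Two small remarks: first, $\scurl\eta = \sigma + \tfrac12\chibarhat\wedge\chihat$ is an algebraic identity, so no separate transport argument is needed there---the paper simply bounds $\scaletwoSu{(\al\snabla)^i(\sigma,\chihat\wedge\chibarhat)}$ directly; second, for general $\mathcal{D}^2\neq(\al\snabla)^2$ the paper does not invoke Lemma~\ref{mainellipticlemma} in its three-derivative form but rather commutes to obtain $\sdiv\mathcal{D}^2\eta$, $\scurl\mathcal{D}^2\eta$ and applies the first-order Hodge identity to the tensor $\mathcal{D}^2\eta$, which is a minor technical point your outline glosses over but does not affect the argument.
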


\begin{proof}
Let us introduce the quantity 

\[\mu := - \slashed{\div}\eta -\rho + \frac{1}{2}\chihat\cdot \chibarhat +\frac{1}{4}\tr\chi \tildetr.  \]%\[ \mubar= -\slashed{\div}\etabar -\rho.\]
We wish to obtain a $\snabla_4$ evolution equation for $\mu$. %and a $\snabla_3$ evolution equation for $\mubar$.
For a $1-$form U, recall the commutation formula (see for example \cite{AnAth}):

\be [\snabla_4,\slashed{\div}]U= -\frac{1}{2}\tr\chi \hsp  \slashed{\div}U -\chihat \cdot \snabla U -(\beta-\frac{1}{2}\Te_{4})\cdot U +\frac{1}{2}(\eta+\etabar)\snabla_4 U -\etabar \hsp \chibarhat\hsp  U +\frac{1}{2}\tr\chi\hsp\etabar \hsp U. \ee

We explicitly compute:

\begin{align} \snabla_4 \sdiv \eta =&\sdiv(\snabla_4 \eta)+[\snabla_4,\sdiv]\eta = \sdiv\big(\frac{1}{2}\tr\chi(\etabar-\eta)+\chihat\cdot(\etabar-\eta)-\beta-\frac{1}{2}\Te_{4}\big)   \notag \\ -&\frac{1}{2}\tr\chi \hsp \sdiv \eta -\chihat \cdot \snabla \eta -(\beta-\frac{1}{2}\Te_4)\eta +\frac{1}{2}(\eta+\etabar)\snabla_4 \eta - \etabar \hsp \chibarhat \hsp \eta +\frac{1}{2}\tr\chi \hsp \etabar \hsp \eta.  \label{muintermediate1} \end{align}Moreover, 
\be \snabla_4 \rho =- \frac{3}{2}\tr\chi \rho + \slashed{\div}\beta -\frac{1}{2}\chibarhat^{\sharp}\cdot \alpha + (\etabar,\beta) - \frac{1}{4}\big(\snabla_3 \Te_{44} -\snabla_4 \Te_{34} +2\omegabar\hsp \Te_{44} +\bcancel{\omega \Te_{34}}+2(\etabar-\eta)\cdot \Te_4\big). \label{muintermediate2}\ee For the rest of the terms, we have

\begin{align} \snabla_4(\frac{1}{2}\chihat \cdot \chibarhat) =& \frac{1}{2}\chihat^{AB}  \big( -\frac{1}{2}\tr\chi \hsp \chibarhat_{AB} +(\snabla_{A}\etabar_B+\snabla_B \etabar_A -\sdiv \etabar \hsp \gslash_{AB}) -\frac{1}{2}\tr\chibar \chihat_{AB} +2\etabar_A \etabar_B - \lvert \etabar\rvert^2 \gslash_{AB} +\slashed{T}_{AB}\big)\notag \\ +& \frac{1}{2} \hsp\chibarhat^{AB}\big(-\tr\chi \hsp \chihat_{AB}-\alpha_{AB}, \big) \label{muintermediate3}.\end{align}
Finally,

\begin{align}
\snabla_4\big(\frac{1}{4}\tr\chi \tildetr\big) = \frac{1}{4}\tildetr \big(-\frac{1}{2} (\tr\chi)^2 - \lvert \chihat \rvert^2 -\slashed{T}_{44}\big) +\frac{1}{4} \tr\chi\big( 2 \div \etabar +2\lvert \etabar \rvert^2 + \chihat\cdot \chibarhat +\frac{1}{2}\tr\chi \tr\chibar +\rho-\frac{1}{2}\Te_{34}\big) \label{muintermediate4}
\end{align}

\noindent After a lengthy but straightforward calculation, adding $-$\eqref{muintermediate1}$-$ \eqref{muintermediate2}+\eqref{muintermediate3}+\eqref{muintermediate4}, the terms \[\sdiv \beta, -\frac{1}{2}\hsp \chibarhat^{\sharp}\cdot \alpha,\] as well as \[\frac{1}{2}\tr\chi \hsp \sdiv \etabar, \chihat \cdot \snabla \etabar\] crucially cancel out. We finally obtain

\begin{align} 
\snabla_4\mu +\tr\chi \mu=& \frac{1}{2}(\etabar-\eta)\snabla\tr\chi -2\chihat \cdot \snabla \eta +(\eta-\etabar)\hsp \sdiv \chihat -\frac{1}{2}\sdiv \Te_4 -(\beta-\frac{1}{2}\Te_4)\eta \notag \\  +&\frac{1}{2}(\eta+\etabar)\big(-\chi\cdot(\eta-\etabar)-\beta-\frac{1}{2}\Te_4\big)-\etabar \hsp \chibarhat \hsp \eta +\frac{1}{2}\tr\chi \hsp  \etabar\hsp  \eta +\frac{1}{4}\tr\chi \rho \notag \\+&(\etabar,\beta) -\frac{1}{4}\big(\snabla_3 \Te_{44}-\snabla_4 \Te_{34}+2\omegabar \hsp \Te_{44}+2(\etabar-\eta)\cdot \Te_{4}\big) -\frac{1}{4}\tr\chibar \lvert \chihat \rvert^2 \notag  \\ +& \chihat \cdot \etabar \cdot \etabar +\frac{1}{2}\chihat\cdot \Te +\frac{1}{8}\tr\chi \tildetr \tr\chi +\frac{1}{4}\tildetr (-\lvert \chihat \rvert^2-\Te_{44}) +\frac{1}{2}\tr\chi \lvert \etabar \rvert^2 \notag \\+& \frac{1}{8}\tr\chi^2 \tr\chibar -\frac{1}{8}\tr\chi \Te_{34}:=M_0.
\end{align}
By bounding

\be \scaletwoSu{\mu} \lesssim \intubar \scaletwoSuubarprime{M_0}\dubarprime , \ee using the bootstrap assumptions, we get 

\be \scaletwoSu{\mu}\lesssim 1. \ee 

\noindent Commuting with $\mathcal{D}$, we get the equation (for $i=1,2$)
\begin{align} 
\label{nabla4dim}
\snabla_4 \De^i \mu =&    \sumit \dione (\modu \omegabar)^{i_2}\dit M_0 \nonumber \\ \notag +& \frac{1}{\al} \sum_{i_1+i_2+i_3+i_4+i_5=i-1}\mathcal{D}^{i_1}\psi_g^{i_2}\mathcal{D}^{i_3}\modu \mathcal{D}^{i_4}(\eta,\etabar) \mathcal{D}^{i_5+1}\mu \\ \nonumber +&\sum_{i_1+i_2+i_3+i_4+1=i}\mathcal{D}^{i_1}\psi_g^{i_2}\mathcal{D}^{i_3}(\tr\chi,\chihat)\mathcal{D}^{i_4+1}\mu \\ \nonumber &+ \al \sumifim \dione \psi_g^{i_2} \dit(\chihat,\tr\chi)\dif \etabar \difi \mu \\ \nonumber +& \al \sumifm \dione \psi_g^{i_2}\dit (\beta,\slashed{T}_{4}) \dif \mu  \\  \nonumber+&\sum_{i_1+i_2+i_3+i_4+i_5=i-1}\mathcal{D}^{i_1}\psi_g^{i_2}\mathcal{D}^{i_3}\modu \mathcal{D}^{i_4}\sigma\mathcal{D}^{i_5}\mu \\ +& \sum_{i_1+\dots+i_6=i-1}\mathcal{D}^{i_1}\psi_g^{i_2}\mathcal{D}^{i_3}\modu \mathcal{D}^{i_4}(\eta,\etabar)\mathcal{D}^{i_5}(\eta,\etabar)\mathcal{D}^{i_6}\mu. 
\end{align} We proceed on a term by term basis. For the first term (keeping in mind that the very first operator from $\De_{i_4+1}$ acting on $\tr\chi$ is $\al\snabla$),

\begin{align}
\frac{1}{\al}\intubar   \ScaletwoSuubarprime{\sumifm \dione (\psi_g,\modu\omega)^{i_2}\dit(\eta,\etabar)\mathcal{D}^{i_4+1}\tr\chi}\dubarprime,
\end{align}we distinguish cases. If $i_4=i-1$, then we control the term by a bootstrap on the elliptic norm of $\tr\chi$ and use the improvements on $\scaleinftySu{\eta,\etabar}$ that we obtained in Section \ref{section0infty} (note that in the non-borderline cases, we can just as well use the bootstrap assumptions instead of the improvements):

\be \frac{1}{\al}\intubar \scaletwoSuubarprime{(\eta,\etabar)\De^i \tr\chi}\lesssim \frac{O_{\text{ell}}[\tr\chi]C(\mathcal{R},\underline{\mathcal{R}},\Ve, \underline{\Ve})}{\modu}\lesssim 1.
\ee If $i_4=i-2=1$, then we bound 
\begin{align}
&\frac{1}{\al}\intubar \big(\scaletwoSuubarprime{\De(\eta,\etabar)\De^2\tr\chi}+\scaletwoSuubarprime{(\psi_g,\modu\omegabar)(\eta,\etabar)\De^2 \tr\chi}\big) \dubarprime \notag \\ \lesssim& \frac{1}{\al}\frac{1}{\modu}O \intubar \scalefourSuubarprime{\De^2\tr\chi} \dubarprime + \frac{O^3}{\modu^2}
\notag \\  \lesssim& \frac{O}{\modu} \big( \scaletwoHu{\De\snabla \tr\chi}^{\frac{1}{2}}\scaletwoHu{\De^2 \snabla \tr\chi}^{\frac{1}{2}}+ \scaletwoHu{\De\snabla \tr\chi}\big)  +1 \notag \\ \lesssim& \frac{O}{\modu}(O^{\frac{1}{2}}O_{\text{ell}}^{\frac{1}{2}} +O)+1 \lesssim 1.\end{align} If $i_4=i-2=0,$ then we bound 

\begin{align}
&\frac{1}{\al} \intubar \big( \scaletwoSuubarprime{\De ^2(\eta,\etabar) (\al\snabla) \tr\chi} +\scaletwoSuubarprime{(\psi_g,\modu\omegabar)\De(\eta,\etabar)(\al\snabla)\tr\chi} \big)\dubarprime \notag \\ +&\frac{1}{\al}\intubar \scaletwoSuubarprime{\De(\psi_g,\modu\omegabar)(\eta,\etabar)(\al\snabla)\tr\chi)}\big) \dubarprime \lesssim 1,
\end{align}where we have bounded $\De^2(\eta,\etabar)$ in $\mathcal{L}^2_{(sc)}$ and the $(\al\snabla)\tr\chi$ in $\mathcal{L}^{\infty}_{(sc)}$ since, using Sobolev embedding, we have

\[ \frac{1}{\al}\scaleinftySuubarprime{\al\snabla \tr\chi}\lesssim \frac{1}{\al} \sum_{i=0}^2\scaletwoSuubarprime{(\al\snabla)^{1+i}\tr\chi}.        \]The terms $\scaletwoSuubarprime{\snabla \tr\chi}, \scaletwoSuubarprime{\al\snabla^2 \tr\chi}$ are controlled by estimates in the previous sections, whereas the term 
\begin{align} &\frac{1}{\modu}\intubar\scaletwoSuubarprime{\De^2(\eta,\etabar)}\scaletwoSuubarprime{a\snabla^3\tr\chi}\dubarprime \notag \\ \lesssim& \frac{1}{\modu}\sup_{0\leq \ubar^{\prime}\leq 1} \scaletwoSuubarprime{\De^2(\eta,\etabar)}\intubar \lVert a\snabla^3\tr\chi\rVert_{\mathcal{L}^2_{(sc)}(S_{u,\ubar^{\prime\prime}})}\text{d}\ubar^{\prime\prime}  \notag  \end{align}is controlled by the elliptic control we have on $\De^2 \snabla \tr\chi$. For the second term in $\db M_0$, similarly, we have

\begin{align} &\sum_{i=1}^2 \sumif \frac{1}{\al}\intubar \scaletwoSuubarprime{\db \chihat \mathcal{D}^{i_4}\snabla \eta}\dubarprime  \lesssim \intubar \scaletwoSuubarprime{\chihat \De^2 \snabla \eta}\dubarprime +1 \notag \\ \lesssim& \frac{\al O }{\modu}\intubar \scaletwoSuubarprime{\snabla \De^2 \eta}\dubarprime+1 \lesssim 1. \end{align}This term will be absorbed by Gr\"onwall's inequality in the end, so can be ignored for all purposes. Note that here there is no need to use the improved estimate from Remark \ref{chihatinfinityr} for $\scaleinftySu{\chihat}$. as the bootstrap assumption only suffices. Similarly, the rest of the terms in \[  \intubar \scaletwoSuubarprime{\db M_0}\dubarprime  \]are bounded above by $1.$  Finally, for the rest of the terms in \eqref{nabla4dim}, we see that they are also bounded above by 1 using the bootstrap assumptions. Ultimately, we obtain

\be \sum_{i=0}^2 \intubar \scaletwoSuubarprime{\De^2 \mu}\dubarprime \lesssim 1. \ee We now recall the div-curl system :

\[ \sdiv \eta = -\mu-\rho + \frac{1}{2}\chihat\cdot \chibarhat + \frac{1}{4}\tr\chi \hsp \tildetr,    \] \[  \scurl \eta = \sigma + \frac{1}{2}\chibarhat \wedge \chihat.   \]Making use of Proposition \ref{mainellipticlemma}, we thus obtain

\begin{align}
\scaletwoSu{a\snabla^3 \eta}\lesssim& \sum_{i=0}^2\big( \scaletwoSu{(\al \snabla)^i \mu} + \scaletwoSu{(\al \snabla)^i(\rho,\sigma)}+\scaletwoSu{(\al\snabla)^i(\chihat \cdot \chibarhat)} \notag \\ +&\scaletwoSu{(\al\snabla)^i(\tr\chi \hsp \tildetr)}\big) +\sum_{i\leq 1}\scaletwoSu{(\al\snabla)^i \eta}.
\end{align}Integrating along the $\ubar-$direction  and raising to the second power, we arrive at \[ \frac{a}{\modu}\scaletwoHu{a \snabla^3 \eta} \lesssim 1+\mathcal{R}.   \]  This proves the claim of the Proposition for the case $\De^2=(\al\snabla)^2$. For a general $\mathcal{D}^2,$ we obtain a new div-curl system of the form

\[ \sdiv \De^2 \eta  = \De^2(-\mu-\rho+\frac{1}{2}\chihat\cdot \chibarhat +\frac{1}{4}\tr\chi\tildetr)+[\sdiv, \De^2]\eta = \De^2(-\mu-\rho+\frac{1}{2}\chihat\cdot \chibarhat +\frac{1}{4}\tr\chi\tildetr)+\hsp \text{terms of order $\leq 2$}, \]\[ \scurl \De^2 \eta = \De^2(\sigma +\frac{1}{2}\chibarhat \wedge \chihat) +\hsp \text{terms of order $\leq 2$}.   \]As such, there holds 

\be \scaletwoSu{\snabla \De^2 \eta}\lesssim \scaletwoSu{\De^2(\mu,\rho, \sigma,\chihat\cdot \chibarhat,\tr\chi\hsp \tildetr)}+ \hsp \text{lower-order terms}. \ee Finally,

\begin{align}\notag  \scaletwoSu{\De^2\snabla \eta}\lesssim& \scaletwoSu{\snabla \De^2 \eta} + \scaletwoSu{[\snabla,\De^2]\eta}\notag \\ \lesssim& \scaletwoSu{\De^2(\mu,\rho, \sigma,\chihat\cdot \chibarhat,\tr\chi\hsp \tildetr)}+ \hsp \text{lower-order terms}. \end{align} Integrating along the $\ubar-$direction and raising to the second power, we arrive at \[ \frac{a}{\modu}\scaletwoSu{\De^2 \snabla \eta}\lesssim 1 +\mathcal{R}.   \]

\end{proof}

\section{Estimates for the Vlasov matter}
\label{vlasov}
\noindent We begin the discussion on the Vlasov stress-energy tensor components $T_{\mu\nu}$, for up to three derivatives. These components, being functions of spacetime themselves, have well-defined norms with respect to the spacetime derivatives $\mathcal{D}$. However, from the point of view of the mass shell, they are non-local objects, in that they appear integrated over the momentum support. After having established the decay rates for the momentum components involved in this integral, we need to estimate the growth of certain derivatives of $f$  on the mass shell. This essentially translates to a problem of controlling several error terms, arising from the non-commutativity of the vector fields in question with the geodesic spray $X$. \vspace{3mm}

\noindent In this section, we control these Vlasov objects. First, we define the Vlasov norms that we want to control. Before we proceed to do so, we need to define the vertical and horizontal lifts associated with the tangent bundle. As $f$ lives on $\mathcal{P}\subset T\mathcal{M}$, to estimate derivatives of $f$ we use vector fields $\in \Gamma(TT\mathcal{M})$, much like the geodesic spray. There is a natural way to "lift" vector fields from $\Gamma(T\mathcal{M})$ to $\Gamma(TT\mathcal{M})$ (or $\Gamma(TT\mathcal{P}))$:

\vspace{3mm} 
\begin{definition}Given $v\in T_x\mathcal{M},$ its \textbf{vertical lift} at $p\in T_x\mathcal{M}$, $\text{Ver}_{(x,p)}(v)$, is defined to be the vector tangent to the curve $c_{(x,p),V}:(-\varepsilon,\varepsilon)\to T\mathcal{M}$ given by \[ c_{(x,p),v}(s)=(x,p+sv),   \]at $s=0$.
\end{definition}
\vspace{3mm}

\noindent Similarly, we introduce the notion of a \textit{horizontal lift} of a vector field on $\mathcal{M}$.

\begin{definition} Let $c: (-\varepsilon, \varepsilon) \to \mathcal{M}$ be a curve on $\mathcal{M}$ with $c(0)=x, c^\prime(0)=v$. Extend $p$ to a vector field along $c$ by parallel transport using the Levi-Civita connection of $g$ on $\mathcal{M}$. The \textbf{horizontal lift} of $v$ at $(x,p)$ denoted $\text{Hor}_{(x,p)}(v)$ is then defined to be the tangent vector to the curve $c_{(x,p),H}:(-\varepsilon,\varepsilon)\to T\mathcal{M}$ given by $c_{(x,p),H}(s)=(c(s),p)$ at $s=0$,\[ \text{Hor}_{(x,p)}(v)= c_{(x,p),H}^\prime (0).\]
\end{definition}

\vspace{3mm}

\noindent Given the coordinates
 $p^1,\dots,p^4$ on $T_x\mathcal{M}$ conjugate to $e_1,\dots,e_4$, the double null frame on $\mathcal{M}$, one has a frame for $T\mathcal{M}$ given by $(e_1,\dots,e_4,p^1,\dots p^4)$. If $v\in T_x\mathcal{M}$ is written as $v=v^{\mu}e_{\mu}$, then
 \[ \text{Ver}_{(x,p)}(v)=v^{\mu}\partial_{p^{\mu}},   \]with a similar expression for $\text{Hor}_{(x,p)}(v):$ \[ \text{Hor}_{(x,p)}(v)= v^{\mu}e_{\mu}-v^{\mu}p^{\nu}\Gamma^{\lambda}_{\mu\nu}\partial_{p^{\lambda}}.\] 

 \vspace{3mm}
% \noindent The generator of the geodesic flow, $X$, is given at $(x,p)\in T\mathcal{M}$ by \[X =\text{Hor}_{(x,p)}(p)=p^{\mu}e^{\mu}-\Gamma^{\lambda}_{\mu\nu}p^{\mu}p^{\nu}\partial_{p^{\lambda}}.    \]
\noindent We note that, after restricting $X$ to $\mathcal{P}\subset T\mathcal{M}$, the Vlasov equation rewrites as \[\restri{X}{\mathcal{P}}f = p^{\mu}e^{\mu}(f)-\Gamma^{\hat{\lambda}}_{\mu\nu}p^{\mu}p^{\nu}\partial_{\pbar^{\hat{\lambda}}}f=0.\]Here $\hat{\lambda}$ runs over $1,2,3$ and $\pbar^1,\pbar^2,\pbar^3$ denote the restrictions of $p^1,p^2,p^3$ to $\mathcal{P}$, while $\partial_{\pbar^1},\partial_{\pbar^2},\partial_{\pbar^3}$ denote the partial derivatives with respect to the restricted coordinate system $(\pbar^1,\pbar^2,\pbar^3)$. It is easy to see that \[ \partial_{\pbar^A} = \partial_{p^A}+ \frac{\slashed{g}_{AB}p^B}{2p^3}\partial_{p^4}, \hspace{2mm} \partial_{\pbar^3}=\partial_{p^3}-\frac{p^4}{p^3}\partial_{p^4}.    \]} Finally, notice that 

\be X_{(x,p)}= Hor_{(x,p)}(p). \ee
Recall the vector fields $V_1,\dots V_6,$
\begin{gather}
V_{(A)}:= Hor(e_A)- \frac{p^3}{\lvert u \rvert} \partial_{\pbar^A}, \hsp V_{(3)} := Hor(e_4), \hspace{3mm} V_{(4)} := p^3 \partial_{\pbar^3} - \lvert u \rvert Hor(e_3), \hspace{3mm} V_{(4+A)}:= \frac{p^3}{\lvert u \rvert^2}\partial_{\pbar^A},
\end{gather}together with the vector field \be V_{(0)}:= \lvert u \rvert Hor(e_3). \ee
\noindent 
We further introduce a frame $F$ on the mass shell $\mathcal{P}$ given by 
\begin{align}
\label{eq:frame}
F_A=e_A, \hsp\hsp F_3=\modu \partial_u, \hsp \hsp F_4=\partial_v, F_{4+A}=\frac{p^3}{\modu}\partial_{\pbar^A}, F_7=p^3 \partial_{\pbar^3}. \end{align}This will be the frame in which the commutators will be calculated. Introduce, for convenience, the set \be \label{tildeV}\tilde{\mathcal{V}}:=\{ V_{(A)},V_{(3)},V_{(4)}, \modu V_{(4+A)}, \modu V_{(0)}\}, \ee where $A=1,2$. Also, define for $k_{i}\in \mathbb{Z}_{\geq 0}$, the total Vlasov norms:
\begin{align}
\mathfrak{V}_{0}:=\sup_{u,\ubar}\sup_{S_{u,\ubar}}\sup_{\mathcal{P}_{x}}f, \end{align}
\begin{align} 
\mathfrak{V}_{1} := \sum_{i=1}^4\sup_{u,\ubar}\sup_{S_{u,\ubar}}\int_{\mathcal{P}_{x}}|V_{(i)}f|^{2}\sqrt{\det\gslash}\frac{dp^{1}dp^{2}dp^{3}}{p^{3}}
+\sum_{i\in\{0,5,6\}}\sup_{u,\ubar}\sup_{S_{u,\ubar}}\int_{\mathcal{P}_{x}}||u|V_{(i)}f|^{2}\sqrt{\det\gslash}\frac{dp^{1}dp^{2}dp^{3}}{p^{3}},\end{align}
%\begin{align} \notag 
%&\mathfrak{V}_{2}\\ \notag :=&\sum_{k_{1}+k_{2}+k_{3}+k_{4}+k_{5}=2}\sum_{A,B=1,2}\sup_{u}\int_{S_{u,\ubar^{\prime}}}\int_{\mathcal{P}_{x}}|V^{k_{1}}_{A}V^{k_{2}}_{0}V^{k_{3}}_{4}V^{k_{4}}_{3}(|u|V_{4+B})^{k_{5}}f|^{2}\sqrt{\det\gslash}\frac{dp^{1}dp^{2}dp^{3}}{p^{3}}\sqrt{\det\gslash}d^{2}\theta ,\end{align}
\be
\mathfrak{V}_{2}:= \sum_{\tilde{V}_{(1)},\tilde{V}_{(2)} \in \tilde{\mathcal{V}}} \sup_{u,\ubar}\int_{S_{u,\ubar}} \int_{\mathcal{P}_{x}}\lvert \tilde{V}_{(1)}\tilde{V}_{(2)}f \rvert^2 \sqrt{\det \gslash}\frac{dp^1 dp^2 dp^3}{p^3}\hsp \sqrt{\det\gslash}d\theta^1d\theta^2,\ee 
%\begin{align}
%&&\mathfrak{V}_{3}:=\sum_{k_{1}+k_{2}+k_{3}+k_{4}+k_{5}=3}\sum_{A,B=1,2}\sup_{u}\int_{0}^{1}\int_{S_{u,\ubar^{\prime}}}\int_{\mathcal{P}_{x}}|V^{k_{1}}_{A}V^{k_{2}}_{0}V^{k_{3}}_{4}V^{k_{4}}_{3}(|u|V_{4+B})^{k_{5}}f|^{2}\sqrt{\det\gslash}\frac{dp^{1}dp^{2}dp^{3}}{p^{3}}\sqrt{\det\gslash}d^{2}\theta \text{d}\ubar^{\prime} .
%\end{align}
\be
\mathfrak{V}_{3}:= \sum_{\tilde{V}_{(1)},\tilde{V}_{(2)}, \tilde{V}_{(3)} \in \tilde{\mathcal{V}}} \sup_{u} \int_0^1\int_{S_{u,\ubar^{\prime}}} \int_{\mathcal{P}_{x}}\lvert \tilde{V}_{(1)}\tilde{V}_{(2)}\tilde{V}_{(3)} f \rvert^2 \sqrt{\det \gslash}\frac{dp^1 dp^2 dp^3}{p^3}\hsp \sqrt{\det\gslash}d\theta^1d\theta^2\dubarprime.\ee 
\noindent Once we have controlled these norms, we can easily estimate derivatives of $T_{\mu\nu}$ by the following relations
 between\footnote{Herein by $e_{\mu}, \mu=1,2,3,4$ we mean $e_{\mu}$ extended as a vector field $\in \Gamma(TT\mathcal P)$.} the $e_{\mu}$ and the vector fields $V$ (here $A=1,2$):
  
\begin{align} e_A =& V_{(A)}+\modu^2\bigg(\frac{\Gammaslash^B_{AC}p^C}{p^3}+{\chibarhat_{A}}^B +\frac{{\chi_A}^B p^4}{p^3}\bigg)V_{(4+B)} \notag \\ +& \frac{\modu^2}{2}\tildetr V_{(4+A)} +\bigg(\frac{1}{2}\frac{\chi_{AB}p^B}{p^3}-\etabar_A\bigg)(V_{(4)}+V_{(0)}), \end{align}
\be \modu e_3 = \big(1+\modu \frac{\eta_A p^A}{p^3}+\modu \hsp \omegabar \big)V_{(0)}+\modu^2\bigg(\frac{({\chibar_B}^A-e_B(b^A))p^B +2\eta^A \hsp p^4}{p^3}\bigg)\modu V_{(4+A)} +\modu \hsp \omegabar \hsp  V_{4}, \ee
\be
e_4 = V_{(3)} + \modu^2\left( \frac{{\chi_B}^A p^B}{p^3} + 2\hsp \etabar^A \right)V_{(4+A)},
\ee
\be
\frac{\partial}{\partial \pbar^{A}}=\frac{|u|^{2}}{p^{3}}V_{(4+A)},~\frac{\partial}{\partial \pbar^{3}}=V_{(4)}+V_{(0)}.
\ee
 
\noindent In order to estimate these $\mathfrak{V}-$norms, we integrate the transport equations 
\begin{align}
 X[V^{(i)}f]=E_i,   
\end{align} for $i=1,2,3$, where by $V^{(i)}$ we denote the presence of $n$ $V-$vector fields and $E^{(i)}$ are the corresponding error terms.
For this, we need the following important integration lemma, which is essentially a version of the Reynolds transport theorem adapted to the mass shell.

\begin{lemma}[Transport inequality on the mass shell]
\label{lem:transport}
Assume that the bootstrap assumptions \eqref{boundsbootstrap} and \eqref{elliptictrchichihatbootstrap} hold. Let \( \mathcal{P} \subset TM \) denote the future mass shell in the tangent bundle of a Lorentzian manifold \( (\mathcal{M}, g) \), equipped with the canonical Sasaki metric \( \widetilde{g} \). Let \( \varphi: \mathcal{P} \to \mathbb{R} \) be a smooth scalar function, and consider the squared modulus \( |\varphi|^2 \) on \( \mathcal{P} \). We define the following scalar functionals on level sets of the foliation \( \{S_{u,\ubar}\} \) and on the null foliation of \( \mathcal{M} \):
\be 
A(s) := \int_{\pi^{-1}(x(s))} |\varphi|^2 \sqrt{\det \slashed{g}} \, \frac{dp^1 dp^2 dp^3}{p^3}, \label{eq:def-A}\ee \be
B(s) := \int_{S_{u,\ubar}} \left( \int_{\pi^{-1}(x(s))} |\varphi|^2 \sqrt{\det \slashed{g}} \, \frac{dp^1 dp^2 dp^3}{p^3} \right) \sqrt{\det \slashed{g}} \, dS_{u,\ubar}(x), \label{eq:def-B}\ee 
\be
C(s) := \int_0^1 \int_{S_{u,\ubar}} \left( \int_{\pi^{-1}(x(s))} |\varphi|^2 \sqrt{\det \slashed{g}} \, \frac{dp^1 dp^2 dp^3}{p^3} \right) \sqrt{\det \slashed{g}} \, dS_{u,\ubar}(x) \, d\ubar, \label{eq:def-C}
\ee

\noindent where \( \pi: \mathcal{P} \to \mathcal{M} \) denotes the basepoint projection, \( \slashed{g} \) denotes the induced Riemannian metric on the 2-spheres \( S_{u,\ubar} \), and \( dS_{u,\ubar} \) is the Riemannian measure associated to \( \slashed{g} \). The variable \( s \in \mathbb{R} \) is an affine parameter along the integral curves of the geodesic spray vector field \( X \in \Gamma(T\mathcal{P}) \), i.e., \( \gamma(s) = \exp_s(x_0, p_0) \) solves \( \frac{d\gamma}{ds} = X(\gamma(s)) \). Then the following integral inequalities hold along the geodesic flow:
\be 
A(s) \lesssim A(s_\infty) + \int_{s_\infty}^{s} \left( \int_{\pi^{-1}(x(s'))} \varphi \cdot X[\varphi] \sqrt{\det \slashed{g}} \, \frac{dp^1 dp^2 dp^3}{p^3} \right) ds', \label{eq:transport-A}\ee
\be
B(s) \lesssim B(s_\infty) + \int_{s_\infty}^{s} \int_{S_{u,\ubar}} \left( \int_{\pi^{-1}(x)} \varphi \cdot X[\varphi] \sqrt{\det \slashed{g}} \, \frac{dp^1 dp^2 dp^3}{p^3} \right) \sqrt{\det \slashed{g}} \, dS_{u,\ubar}(x) \, ds', \label{eq:transport-B}\ee \be
C(s) \lesssim C(s_\infty) + \int_{s_\infty}^{s} \int_0^1 \int_{S_{u,\ubar}} \left( \int_{\pi^{-1}(x)} \varphi \cdot X[\varphi] \sqrt{\det \slashed{g}} \, \frac{dp^1 dp^2 dp^3}{p^3} \right) \sqrt{\det \slashed{g}} \, dS_{u,\ubar}(x) \, d\ubar \, ds'. \label{eq:transport-C}
\ee

\noindent Here, again, $X$ is the geodesic spray, i.e. the Hamiltonian vector field associated with the kinetic Hamiltonian \( H(x,p) = \frac{1}{2} g_x(p,p) \) and the implied constants depend only on the dimension of the spacetime.
\end{lemma}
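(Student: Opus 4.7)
The approach is to reduce each of the three inequalities to a pointwise-in-$s$ identity of the schematic form
$$\frac{d}{ds}(\text{functional})(s) \;=\; 2\!\int\varphi\,X[\varphi]\,d\mu \;+\; (\text{lower-order error}),$$
integrated along a single geodesic in \eqref{eq:transport-A}, over a spacelike $2$-sphere in \eqref{eq:transport-B}, and over an entire outgoing null hypersurface in \eqref{eq:transport-C}, and then to absorb the error terms by Gr\"onwall's inequality using the bootstrap bounds \eqref{boundsbootstrap} and the improved $L^{\infty}$ control on the Ricci coefficients established in Section~\ref{section0infty}. The underlying mechanism in all three cases is the invariance of the Liouville measure on $\mathcal{P}$ under the geodesic spray $X$, which at the scalar level produces the divergence identity
$$\nabla_\mu\!\int_{\mathcal{P}_x} p^\mu\,|\varphi|^2\,d\mu_{\mathcal{P}_x} \;=\; 2\!\int_{\mathcal{P}_x}\varphi\,X[\varphi]\,d\mu_{\mathcal{P}_x},$$
obtained by integrating $X[|\varphi|^2]=2\varphi\,X[\varphi]$ in the fibre and noting that the vertical part of $X$ integrates to zero by the compactness of $\supp(\varphi)$ in momentum (preserved under the commutations with the vector fields from \eqref{tildeVfirstappearance}).

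For \eqref{eq:transport-A}, I would identify the affine parameter $s$ with the optical coordinate $u$ along the fixed geodesic $\gamma$ via Proposition~\ref{integrate}, so that moving in $s$ corresponds, up to the factor $du/ds=p^{3}(s)$, to moving in $u$. Differentiating $A(s)$ under the integral sign decomposes $dA/ds$ into (i) a base-derivative piece $\int_{\mathcal{P}_{x(s)}} p^\mu(s)\,e_\mu|\varphi|^{2}\,d\mu_{\mathcal{P}_x}$ and (ii) a measure-evolution piece controlled by $\tr\chi$, $\tr\chibar$ and $\Gammaslash$, which by the $L^\infty$ estimates of Section~\ref{section0infty} is bounded pointwise by $|u|^{-1}A(s)$. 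In (i), I would rewrite $p^\mu(s)\,e_\mu$ as $X - (\text{vertical part}) - (p^\mu(s)-p^\mu)\,e_\mu$, so that the principal contribution becomes $2\int\varphi\,X[\varphi]\,d\mu_{\mathcal{P}_x}$; the vertical boundary contributions vanish by the compact support of $\varphi$, and the difference $(p^\mu(s)-p^\mu)\,e_\mu|\varphi|^{2}$ is handled fibrewise. The residual terms inherit the $|u|^{-2}$ decay of the transverse momentum components $p^{4},p^{A}$ furnished by Proposition~\ref{momentum}, producing integrable-in-$u$ multiples of $A(s)$, which yields \eqref{eq:transport-A} after Gr\"onwall.

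The estimates \eqref{eq:transport-B} and \eqref{eq:transport-C} follow the same blueprint, only with the base derivative in $s$ additionally integrated over $S_{u,\ubar}$ for $B(s)$ and over the full outgoing null hypersurface $H_{u}^{(0,1)}$ for $C(s)$. The evolution of these area and null-hypersurface volume elements is governed by the first variational formulas $\mathcal{L}_{e_3}\gslash=2\chibar$ and $\mathcal{L}_{e_4}\gslash=2\chi$, whose traces $\tr\chibar,\tr\chi$ are controlled in $L^\infty$ by Section~\ref{section0infty}, while the null lapse $\Omega$ is close to $1$ by Proposition~\ref{Omegabounds}. These contributions produce only lower-order error terms that are again absorbed by Gr\"onwall. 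The main obstacle in this scheme is keeping track of the auxiliary terms generated by converting the base-directional derivative $p^\mu(s)\,e_\mu$ into the intrinsic spray $X$ at the level of the fibre integrand; that is, showing that the $p^{4}e_{4}$, $p^{A}e_{A}$ and Christoffel-vertical corrections all carry enough $|u|^{-2}$ decay to be integrable in $s\sim u$. This is precisely what the momentum-support bounds of Proposition~\ref{momentum}, namely $|u|^{2}|p^{A}|\lesssim p^{3}$ and $|u|^{2}p^{4}\lesssim p^{3}$, are designed to guarantee, so that the Gr\"onwall closure goes through uniformly throughout the slab of existence.
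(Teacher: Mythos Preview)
Your overall architecture (differentiate the functional, isolate the $2\int\varphi\,X[\varphi]$ contribution, absorb the rest by Gr\"onwall) matches the paper's, but there is a genuine gap in how you treat the measure-evolution error. You write that this piece ``is bounded pointwise by $|u|^{-1}A(s)$'' and then invoke Gr\"onwall. That closure fails: $\int_{u_\infty}^u |u'|^{-1}\,du' = \log(|u_\infty|/|u|)$ is not uniformly bounded as $|u_\infty|\to\infty$, so Gr\"onwall produces a multiplicative factor $|u_\infty|/|u|$ rather than a universal constant, contradicting the claimed conclusion. The paper confronts exactly this obstruction. Computing $\frac{1}{\mu}X[\mu]$ explicitly, the leading term is $(\tr\chibar - e_A(b^A))\,p^{3}$; since $\tr\chibar \approx -2/|u|$ and $p^{3}>0$, this contribution is \emph{negative} and can simply be dropped when bounding $dA/ds$ from above (``the measure of the spheres is strictly decreasing along the incoming direction''). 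What remains is the renormalized piece $\big(\tr\chibar+\tfrac{2}{|u|}-e_A(b^A)\big)p^{3}$, which by the bootstrap on $\widetilde{\tr\chibar}$ is $O(a^{1/2}\Gamma|u|^{-2})$ and hence genuinely integrable. Only then does Gr\"onwall close with a uniform constant.

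A secondary issue: your decomposition $p^{\mu}(s)\,e_{\mu} = X - (\text{vertical}) - (p^{\mu}(s)-p^{\mu})\,e_{\mu}$ mixes two different objects --- the momentum $p^{\mu}(s)$ of a single fixed geodesic and the fibre integration variable $p^{\mu}$ --- and the resulting cross-term $(p^{\mu}(s)-p^{\mu})\,e_{\mu}|\varphi|^{2}$ involves all spacetime derivatives of $\varphi$ with $O(1)$ coefficients, which cannot be ``handled fibrewise''. The paper avoids this entirely by applying the Reynolds transport theorem directly to the fibre measure $\mu=\sqrt{\det\gslash}/p^{3}$ under the flow of $X$, yielding
\[
\frac{d}{ds}A(s)=\int_{\mathcal{P}_x}X[|\varphi|^{2}]\,d\mu+\int_{\mathcal{P}_x}|\varphi|^{2}\,\frac{X[\mu]}{\mu}\,d\mu,
\]
with $\frac{1}{\mu}X[\mu]=\Gamma^{\alpha}_{\mu\alpha}p^{\mu}+\Gamma^{3}_{\mu\nu}p^{\mu}p^{\nu}/p^{3}$. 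No single-geodesic cross-terms arise.
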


\begin{proof}
Let $X$ denote the generator of the geodesic flow on the mass shell $\mathcal{P}_x \subset T_x\mathcal{M}$, defined by $X = p^\mu \partial_{x^\mu} - \Gamma^{\hat{\lambda}}_{\mu\nu} p^\mu p^\nu \partial_{\pbar^{\hat{\lambda}}}$, and consider the evolution of the quadratic observable
\be
A(s) := \int_{\mathcal{P}_{x}} |\varphi|^2 \sqrt{\det \gslash} \frac{dp^1 dp^2 dp^3}{p^3},
\ee
along the integral curves of $X$, where $\varphi$ is compactly supported in $\mathcal{P}_{x}$ and smooth. By applying the transport theorem on the evolving hypersurface $\mathcal{P}_{x}$ embedded in $T_xM$, we compute the total derivative of $A(s)$ along the flow:
\begin{align}
\frac{d}{ds} A(s)
&= \int_{\mathcal{P}_{x}} X(|\varphi|^2) \sqrt{\det \gslash} \frac{dp^1 dp^2 dp^3}{p^3}
+ \int_{\mathcal{P}_{x}} |\varphi|^2 \frac{1}{\mu} X[\mu] \sqrt{\det \gslash} \frac{dp^1 dp^2 dp^3}{p^3}, \label{eq:transport}
\end{align}
where $\mu=\frac{\sqrt{\det\gslash}}{p^{3}}$ and 
\[
\frac{1}{\mu} X[\mu] = \left( \Gamma^\alpha_{\mu\alpha} p^\mu + \Gamma^3_{\mu\nu} \frac{p^\mu p^\nu}{p^3} \right).
\]
Consequently, we obtain
\begin{align}
\frac{d}{ds} A(s) 
&= 2 \int_{\mathcal{P}_{x}} \varphi X[\varphi] \sqrt{\det \gslash} \frac{dp^1 dp^2 dp^3}{p^3}
+ \int_{\mathcal{P}_{x}} \left( \Gamma^\alpha_{\mu\alpha} p^\mu + \Gamma^3_{\mu\nu} \frac{p^\mu p^\nu}{p^3} \right) |\varphi|^2 \sqrt{\det \gslash} \frac{dp^1 dp^2 dp^3}{p^3}. \label{eq:A_derivative}
\end{align}

\noindent We now analyze the two error terms arising from the divergence computation. First, we expand the contraction:
\begin{align}\notag
\Gamma^\alpha_{\mu\alpha} p^\mu 
&= \Gamma^A_{3A} p^3 + \Gamma^A_{4A} p^4 + \Gamma^A_{BA} p^B + \Gamma^4_{34} p^3 + \Gamma^4_{44} p^4 + \Gamma^4_{A4} p^A \\ &+
 \Gamma^3_{33} p^3 + \Gamma^3_{43} p^4 + \Gamma^3_{A3} p^A.
\end{align}
By introducing the standard null structure coefficients on the mass shell, and expressing the angular connection coefficients $\Gamma^A_{BA}$ in terms of $\gslash$, we rewrite the above as
\begin{align}
\Gamma^\alpha_{\mu\alpha} p^\mu 
= \left( \tr \chibar - e_A(b^A) \right) p^3 + \tr \chi \, p^4 + \Gammaslash^A_{BA} p^B + 2 \omegabar p^3 + (\etabar_A - \eta_A) p^A,
\end{align}
which we abbreviate as
\[
\Gamma^\alpha_{\mu\alpha} p^\mu = \left( \tr \chibar - e_A(b^A) \right) p^3 + r,
\]
with a remainder term $r$ satisfying
\[
|r| \lesssim \frac{a^{1/2} \Gamma}{|u|^2},
\]
using the bootstrap assumptions and $L^\infty$ estimates for the Ricci coefficients. The leading term $\tr \chibar \, p^3$ decays only like $|u|^{-1}$, and thus is not integrable in the $u$-direction. However, \textit{we exploit a favorable cancellation} by rewriting:
\begin{align} \notag
\int_{\mathcal{P}_{x}} \Gamma^\alpha_{\mu\alpha} p^\mu |\varphi|^2 \sqrt{\det \gslash} \frac{dp^1 dp^2 dp^3}{p^3}
&= \int_{\mathcal{P}_{x}} \left( \tr \chibar + \frac{2}{|u|} - e_A(b^A) \right) p^3 |\varphi|^2 \sqrt{\det \gslash} \frac{dp^1 dp^2 dp^3}{p^3} \\
 &- \frac{2}{|u|} \int_{\mathcal{P}_{x}} p^3 |\varphi|^2 \sqrt{\det \gslash} \frac{dp^1 dp^2 dp^3}{p^3}
+ \int_{\mathcal{P}_{x}} r |\varphi|^2 \sqrt{\det \gslash} \frac{dp^1 dp^2 dp^3}{p^3}.
\end{align}
The cancellation of the leading borderline term now yields
\[
\left| \int_{\mathcal{P}_{x}} \Gamma^\alpha_{\mu\alpha} p^\mu |\varphi|^2 \sqrt{\det \gslash} \frac{dp^1 dp^2 dp^3}{p^3} \right|
\lesssim \frac{a^{1/2} \Gamma}{|u|^2} \int_{\mathcal{P}_{x}} |\varphi|^2 \sqrt{\det \gslash} \frac{dp^1 dp^2 dp^3}{p^3}.
\]
Notice that the appearance of a decay term is essentially a consequence of the fact that the measure of the spheres $S_{u,\ubar}$ is strictly decreasing along the incoming direction.
\noindent Next, we estimate the term $\Gamma^3_{\mu\nu} p^\mu p^\nu / p^3$. Using the null frame decomposition of the connection coefficients, we compute
\begin{align} \notag 
\Gamma^3_{\mu\nu} p^\mu p^\nu 
&= \Gamma^3_{44} (p^4)^2 + 2\Gamma^3_{43} p^4 p^3 + 2\Gamma^3_{4A} p^4 p^A + \Gamma^3_{33} (p^3)^2 + 2\Gamma^3_{3A} p^3 p^A + \Gamma^3_{AB} p^A p^B \\
&= 2\omegabar (p^3)^2 + (\eta_A - \etabar_A) p^A p^3 + \chi_{AB} p^A p^B.
\end{align}
Therefore, applying the pointwise estimates for Ricci coefficients, we obtain
\[
\left| \Gamma^3_{\mu\nu} p^\mu p^\nu \right| \lesssim \frac{a \Gamma}{|u|^3} + \frac{a^{1/2} \Gamma}{|u|^2}.
\]

\noindent Substituting these bounds into \eqref{eq:A_derivative}, we conclude that
\begin{align}
\frac{d}{ds} A(s) 
&\leq 2 \int_{\mathcal{P}_{x}} \varphi X[\varphi] \sqrt{\det \gslash} \frac{dp^1 dp^2 dp^3}{p^3} + \frac{a^{1/2} \Gamma}{|u|^2} A(s) + \frac{a \Gamma}{|u|^3} A(s).
\end{align}
Applying Grönwall's inequality and invoking Proposition~\ref{integrate}, together with the bootstrap assumptions and the smoothness of $\varphi$, we deduce that
\[
A(s) \lesssim A(s_\infty) + \int_{s_\infty}^{s} \left( \int_{\mathcal{P}_{x}} \varphi X[\varphi] \sqrt{\det \gslash} \frac{dp^1 dp^2 dp^3}{p^3} \right) ds'.
\]
The estimates for the associated functionals $B(s)$ and $C(s)$ proceed analogously and are omitted for brevity.
\end{proof}

\noindent Now we want to apply these transport inequalities systematically to control up to three derivatives of the Vlasov distribution function. Before moving to the estimates, we recall one more remark.
\begin{remark}
 One can bound the spacetime (or over $S_{u,\ubar}$) $L^{2}$-norm of the Vlasov stress-energy tensor components by the $L^{2}_{x}L^{2}_{\mathcal{P}_{x}}$ norm of the same tensor components, using H\"older's inequality and the decay properties of the momentum support obtained in subsection \ref{subsectiondecaymomentum}. Indeed, there holds  
 \begin{align}
&\big\lVert\int_{\mathcal{P}_{x}}\varphi \sqrt{\det(\gslash)}\frac{dp^{1}dp^{2}dp^{3}}{p^{3}} \big\rVert^{2}_{L^{2}(S_{u,\ubar})}\nonumber \\ =& \int_{S_{u,\ubar}} \nonumber   \left(\int_{\mathcal{P}_{x}}\varphi \sqrt{\det(\gslash)}\frac{dp^{1}dp^{2}dp^{3}}{p^{3}} \right)\left(\int_{\mathcal{P}_{x}}\varphi \sqrt{\det(\gslash)}\frac{dp^{1}dp^{2}dp^{3}}{p^{3}}\right)\sqrt{\det(\gslash)}\text{d}\theta^1\text{d}\theta^2.
\end{align}
Now we can use Cauchy-Schwarz for the integral to get: 
\begin{align}\nonumber 
\int_{\mathcal{P}_{x}}\varphi \sqrt{\det(\gslash)}\frac{dp^{1}dp^{2}dp^{3}}{p^{3}}\leq& \left(\int_{\mathcal{P}_{x}}\varphi^{2}\sqrt{\det(\gslash)}\frac{dp^{1}dp^{2}dp^{3}}{p^{3}}  \right)^{\frac{1}{2}}\left(\int_{\mathcal{P}_x}  \sqrt{\det(\gslash)}\frac{dp^{1}dp^{2}dp^{3}}{p^{3}}  \right)^{\frac{1}{2}}\\\nonumber 
\lesssim& \frac{1}{|u|} \left(\int_{\mathcal{P}_x}\varphi^{2}\sqrt{\det(\gslash)}\frac{dp^{1}dp^{2}dp^{3}}{p^{3}}  \right)^{\frac{1}{2}}
\end{align}
Therefore, we have under the decay of the momentum support \eqref{momentum},
\begin{eqnarray}
||\int_{P}\varphi \sqrt{\det(\gslash)}\frac{dp^{1}dp^{2}dp^{3}}{p^{3}} ||^{2}_{L^{2}(S_{u,\ubar})}\lesssim \frac{1}{|u|^{2}}\int_{S_{u,\ubar}}\int_{p}  \varphi^{2}\sqrt{\det(\gslash)}\frac{dp^{1}dp^{2}dp^{3}}{p^{3}}.    
\end{eqnarray}
\end{remark}
Now we state one of the most vital aspects of the Vlasov estimates, namely that of the expressions for the commutators $[X,V]$ for each of the $V-$vector fields.
\begin{proposition}
\label{commutator}
Recall the following set of vector fields defined on the mass-shell 
\begin{gather}
V_{(A)}:= Hor(e_A)- \frac{p^3}{\lvert u \rvert} \partial_{\pbar^A}, \hsp V_{(3)} := Hor(e_4), \hspace{3mm} V_{(4)} := p^3 \partial_{\pbar^3} - \lvert u \rvert Hor(e_3), \hspace{3mm} V_{(4+A)}:= \frac{p^3}{\lvert u \rvert^2}\partial_{\pbar^A},
\end{gather}together with the vector field \be V_{(0)}:= \lvert u \rvert Hor(e_3). \ee 
Let $X$ be the geodesic spray. The following commutation relations hold: 
\begin{align}
[X,V_{(A)}]=& \Bigg[\big[-p^3 \hsp e_A(b^B)+\big(\Gamma^B_{A\nu}p^{\nu}+\frac{p^3}{\modu}{\delta_A}^B\big)\big]\big(\frac{\frac{1}{2}\chi_{BC}p^C}{p^3}-\etabar_B\big) \notag\\ +&\frac{1}{\modu}\big(2\hsp e_A(\log \Omega)\hsp p^3 + \Gamma^3_{A\nu}p^{\nu}\big) \big(1+\modu \frac{\eta_C p^C}{p^3}+|u|\omegabar\big) \notag \\ +& \bigg((e_A(\Gamma^3_{\mu\nu})-e_\mu (\Gamma^3_{A\nu}))p^{\mu}\hsp p^{\nu} +\Gamma^C_{\mu\nu}p^{\mu}p^{\nu}\Gamma^{3}_{A\lambda}\partial_{\pbar^C}(p^{\lambda}) + \Gamma^3_{\mu\nu}p^{\mu}p^{\nu}\Gamma^3_{A\lambda} \partial_{\pbar^3}(p^{\lambda}) \notag \\ -&\Gamma^C_{A\lambda}p^{\lambda}\Gamma^3_{\mu\nu}\partial_{\pbar^C}(p^{\mu}p^{\nu})-\Gamma^3_{A\lambda}p^{\lambda} \Gamma^3_{\mu\nu}\partial_{\pbar^3}(p^{\mu}p^{\nu}) +\frac{p^3}{\modu}\Gamma^3_{\mu\nu}\partial_{\pbar^A}(p^{\mu}p^{\nu}) \bigg)\frac{1}{p^3}\Bigg]\mathbf{V_{(0)}} \notag \\ +&\big[ -p^3 \hsp e_A(b^B)+(\Gamma^B_{A\nu}p^{\nu}+\frac{p^3}{\modu}{\delta_A}^B)    \big]  \mathbf{V_{(B)}} \notag \\ +& \Omega^{2}\bigg[\frac{2}{\Omega^{2}}(\eta_{A}+\etabar_{A})p^{4}+  \frac{1}{\Omega^2}\bigg(\big[\Gammaslash^C_{AB}p^B +\big({\chibarhat_{A}}^C + \frac{1}{2}\tildetr {\delta_A}^C\big)p^3 +{\chi_A}^C p^4\big] \frac{\gslash_{CD}p^D}{2p^3} \bigg) \notag \\ -& \frac{1}{\Omega^2}\big(\frac{1}{2}\chi_{AB}p^B -\etabar_A \hsp p^3\big)\frac{p^4}{p^3} \bigg] \mathbf{V_{(3)}} \notag \\ +& \Bigg[\big[-p^3 \hsp e_A(b^B)+\big(\Gamma^B_{A\nu}p^{\nu}+\frac{p^3}{\modu}{\delta_A}^B\big)\big]\big(\frac{\frac{1}{2}\chi_{BC}p^C}{p^3}-\etabar_B\big) \notag\\ +&\big(\chi_{AB}p^{B}-\etabar_{A}p^{3} \big) |u|\omegabar  \notag \\+& \bigg((e_A(\Gamma^3_{\mu\nu})-e_\mu (\Gamma^3_{A\nu}))p^{\mu}\hsp p^{\nu} +\Gamma^C_{\mu\nu}p^{\mu}p^{\nu}\Gamma^{3}_{A\lambda}\partial_{\pbar^C}(p^{\lambda}) + \Gamma^3_{\mu\nu}p^{\mu}p^{\nu}\Gamma^3_{A\lambda} \partial_{\pbar^3}(p^{\lambda}) \notag \\ -&\Gamma^C_{A\lambda}p^{\lambda}\Gamma^3_{\mu\nu}\partial_{\pbar^C}(p^{\mu}p^{\nu})-\Gamma^3_{A\lambda}p^{\lambda} \Gamma^3_{\mu\nu}\partial_{\pbar^3}(p^{\mu}p^{\nu}) +\frac{p^3}{\modu}\Gamma^3_{\mu\nu}\partial_{\pbar^A}(p^{\mu}p^{\nu}) \bigg)\frac{1}{p^3}\Bigg]\mathbf{V_{(4)}} \notag \\
\\ +& \bigg[ \big[-p^3 e_A(b^C)+\big(\Gamma^C_{A\nu}p^{\nu}+ \frac{p^3}{\modu}{\delta_A}^C \big) \big]\hsp \modu^2 \hsp \bigg(\frac{\Gammaslash^B_{CD}p^D}{p^3}+{\chibarhat}_C^B+\frac{1}{2}(\tr\chibar+\frac{2}{\modu}){\delta_C}^B+ \frac{{\chi_C}^B p^4}{p^3}\bigg) \notag \\ +& \big(\chi_{AB}p^{B}-\etabar_{A}p^{3} \big)\hsp \modu^2 \hsp \big(\frac{({\chibar_C}^B-e_{C}(b^{B}))\hsp p^C+2\eta^B\hsp p^4}{p^3}\big) \notag \\ +& \big(2(\eta_A+\etabar_A) p^4 + \bigg(\big[\Gammaslash^C_{AB}p^B +\big({\chibarhat_{A}}^C + \frac{1}{2}\tildetr {\delta_A}^C\big)p^3 +{\chi_A}^C p^4\big] \frac{\gslash_{CD}p^D}{2p^3} \bigg) \notag \\ -& \big(\frac{1}{2}\chi_{AB}p^B -\etabar_A \hsp p^3\big)\frac{p^4}{p^3}\big) \hsp \modu^2 \hsp \big(\frac{({\chi_C}^B)\hsp p^C}{p^3} +2\hsp \etabar^B\big) \notag \\ +& \bigg( {(e_A(\Gamma^B_{\mu\nu})-e_{\mu}(\Gamma^B_{A\nu}))\hsp p^{\mu}p^{\nu}}+\Gamma^C_{\mu\nu} \hsp p^{\mu}\hsp p^{\nu} \partial_{\pbar^C}(p^\lambda)\Gamma^B_{A\lambda} + \Gamma^3_{\mu\nu}p^{\mu}p^{\nu} \partial_{\pbar^3}(p^\lambda) \Gamma^B_{A\lambda} \notag \\ &{- {\Gamma^C_{A\lambda}p^{\lambda}\Gamma^B_{\mu\nu} \partial_{\pbar^C}(p^{\mu}p^{\nu})}}-\Gamma^3_{A\lambda}p^{\lambda}\Gamma^B_{\mu\nu}\partial_{\pbar^3}(p^{\mu}p^{\nu}) +\big({-\frac{(p^3)^2}{\modu^2}\frac{1}{\Omega^2} {\delta_A}^B} +\frac{{\delta_A}^B}{\modu}\Gamma^3_{\mu\nu}p^{\mu}p^{\nu}\big)\notag \\ &- {\frac{p^3}{\modu} \partial_{\pbar^A}(p^{\mu}p^{\nu})\Gamma^B_{\mu\nu}}\bigg)\hsp \frac{\modu^2}{p^3}\Bigg]\mathbf{V_{(4+B)}}, \notag 
\end{align}

\begin{align}\nonumber
    [X,V_{(3)}]=& \Bigg[ \big[2\eta^{A}p^{3}+\chihat^{A}_{D}p^{D}+\frac{1}{2}\tr\chi p^{A} \big]\big(\frac{\chi_{AC}p^C}{2p^3} - \etabar_A\big) \\ \nonumber +&\bigg( \left( {R^3}_{\nu 4\mu}- \Gamma^{\epsilon}_{\mu\nu}\Gamma^{3}_{4\epsilon} +\Gamma^{\epsilon}_{4\nu}\Gamma^3_{\mu \epsilon} +(\Gamma^{\epsilon}_{4\mu}-\Gamma^{\epsilon}_{\mu 4})\Gamma^3_{\epsilon \nu}\right)p^{\mu}p^{\nu} + \Gamma^C_{\mu\nu}p^{\mu}p^{\nu} \Gamma^3_{4\lambda}\partial_{\pbar^C}(p^{\lambda}) \notag \\ +&\Gamma^3_{\mu\nu}p^{\mu}p^{\nu} \Gamma^3_{4\lambda}\partial_{\pbar^3}(p^{\lambda})  -\Gamma^C_{4\lambda}\hsp \Gamma^3_{\mu\nu}\hsp p^{\lambda}\hsp \partial_{\pbar^C}(p^{\mu}p^{\nu}) -\Gamma^3_{4\lambda}\hsp \Gamma^3_{\mu\nu}\hsp p^{\lambda}\hsp \partial_{\pbar^3}(p^{\mu}p^{\nu})\bigg)\frac{1}{p^3}\Bigg] \bf{V_{(0)}} \\ \nonumber +& \big[2\eta^{B}p^{3}+\chihat^{B}_{A}p^{A}+\frac{1}{2}\tr\chi p^{B}  \big] \bf{V_{(B)}} \\ \nonumber +&\bigg[-(\eta_{A}+\etabar_{A})p^{A}-[\omegabar-\omegabar(1-\Omega^{-2})]p^{3}+(\chi^{C}_{B}p^{B}\frac{\gslash_{CD}p^{D}}{2p^{3}}+2\etabar_{D}p^{D})   \bigg] \bf{V_{(3)}} \\ \nonumber +& \Bigg[   \big[ 2\eta^{B}p^{3}+\chihat^{B}_{A}p^{A}+\frac{1}{2}\tr\chi p^{B}\big]\big(\frac{\chi_{AC}p^C}{2p^3} - \etabar_A\big) \\ \nonumber +&\bigg( \left( {R^3}_{\nu 4\mu}- \Gamma^{\epsilon}_{\mu\nu}\Gamma^{3}_{4\epsilon} +\Gamma^{\epsilon}_{4\nu}\Gamma^3_{\mu \epsilon} +(\Gamma^{\epsilon}_{4\mu}-\Gamma^{\epsilon}_{\mu 4})\Gamma^3_{\epsilon \nu}\right)p^{\mu}p^{\nu} + \Gamma^C_{\mu\nu}p^{\mu}p^{\nu} \Gamma^3_{4\lambda}\partial_{\pbar^C}(p^{\lambda}) \notag \\ +&\Gamma^3_{\mu\nu}p^{\mu}p^{\nu} \Gamma^3_{4\lambda}\partial_{\pbar^3}(p^{\lambda})  -\Gamma^C_{4\lambda}\hsp \Gamma^3_{\mu\nu}\hsp p^{\lambda}\hsp \partial_{\pbar^C}(p^{\mu}p^{\nu}) -\Gamma^3_{4\lambda}\hsp \Gamma^3_{\mu\nu}\hsp p^{\lambda}\hsp \partial_{\pbar^3}(p^{\mu}p^{\nu})\bigg)\frac{1}{p^3}\Bigg] \bf{V_{(4)}} \\ \nonumber +& \Bigg[ \big[ 2\eta^{B}p^{3}+\chihat^{B}_{A}p^{A}+\frac{1}{2}\tr\chi p^{B}  \big] \modu^2 \big( \frac{\Gammaslash^B_{AC}p^C}{p^3}+{\chibarhat_A}^B +\frac{1}{2}\tildetr {\delta_A}^B+ {\chi_A}^B \frac{p^4}{p^3}\big)\\ \nonumber +& \big[-(\eta_{A}+\etabar_{A})p^{A}-[\omegabar-\omegabar(1-\Omega^{-2})]p^{3}+(\chi^{C}_{B}p^{B}\frac{\gslash_{CD}p^{D}}{2p^{3}}+2\etabar_{D}p^{D}) \big] |u|^{2}(\frac{\chi^{A}_{B}p^{B}}{p^{3}}+2\etabar^{A})
    \\ \nonumber +& \bigg(  \left( {R^B}_{\nu 4\mu}- \Gamma^{\epsilon}_{\mu\nu}\Gamma^{B}_{4\epsilon} +\Gamma^{\epsilon}_{4\nu}\Gamma^B_{\mu \epsilon} +(\Gamma^{\epsilon}_{4\mu}-\Gamma^{\epsilon}_{\mu 4})\Gamma^B_{\epsilon \nu}\right)p^{\mu}p^{\nu} + \Gamma^C_{\mu\nu}p^{\mu}p^{\nu} \Gamma^B_{4\lambda}\partial_{\pbar^C}(p^{\lambda}) \notag \\ +&\Gamma^3_{\mu\nu}p^{\mu}p^{\nu} \Gamma^B_{4\lambda}\partial_{\pbar^3}(p^{\lambda})  -\Gamma^C_{4\lambda}\hsp \Gamma^B_{\mu\nu}\hsp p^{\lambda}\hsp \partial_{\pbar^C}(p^{\mu}p^{\nu}) -\Gamma^3_{4\lambda}\hsp \Gamma^B_{\mu\nu}\hsp p^{\lambda}\hsp \partial_{\pbar^3}(p^{\mu}p^{\nu})  \bigg) \frac{\modu^2}{p^3}\Bigg]\bf{V_{(4+B)}},
\end{align}

\begin{align}\nonumber
    [X,V_{(4)}]=& \Bigg[ \modu \big[-2p^{4}(\etabar^{A}-\eta^{A})-\Gamma^{A}_{3\nu}p^{\nu}\big]\big(\frac{\chi_{AC}p^C}{2p^3} - \etabar_A\big) \\ \nonumber +& \big(1+\modu \frac{\eta_C p^C}{p^3}+|u|\omegabar\big)\big(\eta_{B}p^{B}+\omegabar p^{3}) \\ \nonumber  +&\bigg( -\Gamma^3_{\mu\nu}p^{\mu}p^{\nu} -\frac{p^3}{\Omega^2} \Gamma^3_{3\nu}p^{\nu} + \modu \big( e_{\mu}(\Gamma^3_{3\nu})p^{\mu}p^{\nu}- \Gamma^C_{\mu\nu}p^{\mu}p^{\nu} \Gamma^3_{3\lambda}\partial_{\pbar^C}(p^{\lambda}) -\Gamma^3_{\mu\nu}p^{\mu}p^{\nu}\Gamma^3_{3\lambda}\partial_{\pbar^3}(p^{\lambda})\big)\notag \\ \nonumber +& p^3\Gamma^3_{\mu\nu}\partial_{\pbar^3}(p^{\mu}p^{\nu})- \modu \big( e_3(\Gamma^3_{\mu\nu}) p^{\mu}p^{\nu}- \Gamma^C_{3\lambda}p^{\lambda} \Gamma^3_{\mu\nu}\partial_{\pbar^C}(p^{\mu}p^{\nu}) - \Gamma^3_{3\lambda}p^{\lambda}\Gamma^3_{\mu\nu}\partial_{\pbar^3}(p^{\lambda})\big)\bigg)\frac{1}{p^3}\Bigg] \bf{V_{(0)}} \\ \nonumber +& \modu \big[-2p^{4}(\etabar^{A}-\eta^{A})-\Gamma^{A}_{3\nu}p^{\nu} \big] \bf{V_{(B)}} \\ \nonumber +&\bigg[p^{4}+|u|(-\omegabar p^{4}-(\Gamma^{A}_{3B}p^{B}+2\eta^{A}p^{4})\frac{\gslash_{AD}p^{D}}{p^{3}})+\frac{|u|}{p^{3}}(\eta_{A}p^{A}+\omegabar p^{3})p^{4}   \bigg] \bf{V_{(3)}} \\ \nonumber +& \Bigg[   \modu \big[ -2p^{4}(\etabar^{A}-\eta^{A})-\Gamma^{B}_{3\nu}p^{\nu} \big] \big(\frac{\chi_{AC}p^C}{2p^3} - \etabar_A\big) \\ \nonumber &+|u|\omegabar(\etabar_{A}p^{A}+\omegabar p^{3})\\ \nonumber +&\bigg( -\Gamma^3_{\mu\nu}p^{\mu}p^{\nu} -\frac{p^3}{\Omega^2} \Gamma^3_{3\nu}p^{\nu} + \modu \big( e_{\mu}(\Gamma^3_{3\nu})p^{\mu}p^{\nu}- \Gamma^C_{\mu\nu}p^{\mu}p^{\nu} \Gamma^3_{3\lambda}\partial_{\pbar^C}(p^{\lambda}) -\Gamma^3_{\mu\nu}p^{\mu}p^{\nu}\Gamma^3_{3\lambda}\partial_{\pbar^3}(p^{\lambda})\big)\notag \\ \nonumber +& p^3\Gamma^3_{\mu\nu}\partial_{\pbar^3}(p^{\mu}p^{\nu})- \modu \big( e_3(\Gamma^3_{\mu\nu}) p^{\mu}p^{\nu}- \Gamma^C_{3\lambda}p^{\lambda} \Gamma^3_{\mu\nu}\partial_{\pbar^C}(p^{\mu}p^{\nu}) - \Gamma^3_{3\lambda}p^{\lambda}\Gamma^3_{\mu\nu}\partial_{\pbar^3}(p^{\lambda})\big)\bigg)\frac{1}{p^3}\Bigg] \bf{V_{(4)}}  \\ \nonumber +& \Bigg[ \modu \big[-2p^{4}(\etabar^{A}-\eta^{A})-\Gamma^{A}_{3\nu}p^{\nu} \big] \modu^2 \big( \frac{\Gammaslash^B_{AC}p^C}{p^3}+{\chibarhat_A}^B +\frac{1}{2}\tildetr {\delta_A}^B + {\chi_A}^B \frac{p^4}{p^3}\big)\\ \nonumber +& \bigg(\eta_{A}p^{A}+\omegabar p^{3} \bigg) \modu^3 \frac{({\chibar_C}^B-e_{C}(p^{B})) p^C + 2\eta^B p^4}{p^3} \\ \nonumber +& \big( p^{4}+|u|(-\omegabar p^{4}-(\Gamma^{A}_{3B}p^{B}+2\eta^{A}p^{4})\frac{\gslash_{AD}p^{D}}{p^{3}})+\frac{|u|}{p^{3}}(\eta_{A}p^{A}+\omegabar p^{3})\big)\modu^2 \hsp \big(\frac{{\chi_C}^B\hsp p^C}{p^3} +2\hsp \etabar^B\big) \\ \nonumber +& \bigg(  \modu ( {R^B}_{\nu \mu 3}- \Gamma^{\epsilon}_{3\nu}\Gamma^{B}_{\mu\epsilon} +\Gamma^{\epsilon}_{\mu\nu}\Gamma^B_{3 \epsilon} +(\Gamma^{\epsilon}_{\mu3}-\Gamma^{\epsilon}_{3 \mu})\Gamma^B_{\epsilon \nu}) - \modu \Gamma^C_{\mu\nu}p^{\mu}p^{\nu}\partial_{\pbar^C}(p^{\lambda}) \Gamma^B_{3\lambda}\notag \\ -& \modu \Gamma^3_{\mu\nu}p^{\mu}p^{\nu}\partial_{\pbar^3}(p^{\lambda})\Gamma^3_{3\lambda} - \frac{p^3}{\Omega^2}\Gamma^B_{3\nu}p^{\nu} + \Gamma^B_{\mu\nu}p^3 \partial_{\pbar^3}(p^{\mu}p^{\nu}) + \modu \Gamma^B_{\mu\nu} \Gamma^C_{3\lambda}p^{\lambda}\partial_{\pbar^C}(p^{\mu}p^{\nu}) \notag \\ +& \modu \Gamma^{B}_{\mu\nu}\Gamma^3_{3\lambda}p^{\lambda}\partial_{\pbar^3}(p^{\mu}p^{\nu}) \bigg) \frac{\modu^2}{p^3}\Bigg]\bf{V_{(4+B)}},
\end{align}

\begin{align}\nonumber
    [X,V_{(4+B)}]=& \Bigg[-\frac{p^3}{\modu^2} {\delta_B}^A   \big(\frac{\chi_{AC}p^C}{2p^3} - \etabar_A\big) \nonumber  \nonumber -\frac{1}{\modu^2}\Gamma^3_{\mu\nu}\partial_{\pbar^B}(p^{\mu}p^{\nu})   \Bigg] \bf{V_{(0)}} \\ \nonumber -& \frac{p^3}{\modu^2} {\delta_B}^A \bf{V_{(A)}}\\\nonumber  -&\frac{\gslash_{BC}p^C}{2\modu^2} \bf{V_{(3)}} \\ \nonumber +&\Bigg[ -\frac{p^3}{\modu^2}{\delta_B}^A \bigg(\frac{\chi_{AC}p^C}{2p^3}-\etabar_A\bigg)  -\frac{1}{\modu^2}\Gamma^3_{\mu\nu}\partial_{\pbar^B}(p^{\mu}p^{\nu})
   \Bigg] \bf{V_{(4)}} \\ \nonumber +& \Bigg[ -\frac{p^3}{\modu^2}{\delta_B}^C \hsp \modu^2 \big( \frac{\Gammaslash^A_{CD}p^D}{p^3}+{\chibarhat_C}^A+{\chi_C}^A\frac{p^4}{p^3}+ \frac{1}{2}(\tr\chibar+\frac{2}{\modu}) {\delta_C}^A \big) \\ \nonumber -& \frac{\gslash_{BC}p^C}{2}\bigg(\frac{{\chi_C}^Ap^C}{p^3}+2\etabar^A\bigg) \\ +& \bigg[{\delta_B}^A \big((-\Gamma^3_{\mu\nu}p^{\mu}p^{\nu} + \frac{2(p^3)^2}{\modu^3}\big)+\Gamma^A_{\mu\nu}p^3 \partial_{\pbar^B}(p^{\mu}p^{\nu}) \bigg]
         \Bigg]\bf{V_{(4+A)}}.
\end{align}
\end{proposition}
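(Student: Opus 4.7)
The computation is direct but combinatorially intricate. I will sketch the strategy for $V_{(A)}$ in detail; the commutators $[X,V_{(3)}], [X,V_{(4)}], [X,V_{(4+B)}]$ follow by entirely analogous reasoning, differing only in which Christoffel pairs arise and which lift (horizontal vs.\ vertical) is being differentiated.

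The plan is organized in four steps. First, I would write each vector field in the standard frame $\{e_\mu,\partial_{p^\nu}\}$ on $T\mathcal{M}$. For $w=w^{\mu}e_{\mu}\in T_x\mathcal{M}$, one has
\[
\Horxp(w)=w^{\mu}e_{\mu}-w^{\mu}p^{\nu}\Gamma^{\lambda}_{\mu\nu}\partial_{p^{\lambda}},\qquad \Verxp(w)=w^{\mu}\partial_{p^{\mu}},
\]
and $X=\Horxp(p)=p^{\mu}e_{\mu}-\Gamma^{\lambda}_{\mu\nu}p^{\mu}p^{\nu}\partial_{p^{\lambda}}$. Using the identity $\partial_{\pbar^A}=\partial_{p^A}+\tfrac{\gslash_{AB}p^B}{2p^3}\partial_{p^4}$ to convert from mass-shell coordinates to the ambient ones, we get an explicit presentation of $V_{(A)}$ in the full frame.

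Second, I would expand $[X,V_{(A)}]$ by Leibniz, using that $e_A$ annihilates the fiber coordinates $p^{\mu}$ and that $|u|$ is annihilated by $e_4$, $e_A$ and by the vertical derivatives. The $e_\mu(w^\nu)$ terms vanish, so all contributions come from (a) derivatives $e_\mu(\Gamma^\lambda_{A\nu})$ and $e_A(\Gamma^\lambda_{\mu\nu})$, (b) the torsion identities $[e_\mu,e_\nu]=(\Gamma^\lambda_{\mu\nu}-\Gamma^\lambda_{\nu\mu})e_\lambda+\ldots$, and (c) momentum-space contractions of the form $\Gamma^\lambda_{\alpha\beta}p^{\alpha}p^{\beta}\partial_{\pbar^C}(p^\mu p^\nu)$ etc.

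Third, and crucially, I would collect the connection-derivative terms into the antisymmetric combinations
\[
\big(e_A(\Gamma^{\lambda}_{\mu\nu})-e_\mu(\Gamma^{\lambda}_{A\nu})\big)p^{\mu}p^{\nu}
\]
and invoke the identity \eqref{eq:curvature_identity} to rewrite each such block as ${R^{\lambda}}_{\nu A\mu}p^{\mu}p^{\nu}$ plus a quadratic $\Gamma\Gamma$ remainder. This is precisely the mechanism described in the introduction (brackets (I) and (II)): raw derivatives of $\Gamma$ never survive in isolation, only the curvature plus harmless $\Gamma\Gamma$ combinations do. This step is what restores both integrability and regularity in the final expression.

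Finally, I would re-express the resulting vector field, which at this stage lives in $T(T\mathcal{M})$, back in the $V$-basis using the inverse relations stated in the paragraph following \eqref{tildeV} (e.g.\ $e_A=V_{(A)}+\modu^2(\ldots)V_{(4+B)}+\ldots$, $\modu e_3=V_{(0)}+\ldots$, etc.). This projection is purely algebraic but carries substantial bookkeeping; each coefficient of $V_{(0)},V_{(A)},V_{(3)},V_{(4)},V_{(4+B)}$ in the final statement is obtained by collecting the horizontal contribution, the vertical contribution, and the cross-term arising from the $-\tfrac{p^3}{|u|}\partial_{\pbar^A}$ piece of $V_{(A)}$ being commuted with $X$.

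The main obstacle is not conceptual but organizational. The key discipline, which must be maintained throughout, is to (i) never break up the antisymmetric pair in step three before applying \eqref{eq:curvature_identity}, and (ii) always pair each shift-derivative $e_A(b^B)$ with its companion $\chibar^A{}_B$ from $\Gamma^B_{3A}$ (as flagged in the bracket (II) discussion), so that no uncontrolled $\snabla b$ term is ever created. The analogous computations for $V_{(3)}$ (with role of $A$ played by $4$), $V_{(4)}$ (where the vertical piece is $p^3\partial_{\pbar^3}$ rather than a $\partial_{\pbar^A}$, and the horizontal piece is $|u|\Hor(e_3)$, producing additional $|u|\omegabar$ and $|u|\eta$ terms when $X$ hits $|u|$), and $V_{(4+B)}$ (which is purely vertical, so no antisymmetric curvature block appears—only the single-sided contraction, as reflected in the much shorter expression) follow the same template.
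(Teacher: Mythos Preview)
Your overall strategy is sound and you have correctly identified the two structural pillars of the computation: the antisymmetric curvature identity \eqref{eq:curvature_identity}, and the systematic pairing of $e_A(b^B)$ with ${\chibar_A}^B$ so that only $\Gamma^B_{3A}$ (for which there is a $\snabla_4$ equation) ever appears. The four-step outline and the remarks at the end about organizational discipline match the substance of what the paper does.

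The one genuine difference is your choice of working frame. You propose to compute directly in the non-holonomic frame $\{e_\mu,\partial_{p^\nu}\}$ on $T\mathcal{M}$, which forces you to carry the frame structure constants $[e_\mu,e_\nu]=(\Gamma^\lambda_{\mu\nu}-\Gamma^\lambda_{\nu\mu})e_\lambda$ through every commutator. The paper instead passes to the \emph{holonomic} coordinate basis $C=\{\partial_{\theta^A},\partial_u,\partial_{\ubar},\partial_{\pbar^A},\partial_{\pbar^3}\}$ on $\mathcal{P}$, where the simple identity $[X,V]^i=X(V^i)-V(X^i)$ holds with no structure-constant correction. The $C$-basis components of $X$ and each $V_{(j)}$ are written out explicitly (picking up the $b^A$ and $\Omega^{-2}$ from $e_3,e_4$), the seven component functions $[X,V_{(j)}]^i$ are computed one at a time, and only at the very end is the result pushed back to the $V$-basis via the relations you cite. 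Your route is not wrong, but the structure-constant bookkeeping is precisely what the paper's basis choice is designed to avoid; in particular, the cancellations you flag in step (ii) (e.g.\ that $e_3(b^B)$ and $e_4(\Omega)$ never survive) are more transparent in the $C$-basis because they arise simply from $X(V^i)-V(X^i)$ with coordinate derivatives, rather than from a cancellation between a derivative term and a structure-constant term.
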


%\begin{remark}
%Notice that the coefficients $|C^{A,4,4+B,0}_{A,3}|\lesssim \Gamma|u|^{-2}$, $|C^{A,4,4+B,0}_{4,0}|\lesssim \Gamma|u|^{-3}$, and $|C^{4+B}_{A,3}|\lesssim \Gamma|u|^{-1}$. Since we estimate $||u|V_{4+A}f|$, this recovers the integrability of $\frac{1}{|u|}|C^{4+B}_{A,3}|\lesssim \Gamma|u|^{-2}$.  
%\end{remark}

\begin{remark}
  It is of importance to include $[X,V_{(0)}]$, for the sake of higher derivative estimates. However, 
\be [X,V_{(0)}]=-[X,V_{(4)}]+[X,p^3\partial_{\pbar^3}]. \ee 
Therefore, we omit its explicit expression.
\end{remark}

\begin{proof}
The proof is a consequence of direct calculations, keeping careful track of all the cancellations. Recall \[ X= p^\mu e_{\mu}- \Gamma^A_{\mu\nu}p^{\mu}p^{\nu}\partial_{\pbar^A} - \Gamma^3_{\mu\nu}p^{\mu}p^{\nu}\partial_{\pbar^3}.  \] We perform the commutation calculations in the holonomic $C$-basis\footnote{This was deemed most practical, because in this holonomic basis the commutators enjoy the simple identity \be \notag [X,V]^{i}= X(V^i)-V(X^i), \ee where $F^i$ denotes the $i-$th component of a vector field $F$.} $\{\frac{\partial}{\partial\theta^{A}},\frac{\partial}{\partial u},\frac{\partial}{\partial\ubar},\frac{\partial}{\partial \pbar^{A}},\frac{\partial}{\partial \pbar^{3}}\}_{A=1}^{2}$ 
In this $C$-basis, the components of the vector field $X$ read\footnote{Recall the choice of frame \eqref{ourframe}.}
\be X = \big(p^3 b^A +p^A, p^3, \frac{p^4}{\Omega^2}, -\Gamma^A_{\nu\lambda}p^{\nu}p^{\lambda}, -\Gamma^3_{\nu\lambda}p^{\nu}p^{\lambda}\big). \ee
First, consider the vector field $V_{(A)}$. We have $V_{(A)}= \Hor(e_A)-\frac{p^3}{\modu}\partial_{\pbar^A},$ hence \[ V_{(A)}= \big({\delta_A}^B, 0,0, -(\Gamma^B_{A\nu}p^\nu+\frac{p^3}{\modu}{\delta_A}^B), -\Gamma^3_{A\nu}p^{\nu}\big).   \]For the first two components, there holds

\begin{align}\notag 
[X,V_{(A)}]^B =& X({\delta_A}^B) -V_{(A)}(p^3b^B+p^B) = -p^3 e_A(b^B) + \big(\Gamma^B_{A\nu}p^{\nu}+\frac{p^3}{\modu} {\delta_A}^B\big) -\Gamma^3_{A\nu}p^{\nu} b^B\\ =& \Gammaslash^B_{AC}p^C + {\chi_A}^B p^4 +\big({\chibarhat_A}^B- e_A(b^B) +\frac{1}{2}\tildetr {\delta_{A}}^B\big)p^3 - \f12 \chi_{AC}p^Cb^B +\etabar_A p^3 b^B.
\end{align} For the third component, there holds

\begin{align} [X,V_{(A)}]^3 = -\Hor(e_A)(p^3) = \Gamma^3_{A\nu}p^\nu = \chi_{AB}p^B -\etabar_A p^3. \end{align}
For the fourth component, 
\begin{align}
[X,V_{(A)}]^4 =& -\Hor(e_A)\big(\frac{p^4}{\Omega^2}\big)  = \frac{2}{\Omega^2} e_A(\log \Omega) p^4 +\frac{1}{\Omega^2}\big(\Gamma^C_{A\nu}p^{\nu} \frac{\gslash_{CD}p^D}{2p^3} -\Gamma^3_{A\nu}p^{\nu}\frac{p^4}{p^3}+\frac{p^3}{\modu}\frac{\gslash_{AB}p^B}{2p^3}\big) \notag \\ =& \frac{2}{\Omega^2}(\eta_A+\etabar_A) p^4 + \frac{1}{\Omega^2}\bigg(\big[\Gammaslash^C_{AB}p^B +\big({\chibarhat_{A}}^C + \frac{1}{2}\tildetr {\delta_A}^C\big)p^3 +{\chi_A}^C p^4\big] \frac{\gslash_{CD}p^D}{2p^3} \bigg) \notag \\ -& \frac{1}{\Omega^2}\big(\frac{1}{2}\chi_{AB}p^B -\etabar_A \hsp p^3\big)\frac{p^4}{p^3}.    \end{align}For the fifth and sixth term, we have the following:

 \begin{align} [X, Hor(e_A)-\frac{p^3}{\modu}\partial_{\pbar^A}]^{4+B}&= -X(\Gamma^B_{A\nu}p^{\nu}) +Hor(e_A)(\Gamma^B_{\mu\nu}p^{\mu}p^{\nu}) -X \left(\frac{p^3}{\lvert u \rvert}{\delta_A}^B\right) -\frac{p^3}{\modu}\partial_{\pbar^A}(\Gamma^B_{\mu\nu}p^{\mu}p^{\nu}) \notag \\ &= \underbrace{(e_A(\Gamma^B_{\mu\nu})-e_{\mu}(\Gamma^B_{A\nu}))\hsp p^{\mu}p^{\nu}}_\text{I}+\Gamma^C_{\mu\nu} \hsp p^{\mu}\hsp p^{\nu} \partial_{\pbar^C}(p^\lambda)\Gamma^B_{A\lambda} + \Gamma^3_{\mu\nu}p^{\mu}p^{\nu} \partial_{\pbar^3}(p^\lambda) \Gamma^B_{A\lambda} \notag \\ &\underbrace{-\Gamma^C_{A\lambda}p^{\lambda}\Gamma^B_{\mu\nu} \partial_{\pbar^C}(p^{\mu}p^{\nu})}_\text{II}-\Gamma^3_{A\lambda}p^{\lambda}\Gamma^B_{\mu\nu}\partial_{\pbar^3}(p^{\mu}p^{\nu}) +\left(\underbrace{-\frac{(p^3)^2}{\modu^2}{\delta_A}^B}_\text{I} +\frac{{\delta_A}^B}{\modu}\Gamma^3_{\mu\nu}p^{\mu}p^{\nu}\right)\notag \\&\underbrace{-\frac{p^3}{\modu} \partial_{\pbar^A}(p^{\mu}p^{\nu})\Gamma^B_{\mu\nu}}_\text{II}.\end{align}Each pair of terms in brackets labelled (I) and (II) leads to a cancellation that improves the order of decay. Indeed, the first term in bracket (II), is $o(\modu^{-3})-{\chibar_A}^C\hsp p^3 \hsp \Gamma^B_{\mu\nu}\hsp \partial_{\pbar^C}(p^{\mu}p^{\nu}) = -\frac{1}{2}\tr\chibar\hsp p^3\hsp \Gamma^B_{\mu\nu}\hsp \partial_{\pbar^A}({p^{\mu}p^{\nu}})+o(\modu^{-3})$. By itself, this term is only $o(\modu^{-2})$. When combined, however, with the second term in bracket $(II)$, the whole bracket becomes
 \[   (II) =   -\frac{1}{2}p^3 \hsp \big(\tr\chibar+\frac{2}{\modu}\big)\Gamma^B_{\mu\nu}\partial_{\pbar^A}(p^{\mu}p^{\nu})+ o(\modu^{-3})=o(\modu^{-3}).            \]As for the terms in bracket (I), it is here where not only the issue of integrability, but also the issue of regularity in the terms is overcome. Indeed, notice that

\begin{equation}
     e_A(\Gamma^B_{\mu\nu})-e_{\mu}(\Gamma^B_{A\nu}) = {R^B}_{\nu A \mu}- \Gamma^{\epsilon}_{\mu\nu}\Gamma^{B}_{A\epsilon} +\Gamma^{\epsilon}_{A\nu}\Gamma^B_{\mu \epsilon} +(\Gamma^{\epsilon}_{A\mu}-\Gamma^{\epsilon}_{\mu A})\Gamma^B_{\epsilon \nu}.
\end{equation}In other words, throughout the entire argument, \textbf{derivatives of Christoffel symbols will always appear in differences that give rise to Riemann curvature components (up to terms of the form $\Gamma\Gamma$, which are harmless, both in terms of integrability and regularity)}.

\vspace{3mm}
\par\noindent As a consequence, the first term in $(I)$ is $o(\modu^{-3}) + \Gamma^{\epsilon}_{A\nu}\Gamma^B_{\mu\epsilon}p^{\mu}p^{\nu}$, where the latter fails to be $o(\modu^{-3})$ precisely when $\mu=\nu=3$. In that case, the worst term is \[   \Gamma^C_{A3}\Gamma^{B}_{3C} p^3 p^3 \hspace{1mm} \text{(summation over C=1,2)} =\frac{1}{4}(\tr\chibar)^2 (p^3)^2 =o(\modu^{-2}).   \]When combined, however, with the second term in bracket (I), the whole bracket becomes \[ (I) = \frac{1}{4} (p^3)^2 \big( \tr\chibar + \frac{2}{\modu} \big) \big(\tr\chibar - \frac{2}{\modu }\big){\delta_A}^B +o(\modu^{-3}) =o(\modu^{-3}).       \]All the other terms in the commutator are also $o(\modu^{-3})$. As a consequence \be [X,V_{(A)}]^{4+B} = o(\modu^{-3}) \hspace{3mm} \text{(in the C basis)}, \ee
which is $o(\modu^{-2})$ in the $F-$basis, where $F_{4+A}= \frac{1}{\modu}\partial_{\pbar^A}$. 

\vspace{3mm}
\noindent For the seventh term, we also have

 \begin{align} [X, Hor(e_A)-\frac{p^3}{\modu}\partial_{\pbar^A}]^{7}&= -X(\Gamma^3_{A\nu}p^{\nu}) +Hor(e_A)(\Gamma^3_{\mu\nu}p^{\mu}p^{\nu}) -\frac{p^3}{\modu}\partial_{\pbar^A}(\Gamma^3_{\mu\nu}p^{\mu}p^{\nu}) \notag \\ &= \underbrace{(e_A(\Gamma^3_{\mu\nu})-e_{\mu}(\Gamma^3_{A\nu}))\hsp p^{\mu}p^{\nu}}_\text{I}+\Gamma^C_{\mu\nu} \hsp p^{\mu}\hsp p^{\nu} \partial_{\pbar^C}(p^\lambda)\Gamma^3_{A\lambda} + \Gamma^3_{\mu\nu}p^{\mu}p^{\nu} \partial_{\pbar^3}(p^\lambda) \Gamma^3_{A\lambda} \notag \\ &\underbrace{-\Gamma^C_{A\lambda}p^{\lambda}\Gamma^3_{\mu\nu} \partial_{\pbar^C}(p^{\mu}p^{\nu})}_\text{II}-\Gamma^3_{A\lambda}p^{\lambda}\Gamma^3_{\mu\nu}\partial_{\pbar^3}(p^{\mu}p^{\nu}) \underbrace{-\frac{p^3}{\modu} \partial_{\pbar^A}(p^{\mu}p^{\nu})\Gamma^3_{\mu\nu}}_\text{II}= o(\modu^{-2}).\end{align}

\noindent This follows similarly after writing \[e_A(\Gamma^3_{\mu\nu})-e_{\mu}(\Gamma^3_{A\nu})={R^3}_{\nu A\mu} -\Gamma^{\epsilon}_{\mu\nu}\Gamma^3_{A\epsilon} +\Gamma^{\epsilon}_{A\nu}\Gamma^3_{\mu\epsilon}+(\Gamma^{\epsilon}_{A\mu}-\Gamma^{\epsilon}_{\mu A})\Gamma^3_{\epsilon\nu}.\] and estimating term by term.    We conclude that

\begin{equation} \label{VA}
\lvert X V_{A}f(x,p)\rvert = \lvert \hsp [X, V_{(A)}](f)(x,p) \rvert \leq \frac{1}{\modu^{2}}\sum_{j=1}^7 \lvert F_j(f)(x,p)\rvert \leq \frac{V}{\modu^{2}}. 
\end{equation}
For $V_{(3)} = Hor(e_4)$, we have

\be \Hor(e_4) = \big(0,0,\frac{1}{\Omega^2}, -\Gamma^A_{4\nu}p^{\nu},-\Gamma^3_{4\nu}p^{\nu}\big). \ee Hence, we estimate

\begin{align}\notag
[X,V_{(3)}]^B =& -\Hor(e_4)(p^3 b^B +p^B)= -p^3 e_4(b^B) + \Gamma^B_{4\nu}p^{\nu} +  \Gamma^3_{4\nu}p^{\nu}b^B \notag \\ =& -p^3 \big( 2(\etabar^B - \eta^B) +{\chihat_C}^{B}b^C+ \frac{1}{2}\tr\chi \hsp  b^B - {\chi_C}^B b^C \big) + {\chi_A}^B p^A +2 \etabar^B p^3 \notag \\ =& 2\eta^B \hsp p^3 +{\chihat_{A}}^B p^A +\frac{1}{2}\tr\chi \hsp p^B,
\end{align}
\begin{align}
[X,V_{(3)}]^3 = -\Hor(e_4)(p^3)= \Gamma^3_{4\nu}p^{\nu} = 0.
\end{align}For the fourth component, we have \begin{align} [X,V_{(3)}]^4 =& X\big(\frac{1}{\Omega^2}\big)  -\Hor(e_4)\big(\frac{p^4}{\Omega^2}\big) = p^A e_A\big(\frac{1}{\Omega^2}\big)+ p^3 \hsp e_3\big(\frac{1}{\Omega^2}\big) +\bcancel{p^4 e_4\big(\frac{1}{\Omega^2}\big)}\notag \\ -&\bcancel{p^4 e_4\big(\frac{1}{\Omega^2}\big)}+\frac{1}{\Omega^2}\big(\Gamma^C_{4\nu}p^\nu \frac{\gslash_{CD}p^D}{2p^3}-\Gamma^3_{4\nu}p^{\nu}\frac{p^4}{p^3}\big)= -\frac{1}{\Omega^2} (\eta_A+\etabar_A)p^A \\ -&\frac{1}{\Omega^2}\big[\omegabar-\omegabar\big(1-\frac{1}{\Omega^2}\big)\big]\hsp p^3 +\frac{1}{\Omega^2} \big({\chi_B}^C \hsp p^B \frac{\gslash_{CD}p^D}{2p^3} + 2 \etabar_D p^D\big). \end{align}

\noindent Moving on, we have

\begin{align}
[X, V_{(3)}]^{4+B} =& X(-\Gamma^B_{4\nu}p^{\nu}) + Hor(e_4)(\Gamma^B_{\mu\nu}p^{\mu}p^{\nu}) \notag \\ =& (e_4(\Gamma^B_{\mu\nu})-e_{\mu}(\Gamma^B_{4\nu}))p^{\mu}p^{\nu}+ \Gamma^C_{\mu\nu}p^{\mu}p^{\nu} \Gamma^B_{4\lambda}\partial_{\pbar^C}(p^{\lambda})+\Gamma^3_{\mu\nu}p^{\mu}p^{\nu} \Gamma^B_{4\lambda}\partial_{\pbar^3}(p^{\lambda}) \notag \\ -&\Gamma^C_{4\lambda}\hsp \Gamma^B_{\mu\nu}\hsp p^{\lambda}\hsp \partial_{\pbar^C}(p^{\mu}p^{\nu}) -\Gamma^3_{4\lambda}\hsp \Gamma^B_{\mu\nu}\hsp p^{\lambda}\hsp \partial_{\pbar^3}(p^{\mu}p^{\nu}) \notag \\ =& \left( {R^B}_{\nu 4\mu}- \Gamma^{\epsilon}_{\mu\nu}\Gamma^{B}_{4\epsilon} +\Gamma^{\epsilon}_{4\nu}\Gamma^B_{\mu \epsilon} +(\Gamma^{\epsilon}_{4\mu}-\Gamma^{\epsilon}_{\mu 4})\Gamma^B_{\epsilon \nu}\right)p^{\mu}p^{\nu} + \Gamma^C_{\mu\nu}p^{\mu}p^{\nu} \Gamma^B_{4\lambda}\partial_{\pbar^C}(p^{\lambda}) \notag \\ +&\Gamma^3_{\mu\nu}p^{\mu}p^{\nu} \Gamma^B_{4\lambda}\partial_{\pbar^3}(p^{\lambda})  -\Gamma^C_{4\lambda}\hsp \Gamma^B_{\mu\nu}\hsp p^{\lambda}\hsp \partial_{\pbar^C}(p^{\mu}p^{\nu}) -\Gamma^3_{4\lambda}\hsp \Gamma^B_{\mu\nu}\hsp p^{\lambda}\hsp \partial_{\pbar^3}(p^{\mu}p^{\nu})  =o(\modu^{-3}). \end{align}
Finally, we have

\begin{align}  [X,V_{(3)}]^{7} =& X(-\Gamma^3_{4\lambda}p^{\lambda}) +Hor(e_4)(\Gamma^3_{\mu\nu}p^{\mu}p^{\nu}) \notag \\ =& (e_4(\Gamma^3_{\mu\nu})-e_{\mu}(\Gamma^3_{4\nu}))p^{\mu}p^{\nu} +  \Gamma^C_{\mu\nu}p^{\mu}p^{\nu} \Gamma^3_{4\lambda}\partial_{\pbar^C}(p^{\lambda})+\Gamma^3_{\mu\nu}p^{\mu}p^{\nu} \Gamma^3_{4\lambda}\partial_{\pbar^3}(p^{\lambda}) \notag \\ -&\Gamma^C_{4\lambda}\hsp \Gamma^3_{\mu\nu}\hsp p^{\lambda}\hsp \partial_{\pbar^C}(p^{\mu}p^{\nu}) -\Gamma^3_{4\lambda}\hsp \Gamma^3_{\mu\nu}\hsp p^{\lambda}\hsp \partial_{\pbar^3}(p^{\mu}p^{\nu})  \notag \\  =& \left( {R^3}_{\nu 4\mu}- \Gamma^{\epsilon}_{\mu\nu}\Gamma^{3}_{4\epsilon} +\Gamma^{\epsilon}_{4\nu}\Gamma^3_{\mu \epsilon} +(\Gamma^{\epsilon}_{4\mu}-\Gamma^{\epsilon}_{\mu 4})\Gamma^3_{\epsilon \nu}\right)p^{\mu}p^{\nu} + \Gamma^C_{\mu\nu}p^{\mu}p^{\nu} \Gamma^3_{4\lambda}\partial_{\pbar^C}(p^{\lambda}) \notag \\ +&\Gamma^3_{\mu\nu}p^{\mu}p^{\nu} \Gamma^3_{4\lambda}\partial_{\pbar^3}(p^{\lambda})  -\Gamma^C_{4\lambda}\hsp \Gamma^3_{\mu\nu}\hsp p^{\lambda}\hsp \partial_{\pbar^C}(p^{\mu}p^{\nu}) -\Gamma^3_{4\lambda}\hsp \Gamma^3_{\mu\nu}\hsp p^{\lambda}\hsp \partial_{\pbar^3}(p^{\mu}p^{\nu}).  \end{align}

\noindent Moving on to $V_{(4)}$, we have
\be V_{(4)}= \big(-\modu b^A, -\modu, 0, \modu \Gamma^A_{3\nu}p^{\nu}, \modu \Gamma^3_{3\nu}p^{\nu}+p^3\big). \ee Therefore,
\begin{align}
[X,V_{(4)}]^B =& X(-\modu b^B)-p^3 \partial_{\pbar^3}(p^3 b^B+p^B) +\modu \notag \Hor(e_3)(p^3 b^B+p^B) \\ =& -p^3 e_3(\modu b^B) -\modu p^4 e_4(b^B) - p^3 b^B +\modu p^3 e_3(b^B) -\modu (\Gamma^B_{3\nu}p^{\nu}+ \Gamma^3_{3\nu}p^{\nu}b^B) \notag \\ =& -2 \modu p^4(\etabar^B-\eta^B) -\modu (\Gamma^B_{3\nu}p^{\nu}+\Gamma^3_{3\nu}p^{\nu}b^B).
\end{align}Note two important things here. The first is the cancellation of the terms $-p^3 e_3(\modu b^B)-p^3 b^B +\modu p^3 e_3(b^B)$. It is never the case that either one of $e_3(b^B)$ or $e_4(\Omega)$, objects for which we have no structure equations and whose appearance would create a loss of derivatives, can be part of a commutation term. Finally, note that $\Gamma^B_{3A}$ appears whole and $e_A(b^B)$ does not appear by itself, as was clearly promised in the introduction. In addition, the lower order term $\Gamma^{3}_{3\nu}p^{\nu}b^{B}$ containing the shift vector field cancels out while expressed in the $V$ basis. We stress this point that working with the appropriate basis is extremely crucial. Moving on, we have

\begin{align}\notag 
[X,V_{(4)}]^3 =& X(-\modu)-(p^3\partial_{\pbar^3}-\modu\Hor(e_3))(p^3) = p^3-p^3 +\modu \Hor(e_3)(p^3) \\ =& \modu \Gamma^3_{3\nu}p^{\nu} = \modu \eta_A p^A + \modu\hsp  \omegabar \hsp p^3,
\end{align}

\begin{align}
[X,V_{(4)}]^4 =& -(p^3 \partial_{\pbar^3}-\modu\Hor(e_3))\big(\frac{p^4}{\Omega^2}\big) = \frac{p^4}{\Omega^2} +\frac{\modu}{\Omega^2}\bigg[ -\omegabar \hsp p^4 -(\Gamma^A_{3B}p^B +2 \eta^A p^4)\frac{\gslash_{AD}p^D}{2p^3}\bigg]\notag \\ +&\frac{\modu}{\Omega^2} (\eta_A p^A +\omegabar \hsp p^3)\cdot \frac{p^4}{p^3}.
\end{align}
For the next two components, we have

\begin{align} [X, V_{(4)}]^{4+B} =& X(\modu \Gamma^B_{3\nu}p^{\nu}) - \Gamma^B_{\mu\nu}p^3 \partial_{\pbar^3}(p^{\mu}p^{\nu}) + \modu e_3(\Gamma^B_{\mu\nu})p^{\mu}p^{\nu}\notag \\ -& \modu \Gamma^B_{\mu\nu} \Gamma^C_{3\lambda}p^{\lambda} \partial_{\pbar^C}(p^{\mu}p^{\nu}) - \modu \Gamma^B_{\mu\nu} \Gamma^3_{3\lambda}p^{\lambda} \partial_{\pbar^3}(p^{\mu}p^{\nu}) \notag \\=& \modu \big( \left(e_{\mu}(\Gamma^B_{3\nu})-e_3(\Gamma^B_{\mu\nu})\right) p^{\mu}p^{\nu} -\Gamma^C_{\mu\nu}p^{\mu}p^{\nu}\partial_{\pbar^C}\left(p^{\lambda}\right) \Gamma^B_{3\lambda} -\Gamma^3_{\mu\nu}p^{\mu}p^{\nu}\partial_{\pbar^3}\left(p^{\lambda} \right) \Gamma^3_{3\lambda}\big)
\notag \\ \notag -& \frac{p^3}{\Omega^2}\Gamma^B_{3\nu}p^{\nu} + \Gamma^B_{\mu\nu}p^3 \partial_{\pbar^3}(p^{\mu}p^{\nu}) + \modu \Gamma^B_{\mu\nu} \left(\Gamma^C_{3\lambda}p^{\lambda}\partial_{\pbar^C}(p^{\mu}p^{\nu}) +\Gamma^3_{3\lambda}p^{\lambda}\partial_{\pbar^3}(p^{\mu}p^{\nu})\right)\\ =& \modu ( {R^B}_{\nu \mu 3}- \Gamma^{\epsilon}_{3\nu}\Gamma^{B}_{\mu\epsilon} +\Gamma^{\epsilon}_{\mu\nu}\Gamma^B_{3 \epsilon} +(\Gamma^{\epsilon}_{\mu3}-\Gamma^{\epsilon}_{3 \mu})\Gamma^B_{\epsilon \nu}) - \modu \Gamma^C_{\mu\nu}p^{\mu}p^{\nu}\partial_{\pbar^C}(p^{\lambda}) \Gamma^B_{3\lambda}\notag \\ -& \modu \Gamma^3_{\mu\nu}p^{\mu}p^{\nu}\partial_{\pbar^3}(p^{\lambda})\Gamma^3_{3\lambda} - \frac{p^3}{\Omega^2}\Gamma^B_{3\nu}p^{\nu} + \Gamma^B_{\mu\nu}p^3 \partial_{\pbar^3}(p^{\mu}p^{\nu}) + \modu \Gamma^B_{\mu\nu} \Gamma^C_{3\lambda}p^{\lambda}\partial_{\pbar^C}(p^{\mu}p^{\nu}) \notag \\ +& \modu \Gamma^3_{3\lambda}p^{\lambda}\partial_{\pbar^3}(p^{\mu}p^{\nu}),
\end{align}

\begin{align}
[X, V_{(4)}]^7 =& X(p^3 +\modu \Gamma^3_{3\nu}p^{\nu}) +p^3\partial_{\pbar^3}(\Gamma^3_{\mu\nu}p^{\mu}p^{\nu}) -\modu Hor(e_3)(\Gamma^3_{\mu\nu}p^{\mu}p^{\nu}) \notag \\ =& -\Gamma^3_{\mu\nu}p^{\mu}p^{\nu} -\frac{p^3}{\Omega^2} \Gamma^3_{3\nu}p^{\nu} + \modu \big( e_{\mu}(\Gamma^3_{3\nu})p^{\mu}p^{\nu}- \Gamma^C_{\mu\nu}p^{\mu}p^{\nu} \Gamma^3_{3\lambda}\partial_{\pbar^C}(p^{\lambda}) -\Gamma^3_{\mu\nu}p^{\mu}p^{\nu}\Gamma^3_{3\lambda}\partial_{\pbar^3}(p^{\lambda})\big)\notag \\ +& p^3\Gamma^3_{\mu\nu}\partial_{\pbar^3}(p^{\mu}p^{\nu})- \modu \big( e_3(\Gamma^3_{\mu\nu}) p^{\mu}p^{\nu}- \Gamma^C_{3\lambda}p^{\lambda} \Gamma^3_{\mu\nu}\partial_{\pbar^C}(p^{\mu}p^{\nu}) - \Gamma^3_{3\lambda}p^{\lambda}\Gamma^3_{\mu\nu}\partial_{\pbar^3}(p^{\lambda})\big) .
\end{align} For the last two vector fields $V_{(4+A)}= \frac{p^3}{\modu^2}\partial_{\pbar^A}$, we have

\be V_{(4+A)}= (0,0,0,\frac{p^3}{\modu^2},0), \ee hence direct computation yields

\be [X,V_{(4+A)}]^B = -\frac{p^3}{\modu^2}\partial_{\pbar^A}(p^3 b^B+p^B) = -\frac{p^3}{\modu^2}{\delta_A}^B, \ee 

\be [X,V_{(4+A)}]^3 = -\frac{p^3}{\modu^2}\partial_{\pbar^A}(p^3) =0, \ee

\be [X,V_{(4+A)}]^4 = -\frac{p^3}{\modu^2}\partial_{\pbar^A}\big(\frac{p^4}{\Omega^2}\big) = -\frac{p^3}{\modu^2}\frac{\gslash_{AD}p^D}{2\hsp \Omega^2\hsp p^3} , \ee

\be [X,V_{(4+A)}]^{4+B} = X\bigg(\frac{p^3}{\modu^2}{\delta_A}^B\bigg) +\frac{p^3}{\modu^2}\Gamma^B_{\mu\nu}\partial_{\pbar^A}(p^{\mu}p^{\nu}) = \frac{{\delta_A}^B}{\modu^2}\big(-\Gamma^3_{\mu\nu}p^{\mu}p^{\nu} +\frac{2 \hsp (p^3)^2}\modu\big) +\Gamma^B_{\mu\nu} \frac{p^3}{\modu^2}\partial_{\pbar^A}(p^{\mu}p^{\nu}).\ee     

\be [X, V_{(4+A)}]^7 = -\frac{p^3}{\modu^2}\partial_{\pbar^A}(\Gamma^3_{\mu\nu}p^{\mu}p^{\nu}) = o(\modu^{-2}). \ee
\noindent Notice the disappearance of the $\frac{\partial}{\partial u}$ component and hence no extra shift $b$ term.  Now, use the following relation between the $C$ basis (lifted up to $\mathcal{P}$) and the $V$ basis, for $A=1,2$: 
\begin{align} e_A =& V_{(A)}+\modu^2\bigg(\frac{\Gammaslash^B_{AC}p^C}{p^3}+{\chihat_{A}}^B +\frac{{\chi_A}^B p^4}{p^3}\bigg)V_{(4+B)} \notag \\ +& \frac{\modu^2}{2}\tildetr V_{(4+A)} +\bigg(\frac{1}{2}\frac{\chi_{AB}p^B}{p^3}-\etabar_A\bigg)(V_{(4)}+V_{(0)}), \end{align}
\be \modu e_3 = \big(1+\modu \frac{\eta_A p^A}{p^3}+\modu \hsp \omegabar \big)V_{(0)}+\modu^2\bigg(\frac{({\chibar_B}^A-e_B(b^A))p^B +2\eta^A \hsp p^4}{p^3}\bigg)\modu V_{(4+A)} +\modu \hsp \omegabar \hsp  V_{4}, \ee
\be
e_4 = V_{(3)} + \modu^2\left( \frac{{\chi_B}^A p^B}{p^3} + 2\hsp \etabar^A \right)V_{(4+A)},
\ee
\be
\frac{\partial}{\partial \pbar^{A}}=\frac{|u|^{2}}{p^{3}}V_{(4+A)},~\frac{\partial}{\partial \pbar^{3}}=V_{(4)}+V_{(0)}.
\ee
This completes the proof of the commutation calculus. 
\end{proof}

\begin{remark}[Appearance of curvature]
First, note the most important observation made in this study. As pointed out by Taylor \cite{taylor}, there appears to be a significant challenge in closing the regularity while deriving higher order estimates for the Vlasov distribution function $f$. Recall that the main ingredient that one possesses to estimate $f$ is the transport equation
\begin{align}
 X[f]=0.   
\end{align}
Consider estimating the third derivative of $ f$; this is the desired regularity in the present framework. Taylor \cite{taylor} observed that commutation of this equation with a vector field $V$ introduces $V[\Gamma]$, introducing $V[e(b)]$, which requires control of four derivatives of the shift vector field $b$. Consequently, this would require control of four derivatives of the connection coefficients since $b$ verifies an equation of the type 
\begin{align}
 \snabla_{4}b\sim (\eta,\etabar)+\chi\cdot b.   
\end{align}
But this is impossible to close since the best we can do for the connection coefficients is to control them on null hypersurfaces up to three derivatives by means of elliptic estimates. This appears to be an obstruction to directly commuting the transport equation, even though heuristically speaking, four derivatives of $b$ are at the same level as two derivatives of curvature-note components of $b$ are essentially metric components. A similar phenomenon occurs for the connection coefficients $\Gammaslash$ tangent to the $2-$spheres.  However, Taylor's observation ignores the full structure of the error terms-which he points out. We make the vital observation that the derivatives of the connection coefficients always appear in a combination that produces the curvature. For example, in the expression of $[X,V_{(4)}]$, one obtains $e_{\mu}(\Gamma^{3}_{3\nu})-e_{3}(\Gamma^{3}_{\mu\nu})$. Note that this combination precisely yields the curvature through the elementary identity 
\begin{align}
e_{\mu}(\Gamma^{3}_{3\nu})-e_{3}(\Gamma^{3}_{\mu\nu})=R^{3}~_{\nu \mu 3}-\Gamma^{\lambda}_{3\nu}\Gamma^{3}_{\mu\lambda}+\Gamma^{\lambda}_{\mu\nu}\Gamma^{3}_{3\lambda}+(\Gamma^{\lambda}_{\mu 3}-\Gamma^{\lambda}_{3\mu})\Gamma^{3}_{\lambda\nu}.
\end{align}
Therefore, the undesirable terms of the type  $V(e(b))$ or $V(\Gammaslash)$ do not appear, and terms of the type $\Gamma \hsp\Gamma$ enjoy a higher decay property, or more precisely, integrable decay. This is pivotal throughout the present work.
    
\end{remark}

\begin{remark}
We note another vital point about the structure of the commutators in proposition \ref{commutator}. Notice that the shift vector field $b$ always appears in the combination $e_{A}(b^{B})p^{A}+\Gamma^{B}_{3A}p^{A}$ or as $-e_{A}(b^{B})p^{A}+\Gamma^{B}_{A3}p^{A}$ (or instead of $p^A$, any other momentum variable $p^{\alpha}$ appears in multiplication). Note that the term of the first type $e_{A}(b^{B})p^{A}+\Gamma^{B}_{3A}p^{A}$  simplifies to $\chibar^{B}_{A}\hsp p^A$ while the second term $-e_{A}(b^{B})p^{A}+\Gamma^{B}_{A3}p^{A}$ simply yields $\Gamma^{B}_{3A}p^{A}$. Luckily, we have separate $\snabla_{4}$ equations for the trace and tracefree part of $\Gamma^{B}_{A3}$. In addition, $\snabla_{4}b$ term can be expressed in terms of the Ricci coefficients by means of the transport equation for $b$.  
\end{remark}

\subsection{First derivative control of Vlasov}\label{firstvlasovs}
Now we prove the first-order energy estimates for the Vlasov field. Motivated by the  expression for the total first-order Vlasov norm $\mathfrak{V}_{1}$, we define certain $u-$dependent Vlasov norms. The first one is as follows:
\begin{eqnarray}
\label{eq:first} \notag 
&&\mathfrak{V}_{1}(u):=\frac{|u|^{2}}{a^{2}}\Bigg(\sum_{A=1,2}\sup_{\ubar}\sup_{S_{u,\ubar}}\int_{\mathcal{P}_{x}}|V_{A}f|^{2}\sqrt{\det\gslash}\frac{dp^{1}dp^{2}dp^{3}}{p^{3}}+\sup_{\ubar}\sup_{S_{u,\ubar}}\int_{\mathcal{P}_{x}}|V_{4}f|^{2}\sqrt{\det\gslash}\frac{dp^{1}dp^{2}dp^{3}}{p^{3}}\\\nonumber &&+\sup_{\ubar}\sup_{S_{u,\ubar}}\int_{\mathcal{P}_{x}}|V_{3}f|^{2}\sqrt{\det\gslash}\frac{dp^{1}dp^{2}dp^{3}}{p^{3}}+ \sum_{A=1,2}\sup_{\ubar}\sup_{S_{u,\ubar}}\int_{\mathcal{P}_{x}}|\modu V_{4+A}f|^{2}\sqrt{\det\gslash}\frac{dp^{1}dp^{2}dp^{3}}{p^{3}}\\\nonumber 
&&+\sup_{\ubar}\sup_{S_{u,\ubar}}\int_{\mathcal{P}_{x}}||u|V_{0}f|^{2}\sqrt{\det\gslash}\frac{dp^{1}dp^{2}dp^{3}}{p^{3}} \Bigg).  
\end{eqnarray}
In addition to these uniform norms, we also need improved $L^{2}(S_{u,\ubar})$-estimates of the $Vf$. We thus introduce:
\begin{align}
\label{eq:firstL2}
&\mathfrak{V}^{s}_{1}(u) \nonumber \\\nonumber :=&\frac{1}{a^{2}}\Bigg(\sum_{A=1,2}\sup_{\ubar}\int_{S_{u,\ubar}}\Bigg(\int_{\mathcal{P}_{x}}|V_{A}f|^{2}\sqrt{\det\gslash}\frac{dp^{1}dp^{2}dp^{3}}{p^{3}}\Bigg)dS_{u,\ubar}+\sup_{\ubar}\int_{S_{u,\ubar}}\int_{\mathcal{P}_{x}}|V_{4}f|^{2}\sqrt{\det\gslash}\frac{dp^{1}dp^{2}dp^{3}}{p^{3}}dS_{u,\ubar}\\\nonumber +&\sup_{\ubar}\int_{S_{u,\ubar}}\int_{\mathcal{P}_{x}}|V_{3}f|^{2}\sqrt{\det\gslash}\frac{dp^{1}dp^{2}dp^{3}}{p^{3}}dS_{u,\ubar}+ \sum_{A=1,2}\sup_{\ubar}\int_{S_{u,\ubar}}\int_{\mathcal{P}_{x}}|uV_{4+A}f|^{2}\sqrt{\det\gslash}\frac{dp^{1}dp^{2}dp^{3}}{p^{3}}dS_{u,\ubar}\\ 
+&\sup_{\ubar}\int_{S_{u,\ubar}}\int_{\mathcal{P}_{x}}||u|V_{0}f|^{2}\sqrt{\det\gslash}\frac{dp^{1}dp^{2}dp^{3}}{p^{3}}dS_{u,\ubar} \Bigg).    
\end{align}
Also let us define $\mathfrak{V}_{1}(u_{\infty})=\mathcal{V}^{0}$ and $\mathfrak{V}^{s}_{1}(u_{\infty})=\mathcal{V}^{0}_{s}$
\begin{remark}
Note that the assumption on the initial data in Theorem \ref{mainone} implies control of both $\mathcal{V}_{1}$ and $\mathcal{V}^{S}_{1}$ at $u=u_{\infty}$.      
\end{remark}
\begin{proposition}[First derivative estimates for the distribution function]
\label{firstvlasov}
Let \( f \colon \mathcal{P} \to \mathbb{R}_{\geq 0} \) denote the distribution function associated to a solution of the Einstein--Vlasov system on a globally hyperbolic spacetime \( (\mathcal{M}, g) \), and let \( \mathfrak{V}_{1} \) denote the $L^{2}(\mathcal{P}_{x})$-norm of the first-order geometric derivatives of \( f \) along the frame-adapted vector fields $V$ as defined in \eqref{eq:first}. Let \( \mathcal{V}^{0} \) (\( \mathcal{V}^{0}_{s} \), respectively) denote the initial norm of $\mathfrak{V}_{1}$ ($\mathfrak{V}^{s}_{1}$, respectively)  on the null hypersurface $H_{u_{\infty}}$. Then, under the bootstrap assumptions described in Section~\ref{section:s31}, the following pointwise bound holds:
\[
\mathfrak{V}_{1} \lesssim 1 + \mathcal{V}^{0}, \mathfrak{V}^{s}_{1}\lesssim \mathcal{V}^{0}_{s}.
\]
\end{proposition}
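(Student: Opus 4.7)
The plan is to use the transport inequality from Lemma \ref{lem:transport} applied to $\varphi = Vf$ for each $V \in \tilde{\mathcal{V}}$. Since $X[f]=0$, we have $X[Vf] = [X,V]f$, and Proposition \ref{commutator} gives an explicit decomposition
\[
[X,V]f = \sum_{W \in \tilde{\mathcal{V}} \cup \{p^3 \partial_{\pbar^3}\}} c_W(x,p)\, Wf,
\]
where the coefficients $c_W$ are built from Ricci coefficients, Weyl curvature components, and momentum factors. First I would verify, using the momentum decay bounds of Proposition \ref{momentum} and the scale-invariant $\mathcal{L}^\infty_{(sc)}(S)$ bounds on Ricci coefficients from Section \ref{section0infty} (together with the improved estimates for $\chihat, \chibarhat, \tr\chibar, \etabar$), that each coefficient $c_W$ — once weighted by the appropriate power of $|u|/a$ assigned in the definition \eqref{eq:first} of $\mathfrak{V}_1$ — decays at worst like $a^{1/2}|u|^{-2}$, an integrable rate in $u$.

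Next, I would apply \eqref{eq:transport-A} along the geodesic flow, Cauchy--Schwarz in $\mathcal{P}_x$, and the bound above to obtain a schematic inequality of the form
\[
\bigl(\mathfrak{V}_1(u)\bigr)^{1/2} \lesssim \bigl(\mathfrak{V}_1(u_\infty)\bigr)^{1/2} + \int_{u_\infty}^{u} \frac{C(\Gamma,V)}{|u'|^2}\, \bigl(\mathfrak{V}_1(u')\bigr)^{1/2} du' + (\text{curvature error}),
\]
where the curvature error collects the terms in $[X,V]f$ in which a Weyl component appears. For the components $\rho, \sigma, \beta, \betabar$, all of which satisfy $\mathcal{L}^\infty_{(sc)}(S)$ bounds propagated via $\mathcal{L}^4_{(sc)}(S)$-Sobolev inequalities from Section \ref{section40curv} (together with the $\mathcal{L}^2_{(sc)}(S)$ estimates on $\mathcal{D}(\beta, \rho, \sigma, \betabar)$), this error is also integrable. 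Gr\"onwall in $u$ then closes the bootstrap $\mathfrak{V}_1 \le V$ and yields $\mathfrak{V}_1 \lesssim 1 + \mathcal{V}^0$. The $\mathfrak{V}_1^s$ estimate follows the same scheme, except that the outer $\sup_{S_{u,\ubar}}$ is replaced by $\int_{S_{u,\ubar}}$ and one applies \eqref{eq:transport-B} instead of \eqref{eq:transport-A}; after Cauchy--Schwarz, curvature coefficients that cannot be put in $L^\infty(S)$ (principally the $\alpha$-component through $R^{B}{}_{\nu 4 \mu}$) must be placed in $L^2(S_{u,\ubar})$, which leads naturally to a null-hypersurface bound through $\int_{u_\infty}^{u}|u'|^{-2}\scaletwoSuprime{\alpha}\,du' \lesssim \al/|u|$ after H\"older in $u'$.

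The main obstacle will be the treatment of the Weyl curvature terms $\alpha, \beta$ arising in the $V_{(4+B)}$-component of the commutators $[X, V_{(A)}]$ and $[X, V_{(3)}]$, which are the slowest-decaying. For $\alpha$, only a null-hypersurface norm $\mathcal{L}^2_{(sc)}(H_u)$ is available, so any term involving $\alpha$ must appear multiplied by a $V_j f$ whose $L^2(\mathcal{P}_x)$-norm can be taken out of the $u'$-integral (e.g.\ via the sup over $\ubar$ already present in the norm) so that the remaining $u'$-integral is $\int_{u_\infty}^{u} |u'|^{-2}\scaletwoSuprime{\alpha}\,du'$, which is bounded by the Cauchy--Schwarz inequality and the $\mathcal{R}_0[\alpha]$ norm. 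Crucially, the structural cancellations highlighted in Proposition \ref{commutator} — namely the fact that bare $e_A(b^B)$ terms and bare derivatives of Christoffel symbols never survive, but always organize into the tensorial $\Gamma^B_{3A}$ combinations and Riemann curvature differences ${R^B}{}_{\nu C \mu} - \Gamma\Gamma$ — ensure that the slowest momentum factors appearing alongside $\alpha$ are only $p^A p^3$ rather than $p^3 p^3$ or worse, giving just enough $|u|^{-2}$ decay through the momentum support bounds to make the error integrable.

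Finally, I would verify that the contribution from the volume distortion term in Lemma \ref{lem:transport} (the $(\mathrm{tr}\chibar + 2/|u|)p^3$ cancellation and lower-order $a^{1/2}\Gamma/|u|^2$ remainder) contributes only to the Gr\"onwall factor, and hence does not enter the size of $\mathfrak{V}_1$. Thus the final bounds $\mathfrak{V}_1 \lesssim 1 + \mathcal{V}^0$ and $\mathfrak{V}_1^s \lesssim \mathcal{V}_s^0$ will follow.
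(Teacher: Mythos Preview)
Your overall scheme matches the paper: apply Lemma~\ref{lem:transport} with $\varphi=Vf$, expand $X[Vf]=[X,V]f$ in the $V$-basis via Proposition~\ref{commutator}, verify that the structural cancellations (the $e_A(b^B)\to\Gamma^B_{3A}$ reorganization and the curvature identity for antisymmetrized derivatives of Christoffel symbols) force every error coefficient to decay at the integrable rate $\sim a^{1/2}(\Gamma,\mathcal{R})|u|^{-2}$, and close by bootstrap and Gr\"onwall after the change of variable $s\mapsto u$.

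There is, however, a concrete error in your handling of $\alpha$. You assert that ``for $\alpha$, only a null-hypersurface norm $\mathcal{L}^2_{(sc)}(H_u)$ is available,'' and propose to control $\int_{u_\infty}^u|u'|^{-2}\scaletwoSuprime{\alpha}\,du'$ by $\mathcal{R}_0[\alpha]$ after H\"older in $u'$. Both points are wrong. First, the whole reason the paper works at \emph{two} derivatives of curvature (see the discussion in Section~1.7) is precisely to secure $L^\infty(S)$ control on \emph{every} Weyl component, $\alpha$ included: from $\|\mathcal{D}^2\alpha\|_{\mathcal{L}^2_{(sc)}(H)}$ one obtains $\|\mathcal{D}\alpha\|_{\mathcal{L}^4_{(sc)}(S)}$ by the codimension-$1$ trace inequality, and then $\|\alpha\|_{\mathcal{L}^\infty_{(sc)}(S)}$ by Sobolev. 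The paper's proof accordingly records the pointwise bound $|\alpha|\lesssim a^{1/2}\mathcal{R}/|u|$ in the same table as all the other Ricci and Weyl estimates, and uses it directly to conclude $|C^J_I|\lesssim a^{1/2}(\Gamma,\mathcal{R})|u|^{-2}$. Second, even if only $\mathcal{L}^2_{(sc)}(H_u)$ were at hand, your workaround would fail: $\mathcal{R}_0[\alpha]=\tfrac{1}{\al}\scaletwoHu{\alpha}$ is an integral in $\ubar$ along $H_u$, whereas the transport integral runs in $u'$; controlling the latter would require an $\Hbar_{\ubar}$-norm of $\alpha$, which is not available. The fix is simply to use the pointwise $L^\infty(S)$ bound that the two-derivative framework already supplies, exactly as the paper does.
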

\begin{proof}
We prove these estimates by using the transport inequality \ref{lem:transport} on the mass-shell. But before doing so, let us make the following bootstrap assumption on the Vlasov field. Assume that $\mathfrak{V}_{1}(u)$ and $\mathfrak{V}^{s}_{1}(u)$ verifies the following bootstrap assumptions 
\begin{align}
\label{eq:intboot1}
 \mathfrak{V}_{1}\leq \mathcal{V},~\mathfrak{V}^{s}_{1}\leq \mathcal{V}_{1}   
\end{align}
where $\mathcal{V}$ ($\mathcal{V}_{1}$, respectively) is large enough so that $\mathcal{V}^{0}\ll \mathcal{V}$ ($\mathcal{V}^{0}_{s}\ll \mathcal{V}_{1}$, respectively) but also $\mathcal{V}\ll a^{\frac{1}{10}}$ ($\mathcal{V}_{1}\ll a^{\frac{1}{10}}$, respectively). We only consider the cases for the vector fields that contain terms that are borderline dangerous. The estimates for the remaining vector fields follow in the exact same manner. More precisely, we consider one estimate in the unweighted category (e.g., for the vector field $V_{(A)}$) and one estimate for the weighted category (e.g., for the vector field $V_{(4+A)}$). Now we use the transport inequality from Lemma \ref{lem:transport} to the entity $\frac{a^{2}}{|u|^{2}}\mathfrak{V}_{1}$. First, consider $\mathfrak{V}^{1,2}_{1}:=\sum_{A=1,2}\int_{\mathcal{P}_{x}}|V_{A}f|^{2}\sqrt{\det\gslash}\frac{dp^{1}dp^{2}dp^{3}}{p^{3}}$. Now recall the transport lemma \ref{lem:transport} and set $A(s):=\mathfrak{V}^{1,2}_{1}(s)$ to yield 
\begin{align}
\mathcal{V}^{1,2}_{1}(s)\lesssim \mathcal{V}^{1,2}_{1}(s_{\infty})+\int_{s_{\infty}}^{s}\Bigg(\int_{p}V_{A}f X[V_{A}f]\sqrt{\det(\gslash)}\frac{dp^{1}dp^{2}dp^{3}}{p^{3}}\Bigg)ds^{'}      
\end{align}
and estimate the error term involving $X[V_{A}f]$. \begin{eqnarray}
[X,V_{(A)}]f=\sum_{J=0}^{6}C^{J}_{A}V_{J}f    
\end{eqnarray}
The coefficients $C^{J}_{A}$ of the error terms can be obtained from the proposition \ref{commutator}. First, recall the expression for $C^{0}_{B},~B=1,2$
\begin{align}
C^{0}_{B}&=\big[-p^3 \hsp e_A(b^B)+\big(\Gamma^B_{A\nu}p^{\nu}+\frac{p^3}{\modu}{\delta_A}^B\big)\big]\big(\frac{\frac{1}{2}\chi_{BC}p^C}{p^3}-\etabar_B\big) \notag\\ +&\frac{1}{\modu}\big(2\hsp e_A(\log \Omega)\hsp p^3 + \Gamma^3_{A\nu}p^{\nu}\big) \big(1+\modu \frac{\eta_C p^C}{p^3}+|u|\omegabar\big) \notag \\ +& \bigg((e_A(\Gamma^3_{\mu\nu})-e_\mu (\Gamma^3_{A\nu}))p^{\mu}\hsp p^{\nu} +\Gamma^C_{\mu\nu}p^{\mu}p^{\nu}\Gamma^{3}_{A\lambda}\partial_{\pbar^C}(p^{\lambda}) + \Gamma^3_{\mu\nu}p^{\mu}p^{\nu}\Gamma^3_{A\lambda} \partial_{\pbar^3}(p^{\lambda}) \notag \\ -&\Gamma^C_{A\lambda}p^{\lambda}\Gamma^3_{\mu\nu}\partial_{\pbar^C}(p^{\mu}p^{\nu})-\Gamma^3_{A\lambda}p^{\lambda} \Gamma^3_{\mu\nu}\partial_{\pbar^3}(p^{\mu}p^{\nu}) +\frac{p^3}{\modu}\Gamma^3_{\mu\nu}\partial_{\pbar^A}(p^{\mu}p^{\nu}) \bigg)\frac{1}{p^3}.
\end{align}

\noindent Fix a double null foliation $(u,\ubar,\theta^A)$ and the corresponding sphere $S_{u,\ubar}$ with induced metric $\gslash_{AB}$. We use frame vectorfields $e_3,e_4,e_A$ as in the main text. Greek indices $\mu,\nu,\dots$ run over $\{1,2,3,4\}$; capital Latin indices $A,B,C$ run over $\{1,2\}$ and are raised and lowered with $\gslash$. The momentum fibre coordinates are $(p^1,p^2,p^3)$ with $p^4=p^4(p^1,p^2,p^3)$ determined by the mass-shell constraint; we abbreviate $\pslash=(p^1,p^2)$. The momentum integration on each fibre is
\[
\int_{\mathcal{P}} \phi(p)\,dp := \int_{\mathbb R^3} \phi(p)\sqrt{\det\gslash}\,\frac{dp^1dp^2dp^3}{p^3}\,,
\]
as in the body of the paper. We assume the bootstrap/pointwise bounds on the Ricci coefficients $\psi$ and curvature components $\Psi$ recorded elsewhere.

\noindent Under the hypotheses above, there exists a finite collection of smooth scalar functions $\{f^i_j\}$ of the indicated geometric variables such that, schematically,
\begin{equation}\label{C0-decomp}
\begin{split}
C^{0}_{B}
&=\sum_{i} f^{i}_{1}(\psi)\, f^{i}_{2}(\Psi)\, f^{i}_{3}(p^{4})\, f^{i}_{4}(\pslash)\, f^{i}_{5}(\gslash)\, f^{i}_{6}(\gslash^{-1}) \\
&\quad + \;\gslash\pslash\Bigg( p^{4}\int_{P} f\,p^{3}\,dp \;+\; p^{3}\int_{P} f\,p^{4}\,dp \;+\; \pslash\int_{P} f\,\gslash\pslash\,dp \Bigg)\\
&\quad +\; p^{4}p^{3}\int_{P} f\,\gslash\pslash\,dp 
\;+\; p^{3}p^{3}\int_{P} f\,p^{4}\gslash\pslash\,dp
\;+\; \pslash\,p^{3}\int_{P} f\,\gslash\pslash\,\gslash\pslash\,dp,
\end{split}
\end{equation}
where each $f^{i}_{1}(\psi)$ is a polynomial in the Ricci coefficients
\[
\psi\in\{\chihat,\tr\chi,\chibarhat,\tr\chibar,\omegabar,\eta,\etabar\},
\]
each $f^{i}_{2}(\Psi)$ is a polynomial in the Weyl components
\[
\Psi\in\{\alpha,\beta,\rho,\sigma,\betabar,\alphabar\},
\]
and the remaining $f^{i}_{j}$ are polynomial functions of the indicated fiber or metric variables.

\noindent Moreover, each term in \eqref{C0-decomp} satisfies the pointwise decay bounds (for $|u|$ large)
\[
\big| \text{(each algebraic term in the first line)} \big| \;\lesssim\; \Gamma\,|u|^{-2}\cdot \mathcal{P}(p^3,p^4,\pslash),
\]
and every term involving a momentum integral is bounded by
\[
\big| \text{(each integral term)}\big| \;\lesssim\; \Gamma\,|u|^{-2}\cdot \mathcal{Q}(p^3,p^4,\pslash),
\]
where $\mathcal{P}$ and $\mathcal{Q}$ are explicit polynomial factors in the fiber variables which are uniformly integrable in $p$ given the decay of $f$ on the mass shell. In particular, all terms on the right-hand side of \eqref{C0-decomp} are $u$-integrable and therefore do not cause loss of regularity in the energy estimates.

Differentiate the connection coefficients in antisymmetric combinations and commute derivatives exactly as in the introduction. For example, for fixed indices $A,\mu,\nu$,
\[
e_A(\Gamma^{3}_{\mu\nu}) - e_\mu(\Gamma^{3}_{A\nu})
= R^{3}{}_{\nu A\mu}
- \Gamma^{\lambda}{}_{\mu\nu}\Gamma^{3}{}_{A\lambda}
+ \Gamma^{\lambda}{}_{A\nu}\Gamma^{3}{}_{\mu\lambda}
+(\Gamma^{\lambda}{}_{A\mu}-\Gamma^{\lambda}{}_{\mu A})\Gamma^{3}{}_{\lambda\nu}.
\]
The first term on the right is a curvature component and the remaining terms are quadratic in the connection coefficients. Using the Ricci decomposition of $R$ into Weyl plus trace parts, the curvature term is expressed as a combination of Weyl components $\Psi$ and matter stress-energy (which yields the momentum integrals $\int_P f\cdot\,$). This produces the schematic factor $f^i_2(\Psi)$ and the momentum-integral terms appearing in \eqref{C0-decomp}.

Expanding the quadratic connection terms and collecting factors yields finite sums of products, each of which is a polynomial in the Ricci coefficients $\psi$, possibly multiplied by smooth functions of $\gslash,\gslash^{-1},\pslash,p^4$. This produces the first line of \eqref{C0-decomp}.

\noindent The matter terms arising from the Ricci contraction of $R$ contribute expressions of the form (schematically)
\[
\int_P \Phi(\psi,\gslash)\, f(p)\, \mathfrak{m}(p)\,dp,
\]
where $\mathfrak m(p)=\sqrt{\det\gslash}\,p^{-3}$ is the geometric fibre measure and $\Phi$ depends smoothly on $\psi$ and $\gslash$. After pulling the fibre factors outside and expanding in $\pslash,p^3,p^4$ one obtains the integrals that appear in the second and third lines of \eqref{C0-decomp}.

\noindent The potentially non-integrable expression,
\[
-p^{3}e_A(b^B) + \Gamma^{B}{}_{A\nu}p^\nu + \frac{p^{3}}{|u|}\delta^B_A,
\]
is algebraically rearranged by expanding $\Gamma^{B}{}_{A\nu}p^\nu$ into its frame components:
\[
\Gamma^{B}{}_{A\nu}p^\nu
= \Gamma^{B}{}_{A4}p^4 + \Gammaslash^{B}{}_{AC}p^{C} + \Gamma^{B}{}_{A3}p^3.
\]
Recalling the definition
\[
\mathcal G^{A}{}_{B} := \Gamma^{A}{}_{3B} + e_{A}(b^{B}) \quad\text{(equivalently }\Gamma^{A}{}_{3B}=\Gamma^{A}{}_{B3}-e_A(b^B)\text{)},
\]
one obtains the identity
\[
-p^{3}e_A(b^B) + \Gamma^{B}{}_{A\nu}p^\nu + \frac{p^{3}}{|u|}\delta^B_A
= \Gamma^{B}{}_{A4}p^4 + \Gammaslash^{B}{}_{AC}p^C + \widehat{\mathcal G}^{B}{}_{A} + \tfrac12\Big(\tr\mathcal G + \tfrac{2}{|u|}\Big)\delta^B_A,
\]
where $\widehat{\mathcal G}=\mathcal G-\tfrac{1}{2}(\tr\mathcal G)\,\mathrm{Id}$ denotes the trace-free part of $\mathcal G$.

\noindent By the bootstrap/pointwise estimates on the Ricci coefficients, one has
\be \label{eq:cancellation}
\big|\tr\mathcal G + \tfrac{2}{|u|}\big| \;\lesssim\; \Gamma\,|u|^{-2}.
\ee
Thus the dangerous $\tfrac{p^{3}}{|u|}$ contribution is exactly cancelled, leaving only $\lesssim\Gamma|u|^{-2}$. The remaining terms inherit at least $|u|^{-2}$ decay from the bootstrap hypotheses. Therefore, every term is $u$-integrable and does not produce the non-integrable obstruction originally feared.

\noindent Collecting the estimates yields the decomposition \eqref{C0-decomp} together with the stated decay bounds. Consequently, the coefficient $C^{0}_{B}$ is composed of finitely many algebraic polynomial-type terms and momentum-integral terms, each of which decays sufficiently rapidly in $|u|$ (and is uniformly integrable in the fibre variable) to avoid loss of regularity in the subsequent energy estimates.

\noindent We need to be careful here since we can not use the scale-invariant Holder inequality in estimating the product terms, since we do not know the signature of the Vlasov momentum variables $\{p^{4},p^{3},p^{A},p^{B}\}$. Therefore, we convert the scale-invariant estimates of the Ricci coefficients and the Weyl curvature components to the unscaled form and use these to control the Vlasov norms. First, note the following for a Ricci coefficient belonging to the category $\psi_{g}$   
\begin{eqnarray}
||\omegabar||_{L^{\infty}(S_{u,\ubar})}\lesssim \frac{a\Gamma}{|u|^{3}},~ ||\tr\chi||_{L^{\infty}(S_{u,\ubar})}\lesssim \frac{\Gamma}{|u|},~||\eta||_{L^{\infty}(S_{u,\ubar})}\lesssim \frac{a^{\frac{1}{2}}}{|u|^{2}}\Gamma,~||\etabar||_{L^{\infty}(S_{u,\ubar})}\lesssim \frac{a^{\frac{1}{2}}}{|u|^{2}}\Gamma.   
\end{eqnarray}
and the following pointwise estimates (of the tensor norm) 
\begin{eqnarray}
\nonumber |\psi_{g}|\lesssim \frac{\Gamma}{|u|},~|\gslash^{-1}|\lesssim 1,~|\chibarhat|\lesssim \frac{a^{\frac{1}{2}}\Gamma}{|u|^{2}},~|\widetilde{\tr\chibar}|\lesssim \frac{\Gamma}{|u|^{2}},~|\chihat|\lesssim \frac{\Gamma a^{\frac{1}{2}}}{|u|},~|p^{4}|\lesssim \frac{1}{|u|^{2}},~|p^{3}|\lesssim 1,~|\Gammaslash|\lesssim \frac{a^{\frac{1}{2}}}{|u|^{2}},\\\nonumber 
 |\pslash|\lesssim \frac{1}{|u|},~|\gslash|\lesssim 1,~|f|\lesssim a, |p^{1,2}|\lesssim \frac{1}{|u|^{2}},~|\alpha|\lesssim \frac{\mathcal{R} a^{\frac{1}{2}}}{|u|},~|\beta|\lesssim \frac{a^{\frac{1}{2}}\mathcal{R}}{|u|^{2}},~|\betabar|\lesssim \frac{a^{\frac{3}{2}}\mathcal{R}}{|u|^{4}},~|(\rho,\sigma)|\lesssim \frac{a\mathcal{R}}{|u|^{3}},~|\alphabar|\lesssim \frac{a^{2}\mathcal{R}}{|u|^{5}}.
\end{eqnarray}
This yields the following component-wise and point-wise estimates 
\begin{align} \label{boundsbootstrapnew1}
\lvert \chihat_{AB} \nonumber\rvert \lesssim \Gamma\al\lvert u \rvert, \hspace{3mm} \lvert \tr \chi \rvert \lesssim \frac{\Gamma}{\lvert u\rvert}, \hspace{3mm} \lvert \chibarhat_{AB} \rvert \lesssim \al \Gamma , \hspace{3mm} \lvert \tr\chibar \rvert \lesssim \frac{\Gamma}{\lvert u \rvert}, \\ \lvert {\chibarhat_A}^B \rvert \lesssim \frac{\al\Gamma}{\modu^2}, \hspace{3mm} \lvert \etabar_A \rvert \lesssim \frac{\al\Gamma}{\modu}, \hspace{3mm} \lvert {\chihat_{A}}^B \rvert \lesssim \frac{\al\Gamma}{\modu},\hspace{3mm} \lvert e_A(b^B) \rvert \lesssim \frac{\Gamma}{\lvert u \rvert^2}, \\ \nonumber \lvert \eta^A \rvert \lesssim \frac{\al\Gamma}{\lvert u \rvert^3}, \hspace{3mm} \lvert \omega \rvert \lesssim \frac{\Gamma}{\modu}, |\beta_{A}|\lesssim \frac{a^{\frac{1}{2}}\mathcal{R}}{|u|},~|\alpha_{AB}|\lesssim a^{\frac{1}{2}}|u|\mathcal{R},~ |\betabar_{A}|\lesssim \frac{a^{\frac{3}{2}}\mathcal{R}}{|u|^{3}},\\
|(\rho,\sigma)|\lesssim \frac{a\mathcal{R}}{|u|^{3}},~|\alphabar_{AB}|\lesssim \frac{a^{2}\mathcal{R}}{|u|^{3}}
\end{align}
along with the basic estimates 
\be   \frac{c}{\lvert u \rvert^2} \leq  \gslash^{AB}(u,\ubar, \theta^1, \theta^2) \leq \frac{C}{\lvert u \rvert^2}, \hspace{3mm} c \hsp \lvert u \rvert^2 \leq \gslash_{AB}(u,\ubar, \theta^1, \theta^2) \leq C \lvert u \rvert^2. \ee 
We will use the component-wise estimates instead of tensor norms. This is because we estimate the components of several derivatives of the Vlasov distribution function $f$. For example, we will estimate $V_{A}V_{B}f$ component-wise (in the frame that we are explicitly working) and while the tensor norm of the tensor $\xi_{AB}:=\int_{\mathcal{P}_{x}}V_{A}V_{B}f$ would be defined as follows 
\begin{align}
 ||\xi||:=\sqrt{\gslash^{AC}\gslash^{BD}\xi_{AB}\xi_{CD}}.   
\end{align}
Using these facts, it is easy to observe that the term $\sum f^{i}_{1}(\psi)f^{i}_{2}(\Psi)f^{i}_{3}(p^{4})f^{i}_{4}(\pslash)f^{i}_{5}(\gslash)f^{i}_{6}(\gslash^{-1})$ enjoys integrable decay and more precisely verifies a bound of the following type 
\begin{align}
|\sum f^{i}_{1}(\psi)f^{i}_{2}(\Psi)f^{i}_{3}(p^{4})f^{i}_{4}(\pslash)f^{i}_{5}(\gslash)f^{i}_{6}(\gslash^{-1})|\lesssim \frac{a^{\frac{1}{2}}\Gamma}{|u|^{2}}
\end{align}
where we have noted the worst possible decay. In addition, the stress-energy tensor part of $C^{0}_{B}$ decays faster and obeys 
\begin{align} \notag
|\gslash\pslash\left(
p^{4}\int_{p}fp^{3}dp + 
p^{3}\int_{p}fp^{4}dp + 
\pslash\int_{p}f\gslash\pslash dp
\right)+ p^{4}p^{3}\int_{p}f\gslash \pslash dp 
+ p^{3}p^{3}\int_{p}f p^{4}\gslash \pslash dp 
+ \pslash p^{3}\int_{P}f\gslash \pslash \gslash\pslash dp|\\
\lesssim \frac{a}{|u|^{4}}.
\end{align}
Therefore, the decay estimate verifies by $C^{0}_{B}$ reads 
\begin{align}
 |C^{0}_{B}|\lesssim  \frac{a^{\frac{1}{2}}(\Gamma,\mathcal{R})}{|u|^{2}}.  
\end{align}
We will proceed in an exactly similar manner to extract the decay estimates for the remaining error coefficients. Recall the following expression for the error coefficient $C^{B}_{A}$ 
\begin{align}
C^{B}_{A}= -p^3 \hsp e_A(b^B)+(\Gamma^B_{A\nu}p^{\nu}+\frac{p^3}{\modu}{\delta_A}^B) 
\end{align}
which, once again, exhibits the cancellation property, yielding integrable decay as in \eqref{eq:cancellation}. Ultimately, this verifies 
\begin{align}
|C^{B}_{A}|\lesssim \frac{a^{\frac{1}{2}}\Gamma}{|u|^{2}}.    
\end{align}
This cancellation, resulting in an integrable decay feature, is ubiquitous in this study. Next we consider
\begin{align}
 C^{4}_{A}&=\big[-p^3 \hsp e_A(b^B)+\big(\Gamma^B_{A\nu}p^{\nu}+\frac{p^3}{\modu}{\delta_A}^B\big)\big]\big(\frac{\frac{1}{2}\chi_{BC}p^C}{p^3}-\etabar_B\big) \notag\\ +&\big(\chi_{AB}p^{B}-\etabar_{A}p^{3} \big) |u|\omegabar  \notag \\+& \bigg((e_A(\Gamma^3_{\mu\nu})-e_\mu (\Gamma^3_{A\nu}))p^{\mu}\hsp p^{\nu} +\Gamma^C_{\mu\nu}p^{\mu}p^{\nu}\Gamma^{3}_{A\lambda}\partial_{\pbar^C}(p^{\lambda}) + \Gamma^3_{\mu\nu}p^{\mu}p^{\nu}\Gamma^3_{A\lambda} \partial_{\pbar^3}(p^{\lambda}) \notag \\ -&\Gamma^C_{A\lambda}p^{\lambda}\Gamma^3_{\mu\nu}\partial_{\pbar^C}(p^{\mu}p^{\nu})-\Gamma^3_{A\lambda}p^{\lambda} \Gamma^3_{\mu\nu}\partial_{\pbar^3}(p^{\mu}p^{\nu}) +\frac{p^3}{\modu}\Gamma^3_{\mu\nu}\partial_{\pbar^A}(p^{\mu}p^{\nu}) \bigg)\frac{1}{p^3}.   
\end{align}
Once again notice the appearance of the factor $-p^3 \hsp e_A(b^B)+\big(\Gamma^B_{A\nu}p^{\nu}+\frac{p^3}{\modu}{\delta_A}^B\big)$ that yields cancellation and integrable decay (here it does not even require the cancellation property since it is multiplied by a factor that forces the whole expression to be integrable-however cancellation yields an improved decay rate). The derivatives of the connection produce Riemann curvature as promised  
\begin{align}
e_A(\Gamma^3_{\mu\nu})-e_\mu (\Gamma^3_{A\nu})= R^{3}~_{\nu A\mu}-\Gamma^{\lambda}_{\mu\nu}\Gamma^{3}_{A\lambda}+\Gamma^{\lambda}_{A\nu}\Gamma^{3}_{\mu\lambda}+(\Gamma^{\lambda}_{A\mu}-\Gamma^{\lambda}_{\mu A})\Gamma^{3}_{\lambda \nu}.    
\end{align}

More explicitly, recalling the notation from the expression of $C^{0}_{A}$, we write 
\begin{align}
    C^{4}_{A}&=\sum f^{i}_{1}(\psi)f^{i}_{2}(\Psi)f^{i}_{3}(p^{4})f^{i}_{4}(\pslash)f^{i}_{5}(\gslash)f^{i}_{6}(\gslash^{-1})+\gslash\pslash \left( 
    p^{4}\int_{p}f p^{3}\,dp 
  + p^{3}\int_{p}f p^{4}\,dp 
  + \pslash\int_{p}f \gslash \pslash\, dp 
\right) \nonumber \\
&+ p^{4}p^{3}\int_{p}f \gslash \pslash\,dp 
+ p^{3}p^{3}\int_{p}f p^{4} \gslash \pslash\,dp 
+ \pslash p^{3} \int_{p}f \gslash \pslash \gslash \pslash\ dp.
\end{align}
Here we note an important null structure of the Weyl curvature terms appearing in \[\sum f^{i}_{1}(\psi)f^{i}_{2}(\Psi)f^{i}_{3}(p^{4})f^{i}_{4}(\pslash)f^{i}_{5}(\gslash)f^{i}_{6}(\gslash^{-1}).\] These terms read 
\begin{align}
 \betabar_{A} p^{3}p^{3} 
+ \beta_{A} p^{4}p^{3} 
+ \beta_{A} \pslash_{C}\pslash^{C}  
+ \alpha_{AB} p^{4}\pslash^{B} 
+ \alphabar_{AB} \pslash^{B} p^{3}.   
\end{align}
The components $\alpha_{AB}$ are the most dangerous terms since they behave like $a^{\frac{1}{2}}|u|$. But note that it always appears multiplied by $p^{4}$ and $\pslash^{B}$ both of which decay like $|u|^{-2}$. On the other hand, $p^{3}$does appear with $\betabar$ and $\alphabar$, but they exhibit superior integrable decay by themselves. In summary, the connection/curvature components with less decay always appear with $p^{4}$ and/or $\pslash^{A}$ while the dangerous momentum components $p^{3}$ (that enjoys no decay) appear with connection/curvature components with superior integrable decay.  Every term, therefore, enjoys integrability. This is crucial. Stress-energy tensor components exhibit integrable decay as well. More precisely, 
\begin{align}
&\big|\gslash\pslash \left( 
    p^{4}\int_{p}f p^{3}\,dp 
  + p^{3}\int_{p}f p^{4}\,dp 
  + \pslash\int_{p}f \gslash \pslash\, dp 
\right) 
+ p^{4}p^{3}\int_{p}f \gslash \pslash\,dp 
+ p^{3}p^{3}\int_{p}f p^{4} \gslash \pslash\,dp 
+ \pslash p^{3} \int_{p}f \gslash \pslash \gslash \pslash\ dp\big|\nonumber \\  
\lesssim& \frac{a}{|u|^{4}}
\end{align}
and $C^{4}_{A}$ verifies 
\begin{align}
 |C^{4}_{A}|\lesssim \frac{a^{\frac{3}{2}}\mathcal{R}}{|u|^{3}}+\frac{a^{\frac{1}{2}}\Gamma}{|u|^{2}}.  
\end{align}
Now note the next coefficient is $C^{4+B}_{A}$. We write the complete expression because we want to point out a few vital cancellations that are responsible for yielding integrable decay. Direct computations yield 
\begin{align*}
C^{4+B}_{A} &= |u|^{2}\,\chi^{B}_{A} 
 \Big( \omegabar\,p^{4}p^{3} + \etabar\,p^{4}p^{4} + \chibar_{CD}p^{C}p^{D} \Big) \\
&\quad + |u|^{2} \Big( \etabar_{A}p^{A}p^{3} + \eta_{A}p^{A}p^{3} + \chi_{CD}p^{C}p^{D} \Big)\chibar^{B}_{A} \\
&\quad + |u|^{2} \Big( \etabar^{C}p^{4}p^{3} + \eta^{C}p^{4}p^{3} 
       + 2\chibar^{C}_{A}p^{A}p^{3} + \Gammaslash^{C}_{EF}p^{E}p^{F} \Big)\Gammaslash^{B}_{AC} \\
&\quad + |u|^{2}\Big(
     \etabar_{A}\eta^{B}p^{4}p^{3} 
   + \etabar_{A}\chi^{B}_{C}p^{4}p^{C} 
   + \chibar_{AC}\eta^{B}p^{C}p^{3} 
   + \chibar_{AC}\chi^{B}_{D}p^{C}p^{D} \Big) \\
&\quad + |u|^{2}\Big(
     \etabar_{A}\etabar^{B}p^{4}p^{3}
   + \chi_{AC}\etabar^{B}p^{4}p^{C}
   + \etabar_{A}\chibar^{B}_{C}p^{C}p^{3}
   + \chi_{AC}\chibar^{B}_{D}p^{C}p^{D} \Big) \\
&\quad + |u|^{2}\Big(
     \chibar^{C}_{A}\chibar^{B}_{C}p^{3}p^{3}
   + \chi^{C}_{A}\chi^{B}_{C}p^{4}p^{4}
   + \chi^{C}_{A}\chibar^{B}_{C}p^{4}p^{3}
   + \chibar^{C}_{A}\chi^{B}_{C}p^{4}p^{3} \Big) \\
&\quad + |u|^{2}\Big(
     \chi^{C}_{A}\Gammaslash^{B}_{DC}p^{4}p^{D}
   + \Gammaslash^{C}_{AD}\chi^{B}_{C}p^{4}p^{D}
   + \chibar^{C}_{A}\Gammaslash^{B}_{DC}p^{D}p^{C}
   + \Gammaslash^{C}_{AD}\chibar^{B}_{C}p^{D}p^{C}
   + \Gammaslash^{C}_{AD}\Gammaslash^{B}_{EC}p^{E}p^{D} \Big) \\
&\quad + |u|^{2}\Bigg[ 
   \etabar_{A}\etabar^{B}p^{4}p^{3} 
 + \etabar_{A}\chi^{B}_{C}p^{4}p^{C} 
 + \chibar_{AC}\chi^{B}_{D}p^{C}p^{D} 
 + \etabar_{A}\eta^{B}p^{4}p^{3} \\
&\qquad\quad 
 + \etabar_{A}\chibar^{B}_{C}p^{C}p^{3} 
 + \chi_{AC}\eta^{B}p^{4}p^{C} 
 + \chi_{AC}\chibar^{B}_{D}p^{C}p^{D} 
 + \chi^{C}_{A}\chi^{B}_{C}p^{4}p^{4} 
 + \chi^{C}_{A}\chibar^{B}_{C}p^{4}p^{3} \\
&\qquad\quad 
 + \underbrace{\chibar^{C}_{A}\chibar^{B}_{C}p^{3}p^{3}}_{V}
 + \chibar^{C}_{A}\Gammaslash^{B}_{CD}p^{D}p^{3} 
 + \chi^{C}_{A}\Gammaslash^{B}_{CD}p^{4}p^{D} 
 + \Gammaslash^{C}_{AD}\Gammaslash^{B}_{CE}p^{E}p^{D} \\
&\qquad\quad 
 + \etabar_{A}\etabar^{B}p^{4}p^{3} 
 + \etabar_{A}\big(\chibar^{B}_{C} - e_{C}(b^{B})\big)p^{4}p^{C} 
 + \chibar_{CA}\etabar^{B}p^{C}p^{3} 
 + \chibar_{CA}\chi^{B}_{D}p^{C}p^{D} \\
&\qquad\quad 
 + \eta_{A}\eta^{B}p^{4}p^{3} 
 + \eta^{B}\chi_{CA}p^{4}p^{C} 
 + \eta_{A}\chibar^{B}_{C}p^{C}p^{3} 
 + \chi_{CA}\Gammaslash^{B}_{CD}p^{C}p^{D} \\
&\qquad\quad 
 + \big(\chibar^{C}_{A} - e_{A}(b^{C})\big)\chi^{B}_{C}p^{4}p^{4} 
 + \big(\chibar^{C}_{A} - e_{A}(b^{C})\big)\chibar^{B}_{C}p^{4}p^{3} 
 + \chibar^{C}_{A}\chi^{B}_{C}p^{4}p^{3} \\
&\qquad\quad 
 + \big(\chibar^{C}_{A} - e_{A}(b^{C})\big)\Gammaslash^{B}_{CD}p^{4}p^{D} 
 + \chibar^{C}_{A}\Gammaslash^{B}_{CD}p^{D}p^{3} 
 + \Gammaslash^{B}_{DA}\Gammaslash^{B}_{CE}p^{E}p^{D} \\
&\qquad\quad 
 + \Big( \Gammaslash^{B}_{AC} + \tfrac{\gslash_{CD}p^{D}}{2p^{3}}\chi^{B}_{A} \Big)
   \Big( 2(\etabar^{C} + \eta^{C})p^{4}p^{3} 
       + \Gammaslash^{C}_{AB}p^{A}p^{B} 
       + 2\chibar^{C}_{A}p^{A}p^{3} \Big) \\
&\qquad\quad 
 + \Big( \chibar^{B}_{A} - \tfrac{p^{4}}{p^{3}}\chi^{B}_{A} \Big)
   \Big( \etabar_{A}p^{4}p^{A} + \zeta_{A}p^{4}p^{A} 
       + (\eta_{A}-\etabar_{A})p^{A}p^{3} 
       + \chi_{AB}p^{A}p^{B} + \omegabar p^{4}p^{3} \Big) \\
&\qquad\quad 
 + \Big( \chi^{C}_{A}p^{4} 
       + \underbrace{\chibar^{C}_{A}p^{3}}_{I} 
       + \Gammaslash^{C}_{AB}p^{B} \Big)
   \Big( \chi^{B}_{C}p^{4} 
       + \underbrace{(\chibar^{B}_{C} - e_{C}(b^{B}))p^{3}}_{II} 
       + \Gammaslash^{B}_{DC}p^{D}
       + \tfrac{\gslash_{CD}p^{D}}{2p^{3}}
         \big(\etabar^{B}p^{3} + \eta^{B}p^{3} + \chi^{B}_{C}p^{C}\big) \Big) \\
&\qquad\quad 
 + \underbrace{\tfrac{p^{3}}{|u|}}_{III}
   \Big( \chi^{B}_{A}p^{4} 
       + \underbrace{\chibar^{B}_{A}p^{3}}_{IV} 
       + \tfrac{\gslash_{AD}p^{D}}{2p^{3}}
         \big(\etabar^{B}p^{3} + \eta^{B}p^{3} + \chi^{B}_{C} \big) \Big)
\Bigg]\\
&+\bigg( {(e_A(\Gamma^B_{\mu\nu})-e_{\mu}(\Gamma^B_{A\nu}))\hsp p^{\mu}p^{\nu}}+\Gamma^C_{\mu\nu} \hsp p^{\mu}\hsp p^{\nu} \partial_{\pbar^C}(p^\lambda)\Gamma^B_{A\lambda} + \Gamma^3_{\mu\nu}p^{\mu}p^{\nu} \partial_{\pbar^3}(p^\lambda) \Gamma^B_{A\lambda} \notag \\ &{- {\Gamma^C_{A\lambda}p^{\lambda}\Gamma^B_{\mu\nu} \partial_{\pbar^C}(p^{\mu}p^{\nu})}}-\Gamma^3_{A\lambda}p^{\lambda}\Gamma^B_{\mu\nu}\partial_{\pbar^3}(p^{\mu}p^{\nu}) +\big({-\frac{(p^3)^2}{\modu^2}\frac{1}{\Omega^2} {\delta_A}^B} +\frac{{\delta_A}^B}{\modu}\Gamma^3_{\mu\nu}p^{\mu}p^{\nu}\big)\notag \\ &- {\frac{p^3}{\modu} \partial_{\pbar^A}(p^{\mu}p^{\nu})\Gamma^B_{\mu\nu}}\bigg)\hsp \frac{\modu^2}{p^3}.
\end{align*}
We first emphasize a crucial structural feature of the expression for $C^{4+B}_{A}$.  
In the expansions above, the most singular contributions arise from terms quadratic in 
$\tr\chibar$, specifically of the schematic form $\tr\chibar\cdot \tr\chibar$.  
These appear within the combinations labeled $I, II, III, IV$, and $V$.  
A direct inspection reveals that these contributions cancel \emph{exactly}.  
This cancellation is indispensable: without it, one would be left with non--integrable terms 
in the energy hierarchy, thereby obstructing any possibility of closing the estimates.  

Next, we recall that derivatives of the connection coefficients never appear in isolation but are 
always absorbed into curvature components. Indeed, for the Levi--Civita connection one has the identity
\begin{align}
e_A(\Gamma^{B}_{\mu\nu}) - e_{\mu}(\Gamma^{B}_{A\nu})
   &= R^{B}{}_{\nu A\mu}
      - \Gamma^{\lambda}_{\mu\nu}\Gamma^{B}_{A\lambda}
      + \Gamma^{\lambda}_{A\nu}\Gamma^{B}_{\mu\lambda}
      + (\Gamma^{\lambda}_{A\mu}-\Gamma^{\lambda}_{\mu A})\Gamma^{B}_{\lambda\nu}.
\end{align}
Thus, apparent derivatives of connection coefficients reorganize into curvature together with lower-order quadratic terms, thereby precluding any loss of derivatives.  

In the context of the energy estimates, it is essential to control the contribution of $C^{4+B}_{A}$ 
in a manner that ensures integrability. More precisely, since the natural quantity appearing in the 
transport operator is $|u|V_{4+A}f$, it is necessary that the rescaled coefficient $|u|^{-1}C^{4+B}_{A}$ 
is locally integrable. A detailed inspection of the structure of $C^{4+B}_{A}$, combined with the 
pointwise estimates for the Ricci coefficients and curvature components yield the bound
\begin{align}
|u|^{-1}\,\big|C^{4+B}_{A}\big|
   \;\lesssim\; \frac{a^{1/2}(\Gamma,\mathcal
   {R})}{|u|^{2}}.
\end{align}
This estimate is precisely at the threshold required to guarantee integrability along the null 
hypersurfaces and thus suffices for the closure of the weighted energy hierarchy. The last error coefficient $C^{3}_{A}$ reads 
\begin{align}
C^{3}_{A}=\bigg[2(\eta_{A}+\etabar_{A})p^{4}+  \bigg(\big[\Gammaslash^C_{AB}p^B +\big({\chibarhat_{A}}^C + \frac{1}{2}\tildetr {\delta_A}^C\big)p^3 +{\chi_A}^C p^4\big] \frac{\gslash_{CD}p^D}{2p^3} \bigg) \notag -\big(\frac{1}{2}\chi_{AB}p^B -\etabar_A \hsp p^3\big)\frac{p^4}{p^3} \bigg].    
\end{align}
This term exhibits superior integrable decay without the necessity of any cancellation and is easy to estimate. It reads
\begin{align}
|C^{3}_{A}|\lesssim \frac{a^{\frac{1}{2}}\Gamma}{|u|^{2}}.    
\end{align}
Let the hypotheses and notation of the preceding sections hold and assume the bootstrap bound displayed as \eqref{eq:intboot1}. We will prove the following bound
\begin{equation}\label{eq:V12-rescaled}
\frac{|u|^{2}}{a^{2}}\,\mathfrak V^{1,2}_1(u)
\lesssim
\frac{|u_\infty|^{2}}{a^{2}}\,\mathfrak V^{1,2}_1(u_\infty)
+ (\Gamma,\mathcal R)\,\mathcal V\,\frac{a^{1/2}}{|u|}.
\end{equation}
Consequently, under the bootstrap assumption \eqref{eq:intboot1} there exists $C'>0$ such that
\begin{equation}\label{eq:V12-unif}
\frac{|u|^{2}}{a^{2}}\,\mathfrak V^{1,2}_1(u)
\lesssim
\frac{|u_\infty|^{2}}{a^{2}}\,\mathfrak V^{1,2}_1(u_\infty)
+1
\end{equation}
We begin by collecting the error estimates obtained previously. There exists an absolute constant $C>0$ for which the following differential inequality holds (for $s$ in the corresponding parameter interval):
\begin{equation}\label{eq:err-ineq}
\mathfrak V^{1,2}_1(s)
\lesssim 
\mathfrak V^{1,2}_1(s_\infty)
+
\int_{s_\infty}^s
|\mathcal E(s')|\,ds',
\end{equation}
with
\begin{multline}\label{eq:E-def}
|\mathcal E(s')|
\lesssim \Bigg(
\frac{a^{1/2}\Gamma}{|u|^{2}}\int_{P}|\mathbf V_A f|^2
+\frac{a^{1/2}\Gamma\mathcal R}{|u|^{3}}\int_{P}\bigl|\,|u|\,\mathbf V_0 f\bigr|^2\\
+\Bigl[\frac{a^{3/2}\mathcal R}{|u|^{3}}+\frac{a^{1/2}\Gamma}{|u|^{2}}\Bigr]\int_{P}|\mathbf V_4 f|^2
+\frac{a^{1/2}\Gamma}{|u|^{2}}\int_{P}|\mathbf V_3 f|^2
+\frac{(\Gamma,\mathcal R)\,a^{1/2}}{|u|^{2}}\int_{P}\bigl|\,|u|\,\mathbf V_{4+B} f\bigr|^2
\Bigg).
\end{multline}

\noindent By the bootstrap control of $\mathfrak V_1$ we have the moment bounds (uniform in $s$)
\begin{equation}\label{eq:moments}
\int_{P}|\mathbf V_A f|^2 \lesssim \frac{a^{2}\mathcal V}{|u|^{2}},\quad
\int_{P}\bigl|\,|u|\,\mathbf V_{4+A} f\bigr|^2 \lesssim \frac{a^{2}\mathcal V}{|u|^{2}},\quad
\int_{P}\bigl|\,|u|\,\mathbf V_{0} f\bigr|^2 \lesssim \frac{a^{2}\mathcal V}{|u|^{2}},
\end{equation}
and
\[
\int_{P}|\mathbf V_4 f|^2 \lesssim \frac{a^{2}\mathcal V}{|u|^{2}},\qquad
\int_{P}|\mathbf V_3 f|^2 \lesssim \frac{a^{2}\mathcal V}{|u|^{2}}.
\]
Inserting these bounds into \eqref{eq:E-def} and collecting like terms yields (after absorbing absolute constants)
\begin{equation}\label{eq:E-simplified}
|\mathcal E(s')| \le (\Gamma,\mathcal R)\,\mathcal V\left(\frac{a^{5/2}}{|u|^{4}} + \frac{a^{7/2}}{|u|^{5}}\right).
\end{equation}
Next we change variables from $s$ to $u$ using Proposition~\ref{integrate}, which gives $\dfrac{d}{ds}=p^3\dfrac{d}{du}$. we deduce from \eqref{eq:err-ineq} and \eqref{eq:E-simplified} that
\[
\mathfrak V^{1,2}_1(u)
\lesssim
\mathfrak V^{1,2}_1(u_\infty)
+ (\Gamma,\mathcal R)\,\mathcal V
\int_{u_\infty}^{u}\!\left(\frac{a^{5/2}}{|u'|^{4}} + \frac{a^{7/2}}{|u'|^{5}}\right)\,du'.
\]
Carrying out the elementary integrals in $u'$ yields
\[
\mathfrak V^{1,2}_1(u)
\le
\mathfrak V^{1,2}_1(u_\infty)
+ (\Gamma,\mathcal R)\,\mathcal V\left(\frac{a^{5/2}}{|u|^{3}} + \frac{a^{7/2}}{|u|^{4}}\right).
\]
\noindent Multiplying both sides by $\dfrac{|u|^{2}}{a^{2}}$ we obtain
\[
\frac{|u|^{2}}{a^{2}}\,\mathfrak V^{1,2}_1(u)
\le
\frac{|u_\infty|^{2}}{a^{2}}\,\mathfrak V^{1,2}_1(u_\infty)
+ (\Gamma,\mathcal R)\,\mathcal V\left(\frac{a^{1/2}}{|u|} + \frac{a^{3/2}}{|u|^{2}}\right).
\]
Since $a$ is fixed in our regime and $|u|^{-1}\ge |u|^{-2}$ for the relevant $u$, the right-most bracket is bounded by $\lesssim \,a^{1/2}/|u|$, which proves \eqref{eq:V12-rescaled}. Finally, invoking the bootstrap assumption \eqref{eq:intboot1} to control the left-hand side uniformly allows us to absorb the remaining term and conclude \eqref{eq:V12-unif}.

Now we show the estimates for a vector field in the weighted category, namely $V_{4+A}$. Consider $V_{4+A}f$ i.e., we want to estimate 
\begin{eqnarray}
\mathfrak{V}^{4+A}_{1}:=\int_{\mathcal{P}_{x}}|V_{4+A}f|^{2}\sqrt{\det\gslash}\frac{dp^{1}dp^{2}dp^{3}}{p^{3}}.    
\end{eqnarray}
In reality, we want to estimate $|u|^{2}\mathfrak{V}^{4+A}_{1}$, but do so indirectly.
Recall lemma \ref{lem:transport} and set $A(s):=\mathfrak{V}^{4+A}_{1}$ and obtain 
\begin{align}
 \mathfrak{V}^{4+A}_{1}(s)\lesssim \mathfrak{V}^{4+A}_{1}(s_{\infty})+\int_{s_{\infty}}^{s}\Bigg(\int_{p}V_{4+A}f X[V_{4+A}f]\sqrt{\det(\gslash)}\frac{dp^{1}dp^{2}dp^{3}}{p^{3}}\Bigg)ds^{'}.   
\end{align}
We control the error term. Recall the commutation formula from proposition \ref{commutator}
\begin{align}\nonumber
    [X,V_{(4+B)}]=& \Bigg[-\frac{p^3}{\modu^2} {\delta_B}^A   \big(\frac{\chi_{AC}p^C}{2p^3} - \etabar_A\big) \nonumber  \nonumber -\frac{1}{\modu^2}\Gamma^3_{\mu\nu}\partial_{\pbar^B}(p^{\mu}p^{\nu})   \Bigg] \bf{V_{(0)}} \\ \nonumber -& \frac{p^3}{\modu^2} {\delta_B}^A \bf{V_{(A)}}\\\nonumber  -&\frac{\gslash_{BC}p^C}{2\modu^2} \bf{V_{(3)}} \\ \nonumber +&\Bigg[ -\frac{p^3}{\modu^2}{\delta_B}^A \bigg(\frac{\chi_{AC}p^C}{2p^3}-\etabar_A\bigg)  -\frac{1}{\modu^2}\Gamma^3_{\mu\nu}\partial_{\pbar^B}(p^{\mu}p^{\nu})
   \Bigg] \bf{V_{(4)}} \\ \nonumber +& \Bigg[ -\frac{p^3}{\modu^2}{\delta_B}^C \hsp \modu^2 \big( \frac{\Gammaslash^A_{CD}p^D}{p^3}+{\chibarhat_C}^A+{\chi_C}^A\frac{p^4}{p^3}+ \frac{1}{2}(\tr\chibar+\frac{2}{\modu}) {\delta_C}^A \big) \\ \nonumber -& \frac{\gslash_{BC}p^C}{2}\bigg(\frac{{\chi_C}^Ap^C}{p^3}+2\etabar^A\bigg) \\ +& \bigg[{\delta_B}^A \big((-\Gamma^3_{\mu\nu}p^{\mu}p^{\nu} + \frac{2(p^3)^2}{\modu^3}\big)+\Gamma^A_{\mu\nu}p^3 \partial_{\pbar^B}(p^{\mu}p^{\nu}) \bigg]
         \Bigg]\bf{V_{(4+A)}}.
\end{align}
This expression is relatively simple, and estimating the error coefficients is straightforward
\begin{align}
 |C^{0}_{4+B}|= \big|-\frac{p^3}{\modu^2} {\delta_B}^A   \big(\frac{\chi_{AC}p^C}{2p^3} - \etabar_A\big)  -\frac{1}{\modu^2}\Gamma^3_{\mu\nu}\partial_{\pbar^B}(p^{\mu}p^{\nu})\big|\lesssim \frac{a^{\frac{1}{2}}\Gamma}{|u|^{3}},\\
 |C^A_{4+B}| = |-\frac{p^3}{\modu^2} \delta^A_B| \lesssim \modu^{-2}, 
\qquad
|C^3_{4+B}| = |-\frac{\gslash_{BC}p^C}{2\modu^2}|\lesssim |u|^{-2},\\
|C^{4}_{4+B}|=|-\frac{p^3}{\modu^2}{\delta_B}^A \bigg(\frac{\chi_{AC}p^C}{2p^3}-\etabar_A\bigg)  -\frac{1}{\modu^2}\Gamma^3_{\mu\nu}\partial_{\pbar^B}(p^{\mu}p^{\nu})|\lesssim \frac{a^{\frac{1}{2}}\Gamma}{|u|^{3}},
\end{align}
and lastly 
\begin{align}
C^{4+A}_{4+B} &= 
- \delta_B^C \left( \frac{p^3}{\modu^2} \right)
\left(
\modu^2 \left( \frac{\Gammaslash^A_{CD}p^D}{p^3} + \chibarhat_C^A + \chi_C^A \frac{p^4}{p^3}
+ \frac{1}{2} \left( \tr\chibar + \frac{2}{\modu} \right) \delta_C^A \right)
\right) \notag \\
&\quad
- \frac{\gslash_{BC}p^C}{2} \left( \frac{(\chi_C^A - e_C(b^A))p^C}{p^3} + 2\etabar^A \right) \notag \\
&\quad
+ \delta_B^A \left( -\Gamma^3_{\mu\nu} p^\mu p^\nu + \frac{2(p^3)^2}{\modu^3} \right)
+ \Gamma^A_{\mu\nu} p^3 \, \partial_{\pbar^B}(p^\mu p^\nu).
\end{align}
We explicitly calculate
\begin{align}
\Gamma^3_{\mu\nu} p^\mu p^\nu 
= \omegabar (p^3)^2 + (\eta_{A}-\etabar_{A})p^{A}p^{3}+\chi_{AB}p^{A}p^{B}
\end{align}
and
\begin{eqnarray}
 \Gamma^{A}_{\mu\nu}p^{3}\frac{\partial}{\partial \pbar^{B}}(p^{\mu}p^{\nu})=\chi^{A}_{B}p^{4}p^{3}+\underbrace{\Bigg(\chibar^{A}_{B}-e_{A}(b^{B})\Bigg)p^{3}p^{3}}_{A}+\Gammaslash^{A}_{BC}p^{C}p^{3}+\chi^{A}_{C}P^{C}+\etabar^{A}p^{3}.   
\end{eqnarray}
There are several points to be noted here. First, note that $e_{A}(b^{B})$ appears with $\chibar^{A}_{B}$ and therefore the form $\Gamma^{B}_{3A}$ and can be controlled through the $\snabla_{4}$ evolution equation. But more importantly, in the trace-tracefree decomposition of $\Gamma^{A}_{3B}=\chibar^{A}_{B}-e_{A}(b^{B})$, the non-integrable term $\tr\chibar-e_{A}(b^{A})$ needs to be taken care of. But notice that this term appears with a good sign since $\tr\chibar-e_{A}(b^{A})=-\frac{2}{|u|}+(\tr\chibar+\frac{2}{|u|}-e_{A}(b^{A}))$ and therefore the non-integrable part in term $A$ is estimated as  
\begin{align}
\notag &\int_{\mathcal{P}}V_{4+A}f\Bigg(\tr\chibar-e_{A}(b^{A})\Bigg) p^{3}p^{3}V_{4+B}f \sqrt{\gslash}\frac{dp^{1}dp^{2}dp^{3}}{p^{3}}\\ 
 =&-\frac{2}{|u|}\int_{P}|V_{4+A}f|^{2}(p^{3})^{2}\sqrt{g}\frac{dp^{1}dp^{2}dp^{3}}{p^{3}}+\Bigg(\tr\chibar+\frac{2}{|u|}-e_{A}(b^{A})\Bigg)\int_{P}|V_{4+A}f|^{2}(p^{3})^{2}\sqrt{g}\frac{dp^{1}dp^{2}dp^{3}}{p^{3}}.
\end{align}
Finally, we collect every term, yielding
\begin{align}\notag
   \mathfrak{V}^{4+A}_{1}(s)\lesssim \mathfrak{V}^{4+A}_{1}(s_{\infty})+ &\int_{s_{\infty}}^{s}\Bigg(\frac{1}{|u|^{2}}\int_{P}|V_{A}f||V_{4+B}f|dP+\frac{a^{\frac{1}{2}}\Gamma}{|u|^{3}}\int_{P}|V_{0}f||V_{4+B}f|+\frac{a^{\frac{1}{2}}\Gamma}{|u|^{3}}\int_{P}|V_{4}f||V_{4+B}f|\\\nonumber 
&+\frac{1}{|u|^{2}}\int_{P}|V_{3}f||V_{4+B}f|+\frac{a^{\frac{1}{2}}\Gamma}{|u|^{2}}\int_{P}|V_{4+A}f|^{2}dP\Bigg)ds^{'}\\\nonumber 
&\lesssim \int_{s_{\infty}}^{s}\Bigg(\frac{1}{|u|^{2}}\int_{P}|V_{A}f||V_{4+B}f|dP+\frac{a^{\frac{1}{2}}\Gamma}{|u|^{3}}\int_{P}|V_{0}f||V_{4+B}f|+\frac{a^{\frac{1}{2}}\Gamma}{|u|^{3}}\int_{P}|V_{4}f||V_{4+B}f|\\\nonumber 
&+\frac{1}{|u|^{2}}\int_{P}|V_{3}f||V_{4+B}f|\\ 
&+\frac{1}{|u|^{3}}\int_{P}|V_{3}f||u V_{4+B}f|dP+\frac{a^{\frac{1}{2}}\Gamma}{|u|^{4}}\int_{P}|u V_{4+A}f|^{2}dP\Bigg)ds^{'}.
\end{align}
First, note that we estimate  $V_{4+A}f$ at the very end, by which time estimates have already been obtained for the rest of the vector fields i.e.,
\begin{align}
\int_{P}|V_{A}f|^{2}dp\lesssim \frac{a^{2}\mathcal{V}^{0}}{|u|^{2}},\int_{P}||u|V_{0}f|^{2}dp\lesssim \frac{a^{2}\mathcal{V}^{0}}{|u|^{2}},~\int_{P}|V_{4}f|^{2}dp\lesssim \frac{a^{2}\mathcal{V}^{0}}{|u|^{2}},~\int_{P}|V_{3}f|^{2}dp\lesssim \frac{a^{2}\mathcal{V}^{0}}{|u|^{2}}    
\end{align}
and we use the boot-strap on the $|u|V_{4+A}f$,
\begin{align}
\int_{P}||u|V_{4+A}f|^{2}dp\lesssim \frac{a^{2}\mathcal{V}}{|u|^{2}},
\end{align}
which yields
\begin{align}
 \mathfrak{V}^{4+A}(s)\lesssim \mathfrak{V}^{4+A}(s_{\infty})+\int_{s_{\infty}}^{s}\bigg[\frac{a\mathcal{V}^{0}}{|u|^{3}}\sqrt{\mathfrak{V}^{4+A}}+\frac{a^{\frac{3}{2}}\Gamma\mathcal{V}^{0}}{|u|^{4}}\sqrt{\mathfrak{V}^{4+A}}+\frac{a^{\frac{5}{2}}\Gamma\mathcal{V}}{|u|^{6}}\bigg]ds^{'}.
\end{align}
Now recall $\frac{d}{ds}=p^{3}\frac{d}{du}$ (Proposition \ref{integrate}) to obtain, under the bootstrap,
\begin{align}
\frac{|u|^{4}}{a^{2}}\mathfrak{V}^{4+A}(s)\lesssim \frac{|u_{\infty}|^{4}}{a^{2}}\mathfrak{V}^{4+A}(s_{\infty})+\mathcal{V}^{0}+1.
\end{align}
Therefore, all the estimates collectively yield 
\begin{align}
\mathfrak{V}(u)\lesssim \mathcal{V}^{0}+1
\end{align}
uniformly in $u$,
which completes the proof. For the estimates for $\mathfrak{V}^{s}(u)$, we use the second transport inequality \eqref{eq:transport-B} in Lemma \ref{lem:transport} and proceed in exactly the same fashion. For brevity, we do not repeat the computations since it would be repetitive. This concludes the proof of the estimates for the first derivatives of the distribution function $f$. 
\end{proof}

\subsection{Second Derivative control of Vlasov}
Now we move on to the second derivative estimates for the Vlasov field. To this end, we need to compute the second commutator. Let \( f \) be a smooth scalar function on phase space and let \( X \), \( V_A \), and \( V_B \) denote smooth vector fields satisfying the necessary regularity and geometric compatibility assumptions (e.g., adapted to a null foliation or a geometric frame bundle over spacetime). Then the following elementary identity holds:
\begin{equation} \label{eq:basic-product}
X(V_A V_B f) = V_A \left( X(V_B f) \right) - [V_A, X](V_B f).
\end{equation}

\noindent We now compute each term on the right-hand side of \eqref{eq:basic-product} in terms of the frame \(\{V_I\}_{I=0}^{6}\), assumed to span the relevant tangent bundle. Specifically, we expand the commutator \([X, V_A]\) as follows:
\begin{equation} \label{eq:commutator}
[X, V_A] = \sum_{I=0}^{6} C^I_A V_I,
\end{equation}
for some smooth scalar coefficient functions \( C^I_A \), depending on spacetime position and momentum variables.

\noindent Applying \eqref{eq:commutator}, we obtain the action of the commutator on \( V_B f \):
\begin{equation} \label{eq:commutator-on-VB}
[X, V_A](V_B f) = \sum_{I=0}^{6} C^I_A V_I (V_B f).
\end{equation}

\noindent Next, we compute the full second-order derivative term \( V_A(X(V_B f)) \). Using the product rule and the expansion of \( X(V_B) \), we write:
\begin{equation} \label{eq:VA-on-XVB}
V_A \left( X(V_B f) \right) = \sum_{I=0}^{6} C^I_B V_A \left( V_I f \right) + \sum_{I=0}^{6} V_A(C^I_B) V_I f.
\end{equation}

\noindent Combining \eqref{eq:commutator-on-VB} and \eqref{eq:VA-on-XVB} into \eqref{eq:basic-product}, we obtain the following identity:
\begin{equation} \label{eq:full-commuted}
X(V_A V_B f) \sim \sum_{I=0}^{6} C^I_A V_I V_B f + \sum_{I=0}^{6} V_A(C^I_B) V_I f.
\end{equation}

\noindent This identity expresses the commuted second-order operator \( X(V_A V_B f) \) entirely in terms of the frame \( \{V_I\} \), the coefficient functions \( C^I_A \), and their derivatives. Now we already have explicit expressions for the $C^{I}_{A}$. At the level of the second derivative of the Vlasov field, we therefore need to estimate the first derivatives of $C^{I}_{B}$, which would involve first derivatives of the Weyl curvature, first derivatives of the connection coefficients, and the first derivatives of the Vlasov matter stress-energy tensor. We need a systematic way to compute the typical term $V_{A}(C^{I}_{B})$. The following lemma accomplishes it. 
\begin{lemma}
\label{action}
Let \(\psi = (\psi_0, \dots, \psi_k)\) denote a finite collection of smooth scalar or tensor fields (on $S_{u,\ubar}$) depicting the Ricci coefficients of the Weyl curvature components and let \(\pslash\) denote the pair \((p^1, p^2)\) of angular momentum components, \(p^4\) the mass shell component determined by the constraint relation, and \(\gslash = \gslash_{AB}\) the induced metric on the sphere \(S_{u,\ubar}\). Consider an entity of the form
\[
G := f_1(\psi)\cdot\, f_2(\pslash)\cdot\, f_3(p^4)\cdot \, f_4(\gslash),
\]
where \(f_1, f_2, f_3, f_4\) are smooth functions of their respective arguments. Then the action of the derivative \(V_D\) satisfies the identity:
\begin{align}
\label{eq:fix}
V_D(G) &= \Bigg[
\left( \sum_{i=0}^k \frac{\partial f_1}{\partial \psi_i} \nablasl \psi_i \right) f_4(\gslash)
+ \left( \sum_{i=0}^k \frac{\partial f_1}{\partial \psi_i} \Gammaslash\, \psi_i \right) f_4(\gslash)
+ f_1(\psi) f_4'(\gslash)\, \Gammaslash\, \gslash
\Bigg] f_2(\pslash) f_3(p^4) \notag \\
&\quad + f_1(\psi) f_4(\gslash) \Bigg[
\left( \chihat\, p^4 + \tr\chi\, p^4 + \chibarhat\, p^3 + \tr\chibar\, p^3 + \Gammaslash\, \pslash \right)
\left( f_2'(\pslash) f_3(p^4) + f_2(\pslash)\, \gslash\, \pslash\, f_3'(p^4) \right) \notag \\
&\qquad + \left( \etabar\, p^3 + \chihat\, \pslash + \tr\chi\, \pslash \right) f_2(\pslash)\, f_3'(p^4)\, p^4 
+ \frac{p^3}{|u|} \left( f_2'(\pslash) f_3(p^4) + \gslash\, \pslash\, f_2(\pslash)\, f_3'(p^4) \right)
\Bigg]_{D}.
\end{align}
\noindent Moreover, if each \(\psi_i\) is a scalar function (i.e., independent of the angular variables), then the term involving \(\Gammaslash\, \psi_i\) vanishes identically:
\[
\sum_{i=0}^k \frac{\partial f_1}{\partial \psi_i} \Gammaslash\, \psi_i = 0.
\]
The subscript in equation \ref{eq:fix} denotes the index that is not summed over i.e., for an entity $\alpha^{A}_{D}\eta_{A}$ we write $[\alpha\eta]_{D}$. 
\end{lemma}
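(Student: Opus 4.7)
The plan is to establish the identity \eqref{eq:fix} by a direct application of the Leibniz rule combined with the chain rule, tracking how each vector field $V_D\in\{V_{(A)},V_{(3)},V_{(4)},V_{(4+A)},V_{(0)}\}$ acts on the four distinct classes of variables appearing in $G$: the Ricci/curvature variables $\psi$, the angular momenta $\pslash$, the null momentum component $p^4$ (determined by the mass--shell constraint), and the induced metric $\gslash$. I would first apply the product rule to split
\[
V_D(G) = V_D\big(f_1(\psi)\big)f_2 f_3 f_4 + f_1 V_D\big(f_2(\pslash)\big)f_3 f_4 + f_1 f_2 V_D\big(f_3(p^4)\big)f_4 + f_1 f_2 f_3 V_D\big(f_4(\gslash)\big),
\]
and then treat each of the four terms separately.

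For the $f_1(\psi)$--term I would decompose $V_D$ into its horizontal and vertical parts. Because each $\psi_i$ is a function (or tensor) on spacetime, only the horizontal piece acts nontrivially, giving $\Hor(e_\mu)(\psi_i)=e_\mu(\psi_i)$; passing to the covariant derivative yields a tangential term $\nablasl\psi_i$ together with Christoffel corrections $\Gammaslash\,\psi_i$ that appear only when $\psi_i$ carries tangential indices. Applying $\partial f_1/\partial\psi_i$ through the chain rule produces exactly the first bracket of \eqref{eq:fix}. The $f_4(\gslash)$--term is handled analogously: $\gslash$ depends only on spacetime, so only horizontal derivatives act and each $\Hor(e_\mu)$ applied to $\gslash$ generates the $\Gammaslash\,\gslash$ contribution, accounting for the last contribution in the first bracket.

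For the $f_2(\pslash)$-- and $f_3(p^4)$--terms I would use the explicit expressions of $V_D$ to compute its action on $p^1,p^2,p^3,p^4$. Two sources contribute: the horizontal lift inserts $-\Gamma^\lambda_{\mu\nu}p^\nu\partial_{p^\lambda}$ through the geodesic correction, while the genuine vertical piece $\partial_{\pbar^A}$ or $\partial_{\pbar^3}$ built into $V_D$ acts directly on the fibre coordinates. The mass--shell constraint, via the identities $\partial_{\pbar^A}(p^4) = \gslash_{AB}p^B/(2p^3)$ and $\partial_{\pbar^3}(p^4)=-p^4/p^3$ recorded earlier in the paper, forces angular derivatives to generate additional $\gslash\,\pslash\, f_3'(p^4)$ factors, explaining the presence of the product $f_2(\pslash)\gslash\pslash f_3'(p^4)$ in the identity. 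Rewriting the resulting Christoffel symbols in the double--null frame $\{\chihat,\tr\chi,\chibarhat,\tr\chibar,\eta,\etabar,\omegabar,\Gammaslash\}$ yields the middle block of \eqref{eq:fix}; in particular, the explicit $\frac{p^3}{|u|}\partial_{\pbar^A}$ correction built into $V_{(A)}$ produces the final $\frac{p^3}{|u|}$ contribution.

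The principal obstacle is entirely organizational rather than conceptual: one must carefully bookkeep the five different structures of $V_D$ (three that are purely horizontal, two that mix horizontal and vertical pieces), systematically eliminate the non--independent fibre coordinate $p^4$ through the mass--shell relation, and verify that after collecting terms the resulting schematic expression matches \eqref{eq:fix} under the index convention $[\,\cdot\,]_D$ in which the free index $D$ is not summed. Unlike the commutator computations of Proposition~\ref{commutator}, no delicate cancellations between $\frac{p^3}{|u|}$ terms and $\Gamma^B_{3A}p^A$ terms are required here, so the proof is a finite, purely algebraic expansion. The vanishing of the $\Gammaslash\,\psi_i$ contribution when $\psi_i$ is scalar is then immediate, since in that case $\psi_i$ carries no tangential indices on which $\Gammaslash$ could act.
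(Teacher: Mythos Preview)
Your proposal is correct and takes essentially the same approach as the paper, which records the proof simply as ``Straightforward computation using the definition of the vector fields $V$.'' Your outline is precisely the elaboration of that computation: Leibniz rule on the four factors, horizontal/vertical splitting of $V_D$, use of the mass--shell identities $\partial_{\pbar^A}(p^4)=\gslash_{AB}p^B/(2p^3)$ and $\partial_{\pbar^3}(p^4)=-p^4/p^3$, and rewriting the resulting $\Gamma^\lambda_{\mu\nu}$ in the null frame.
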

\begin{proof}
 Straightforward computation using the definition of the vector fields $V$.  
\end{proof}
Equipped with Lemma \ref{action}, we proceed to estimate the second derivative of the Vlasov field. First, recall the definition of the norm that we want to control (together with the definition of the set $\tilde{V}$ in \eqref{tildeV}): 
\begin{eqnarray}
\mathfrak{V}_{2}:=\frac{1}{a^{2}}\sum_{V_{(1)},V_{(2)}\in \tilde{V}}\sup_{u,\ubar}\int_{S_{u,\ubar}}\nonumber \int_{\mathcal{P}_{x}}|\tilde{V}_{(1)}\tilde{V}_{(2)} f|^{2}\sqrt{\det\gslash}\frac{dp^{1}dp^{2}dp^{3}}{p^{3}}\sqrt{\det\gslash}\hsp d^{2}\theta.
\end{eqnarray}
The following proposition accomplishes it:
\begin{proposition}[Second derivative estimates for the Vlasov distribution function]
\label{prop:second_derivative_estimate}
Let \( f: \mathcal{M} \times \mathbb{R}^3 \to \mathbb{R} \) denote a classical solution to the Vlasov equation posed on a smooth, globally hyperbolic Lorentzian manifold \( (\mathcal{M}, g) \), foliated by a double null coordinate system \( (u, \ubar, \theta^1, \theta^2) \). Suppose that \( f \) satisfies the transport equation along the null geodesic flow and that all relevant Ricci coefficients, curvature components, and geometric quantities satisfy the bootstrap assumptions described in Section \ref{bootstrap}.

\noindent Let \( \mathfrak{V}_2 \) denote the second-order energy quantity associated to \( f \), defined by $\mathfrak{V}$,
built from commutation vector fields adapted to the geometric foliation as follows 
\begin{eqnarray}
\mathfrak{V}_{2}:=\frac{1}{a^{2}}\sum_{V_{(1)},V_{(2)}\in \tilde{V}}\sup_{u,\ubar}\int_{S_{u,\ubar}}\nonumber \int_{\mathcal{P}_{x}}|\tilde{V}_{(1)}\tilde{V}_{(2)} f|^{2}\sqrt{\det\gslash}\frac{dp^{1}dp^{2}dp^{3}}{p^{3}}\sqrt{\det\gslash}\hsp d^{2}\theta.
\end{eqnarray}

\noindent Denote, moreover, the initial data expression
\[
\mathcal{I}^0_{2} :=\mathfrak{V}_{2}:=\frac{1}{a^{2}}\sum_{V_{(1)},V_{(2)}\in \tilde{V}}\sup_{\ubar}\int_{S_{u_{\infty},\ubar}}\nonumber \int_{\mathcal{P}_{x}}|\tilde{V}_{(1)}\tilde{V}_{(2)} f|^{2}\sqrt{\det\gslash}\frac{dp^{1}dp^{2}dp^{3}}{p^{3}}\sqrt{\det\gslash}\hsp d^{2}\theta<\infty.
\]
Then the following estimates hold
\begin{equation}
\label{eq:second_derivative_estimate}
\mathfrak{V}_2 \lesssim 1 + \mathcal{I}^0_{2}.
\end{equation}
where the involved constants are purely numerical.
\end{proposition}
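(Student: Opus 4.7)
The plan is to apply the mass-shell transport inequality of Lemma \ref{lem:transport}, in its $B(s)$ form (integrated over $S_{u,\ubar}$), to the quantity $\varphi = \tilde{V}_{(1)}\tilde{V}_{(2)} f$ for every pair $\tilde{V}_{(1)},\tilde{V}_{(2)}\in\tilde{V}$. Before doing so I would introduce the bootstrap assumption $\mathfrak{V}_2\leq \mathcal{V}_2$ with $\mathcal{V}_2\ll a^{1/16}$, so that the resulting Grönwall-type inequalities close. The transport identity reduces matters to controlling
\[
\int_{S_{u,\ubar}}\int_{\mathcal{P}_x} \tilde{V}_{(1)}\tilde{V}_{(2)}f\cdot X[\tilde{V}_{(1)}\tilde{V}_{(2)}f]\,\mathrm{d}\mu_{\mathcal{P}_x}\,\mathrm{d}\mu_{S_{u,\ubar}}
\]
along the incoming geodesic flow parameter $s$, which by Proposition \ref{integrate} converts to an integral in $u'\in[u_\infty,u]$.

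Next, the schematic identity just derived above the statement, namely
\[
X[V_A V_B f]\;\sim\;\sum_{I=0}^{6} C^I_{(A)}\, V_I V_B f\;+\;\sum_{I=0}^{6} V_A(C^I_{(B)})\, V_I f,
\]
organises the error into two families. Family (a) consists of the coefficients $C^I_{(A)}$ times a second-order $V_I V_B f$ and has exactly the same structure, and the same integrable pointwise decay $\lesssim (\Gamma,\mathcal R)a^{1/2}|u|^{-2}$ (resp.\ $|u|^{-3}$ for the weighted $V_0,V_{4+A}$ directions), as in Proposition \ref{firstvlasov}; these terms are absorbed by a Cauchy--Schwarz followed by the bootstrap on $\mathfrak{V}_2$ and Grönwall in $u$. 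The key borderline term inside this family is the $\tr\chibar+\tfrac{2}{|u|}$ contribution arising from the trace-free/trace split of $\Gamma^B{}_{3A}$; this is precisely where the cancellation highlighted in Proposition \ref{commutator} is essential to integrability in the $u$-direction.

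Family (b) is the genuinely new contribution: it contains one derivative of the connection, one derivative of Weyl curvature, or one derivative of a velocity moment of $f$, all multiplied by $V_I f$. To compute $V_A(C^I_{(B)})$ I would apply Lemma \ref{action} term by term to the schematic decomposition of $C^I_{(B)}$ already listed (the polynomial-in-$(\psi,\Psi,p,\gslash)$ part plus the momentum-integral part). The structural miracle, inherited from Proposition \ref{commutator}, is that bare $V(\Gammaslash)$ and $V(e(b))$ never appear in isolation: any derivative of a Christoffel symbol is packaged as a Riemann component plus $\Gamma\cdot\Gamma$ lower-order pieces, so the only curvature derivatives that arise are $\mathcal D \Psi$, controllable through $\mathcal{R}_1,\mathcal R_2$, and the only matter-moment derivatives are $\mathcal D\mathcal T$, controllable by the first-order Vlasov estimate just proved. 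Derivatives $\mathcal D\psi$ of Ricci coefficients are controlled by $\prescript{(S)}{}{\mathcal O}_{1,4}$ and, where necessary at top order, by the reduced elliptic estimates for $(\chihat,\tr\chi,\eta)$ and by $\prescript{(S)}{}{\mathcal O}_{2,2}$. Where the curvature component is only $L^2(H)$-controlled (the $\mathcal{D}\alpha$ contribution, multiplied by $p^4$ or $\pslash$ which decay like $|u|^{-2}$), I would invoke the codimension-1 trace inequality of Proposition \ref{codimensiononetraceineq}, after bounding the integrals $\int_{\mathcal{P}_x}(\tilde{V}_{(1)}\tilde{V}_{(2)}f)\,V_I f\,\mathrm{d}\mu_{\mathcal{P}_x}$ by Cauchy--Schwarz in the momentum variable, replacing the uniform-in-$S_{u,\ubar}$ control of $V_I f$ with its $L^\infty_{(sc)}(S_{u,\ubar})L^2(\mathcal{P}_x)$ bound obtained from $\mathfrak V_1$.

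The hard part will be verifying that, once Cauchy--Schwarz has split each error integrand into a piece containing second-order $f$-derivatives (integrated against $\tilde V_{(1)}\tilde V_{(2)}f$) and a piece containing first-order $f$-derivatives paired with $\mathcal D$(Ricci, Weyl, or $\mathcal T$), the total $u'$-integrand really sits in $L^1([u_\infty,u])$ with weight at most $|u'|^{-2}$. This requires carefully tracking the slowest-decaying curvature component in each momentum slot: $\alpha$ must always appear multiplied by $p^4$ or $\pslash^A$ (each $\lesssim|u|^{-2}$), while $\betabar,\alphabar$ may appear with $p^3$ but are themselves integrable; any arrangement violating this pairing would destroy the argument. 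Modulo these verifications, the $B(s)$ transport inequality together with Grönwall in $|u|$ yields
\[
\frac{1}{a^2}\sum_{\tilde V_{(1)},\tilde V_{(2)}\in\tilde V}\int_{S_{u,\ubar}}\int_{\mathcal{P}_x} |\tilde V_{(1)}\tilde V_{(2)}f|^2 \lesssim \mathcal I^0_2 + 1,
\]
which improves the bootstrap and establishes the claimed bound.
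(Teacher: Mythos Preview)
The proposal is correct and follows essentially the same route as the paper: bootstrap $\mathfrak{V}_2\le\mathcal V$, apply the $B(s)$ form of Lemma~\ref{lem:transport}, split the error $X[V_AV_Bf]$ into the family $\sum_I C^I_{A} V_I V_{B}f$ (handled by the pointwise integrable decay of $C^I$ already established in Proposition~\ref{firstvlasov} together with the bootstrap on $\mathfrak V_2$) and the family $\sum_I V_{A}(C^I_{B})V_If$ (expanded via Lemma~\ref{action}, with $\nablasl\psi$ placed in $L^2(S_{u,\ubar})$, the factor $V_If$ controlled uniformly in $L^\infty(S)L^2(\mathcal P_x)$ by Proposition~\ref{firstvlasov}, and $V_AV_Bf$ in $L^2(S)L^2(\mathcal P_x)$ via the bootstrap). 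One small over-citation: at this level $V_A(C^I_B)$ produces only \emph{one} derivative of Ricci coefficients or Weyl curvature, so $\prescript{(S)}{}{\mathcal O}_{1,4}$ together with $L^2(S)$ bounds on $\mathcal D\Psi$ (obtained from $\mathcal R_1,\mathcal R_2$ via the codimension-1 trace) already suffice; the elliptic estimates for $(\chihat,\tr\chi,\eta)$ and the $\prescript{(S)}{}{\mathcal O}_{2,2}$ norms you mention only become necessary at the third-derivative stage.
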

\begin{proof} 
First, we make the bootstrap assumption  \be \label{V2bootstrap} \mathfrak{V}_{2}\leq \mathcal{V}, \ee
where $\mathcal{V}$ is large enough so that 
 $\mathcal{I}^{0}\ll \mathcal{V}$
but also $\mathcal{V}\leq a^{\frac{1}{320}}$. Once again, we will only provide a detailed proof for the borderline terms. We consider $V_{A}V_{B}f$. More precisely, we want to estimate 
\begin{eqnarray}
 \mathfrak{V}^{1}_{2}:=\int_{S_{u,\ubar}}\int_{\mathcal{P}_{x}}|V_{A}V_{B}f|^{2}\sqrt{\det\gslash}\hsp \frac{dp^{1}dp^{2}dp^{3}}{p^{3}}\sqrt{\det\gslash}\hsp d^{2}\theta.   
\end{eqnarray}
First, recall the elementary identity 
\begin{eqnarray}
  X(V_{A}V_{B}f)=V_{A}(X(V_{B}f))-[V_{A},X](V_{B}f).  
\end{eqnarray}
Now we can explicitly compute both $V_{A}(X(V_{B}))$ and $[V_{A},X](V_{B}f)$. Indeed,
\begin{eqnarray}
 [X,V_{A}]V_{B}f=\sum_{I=0}^{6}C^{I}_{A}V_{I}V_{B}f,   
\end{eqnarray}
and 
\begin{eqnarray}
 V_{A}(X(V_{B}f))=\sum_{I=0}^{6}C^{I}_{A}V_{I}V_{B}f+\sum_{I=0}^{6}V_{A}(C^{I}_{B})V_{I}f.   
\end{eqnarray}
The new terms that we need to estimate now are the $V_{A}(C^{I}_{B})$.
First, recall the transport inequality from lemma \ref{lem:transport} and set $B:=\mathfrak{V}^{1}_{2}$,
\begin{align}
\mathfrak{V}^{1}_{2}(s) &\lesssim \mathfrak{V}^{1}_{2}(s_{\infty}) + \int_{s_\infty}^{s} \int_{S_{u,\ubar}} \left( \int_{\pi^{-1}(x)} V_{A}V_{B}f \cdot X[ V_{A}V_{B}f] \sqrt{\det \slashed{g}} \, \frac{dp^1 dp^2 dp^3}{p^3} \right) \sqrt{\det \slashed{g}} \, dS_{u,\ubar}(x) \, ds', \label{eq:transport-B1} 
\end{align}
where $s_{\infty}=s(u_{\infty})$. The first thing that we need to do is evaluate the error term $X[V_{A}V_{B}f]$. Note that the only non-trivial computations that we have to execute are $V_{D}(C^{I}_{B})$ for $I=A,4,3,4+A,0$. To this end, we will use the lemma \ref{action}. The idea here is to keep track of the upper and lower indices since they result in the appearance of $\gslash^{-1}$ and $\gslash$ in the component-wise point-wise norm. Then we can use the table \ref{boundsbootstrapnew1} to control them. Schematically, we show a few terms. First, we have $V_{D}(C^{0}_{A})$ which contains both the indices downstairs. Using Lemma \ref{action}, we obtain   
\begin{align}
\label{eq:V1}
V_{D}(C^{0}_{A}) \supset  V_D(\psi_g \psi_g p^4)_{A} 
&\sim \bigg[(\nablasl \psi_g)\, \psi_g\, p^4 
+ \Gammaslash\, \psi_g\, \psi_g\, p^4 \notag 
+ (\chihat\, p^4 + \tr\chi\, p^4 + \chibarhat\, p^3 + \tr\chibar\, p^3 + \Gammaslash\, \pslash)\, \psi_g\, \psi_g\, \gslash\, \pslash \notag \\
&\quad + (\etabar + \chihat\, \pslash + \tr\chi\, \pslash)\, \psi_g\, \psi_g\, p^4 
+ |u|^{-1}\, \psi_g\, \psi_g\, \gslash\, \pslash\bigg]_{DA},\\
V_{D}(C^{0}_{A}) \supset V_D(\psi_g\, \psi_g\, \pslash^2)_{A} 
&\sim \bigg[(\nablasl \psi_g)\, \psi_g\, \pslash^2 
+ \Gammaslash\, \psi_g\, \psi_g\, \pslash^2 \notag \\
&\quad + \psi_g\, \psi_g \left[ (\chihat\, p^4 + \tr\chi\, p^4 + \chibarhat\, p^3 + \tr\chibar\, p^3) - |u|^{-1} \pslash \right]\bigg]_{DA},\\
V_{D}(C^{0}_{A}) \supset V_D(\psi_g\, \tr\chibar\, \pslash)_{A} 
&\sim \bigg[\left[ (\nablasl \psi_g)\, \tr\chibar + (\nablasl \tr\chibar)\, \psi_g + \tr\chibar\, \Gammaslash\, \psi_g \right]\, \pslash \notag \\
&\quad + \psi_g\, \tr\chibar \left( \chihat\, p^4 + \tr\chi\, p^4 + \chibarhat\, p^3 + \tr\chibar\, p^3 + \Gammaslash\, \pslash + |u|^{-1} \right)\bigg]_{DA},\\
V_{D}(C^{0}_{A}) \supset V_D\left( \psi_g\, \gslash\, \pslash\, p^4 \right)_{A}
&\sim  \bigg[\left[ (\nablasl \psi_g)\, \gslash + \Gammaslash\, \psi_g + \psi_g\, \Gammaslash\, \gslash \right]\, \pslash\, p^4 \notag \\
&\quad + \psi_g\, \gslash \Big[ (\chihat\, p^4 + \tr\chi\, p^4 + \chibarhat\, p^3 + \tr\chibar\, p^3 + \Gammaslash\, \pslash)(p^4 + \pslash\, \gslash\, \pslash) \notag \\
&\quad + (\etabar\, p^3 + \chihat\, \pslash + \tr\chi\, \pslash)\, \pslash\, p^4 
+ p^3\, |u|^{-1}(p^4 + \gslash\, \pslash^2) \Big]\bigg]_{DA},\\
V_{D}(C^{0}_{A}) \supset V_D\left( \psi_g\, \gslash\, \pslash^3 \right)_{A}
&\sim \bigg[ \left[ (\nablasl \psi_g)\, \gslash + \Gammaslash\, \psi_g\, \gslash \right]\, \pslash^3 + \psi_g\, \gslash \left( \chihat\, p^4 + \tr\chi\, p^4 + \chibarhat\, p^3 + \tr\chibar\, p^3 + \Gammaslash\, \pslash \right)\, \pslash^2 \notag \\
&\quad + \psi_g\, \gslash\, p^3\, |u|^{-1}\, \pslash^2\bigg]_{DA}.
\end{align}
Notice the decay structure of every term present. 
Similarly, there are additional terms in $V_{D}(C^{0}_{A})$ with exactly similar schematics, but with better or equal decay bounds. Therefore, we do not include these for the sake of clarity. Next we consider $V_{D}(C^{B}_{A})$. There holds

\begin{align}\notag 
V_D\bigl(C^{B}_{A}\bigr)
&\sim
\Bigg[
\,
\nablasl(\chibarhat,\;\widetilde{\tr\chibar})
\;+\;
\Gammaslash(\chibarhat,\;\widetilde{\tr\chibar}) 
+ \bigl(\nablasl\chibarhat+\Gammaslash\chibarhat\bigr)\,p^{4}
\\[4pt] \notag 
&\qquad
+ \chibarhat\Bigl[
\bigl(\chihat\,p^{4}+\tr\chi\,p^{4}+\chibarhat\,p^{3}+\tr\chibar\,p^{3}+\Gammaslash\,\pslash\bigr)\,\gslash\,\pslash \\[4pt] \notag 
&\qquad+ \bigl(\etabar\,p^{3}+\chihat\,\pslash+\tr\chi\,\pslash\bigr)\,p^{4}
+ |u|^{-1}\,\gslash\,\pslash
\Bigr] + \bigl(\nablasl\psi_{g}+\Gammaslash\psi_{g}\bigr)\,p^{4}
\\[4pt] \notag 
&\qquad
+ \psi_{g}\Bigl[
\bigl(\chihat\,p^{4}+\tr\chi\,p^{4}+\chibarhat\,p^{3}+\tr\chibar\,p^{3}+\Gammaslash\,\pslash\bigr)\,\gslash\,\pslash \\[4pt]
&\qquad 
+ \bigl(\etabar\,p^{3}+\chihat\,\pslash+\tr\chi\,\pslash\bigr)\,p^{4}
+ |u|^{-1}\,\gslash\,\pslash
\Bigr]
\,
\Bigg]^{B}_{\;DA}\!,
\end{align}

where, once again, we omitted the terms that reappear. In particular, this will be the recurring theme in estimating the error terms with the same decay structure. The next error term $V_{D}(C^{4}_{A})$ has the following semi-schematic structure
\begin{align}
 & V_{D}(C^{4}_{A})\sim \Bigg[\bigg[\big( \chihat\, \nablasl \psi_g + \psi_g\, \nablasl \chihat
      + \chihat\, \Gammaslash\, \psi_g + \psi_g\, \Gammaslash\, \chihat \big)
    p^4\, \pslash \notag\\
&\quad + \psi_g\, \chihat \Big[
   \big( \chihat p^4 + \tr\chi\, p^4 + \chibarhat\, p^3 + \tr\chibar\, p^3
        + \Gammaslash\, \pslash \big)(p^4 + \gslash\, \pslash^2) \notag\\ \notag
&\qquad\quad + \big( \etabar\, p^3 + \chihat\, \pslash + \tr\chi\, \pslash \big)
   + p^3\, |u|^{-1}(p^4 + \gslash \hsp  \pslash^2) \Big]\bigg]\\
   &\quad +\bigg[\big( 2\hsp \psi_g\, \nablasl \psi_g\, \gslash
       + 2\psi_g\, \Gammaslash\, \psi_g\, \gslash
       + \psi_g^2\, \Gammaslash\, \gslash \big)\pslash\, p^4 \notag\\
&\quad + \psi_g^2 \hsp  \gslash \Big[
   (\chihat \hsp  p^4 + \tr\chi\, p^4 + \chibarhat\, p^3 + \tr\chibar\, p^3
      + \Gammaslash\, \pslash)(p^4 + \gslash\, \pslash^2) \notag\\ \notag 
&\qquad\quad + (\etabar\, p^3 + \chihat\, \pslash + \tr\chi\, \pslash)
   + p^3\, |u|^{-1} (p^4 + \gslash\, \pslash^2) \Big] \bigg]\\
   &+\bigg[\big( \psi\, \nablasl \psi_g + \psi_g\, \nablasl \psi
       + \psi\, \Gammaslash\, \psi_g + \psi_g\, \Gammaslash\, \psi \big)
      \gslash\, \pslash^2 \notag\\ \notag 
&\quad + \psi_g\, \psi\, \gslash \Big[
   (\chihat \hsp p^4 + \tr\chi\, p^4 + \chibarhat \hsp p^3 + \tr\chibar \hsp  p^3
      + \Gammaslash\, \pslash)\,\pslash
   + p^3 \hsp |u|^{-1} \hsp \pslash \Big]\bigg]\\ \notag 
   &+\Big[
(\nablasl \widetilde{\tr\chibar} + \Gammaslash \hsp \widetilde{\tr\chibar}) \hsp \gslash
+ \widetilde{\tr\chibar}\hsp \Gammaslash \hsp \gslash
\Big] \pslash
+ \widetilde{\tr\chibar} \hsp  \gslash
\Big[
(\chihat \hsp p^{4} + \tr\chi \hsp p^{4} + \chibarhat \hsp p^{3} + \tr\chibar  \hsp p^{3} + \Gammaslash \hsp \pslash) + p^{3} \hsp |u|^{-1}
\Big] \\ \notag 
& + (\nablasl \chibarhat + \Gammaslash \hsp \chibarhat) \hsp \pslash
+ \chibarhat \Big[
(\chihat \hsp p^{4} + \tr\chi \hsp p^{4} + \chibarhat \hsp p^{3} + \tr\chibar \hsp p^{3} + \Gammaslash \hsp  \pslash) + p^{3} \hsp |u|^{-1}
\Big] \\ \notag 
& + (\nablasl \chihat + \Gammaslash \hsp \chihat) \hsp \pslash p^{4} \\ \notag 
& + \chihat \Big[
(\chihat \hsp p^{4} + \tr\chi \hsp  p^{4} + \chibarhat \hsp p^{3} + \tr\chibar \hsp p^{3} + \Gammaslash \hsp \pslash)(p^{4} + \gslash \hsp \pslash \hsp \pslash ) 
+ (\etabar \hsp p^{3} + \chihat  \hsp \pslash + \tr\chi \hsp \pslash)\hsp \pslash \hsp p^{4} + p^{3} \hsp |u|^{-1} (p^{4} + \gslash \hsp \pslash\hsp \pslash)
\Big] \\ \notag 
& + \Big[
(\nablasl \psi_{g} + \Gammaslash \hsp \psi_{g}) \hsp \gslash + \psi_{g} \hsp \Gammaslash \hsp \gslash
\Big] \pslash \hsp p^{4} \\
\notag  & + \psi_{g} \hsp \gslash \Big[
(\chihat \hsp^{4} + \tr\chi \hsp p^{4} + \chibarhat \hsp p^{3} + \tr\chibar \hsp p^{3} + \Gammaslash \hsp \pslash)(p^{4} + \gslash \hsp\pslash\hsp \pslash) 
+ (\etabar \hsp p^{3} + \chihat \hsp \pslash + \tr\chi \hsp \pslash) \pslash \hsp p^{4} + p^{3} \hsp |u|^{-1} \hsp (p^{4} + \gslash \hsp \pslash\hsp \pslash)
\Big] \\ \notag 
& + \Big[
(\nablasl \Gammaslash + \Gammaslash \hsp \Gammaslash) \hsp \gslash + \Gammaslash \hsp \Gammaslash \hsp \gslash
\Big]\hsp \pslash \hsp \pslash \\ \notag 
& + \Gammaslash \hsp \gslash \hsp \Big[
(\chihat \hsp p^{4} + \tr\chi \hsp p^{4} + \chibarhat \hsp p^{3} + \tr\chibar \hsp p^{3} + \Gammaslash \hsp \pslash)(2\hsp \pslash) + p^{3} \hsp |u|^{-1} \hsp (2\hsp  \pslash)
\Big] \\ \notag 
& + (\nablasl \psi_{g} + \Gammaslash \hsp \psi_{g}) \hsp p^{4} 
+ \psi_{g} \Big[
(\etabar \hsp p^{3} + \chihat \hsp \pslash + \tr\chi \hsp \pslash) \hsp p^{4} + p^{3} \hsp |u|^{-1} \hsp p^{4}
\Big] \\ \notag 
& + (\nablasl \chibarhat + \Gammaslash \hsp \chibarhat) \hsp \pslash \hsp p^{4} \\ \notag 
& + \chibarhat \hsp \Big[
(\chihat \hsp p^{4} + \tr\chi \hsp p^{4} + \chibarhat \hsp p^{3} + \tr\chibar \hsp p^{3} + \Gammaslash \hsp \pslash)(p^{4} + \gslash \hsp \pslash\hsp \pslash) \\ \notag 
& \quad + (\etabar \hsp p^{3} + \chihat \hsp \pslash + \tr\chi \hsp \pslash) \pslash \hsp p^{4} + p^{3} \hsp |u|^{-1} (p^{4} + \gslash \hsp \pslash \hsp \pslash)
\Big] \\ \notag 
& + \Big[
(\nablasl \psi_{g} + \Gammaslash \hsp \psi_{g}) \hsp \gslash + \psi_{g}\hsp \Gammaslash \hsp \gslash
\Big] \hsp \pslash \hsp p^{4} \\ \notag 
& + \psi_{g} \hsp \gslash \Big[
(\chihat \hsp p^{4} + \tr\chi \hsp p^{4} + \chibarhat \hsp p^{3} + \tr\chibar \hsp p^{3} + \Gammaslash \hsp \pslash)(p^{4} + \gslash \hsp  \pslash \hsp \pslash) \\ 
& \quad + (\etabar \hsp p^{3} + \chihat \hsp \pslash + \tr\chi \hsp \pslash)\hsp  \pslash \hsp p^{4} + p^{3}\hsp  |u|^{-1} \hsp (p^{4} + \gslash \hsp  \pslash \hsp \pslash)
\Big]\Bigg]_{DA}.
\end{align}
Lastly, we want to write down the error term $V_{D}(C^{4+B}_{A})$ schematically. Once again, we only note the non-repeating terms. The error terms read
\begin{align}
\label{eq:V4} \notag 
& V_D[C^{4+B}_{A}]\\ \notag 
\sim\, & \Bigg[ |u|^2 \Big[
(\nablasl \chihat \hsp \etabar + \chihat \hsp \nablasl \etabar + \nablasl \tr\chi \hsp \etabar + \tr\chi \hsp \nablasl \etabar)(p^4)^2 \\ \notag 
& + (\nablasl \chihat \hsp \omegabar + \chihat \hsp \nablasl \omegabar + \nablasl \tr\chi \hsp \omegabar + \tr\chi \hsp \nablasl \omegabar)\hsp p^4 \\ \notag 
& + (\nablasl \chihat \cdot \chibarhat + \chihat \cdot \nablasl \chibarhat + \nablasl \tr\chi \cdot \chibarhat + \tr\chi \cdot \nablasl \chibarhat)\hsp  \pslash \hsp \pslash \\ \notag 
& + (\nablasl \chihat \cdot \gslash \tr\chibar + \chihat \cdot \nablasl(\gslash \tr\chibar) + \nablasl \tr\chi \cdot \gslash \tr\chibar + \tr\chi \cdot \nablasl(\gslash \tr\chibar))\hsp  \pslash \hsp \pslash \\ \notag 
&+\left[
\left( \nablasl \psi_g \cdot \tr\chi + \psi_g \cdot \nablasl \tr\chi \right)
+ \left( \Gammaslash \psi_g \cdot \tr\chi + \Gammaslash \psi_g \cdot \tr\chi \right)
\right]\hsp  \pslash \hsp p^4 \\
\notag  & + \psi_g \tr\chi \cdot 
\Big[
(\chihat \hsp p^4 + \tr\chi  \hsp p^4 + \chibarhat \hsp p^3 + \tr\chibar \hsp p^3 + \Gammaslash \hsp \pslash) \cdot (p^4 + \gslash\hsp \pslash \hsp \pslash) \\ \notag 
&\qquad + (\etabar \hsp  p^3 + \chihat \hsp  \pslash + \tr\chi \hsp  \pslash) \cdot \pslash  \hsp  p^4+ p^3 \hsp  |u|^{-1} \cdot (p^4 + \gslash\hsp  \pslash\hsp  \pslash)
\Big]\\ \notag 
&+\left[
\nablasl \chihat \cdot \gslash + \Gammaslash \hsp \chihat \cdot \gslash + \chihat \cdot \Gammaslash \hsp  \gslash
\right] \cdot \pslash \hsp p^4 \\ \notag 
& + \chihat \cdot \gslash \cdot
\Big[
(\chihat \hsp  p^4 + \tr\chi \hsp  p^4 + \chibarhat \hsp  p^3 + \tr\chibar \hsp  p^3 + \Gammaslash \hsp \pslash)(p^4 + \gslash \hsp \pslash) \\ \notag 
&\qquad + (\etabar \hsp  p^3 + \chihat \hsp  \pslash + \tr\chi \hsp \pslash) \cdot \pslash \hsp p^4 + p^3 |u|^{-1} (p^4 + \gslash \hsp  \pslash)
\Big]\\ \notag 
&+|u|^2 \Bigg\{ \left( \nablasl \chihat + \nablasl \tr\chi + \Gammaslash\hsp  \chihat + \Gammaslash \hsp  \tr\chi \right) \cdot \left[\omegabar \hsp  p^4 + \etabar \hsp (p^4)^2 +(\chibarhat + \gslash \hsp  \tr\chibar) \pslash\hsp  \pslash \hsp \right] \\ \notag 
&\quad + (\chihat + \tr\chi) \cdot \Big[
\left( \nablasl \omegabar + \Gammaslash \hsp \omegabar \right) p^4 + \left( \nablasl \etabar + \Gammaslash \hsp \etabar \right) (p^4)^2 \\ 
&\qquad + \left( \nablasl \chibarhat + \Gammaslash \hsp  \chibarhat \right) \pslash \hsp \pslash + \left( \nablasl \tr\chibar + \Gammaslash \hsp  \tr\chibar \right) \hsp \gslash \hsp \pslash \hsp \pslash + \tr\chibar \hsp \cdot \Gammaslash \hsp \pslash \hsp  \pslash
\Big] \Bigg\}\Bigg]^{B}_{DA}
\end{align}
Once we have the schematics, we are in a position to estimate the energy $\mathfrak{V}^{1}_{2}$. The important point to note here is that we need to estimate each type of term separately and carefully. Recall the transport inequality for $\mathfrak{V}^{1}_{2}$ and perform the change of variable $s\mapsto u(s)$ together with Proposition \ref{integrate} : 
\begin{align}
\mathfrak{V}^{1}_{2}(u)\lesssim \mathfrak{V}^{1}_{2}(u_{\infty})+\intu \int_{S_{u^{\prime},\ubar}}\big(\int_{\mathcal{P}_{x}}V_{A}V_{B}f\hsp X[V_{A}V_{B}f] \hsp \text{d}\mathcal{P}_{x}\big)\sqrt{\det\gslash} \hsp \text{d}\theta^1 \text{d}\theta^2 \text{d}u^{\prime}.
 \end{align}
Now recall that the error term contains two types of terms: 
\begin{eqnarray}
 X[V_{A}V_{B}f]\sim \sum_{I=0}^{6}C^{I}_{A}V_{I}V_{B}f+\sum_{I=0}^{6}V_{A}(C^{I}_{B})V_{I}f,   
\end{eqnarray}
hence substituting in the error integral yields
\begin{align}
&\int_{u_{\infty}}^{u}\int_{S_{u^{\prime},\ubar}}\big(\int_{\mathcal{P}_{x}}V_{A}V_{B}f\hsp X[V_{A}V_{B}f]\hsp \text{d}\mathcal{P}_{x}\big)\hsp \sqrt{\det\gslash} \hsp \text{d}\theta^1 \text{d}\theta^2 \text{d}u^{\prime} \notag \\
=&\int_{u_{\infty}}^{u}\int_{S_{u^{\prime},\ubar}}\big(\int_{\mathcal{P}_{x}}V_{A}V_{B}f\Bigg[\sum_{I=0}^{6}C^{I}_{A}V_{I}V_{B}f+\sum_{I=0}^{6}V_{A}(C^{I}_{B})V_{I}f\Bigg] \text{d}\mathcal{P}_{x}\big)\sqrt{\det\gslash}\hsp \text{d}\theta^1 \text{d}\theta^2 \text{d}u^{\prime}.
\end{align}
Now for the first term 
\begin{align}
 \int_{u_{\infty}}^{u}\int_{S_{u^{\prime},\ubar}}\big(\int_{\mathcal{P}_{x}}V_{A}V_{B}f\Big[\sum_{I=0}^{6}C^{I}_{A}V_{I}V_{B}f\Big] d\mathcal{P}_{x}\big)\sqrt{\det\gslash}\hsp \text{d}\theta^1  \text{d}\theta^2  \text{d}u^{\prime},   
\end{align}
the estimates follow in a similar fashion as the first derivative estimates since we already have the estimates for $C^{I}_{A}$. We control this term as follows 
\begin{align} \notag 
&\int_{u_{\infty}}^{u}\int_{S_{u^{\prime},\ubar}}\big(\int_{\mathcal{P}_{x}}V_{A}V_{B}f\Big[\sum_{I=0}^{6}C^{I}_{A}V_{I}V_{B}f\Big] d\mathcal{P}_{x}\big)\sqrt{\det\gslash}\hsp \text{d}\theta^1  \text{d}\theta^2  \text{d}u^{\prime}\\\nonumber 
=&\int_{u_{\infty}}^{u}\int_{S_{u^{\prime},\ubar}}\big(\int_{\mathcal{P}_{x}}V_{A}V_{B}f\Big[\sum_{I\in \{0,1,2,3,4\}}C^{I}_{A}V_{I}V_{B}f\Big] d\mathcal{P}_{x}\big)\sqrt{\det\gslash}\hsp \text{d}\theta^1  \text{d}\theta^2  \text{d}u^{\prime}\\\nonumber 
+&\int_{u_{\infty}}^{u}\int_{S_{u^{\prime},\ubar}}\big(\int_{\mathcal{P}_{x}}V_{A}V_{B}f\Big[\sum_{D=1}^{2}C^{4+D}_{A}V_{4+D}V_{B}f\Big] d\mathcal{P}_{x}\big)\sqrt{\det\gslash}\hsp \text{d}\theta^1  \text{d}\theta^2  \text{d}u^{\prime}\\\nonumber 
\lesssim &\sum_{I\in \{0,1,2,3,4\}}\int_{u_{\infty}}^{u}\left(\frac{C\Gamma a^{\frac{1}{2}}}{|u|^{3}}+\frac{a^{\frac{1}{2}}}{|u|^{2}}+\frac{C\Gamma }{|u|^{2}}\right)\int_{S_{u^{\prime},\ubar}}\big(\int_{\mathcal{P}_{x}}V_{A}V_{B}f V_{I}V_{B}f d\mathcal{P}_{x}\big)\sqrt{\det\gslash}\hsp \text{d}\theta^1  \text{d}\theta^2  \text{d}u^{\prime}\\ 
+&\int_{u_{\infty}}^{u}\frac{\Gamma a^{\frac{1}{2}}}{|u|^{2}}\int_{S_{u^{\prime},\ubar}}\big[\int_{\mathcal{P}_{x}}|u|V_{A}V_{B}f V_{4+D}V_{B}f d\mathcal{P}_{x}\big]\sqrt{\det\gslash}\hsp \text{d}\theta^1  \text{d}\theta^2  \text{d}u^{\prime}.
\end{align}
Now recall that we have made the bootstrap assumption \eqref{V2bootstrap} at the start, namely
\begin{align}
\mathfrak{V}_{2}:=\frac{1}{a^{2}}\sum_{V_{(1)},V_{(2)}\in \tilde{V}}\sup_{u,\ubar}\int_{S_{u,\ubar}}\nonumber \int_{\mathcal{P}_{x}}|\tilde{V}_{(1)}\tilde{V}_{(2)} f|^{2}\sqrt{\det\gslash}\frac{dp^{1}dp^{2}dp^{3}}{p^{3}}\sqrt{\det\gslash}\hsp d^{2}\theta \leq \mathcal{V},   
\end{align}
therefore the terms of the first type belonging to the error terms yield
\begin{align}
\bigg |\int_{u_{\infty}}^{u}\int_{S_{u^{\prime},\ubar}}\big(\int_{\mathcal{P}_{x}}V_{A}V_{B}f\Big[\sum_{I=0}^{6}C^{I}_{A}V_{I}V_{B}f\Big] d\mathcal{P}_{x}\big )\sqrt{\det\gslash}\hsp \text{d}\theta^1 \text{d}\theta^2 \duprime \bigg|
\lesssim \frac{\Gamma\mathcal{V}a^{\frac{5}{2}}}{|u|}.
\end{align}
Now consider the remaining terms 
\begin{align}
 \nonumber &\sum_{I=0}^{6}\int_{u_{\infty}}^{u}\int_{S_{u^{\prime},\ubar}}\Big[\int_{\mathcal{P}_{x}}V_{A}V_{B}f V_{A}(C^{I}_{B})V_{I}f d\mathcal{P}_{x}\Big]\sqrt{\det\gslash}\hsp \text{d}\theta^1 \text{d}\theta^2 \duprime\\\nonumber 
 =&\sum_{I=0}^{4}\int_{u_{\infty}}^{u}\int_{S_{u^{\prime},\ubar}}\Big[\int_{\mathcal{P}_{x}}V_{A}V_{B}f V_{A}(C^{I}_{B})V_{I}f d\mathcal{P}_{x}\Big]\sqrt{\det\gslash}\hsp \text{d}\theta^1 \text{d}\theta^2 \duprime\\
 +&\sum_{D=1}^{2}\int_{u_{\infty}}^{u}\int_{S_{u^{\prime},\ubar}}\Big[\int_{\mathcal{P}_{x}}V_{A}V_{B}f V_{A}(C^{4+D}_{B})V_{4+D}f d\mathcal{P}_{x}\Big]\sqrt{\det\gslash}\hsp \text{d}\theta^1 \text{d}\theta^2 \duprime.
\end{align}
To estimate these terms systematically, we adopt the following strategy. The error term $V_{A}(C^{I}_{B})$ schematically reads as follows¨ 
\begin{eqnarray}
V_{A}(C^{I}_{B})\sim \nablasl\psi \cdot\hsp  G(\psi,\pslash,p^{4})+\text{terms~involving~1~derivative~of~stress tensor}+ l.o.t.,  
\end{eqnarray}
where $l.o.t$ are the terms that do not contain derivatives of $\psi$ and $G$ is a polynomial function of its entries and does not contain any derivatives. Here, $\psi$ belongs to the Ricci coefficients-Weyl curvature set $\{\chihat,\tr\chi,\chibarhat,\tr\chibar,\eta,\etabar,\omegabar,\alpha,\alphabar,\beta,\betabar,\rho,\sigma\}$. First, we consider the derivative terms
\begin{align}
 \int_{u_{\infty}}^{u}\int_{S_{u^{\prime},\ubar}}\int_{\mathcal{P}_{x}}V_{A}V_{B}f \hsp G(\psi,\pslash,p^{4})\nablasl \psi V_{B}fd\mathcal{P}_{x}\sqrt{\det\gslash}\hsp \text{d}\theta^1 \text{d}\theta^2 \duprime  .  
\end{align}
We control these in the following way, 
\begin{align} \notag
&\big|\int_{u_{\infty}}^{u}\int_{S_{u^{\prime},\ubar}}\int_{\mathcal{P}_{x}}V_{A}V_{B}f \hsp G(\psi,\pslash,p^{4})\nablasl \psi \hsp  V_{B}f\hsp \text{d}\mathcal{P}_{x}\sqrt{\det\gslash}\hsp \text{d}\theta^1 \text{d}\theta^2 \duprime\big|\\\nonumber 
\leq &\int_{u_{\infty}}^{u}\sup_{\ubar^{\prime},S_{u^{\prime},\ubar^{\prime}},\mathcal{P}_{x}}|G(\psi,\pslash,p^{4})|\Big[\int_{S_{u^{\prime},\ubar^{\prime}}}|\nablasl \psi|||V_{A}V_{B}f||_{L^{2}_{\mathcal{P}_{x}}}||V_{B}f||_{L^{2}_{\mathcal{P}_{x}}}\sqrt{\det\gslash}\Big]\text{d}u^{\prime}  \\\nonumber 
\lesssim &\int_{u_{\infty}}^{u}\sup_{\ubar^{\prime},S_{u^{\prime},\ubar^{\prime}},\mathcal{P}_{x}}|G(\psi,\pslash,p^{4})|\Big[||\nablasl\psi||_{L^{2}({S_{u^{\prime},\ubar^{\prime}}})}\int_{S_{u^{\prime},\ubar}}\int_{\mathcal{P}_{x}}|V_{A}V_{B}f|^{2}d\mathcal{P}_{x}\sqrt{\det\gslash}\Big]\text{d}u^{\prime} \\ 
\lesssim &\int_{u_{\infty}}^{u}\sup_{\ubar,S_{u^{\prime},\ubar^{\prime}},\mathcal{P}_{x}}|G(\psi,\pslash,p^{4})|\sup_{\ubar}||\nablasl\psi||_{L^{2}({S_{u^{\prime},\ubar^{\prime}}})}\Big[\int_{S_{u^{\prime},\ubar^{\prime}}}\int_{\mathcal{P}_{x}}|V_{A}V_{B}f|^{2}d\mathcal{P}_{x}\sqrt{\det\gslash}\Big]\text{d}u^{\prime} ,
\end{align}
where we have used the fact that $||V_{B}f||_{L^{2}_{\mathcal{P}_{x}}}$ is uniformly bounded by invoking the estimates, from Proposition \ref{firstvlasov}. The idea is that we use the bootstrap on \[\frac{1}{a^{2}}\Bigg[\int_{S_{u^{\prime},\ubar}}\int_{\mathcal{P}_{x}}|V_{A}V_{B}f|^{2}d\mathcal{P}_{x}\sqrt{\det\gslash}\Bigg],\] uniformly over $u$, which leaves 
\begin{align}\notag
&\big|\int_{u_{\infty}}^{u}\int_{S_{u^{\prime},\ubar}}\int_{\mathcal{P}_{x}}V_{A}V_{B}f \hsp G(\psi,\pslash,p^{4})\hsp \nablasl \psi \hsp  V_{B}f\hsp d\mathcal{P}_{x}\sqrt{\det\gslash}\hsp \text{d}\theta^1 \text{d}\theta^2 \duprime \big| \\ \lesssim& \hsp a^{2}\mathcal{V}\int_{u_{\infty}}^{u}\hsp \sup_{\ubar^{\prime},S_{u^{\prime},\ubar^{\prime}},\mathcal{P}_{x}}|G(\psi,\pslash,p^{4})|\sup_{\ubar^{\prime}}||\nablasl\psi||_{L^{2}({S_{u^{\prime},\ubar^{\prime}}})} \text{d}u^{\prime}.  
\end{align}
Also, recall that the lower order terms appearing here can be estimated in $L^{\infty}(S_{u,\ubar})$. Now we use the explicit expressions for the error terms $\{V_{D}(C^{I}_{A})\}_{I=0}^{7}$. We estimate: 
\paragraph{\( \nablasl \psi_{g} \cdot \psi_{g}\cdot   p^{4} \quad\Big( G(\psi, \pslash, p^4) = \psi_{g} \cdot p^{4} \Big) \)}
\begin{align}\notag
\int_{u_{\infty}}^{u} \sup_{\ubar,\, S_{u', \ubar},\, \mathcal{P}_{x}} 
|G(\psi, \pslash, p^4)| \cdot 
\sup_{\ubar'} \left\| \nablasl \psi \right\|_{L^{2}(S_{u', \ubar'})} 
\, \mathrm{d}u'
&\lesssim 
\int_{u_{\infty}}^{u}
\left(
\frac{a^{1/2} \Gamma}{|u'|} \cdot \frac{\Gamma}{|u'|} \cdot \frac{1}{|u'|^2}
+
\frac{a \Gamma}{|u'|^2} \cdot \frac{\Gamma}{|u'|} \cdot \frac{1}{|u'|^2}
\right)
\, \mathrm{d}u' \\
&\lesssim 
\frac{a^{1/2} \Gamma^2}{|u|^3} 
+ \frac{a \Gamma^2}{|u|^4}, \notag
\end{align}
%\paragraph{\( \nablasl \psi_{g} \hsp \psi_{g} \hsp  \pslash  \hsp \pslash \quad\Big( G(\psi, \pslash, p^4) = \psi_{g} \hsp  \pslash  \hsp \pslash \Big) \)}
%\begin{align}\notag
%\int_{u_{\infty}}^{u} \sup_{\ubar,\, S_{u', \ubar},\, \mathcal{P}_{x}} 
%|G(\psi, \pslash, p^4)| \cdot 
%\sup_{\ubar'} \left\| \nablasl \psi \right\|_{L^{2}(S_{u', \ubar'})} 
%\, \mathrm{d}u'
%\lesssim& \int_{u_{\infty}}^{u}\Bigg(\frac{a^{\frac{1}{2}}\Gamma}{|u^{\prime}|}\frac{\Gamma}{|u^{\prime}|}+\frac{a\Gamma}{|u^{\prime}|^{2}}\frac{\Gamma}{|u^{\prime}|}\Bigg)|u^{\prime}|^{-3}\text{d}u^{\prime} \\
%\lesssim&  \frac{a^{\frac{1}{2}}\Gamma}{|u|^{4}}+\frac{a\Gamma}{|u|^{5}},
%\end{align}
%$\nablasl \psi_g \cdot \psi_g \cdot p^4$
%\begin{align*}
%\int_{u_\infty}^{u} \sup_{\ubar, S_{u', \ubar}, \mathcal{P}_{x}} |G(\psi, \pslash, p^4)| 
%\cdot \sup_{\ubar} \|\nablasl \psi\|_{L^2(S_{u', \ubar})} \, du' 
%&\lesssim \int_{u_\infty}^{u} \left( \frac{a^{1/2} \Gamma}{|u'|} \cdot \frac{\Gamma}{|u'|} \cdot \frac{1}{|u'|^2} 
%+ \frac{a \Gamma}{|u'|^2} \cdot \frac{\Gamma}{|u'|} \cdot \frac{1}{|u'|^2} \right) du' \\
%&\lesssim \frac{a^{1/2} \Gamma^2}{|u|^3} + \frac{a \Gamma^2}{|u|^4},
%\end{align*}
$\nablasl \psi_g \cdot \psi_g \cdot \pslash \cdot \pslash$
\begin{align*}
\int_{u_\infty}^{u} \sup_{\ubar, S_{u', \ubar}, \mathcal{P}_{x}} |G(\psi, \pslash, p^4)|
\cdot \sup_{\ubar} \|\nablasl \psi\|_{L^2(S_{u', \ubar})} \, du' 
&\lesssim \int_{u_\infty}^{u} \left( \frac{a^{1/2} \Gamma}{|u'|} \cdot \frac{\Gamma}{|u'|} \cdot \frac{1}{|u'|^2} 
+ \frac{a \Gamma}{|u'|^2} \cdot \frac{\Gamma}{|u'|} \cdot \frac{1}{|u'|^2} \right) du' \\
&\lesssim \frac{a^{1/2} \Gamma^2}{|u|^3} + \frac{a \Gamma^2}{|u|^4},
\end{align*}
$\nablasl \psi_g \cdot \chibarhat \cdot \pslash + \nablasl \chibarhat \cdot \psi_g \cdot \pslash$
\begin{align*}
\int_{u_\infty}^{u} \sup_{\ubar, S_{u', \ubar}, \mathcal{P}_{x}} |G(\psi, \pslash, p^4)| 
\cdot \sup_{\ubar} \|\nablasl(\psi_g, \chibarhat)\|_{L^2(S_{u', \ubar})} \, du'
&\lesssim \int_{u_\infty}^{u} \left( \frac{a^{1/2} \Gamma}{|u'|^2} \cdot \frac{\Gamma}{|u'|} 
+ \frac{\Gamma}{|u'|} \cdot \frac{\Gamma a^{1/2}}{|u'|^2} \right) du' \\
&\lesssim \frac{a^{\frac{1}{2}} \Gamma^{2}}{|u|^2},
\end{align*}
$(\nablasl \psi_g \cdot \tr\chibar + \nablasl \tr\chibar \cdot \psi_g) \cdot \pslash$
\begin{align*}
\int_{u_\infty}^{u} \sup_{\ubar, S_{u', \ubar}, \mathcal{P}_{x}} |G(\psi, \pslash, p^4)| 
\cdot \sup_{\ubar} \|\nablasl \psi\|_{L^2(S_{u', \ubar})} \, du' 
&\lesssim \int_{u_\infty}^{u} \left( \frac{a^{1/2} \Gamma}{|u'|^2} \cdot \frac{\Gamma}{|u'|} 
+ \frac{\Gamma}{|u'|} \cdot \frac{a^{1/2} \Gamma}{|u'|^2} \right) du' \\
&\lesssim \frac{a^{1/2} \Gamma^2}{|u|^2}, 
\end{align*}
 $(\nablasl \psi_g) \cdot \gslash \cdot \pslash \cdot p^4$
\begin{align*}
\int_{u_\infty}^{u} \sup_{\ubar, S_{u', \ubar}, \mathcal{P}_{x}} |G(\psi, \pslash, p^4)| 
\cdot \sup_{\ubar} \|\nablasl \psi\|_{L^2(S_{u', \ubar})} \, du'
&\lesssim \int_{u_\infty}^{u} \left( \frac{1}{|u'|^2} \cdot \frac{a^{1/2} \Gamma}{|u'|} \right) du' \lesssim \frac{a^{1/2} \Gamma}{|u|^2},
\end{align*}
 $\nablasl \psi_g \cdot \gslash \cdot \pslash \cdot \pslash \cdot \pslash$
\begin{align*}
\int_{u_\infty}^{u} \sup_{\ubar, S_{u', \ubar}, \mathcal{P}_{x}} |G(\psi, \pslash, p^4)|
\cdot \sup_{\ubar} \|\nablasl \psi\|_{L^2(S_{u', \ubar})} \, du'
&\lesssim \int_{u_\infty}^{u} \left( \frac{1}{|u'|^3} \cdot \frac{a^{1/2} \Gamma}{|u'|} \right) du'\lesssim \frac{a^{1/2} \Gamma}{|u|^3},
\end{align*}
$\nablasl \psi_g \cdot \psi_g \cdot p^4 \cdot p^4$
\begin{align*}
\int_{u_\infty}^{u} \sup_{\ubar, S_{u', \ubar}, \mathcal{P}_{x}} |G(\psi, \pslash, p^4)| 
\cdot \sup_{\ubar} \|\nablasl \psi\|_{L^2(S_{u', \ubar})} \, du'
&\lesssim \int_{u_\infty}^{u} \left( \frac{\Gamma^2}{|u'|^4} \cdot \frac{a^{1/2}}{|u'|} \right) du'\lesssim \frac{a^{1/2} \Gamma^2}{|u|^4},
\end{align*}
$\psi_g \cdot \nablasl \psi_g \cdot \pslash \cdot p^4$
\begin{align*}
\int_{u_\infty}^{u} \sup_{\ubar, S_{u', \ubar}, \mathcal{P}_{x}} |G(\psi, \pslash, p^4)|
\cdot \sup_{\ubar} \|\nablasl \psi\|_{L^2(S_{u', \ubar})} \, du'
&\lesssim \int_{u_\infty}^{u} \left( \frac{\Gamma}{|u'|} \cdot \frac{1}{|u'|^3} \cdot \frac{a^{1/2} \Gamma}{|u'|} \right) du'\lesssim \frac{a^{1/2} \Gamma^2}{|u|^4},
\end{align*}
$\psi_g \cdot \nablasl \tr\chibar + \tr\chibar \cdot \nablasl \psi_g$
\begin{align*}
\int_{u_\infty}^{u} \sup_{\ubar, S_{u', \ubar}, \mathcal{P}_{x}} |G(\psi, \pslash, p^4)| 
\cdot \sup_{\ubar} \|\nablasl \psi\|_{L^2(S_{u', \ubar})} \, du' 
&\lesssim \int_{u_\infty}^{u} \left( \frac{\Gamma}{|u'|} \cdot \frac{a^{1/2} \Gamma}{|u'|} \right) du'\lesssim \frac{a^{1/2} \Gamma^2}{|u|}.
\end{align*}
$\nablasl(\chibarhat, \widetilde{\tr \chibar})$
\begin{align*}
\int_{u_\infty}^{u} \sup_{\ubar, S_{u', \ubar}, \mathcal{P}_{x}} |G(\psi, \pslash, p^4)|
\cdot \sup_{\ubar} \|\nablasl \psi\|_{L^2(S_{u', \ubar})} \, du'
&\lesssim \int_{u_\infty}^{u} \frac{a^{1/2} \Gamma}{|u'|^{2}} \, du' \lesssim \frac{a^{1/2} \Gamma}{|u|},
\end{align*}
$\nablasl\chibarhat \cdot p^4$

\begin{equation*}
\int_{u_{\infty}}^{u} \sup_{\ubar, S_{u',\ubar}, \mathcal{P}_{x}} |G(\psi, \pslash, p^4)| \sup_{\ubar} \left\| \nablasl \psi \right\|_{L^2(S_{u',\ubar'})} \, du' 
\lesssim \frac{a^{1/2} \Gamma}{|u|^3},
\end{equation*}
$\nablasl \psi_{g} \cdot p^4$

\begin{equation*}
\int_{u_{\infty}}^{u} \sup_{\ubar, S_{u',\ubar}, \mathcal{P}_{x}} |G(\psi, \pslash, p^4)| \sup_{\ubar} \left\| \nablasl \psi \right\|_{L^2(S_{u',\ubar'})} \, du' 
\lesssim \int_{u_{\infty}}^{u} \frac{\Gamma}{|u'|^3} \, du' 
\lesssim \frac{\Gamma}{|u|^2}
\end{equation*}
$\nablasl \alpha \cdot \pslash$
\begin{equation*}
\int_{u_{\infty}}^{u} \sup_{\ubar, S_{u',\ubar}, \mathcal{P}_{x}} |G(\psi, \pslash, p^4)| \sup_{\ubar} \left\| \nablasl \psi \right\|_{L^2(S_{u',\ubar'})} \, du' 
\lesssim \int_{u_{\infty}}^{u} \frac{\Gamma a^{1/2}}{|u'|^2} \, du' 
\lesssim \frac{\Gamma a^{1/2}}{|u|}
\end{equation*}
$\nablasl \betabar$
\begin{equation*}
\int_{u_{\infty}}^{u} \sup_{\ubar, S_{u',\ubar}, \mathcal{P}_{x}} |G(\psi, \pslash, p^4)| \sup_{\ubar} \left\| \nablasl \psi \right\|_{L^2(S_{u',\ubar'})} \, du' 
\lesssim \int_{u_{\infty}}^{u} \frac{a^{3/2} \Gamma}{|u'|^4} \, du' 
\lesssim \frac{a^{3/2} \Gamma}{|u|^3}
\end{equation*}
$\nablasl \psi_{g} \cdot \chihat \cdot \pslash \cdot p^4$
\begin{equation*}
\int_{u_{\infty}}^{u} \sup_{\ubar, S_{u',\ubar}, \mathcal{P}_{x}} |G(\psi, \pslash, p^4)| \sup_{\ubar} \left\| \nablasl \psi \right\|_{L^2(S_{u',\ubar'})} \, du' 
\lesssim \int_{u_{\infty}}^{u} \frac{a^{1/2} \Gamma^2}{|u'|^5} \, du' 
\lesssim \frac{a^{1/2} \Gamma^2}{|u|^4},
\end{equation*}
$\nablasl \psi_g \cdot \psi_g \cdot \pslash \cdot \pslash$
\begin{equation*}
\int_{u_{\infty}}^{u} \sup_{\ubar, S_{u', \ubar}, \mathcal{P}_{x}} |G(\psi, \pslash, p^4)| \cdot \sup_{\ubar} \|\nablasl \psi\|_{L^2(S_{u', \ubar'})} \, du'
\lesssim \int_{u_{\infty}}^{u} \frac{\Gamma^2}{|u'|^4} \, du'
\lesssim \frac{\Gamma^2}{|u|^3}
\end{equation*}
%$\nablasl \psi_g \cdot \psi_g \cdot \pslash \pslash$
%\begin{equation*}
%\int_{u_{\infty}}^{u} \sup_{\ubar, S_{u', \ubar}, \mathcal{P}_{x}} |G(\psi, \pslash, p^4)| \cdot \sup_{\ubar} \|\nablasl \psi\|_{L^2(S_{u', \ubar'})} \, du'
%\lesssim \int_{u_{\infty}}^{u} \frac{\Gamma^2}{|u'|^4} \, du'
%\lesssim \frac{\Gamma^2}{|u|^3}
%\end{equation*}
$\nablasl \psi_g \cdot \widetilde{\tr\chibar}$
\begin{equation*}
\int_{u_{\infty}}^{u} \sup_{\ubar, S_{u', \ubar}, \mathcal{P}_{x}} |G(\psi, \pslash, p^4)| \cdot \sup_{\ubar} \|\nablasl \psi\|_{L^2(S_{u', \ubar'})} \, du'
\lesssim \int_{u_{\infty}}^{u} \frac{\Gamma^2}{|u'|^3} \, du'
\lesssim \frac{\Gamma^2}{|u|^2}
\end{equation*}
$\nablasl \widetilde{\tr\chibar} \cdot \psi_g$
\begin{equation*}
\int_{u_{\infty}}^{u} \sup_{\ubar, S_{u', \ubar}, \mathcal{P}_{x}} |G(\psi, \pslash, p^4)| \cdot \sup_{\ubar} \|\nablasl \psi\|_{L^2(S_{u', \ubar'})} \, du'
\lesssim \int_{u_{\infty}}^{u} \frac{\Gamma^2}{|u'|^3} \, du'
\lesssim \frac{\Gamma^2}{|u|^2}
\end{equation*}
$\nablasl \chihat \cdot \pslash \cdot p^4$
\begin{equation*}
\int_{u_{\infty}}^{u} \sup_{\ubar, S_{u', \ubar}, \mathcal{P}_{x}} |G(\psi, \pslash, p^4)| \cdot \sup_{\ubar} \|\nablasl \psi\|_{L^2(S_{u', \ubar'})} \, du'
\lesssim a^{1/2} \Gamma \int_{u_{\infty}}^{u} \frac{1}{|u'|^4} \, du'
\lesssim \frac{a^{1/2} \Gamma}{|u|^3}
\end{equation*}
$\chibarhat \cdot \nablasl \chihat$
\begin{equation*}
\int_{u_{\infty}}^{u} \sup_{\ubar, S_{u', \ubar}, \mathcal{P}_{x}} |G(\psi, \pslash, p^4)| \cdot \sup_{\ubar} \|\nablasl \psi\|_{L^2(S_{u', \ubar'})} \, du'
\lesssim a \Gamma^2 \int_{u_{\infty}}^{u} \frac{1}{|u'|^3} \, du'
\lesssim \frac{a \Gamma^2}{|u|^2}
\end{equation*}
$\nablasl \chibarhat \cdot \chihat$
\begin{equation*}
\int_{u_{\infty}}^{u} \sup_{\ubar, S_{u', \ubar}, \mathcal{P}_{x}} |G(\psi, \pslash, p^4)| \cdot \sup_{\ubar} \|\nablasl \psi\|_{L^2(S_{u', \ubar'})} \, du'
\lesssim a \Gamma^2 \int_{u_{\infty}}^{u} \frac{1}{|u'|^3} \, du'
\lesssim \frac{a \Gamma^2}{|u|^2},
\end{equation*}
$\nablasl \psi_g \cdot \psi_g \cdot p^4$
\begin{equation*}
\int_{u_{\infty}}^{u} \sup_{\ubar, S_{u', \ubar}, \mathcal{P}_{x}} |G(\psi, \pslash, p^4)| \cdot \sup_{\ubar} \|\nablasl \psi\|_{L^2(S_{u', \ubar'})} \, du'
\lesssim \frac{\Gamma^2}{|u|^3},
\end{equation*}
$\nablasl \chihat \cdot \widetilde{\tr\chibar} \cdot \pslash$
\begin{equation*}
\int_{u_{\infty}}^{u} \sup_{\ubar, S_{u', \ubar}, \mathcal{P}_{x}} |G(\psi, \pslash, p^4)| \cdot \sup_{\ubar} \|\nablasl \psi\|_{L^2(S_{u', \ubar'})} \, du'
\lesssim \frac{a^{1/2} \Gamma^2}{|u|^3},
\end{equation*}
$\nablasl \chihat \cdot \chihat \cdot p^4 \cdot \pslash$
\begin{equation*}
\int_{u_{\infty}}^{u} \sup_{\ubar, S_{u', \ubar}, \mathcal{P}_{x}} |G(\psi, \pslash, p^4)| \cdot \sup_{\ubar} \|\nablasl \psi\|_{L^2(S_{u', \ubar'})} \, du'
\lesssim \int_{u_{\infty}}^{u} \frac{a \Gamma^2}{|u'|^5} \, du'
\lesssim \frac{a \Gamma^2}{|u|^4}
\end{equation*}
$\nablasl \psi_g \cdot \chihat \cdot \pslash \cdot \pslash$
\begin{equation*}
\int_{u_{\infty}}^{u} \sup_{\ubar, S_{u', \ubar}, \mathcal{P}_{x}} |G(\psi, \pslash, p^4)| \cdot \sup_{\ubar} \|\nablasl \psi\|_{L^2(S_{u', \ubar'})} \, du'
\lesssim \int_{u_{\infty}}^{u} \frac{a^{1/2} \Gamma^2}{|u'|^4} \, du'
\lesssim \frac{a^{1/2} \Gamma^2}{|u|^3}
\end{equation*}
$\nablasl \chihat \cdot \psi_g \cdot \pslash \cdot p^4$
\begin{equation*}
\int_{u_{\infty}}^{u} \sup_{\ubar, S_{u', \ubar}, \mathcal{P}_{x}} |G(\psi, \pslash, p^4)| \cdot \sup_{\ubar} \|\nablasl \psi\|_{L^2(S_{u', \ubar'})} \, du'
\lesssim \int_{u_{\infty}}^{u} \frac{\Gamma^2}{|u'|^5} \, du'
\lesssim \frac{\Gamma^2}{|u|^4}
\end{equation*}
$\nablasl \beta \cdot p^4$
\begin{equation*}
\int_{u_{\infty}}^{u} \sup_{\ubar, S_{u', \ubar}, \mathcal{P}_{x}} |G(\psi, \pslash, p^4)| \cdot \sup_{\ubar} \|\nablasl \psi\|_{L^2(S_{u', \ubar'})} \, du'
\lesssim \int_{u_{\infty}}^{u} \frac{a^{1/2} \Gamma}{|u'|^4} \, du'
\lesssim \frac{a^{1/2} \Gamma}{|u|^3}
\end{equation*}
$\nablasl \beta \cdot \pslash \cdot \pslash$
\begin{equation*}
\int_{u_{\infty}}^{u} \sup_{\ubar, S_{u', \ubar}, \mathcal{P}_{x}} |G(\psi, \pslash, p^4)| \cdot \sup_{\ubar} \|\nablasl \psi\|_{L^2(S_{u', \ubar'})} \, du'
\lesssim \int_{u_{\infty}}^{u} \frac{a^{1/2} \Gamma}{|u'|^4} \, du'
\lesssim \frac{a^{1/2} \Gamma}{|u|^3}
\end{equation*}
$\nablasl \alpha \cdot \pslash$
\begin{equation*}
\int_{u_{\infty}}^{u} \sup_{\ubar, S_{u', \ubar}, \mathcal{P}_{x}} |G(\psi, \pslash, p^4)| \cdot \sup_{\ubar} \|\nablasl \psi\|_{L^2(S_{u', \ubar'})} \, du'
\lesssim \int_{u_{\infty}}^{u} \frac{a^{1/2} \Gamma}{|u'|^2} \, du'
\lesssim \frac{a^{1/2} \Gamma}{|u|},
\end{equation*}
$\nablasl \alpha\cdot \pslash\cdot p^4$
\begin{align*}
&\int_{u_{\infty}}^{u} \sup_{\ubar,\, S_{u',\ubar},\, \mathcal{P}_{x}} |G(\psi,\pslash,p^4)| \sup_{\ubar} \| \nablasl \psi \|_{L^2(S_{u',\ubar})} \, du' \lesssim \int_{u_{\infty}}^{u} \frac{a^{\frac{1}{2}} \Gamma}{|u'|^4} \, du' \lesssim \frac{a^{\frac{1}{2}} \Gamma}{|u|^3}.
\end{align*}
$\nablasl \alphabar\cdot \pslash$
\begin{align*}
&\int_{u_{\infty}}^{u} \sup_{\ubar,\, S_{u',\ubar},\, \mathcal{P}_{x}} |G(\psi,\pslash,p^4)| \sup_{\ubar} \| \nablasl \psi \|_{L^2(S_{u',\ubar})} \, du'\lesssim \int_{u_{\infty}}^{u} \frac{a^2 \Gamma}{|u'|^6} \, du' \lesssim \frac{a^2 \Gamma}{|u|^5}.
\end{align*}
$\nablasl \widetilde{\tr \chibar}\cdot \pslash$
\begin{align*}
&\int_{u_{\infty}}^{u} \sup_{\ubar,\, S_{u',\ubar},\, \mathcal{P}_{x}} |G(\psi,\pslash,p^4)| \sup_{\ubar} \| \nablasl \psi \|_{L^2(S_{u',\ubar})} \, du' \lesssim \Gamma \int_{u_{\infty}}^{u} \frac{1}{|u'|^3} \, du' \lesssim \frac{\Gamma}{|u|^2}.
\end{align*}
$\nablasl \chibarhat\cdot \pslash$
\begin{align*}
&\int_{u_{\infty}}^{u} \sup_{\ubar,\, S_{u',\ubar},\, \mathcal{P}_{x}} |G(\psi,\pslash,p^4)| \sup_{\ubar} \| \nablasl \psi \|_{L^2(S_{u',\ubar})} \, du'
\lesssim a^{\frac{1}{2}} \Gamma \int_{u_{\infty}}^{u} \frac{1}{|u'|^3} \, du' \lesssim \frac{a^{\frac{1}{2}} \Gamma}{|u|^2}.
\end{align*}
$\nablasl \chihat\cdot \pslash\cdot p^4$
\begin{align*}
&\int_{u_{\infty}}^{u} \sup_{\ubar,\, S_{u',\ubar},\, \mathcal{P}_{x}} |G(\psi,\pslash,p^4)| \sup_{\ubar} \| \nablasl \psi \|_{L^2(S_{u',\ubar})} \, du'
\lesssim a^{\frac{1}{2}} \Gamma \int_{u_{\infty}}^{u} \frac{1}{|u'|^4} \, du' \lesssim \frac{a^{\frac{1}{2}} \Gamma}{|u|^3}.
\end{align*}
$\nablasl \psi_{g}\cdot \pslash\cdot p^4$
\begin{align*}
&\int_{u_{\infty}}^{u} \sup_{\ubar,\, S_{u',\ubar},\, \mathcal{P}_{x}} |G(\psi,\pslash,p^4)| \sup_{\ubar} \| \nablasl \psi \|_{L^2(S_{u',\ubar})} \, du'
\qquad\lesssim \Gamma \int_{u_{\infty}}^{u} \frac{1}{|u'|^4} \, du' \lesssim \frac{\Gamma}{|u|^3}.
\end{align*}
Now we estimate the following term
\begin{align}
 \sum_{D=1}^{2}\int_{u_{\infty}}^{u}\int_{S_{u^{\prime},\ubar}}\Bigg[\int_{\mathcal{P}_{x}}V_{A}V_{B}f \hsp V_{A}(C^{4+D}_{B})V_{4+D}f \text{d}\mathcal{P}_{x}\Bigg]\sqrt{\det\gslash}\hsp \text{d}\theta^1 \text{d}\theta^2 \duprime.   
\end{align}
Again, we have the following form 
\begin{align}
 \int_{u_{\infty}}^{u}\int_{S_{u^{\prime},\ubar}}\int_{\mathcal{P}_{x}}V_{A}V_{B}f \hsp G(\psi,\pslash,p^{4})\hsp \nablasl \psi \hsp |u^{\prime}|^{2}\hsp V_{B+4}f\hsp \text{d}\mathcal{P}_{x}\hsp \sqrt{\det\gslash}\text{d}\theta^1 \text{d}\theta^2 \duprime. 
\end{align}
We control this term in the following way ,
\begin{align}\notag
&\big|\int_{u_{\infty}}^{u}\int_{S_{u^{\prime},\ubar}}\int_{\mathcal{P}_{x}}V_{A}V_{B}f \hsp G(\psi,\pslash,p^{4})\hsp \nablasl \psi\hsp  |u^{\prime}|^{2}\hsp V_{B+4}f\hsp \text{d}\mathcal{P}_{x}\sqrt{\det\gslash}\hsp \text{d}\theta^1 \text{d}\theta^2 \duprime\big |\\\nonumber 
\leq &\int_{u_{\infty}}^{u}\sup_{\ubar^{\prime},S_{u^{\prime},\ubar},\mathcal{P}_{x}}|G(\psi,\pslash,p^{4})|\Bigg[\int_{S_{u^{\prime},\ubar}}|\nablasl \psi|||V_{A}V_{B}f||_{L^{2}_{\mathcal{P}_{x}}}|u^{\prime}|||u^{\prime}V_{B+4}f||_{L^{2}_{\mathcal{P}_{x}}}\sqrt{\det\gslash}\hsp \text{d}\theta^1 \text{d}\theta^2 \Bigg]\text{d}u^{\prime}  \\\nonumber 
\lesssim &\int_{u_{\infty}}^{u}\sup_{\ubar^{\prime},S_{u^{\prime},\ubar},\mathcal{P}_{x}}|G(\psi,\pslash,p^{4})|\Bigg[||\nablasl\psi||_{L^{2}({S_{u^{\prime},\ubar^{\prime}}})}|u^{\prime}|\int_{S_{u^{\prime},\ubar}}\int_{\mathcal{P}_{x}}|V_{A}V_{B}f|^{2}d\mathcal{P}_{x}\sqrt{\det\gslash} \hsp \text{d}\theta^1 \text{d}\theta^2 \Bigg]\text{d}u^{\prime} \\ 
\lesssim &\int_{u_{\infty}}^{u}\sup_{\ubar,S_{u^{\prime},\ubar},\mathcal{P}_{x}}|G(\psi,\pslash,p^{4})|\sup_{\ubar}||\nablasl\psi||_{L^{2}({S_{u^{\prime},\ubar^{\prime}}})}|u^{\prime}|\Bigg[\int_{S_{u^{\prime},\ubar}}\int_{\mathcal{P}_{x}}|V_{A}V_{B}f|^{2}\text{d}\mathcal{P}_{x}\sqrt{\det\gslash}\hsp \text{d}\theta^1 \text{d}\theta^2 \Bigg]\text{d}u^{\prime} ,
\end{align}
    where we have used that fact that $||\modu \hsp V_{B+4}f||_{L^{2}_{\mathcal{P}_{x}}}$ is uniformly bounded, by the first order estimate from Proposition \ref{firstvlasov}. The idea is that we use the bootstrap on $\Bigg[\int_{S_{u^{\prime},\ubar}}\int_{\mathcal{P}_{x}}|V_{A}V_{B}f|^{2}d\mathcal{P}_{x}\sqrt{\det\gslash}\hsp \text{d}\theta^1 \text{d}\theta^2 \duprime \Bigg]$, uniformly over $u$, which leaves 
\begin{align}
\notag &\big|\int_{u_{\infty}}^{u}\int_{S_{u^{\prime},\ubar}}\int_{\mathcal{P}_{x}}V_{A}V_{B}f \hsp G(\psi,\pslash,p^{4})\hsp \nablasl \psi\hsp  |u^{\prime}|^{2}\hsp V_{B+4}f\hsp \text{d}\mathcal{P}_{x}\sqrt{\det\gslash}\hsp \text{d}\theta^1 \text{d}\theta^2 \duprime\big |\\ \lesssim& a^{2}\mathcal{V}\int_{u_{\infty}}^{u}\hsp \sup_{\ubar,S_{u^{\prime},\ubar},\mathcal{P}_{x}}|G(\psi,\pslash,p^{4})|\sup_{\ubar}||\nablasl\psi||_{L^{2}({S_{u^{\prime},\ubar^{\prime}}})}|u^{\prime}| \text{d}u^{\prime} .   
\end{align}
We estimate these terms in a similar fashion, and therefore we do not present the term-by-term estimates, but rather the final one (i.e., the worst possible decay). Collecting all the terms together with Gr\"onwall inequality, we obtain 
\begin{align}
\mathfrak{V}^{1}_{2}(u)\lesssim \mathfrak{V}^{1}_{2}(u_{\infty})+\frac{a^{\frac{5}{2}}\Gamma^{2}\mathcal{V}}{|u|}, \end{align}or equivalently \begin{align} \frac{1}{a^{2}}\mathfrak{V}^{1}_{2}(u)\lesssim \frac{1}{a^{2}}\mathfrak{V}^{1}_{2}(u_{\infty})+\frac{a^{\frac{1}{2}}\Gamma^{2}\mathcal{V}}{|u|}.
\end{align}

\noindent The other derivative estimates follow in a similar fashion. In fact, the estimates are exactly the same with repeated control over the Ricci coefficients and the Weyl curvature components. Therefore, we do not present the readers with repeated computations. This completes the second derivative estimates for the Vlasov field. 
\end{proof}
\subsection{Third Derivative Control of Vlasov}\label{thirdvlasovs}
Now we move on to the third derivative estimates. %These are the most vital ones, and care needs to be taken. 
First, we illustrate how the regularity level is controlled, since integrability is not the major issue at higher orders (it having been resolved at first order and with the understanding that extra $\tilde{V}-$derivatives preserve it). First, recall the definition of the norm that we wish to control 
for $k_{i}\in \mathbb{Z}_{\geq 0}$
\begin{align}
 \mathfrak{V}_{3}:=\frac{1}{a^{2}}\sum_{\tilde{V}_{(1)},\tilde{V}_{(2)}, \tilde{V}_{(3)}\in \tilde{V}}\sup_{u}\int_0^1 \int_{S_{u,\ubar^{\prime}}}\nonumber \int_{\mathcal{P}_{x}}|\tilde{V}_{(1)}\tilde{V}_{(2)} \tilde{V}_{(3)} f|^{2}\sqrt{\det\gslash}\hsp\frac{dp^{1}dp^{2}dp^{3}}{p^{3}}\sqrt{\det\gslash}\hsp d^{2}\theta \dubarprime.
\end{align}
\begin{proposition}
\label{thirdvlasov}
 Let \( f: \mathcal{P} \to \mathbb{R} \) denote a classical solution to the Vlasov equation posed on a smooth, globally hyperbolic Lorentzian manifold \( (\mathcal{M}, g) \), foliated by a double null coordinate system \( (u, \ubar, \theta^1, \theta^2) \). Suppose that \( f \) satisfies the transport equation along the null geodesic flow and that all relevant Ricci coefficients, curvature components, and geometric quantities satisfy the bootstrap assumptions described in Section \ref{bootstrapassumption}. Let \( \mathfrak{V}_2 \) denote the second-order energy quantity associated to \( f \), defined by $\mathfrak{V}$,
built from commutation vector fields adapted to the geometric foliation as follows 
\begin{eqnarray}
\mathfrak{V}_{3}:=\frac{1}{a^{2}}\sum_{\tilde{V}_{(1)},\tilde{V}_{(2)}, \tilde{V}_{(3)}\in \tilde{V}}\sup_{u}\int_0^1 \int_{S_{u,\ubar^{\prime}}}\nonumber \int_{\mathcal{P}_{x}}|\tilde{V}_{(1)}\tilde{V}_{(2)} \tilde{V}_{(3)} f|^{2}\sqrt{\det\gslash}\hsp \frac{dp^{1}dp^{2}dp^{3}}{p^{3}}\sqrt{\det\gslash}\hsp d^{2}\theta \dubarprime.
\end{eqnarray}

\noindent Assume, furthermore, that the initial data for \( f \) obeys the bound
\[
\mathcal{I}^0_{3} :=\mathfrak{V}_{3}:=\frac{1}{a^{2}}\sum_{\tilde{V}_{(1)},\tilde{V}_{(2)}, \tilde{V}_{(3)}\in \tilde{V}}\sup_{u}\int_0^1 \int_{S_{u_{\infty},\ubar^{\prime}}}\nonumber \int_{\mathcal{P}_{x}}|\tilde{V}_{(1)}\tilde{V}_{(2)} \tilde{V}_{(3)} f|^{2}\sqrt{\det\gslash}\frac{dp^{1}dp^{2}dp^{3}}{p^{3}}\sqrt{\det\gslash}\hsp d^{2}\theta \dubarprime
 < \infty.
\]
Then the following estimates hold
\begin{equation}
\label{eq:third_derivative_estimate}
\mathfrak{V}_3 \lesssim \left(1 + \mathcal{I}^0_{3}\right).
\end{equation}
where the involved constants are purely numerical.
\end{proposition}

\begin{proof}
First, we make the bootstrap assumption that 
\begin{align}
\mathfrak{V}_{3}\leq \mathcal{V},
\end{align}
where $\mathcal{V}$ is large enough so that 
$\mathcal{I}^{0}_{3}\ll \mathcal{V}$   
but also $\mathcal{V}\leq a^{\frac{1}{320}}$. Under the bootstrap assumption, we have 
\begin{align}
\sum_{\tilde{V}_{(1)},\tilde{V}_{(2)}, \tilde{V}_{(3)}\in \tilde{V}}\sup_{u}\int_0^1 \int_{S_{u,\ubar^{\prime}}}\nonumber \int_{\mathcal{P}_{x}}|\tilde{V}_{(1)}\tilde{V}_{(2)} \tilde{V}_{(3)} f|^{2}\sqrt{\det\gslash}\frac{dp^{1}dp^{2}dp^{3}}{p^{3}}\sqrt{\det\gslash}\hsp d^{2}\theta \dubarprime \leq a^2 \mathcal{V}.   
\end{align}
Now, we outline the strategy to close the regularity estimates using the generalized H\"older inequality on the mass shell. This is the most important aspect of the third derivative estimates. What then remains are tedious but straightforward computations. Therefore, we keep the tedious estimates as short as possible for clarity. 
For $I,J,K \in \{0,\dots,6\}$, recall the commutation formula  
\begin{equation} \label{eq:comm-triple}
    X\big[ V_{I} V_{J} V_{K} f \big] 
    = V_{I} \big( X[V_{J} V_{K} f] \big) 
      - [V_{I}, X] \, V_{J} V_{K} f.
\end{equation}
Using the expression for the double commutator, we have  
\begin{align}
    X\big[ V_{I} V_{J} V_{K} f \big] 
    &= V_{I} \big( X[V_{J} V_{K} f] \big) 
       - [V_{I}, X] \, V_{J} V_{K} f \notag \\
    &= \sum_{L=0}^{6} C^{L}_{I} \, V_{L} V_{J} V_{K} f 
       + V_{I} \Bigg[ \sum_{L=0}^{6} C^{L}_{J} \, V_{L} V_{K} f 
       + \sum_{L=0}^{6} V_{J}(C^{L}_{K}) \, V_{L} f \Bigg].
\end{align}
Since each $V_{I}$ is a derivation on the mass shell, it satisfies Leibniz's rule.  
Therefore,  
\begin{equation}
\label{eq:triple-decomp}
    X\big[ V_{I} V_{J} V_{K} f \big]
    = \underbrace{\sum_{L=0}^{6} C^{L}_{I} \, V_{L} V_{J} V_{K} f}_{\mathrm{I}}
    + \underbrace{\sum_{L=0}^{6} V_{I} V_{J}(C^{L}_{K}) \, V_{L} f}_{\mathrm{II}}
    + \underbrace{\sum_{L=0}^{6} V_{J}(C^{L}_{K}) \, V_{I} V_{L} f}_{\mathrm{III}}.
\end{equation}

\noindent The term $\mathrm{I}$ is straightforward to handle (analogous to the previous two cases) and will not be discussed in detail here.  
The term $\mathrm{II}$ is slightly more intricate; we will treat only the most dangerous contributions.  
The term $\mathrm{III}$ is the most involved and will be analyzed fully.

\noindent Consider the quantity  
\begin{equation}
    \mathfrak{V}^{1}_{3}
    := \int_{0}^{1} \int_{S_{u, \ubar'}} 
       \bigg( \int_{\mathcal{P}_{x}} \big| V_{A} V_{B} V_{C} f \big|^{2} \, \text{d}\mathcal{P}_{x} \bigg) 
       \sqrt{\det \gslash} \text{d}\theta^1 \text{d}\theta^2  \dubarprime,
\end{equation}
which, under the bootstrap assumption, verifies $\mathfrak{V}^{1}_{3}\leq a^{2}\mathcal{V}$. 
Applying the transport inequality \eqref{lem:transport} to $\mathfrak{V}^{1}_{3}$ yields  
\begin{align}
    \mathfrak{V}^{1}_{3}(u) 
    &\lesssim \mathfrak{V}^{1}_{3}(u_{\infty}) 
      + \int_{u_{\infty}}^{u} \int_{0}^{1} \int_{S_{u', \ubar'}} 
        \bigg( \int_{\mathcal{P}_{x}} V_{A} V_{B} V_{C} f \,
        X\big[ V_{A} V_{B} V_{C} f \big] \, \text{d}\mathcal{P}_{x} \bigg) 
        \sqrt{\det \gslash} \hsp \text{d}\theta^1 \text{d}\theta^2 \dubarprime \duprime.
\end{align}
Thus, it remains to control the error terms arising from $X[ V_{A} V_{B} V_{C} f ]$.  
As an example, we consider  
\begin{equation}
    \sum_{J=0}^{6} \int_{u_{\infty}}^{u} \int_{0}^{1} \int_{S_{u', \ubar'}} 
    \bigg( \int_{\mathcal{P}_x} V_{A} V_{B} V_{C} f \,
    V_{B}(C^{J}_{C}) \, V_{A} V_{J} f \, dP \bigg)
    \sqrt{\det \gslash} \hsp \text{d}\theta^1 \text{d}\theta^2 \dubarprime \duprime.
\end{equation}

\noindent The term $V_{B}(C^{0}_{C})$ has the schematic form, for $\psi$ being either a Ricci coefficient or a Weyl curvature component,
\begin{equation}
V_{B}(C^{0}_{C}) \sim \nablasl \psi \, f_{1}(\psi) \, f_{2}(\pslash) \, f_{3}(p^{4}) + \text{l.o.t.}
\end{equation}

\noindent We first control the terms containing the top-order derivative of Vlasov, the second derivative of Vlasov, and the first derivative of Ricci coefficients and Weyl curvature components.  
These terms have the schematic structure
\begin{align}
\label{eq:ultimate1}
&\int_{u_{\infty}}^{u} \int_{0}^{1} \int_{S_{u,u'}} 
\left( \int_{\mathcal{P}_{x}} V_{A}V_{B}V_{C}f \hsp \nablasl \psi \, f_{1}(\psi) \, f_{2}(\pslash) \, f_{3}(p^{4}) \, V_{A}V_{0}f \, \text{d}\mathcal{P}_x \right) 
\sqrt{\det\gslash}\hsp \text{d}\theta^1 \text{d}\theta^2 \dubarprime \duprime  \notag \\
 \lesssim& \int_{u_{\infty}}^{u} \int_{0}^{1} \int_{S_{u',\ubar}} 
|\nablasl \psi| \, |f_{1}(\psi)| \, 
\sup_{\mathcal{P}_{x}} |f_{2}(\pslash)| \, \sup_{\mathcal{P}_{x}} |f_{3}(p^{4})| \,
\| V_{A}V_{B}V_{C}f \|_{L^{2}(\mathcal{P}_{x})} \, 
\| V_{A}V_{0}f \|_{L^{2}(\mathcal{P}_{x})} \notag \\
&\hspace{6cm} \times \sqrt{\det\gslash}\hsp \text{d}\theta^1 \text{d}\theta^2 \dubarprime \duprime  \notag \\
\lesssim& \int_{u_{\infty}}^{u} \int_{0}^{1} 
\sup_{\mathcal{P}_{x}} |f_{2}(\pslash)| \, \sup_{\mathcal{P}_{x}} |f_{3}(p^{4})| \,
\|\nablasl \psi\|_{L^{4}(S_{u',\ubar'})} \,
\|f_{1}(\psi)\|_{L^{\infty}(S_{u',\ubar'})} \notag \\
&\hspace{2cm} \times 
\|V_{A}V_{B}V_{C}f\|_{L^{2}(S_{u',\ubar'})L^{2}(\mathcal{P}_{x})} \,
\|V_{A}V_{0}f\|_{L^{4}(S_{u',\ubar'})L^{2}(\mathcal{P}_{x})} 
\, \dubarprime \duprime \notag \\
\quad \lesssim& \int_{u_{\infty}}^{u} 
\sup_{\ubar} \|\nablasl \psi\|_{L^{4}(S_{u',\ubar})} \,
\sup_{\ubar} \|f_{1}(\psi)\|_{L^{\infty}(S_{u',\ubar})} \,
\sup_{\mathcal{P}_{x}} |f_{2}(\pslash)| \,
\sup_{\mathcal{P}_{x}} |f_{3}(p^{4})| \notag \\
&\hspace{2cm} \times \int_{0}^{1} 
\|V_{A}V_{B}V_{C}f\|_{L^{2}(S_{u',\ubar'})L^{2}(\mathcal{P}_{x})} \,
\|V_{A}V_{0}f\|_{L^{4}(S_{u',\ubar'})L^{2}(\mathcal{P}_{x})} 
\, \dubarprime \duprime.
\end{align}
Now we claim that the integral 
\begin{align}
\label{eq:border1}
\int_{0}^{1}||V_{A}V_{B}V_{C}f||_{L^{2}(S_{u^{\prime},\ubar^{\prime}})L^{2}(\mathcal{P}_{x})}||V_{A}V_{0}f||_{L^{4}(S_{u^{\prime},\ubar^{\prime}})L^{2}(\mathcal{P}_{x})}\text{d}\ubar^{\prime}    
\end{align}
can be controlled by means of the boot-strap assumptions on the top order norm of $f$ and the lower order norms estimated earlier. Note that $||V_{A}V_{0}f||_{L^{2}(\mathcal{P}_{x})}$ is a function on spacetime and particular on $S_{u,\ubar}$. Therefore, we can apply Sobolev embedding on the sphere. Recall the Sobolev embedding for a function $\mathcal{G}$ on $S_{u,\ubar}$, 
\begin{eqnarray}
||\mathcal{G}||_{L^{4}(S_{u,\ubar})}\lesssim |u|^{\frac{1}{2}}||\nablasl \mathcal{G}||_{L^{2}(S_{u,\ubar})}+|u|^{-\frac{1}{2}}||\mathcal{G}||_{L^{2}(S_{u,\ubar})}.    
\end{eqnarray}
Applying this to the function $\mathcal{G}=||V_{A}V_{0}f||_{L^{2}(\mathcal{P}_{x})}$, we obtain 
\begin{align}
||V_{A}V_{0}f||_{L^{4}(S_{u^{\prime},\ubar^{\prime}})L^{2}(\mathcal{P}_{x})}\lesssim& |u|^{\frac{1}{2}} \left[\int_{S_{u,\ubar}}\left(\nablasl \left(\int_{\mathcal{P}_{x}}|V_{A}V_{0}f|^{2} d\mathcal{P}_{x}\right)^{\frac{1}{2}} \right)^{2} \right]^{\frac{1}{2}}\nonumber\\ \notag  +&|u|^{-\frac{1}{2}}\Bigg[\int_{S_{u,\ubar}}\left(\int_{\mathcal{P}_{x}}|V_{A}V_{0}f|^{2} d\mathcal{P}_{x}\right)\Bigg]^{\frac{1}{2}} \\ 
\lesssim& |u|^{\frac{1}{2}} \left[\int_{S_{u,\ubar}}\left(\nablasl \left(\int_{\mathcal{P}_{x}}|V_{A}V_{0}f|^{2} d\mathcal{P}_{x}\right)^{\frac{1}{2}} \right)^{2} \right]^{\frac{1}{2}}+a|u|^{-1}\mathcal{I}^{0}_{2},
\end{align}
by using the second derivative estimates from Proposition \ref{prop:second_derivative_estimate}. Now, note the following computations 
\begin{eqnarray}
\nablasl\left(\int_{\mathcal{P}_{x}}|V_{A}V_{0}f|^{2} \right)^{\frac{1}{2}}\nonumber=\frac{\int_{P}V_{A}V_{0}f \nablasl V_{A}V_{0}f}{\sqrt{\left(\int_{\mathcal{P}_{x}}|V_{A}V_{0}f|^{2} \right)}}+|\Gammaslash|||V_{A}V_{0}f||_{L^{2}(\mathcal{P}_{x})} \lesssim ||\nablasl V_{A}V_{0}f||_{L^{2}(\mathcal{P}_{x})}+ |\Gammaslash|||V_{A}V_{0}f||_{L^{2}(\mathcal{P}_{x})},  
\end{eqnarray}
and recall the following relation between the derivatives 
\begin{align}  e_A =& V_{(A)}  + \modu^2 \left( \frac{\Gammaslash^C_{AB}\hsp p^B}{p^3} + {\chibarhat_A}^C +\frac{{\chi_A}^C \hsp p^4}{p^3}\right) V_{(4+C)}  \notag \\ &+\frac{\modu^2}{2} (\tr\chibar+\frac{2}{\modu}) V_{(4+A)} +\left( \frac{\frac{1}{2}\chi_{AB}\hsp p^B}{p^3} -\etabar_A\right)(V_{(4)}+V_{(0)}) \label{expressionVA-2} \end{align}    
and the fact that, schematically,
\begin{align}
&\nablasl V_{A}V_{0}f \sim\notag \\ \sim &V_{D}V_{A}V_{0}f+|u|^{2} \hsp (\Gammaslash\hsp \pslash+\chibarhat+\chi \hsp p^{4})\hsp V_{4+D}V_{A}V_{0}f+|u|^{2}\hsp \widetilde{\tr\chibar}\hsp V_{4+D}V_{A}V_{0}f+(\chi\hsp \pslash+\etabar)(V_{4}V_{A}V_{0}f+V_{0}V_{A}V_{0}f).     
\end{align}
Finally, this yields 
\begin{align}
\notag &|u|^{\frac{1}{2}} \left[\int_{S_{u,\ubar}}\left(\nablasl \left(\int_{\mathcal{P}_{x}}|V_{A}V_{0}f|^{2} d\mathcal{P}_{x}\right)^{\frac{1}{2}} \right)^{2} \sqrt{\det\gslash}\hsp \text{d}\theta^1 \text{d}\theta^2 \right]^{\frac{1}{2}}\\ \notag
\lesssim& |u|^{\frac{1}{2}}\Bigg[\int_{S_{u,\ubar}}\Bigg(||V_{D}V_{A}V_{0}f+|u|^{2}(\Gammaslash\pslash+\chibarhat+\chi p^{4})V_{4+D}V_{A}V_{0}f+|u|^{2}\widetilde{\tr\chibar}V_{4+D}V_{A}V_{0}f\\ \notag +&(\chi\pslash+\etabar)(V_{4}V_{A}V_{0}f+V_{0}V_{A}V_{0}f)||_{L^{2}(\mathcal{P}_{x})}\Bigg)^{2}\sqrt{\det\gslash}\hsp \text{d}\theta^1 \text{d}\theta^2 \Bigg]^{\frac{1}{2}}\\ \notag
\lesssim& |u|^{\frac{1}{2}}\Bigg[\int_{S_{u,\ubar}}\Bigg(||V_{D}V_{A}V_{0}f||^{2}_{L^{2}(\mathcal{P}_{x})}+a^{2}\Gamma^{2}||V_{4+D}V_{A}V_{0}f||^{2}_{L^{2}(\mathcal{P}_{x})}+\Gamma^{2}||V_{4+D}V_{A}V_{0}f||^{2}_{L^{2}(\mathcal{P}_{x})}\\ \notag
+&\frac{a\Gamma^{2}}{|u|^{4}}||V_{4}V_{A}V_{0}f||_{L^{2}(\mathcal{P}_{x})}\Bigg)\sqrt{\det\gslash} \Bigg]^{\frac{1}{2}}\\ \notag
\lesssim& |u|^{\frac{1}{2}}\Bigg[||V_{D}V_{A}V_{0}f||_{L^{2}(S_{u,\ubar})L^{2}(\mathcal{P}_{x})}+a\hsp \Gamma \hsp ||V_{4+D}V_{A}V_{0}f||_{L^{2}(S_{u,\ubar})L^{2}(\mathcal{P}_{x})}\\+&\Gamma \hsp ||V_{4+D}V_{A}V_{0}f||_{L^{2}(S_{u,\ubar})L^{2}(\mathcal{P}_{x})}+\frac{a^{\frac{1}{2}}\Gamma}{|u|^{2}}||V_{4}V_{A}V_{0}f||_{L^{2}(S_{u,\ubar})L^{2}(\mathcal{P}_{x})}\Bigg].
\end{align}
Therefore, the integral in \eqref{eq:border1} is estimated as
\begin{align}
&\int_{0}^{1} 
   \| V_{A} V_{B} V_{C} f \|_{L^{2}(S_{u',\ubar'}) L^{2}(\mathcal{P}_{x})} 
   \| V_{A} V_{0} f \|_{L^{4}(S_{u',\ubar'}) L^{2}(\mathcal{P}_{x})} 
   \, \text{d}\ubar' \notag \\
\quad \lesssim& |u|^{\frac12} \int_{0}^{1} 
   \| V_{A} V_{B} V_{C} f \|_{L^{2}(S_{u',\ubar'}) L^{2}(\mathcal{P}_{x})} \notag \\
&\qquad \times \Big[ 
       \| V_{D} V_{A} V_{0} f \|_{L^{2}(S_{u',\ubar'}) L^{2}(\mathcal{P}_{x})} 
     + a \Gamma \| V_{4+D} V_{A} V_{0} f \|_{L^{2}(S_{u',\ubar'}) L^{2}(\mathcal{P}_{x})} \notag \\
&\qquad\quad 
     + \Gamma \| V_{4+D} V_{A} V_{0} f \|_{L^{2}(S_{u',\ubar'}) L^{2}(\mathcal{P}_{x})} 
     + \frac{a^{\frac12} \Gamma}{|u|^{2}} 
       \| V_{4} V_{A} V_{0} f \|_{L^{2}(S_{u',\ubar'}) L^{2}(\mathcal{P}_{x})} 
   \Big] \, d\ubar' \notag \\
&\qquad + \int_{0}^{1} 
     \| V_{A} V_{B} V_{C} f \|_{L^{2}(S_{u',\ubar'}) L^{2}(\mathcal{P}_{x})} 
     \, d\ubar' \notag \\
\quad \lesssim& |u|^{\frac12} \int_{0}^{1} \Big[ 
       \| V_{A} V_{B} V_{C} f \|^{2}_{L^{2}(S_{u',\ubar'}) L^{2}(\mathcal{P}_{x})} 
     + \frac{1}{|u|^{2}}\| V_{D} V_{A} |u|V_{0} f \|^{2}_{L^{2}(S_{u',\ubar'}) L^{2}(\mathcal{P}_{x})} \notag \\
&\qquad 
     + \frac{a \Gamma}{|u|^{2}} 
       \| |u| V_{4+D} V_{A} |u| V_{0} f \|^{2}_{L^{2}(S_{u',\ubar'}) L^{2}(\mathcal{P}_{x})} \notag \\
&\qquad 
     + \frac{\Gamma}{|u|^{2}} 
       \| |u| V_{4+D} V_{A} |u|V_{0} f \|^{2}_{L^{2}(S_{u',\ubar'}) L^{2}(\mathcal{P}_{x})} 
     + |u|^{-\frac12} 
       \| V_{A} V_{B} V_{C} f \|^{2}_{L^{2}(S_{u',\ubar'}) L^{2}(\mathcal{P}_{x})} 
   \Big] \, d\ubar' \notag \\
\quad \lesssim& |u|^{\frac12} \Big[ 
       a^{2}\mathcal{V} 
     + \frac{a^{3} \Gamma \mathcal{V}}{|u|^{2}} 
     + \frac{\Gamma a^{2}\mathcal{V}}{|u|^{2}} 
     + a^{2}|u|^{-\frac12} \mathcal{V} 
   \Big] 
   \lesssim a^{2}|u|^{\frac12}\mathcal{V}. 
\end{align}
Therefore, the ultimate estimate for \eqref{eq:ultimate1} reads 
\begin{align}
&\int_{u_{\infty}}^{u} \int_{0}^{1} \int_{S_{u^{\prime}, \ubar'}} 
 \bigg( \int_{\mathcal{P}_{x}} V_{A} V_{B} V_{C} f \hsp \nablasl \psi \hsp
 f_{1}(\psi)\hsp  f_{2}(\pslash)\hsp  f_{3}(p^{4}) \hsp
 V_{A} V_{0} f \hsp \text{d}\mathcal{P}_x \bigg)
 \sqrt{\det \gslash} \hsp \text{d}\theta^1 \text{d}\theta^2 \text{d}\ubar' \, \duprime \nonumber \\
\quad \lesssim& 
\int_{u_{\infty}}^{u} 
\sup_{\ubar} \| \nablasl \psi \|_{L^{4}(S_{u',\ubar})} \,
\sup_{\ubar} \| f_{1}(\psi) \|_{L^{\infty}(S_{u',\ubar})} \,
\sup_{\mathcal{P}_{x}} |f_{2}(\pslash)| \,
\sup_{\mathcal{P}_{x}} |f_{3}(p^{4})| \nonumber \\
&\qquad\qquad \times 
\int_{0}^{1} 
\| V_{A} V_{B} V_{C} f \|_{L^{2}(S_{u',\ubar'}) L^{2}(\mathcal{P}_{x})} \,
\| V_{A} V_{0} f \|_{L^{4}(S_{u',\ubar'}) L^{2}(\mathcal{P}_{x})} \,
\dubarprime\duprime  \nonumber \\
\quad \lesssim& 
\int_{u_{\infty}}^{u} 
\sup_{\ubar} \| \nablasl \psi \|_{L^{4}(S_{u',\ubar})} \,
\sup_{\ubar} \| f_{1}(\psi) \|_{L^{\infty}(S_{u',\ubar})} \,
\sup_{\mathcal{P}_{x}} |f_{2}(\pslash)| \,
\sup_{\mathcal{P}_{x}} |f_{3}(p^{4})| \, 
|u'|^{\frac{1}{2}} a^{2} \mathcal{V}
 \, \duprime \nonumber \\
\quad =& 
a^{2}\mathcal{V}
\int_{u_{\infty}}^{u} 
\sup_{\ubar} \| \nablasl \psi \|_{L^{4}(S_{u',\ubar})} \,
\sup_{\ubar} \| f_{1}(\psi) \|_{L^{\infty}(S_{u',\ubar})} \,
\sup_{\mathcal{P}_{x}} |f_{2}(\pslash)| \,
\sup_{\mathcal{P}_{x}} |f_{3}(p^{4})| \,
|u'|^{\frac{1}{2}} \, \duprime.
\end{align}
Now we systematically control these terms, by recalling the commutation formula derived in the previous Section.  Using the explicit expressions for the error terms $V_{I}(C^{J}_{K})$ as in \ref{eq:V1}-\ref{eq:V4}, we estimate the following:
\begin{align}
a^{2}\mathcal{V}\int_{u_{\infty}}^{u}\sup_{\ubar}||\nablasl\psi||_{L^{4}(S_{u^{\prime},\ubar})}\sup_{\ubar}||f_{1}(\psi)||_{L^{\infty}(S_{u^{\prime},\ubar})}\sup_{\mathcal{P}_{x}}|f_{2}(\pslash)|\sup_{\mathcal{P}_{x}}|f_{3}(p^{4})| |u^{\prime}|^{\frac{1}{2}}\text{d}u^{\prime}.
\end{align}
First, consider the worst term. For $\psi=\alpha,~f_{1}=1,f_{2}=\pslash, f_{3}=1$, we have 
\begin{eqnarray}
a^{2}\mathcal{V}\int_{u_{\infty}}^{u}\sup_{\ubar}||\nablasl\alpha||_{L^{4}(S_{u^{\prime},\ubar})}\sup_{\mathcal{P}_{x}}|\pslash| |u^{\prime}|^{\frac{1}{2}}\text{d}u^{\prime}     
\end{eqnarray}
To estimate $||\snabla\alpha||_{L^{4}}$, we first use the scale-invariant codimension-1 trace inequality \eqref{codimension12}: 
\begin{align}
||\nablasl\alpha||_{L^{4}_{sc}(S_{u,\ubar})}\lesssim ||\nablasl\alpha||_{L^{4}_{sc}(S_{u,0})}+||\nabla_{4}\nablasl\alpha||^{\frac{1}{2}}_{L^{2}_{sc}(H)}\Bigg(||\nablasl\alpha||^{\frac{1}{2}}_{L^{4}_{sc}(H)}+||a^{\frac{1}{2}}\nablasl^{2}\alpha||^{\frac{1}{2}}_{L^{2}_{sc}(H)} \Bigg).    
\end{align}
Note that the right-hand side is under control in terms of $\Gamma$ by bootstrap. Therefore, noting the scaling 
\begin{align}
 ||\nablasl\alpha||_{L^{4}(S_{u,\ubar})}\lesssim \frac{a^{\frac{1}{2}}}{|u|^{\frac{3}{2}}}\Gamma,   
\end{align}
hence 
\begin{align} \notag 
&a^{2}\mathcal{V}\int_{u_{\infty}}^{u}\sup_{\ubar}||\nablasl\alpha||_{L^{4}(S_{u^{\prime},\ubar})}\hsp \sup_{\mathcal{P}_{x}}|\pslash| \hsp |u^{\prime}|^{\frac{1}{2}}\text{d}u^{\prime}\lesssim a^{\frac{5}{2}}\mathcal{V}\Gamma \int_{u_{\infty}}^{u}|u^{\prime}|^{-\frac{3}{2}}|u^{\prime}|^{-1}|u^{\prime}|^{\frac{1}{2}}\text{d}u^{\prime} \\
\lesssim& \frac{a^{\frac{5}{2}}\mathcal{V}\Gamma}{|u|}\lesssim a^2,
\end{align}
Now consider the curvature $\betabar$. We observe that we need to estimate the following
\begin{eqnarray}
a^{2}\mathcal{V}\int_{u_{\infty}}^{u}\sup_{\ubar}||\nablasl\betabar||_{L^{4}(S_{u^{\prime},\ubar})} |u^{\prime}|^{\frac{1}{2}}\text{d}u^{\prime}     
\end{eqnarray}
We control  $||\nablasl\betabar||_{L^{4}(S_{u^{\prime},\ubar})}$ by the codimension-1 trace inequality \ref{codimension12}
\begin{eqnarray}
||\nablasl\betabar||_{L^{4}_{sc}(S_{u,\ubar})}\lesssim ||\nablasl\betabar||_{L^{4}_{sc}(S_{u,0})}+||\snabla_{4}\nablasl\betabar||^{\frac{1}{2}}_{L^{2}_{sc}(H)}\Bigg(||\nablasl\betabar||^{\frac{1}{2}}_{L^{4}_{sc}(H)}+||a^{\frac{1}{2}}\nablasl^{2}\betabar||^{\frac{1}{2}}_{L^{2}_{sc}(H)} \Bigg)    
\end{eqnarray}
and by the bootstrap assumptions \eqref{boundsbootstrap}, we have 
\begin{eqnarray}
||\nablasl\betabar||_{L^{4}_{sc}(S_{u,\ubar})}\lesssim a^{\frac{1}{4}}\Gamma \implies ||\nablasl\betabar||_{L^{4}(S_{u,\ubar})}\lesssim \frac{a^{2+\frac{1}{4}}}{|u|^{4+\frac{1}{2}}}\Gamma,   
\end{eqnarray}
since $s_{2}(\betabar)=\frac{3}{2}$, $s_{2}(\nablasl \betabar)=2$, and $||\nablasl\betabar||_{L^{4}(S_{u,\ubar})}=\frac{a^{2}}{|u|^{4+\frac{1}{2}}}||\nablasl\betabar||_{L^{4}_{sc}(S_{u,\ubar})}$. Therefore 
\begin{eqnarray}
a^{2}\mathcal{V}\int_{u_{\infty}}^{u}\sup_{\ubar}||\nablasl\betabar||_{L^{4}(S_{u^{\prime},\ubar})} |u^{\prime}|^{\frac{1}{2}}\text{d}u^{\prime} \lesssim a^{2}\mathcal{V}\hsp \Gamma\int_{u_{\infty}}^{u}\frac{a^{2+\frac{1}{4}}}{|u^{\prime}|^{4+\frac{1}{2}}}|u^{\prime}|^{\frac{1}{2}}\text{d}u^{\prime}
\lesssim \frac{a^{4+\frac{1}{4}}\mathcal{V}\Gamma}{|u|^{3}}.  
\end{eqnarray}
Similarly,
\begin{align}
a^{2}\mathcal{V}\int_{u_{\infty}}^{u}\sup_{\ubar}||\nablasl\beta||_{L^{4}(S_{u^{\prime},\ubar})}\sup_{\mathcal{P}_{x}}|\pslash\hsp \pslash| |u^{\prime}|^{\frac{1}{2}}\text{d}u^{\prime}\lesssim  a^{2}\mathcal{V}\hsp \Gamma\int_{u_{\infty}}^{u}\frac{a^{1+\frac{1}{4}}}{|u^{\prime}|^{2+\frac{1}{2}}}\frac{1}{|u^{\prime}|^{2}}|u^{\prime}|^{\frac{1}{2}}\text{d}u^{\prime}
\lesssim \frac{a^{3+\frac{1}{4}}\mathcal{V}\Gamma}{|u|^{3}}.
\end{align}
Finally, we need to control the following involving $\alphabar$ that has the highest decay
\begin{align}
a^{2}\mathcal{V}\int_{u_{\infty}}^{u}\sup_{\ubar}||\nablasl\alphabar||_{L^{4}(S_{u^{\prime},\ubar})}\sup_{\mathcal{P}_{x}}|\pslash|\hsp  |u^{\prime}|^{\frac{1}{2}}\text{d}u^{\prime}    
\end{align}
We invoke the following codimension-1 trace inequality \ref{codimension11} for $\alphabar$, 
\begin{eqnarray}
 ||\nablasl\alphabar||_{L^{4}_{sc}(S_{u,\ubar})}\lesssim ||\nablasl\alphabar||_{L^{4}_{sc}(S_{u_{\infty},\ubar})}+||a^{\frac{1}{2}}\nablasl_{3}\nablasl\alphabar||^{\frac{1}{2}}_{L^{2}_{sc}(\Hbar)}\left(a^{-\frac{1}{4}}||\nablasl\alphabar||^{\frac{1}{2}}_{L^{2}_{sc}(\Hbar)}+||\nablasl\nablasl\alphabar||^{\frac{1}{2}}_{L^{2}_{sc}(\Hbar)}\right).     
\end{eqnarray}
Now, from the bootstrap, we have control over the following $\alphabar$ norms 
\begin{align}
 \scaletwoHbaru{\mathcal{D}\alphabar}\lesssim \Gamma,\scaletwoHbaru{\mathcal{D}^2\alphabar}\lesssim \Gamma   
\end{align}
for $\mathcal{D}\in \{\snabla_{4},|u|\snabla_{3},a^{\frac{1}{2}}\nablasl\}$. Therefore, this translates to 
\begin{eqnarray}
 ||\nablasl\alphabar||_{L^{4}_{sc}(S_{u,\ubar})}\lesssim ||\nablasl\alphabar||_{L^{4}_{sc}(S_{u_{\infty}})}+\frac{\Gamma}{a}   
\end{eqnarray}
or in the ordinary norm 
\begin{eqnarray}
||\nablasl\alphabar||_{L^{4}(S_{u,\ubar})}\lesssim \frac{a^{\frac{5}{2}}}{|u|^{5+\frac{1}{2}}}\Gamma    
\end{eqnarray}
and therefore 
\begin{eqnarray}
 a^{2}\mathcal{V}\int_{u_{\infty}}^{u}\sup_{\ubar}||\nablasl\alphabar||_{L^{4}(S_{u^{\prime},\ubar})}\sup_{\mathcal{P}_{x}}|\pslash| \hsp  |u^{\prime}|^{\frac{1}{2}}\text{d}u^{\prime}\lesssim a^{2}\mathcal{V}\Gamma\int_{u_{\infty}}^{u}\frac{a^{\frac{5}{2}}}{|u|^{5+\frac{1}{2}}}|u^{\prime}|^{-1}|u^{\prime}|^{\frac{1}{2}}\text{d}u^{\prime}
 \lesssim \frac{a^{2+\frac{5}{2}}\mathcal{V}\Gamma}{|u|^{4}}. 
\end{eqnarray}
The derivatives of the Ricci coefficients are estimated as follows (one again, we do not repeat estimates that appear twice or more):
\begin{align}
&a^{2} \mathcal{V}\int_{u_{\infty}}^{u}\sup_{\ubar}||\nablasl\widetilde{\tr\chibar}||_{L^{4}(S_{u^{\prime},\ubar})}\sup_{\ubar}||\psi_{g}||_{L^{\infty}(S_{u^{\prime},\ubar})}|u^{\prime}|^{\frac{1}{2}}\text{d}u^{\prime}\\
 \lesssim& a^{2}\mathcal{V}\int_{u_{\infty}}^{u}\frac{\Gamma}{|u^{\prime}|^{2+\frac{1}{2}}}\frac{\Gamma}{|u^{\prime}|}|u^{\prime}|^{\frac{1}{2}}\lesssim \frac{a^{2}\mathcal{V}\hsp \Gamma^{2}}{|u|^{2}}.
\end{align}
This completes the estimates for the term 
\begin{align}
a^{2}\mathcal{V}\int_{u_{\infty}}^{u}\sup_{\ubar}||\nablasl\psi||_{L^{4}(S_{u^{\prime},\ubar})}\sup_{\ubar}||f_{1}(\psi)||_{L^{\infty}(S_{u^{\prime},\ubar})}\sup_{\mathcal{P}_{x}}|f_{2}(\pslash)|\sup_{\mathcal{P}_{x}}|f_{3}(p^{4})| |u^{\prime}|^{\frac{1}{2}}\text{d}u^{\prime} .
\end{align}
Now consider the next term 
\begin{align}
\int_{u_{\infty}}^{u} \int_{0}^{1}\int_{S_{u^{\prime},\ubar^{\prime}}}\Bigg(\int_{p}V_{A}V_{B}V_{C}f V_{B}(C^{D}_{C})V_{A}V_{D}f dP\Bigg)\sqrt{\det\gslash} \hsp\text{d}\theta^1 \text{d}\theta^2\text{d}\ubar^{\prime}   \duprime.
\end{align}
Similarly, one has the following schematic structure for a $\psi$ belonging to the set of connection of Weyl curvature components: 
\begin{align}
 V_{B}(C^{D}_{C})\sim \nablasl\psi \cdot f_{1}(\psi)\cdot f_{2}(\pslash)\cdot f_{3}(p^{4})+l.o.t.   
\end{align}
We estimate the integral in a similar fashion,
\begin{align} \notag
&\int_{u_{\infty}}^{u} \int_{0}^{1}\int_{S_{u^{\prime},u^{\prime}}}\Bigg(\int_{p}V_{A}V_{B}V_{C}f\hsp \nablasl \psi \hsp f_{1}(\psi) \hsp f_{2}(\pslash)\hsp f_{3}(p^{4}) \hsp V_{A}V_{D}f \text{d}\mathcal{P}_{x}\Bigg)\sqrt{\det\gslash} \hsp \text{d}\theta^1 \text{d}\theta^2 \dubarprime\duprime \\\nonumber 
 \lesssim& 
 \int_{u_{\infty}}^{u}\int_{0}^{1}\int_{S_{u,\ubar}}|\nablasl\psi||f_{1}(\psi)|\sup_{\mathcal{P}_{x}}|f_{2}(\pslash)|\sup_{\mathcal{P}_{x}}|f_{3}(p^{4})| ||V_{A}V_{B}V_{C}f||_{L^{2}(\mathcal{P}_{x})}||V_{A}V_{D}f||_{L^{2}(\mathcal{P}_{x})}\sqrt{\det\gslash}\hsp \text{d}\theta^1 \text{d}\theta^2\dubarprime\duprime  \\\nonumber 
 \lesssim& \int_{u_{\infty}}^{u}\int_{0}^{1}\sup_{\mathcal{P}_{x}}|f_{2}(\pslash)|\sup_{\mathcal{P}_{x}}|f_{3}(p^{4})||\nablasl\psi||_{L^{4}(S_{u^{\prime},\ubar^{\prime}})}||f_{1}(\psi)||_{L^{\infty}(S_{u^{\prime},\ubar^{\prime}})}\\ &\hspace{3mm}\times||V_{A}V_{B}V_{C}f||_{L^{2}(S_{u^{\prime},\ubar^{\prime}})L^{2}(\mathcal{P}_{x})}||V_{A}V_{D}f||_{L^{4}(S_{u^{\prime},\ubar^{\prime}})L^{2}(\mathcal{P}_{x})}\text{d}\ubar^{\prime} \text{d}u^{\prime} \\\nonumber 
 \lesssim& \int_{u_{\infty}}^{u}\sup_{\ubar}||\nablasl\psi||_{L^{4}(S_{u^{\prime},\ubar})}\sup_{\ubar}||f_{1}(\psi)||_{L^{\infty}(S_{u^{\prime},\ubar})}\sup_{\mathcal{P}_{x}}|f_{2}(\pslash)|\sup_{\mathcal{P}_{x}}|f_{3}(p^{4})|\\ &\hspace{3mm}\times\int_{0}^{1}||V_{A}V_{B}V_{C}f||_{L^{2}(S_{u^{\prime},\ubar^{\prime}})L^{2}(\mathcal{P}_{x})}||V_{A}V_{D}f||_{L^{4}(S_{u^{\prime},\ubar^{\prime}})L^{2}(\mathcal{P}_{x})}\text{d}\ubar^{\prime} \text{d}u^{\prime} .    
\end{align}
Now we claim that the integral 
\begin{align}
\label{eq:border}
\int_{0}^{1}||V_{A}V_{B}V_{C}f||_{L^{2}(S_{u^{\prime},\ubar^{\prime}})L^{2}(\mathcal{P}_{x})}||V_{A}V_{D}f||_{L^{4}(S_{u^{\prime},\ubar^{\prime}})L^{2}(\mathcal{P}_{x})}\text{d}\ubar^{\prime}    
\end{align}
can be controlled by means of the bootstrap assumptions on the top order norm of $f$ and the lower order norms estimated earlier. The philosophy is exactly the same as before. Note that $||V_{A}V_{D}f||_{L^{2}(\mathcal{P}_{x})}$ is a function on spacetime and particular on $S_{u,\ubar}$. Therefore, we can apply the Sobolev embedding result on the sphere. Recall the Sobolev embedding for a function $\mathcal{G}$ on $S_{u,\ubar}$: 
\begin{eqnarray}
||\mathcal{G}||_{L^{4}(S_{u,\ubar})}\lesssim |u|^{\frac{1}{2}}||\nablasl \mathcal{G}||_{L^{2}(S_{u,\ubar})}+|u|^{-\frac{1}{2}}||\mathcal{G}||_{L^{2}(S_{u,\ubar})}.    
\end{eqnarray}
Applying this to the function $\mathcal{G}=||V_{A}V_{D}f||_{L^{2}(\mathcal{P}_{x})}$ yields
\begin{align}&
||V_{A}V_{D}f||_{L^{4}(S_{u^{\prime},\ubar^{\prime}})L^{2}(\mathcal{P}_{x})}\notag \\\lesssim& |u|^{\frac{1}{2}} \left[\int_{S_{u,\ubar}}\left(\nablasl \left(\int_{\mathcal{P}_{x}}|V_{A}V_{D}f|^{2} d\mathcal{P}_{x}\right)^{\frac{1}{2}} \right)^{2} \right]^{\frac{1}{2}}\nonumber+|u|^{-\frac{1}{2}}\Bigg[\int_{S_{u,\ubar}}\left(\int_{\mathcal{P}_{x}}|V_{A}V_{D}f|^{2} d\mathcal{P}_{x}\right)\Bigg]^{\frac{1}{2}} \\ 
\lesssim& |u|^{\frac{1}{2}} \left[\int_{S_{u,\ubar}}\left(\nablasl \left(\int_{\mathcal{P}_{x}}|V_{A}V_{D}f|^{2} d\mathcal{P}_{x}\right)^{\frac{1}{2}} \right)^{2} \right]^{\frac{1}{2}}+a|u|^{-\frac{1}{2}}\sqrt{\mathcal{I}^{0}_{2}},
\end{align}
by using the second derivative estimates from the previous Proposition \ref{prop:second_derivative_estimate}. We further  compute \begin{align}
\nablasl\left(\int_{\mathcal{P}_{x}}|V_{A}V_{D}f|^{2} \right)^{\frac{1}{2}}=\frac{\int_{P}V_{A}V_{D}f \nablasl V_{A}V_{D}f}{\sqrt{\left(\int_{\mathcal{P}_{x}}|V_{A}V_{D}f|^{2} \right)^{\frac{1}{2}}}}+\Gammaslash|||V_{A}V_{D}f||_{L^{2}(\mathcal{P}_{x})}\lesssim ||\nablasl V_{A}V_{D}f||_{L^{2}(\mathcal{P}_{x})}+\Gammaslash \hsp |||V_{A}V_{D}f||_{L^{2}(\mathcal{P}_{x})}. 
\end{align}
Recalling the following relation between the derivatives, 
\begin{align}  e_A =& V_{(A)}  + \modu^2 \left( \frac{\Gammaslash^C_{AB}\hsp p^B}{p^3} + {\chibarhat_A}^C +\frac{{\chi_A}^C \hsp p^4}{p^3}\right) V_{(4+C)}  \notag \\ &+\frac{\modu^2}{2} (\tr\chibar+\frac{2}{\modu}) V_{(4+A)} +\left( \frac{\frac{1}{2}\chi_{AB}\hsp p^B}{p^3} -\etabar_A\right)(V_{(4)}+V_{(0)}), \label{expressionVA-1} \end{align}    
we establish the schematic expression 
\begin{align}
&||\nablasl V_{A}V_{E}f||_{L^{2}(\mathcal{P}_{x})}\sim ||V_{D}V_{A}V_{E}f+|u|^{2}(\Gammaslash\pslash+\chibarhat+\chi p^{4})V_{4+D}V_{A}V_{E}f+|u|^{2}\widetilde{\tr\chibar}V_{4+D}V_{A}V_{E}f \notag\\ &\hspace{72mm}+(\chi\pslash+\etabar)(V_{4}V_{A}V_{E}f+V_{0}V_{A}V_{E}f)||_{L^{2}(\mathcal{P}_{x})}.     
\end{align}
Putting together all these basic computations, we obtain 
\begin{align}
&|u|^{\frac{1}{2}} \left[\int_{S_{u,\ubar}}\left(\nablasl \left(\int_{\mathcal{P}_{x}}|V_{A}V_{E}f|^{2} d\mathcal{P}_{x}\right)^{\frac{1}{2}} \right)^{2} \hsp \sqrt{\det\gslash}\hsp\text{d}\theta^1\text{d}\theta^2 \right]^{\frac{1}{2}}\\ \notag
\lesssim& |u|^{\frac{1}{2}}\Bigg[||V_{D}V_{A}V_{E}f||_{L^{2}(S_{u,\ubar})L^{2}(\mathcal{P}_{x})}+a\Gamma||V_{4+D}V_{A}V_{E}f||_{L^{2}(S_{u,\ubar})L^{2}(\mathcal{P}_{x})}\\+&\Gamma||V_{4+D}V_{A}V_{E}f||_{L^{2}(S_{u,\ubar})L^{2}(\mathcal{P}_{x})}+\frac{a^{\frac{1}{2}}\Gamma}{|u|^{2}}||V_{4}V_{A}V_{E}f||_{L^{2}(S_{u,\ubar})L^{2}(\mathcal{P}_{x})}\Bigg].
\end{align}
Notice that this controls the top order terms in the Vlasov estimates. The vital point to note here that without uniform estimate on the first derivative $Vf$ in $L^{2}(\mathcal{P}_{x})$, this strategy fails. This, in turn, necessitates $L^{\infty}$ control of the Weyl curvature. Therefore, at least two derivatives of the curvature on the null cone $H$ (and/or $\Hbar$) need to be controlled. Finally, the integral in \eqref{eq:border} is estimated as
\begin{align} \notag
&\int_{0}^{1}||V_{A}V_{B}V_{C}f||_{L^{2}(S_{u^{\prime},\ubar^{\prime}})L^{2}(\mathcal{P}_{x})}||V_{A}V_{E}f||_{L^{4}(S_{u^{\prime},\ubar^{\prime}})L^{2}(\mathcal{P}_{x})}\text{d}\ubar^{\prime} \\
\notag \lesssim& |u|^{\frac{1}{2}}\int_{0}^{1}\Bigg[||V_{A}V_{B}V_{C}f||^{2}_{L^{2}(S_{u^{\prime},\ubar^{\prime}})L^{2}(\mathcal{P}_{x})}+||V_{D}V_{A}V_{E}f||^{2}_{L^{2}(S_{u,\ubar})L^{2}(\mathcal{P}_{x})}+\frac{a^{2}\Gamma^{2}}{|u|^{2}}||u V_{4+D}V_{A}V_{E}f||^{2}_{L^{2}(S_{u,\ubar})L^{2}(\mathcal{P}_{x})}\\ \notag+&\frac{\Gamma^{2}}{|u|^{2}}||u V_{4+D}V_{A}V_{E}f||^{2}_{L^{2}(S_{u,\ubar})L^{2}(\mathcal{P}_{x})}+\frac{a\Gamma}{|u|^{4}}||V_{4}V_{A}V_{E}f||^{2}_{L^{2}(S_{u,\ubar})L^{2}(\mathcal{P}_{x})}+|u|^{-\frac{1}{2}}||V_{A}V_{B}V_{C}f||^{2}_{L^{2}(S_{u^{\prime},\ubar^{\prime}})L^{2}(\mathcal{P}_{x})}\Bigg]\text{d}\ubar^{\prime} \notag \\ 
\lesssim& |u|^{\frac{1}{2}}\Bigg[\mathcal{V}+\frac{a^{2}\Gamma^{2}\mathcal{V}}{|u|^{2}}+\frac{\Gamma^{2}\mathcal{V}}{|u|^{2}}+|u|^{-\frac{1}{2}}\mathcal{V}\Bigg]\lesssim |u|^{\frac{1}{2}}\Gamma^{2}\mathcal{V}.
\end{align}
Recalling back the estimates that we wanted to obtain, 
\begin{align} \notag 
&\int_{u_{\infty}}^{u} \int_{0}^{1}\int_{S_{u,u^{\prime}}}\Bigg(\int_{p}V_{A}V_{B}V_{C}f\nablasl \psi f_{1}(\psi) f_{2}(\pslash)f_{3}(p^{4}) V_{A}V_{E}f dP\Bigg)\sqrt{\det\gslash}\hsp \text{d}\theta^1 \text{d}\theta^2 \dubarprime \duprime \\ \notag 
\lesssim& \int_{u_{\infty}}^{u}\sup_{\ubar}||\nablasl\psi||_{L^{4}(S_{u^{\prime},\ubar})}\sup_{\ubar}||f_{1}(\psi)||_{L^{\infty}(S_{u^{\prime},\ubar})}\sup_{\mathcal{P}_{x}}|f_{2}(\pslash)|\sup_{\mathcal{P}_{x}}|f_{3}(p^{4})|\\ \notag &\hspace{10mm}\times \int_{0}^{1}||V_{A}V_{B}V_{C}f||_{L^{2}(S_{u^{\prime},\ubar^{\prime}})L^{2}(\mathcal{P}_{x})}||V_{A}V_{E}f||_{L^{4}(S_{u^{\prime},\ubar^{\prime}})L^{2}(\mathcal{P}_{x})}\text{d}\ubar^{\prime} \text{d}u^{\prime} \\\nonumber 
\lesssim& \int_{u_{\infty}}^{u}\sup_{\ubar}||\nablasl\psi||_{L^{4}(S_{u^{\prime},\ubar})}\sup_{\ubar}||f_{1}(\psi)||_{L^{\infty}(S_{u^{\prime},\ubar})}\sup_{\mathcal{P}_{x}}|f_{2}(\pslash)|\sup_{\mathcal{P}_{x}}|f_{3}(p^{4})| |u^{\prime}|^{\frac{1}{2}}\Gamma^{2}\mathcal{V}\text{d}u^{\prime} \\
=&\Gamma^{2}\mathcal{V}\int_{u_{\infty}}^{u}\sup_{\ubar}||\nablasl\psi||_{L^{4}(S_{u^{\prime},\ubar})}\sup_{\ubar}||f_{1}(\psi)||_{L^{\infty}(S_{u^{\prime},\ubar})}\sup_{\mathcal{P}_{x}}|f_{2}(\pslash)|\sup_{\mathcal{P}_{x}}|f_{3}(p^{4})| |u^{\prime}|^{\frac{1}{2}}\text{d}u^{\prime} .
\end{align} 
First, note that several estimates repeat, and therefore we do not write the estimates for every term. Collecting all terms together yields 
\begin{align}
&\Gamma^{2}\mathcal{V}\int_{u_{\infty}}^{u}\sup_{\ubar}||\nablasl (\chibarhat,\widetilde{\tr\chibar})||_{L^{4}(S_{u^{\prime},\ubar})}\sup_{\ubar}|| |u^{\prime}|^{\frac{1}{2}}\text{d}u^{\prime} \lesssim\Gamma^{2}\mathcal{V}\int_{u_{\infty}}^{u}\Bigg(\frac{a^{\frac{1}{2}}\Gamma}{|u^{\prime}|^{2+\frac{1}{2}}},\frac{\Gamma}{|u^{\prime}|^{2+\frac{1}{2}}}\Bigg)|u^{\prime}|^{\frac{1}{2}}\text{d}u^{\prime} \notag \\ 
\lesssim& \Gamma^{2}\mathcal{V}\Bigg(\frac{a^{\frac{1}{2}}\Gamma}{|u|},\frac{\Gamma}{|u|}\Bigg).
\end{align}
We now move on to estimating the terms containing two derivatives of Weyl curvature. This ensemble of terms is depicted in the following term in the transport estimates: 
\begin{align}
\sum_{L=0}^{6}\int_{u_{\infty}}^{u}\int_{0}^{1}\int_{S_{u^{\prime},\ubar^{\prime}}}\Bigg[\int_{\mathcal{P}_{x}}V_{A}V_{B}V_{C}fV_{A}V_{B}(C^{L}_{C})V_{L}f \text{d}\mathcal{P}_x\Bigg]\sqrt{\det\gslash}\hsp \text{d}\theta^1 \text{d}\theta^2 \dubarprime\duprime.   
\end{align}
We compute $V_{A}V_{B}(C^{L}_{C})$ for all $L=0,....,6$,
\begin{align}\notag
 &V_{A}V_{B}(C^{0}_{C})\sim \bigg[\nablasl^{2}\psi_{g} \hsp \psi_{g} \hsp p^{4}+\nablasl^{2}\psi_{g} \hsp \psi_{g} \hsp \pslash \hsp \pslash+\nablasl^{2}\psi_{g} \hsp \chibarhat  \hsp \pslash+\nablasl^{2}\psi_{g} \hsp \tr\chibar  \hsp\hsp  \pslash+\nablasl^{2}\tr\chibar  \hsp \psi_{g} \hsp \pslash+\nablasl^{2}\psi_{g} \hsp \pslash  \hsp p^{4}\\ \notag +&\nablasl^{2}\psi_{g} \hsp \pslash \hsp \pslash \hsp \pslash+\nablasl^{2}\psi_{g} \hsp p^{4} \hsp p^{4}+\nablasl^{2}\chibarhat \hsp  p^{4}+\nablasl^{2}\psi_{g} \hsp p^{4}+\nablasl^{2}\alpha \hsp \pslash+\nablasl^{2}\beta \hsp \pslash \hsp \pslash+\nablasl^{2}\alpha \hsp \pslash \hsp p^{4}+\nablasl^{2}\alphabar \hsp  \pslash\\
 +&|\nablasl\psi_{g}|^{2} \hsp (p^{4}+\pslash \hsp \pslash)+\nablasl\psi_{g} \hsp \nablasl\chibarhat  \hsp \pslash+\nablasl\psi_{g} \hsp \nablasl\tr\chibar \hsp \pslash+|\nablasl\psi_{g}|^{2} \hsp p^{4} \hsp p^{4}+\mathcal{E}_{l.o.t}\bigg]_{ABC},
\end{align}
where the $\mathcal{E}_{l.o.t}$ contains terms that are linear in the first derivative. The top derivative terms of $V_{A}V_{B}(C^{0}_{C})$ have the following schematic structure 
\begin{align}
\nablasl^{2}\psi \cdot f_{1}(\psi)\cdot f_{2}(\pslash)\cdot f_{3}(p^{4}), 
\end{align}
for $\psi$ denoting a Ricci coefficient or a Weyl curvature component. If $\psi$ is a Ricci coefficient that belongs to the set $\{\chihat,\tr\chi,\chibarhat,\tr\chibar,\eta,\etabar,\omega=0,\omegabar\}$, then we estimate $\nablasl^{2}\psi$ in $L^{2}(S_{u,\ubar})$,  
\begin{align}
 \notag &\int_{u_{\infty}}^{u}\int_{0}^{1}\int_{S_{u,\ubar}}\Bigg[ \int_{\mathcal{P}_{x}}V_{A}V_{B}V_{C}f \nablasl^{2}\psi f_{1}(\psi)f_{2}(\pslash)f_{3}(p^{4}) V_{0}f \hsp \text{d}\mathcal{P}_{x}\Bigg]\sqrt{\det\gslash}\hsp \text{d}\theta^1 \text{d}\theta^2 \dubarprime \duprime \\
 \lesssim& \int_{u_{\infty}}^{u}\sup_{\ubar}||\nablasl^{2}\psi||_{L^{2}(S_{u^{\prime},\ubar})}\sup_{\ubar}||f_{1}(\psi)||_{L^{\infty}(S_{u^{\prime},\ubar})}\sup_{\mathcal{P}_{x}}|f_{2}(\pslash)|\hsp \sup_{\mathcal{P}_{x}}|f_{3}(p^{4})|\notag \\&\hspace{3cm}\times\Bigg[\int_{0}^{1}\int_{S_{u^{\prime},\ubar^{\prime}}}\int_{\mathcal{P}_{x}}|V_{A}V_{B}V_{C}f|^{2} \text{d}\mathcal{P}_x \text{d}S_{u^{\prime},\ubar^{\prime}}\dubarprime\Bigg]^{\frac{1}{2}} \sup_{\ubar}||V_{0}f||_{L^{\infty}(S_{u^{\prime},\ubar})L^{2}(\mathcal{P}_{x})} \duprime .
\end{align}
Now note that $\sup_{\ubar}|||u|V_{0}f||_{L^{\infty}(S_{u^{\prime},\ubar})L^{2}(\mathcal{P}_{x})}$ is estimated in the previous step and we use bootstrap for $\int_{0}^{1}\int_{S_{u^{\prime},\ubar}}\int_{\mathcal{P}_{x}}|V_{A}V_{B}V_{C}f|^{2}$. Therefore, the final estimates become
\begin{align}
\notag \int_{u_{\infty}}^{u}\int_{0}^{1}\int_{S_{u,\ubar}}\Bigg[ \int_{\mathcal{P}_{x}}V_{A}V_{B}V_{C}f \nablasl^{2}\psi f_{1}(\psi)f_{2}(\pslash)f_{3}(p^{4}) V_{0}f \text{d}\mathcal{P}_x\Bigg]\sqrt{\det\gslash}\hsp \text{d}\theta^1 \text{d}\theta^2 \text{d}\ubar^{\prime} \text{d}u^{\prime} \\
 \lesssim a^{2}\mathcal{V}  \int_{u_{\infty}}^{u}\sup_{\ubar}||\nablasl^{2}\psi||_{L^{2}(S_{u^{\prime},\ubar})}\sup_{\ubar}||f_{1}(\psi)||_{L^{\infty}(S_{u^{\prime},\ubar})}\sup_{\mathcal{P}_{x}}|f_{2}(\pslash)|\sup_{\mathcal{P}_{x}}|f_{3}(p^{4})||u^{\prime}|^{-1} \duprime .
\end{align}
We now collect (without duplication) the terms that contain second derivatives of the Ricci coefficients. We have 
\begin{align}
a^{2}\mathcal{V}  \int_{u_{\infty}}^{u}\sup_{\ubar}||\nablasl^{2}\psi_{g}||_{L^{2}(S_{u^{\prime},\ubar})}\sup_{\ubar}||\psi_{g}||_{L^{\infty}(S_{u^{\prime},\ubar})}\sup_{\mathcal{P}_{x}}|p^{4}||u^{\prime}|^{-1}\\\nonumber 
\lesssim a^{2}\mathcal{V}\int_{u_{\infty}}^{u}\frac{a\Gamma}{|u^{\prime}|^{2}}\frac{\Gamma}{|u^{\prime}|}|u^{\prime}|^{-2}|u^{\prime}|^{-1}\text{d}u^{\prime} \lesssim \frac{a^{3}\Gamma^{2}\mathcal{V}}{|u|^{5}},\end{align}
\begin{align}
a^{2}\mathcal{V} \int_{u_{\infty}}^{u}\sup_{\ubar}||\nablasl^{2}\psi_{g}||_{L^{2}(S_{u^{\prime},\ubar})}\sup_{\ubar}||\psi_{g}||_{L^{\infty}(S_{u^{\prime},\ubar})}\sup_{\mathcal{P}_{x}}|\pslash\pslash||u^{\prime}|^{-1}
\lesssim \frac{a^{3}\Gamma^{2}\mathcal{V}}{|u|^{4}},\end{align}
\begin{align}
&a^{2}\mathcal{V}  \int_{u_{\infty}}^{u}\sup_{\ubar}||\nablasl^{2}\psi_{g}||_{L^{2}(S_{u^{\prime},\ubar})}\sup_{\ubar}||\chibarhat||_{L^{\infty}(S_{u^{\prime},\ubar})}\sup_{\mathcal{P}_{x}}|\pslash| |u^{\prime}|^{-1}\text{d}u^{\prime} \notag \\  \lesssim& 
 a^{2}\mathcal{V}\int_{u_{\infty}}^{u}\frac{a\Gamma}{|u^{\prime}|^{2}}\frac{a^{\frac{1}{2}}\Gamma}{|u^{\prime}|^{2}}|u^{\prime}|^{-1}|u^{\prime}|^{-1} \text{d}u^{\prime} \lesssim \frac{a^{3}\Gamma^{2}\mathcal{V}}{|u|^{5}},\end{align}
\begin{align}
a^{2}\mathcal{V}  \int_{u_{\infty}}^{u}\sup_{\ubar}||\nablasl^{2}\psi_{g}||_{L^{2}(S_{u^{\prime},\ubar})}\sup_{\ubar}||\tr\chibar||_{L^{\infty}(S_{u^{\prime},\ubar})} \sup_{\mathcal{P}_{x}}|\pslash| |u^{\prime}|^{-1}\text{d}u^{\prime}  \lesssim   a^{2}\mathcal{V}\int_{u_{\infty}}^{u}\frac{a\Gamma}{|u^{\prime}|^{2}}\frac{\Gamma}{|u^{\prime}|}|u^{\prime}|^{-2} \text{d}u^{\prime} \lesssim \frac{a^{3}\Gamma^{2}\mathcal{V}}{|u|^{4}},\\
a^{2}\mathcal{V}  \int_{u_{\infty}}^{u}\sup_{\ubar}||\nablasl^{2}\widetilde{\tr\chibar}||_{L^{2}(S_{u^{\prime},\ubar})}\sup_{\ubar}||\psi_{g}||_{L^{\infty}(S_{u^{\prime},\ubar})}\sup_{\mathcal{P}_{x}}|\pslash||u^{\prime}|^{-1}\text{d}u^{\prime}  \lesssim  a^{2}\mathcal{V}\int_{u_{\infty}}^{u}\frac{\Gamma}{|u^{\prime}|^{3}}\frac{\Gamma}{|u^{\prime}|}|u^{\prime}|^{-2} \text{d}u^{\prime} \lesssim \frac{a^{2}\Gamma^{2}\mathcal{V}}{|u|^{5}}, \\ 
a^{2}\mathcal{V}  \int_{u_{\infty}}^{u}\sup_{\ubar}||\nablasl^{2}\widetilde{\tr\chibar}||_{L^{2}(S_{u^{\prime},\ubar})}\sup_{\ubar}||\psi_{g}||_{L^{\infty}(S_{u^{\prime},\ubar})}\sup_{\mathcal{P}_{x}}|\pslash||u^{\prime}|^{-1}\text{d}u^{\prime}  \lesssim  a^{2}\mathcal{V}\int_{u_{\infty}}^{u}\frac{\Gamma}{|u^{\prime}|^{3}}\frac{\Gamma}{|u^{\prime}|}|u^{\prime}|^{-1} \text{d}u^{\prime} \lesssim \frac{a^{2}\Gamma^{2}\mathcal{V}}{|u|^{5}},\end{align}
\begin{align}
 a^{2}\mathcal{V}  \int_{u_{\infty}}^{u}\sup_{\ubar}||\nablasl^{2}\tr\chibar||_{L^{2}(S_{u^{\prime},\ubar})}\sup_{\mathcal{P}_{x}}|p^{4}||u^{\prime}|^{-1}\text{d}u^{\prime}  \lesssim   a^{2}\mathcal{V}\int_{u_{\infty}}^{u}\frac{a^{\frac{1}{2}}\Gamma}{|u^{\prime}|^{3}}|u^{\prime}|^{-2} \text{d}u^{\prime} \lesssim \frac{a^{\frac{5}{2}}\Gamma^{2}\mathcal{V}}{|u|^{4}}.    
\end{align}
We now move on to estimating terms involving the Weyl curvature $\psi \neq \alphabar$: 
\begin{align}
 &\int_{u_{\infty}}^{u}\int_{0}^{1}\int_{S_{u,\ubar}}\Bigg[ \int_{\mathcal{P}_{x}}V_{A}V_{B}V_{C}f \nablasl^{2}\psi f_{1}(\psi)f_{2}(\pslash)f_{3}(p^{4}) V_{0}fdP\Bigg]\sqrt{\det\gslash}\hsp \text{d}\theta^1 \text{d}\theta^2 \dubarprime \duprime \notag \\ \notag
 \lesssim& \int_{u_{\infty}}^{u}\sup_{\ubar}||f_{1}(\psi)||_{L^{\infty}(S_{u^{\prime},\ubar})}\sup_{\mathcal{P}_{x}}|f_{2}(\pslash)|\sup_{\mathcal{P}_{x}}|f_{3}(p^{4})|\\ \notag 
&\hspace{1cm} \times \int_{0}^{1}||\nablasl^{2}\psi||_{L^{2}(S_{u^{\prime},\ubar^{\prime}})}||V_{A}V_{B}V_{C}f||_{L^{2}(S_{u^{\prime},\ubar^{\prime}})L^{2}(\mathcal{P}_{x})}||V_{0}f||_{L^{\infty}(S_{u^{\prime},\ubar^{\prime}})L^{2}(\mathcal{P}_{x})}\text{d}u^{\prime} \text{d}\ubar^{\prime} \\ \notag 
 \lesssim& \int_{u_{\infty}}^{u}\sup_{\ubar}||\nablasl^{2}\psi||_{L^{2}(H_{u^{\prime}})}\sup_{\ubar}||f_{1}(\psi)||_{L^{\infty}(S_{u^{\prime},\ubar})}\sup_{\mathcal{P}_{x}}|f_{2}(\pslash)|\sup_{\mathcal{P}_{x}}|f_{3}(p^{4})|\\
 \notag &\hspace{1cm}\times\Bigg[\int_{0}^{1}\int_{S_{u^{\prime},\ubar}}\int_{\mathcal{P}_{x}}|V_{A}V_{B}V_{C}f|^{2}\Bigg]^{\frac{1}{2}}
 \sup_{\ubar}||V_{0}f||_{L^{\infty}(S_{u^{\prime},\ubar})L^{2}(\mathcal{P}_{x})}\duprime \\ 
 \lesssim &a^{2}\mathcal{V}\int_{u_{\infty}}^{u}\sup_{\ubar}||\nablasl^{2}\psi||_{L^{2}(H_{u^{\prime}})}\sup_{\ubar}||f_{1}(\psi)||_{L^{\infty}(S_{u^{\prime},\ubar})}\sup_{\mathcal{P}_{x}}|f_{2}(\pslash)|\sup_{\mathcal{P}_{x}}|f_{3}(p^{4})||u^{\prime}|^{-1}\text{d}u^{\prime} .
\end{align}
Now estimate 
\begin{align}
\notag &\int_{u_{\infty}}^{u}\int_{0}^{1}\int_{S_{u,\ubar}}\Bigg[ \int_{\mathcal{P}_{x}}V_{A}V_{B}V_{C}f \nablasl^{2}\alpha \pslash V_{0}fdP\Bigg]\sqrt{\det\gslash}\text{d}\ubar^{\prime} \text{d}u^{\prime} \\ \notag
\lesssim& a^{2}\mathcal{V}\int_{u_{\infty}}^{u}\sup_{\ubar}||\nablasl^{2}\alpha||_{L^{2}(H_{u^{\prime}})}\sup_{\ubar}\sup_{\mathcal{P}_{x}}|\pslash||u^{\prime}|^{-1}\text{d}u^{\prime} \\
\lesssim& a^{2}\mathcal{V}\int_{u_{\infty}}^{u}\frac{a^{\frac{1}{2}}\Gamma}{|u^{\prime}|^{2}}|u^{\prime}|^{-2}\text{d}u^{\prime} \lesssim \frac{a^{\frac{5}{2}}\Gamma\mathcal{V}}{|u|^{3}},
\end{align}
\begin{align} \notag 
&\int_{u_{\infty}}^{u}\int_{0}^{1}\int_{S_{u,\ubar}}\Bigg[ \int_{\mathcal{P}_{x}}V_{A}V_{B}V_{C}f \hsp \nablasl^{2}\beta\hsp  \pslash  \hsp \pslash \hsp V_{0}f \hsp \text{d}\mathcal{P}_{x}\Bigg]\sqrt{\det\gslash}\text{d}\ubar^{\prime} \text{d}u^{\prime} \\ \notag
\lesssim& a^{2}\mathcal{V}\int_{u_{\infty}}^{u}\sup_{\ubar}||\nablasl^{2}\beta||_{L^{2}(H_{u^{\prime}})}\sup_{\ubar}\sup_{\mathcal{P}_{x}}|\pslash\hsp \pslash||u^{\prime}|^{-1}\duprime \\
\lesssim& a^{2}\mathcal{V}\int_{u_{\infty}}^{u}\frac{a^{\frac{1}{2}}\Gamma}{|u^{\prime}|^{3}}|u^{\prime}|^{-3}\text{d}u^{\prime} \lesssim \frac{a^{\frac{5}{2}}\Gamma\mathcal{V}}{|u|^{5}}.  
\end{align}
Now, suppose the curvature component $\psi$ is $\alphabar$. The advantage of the term containing $\alphabar$ is that it has the best decay among the weyl curvature components. Then by bootstrap and the first order estimate from Proposition \ref{firstvlasov}, 
\begin{align}
\notag &\int_{u_{\infty}}^{u}\int_{0}^{1}\int_{S_{u,\ubar}}\Bigg[ \int_{\mathcal{P}_{x}}V_{A}V_{B}V_{C}f \hsp \nablasl^{2}\alphabar \hsp f_{1}(\psi)\hsp f_{2}(\pslash)\hsp f_{3}(p^{4}) \hsp V_{0}f \hsp \text{d}\mathcal{P}_{x}\Bigg]\sqrt{\det\gslash}\hsp \text{d}\theta^1 \text{d}\theta^2 \dubarprime \duprime \\ \notag
 \lesssim& \int_{u_{\infty}}^{u}\sup_{\ubar}||f_{1}(\psi)||_{L^{\infty}(S_{u^{\prime},\ubar})}\sup_{\mathcal{P}_{x}}|f_{2}(\pslash)|\sup_{\mathcal{P}_{x}}|f_{3}(p^{4})||u^{\prime}|^{-1}\\ \notag &\hspace{1cm} \int_{0}^{1}||\nablasl^{2}\alphabar||_{L^{2}(S_{u^{\prime},\ubar^{\prime}})}||V_{A}V_{B}V_{C}f||_{L^{2}(S_{u^{\prime},\ubar^{\prime}})L^{2}(\mathcal{P}_{x})}|||u|V_{0}f||_{L^{\infty}(S_{u^{\prime},\ubar^{\prime}})L^{2}(\mathcal{P}_{x})}|u^{\prime}|^{-1}\text{d}u^{\prime} \text{d}\ubar^{\prime} \\ \notag
 \lesssim& a\sqrt{\mathcal{V}}\int_{u_{\infty}}^{u}\sup_{\ubar}||f_{1}(\psi)||_{L^{\infty}(S_{u^{\prime},\ubar})}\sup_{\mathcal{P}_{x}}|f_{2}(\pslash)|\sup_{\mathcal{P}_{x}}|f_{3}(p^{4})|\Bigg(\int_{0}^{1}||\nablasl^{2}\alphabar||^{2}_{L^{2}(S_{u^{\prime},\ubar^{\prime}})}\text{d}\ubar^{\prime} \Bigg)^{\frac{1}{2}}\\\notag &\hspace{2cm} \Bigg(\int_{0}^{1}||V_{A}V_{B}V_{C}f||^{2}_{L^{2}(S_{u^{\prime},\ubar^{\prime}})L^{2}(\mathcal{P}_{x})}\text{d}\ubar^{\prime} \Bigg)^{\frac{1}{2}} |u^{\prime}|^{-1}\duprime \\\nonumber 
 \lesssim& a\sqrt{\mathcal{V}}\sup_{u}\Bigg(\int_{0}^{1}||V_{A}V_{B}V_{C}f||^{2}_{L^{2}(S_{u^{\prime},\ubar^{\prime}})L^{2}(\mathcal{P}_{x})}\text{d}\ubar^{\prime} \Bigg)^{\frac{1}{2}}\Bigg(\int_{u_{\infty}}^{u}|u^{\prime}|^{10}\int_{0}^{1}||\nablasl^{2}\alphabar||^{2}_{L^{2}(S_{u^{\prime},\ubar^{\prime}})}\text{d}\ubar^{\prime} \text{d}u^{\prime} \Bigg)^{\frac{1}{2}}\\ \notag
 &\hspace{2cm}\Bigg(\int_{u_{\infty}}^{u}\frac{\sup_{\ubar}||f_{1}(\psi)||_{L^{\infty}(S_{u^{\prime},\ubar})}\sup_{\mathcal{P}_{x}}|f_{2}(\pslash)|\sup_{\mathcal{P}_{x}}|f_{3}(p^{4})|}{|u^{\prime}|^{12}}\text{d}u^{\prime} \Bigg)^{\frac{1}{2}}\\\nonumber 
 \lesssim& a^{2}\mathcal{V}\Bigg(\int_{0}^{1}\int_{u_{\infty}}^{u}|u^{\prime}|^{10}||\nablasl^{2}\alphabar||^{2}_{L^{2}(S_{u^{\prime},\ubar^{\prime}})}\text{d}u^{\prime} \text{d}\ubar^{\prime} \Bigg)^{\frac{1}{2}}
 \Bigg(\int_{u_{\infty}}^{u}\frac{\sup_{\ubar}||f_{1}(\psi)||_{L^{\infty}(S_{u^{\prime},\ubar})}\sup_{\mathcal{P}_{x}}|f_{2}(\pslash)|\sup_{\mathcal{P}_{x}}|f_{3}(p^{4})|}{a^{-3}|u^{\prime}|^{12}}\text{d}u^{\prime} \Bigg)^{\frac{1}{2}}\\\nonumber 
 \lesssim& a^{2}\mathcal{V}\Bigg(\sup_{\ubar}\int_{u_{\infty}}^{u}|u^{\prime}|^{10}||\nablasl^{2}\alphabar||^{2}_{L^{2}(S_{u^{\prime},\ubar^{\prime}})}\text{d}u^{\prime} \Bigg)^{\frac{1}{2}}\Bigg(\int_{u_{\infty}}^{u}\frac{\sup_{\ubar}||f_{1}(\psi)||_{L^{\infty}(S_{u^{\prime},\ubar})}\sup_{\mathcal{P}_{x}}|f_{2}(\pslash)|\sup_{\mathcal{P}_{x}}|f_{3}(p^{4})|}{a^{-3}|u^{\prime}|^{12}}\text{d}u^{\prime} \Bigg)^{\frac{1}{2}}.
\end{align}
Now recall that $s_{2}(\nablasl^{2}\alphabar)=s_{2}(\alphabar)+1=3$ and therefore the estimates read 
\begin{align}
\notag&\int_{u_{\infty}}^{u}\frac{|u^{\prime}|^{10}}{a^{3}}||\nablasl^{2}\alphabar||^{2}_{L^{2}(S_{u^{\prime},\ubar^{\prime}})}\text{d}u^{\prime} =\int_{u_{\infty}}^{u}\frac{|u^{\prime}|^{10}}{a^{3}} \frac{a^{6}}{|u^{\prime}|^{12}}||||\nablasl^{2}\alphabar||^{2}_{L^{2}_{sc}(S_{u^{\prime},\ubar^{\prime}})}\text{d}u^{\prime} \\
=&\int_{u_{\infty}}^{u}\frac{a}{|u^{\prime}|^{2}}||||a\nablasl^{2}\alphabar||^{2}_{L^{2}_{sc}(S_{u^{\prime},\ubar^{\prime}})}\text{d}u^{\prime} =||(a^{\frac{1}{2}}\nablasl)^{2}\alphabar||^{2}_{L^{2}_{sc}(\Hbar)}
\end{align}
and therefore 
\begin{align} \notag &\int_{u_{\infty}}^{u}\int_{0}^{1}\int_{S_{u,\ubar}}\Bigg[ \int_{\mathcal{P}_{x}}V_{A}V_{B}V_{C}f \hsp \nablasl^{2}\alphabar \hsp f_{1}(\psi)\hsp f_{2}(\pslash)\hsp f_{3}(p^{4}) V_{0}f\hsp \text{d}\mathcal{P}_{x}\Bigg]\sqrt{\det\gslash}\hsp \text{d}\theta^1\text{d}\theta^2\text{d}\ubar^{\prime} \text{d}u^{\prime} \\
  \lesssim& a^{2}\mathcal{V}\Bigg(\sup_{\ubar}||(a^{\frac{1}{2}}\nablasl)^{2}\alphabar||^{2}_{L^{2}_{sc}(\Hbar_{\ubar})}\Bigg)^{\frac{1}{2}}\sup_{\ubar}\Bigg(\int_{u_{\infty}}^{u}\frac{\sup_{\ubar}||f_{1}(\psi)||_{L^{\infty}(S_{u^{\prime},\ubar})}\sup_{\mathcal{P}_{x}}|f_{2}(\pslash)|\sup_{\mathcal{P}_{x}}|f_{3}(p^{4})|}{a^{-3}|u^{\prime}|^{12}}\text{d}u^{\prime} \Bigg)^{\frac{1}{2}}    
\end{align}
Now recall the term that actually contains $\alphabar$ so that we have an explicit form of the functions $f_{1},f_{2}$, and $f_{3}$
\begin{align}
&\int_{u_{\infty}}^{u}\int_{0}^{1}\int_{S_{u,\ubar}}\Bigg[ \int_{\mathcal{P}_{x}}V_{A}V_{B}V_{C}f\hsp \nablasl^{2}\alphabar \hsp \pslash \hsp V_{0}f\hsp \text{d}\mathcal{P}_{x}\Bigg]\sqrt{\det\gslash}\hsp \text{d}\theta^1\text{d}\theta^2 \text{d}\ubar^{\prime} \text{d}u^{\prime} \\ \notag 
\lesssim& a^{2}\mathcal{V}\Bigg(\sup_{\ubar}||(a^{\frac{1}{2}}\nablasl)^{2}\alphabar||^{2}_{L^{2}_{sc}(\Hbar_{\ubar})}\Bigg)^{\frac{1}{2}}\sup_{\ubar}\Bigg(\int_{u_{\infty}}^{u}\frac{\sup_{\mathcal{P}_{x}}|\pslash|}{a^{-3}|u^{\prime }|^{12}}\text{d}u^{\prime } \Bigg)^{\frac{1}{2}}\\
\lesssim& a^{2}\hsp \mathcal{V}\hsp \Gamma \Bigg(\int_{u_{\infty}}^{u}\frac{|u^{\prime}|^{-1}}{a^{-3}|u^{\prime}|^{12}}\text{d}u^{\prime} \Bigg)^{\frac{1}{2}}\lesssim   \mathcal{V}\hsp \Gamma\frac{a^{2+\frac{3}{2}}}{|u|^{6}}.
\end{align}
The remaining terms are estimated in an exactly similar fashion-they verify similar or better estimates. A collection of all the terms yields the desired estimate 
\begin{eqnarray}
&\mathfrak{V}_{3}\lesssim 1+\mathcal{I}^{0}_{3}.
\end{eqnarray}
This completes the proof for the third derivatives of the Vlasov field. 
\end{proof}

\subsection{Estimating the stress-energy tensor $T$ from the estimates of $f$}
\noindent Once the norms $\mathfrak{V}_{1}-\mathfrak{V}_{3}$ are estimated, we are left to estimate the norm of the Vlasov stress-energy tensor. For this, we need to use the expression for $e_{\mu}$ in terms of the vector fields $V$. We will consider the action of derivatives belonging to the following set $\mathcal{D}:=\{\snabla_{4},a^{\frac{1}{2}}\nablasl, |u|\snabla_{3}\}$. First consider $\mathcal{D}=|u|\nabla_{3}$ which is same as $|u|e_{3}$ when $f$ is a function. First, recall the vital expressions 
\begin{itemize}
\item There holds \begin{align}  e_A =& V_{(A)}  + \modu^2 \left( \frac{\Gammaslash^C_{AB}\hsp p^B}{p^3} + {\chibarhat_A}^C +\frac{{\chi_A}^C \hsp p^4}{p^3}\right) V_{(4+C)}  \notag \\ &+\frac{\modu^2}{2} (\tr\chibar+\frac{2}{\modu}) V_{(4+A)} +\left( \frac{\frac{1}{2}\chi_{AB}\hsp p^B}{p^3} -\etabar_A\right)(V_{(4)}+V_{(0)}). \label{expressionVA} \end{align}

\item There holds 
\begin{align} \notag 
\modu e_3 =& \left( 1 + \modu \hsp\frac{\eta_A \hsp p^A}{p^3}\right) V_{(0)}+ \modu^2 \left( \frac{({\chibar_B}^A-e_{B}(b^{A})) \hsp p^B + \eta^A \hsp p^4}{p^3}\right) |u|V_{(4+A)}\\ +& \modu \hsp \frac{\eta_A \hsp p^A}{p^3} V_{(4)}+|u|\omegabar (V_{(4)}+V_{(0)}). \label{expressionV0}
\end{align}
\item There holds 

\begin{align} 
e_4 = V_{(3)} + \modu^2\left( \frac{{\chi_B}^A p^B}{p^3} + 2\hsp \etabar^A \right)V_{(4+A)}
\label{expressionV3}
\end{align}
\end{itemize}
Now, recall that we have the estimates for the $V$ derivatives of $f$. We can use these identities to estimate the ordinary spacetime derivatives of the Vlasov stress-energy tensor. We do this systematically. For example, note that in light of the point-wise estimates on the Ricci coefficients, 
\begin{align}
||u|e_{3}f|\lesssim |V_{0}f|+\frac{\Gamma}{|u|}||u|V_{4+A}f|+\frac{\Gamma}{|u|}|V_{4}f|    
\end{align}
since 
\begin{eqnarray}
1 + \modu \hsp\frac{\eta_A \hsp p^A}{p^3}=1+\frac{a^{\frac{1}{2}}}{|u|^{2}}\lesssim 1\\
\modu^3 \left( \frac{{\chibar_B}^A \hsp p^B + \eta^A \hsp p^4}{p^3}\right)\lesssim \frac{\Gamma}{|u|}+\frac{a^{\frac{1}{2}}}{|u|^{2}}\\
\modu \hsp \frac{\eta_A \hsp p^A}{p^3}\lesssim \frac{a^{\frac{1}{2}}\Gamma}{|u|^{2}}\lesssim 1.
\end{eqnarray}
and the term $|\modu^3 \left( \frac{{\chibar_B}^A \hsp p^B + \eta^A \hsp p^4}{p^3}\right) V_{(4+A)}(f)|\lesssim \frac{\Gamma}{|u|}||u|V_{4+A}f|\lesssim ||u|V_{4+A}f|$. First, we provide the $L^{2}_{sc}(S_{u,\ubar})$ and $L^{4}_{sc}(S_{u,\ubar})$ estimates for the first derivatives of the vlasov field. Recall the following elementary lemma 
\begin{remark}[Action of Derivative on stress-energy tensor]
Suppose the Vlasov stress-energy tensor be $T_{\mu\nu}:=\int_{\mathcal{P}_{x}}f(x,p)p_{\mu}p_{\nu}\frac{\sqrt{\gslash}dp^{1}dp^{2}dp^{3}}{p^3}$. One can explicitly compute the action of the derivatives $\nablasl,\snabla_{4},$ and $\snabla_{3}$ on the stress tensor. For example, observe the following expression for $\snabla_{3}\Te_{3A}$
\begin{align} \notag 
\snabla_{3} &\Te_{3A}=4e_{3}\bigg(\int_{\mathcal{P}_{x}}fp_{A}p^{4}\frac{\sqrt{\det \gslash}dp^{1}dp^{2}dp^{3}}{p^3}\Bigg)-\Gamma^{C}_{3A}T_{3C}\\
=&T[e_{3}(f)]_{3A}+\tr\chibar T[f]_{3A}+[\chibar^{C}_{A}-e_{A}(b^{C})]T_{3C}+T[e_{3}(\log p^{4})f]_{3A}+4\bigg(\int_{\mathcal{P}_{x}}fe_{3}(p_{A})p^{4}\frac{\sqrt{\gslash}dp^{1}dp^{2}dp^{3}}{p^3}\Bigg).
\end{align}
where one uses the fact that $p_{A}=\gslash_{AC}p^{C}$.
One can similarly evaluate the action of derivatives on other stress-energy tensor components.
\end{remark}

\begin{proposition}[First-order Estimates for the Vlasov Stress-Energy Tensor Components]\label{firstL2}

\vspace{3mm}
Let \linebreak $(\mathcal{M},g)$ be a four-dimensional Lorentzian manifold foliated by a double null foliation \(\{S_{u,\ubar}\}_{(u,\ubar)}\), with associated null coordinates \(u,\ubar\), and let \(e_3, e_4\) denote the standard incoming and outgoing normalized null vector fields satisfying \(g(e_3,e_4) = -2\), and \(\{e_A\}_{A=1,2}\) an orthonormal frame tangent to \(S_{u,\ubar}\). Let \(f\colon \mathcal{P} \subset T^*\mathcal{M} \to \mathbb{R}_{\geq 0}\) denote the distribution function of a collisionless ensemble of particles governed by the Vlasov equation, and let \(T_{\alpha\beta}\) denote the associated stress-energy tensor defined by
\[
T_{\alpha\beta}(x) := \int_{\mathcal{P}_x} f(x,p) \, p_\alpha p_\beta \, \frac{\sqrt{\gslash}dp^{1}dp^{2}dp^{3}}{p^3},
\]
where \(p\in \mathcal{P}_x\) lies on the mass shell \(g^{\mu\nu}p_\mu p_\nu = 0\), and the integration is taken with respect to the invariant volume measure on the future-directed null cone at \(x\). Fix \(a \gg 1\), a large parameter governing the hierarchy of estimates, and define the second-order differential operator set
\[
\mathcal{D} := \left\{ |u| \snabla_3,\, a^{1/2} \nablasl,\, \snabla_4 \right\} ,
\]
where \(\nablasl\) denotes the induced Levi-Civita connection on \(S_{u,\ubar}\), and the normalization of vector fields is consistent with the double-null framework.

\noindent Then, for each \(u < 0\), \(\ubar \geq 0\), and for every pair of tensorial components \[(\alpha,\beta)\in \{(4,4), (4,3), (4,A), (A,3), (A,B),\\ (3,3), \}\] the following scale-invariant \(L^2\) estimates hold on the sphere \(S_{u,\ubar}\):
\begin{align}
\frac{|u|}{a} \left\| \mathcal{D} T_{44} \right\|_{\mathcal{L}^2_{(sc)}(S_{u,\ubar})} &\lesssim 1, &
\frac{|u|}{a} \left\| \mathcal{D} T_{43} \right\|_{\mathcal{L}^2_{(sc)}(S_{u,\ubar})} &\lesssim \frac{1}{a}, \notag \\
\frac{|u|}{a} \left\| \mathcal{D} T_{4A} \right\|_{\mathcal{L}^2_{(sc)}(S_{u,\ubar})} &\lesssim \frac{1}{a^{\frac{1}{2}}}, &
\frac{|u|}{a} \left\| \mathcal{D} T_{A3} \right\|_{\mathcal{L}^2_{(sc)}(S_{u,\ubar})} &\lesssim \frac{1}{a^{\frac{3}{2}}}, \label{eq:StressEnergyEstimates1} \\
\frac{|u|}{a} \left\| \mathcal{D} T_{AB} \right\|_{\mathcal{L}^2_{(sc)}(S_{u,\ubar})} &\lesssim \frac{1}{a}, &
\frac{|u|}{a} \left\| \mathcal{D} T_{33} \right\|_{\mathcal{L}^2_{(sc)}(S_{u,\ubar})} &\lesssim \frac{1}{a^{2}}. \notag
\end{align}
The implicit constants in the estimates are numerical. Moreover, we have the improved estimate 
\begin{align}
\frac{|u|^{2}}{a} \left\| T_{44}[|u|e_{3}f] \right\|_{\mathcal{L}^2_{(sc)}(S_{u,\ubar})} &\lesssim 1    
\end{align}
and also for the point-wise norm 
\begin{align}
\frac{|u|^{2}}{a} \left\| T_{44}[|u|e_{3}f] \right\|_{L^\infty_{sc}(S_{u,\ubar})} &\lesssim 1.     
\end{align}
\end{proposition}

\begin{proof}
The improved estimates are easy to see once the previous estimates are obtained, since 
\begin{align}
|u|e_{3}f=\left( 1 + \modu \hsp\frac{\eta_A \hsp p^A}{p^3}\right) V_{(0)}f+ \modu^2 \left( \frac{({\chibar_B}^A-e_{B}(b^{A})) \hsp p^B + \eta^A \hsp p^4}{p^3}\right) |u|V_{(4+A)}f+ \modu \hsp \frac{\eta_A \hsp p^A}{p^3} V_{(4)}f+\omegabar V_{(4)}f    
\end{align}
and verification of the estimates $|V_{0}f|\lesssim a|u|^{-1},~||u|V_{4+A}f|\lesssim a,~|V_{(4)}f|\lesssim a$ implies $||u|e_{3}f|\lesssim \frac{a}{|u|}$ since the coefficients exhibit 
\begin{align}
1 + \big\| \, |u|\,\eta\,\pslash \, \big\|
&\;\lesssim\;
1 + \frac{a^{1/2} \, \Gamma}{|u|^{2}}
\;\lesssim\;
1, 
\\[0.4em]
\modu^2 \left( \frac{({\chibar_B}^A-e_{B}(b^{A})) \hsp p^B + \eta^A \hsp p^4}{p^3}\right)
\, \big| V_{4+A} f \big|
&\;\lesssim\;
\frac{1}{|u|} \, \big\|\, |u| \, V_{4+A}f \,\big\|
\,
\\[0.4em]
\big\|\, |u| \, \eta \, \pslash \,\big\|
&\;\lesssim\;
\frac{a^{1/2} \, \Gamma \, |u|}{|u|^{3}}
\;\lesssim\;
1,
\\[0.4em]
|u| \, \big| \tr\chibar \big|
&\;\lesssim\;
|u| \,
\Bigg(
  \big| \tr\chibar + \tfrac{2}{|u|} \big|
  + \frac{2}{|u|}
\Bigg) \,
\lesssim \frac{\Gamma}{|u|}
\;+\;
1,
\end{align}
the extra $|u|^{-1}$ decay factor appears.
 The estimates for the first derivatives are straightforward. First, recall the improved first-order estimates for $\mathfrak{V}^{S}_{1}$
% \begin{align}
  %&&\mathfrak{V}^{S}_{1}:=\Bigg(\sum_{A=1,2}\sup_{u,\ubar}\int_{S_{u,\ubar}}\Bigg(\int_{\mathcal{P}_{x}}|V_{A}f|^{2}\sqrt{\det\gslash}\frac{dp^{1}dp^{2}dp^{3}}{p^{3}}\Bigg)dS_{u,\ubar}+\sup_{u,\ubar}\int_{S_{u,\ubar}}\Bigg(\int_{\mathcal{P}_{x}}|V_{4}f|^{2}\sqrt{\det\gslash}\frac{dp^{1}dp^{2}dp^{3}}{p^{3}}\Bigg)dS_{u,\ubar}\\\nonumber &&+\sup_{u,\ubar}\int_{S_{u,\ubar}}\Bigg(\int_{\mathcal{P}_{x}}|V_{3}f|^{2}\sqrt{\det\gslash}\frac{dp^{1}dp^{2}dp^{3}}{p^{3}}\Bigg)dS_{u,\ubar}+ \sum_{A=1,2}\sup_{u,\ubar}\int_{S_{u,\ubar}}\Bigg(\int_{\mathcal{P}_{x}}|uV_{4+A}f|^{2}\sqrt{\det\gslash}\frac{dp^{1}dp^{2}dp^{3}}{p^{3}}\Bigg)dS_{u,\ubar}\\\nonumber 
%&&+\sup_{u,\ubar}\int_{S_{u,\ubar}}\Bigg(\int_{\mathcal{P}_{x}}|V_{0}f|^{2}\sqrt{\det\gslash}\frac{dp^{1}dp^{2}dp^{3}}{p^{3}}\Bigg)dS_{u,\ubar} \Bigg),  
 %\end{align}
 \begin{align}
  \mathfrak{V}^{S}_{1}\lesssim 1+\mathcal{I}^{0}_{S},   
 \end{align}
 where $\mathcal{I}^{0}_{S}$ is the initial data for $\mathfrak{V}^{S}_{1}$. Now controlling the $L^{2}(S_{u,\ubar})$ of the first derivatives is a straightforward consequence of the  afore-mentioned expression relating the vector fields $e_{\mu}$ to the $V$-vector fields. Here, we only present the estimates for one stress-energy component. The rest of the estimates follow in an exact similar fashion. We focus on the higher-order estimates that are non-trivial. For $\mathcal{D}=\modu\snabla_3,$
\begin{align}
T_{33} 
&= \int_{\mathcal{P}_{x}} 
    f \, p_{3} p_{3} \,
    \frac{\sqrt{\det \gslash}}{p^{3}}
    \, dp^{1} \, dp^{2} \, dp^{3}, 
\\[0.5em]
\mathcal{D}T_{33} 
&= \int_{\mathcal{P}_{x}}
    \Big( \mathcal{D}f + \tfrac12 |u| \tr\chibar \, f \Big) \, 
    p^{4} p^{4} \,
   \frac{\sqrt{\det \gslash}}{p^{3}}
    \, dp^{1} \, dp^{2} \, dp^{3}
\notag \\[-0.25em]
&\quad
    + 2 \int_{\mathcal{P}_{x}}
        f \, p^{4} \, \mathcal{D}(p^{4}) \,
        \frac{\sqrt{\det \gslash}}{p^{3}}
        \, dp^{1} \, dp^{2} \, dp^{3},
\\[0.5em]
&= \int_{\mathcal{P}_{x}}
    \Bigg(
        \underbrace{
        \big( 1 + |u|\,\eta \,\pslash \big) \, V_{0}f
        + |u|^{3} 
            \Big( (\chibar - \nablasl b + \Gammaslash)\pslash
                   + \eta \, p^{4} \Big) \, V_{4+A}f
        + |u|\, \eta\, \pslash \, V_{4}f
        }_{\text{transport contributions}}
        + \frac12 |u| \tr\chibar \, f
    \Bigg)
    p^{4} p^{4} \,
    \text{d}\mathcal{P}_{x}
\notag \\[-0.25em]
&\quad
    + 2 \int_{\mathcal{P}_{x}}
        f \, p^{4} \, \mathcal{D}(p^{4}) \,
        \frac{\sqrt{\det \gslash}}{p^{3}}
        \, dp^{1} \, dp^{2} \, dp^{3}.
\end{align}
Recalling that
\begin{align}
 e_{3}(p^{4})=\frac{\Bigg(2\chibar _{AB}-\gslash _{BC}e_{A}(b^{C})-\gslash_{AC}e_{B}(b^{C})\Bigg)p^{A}p^{B}}{2p^{3}},   
\end{align}
as well as the estimates on $\Gamma^{A}_{3B}=\chibar^{A}_{B}-e_{B}(b^{A})$, we immediately obtain 
\begin{align}
 |e_{3}p^{4}|\lesssim \frac{\Gamma}{|u|^{3}}.   
\end{align}
The idea is that the momentum variables $p^{\mu}$ and the Ricci coefficients are bounded on the mass-shell in $L^{\infty}$, whenever estimates are available. For example, 
\begin{align}
1 + \big\| \, |u|\,\eta\,\pslash \, \big\|
&\;\lesssim\;
1 + \frac{a^{1/2} \, \Gamma}{|u|^{2}}
\;\lesssim\;
1, 
\\[0.4em]
\modu^2 \left( \frac{({\chibar_B}^A-e_{B}(b^{A})) \hsp p^B + \eta^A \hsp p^4}{p^3}\right)
\, \big| V_{4+A} f \big|
&\;\lesssim\;
\frac{1}{|u|} \, \big\|\, |u| \, V_{4+A}f \,\big\|
\,
\\[0.4em]
\big\|\, |u| \, \eta \, \pslash \,\big\|
&\;\lesssim\;
\frac{a^{1/2} \, \Gamma \, |u|}{|u|^{3}}
\;\lesssim\;
1,
\\[0.4em]
|u| \, \big| \tr\chibar \big|
&\;\lesssim\;
|u| \,
\Bigg(
  \big| \tr\chibar + \tfrac{2}{|u|} \big|
  + \frac{2}{|u|}
\Bigg) \,
\lesssim \frac{\Gamma}{|u|}
\;+\;
1.
\end{align}
Therefore, using the decay of momentum support,
\begin{align} \notag 
\big| \mathcal{D}T_{33} \big|
&\;\lesssim\;
|u|^{-4}
\int_{\mathcal{P}_{x}}
\Big(
   \, |V_{0}f|
   + \big\|\, |u| V_{4+A}f \,\big\|
   + |V_{4}f|
   + |f|
\Big)
\frac{\sqrt{\det\gslash}}{p^{3}}
\, dp^{1} \, dp^{2} \, dp^{3}
\\[0.4em]
&\;\lesssim\;
|u|^{-4}
\Big(
   \, \| V_{0}f \|_{L^{2}(\mathcal{P}_{x})}
   + \big\|\, |u| V_{4+A}f \,\big\|_{L^{2}(\mathcal{P}_{x})}
   + \| V_{4}f \|_{L^{2}(\mathcal{P}_{x})}
   + \| f \|_{L^{2}(\mathcal{P}_{x})}
\Big)
\notag \\[-0.25em] \notag
&\qquad\qquad\quad
\times
\Bigg(
   \int_{\mathcal{P}_{x}}
   \frac{\sqrt{\det\gslash}}{p^{3}}
   \, dp^{1} \, dp^{2} \, dp^{3}
\Bigg)^{\!1/2}
\\[0.4em]
&\;\lesssim\;
|u|^{-5}
\Big(
   \, \| V_{0}f \|_{L^{2}(\mathcal{P}_{x})}
   + \big\|\, |u| V_{4+A}f \,\big\|_{L^{2}(\mathcal{P}_{x})}
   + \| V_{4}f \|_{L^{2}(\mathcal{P}_{x})}
   + \| f \|_{L^{2}(\mathcal{P}_{x})}
\Big),
\end{align}
where H\"older's inequality is applied on the mass shell $\mathcal{P}_{x}$.  
By Minkowski's inequality, we further obtain
\begin{align}
\big\| \mathcal{D}T_{33} \big\|_{L^{2}(S_{u,\ubar})}
&\;\lesssim\;
|u|^{-5}
\Big(
   \, \| V_{0}f \|_{L^{2}(S_{u,\ubar}) L^{2}(\mathcal{P}_{x})}
   + \big\|\, |u| V_{4+A}f \,\big\|_{L^{2}(S_{u,\ubar}) L^{2}(\mathcal{P}_{x})}
\notag \\[-0.25em]
&\qquad
   + \| V_{4}f \|_{L^{2}(S_{u,\ubar}) L^{2}(\mathcal{P}_{x})}
   + \| f \|_{L^{2}(S_{u,\ubar}) L^{2}(\mathcal{P}_{x})}
\Big).
\end{align}
By the estimates on the first derivative of the Vlasov field, we have
\begin{align}
\big\| \mathcal{D}T_{33} \big\|_{L^{4}(S_{u,\ubar})}
&\;\lesssim\;
\frac{a}{|u|^{5}}.
\end{align}
Since $s_{2}(\mathcal{D}T_{33}) = s_{2}(T_{33}) = 2$, it follows that
\begin{align}
\big\| \mathcal{D}T_{33} \big\|_{\mathcal{L}^{2}_{(sc)}(S_{u,\ubar})}
&= \frac{|u|^{4}}{a^{2}} \,
   \big\| \mathcal{D}T_{33} \big\|_{L^{2}(S_{u,\ubar})}
\notag \;\lesssim\;
\frac{1}{a^{2}} \cdot \frac{a}{|u|^{5}}
\;\lesssim\;
\frac{1}{a\,|u|}.
\end{align}
In particular, we automatically obtain
\begin{align}
\frac{|u|}{a} \,
\big\| \mathcal{D}T_{33} \big\|_{\mathcal{L}^{4}_{(sc)}(S_{u,\ubar})}
&\;\lesssim\;
\frac{1}{a^{2}}.
\end{align}
We  now proceed to consider the angular derivative
\[
\mathcal{D} := a^{1/2} \, \nablasl .
\]
We first recall that
\begin{align}
\Te_{33}
&= \int_{\mathcal{P}_{x}} f \, p_{3} p_{3} \,
   \frac{\sqrt{\det\gslash}}{p^{3}}
   \, dp^{1} \, dp^{2} \, dp^{3},
\\[0.4em]
\nablasl \Te_{33}
&= \int_{\mathcal{P}_{x}}
    \big( \mathcal{D} f + \Gammaslash \, f \big)
    \, p^{4} p^{4}
    \frac{\sqrt{\det\gslash}}{p^{3}}
    \, dp^{1} \, dp^{2} \, dp^{3}
\notag \\[0.25em]
&\quad
  + 2 \int_{\mathcal{P}_{x}}
    f \, p^{4} \, e_{A}(p^{4})
    \frac{\sqrt{\det\gslash}}{p^{3}}
    \, dp^{1} \, dp^{2} \, dp^{3}.
\end{align}

\noindent Next, we recall that
\begin{align}
e_{A}
&= V_{(A)}
   + \modu^{2} \left(
       \frac{\Gammaslash^{C}_{AB} \, p^{B}}{p^{3}}
       + {\chibarhat_{A}}^{C}
       + \frac{{\chi_{A}}^{C} \, p^{4}}{p^{3}}
     \right) V_{(4+C)}
\notag \\[0.25em]
&\quad
   + \frac{\modu^{2}}{2}
     \left( \tr\chibar + \frac{2}{\modu} \right)
     V_{(4+A)}
   + \left(
       \frac{\chi_{AB} \, p^{B}}{2\, p^{3}}
       - \etabar_{A}
     \right)
     \big( V_{(4)} + V_{(0)} \big).
\end{align}

Therefore,
\begin{align}
\snabla_A \Te_{33}
&\;\sim\;
  \int_{\mathcal{P}_{x}}
  \Big[
      V_{A} f
    + |u|^{2} \, (\Gammaslash \, \pslash + \chibarhat + p^{4} \chi) \, V_{4+A} f
    + |u|^{2} \, \widetilde{\tr\chibar} \, V_{4+A} f
\notag \\[-0.25em]
&\hspace{6em}
    + (\chi \, \pslash + \etabar) \, \big( V_{4} f + V_{0} f \big)
    + \Gammaslash \, f
  \Big]
  \, p^{4} p^{4}
  \frac{\sqrt{\det\gslash}}{p^{3}}
  \, dp^{1} \, dp^{2} \, dp^{3}.
\end{align}
By applying the pointwise estimates, the decay of the momentum support and Hölder's inequality on the mass–shell $\mathcal{P}_{x}$, we find
\begin{align}
    |\snabla_A \Te_{33}|
    &\lesssim \frac{1}{|u|^{5}} \Big(
        \| V_A f \|_{L^{2}(\mathcal{P}_{x})}
        + \|\, |u| V_{4+A} f \|_{L^{2}(\mathcal{P}_{x})}
        + \| V_{4} f \|_{L^{2}(\mathcal{P}_{x})}
        + \| f \|_{L^{2}(\mathcal{P}_{x})}
    \Big).
\end{align}
Note that
\begin{align}\notag
    |\mathcal{D} \Te_{33}|
    &= a^{\frac12} \sqrt{ \gslash^{AB} \, \nablasl_A \Te_{33} \, \nablasl_B T]\Te_{33} } \\
    &\lesssim \frac{a^{\frac12}}{|u|} \cdot \frac{1}{|u|^{5}}
        \Big(
            \| V_A f \|_{L^{2}(\mathcal{P}_{x})}
            + \|\, |u| V_{4+A} f \|_{L^{2}(\mathcal{P}_{x})}
            + \| V_{4} f \|_{L^{2}(\mathcal{P}_{x})}
            + \| f \|_{L^{2}(\mathcal{P}_{x})}
        \Big).
\end{align}
Hence,
\begin{align}
    \| \mathcal{D} \Te_{33} \|_{L^{2}(S_{u,\ubar})}
    &\lesssim \frac{a^{\frac12}}{|u|^{6}}
    \Big(
        \| V_A f \|_{L^{2}(S_{u,\ubar}) L^{2}(\mathcal{P}_{x})}
        + \|\, |u| V_{4+A} f \|_{L^{2}(S_{u,\ubar}) L^{2}(\mathcal{P}_{x})} \notag \\
    &\quad + \| V_{4} f \|_{L^{2}(S_{u,\ubar}) L^{2}(\mathcal{P}_{x})}
        + \| f \|_{L^{2}(S_{u,\ubar}) L^{2}(\mathcal{P}_{x})}
    \Big).
\end{align}
Using the Vlasov estimates from Proposition~\ref{firstvlasov}, we obtain
\begin{align}
    \| \mathcal{D} \Te_{33} \|_{L^{4}(S_{u,\ubar})}
    \lesssim \frac{a^{\frac32}}{|u|^{6}}.
\end{align}
Since $s_{2}(\nablasl T_{33}) = \frac52$, it follows that
\begin{align}
    \| \mathcal{D} T_{33} \|_{L^{2}(S_{u,\ubar})}
    = \frac{a^{1}}{|u|^{6} a^{\frac52}} \, |u|^{5}
    \lesssim \frac{1}{a |u|}.
\end{align}
Therefore,
\begin{align}
    \frac{|u|}{a} \, \| \mathcal{D} T_{33} \|_{\mathcal{L}^{2}_{(sc)}(S_{u,\ubar})}
    \lesssim \frac{1}{a^{2}}.
\end{align}

\medskip
\noindent For $\mathcal{D}_3 := \snabla_{4}$, recall that
\begin{align}
 e_4 = V_{(3)} + \modu^2\left( \frac{{\chi_B}^A p^B}{p^3} + 2\hsp \etabar^A \right)V_{(4+A)}   
\end{align}
Moreover,
\begin{align}
      \snabla_{4} \Te_{33}
    &\sim \int_{\mathcal{P}_{x}} \big( e_{4} f + \tr\chi \, f \big) \, p^{4} p^{4}
        \frac{\sqrt{\det \gslash}}{p^{3}} \, dp^{1} dp^{2} dp^{3}+\int_{\mathcal{P}_{x}}fe_{4}(p^{4}p^{4})\frac{\sqrt{\det \gslash}}{p^{3}} \, dp^{1} dp^{2} dp^{3}.
\end{align}
Thus,
\begin{align}
    \snabla_{4} \Te_{33}
    &\sim \int_{\mathcal{P}_{x}} \Big(
        V_{3} f
        + |u|^{2} (\chi  \hsp \pslash + \etabar) V_{4+A} f
        + \tr\chi \, f
    \Big)
    p^{4} \hsp p^{4} \frac{\sqrt{\det \gslash}}{p^{3}}
    \, dp^{1} dp^{2} dp^{3}.
\end{align}
The pointwise bounds imply
\begin{align}
    |\snabla_{4} \Te_{33}|
    \lesssim |u|^{-5} \Big(
        \| V_{3} f \|_{L^{2}(\mathcal{P}_{x})}
        + \|\, |u| V_{4+A} f \|_{L^{2}(\mathcal{P}_{x})}
        + \| V_{4} f \|_{L^{2}(\mathcal{P}_{x})}
        + \| V_{0} f \|_{L^{2}(\mathcal{P}_{x})}
        + \| f \|_{L^{2}(\mathcal{P}_{x})}
    \Big).
\end{align}
Therefore,
\begin{align}
    \| \snabla_{4} \Te_{33} \|_{L^{2}(S_{u,\ubar})}
    &\lesssim |u|^{-5} \Big(
        \| V_{3} f \|_{L^{2}(S_{u,\ubar}) L^{2}(\mathcal{P}_{x})}
        + \|\, |u| V_{4+A} f \|_{L^{2}(S_{u,\ubar}) L^{2}(\mathcal{P}_{x})} \notag \\
    &\quad + \| V_{4} f \|_{L^{2}(S_{u,\ubar}) L^{2}(\mathcal{P}_{x})}
        + \| V_{0} f \|_{L^{2}(S_{u,\ubar}) L^{2}(\mathcal{P}_{x})}
        + \| f \|_{L^{2}(S_{u,\ubar}) L^{2}(\mathcal{P}_{x})}
    \Big) \notag \\
    &\lesssim \frac{a}{|u|^{5}}.
\end{align}
Since $s_{2}(\snabla_{4} \Te_{33}) = s_{2}(\Te_{33}) = 2$, we deduce
\begin{align}
    \| \snabla_{4} \Te_{33} \|_{\mathcal{L}^{2}_{(sc)}(S_{u,\ubar})}
    \lesssim \frac{|u|^{4}}{a^{2}} \cdot \frac{a}{|u|^{5}}
    \lesssim \frac{1}{a |u|},
\end{align}
or equivalently,
\begin{align}
    \frac{|u|}{a} \, \| \snabla_{4} T_{33} \|_{\mathcal{L}^{2}_{(sc)}(S_{u,\ubar})}
    \lesssim \frac{1}{a^{2}}.
\end{align}

Now, notice that the remaining estimates follow exactly in a similar fashion. 
\end{proof}

\begin{proposition}[Sharp High-Order Estimates for the Stress-Energy Tensor Components]\label{prop:StressEnergyHighOrder}
Let \linebreak \((\mathcal{M},g)\) be a four-dimensional Lorentzian manifold foliated by a double null foliation \(\{S_{u,\ubar}\}_{(u,\ubar)}\), with associated null coordinates \(u,\ubar\), and let \(e_3, e_4\) denote the standard incoming and outgoing normalized null vector fields satisfying \(g(e_3,e_4) = -2\), and \(\{e_A\}_{A=1,2}\) an orthonormal frame tangent to \(S_{u,\ubar}\). Let \(f\colon \mathcal{P} \subset T^*\mathcal{M} \to \mathbb{R}_{\geq 0}\) denote the distribution function of a collisionless ensemble of particles governed by the Vlasov equation, and let \(T_{\alpha\beta}\) denote the associated stress-energy tensor defined by
\[
T_{\alpha\beta}(x) := \int_{\mathcal{P}_x} f(x,p) \, p_\alpha p_\beta \, \frac{\sqrt{\gslash}dp^{1}dp^{2}dp^{3}}{p^3},
\]
where \(p\in \mathcal{P}_x\) lies on the mass shell (\(g^{\mu\nu}p_\mu p_\nu = 0\)) and the integration is taken with respect to the afore-mentioned measure on the future-directed null cone at \(x\). Fix \(a \gg 1\), a large parameter governing the hierarchy of estimates, and define the second-order differential operator set
\[
\mathcal{D}^2 := \left\{ D_1 D_2 : D_1, D_2 \in \left\{ |u| \snabla_3,\, a^{1/2} \nablasl,\, \snabla_4 \right\} \right\},
\]
where \(\nablasl\) denotes the induced Levi-Civita connection on \(S_{u,\ubar}\), and the normalization of vector fields is consistent with the double-null framework. Then, for each \(u < 0\), \(\ubar \geq 0\), and for every pair of tensorial components\[(\alpha,\beta)\in \{(4,4), (4,3), (4,A), (A,3), (A,B), (3,3)\},\] the following scale-invariant \(L^2\)-estimates hold on the sphere \(S_{u,\ubar}\):
\begin{align}
\frac{|u|}{a} \left\| \mathcal{D}^2 T_{44} \right\|_{\mathcal{L}^2_{(sc)}(S_{u,\ubar})} &\lesssim 1, &
\frac{|u|}{a} \left\| \mathcal{D}^2 T_{43} \right\|_{\mathcal{L}^2_{(sc)}(S_{u,\ubar})} &\lesssim \frac{1}{a}, \notag \\
\frac{|u|}{a} \left\| \mathcal{D}^2 T_{4A} \right\|_{\mathcal{L}^2_{(sc)}(S_{u,\ubar})} &\lesssim \frac{1}{a^{\frac{1}{2}}}, &
\frac{|u|}{a} \left\| \mathcal{D}^2 T_{A3} \right\|_{\mathcal{L}^2_{(sc)}(S_{u,\ubar})} &\lesssim \frac{1}{a^{\frac{3}{2}}}, \label{eq:StressEnergyEstimates2} \\
\frac{|u|}{a} \left\| \mathcal{D}^2 T_{AB} \right\|_{\mathcal{L}^2_{(sc)}(S_{u,\ubar})} &\lesssim \frac{1}{a}, &
\frac{|u|}{a} \left\| \mathcal{D}^2 T_{33} \right\|_{\mathcal{L}^2_{(sc)}(S_{u,\ubar})} &\lesssim \frac{1}{a^{2}}. \notag
\end{align}
The implicit constants in the estimates are numerical. In addition, let
\[
\mathcal{D}^3 := \left\{ D_1 D_2 D_{3} : D_1, D_2, D_3 \in \left\{ |u| \snabla_3,\, a^{1/2} \nablasl,\, \snabla_4 \right\} \right\},
\]
where \(\nablasl\) denotes the induced Levi-Civita connection on \(S_{u,\ubar}\), and the normalization of vector fields is consistent with the double-null framework. Then the following estimate holds for the third derivatives of every pair of tensorial components \((\alpha,\beta)\in \{(4,4), (4,3), (4,A), (A,3), (A,B), (3,3)\}\) ,
\begin{align}
\frac{|u|}{a} \left\| \mathcal{D}^3 T_{44} \right\|_{L^2_{sc}(H_{u})} &\lesssim 1, &
\frac{|u|}{a} \left\| \mathcal{D}^3 T_{43} \right\|_{L^2_{sc}(H_{u})} &\lesssim \frac{1}{a}, \notag \\
\frac{|u|}{a} \left\| \mathcal{D}^3 T_{4A} \right\|_{L^2_{sc}(H_{u})} &\lesssim \frac{1}{a^{\frac{1}{2}}}, &
\frac{|u|}{a} \left\| \mathcal{D}^3 T_{A3} \right\|_{L^2_{sc}(H_{u})} &\lesssim \frac{1}{a^{\frac{3}{2}}}, \label{eq:StressEnergyEstimates} \\
\frac{|u|}{a} \left\| \mathcal{D}^3 T_{AB} \right\|_{L^2_{sc}(H_{u})} &\lesssim \frac{1}{a}, &
\frac{|u|}{a} \left\| \mathcal{D}^3 T_{33} \right\|_{L^2_{sc}(H_{u})} &\lesssim \frac{1}{a^{2}}. \notag
\end{align}
uniformly in $u\in [u,u_{\infty}]$. The constants involved in the estimates are numerical.
\end{proposition}

\begin{proof}
The proof is a simple consequence of expressing the vector fields $(a^{\frac{1}{2}}e_{A},e_{4},|u|e_{3})$ in terms of the vector field $V$s. In light of the identities \eqref{expressionVA}-\eqref{expressionV3}, we can successively express $\mathcal{D}^{2}$ in terms of $V^{2}f$. For example, note the following semi-schematic expression for $\mathcal{D}=|u|\nabla_{3}$ 
\begin{align}
\mathcal{D}^{2}f \sim\; & 
\Big( |u|\, \pslash\, \eta + |u|\, \pslash\, \mathcal{D} \eta \Big) V_{(0)}f \notag \\
&+ \left(1 + \modu\, \frac{\eta_A\, p^A}{p^3} \right) 
\Bigg[ \left(1 + \modu\, \frac{\eta_A\, p^A}{p^3} \right) V_{(0)}V_{(0)}f 
+ \modu^3 \left( \frac{({\chibar_B}^{\;A}-e_{A}(b^{B})\, p^B + \eta^A\, p^4}{p^3} \right) V_{(4+A)}V_{(0)}f \notag \\
&\hspace{5em} + \modu\, \frac{\eta_A\, p^A}{p^3} V_{(4)}V_{(0)}f \Bigg] \notag \\
&+ \Big( |u|^3\, \pslash\, \chibar + |u|^3\, \pslash\, \mathcal{D} \chibar 
+ p^4\, \mathcal{D} \eta + |u|\, \chi\, \pslash\, \pslash \Big) V_{(4+A)}f \notag \\
&+ \modu^3 \left( \frac{({\chibar_B}^{\;A}-e_{A}(b^{B}))\, p^B + \eta^A\, p^4}{p^3} \right) 
\Bigg[ \left(1 + \modu\, \frac{\eta_A\, p^A}{p^3} \right) V_{(0)}V_{(4+A)}f \notag \\
&\hspace{5em} + \modu^3 \left( \frac{({\chibar_B}^{\;A}-e_{A}(b^{B})\, p^B + \eta^A\, p^4}{p^3} \right) V_{(4+A)}V_{(4+A)}f 
+ \modu\, \frac{\eta_A\, p^A}{p^3} V_{(4)}V_{(4+A)}f \Bigg] \notag \\
&+ \Big( |u|\, \pslash\, \eta + |u|\, \pslash\, \mathcal{D} \eta \Big) V_{(4)}f \notag \\
&+ \modu\, \frac{\eta_A\, p^A}{p^3} 
\Bigg[ \left(1 + \modu\, \frac{\eta_A\, p^A}{p^3} \right) V_{(0)}V_{(4)}f 
+ \modu^3 \left( \frac{({\chibar_B}^{\;A}-e_{A}(b^{B})\, p^B + \eta^A\, p^4}{p^3} \right) V_{(4+A)}V_{(4)}f \notag \\
&\hspace{5em} + \modu\, \frac{\eta_A\, p^A}{p^3} V_{(4)}V_{(4)}f \Bigg],
\end{align}
and 

\begin{align} \notag
\mathcal{D}^{3} f 
&\sim 
\mathcal{D}\big( \pslash |u| \eta + \pslash |u| \mathcal{D} \eta \big) V_0 f \\ \notag
&\quad + \big( \pslash |u| \eta + \pslash |u| \mathcal{D} \eta \big)
\big[
(1 + |u| \eta \pslash) V_0 V_0 f 
+ |u|^3 (\chibar \pslash + \eta p^4) V_{4+A} V_0 f 
+ |u| \eta \pslash V_4 V_0 f
\big] \\  \notag
&\quad + (1 + \eta \pslash) \mathcal{D}(|u| \eta \pslash) V_0 V_0 f \\  \notag
&\quad + (1 + |u| \eta \pslash)^2 
\big( \pslash |u| \eta + \pslash |u| \mathcal{D} \eta \big)
\big[
(1 + |u| \eta \pslash) V_0 V_0 V_0 f 
+ |u|^3 (\chibar \pslash + \eta p^4) V_{4+A} V_0 V_0 f 
+ |u| \eta \pslash V_4 V_0 V_0 f
\big] \\  \notag
&\quad + \mathcal{D}(|u| \eta \pslash) |u|^3 (\chibar \pslash + \eta \pslash) V_{4+A} V_0 f \\
 \notag &\quad + (1 + |u| \eta \pslash) 
\big( 
|u|^3 \pslash \chibar + |u|^3 \pslash \mathcal{D} \chibar + p^4 \mathcal{D} \eta + |u| (\chibar - e(b)) \pslash \pslash 
\big) V_{4+A} V_0 f \\
 \notag &\quad + (1 + \eta \pslash) |u|^3 (\chibar \pslash + \eta \pslash)
\big[
(1 + |u| \eta \pslash) V_0 V_{4+A} V_0 f 
+ |u|^3 (\chibar \pslash + \eta p^4) V_{4+A} V_{4+A} V_0 f 
+ |u| \eta \pslash V_4 V_{4+A} V_0 f
\big] \\  \notag
&\quad + (1 + 2 |u| \etabar \pslash) \mathcal{D}(|u| \eta \pslash) V_4 V_0 f \\  \notag
&\quad + |u| \eta \pslash (1 + |u| \eta \pslash)
\big[
(1 + |u| \eta \pslash) V_0 V_4 V_0 f 
+ |u|^3 (\chibar \pslash + \eta p^4) V_{4+A} V_4 V_0 f 
+ |u| \eta \pslash V_4 V_4 V_0 f
\big] \\  \notag
&\quad + \mathcal{D}
\big( 
|u|^3 \pslash \chibar + |u|^3 \pslash \mathcal{D} \chibar + p^4 \mathcal{D} \eta + |u| (\chibar - e(b)) \pslash \pslash 
\big) V_{4+A} f \\  \notag
&\quad + 
\big( 
|u|^3 \pslash \chibar + |u|^3 \pslash \mathcal{D} \chibar + p^4 \mathcal{D} \eta + |u| (\chibar - e(b)) \pslash \pslash 
\big) \\  \notag
&\qquad \times \big[
(1 + |u| \eta \pslash) V_0 V_{4+A} f 
+ |u|^3 (\chibar \pslash + \eta p^4) V_{4+A} V_{4+A} f 
+ |u| \eta \pslash V_4 V_{4+A} f
\big] \\  \notag
&\quad + \mathcal{D}(|u| \eta \pslash) |u|^3 (\chibar \pslash + \eta \pslash) V_0 V_{4+A} f \\  \notag
&\quad + (1 + |u| \eta \pslash) 
\big(
|u|^3 \pslash \chibar + |u|^3 \pslash \mathcal{D} \chibar + p^4 \mathcal{D} \eta + |u| (\chibar - e(b)) \pslash \pslash 
\big) V_0 V_{4+A} f \\  \notag
&\quad + (1 + \eta \pslash) |u|^3 (\chibar \pslash + \eta \pslash)
\big[
(1 + |u| \eta \pslash) V_0 V_0 V_{4+A} f 
+ |u|^3 (\chibar \pslash + \eta p^4) V_{4+A} V_0 V_{4+A} f 
+ |u| \eta \pslash V_4 V_0 V_{4+A} f
\big] \\  \notag
&\quad + |u|^3 (\chibar \pslash + \eta \pslash) 
\mathcal{D} \big( |u|^3 (\chibar \pslash + \eta \pslash) \big) V_{4+A} V_{4+B} f \\  \notag
&\quad + \big( |u|^3 (\chibar \pslash + \eta \pslash) \big)^2 
\big[
(1 + |u| \eta \pslash) V_0 V_{4+A} V_{4+B} f 
+ |u|^3 (\chibar \pslash + \eta p^4) V_{4+C} V_{4+A} V_{4+B} f 
+ |u| \eta \pslash V_4 V_{4+A} V_{4+B} f
\big] \\  \notag
&\quad + \mathcal{D}(|u|^3 (\chibar \pslash + \eta \pslash)) |u| \eta \pslash V_4 V_{4+A} f \\  \notag
&\quad + |u|^3 (\chibar \pslash + \eta \pslash) \mathcal{D}(|u| \eta \pslash) V_4 V_{4+A} f \\  \notag
&\quad + |u|^3 (\chibar \pslash + \eta \pslash) |u| \eta \pslash 
\big[
(1 + |u| \eta \pslash) V_0 V_4 V_{4+A} f 
+ |u|^3 (\chibar \pslash + \eta p^4) V_{4+C} V_4 V_{4+A} f 
+ |u| \eta \pslash V_4 V_4 V_{4+A} f
\big] \\  \notag
&\quad + \mathcal{D} \big( \pslash |u| \eta + \pslash |u| \mathcal{D} \eta \big) V_4 f \\
 \notag &\quad + \big( \pslash |u| \eta + \pslash |u| \mathcal{D} \eta \big)
\big[
(1 + |u| \eta \pslash) V_0 V_4 f 
+ |u|^3 (\chibar \pslash + \eta p^4) V_{4+C} V_4 f 
+ |u| \eta \pslash V_4 V_4 f
\big] \\  \notag
&\quad + (1 + 2 |u| \etabar \pslash) \mathcal{D}(|u| \eta \pslash) V_0 V_4 f \\  \notag
&\quad + |u| \eta \pslash (1 + |u| \eta \pslash) 
\big[
(1 + |u| \eta \pslash) V_0 V_0 V_4 f 
+ |u|^3 (\chibar \pslash + \eta p^4) V_{4+C} V_0 V_4 f 
+ |u| \eta \pslash V_4 V_0 V_4 f
\big] \\  \notag
&\quad + \mathcal{D}(|u|^3 (\chibar \pslash + \eta \pslash)) |u| \eta \pslash V_{4+A} V_4 f \\  \notag
&\quad + |u|^3 (\chibar \pslash + \eta \pslash) \mathcal{D}(|u| \eta \pslash) V_{4+A} V_4 f \\  \notag
&\quad + |u|^3 (\chibar \pslash + \eta \pslash) |u| \eta \pslash 
\big[
(1 + |u| \eta \pslash) V_0 V_{4+A} V_4 f 
+ |u|^3 (\chibar \pslash + \eta p^4) V_{4+C} V_{4+A} V_4 f 
+ |u| \eta \pslash V_4 V_{4+A} V_4 f
\big] \\  \notag
&\quad + |u| \eta \pslash \mathcal{D}(|u| \eta \pslash) \\  
&\quad + |u| \eta \pslash 
\big[
(1 + |u| \eta \pslash) V_0 V_4 V_4 f 
+ |u|^3 (\chibar \pslash + \eta p^4) V_{4+C} V_4 V_4 f 
+ |u| \eta \pslash V_4 V_4 V_4 f
\big]
\end{align}
where one uses the fact 
\begin{align}
 e_{3}(p^{4})=\frac{\Bigg(2\chibar _{AB}-\gslash _{BC}e_{A}(b^{C})-\gslash_{AC}e_{B}(b^{C})\Bigg)p^{A}p^{B}}{2p^{3}}.   
\end{align}
First of all, note that  $\Gamma^{C}_{3B}\gslash_{AC}$ appears again. Moreover, note an important point: wherever $V_{4+A}$ appears, we absorb a factor of $|u|$ with it. For example, we write $V_{K}V_{4+A}f=\frac{1}{|u|}V_{K}(|u|V_{4+A}f)+V_{K}(\frac{1}{|u|})|u|V_{4+A}f$. In light of the estimates on the Ricci coefficients including $\Gamma^{A}_{3B}=\chibar^{A}_{B}-e_{A}(b^{B})$, the coefficients of \[\tilde{V}_{(1)}\tilde{V}_{(2)}f, \hspace{1mm} \tilde{V}_{(1)}\tilde{V}_{(2)}\tilde{V}_{(3)}f,\] with $\tilde{V}_i \in \tilde{V}$ for $i=1,2,3$ (recall the definition of $\tilde{V}$ in equation \eqref{tildeV}), are uniformly bounded by $\lesssim 1$--this is vital, as we shall see soon. The coefficient of the lower derivatives verify better decay estimates, except for the zeroth order term. This will be important and a distinguishing feature of the large data problem. Now we estimate $||\mathcal{D}^{2}T_{\alpha\beta}||_{L^{2}_{sc}(S_{u,\ubar})}$ for all spacetime indices $\alpha,\beta$. First, recall that 
%\begin{align}
%T_{43} &= \int_{\mathcal{P}_{x}} f\, p_{4} p_{3} \sqrt{\det\gslash} \, 
%          \frac{dp^{1} dp^{2} dp^{3}}{p^{3}}, \\
%T_{A4} &= \int_{\mathcal{P}_{x}} f\, p_{A} p_{4} \sqrt{\det\gslash} \, 
%          \frac{dp^{1} dp^{2} dp^{3}}{p^{3}}, \\
%T_{AB} &= \int_{\mathcal{P}_{x}} f\, p_{A} p_{B} \sqrt{\det\gslash} \, 
%          \frac{dp^{1} dp^{2} dp^{3}}{p^{3}}, \\
%T_{44} &= \int_{\mathcal{P}_{x}} f\, p_{4} p_{4} \sqrt{\det\gslash} \, 
%          \frac{dp^{1} dp^{2} dp^{3}}{p^{3}}, \\
%T_{33} &= \int_{\mathcal{P}_{x}} f\, p_{3} p_{3} \sqrt{\det\gslash} \, 
%          \frac{dp^{1} dp^{2} dp^{3}}{p^{3}}, \\
%T_{A3} &= \int_{\mathcal{P}_{x}} f\, p_{A} p_{3} \sqrt{\det\gslash} \, 
%          \frac{dp^{1} dp^{2} dp^{3}}{p^{3}}.
%\end{align}

\begin{align}
s_{2}(T_{44}) &= s_{2}(\snabla_{4} \tr\chi) = 0, \\
s_{2}(T_{33}) &= s_{2}(\snabla_{3} \tr\chibar) = 1 + 1 = 2, \\
s_{2}(T_{A4}) &= s_{2}(\snabla_{4} \eta) = \frac{1}{2}, \\
s_{2}(T_{A3}) &= s_{2}(\snabla_{3} \etabar) = 1 + \frac{1}{2} = \frac{3}{2}, \\
s_{2}(T_{43}) &= s_{2}(\snabla_{4} \omegabar) = 1, \\
s_{2}(\slashed{T}_{AB}) &= s_{2}(T_{AB}) = s_{2}(\snabla_{4} \chibarhat) = 1,
\end{align}
by means of the signature conservation of the transport equations for the Ricci coefficients. we estimate $\mathcal{D}^{2}T_{44}$ first:
\begin{align}
|\mathcal{D}^{2}T_{44}|
&\;\lesssim\;
    \int_{p} \Big[
        V_{0}V_{0}f
        + |u|\,V_{4+A}V_{0}f
        + V_{4}V_{0}f
        + V_{0}(|u|\,V_{4+A}f) \notag \\
&\qquad\quad
        + |u|^{2} V_{4+A}V_{4+B}f
        + |u|^{-1} V_{4}(|u|\,V_{4+A}f)
        + V_{0}V_{4}f
        + |u|\,V_{4+A}V_{4}f \notag \\
&\qquad\quad
        + |u|^{-2} V_{4}V_{4}f
        + |u|^{-1} (V_{0},\,V_{4},\,|u|\,V_{4+A})f
    \Big]
    \sqrt{\det\gslash}\,\frac{dp^{1}dp^{2}dp^{3}}{p^{3}} \notag \\
&\quad
    + \int_{P} |f|\,\sqrt{\det\gslash}\,\frac{dp^{1}dp^{2}dp^{3}}{p^{3}} \notag \\[1em]
&\;\lesssim\;
    |u|^{-1} \Big[
        \|V_{0}V_{0}f\|_{L^{2}(\mathcal{P}_{x})}
        + \||u|\,V_{4+A}V_{0}f\|_{L^{2}(\mathcal{P}_{x})}
        + \|V_{4}V_{0}f\|_{L^{2}(\mathcal{P}_{x})} \notag \\
&\qquad\quad
        + \|V_{0}(|u|\,V_{4+A}f)\|_{L^{2}(\mathcal{P}_{x})}
        + \||u|^{2}V_{4+A}V_{4+B}f\|_{L^{2}(\mathcal{P}_{x})} \notag \\
&\qquad\quad
        + \|V_{4}(|u|\,V_{4+A}f)\|_{L^{2}(\mathcal{P}_{x})}
        + \|V_{0}V_{4}f\|_{L^{2}(\mathcal{P}_{x})}
        + \||u|\,V_{4+A}V_{4}f\|_{L^{2}(\mathcal{P}_{x})} \notag \\
&\qquad\quad
        + \|V_{4}V_{4}f\|_{L^{2}(\mathcal{P}_{x})}
        + |u|^{-1} \|(V_{0},\,V_{4},\,|u|\,V_{4+A})f\|_{L^{2}(\mathcal{P}_{x})}
    \Big]
    + \frac{a}{|u|^{2}},
\end{align}
where we have used H\"older's inequality on mass-shell with the measure $\sqrt{\det\gslash}\frac{dp^{1}dp^{2}dp^{3}}{p^{3}}$, the estimate on $f$ i.e., $\sup_{\mathcal{P}|_{u=u_{\infty}}}f=\sup_{\mathcal{P}}f\lesssim a$
and the decay estimate on the momentum support. Therefore the $L^{2}(S_{u,\ubar})$ norm of the spacetime entity $|\mathcal{D}^{2}T_{44}|$ is estimated as
\begin{align} \notag
\|\mathcal{D}^{2}T_{44}\|_{L^{2}(S_{u,\ubar})}
&\lesssim |u|^{-1} \Big[ 
\|V_{0}V_{0}f\|_{L^{2}(S_{u,\ubar})L^{2}(\mathcal{P}_{x})} \\ \notag 
&\quad + \||u|\,V_{4+A}V_{0}f\|_{L^{2}(S_{u,\ubar})L^{2}(\mathcal{P}_{x})} \\ \notag 
&\quad + \|V_{4}V_{0}f\|_{L^{2}(S_{u,\ubar})L^{2}(\mathcal{P}_{x})} \\ \notag 
&\quad + \|V_{0}(|u|\,V_{4+A}f)\|_{L^{2}(S_{u,\ubar})L^{2}(\mathcal{P}_{x})} \\ \notag 
&\quad + \||u|^{2}V_{4+A}V_{4+B}f\|_{L^{2}(S_{u,\ubar})L^{2}(\mathcal{P}_{x})} \\ \notag 
&\quad + \|V_{4}(|u|\,V_{4+A}f)\|_{L^{2}(S_{u,\ubar})L^{2}(\mathcal{P}_{x})} \\ \notag 
&\quad + \|V_{0}V_{4}f\|_{L^{2}(S_{u,\ubar})L^{2}(\mathcal{P}_{x})} \\ \notag 
&\quad + \||u|\,V_{4+A}V_{4}f\|_{L^{2}(S_{u,\ubar})L^{2}(\mathcal{P}_{x})} \\
&\quad + \|V_{4}V_{4}f\|_{L^{2}(S_{u,\ubar})L^{2}(\mathcal{P}_{x})}
\Big] + \frac{a}{|u|},
\end{align}
where we have used the uniform first-order estimate from Proposition \ref{firstvlasov}, the second derivative estimates from Proposition \ref{prop:second_derivative_estimate}, and the third derivative estimates from Proposition \ref{thirdvlasov}. We thus arrive at
\begin{align}
||\mathcal{D}^{2}T_{44}||_{L^{2}(S_{u,\ubar})}\lesssim \frac{a}{|u|}. 
\end{align}
In  the scale-invariant norm, noting $s_{2}(\mathcal{D}^{2}T_{44})=s_{2}(T_{44})=0$, we have
\begin{align}
||\mathcal{D}^{2}T_{44}||_{\mathcal{L}^{2}_{(sc)}(S_{u,\ubar})}\lesssim \frac{a}{|u|}~or~\frac{|u|}{a}||\mathcal{D}^{2}T_{44}||_{L^{2}_{sc}(S_{u,\ubar})}\lesssim 1.      
\end{align}
The most important point to note here is that this estimate is sharp when $\mathcal{D}=|u|\snabla_{3}$. This is due to the fact that when $\mathcal{D}=|u|\snabla_{3}$ hits $T_{44}=\int_{\mathcal{P}_{x}}f\sqrt{\det\gslash}\frac{dp^{1}dp^{2}dp^{3}}{p^{3}}$, one generates the following terms 
\begin{align}
 \mathcal{D}T_{44}\sim \int_{\mathcal{P}_{x}}f \hsp |u|\tr\chibar\sqrt{\det\gslash}\frac{dp^{1}dp^{2}dp^{3}}{p^{3}}+\dots \hsp,   
\end{align}
and subsequently 
\begin{align}
 \mathcal{D}^{2}T_{44}\sim \int_{\mathcal{P}_{x}}f|u|\snabla_{3}(|u|\tr\chibar)\sqrt{\det\gslash}\frac{dp^{1}dp^{2}dp^{3}}{p^{3}}+\dots \hsp ,\\
 \mathcal{D}^{3}T_{44}\sim \int_{\mathcal{P}_{x}}f|u|\snabla_{3}\Bigg(|u|\snabla_{3}(|u|\tr\chibar)\Bigg)\sqrt{\det\gslash}\frac{dp^{1}dp^{2}dp^{3}}{p^{3}}+ \dots \hsp ,  
\end{align}
and by the estimates on $|\tr\chibar+\frac{2}{|u|}|\lesssim \frac{\Gamma}{|u|^{2}}$, zeroth order estimate on $f$ $\sup_{\mathcal{P}|_{u=u_{\infty}}}f=\sup_{\mathcal{P}}f=a$, and the decay of momentum support \eqref{zeroorderbootstrap1}-\eqref{zeroorderbootstrap2}we get  
\begin{align}
 |\mathcal{D}^{2}T_{44}|\lesssim \frac{a}{|u|^{2}}+\text{better decay},~|\mathcal{D}^{3}T_{44}|\lesssim \frac{a}{|u|^{2}}+\text{better decay}.      
\end{align}
This, in the $\mathcal{L}^{2}_{(sc)}(S_{u,\ubar})$--norm, yields 
\begin{align}
 ||\mathcal{D}^{2}T_{44}||_{L^{2}_{sc}(S_{u,\ubar})}\lesssim \frac{a}{|u|},~||\mathcal{D}^{3}T_{44}||_{L^{2}_{sc}(H_{u})}\lesssim \frac{a}{|u|} .  
\end{align}
This is the worst possible case since, if one of $\mathcal{D}$ is $a^{\frac{1}{2}}\nablasl$ or $\nabla_{4}$, then that would produce additional decay. The largeness of $f$ is reflected in this estimate due to its sharpness. It can not be improved further while keeping the  norm of $f$ fixed. Moving on, we wish to control the following norms 
\begin{align}
 ||(a^{\frac{1}{2}}\nablasl)^{2}T_{44}||_{L^{2}(S_{u,\ubar})}, ||(a^{\frac{1}{2}}\nablasl|u|\nabla_{3})T_{44}||_{L^{2}(S_{u,\ubar})},||(\snabla_{4})^{2}T_{44}||_{L^{2}(S_{u,\ubar})} ||(a^{\frac{1}{2}}\nablasl)^{3}T_{44}||_{L^{2}(H_{u})}, \dots \hsp .
\end{align}

\noindent For this, we compute $e_{A}e_{B}$ in terms of $V_{\mu}V_{\nu}$. First recall 
\begin{align}
e_A =& V_{(A)}  + \modu^2 \left( \frac{\Gammaslash^C_{AB}\hsp p^B}{p^3} + {\chibarhat_A}^C +\frac{{\chi_A}^C \hsp p^4}{p^3}\right) V_{(4+C)}  \notag \\ &+\frac{\modu^2}{2} (\tr\chibar+\frac{2}{\modu}) V_{(4+A)} +\left( \frac{\frac{1}{2}\chi_{AB}\hsp p^B}{p^3} -\etabar_A\right)(V_{(4)}+V_{(0)})  
\end{align}
and so 
\begin{align}
& e_{A}e_{D}f= \Bigg[V_{(A)}  + \modu^2 \left( \frac{\Gammaslash^C_{AB}\hsp p^B}{p^3} + {\chibarhat_A}^C +\frac{{\chi_A}^C \hsp p^4}{p^3}\right) V_{(4+C)}  \notag \\ &+\frac{\modu^2}{2} (\tr\chibar+\frac{2}{\modu}) V_{(4+A)} +\left( \frac{\frac{1}{2}\chi_{AB}\hsp p^B}{p^3} -\etabar_A\right)(V_{(4)}+V_{(0)})\Bigg]\Bigg[V_{(D)}  + \modu^2 \left( \frac{\Gammaslash^E_{DF}\hsp p^F}{p^3} + {\chibarhat_D}^E +\frac{{\chi_D}^E \hsp p^4}{p^3}\right) V_{(4+E)}  \notag \\ &+\frac{\modu^2}{2} (\tr\chibar+\frac{2}{\modu}) V_{(4+D)} +\left( \frac{\frac{1}{2}\chi_{DF}\hsp p^F}{p^3} -\etabar_D\right)(V_{(4)}+V_{(0)})\Bigg]f.   
\end{align}
We thus infer that the leading order term reads (the remaining lower order terms appear with higher decay and are so irrelevant in the analysis):
\begin{align}
e_{A}e_{D}f\sim V_{A}V_{D}f+\frac{a^{\frac{1}{2}}\Gamma}{|u|}V_{A}(|u|V_{4+D}f)+\frac{a^{\frac{1}{2}}\Gamma}{|u|^{2}}(V_{A}V_{4}f+V_{A}V_{0}f)+l.o.t.
\end{align}
We will encounter terms of the type $\Gammaslash \nablasl T_{44}$ while applying $\nablasl$ twice. This yields
\begin{align}
\big| \nablasl^{2} T_{44} \big|
&\lesssim \Bigg| 
    \int_{\mathcal{P}_{x}} \Bigg[ 
        V_{A} V_{D} f
        + \frac{a^{\frac{1}{2}} \Gamma}{|u|} V_{A} \big( |u| V_{4+D} f \big)
        + \frac{a^{\frac{1}{2}} \Gamma}{|u|^{2}} \big( V_{A} V_{4} f + V_{A} V_{0} f \big) 
    \Bigg] p^{3} p^{3} 
    \frac{\sqrt{\gslash} \, dp^{1} dp^{2} dp^{3}}{p^{3}} 
\Bigg| 
+ |\Gammaslash| \, \big| \nablasl T_{44} \big| \notag \\
&\lesssim |u|^{-1} \Bigg[ 
    \big\| V_{A} V_{D} f \big\|_{L^{2}(\mathcal{P}_{x})}
    + \frac{a^{\frac{1}{2}} \Gamma}{|u|} \big\| V_{A} \, |u| V_{4+D} f \big\|_{L^{2}(\mathcal{P}_{x})}
    + \frac{a^{\frac{1}{2}} \Gamma}{|u|^{2}} \big\| V_{A} V_{4} f \big\|_{L^{2}(\mathcal{P}_{x})} \notag \\
&\hspace{3cm}
    + \frac{a^{\frac{1}{2}} \Gamma}{|u|^{2}} \big\| V_{A} V_{0} f \big\|_{L^{2}(\mathcal{P}_{x})} 
\Bigg] \notag \\
&\quad + |u|^{-1} \Bigg(
    \big\| V f \big\|_{L^{2}(\mathcal{P}_{x})}
    + \big\| |u| V_{4+A} f \big\|_{L^{2}(\mathcal{P}_{x})}
    + \big\| V_{4} f \big\|_{L^{2}(\mathcal{P}_{x})}
    + \big\| V_{(0)} f \big\|_{L^{2}(\mathcal{P}_{x})}
\Bigg).
\end{align}
Here, we have used the fact that $|\Gammaslash-\overline{\Gammaslash}|\lesssim \frac{a^{\frac{1}{2}}}{|u|}$ and $|\overline{\Gammaslash}|\lesssim 1$ (note that these are component-wise norms rather than tensor norm) and first order estimate of $Vf$ from Proposition \ref{firstvlasov}. 
This leads to the norm estimate
\begin{align}
\left\| \nablasl^{2} T_{44} \right\|_{L^{2}(S_{u,\ubar})}
&:= \left\| 
    \sqrt{
        \gslash^{AC} \gslash^{BD} 
        \nablasl_{A} \nablasl_{B} T_{44} \,
        \nablasl_{C} \nablasl_{D} T_{44}
    }
\right\|_{L^{2}(S_{u,\ubar})} \notag \\[1em]
&\lesssim |u|^{-3} \Bigg[
    \left\| V_{A} V_{D} f \right\|_{L^{2}(S_{u,\ubar}) L^{2}(\mathcal{P}_{x})}
    + \frac{a^{\frac12} \Gamma}{|u|}
      \left\| V_{A} \, |u| V_{4+D} f \right\|_{L^{2}(S_{u,\ubar}) L^{2}(\mathcal{P}_{x})} \notag \\
&\quad + \frac{a^{\frac12} \Gamma}{|u|^{2}}
      \left\| V_{A} V_{4} f \right\|_{L^{2}(S_{u,\ubar}) L^{2}(\mathcal{P}_{x})}
    + \frac{a^{\frac12} \Gamma}{|u|^{2}}
      \left\| V_{A} V_{0} f \right\|_{L^{2}(S_{u,\ubar}) L^{2}(\mathcal{P}_{x})}
\Bigg] \notag \\[1em]
&\quad + |u|^{-3} \Bigg(
    \left\| V f \right\|_{L^{2}(S_{u,\ubar}) L^{2}(\mathcal{P}_{x})}
    + \left\| \, |u| V_{4+A} f \right\|_{L^{2}(S_{u,\ubar}) L^{2}(\mathcal{P}_{x})} \notag \\
&\quad\quad + \left\| V_{4} f \right\|_{L^{2}(S_{u,\ubar}) L^{2}(\mathcal{P}_{x})}
    + \left\| V_{(0)} f \right\|_{L^{2}(S_{u,\ubar}) L^{2}(\mathcal{P}_{x})}
\Bigg) \notag \\[1em]
&\lesssim \frac{a}{|u|^{3}}.
\end{align}
\end{proof}
\noindent Recall that $s_{2}(\nablasl^{2}T_{44})=1$ and therefore 
\begin{align}
 ||\nablasl^{2}T_{44}||_{\mathcal{L}^{2}_{(sc)}(S_{u,\ubar})}=\frac{|u|^{2}}{a}||\nablasl^{2}T_{44}||_{L^{2}(S_{u,\ubar})}\lesssim \frac{1}{|u|},   
\end{align}
which implies 
\begin{align}
||\mathcal{D}^{2}T_{44}||_{L^{2}_{sc}(S_{u,\ubar})}=||(a^{\frac{1}{2}}\nablasl)^{2} T_{44}||_{L^{2}_{sc}(S_{u,\ubar})}\lesssim \frac{a}{|u|}~or~\frac{|u|}{a} ||\mathcal{D}^{2}T_{44}||_{L^{2}_{sc}(S_{u,\ubar})}\lesssim 1. 
\end{align}
Similarly, one obtains 
\begin{align}
||\nablasl^{3}T_{44}||_{L^{2}(H_{u})}\lesssim |u|^{-4}+\frac{a}{|u|^{4}},   
\end{align}
hence 
\begin{align}
||(a^{\frac{1}{2}}\nablasl)^{3}T_{44}||_{L^{2}(H_{u})}\lesssim \frac{a^{\frac{3}{2}}}{|u|^{4}}+\frac{a^{\frac{5}{2}}}{|u|^{4}} .  
\end{align}
Noting $s_{2}(\nablasl^{3}T_{44})=\frac{3}{2}$, we immediately obtain 
\begin{align}
||(a^{\frac{1}{2}}\nablasl)^{3}T_{44}||_{L^{2}_{sc}(H_{u})}=\frac{|u|^{3}}{a^{\frac{3}{2}}}\Bigg(\frac{a^{\frac{3}{2}}}{|u|^{4}}+\frac{a^{\frac{5}{2}}}{|u|^{4}}\Bigg)\lesssim \frac{1}{|u|}+\frac{a}{|u|}~or~\frac{|u|}{a}||\mathcal{D}^{3}T_{44}||_{L^{2}_{sc}(H_{u})}\lesssim 1.
\end{align}

\noindent Now for the $\snabla_{4}$ derivatives, compute 
\begin{align} \notag 
 e_{4}e_{4}f=\Bigg[ V_{(3)} + \modu^2\left( \frac{{\chi_B}^A p^B}{p^3} + 2\hsp \etabar^A \right)V_{(4+A)}\Bigg]\Bigg[ V_{(3)} + \modu^2\left( \frac{{\chi_C}^D p^C}{p^3} + 2\hsp \etabar^C \right)V_{(4+D)} \Bigg]f\\
 =V_{3}V_{3}f+\frac{a^{\frac{1}{2}}\Gamma}{|u|}V_{3}|u|V_{4+A}f+\frac{a\Gamma^{2}}{|u|^{2}}(|u|V_{4+A})(|u|V_{4+D})f+l.o.t
\end{align}
and note that the lower order terms $l.o.t$ involve terms with one derivative of $f$ and the coefficients enjoy higher decay since they contain one derivative of the Ricci coefficients. Therefore, we do not repeat these terms; rather, we pay attention to the top-order terms. Explicit computations yield 
\begin{align}
|\snabla^{2}_{4}\Te_{44}| 
&\lesssim \Bigg| \int_{\mathcal{P}_{x}} 
\Bigg[ 
    V_{3} V_{3} f 
    + \frac{a^{\frac{1}{2}} \Gamma}{|u|} V_{3} \big( |u| V_{4+A} f \big) 
    + \frac{a \Gamma^{2}}{|u|^{2}} \big( |u| V_{4+A} \big) \big( |u| V_{4+D} \big) f 
\Bigg] p^{3} p^{3} \frac{\sqrt{\det \gslash} \, dp^{1} dp^{2} dp^{3}}{p^{3}} \Bigg| \notag \\
&\quad + \int_{\mathcal{P}_{x}} \big( |f \nabla_{4} \tr\chi| + |f \tr\chi \tr\chi| \big) \frac{\sqrt{\det \gslash} \, dp^{1} dp^{2} dp^{3}}{p^{3}} + \text{l.o.t.} \notag \\
&\lesssim |u|^{-1} \Bigg[
    \| V_{3} V_{3} f \|_{L^{2}(\mathcal{P}_{x})} 
    + \frac{a^{\frac{1}{2}} \Gamma}{|u|} \| V_{3} (|u| V_{4+A} f) \|_{L^{2}(\mathcal{P}_{x})} 
    + \frac{a \Gamma^{2}}{|u|^{2}} \| (|u| V_{4+A})(|u| V_{4+D}) f \|_{L^{2}(\mathcal{P}_{x})} 
\Bigg],
\end{align}
and so the following holds
\begin{align}
\| \snabla^{2}_{4} T_{44} \|_{L^{2}(S_{u, \ubar})} 
&\lesssim |u|^{-1} \Bigg[
    \| V_{3} V_{3} f \|_{L^{2}(S_{u, \ubar}) L^{2}(\mathcal{P}_{x})} 
    + \frac{a^{\frac{1}{2}} \Gamma}{|u|} \| V_{3} (|u| V_{4+A} f) \|_{L^{2}(S_{u, \ubar}) L^{2}(\mathcal{P}_{x})} \notag \\
&\quad + \frac{a \Gamma^{2}}{|u|^{2}} \| (|u| V_{4+A})(|u| V_{4+D}) f \|_{L^{2}(S_{u, \ubar}) L^{2}(\mathcal{P}_{x})} 
\Bigg] \notag \\
&\lesssim \frac{a}{|u|}.
\end{align}
Consequently, 
\begin{align}
||\snabla^{2}_{4}T_{44}||_{L^{2}_{sc}(S_{u,\ubar})}\lesssim \frac{a}{|u|} ~or ~  \frac{|u|}{a}||\mathcal{D}^{2}T_{44}||_{L^{2}_{sc}(S_{u,\ubar})}\lesssim 1 
\end{align}
since $s_{2}(\snabla^{2}_{4}T_{44})=0$. Next, we move on to estimating the remaining stress-energy tensor components.\\
\vspace{3mm}

\noindent  \textbf{$T_{A3}$:} First consider $\mathcal{D}=|u|\nabla_{3}$. We compute
\begin{align}
|\mathcal{D}^{2}T_{A3}| \lesssim \; & 
\int_{\mathcal{P}_{x}} \Bigg[
    V_{0}V_{0}f
    + |u| V_{4+A} V_{0} f
    + V_{4} V_{0} f
    + V_{0}(|u| V_{4+A} f)
    + |u|^{2} V_{4+A} V_{4+B} f \notag \\
&\quad + |u|^{-1} V_{4}(|u| V_{4+A} f)
    + V_{0} V_{4} f
    + |u| V_{4+A} V_{4} f
    + |u|^{-2} V_{4} V_{4} f
\Bigg] |p^{4} p_{A}| \sqrt{\det \gslash} \frac{dp^{1} dp^{2} dp^{3}}{p^{3}} \notag \\
&+ \int_{\mathcal{P}_{x}} |f| \, |p^{4} p_{A}| \sqrt{\det \gslash} \frac{dp^{1} dp^{2} dp^{3}}{p^{3}}
+ \text{l.o.t.} \notag \\
\lesssim \; & |u|^{-3} \Bigg[
    \| V_{0} V_{0} f \|_{L^{2}(\mathcal{P}_{x})} 
    + \| |u| V_{4+A} V_{0} f \|_{L^{2}(\mathcal{P}_{x})} 
    + \| V_{4} V_{0} f \|_{L^{2}(\mathcal{P}_{x})} 
    + \| V_{0}(|u| V_{4+A} f) \|_{L^{2}(\mathcal{P}_{x})} \notag \\ \notag
&\quad + \| |u|^{2} V_{4+A} V_{4+B} f \|_{L^{2}(\mathcal{P}_{x})} 
    + \| V_{4}(|u| V_{4+A} f) \|_{L^{2}(\mathcal{P}_{x})} 
    + \| V_{0} V_{4} f \|_{L^{2}(\mathcal{P}_{x})} 
    + \| |u| V_{4+A} V_{4} f \|_{L^{2}(\mathcal{P}_{x})}\\
    &\quad 
    + \| V_{4} V_{4} f \|_{L^{2}(\mathcal{P}_{x})} 
\Bigg] \notag \\
&+ \frac{a}{|u|^{4}},
\end{align}
where the lower order terms $l.o.t$ involve first derivatives as well as terms with higher decay structure produced as a result of $e_{3}$ acting on $p^{4}$. Using the definition of the $L^{2}(S_{u,\ubar})$ norm, we obtain 
\begin{align}
\|\mathcal{D}^{2}T_{A3}\|_{L^{2}(S_{u,\ubar})} 
\lesssim &\; |u|^{-4} \Bigg[
    \| V_{0} V_{0} f \|_{L^{2}(S_{u,\ubar}) L^{2}(\mathcal{P}_{x})} 
    + \| |u| V_{4+A} V_{0} f \|_{L^{2}(S_{u,\ubar}) L^{2}(\mathcal{P}_{x})} \notag\\
&\quad + \| V_{4} V_{0} f \|_{L^{2}(S_{u,\ubar}) L^{2}(\mathcal{P}_{x})} 
    + \| V_{0} (|u| V_{4+A} f) \|_{L^{2}(S_{u,\ubar}) L^{2}(\mathcal{P}_{x})} \notag\\
&\quad + \| |u|^{2} V_{4+A} V_{4+B} f \|_{L^{2}(S_{u,\ubar}) L^{2}(\mathcal{P}_{x})} 
    + \| V_{4} |u| V_{4+A} f \|_{L^{2}(S_{u,\ubar}) L^{2}(\mathcal{P}_{x})} \notag\\
&\quad + \| V_{0} V_{4} f \|_{L^{2}(S_{u,\ubar}) L^{2}(\mathcal{P}_{x})} 
    + \| |u| V_{4+A} V_{4} f \|_{L^{2}(S_{u,\ubar}) L^{2}(\mathcal{P}_{x})} \notag\\
&\quad + \| V_{4} V_{4} f \|_{L^{2}(S_{u,\ubar}) L^{2}(\mathcal{P}_{x})}
\Bigg] + \frac{a}{|u|^{5}}
\end{align}
and by the estimates on Vlasov 
\begin{align}
||\mathcal{D}^{2}T_{A3}||_{L^{2}(S_{u,\ubar})}\lesssim \frac{a}{|u|^{4}}  
\end{align}
and similarly, for the third derivatives 
\begin{align}
||\mathcal{D}^{3}T_{A3}||_{L^{2}(H_{u})} \lesssim \frac{a}{|u|^{4}}    
\end{align}
or in scale invariant norm noting $s_{2}(\mathcal{D}^{2}T_{A3})=s_{2}(T_{A3})=\frac{3}{2}$
\begin{align}
||\mathcal{D}^{2}T_{A3}||_{L^{2}_{sc}(S_{u,\ubar})}\lesssim \frac{1}{a^{\frac{3}{2}}|u|}+\frac{|u|^{3}}{a^{\frac{3}{2}}}\frac{a}{|u|^{4}}\lesssim \frac{1}{a^{\frac{3}{2}}|u|}+\frac{1}{a^{\frac{1}{2}}|u|},\end{align}
\begin{align}
||\mathcal{D}^{3}T_{A3}||_{L^{2}_{sc}(H_{u})} \lesssim \frac{1}{a^{\frac{3}{2}}|u|}+\frac{1}{a^{\frac{1}{2}}|u|}, 
\end{align}
or 
\begin{align}
\frac{|u|}{a} ||\mathcal{D}^{2}T_{A3}||_{L^{2}_{sc}(S_{u,\ubar})}\lesssim \frac{1}{a^{\frac{3}{2}}} \lesssim 1,~\frac{|u|}{a}||\mathcal{D}^{3}T_{A3}||_{L^{2}_{sc}(H_{u})}\lesssim \frac{1}{a^{\frac{3}{2}}} \lesssim 1.   
\end{align}

\noindent Next, consider
\begin{align} \notag
|\nablasl^{2}T_{E3}|=&|\int_{\mathcal{P}_{x}}\Bigg[V_{A}V_{D}f+\frac{a^{\frac{1}{2}}\Gamma}{|u|}V_{A}(|u|V_{4+D}f)+\frac{a^{\frac{1}{2}}\Gamma}{|u|^{2}}(V_{A}V_{4}f+V_{A}V_{0}f \Bigg]p_{E}p^{4}\frac{\sqrt{\gslash}dp^{1}dp^{2}dp^{3}}{p^{3}}+\Gammaslash\Gammaslash T_{E3}|+l.o.t\\
\lesssim& |u|^{-3}\Bigg[||V_{A}V_{D}f||_{L^{2}(\mathcal{P}_{x})}+\frac{a^{\frac{1}{2}}\Gamma}{|u|}||V_{A}|u|V_{4+D}f||_{L^{2}(\mathcal{P}_{x})}+\frac{a^{\frac{1}{2}}\Gamma}{|u|^{2}}||V_{A}V_{4}f||_{L^{2}(\mathcal{P}_{x})}+\frac{a^{\frac{1}{2}}\Gamma}{|u|^{2}}||V_{A}V_{0}f||_{L^{2}(\mathcal{P}_{x})} \Bigg]\\
+&\frac{a}{|u|^{4}}.
\end{align}
Passing to the $L^2(S_{u,\ubar})-$norm, we obtain
\begin{align}
 \notag ||\nablasl^{2}T_{E3}||_{L^{2}(S_{u,\ubar})}:=&||\sqrt{\gslash^{AC}\gslash^{BD}\nablasl_{A}\nablasl_{B}T_{E3}\nablasl_{C}\nablasl_{D}T_{F3}\gslash^{EF}}||_{L^{2}(S_{u,\ubar})}\\ \notag 
 \lesssim& |u|^{-6}\Bigg[||V_{A}V_{D}f||_{L^{2}(S_{u,\ubar})L^{2}(\mathcal{P}_{x})}+\frac{a^{\frac{1}{2}}\Gamma}{|u|}||V_{A}|u|V_{4+D}f||_{L^{2}(S_{u,\ubar})L^{2}(\mathcal{P}_{x})}\\\notag +&\frac{a^{\frac{1}{2}}\Gamma}{|u|^{2}}||V_{A}V_{4}f||_{L^{2}(S_{u,\ubar})L^{2}(\mathcal{P}_{x})}+\frac{a^{\frac{1}{2}}\Gamma}{|u|^{2}}||V_{A}V_{0}f||_{L^{2}(S_{u,\ubar})L^{2}(\mathcal{P}_{x})} \Bigg]+\frac{a}{|u|^{6}}\\
 \lesssim& \frac{a}{|u|^{6}},
\end{align}    
which, after noting that $s_{2}(\nablasl^{2}T_{E3})=1+1+\frac{1}{2}=\frac{5}{2}$ yields
\begin{align}
  %||\nablasl^{2}T_{E3}||_{L^{2}_{sc}(S_{u,\ubar})}\lesssim \frac{1}{|a|^{\frac{3}{2}}|u|} \implies   
 ||(a^{\frac{1}{2}}\nablasl)^{2}T_{E3}||_{L^{2}_{sc}(S_{u,\ubar})}\lesssim \frac{1}{a^{\frac{1}{2}}|u|}, \hspace{3mm} \text{or} \hspace{3mm}
 \frac{|u|}{a}||\mathcal{D}^{2}T_{E3}||_{L^{2}_{sc}(S_{u,\ubar})}\lesssim \frac{1}{a^{\frac{3}{2}}}\lesssim 1.
\end{align}
Similarly, for the third derivatives, the same calculations yield 
\begin{align}
\frac{|u|}{a}||\mathcal{D}^{3}T_{E3}||_{L^{2}_{sc}(H_{u})}\lesssim \frac{1}{a^{\frac{3}{2}}}\lesssim 1.
\end{align}
Moving on to $\snabla_4^2 T_{E3}$,
\begin{align} \notag 
|\snabla^{2}_{4}T_{E3}|=&|\int_{\mathcal{P}_{x}}\Bigg[V_{3}V_{3}f+\frac{a^{\frac{1}{2}}\Gamma}{|u|}V_{3}|u|V_{4+A}f+\frac{a\Gamma^{2}}{|u|^{2}}(|u|V_{4+A})(|u|V_{4+D})f\Bigg]p_{E}p^{4}\frac{\sqrt{\gslash}dp^{1}dp^{2}dp^{3}}{p^{3}}+l.o.t\\
\lesssim& |u|^{-3}\Bigg[||V_{3}V_{3}f||_{L^{2}(\mathcal{P}_{x})}+\frac{a^{\frac{1}{2}}||\Gamma}{|u|}V_{3}(|u|V_{4+A}f)||_{L^{2}(\mathcal{P}_{x})}+\frac{a\Gamma^{2}}{|u|^{2}}||(|u|V_{4+A})(|u|V_{4+D})f||_{L^{2}(\mathcal{P}_{x})} \Bigg], 
\end{align}
hence
\begin{align}
||\snabla^{2}_{4}T_{E3}||_{L^{2}(S_{u,\ubar})}=&||\sqrt{\snabla^{2}_{4}T_{E3}\snabla^{2}_{4}T_{F3}\gslash^{EF}}||_{L^{2}(S_{u,\ubar})}\\ \notag
\lesssim&  |u|^{-4}\Bigg[||V_{3}V_{3}f||_{L^{2}(S_{u,\ubar})L^{2}(\mathcal{P}_{x})}+\frac{a^{\frac{1}{2}}\Gamma}{|u|}V_{3}(|u|V_{4+A}f)||_{L^{2}(S_{u,\ubar})L^{2}(\mathcal{P}_{x})}\\ \notag &\hspace{2cm}+\frac{a\Gamma^{2}}{|u|^{2}}||(|u|V_{4+A})(|u|V_{4+D})f||_{L^{2}(S_{u,\ubar})L^{2}(\mathcal{P}_{x})} \Bigg]  
\lesssim \frac{a}{|u|^{4}}.
\end{align}
Noting that $s_{2}(\snabla^{2}_{4}T_{E3})=1+\frac{1}{2}=\frac{3}{2}$, the scale-invariant version becomes
\begin{align}
%||\snabla^{2}_{4}T_{E3}||_{L^{2}_{sc}(S_{u,\ubar})}\lesssim \frac{1}{a^{\frac{1}{2}}|u|} or~ 
\frac{|u|}{a}||\snabla^{2}_{4}T_{E3}||_{L^{2}_{sc}(S_{u,\ubar})}\lesssim \frac{1}{a^{\frac{3}{2}}}\lesssim 1.   
\end{align}
In an exactly similar fashion, one obtains 
\begin{align}
\frac{|u|}{a}||\nabla^{3}_{4}T_{E3}||_{L^{2}_{sc}(H_{u})}\lesssim \frac{1}{a^{\frac{3}{2}}}\lesssim 1.
\end{align}
Similarly, one obtains $\frac{|u|}{a}||\mathcal{D}^{3}T_{E3}||_{L^{2}_{sc}(H_{u})}\lesssim 1$ and $\frac{|u|}{a}||\mathcal{D}^{2}T_{E3}||_{L^{2}_{sc}(S_{u,\ubar})}\lesssim 1$  for all $\mathcal{D}$.

\vspace{3mm}

\noindent For \textbf{$T_{4A}$:} 
\begin{align} \notag 
|\mathcal{D}^{2}T_{4A}|\lesssim& \int_{\mathcal{P}_{x}}\Bigg[V_{0}V_{0}f+|u|V_{4+A}V_{0}f+V_{4}V_{0}f+V_{0}(|u|V_{4+A}f)+|u|^{2}V_{4+A}V_{4+B}f\\ \notag +&|u|^{-1}V_{4}|u|V_{4+A}f+V_{0}V_{4}f+|u|V_{4+A}V_{4}f+ |u|^{-2}V_{4}V_{4}f\Bigg]\sqrt{\det\gslash}\frac{dp^{1}dp^{2}dp^{3}}{p^{3}}\\ \notag +&\int_{\mathcal{P}_{x}}|f|\sqrt{\det\gslash}\frac{dp^{1}dp^{2}dp^{3}}{p^{3}}
+l.o.t\\ \notag
\lesssim& |u|^{-1}\Bigg[||V_{0}V_{0}f||_{L^{2}(\mathcal{P}_{x})}+|||u|V_{4+A}V_{0}f||_{L^{2}(\mathcal{P}_{x})}+||V_{4}V_{0}f||_{L^{2}(\mathcal{P}_{x})}+||V_{0}(|u|V_{4+A}f)||_{L^{2}(\mathcal{P}_{x})}\\ &\hspace{2.5cm}+|||u|^{2}V_{4+A}V_{4+B}f||_{L^{2}(\mathcal{P}_{x})}+||V_{4}|u|V_{4+A}f||_{L^{2}(\mathcal{P}_{x})}+||V_{0}V_{4}f||_{L^{2}(\mathcal{P}_{x})}\\ \notag &\hspace{6cm}+|u|V_{4+A}V_{4}f||_{L^{2}(\mathcal{P}_{x})}+ ||V_{4}V_{4}f||_{L^{2}(\mathcal{P}_{x})}\Bigg]+\frac{a}{|u|^{2}}.
\end{align}Passing to the $L^2(S_{u,\ubar})$ norms yields
\begin{align}
\notag ||\mathcal{D}^{2}T_{4A}||_{L^{2}(S_{u,\ubar})}\lesssim& |u|^{-2}\Bigg[||V_{0}V_{0}f||_{L^{2}(S_{u,\ubar})L^{2}(\mathcal{P}_{x})}+|||u|V_{4+A}V_{0}f||_{L^{2}(S_{u,\ubar})L^{2}(\mathcal{P}_{x})}\\ \notag +&||V_{4}V_{0}f||_{L^{2}(S_{u,\ubar})L^{2}(\mathcal{P}_{x})}+||V_{0}(|u|V_{4+A}f)||_{L^{2}(S_{u,\ubar})L^{2}(\mathcal{P}_{x})}\\ \notag +&|||u|^{2}V_{4+A}V_{4+B}f||_{L^{2}(S_{u,\ubar})L^{2}(\mathcal{P}_{x})}\\  \notag +&||V_{4}|u|V_{4+A}f||_{L^{2}(S_{u,\ubar})L^{2}(\mathcal{P}_{x})}+||V_{0}V_{4}f||_{L^{2}(S_{u,\ubar})L^{2}(\mathcal{P}_{x})}\\    +&|u|V_{4+A}V_{4}f||_{L^{2}(S_{u,\ubar})L^{2}(\mathcal{P}_{x})}+ ||V_{4}V_{4}f||_{L^{2}(S_{u,\ubar})L^{2}(\mathcal{P}_{x})}\Bigg]+\frac{a}{|u|^{2}}.
\end{align} For the commutator $(\mathcal{D},\snabla_4)\De^2 \Te_{4A}$, we have
\begin{align}
 ||(\mathcal{D},\snabla_{4})\mathcal{D}^{2}T_{4A}||_{L^{2}(H_{u})}\lesssim  |u|^{-2}||\sum_{i_{1}+i_{2}+i_{3}+i_{4}+i_{5}=3}||(V_{0})^{i_{1}}(|u|V_{4+A})^{i_{2}}(V_{4})^{i_{3}}(V_{A})^{i_{4}}(V_{3})^{i_{5}}f||_{L^{2}(H_{u})L^{2}(\mathcal{P}_{x})}+\frac{a}{|u|^{2}},  
\end{align}
hence by the estimates on Vlasov from Proposition \ref{prop:second_derivative_estimate}, we have
\begin{align}
||\mathcal{D}^{2}T_{4A}||_{L^{2}(S_{u,\ubar})}\lesssim \frac{a}{|u|^{2}},~||(\mathcal{D},\snabla_{4})\mathcal{D}^{2}T_{4A}||_{L^{2}(H_{u})}\lesssim \frac{a}{|u|^{2}}    
\end{align}
or, in scale invariant norms, noting $s_{2}(\mathcal{D}^{2}T_{4A})=s_{2}(T_{4A})=\frac{1}{2}$,
%\begin{align}
%||\mathcal{D}^{2}T_{4A}||_{L^{2}_{sc}(S_{u,\ubar})}\lesssim \frac{a^{\frac{1}{2}}}{|u|},~||(\mathcal{D},\nabla_{4})\mathcal{D}^{2}T_{4A}||_{L^{2}_{sc}(H_{u})}\lesssim \frac{a^{\frac{1}{2}}}{|u|}     
%\end{align}
%and so 
\begin{align}
\frac{|u|}{a}||\mathcal{D}^{2}T_{4A}||_{L^{2}_{sc}(S_{u,\ubar})}\lesssim \frac{1}{a^{\frac{1}{2}}}\lesssim 1,~\frac{|u|}{a}||(\mathcal{D},\snabla_{4})\mathcal{D}^{2}T_{4A}||_{L^{2}_{sc}(H_{u})}\lesssim \frac{1}{a^{\frac{1}{2}}}\lesssim 1.
\end{align}
For $\snabla^2 T_{4E}$, we have 
\begin{align} \notag 
&|\nablasl^{2}T_{4E}|\\ \notag \lesssim& |\int_{\mathcal{P}_{x}}\Bigg[V_{A}V_{D}f+\frac{a^{\frac{1}{2}}\Gamma}{|u|}V_{A}(|u|V_{4+D}f)+\frac{a^{\frac{1}{2}}\Gamma}{|u|^{2}}(V_{A}V_{4}f+V_{A}V_{0}f \Bigg]p_{E}p^{3}\frac{\sqrt{\gslash}dp^{1}dp^{2}dp^{3}}{p^{3}}|+|\Gammaslash\Gammaslash T_{4E}|+l.o.t\\ \notag
\lesssim& |u|^{-1}\Bigg[||V_{A}V_{D}f||_{L^{2}(\mathcal{P}_{x})}+\frac{a^{\frac{1}{2}}\Gamma}{|u|}||V_{A}|u|V_{4+D}f||_{L^{2}(\mathcal{P}_{x})}+\frac{a^{\frac{1}{2}}\Gamma}{|u|^{2}}||V_{A}V_{4}f||_{L^{2}(\mathcal{P}_{x})}+\frac{a^{\frac{1}{2}}\Gamma}{|u|^{2}}||V_{A}V_{0}f||_{L^{2}(\mathcal{P}_{x})} \Bigg]+ \frac{a}{|u|^{2}}.
\end{align}
This implies 
\begin{align} \notag 
 ||\nablasl^{2}T_{4E}||_{L^{2}(S_{u,\ubar})}=&||\sqrt{\gslash^{AC}\gslash^{BD}\nablasl_{A}\nablasl_{B}T_{4E}\nablasl_{C}\nablasl_{D}T_{4F}\gslash^{EF}}||_{L^{2}(S_{u,\ubar})}\\
 \lesssim& |u|^{-4}\Bigg[||V_{A}V_{D}f||_{L^{2}(S_{u,\ubar})L^{2}(\mathcal{P}_{x})}+\frac{a^{\frac{1}{2}}\Gamma}{|u|}||V_{A}|u|V_{4+D}f||_{L^{2}(S_{u,\ubar})L^{2}(\mathcal{P}_{x})}\\+&\frac{a^{\frac{1}{2}}\Gamma}{|u|^{2}}||V_{A}V_{4}f||_{L^{2}(S_{u,\ubar})L^{2}(\mathcal{P}_{x})}+\frac{a^{\frac{1}{2}}\Gamma}{|u|^{2}}||V_{A}V_{0}f||_{L^{2}(S_{u,\ubar})L^{2}(\mathcal{P}_{x})} \Bigg]+\frac{a}{|u|^{4}}
 \lesssim \frac{a}{|u|^{4}}
\end{align}    
and similarly
\begin{align}
 ||\nablasl^{3}T_{4E}||_{L^{2}(H_{u})}\lesssim    |u|^{-5}||\sum_{i_{1}+i_{2}+i_{3}+i_{4}+i_{5}=3}||(V_{0})^{i_{1}}(|u|V_{4+A})^{i_{2}}(V_{4})^{i_{3}}(V_{A})^{i_{4}}(V_{3})^{i_{5}}f||_{L^{2}(H_{u})L^{2}(\mathcal{P}_{x})}+\frac{a}{|u|^{5}}\lesssim \frac{a}{|u|^{5}}.
\end{align}
Since $s_{2}(\nablasl^{j}T_{4E})=1+\frac{j}{2}=\frac{3}{2}$,
\begin{align}
%||\nablasl^{2}T_{4E}||_{L^{2}_{sc}(S_{u,\ubar})}\lesssim \frac{a}{a^{\frac{3}{2}}|u|} \implies   ||(a^{\frac{1}{2}}\nablasl)^{2}T_{4E}||_{L^{2}_{sc}(S_{u,\ubar})}\lesssim \frac{a^{\frac{1}{2}}}{|u|}~or~
\frac{|u|}{a}||(a^{\frac{1}{2}}\nablasl)^{2}T_{4E}||_{L^{2}_{sc}(S_{u,\ubar})}\lesssim \frac{1}{a^{\frac{1}{2}}}\lesssim 1,
\end{align}
and similarly 
\begin{align}
%||(a^{\frac{1}{2}}\nablasl)^{3}T_{4E}||_{L^{2}_{sc}(H_{u})}\lesssim \frac{a^{\frac{1}{2}}}{|u|}~or~
\frac{|u|}{a}||(a^{\frac{1}{2}}\nablasl)^{3}T_{4E}||_{L^{2}_{sc}(H_{u})}\lesssim \frac{1}{a^{\frac{1}{2}}}\lesssim 1.
\end{align}The rest of the derivatives $\mathcal{D}$ are handled similarly. 

\vspace{3mm}

\noindent For \textbf{$T_{AB}$:} First, consider $\mathcal{D}=|u|\snabla_{3}$ and observe
\begin{align} \notag  
|\mathcal{D}^{2}T_{AB}|\lesssim& \Bigg|\int_{\mathcal{P}_{x}}\Bigg[V_{0}V_{0}f+|u|V_{4+A}V_{0}f+V_{4}V_{0}f+V_{0}(|u|V_{4+A}f)+|u|^{2}V_{4+A}V_{4+B}f\\ \notag  +&|u|^{-1}V_{4}|u|V_{4+A}f+V_{0}V_{4}f+|u|V_{4+A}V_{4}f+ |u|^{-2}V_{4}V_{4}f\Bigg]p_{A}p_{B}\sqrt{\det\gslash}\frac{dp^{1}dp^{2}dp^{3}}{p^{3}}\Bigg|\\ \notag  +&\int_{\mathcal{P}_{x}}|fp_{A}p_{B}|\sqrt{\det\gslash}\frac{dp^{1}dp^{2}dp^{3}}{p^{3}}
+l.o.t.\\ \notag  
\lesssim& |u|^{-1}\Bigg[||V_{0}V_{0}f||_{L^{2}(\mathcal{P}_{x})}+|||u|V_{4+A}V_{0}f||_{L^{2}(\mathcal{P}_{x})}+||V_{4}V_{0}f||_{L^{2}(\mathcal{P}_{x})}+||V_{0}(|u|V_{4+A}f)||_{L^{2}(\mathcal{P}_{x})}\\ \notag  +&|||u|^{2}V_{4+A}V_{4+B}f||_{L^{2}(\mathcal{P}_{x})}\\ \notag  +&||V_{4}|u|V_{4+A}f||_{L^{2}(\mathcal{P}_{x})}+||V_{0}V_{4}f||_{L^{2}(\mathcal{P}_{x})}+|u|V_{4+A}V_{4}f||_{L^{2}(\mathcal{P}_{x})}+ ||V_{4}V_{4}f||_{L^{2}(\mathcal{P}_{x})}\Bigg]+\frac{a}{|u|^{2}}.
\end{align}Therefore,
\begin{align}  \notag  
||\mathcal{D}^{2}T_{AB}||_{L^{2}(S_{u,\ubar})}\lesssim &|u|^{-3}\Bigg[||V_{0}V_{0}f||_{L^{2}(S_{u,\ubar})L^{2}(\mathcal{P}_{x})}+|||u|V_{4+A}V_{0}f||_{L^{2}(S_{u,\ubar})L^{2}(\mathcal{P}_{x})}\\ \notag  +&||V_{4}V_{0}f||_{L^{2}(S_{u,\ubar})L^{2}(\mathcal{P}_{x})}+||V_{0}(|u|V_{4+A}f)||_{L^{2}(S_{u,\ubar})L^{2}(\mathcal{P}_{x})}\\ \notag  +&|||u|^{2}V_{4+A}V_{4+B}f||_{L^{2}(S_{u,\ubar})L^{2}(\mathcal{P}_{x})}\\ \notag  +&||V_{4}|u|V_{4+A}f||_{L^{2}(S_{u,\ubar})L^{2}(\mathcal{P}_{x})}+||V_{0}V_{4}f||_{L^{2}(S_{u,\ubar})L^{2}(\mathcal{P}_{x})}\\  +&|u|V_{4+A}V_{4}f||_{L^{2}(S_{u,\ubar})L^{2}(\mathcal{P}_{x})}+ ||V_{4}V_{4}f||_{L^{2}(S_{u,\ubar})L^{2}(\mathcal{P}_{x})}\Bigg]+\frac{a}{|u|^{3}}.    
\end{align}
Similarly, for the three derivatives 
\begin{align}
 ||\mathcal{D}^{3}T_{AB}||_{L^{2}(H_{u})}\lesssim |u|^{-3}||\sum_{i_{1}+i_{2}+i_{3}+i_{4}+i_{5}=3}||(V_{0})^{i_{1}}(|u|V_{4+A})^{i_{2}}(V_{4})^{i_{3}}(V_{A})^{i_{4}}(V_{3})^{i_{5}}f||_{L^{2}(H_{u})L^{2}(\mathcal{P}_{x})}+\frac{a}{|u|^{3}}   
\end{align}
and by the estimates on Vlasov from proposition \ref{prop:second_derivative_estimate}
\begin{align}
||\mathcal{D}^{2}T_{AB}||_{L^{2}(S_{u,\ubar})}\lesssim \frac{a}{|u|^{3}},~||\mathcal{D}^{3}T_{AB}||_{L^{2}(H_{u})}\lesssim \frac{a}{|u|^{3}}    
\end{align}
or in scale invariant norms, noting $s_{2}(\mathcal{D}^{2}T_{AB})=s_{2}(T_{AB})=1$,
\begin{align}
%||\mathcal{D}^{2}T_{AB}||_{L^{2}_{sc}(S_{u,\ubar})}\lesssim \frac{1}{|u|},~||\mathcal{D}^{3}T_{AB}||_{L^{2}_{sc}(H_{u})} \lesssim \frac{1}{|u|}~or~
\frac{|u|}{a}||\mathcal{D}^{2}T_{AB}||_{L^{2}_{sc}(S_{u,\ubar})},\frac{|u|}{a}||\mathcal{D}^{3}T_{AB}||_{L^{2}_{sc}(S_{u,\ubar})}\lesssim \frac{1}{a}\lesssim 1. 
\end{align}
Next we consider $\mathcal{D}=a^{\frac{1}{2}}\nablasl$. Note 
\begin{align}
\notag &|\nablasl^{2}T_{EF}|\\ \lesssim& \Bigg|\int_{\mathcal{P}_{x}}\Bigg[V_{A}V_{D}f+\frac{a^{\frac{1}{2}}\Gamma}{|u|}V_{A}(|u|V_{4+D}f)+\frac{a^{\frac{1}{2}}\Gamma}{|u|^{2}}(V_{A}V_{4}f+V_{A}V_{0}f \Bigg]p_{E}p_{F}\frac{\sqrt{\gslash}dp^{1}dp^{2}dp^{3}}{p^{3}}\Bigg|\notag +|\Gammaslash\hsp \Gammaslash \hsp \Te_{EF}|+l.o.t.\\
\lesssim& |u|^{-1}\Bigg[||V_{A}V_{D}f||_{L^{2}(\mathcal{P}_{x})}+\frac{a^{\frac{1}{2}}\Gamma}{|u|}||V_{A}|u|V_{4+D}f||_{L^{2}(\mathcal{P}_{x})}+\frac{a^{\frac{1}{2}}\Gamma}{|u|^{2}}||V_{A}V_{4}f||_{L^{2}(\mathcal{P}_{x})}+\frac{a^{\frac{1}{2}}\Gamma}{|u|^{2}}||V_{A}V_{0}f||_{L^{2}(\mathcal{P}_{x})} \Bigg]
+\frac{a}{|u|^{2}},
\end{align}
hence
\begin{align} \notag 
 &||\nablasl^{2}T_{EF}||_{L^{2}(S_{u,\ubar})}:=||\sqrt{\gslash^{AC}\gslash^{BD}\nablasl_{A}\nablasl_{B}T_{HE}\nablasl_{C}\nablasl_{D}T_{IF}\gslash^{HI}\gslash^{EF}}||_{L^{2}(S_{u,\ubar})}\\ \notag
 \lesssim& |u|^{-5}\Bigg[||V_{A}V_{D}f||_{L^{2}(S_{u,\ubar})L^{2}(\mathcal{P}_{x})}+\frac{a^{\frac{1}{2}}\Gamma}{|u|}||V_{A}|u|V_{4+D}f||_{L^{2}(S_{u,\ubar})L^{2}(\mathcal{P}_{x})}\\   +&\frac{a^{\frac{1}{2}}\Gamma}{|u|^{2}}||V_{A}V_{4}f||_{L^{2}(S_{u,\ubar})L^{2}(\mathcal{P}_{x})}+\frac{a^{\frac{1}{2}}\Gamma}{|u|^{2}}||V_{A}V_{0}f||_{L^{2}(S_{u,\ubar})L^{2}(\mathcal{P}_{x})} \Bigg]+\frac{a}{|u|^{5}}
 \lesssim \frac{a}{|u|^{5}},
\end{align}    
and similarly 
\begin{align}
 &||\nablasl^{3}T_{4A}||_{L^{2}(H_{u})} \notag \\\lesssim&  |u|^{-6}||\sum_{i_{1}+i_{2}+i_{3}+i_{4}+i_{5}=3}||(V_{0})^{i_{1}}(|u|V_{4+A})^{i_{2}}(V_{4})^{i_{3}}(V_{A})^{i_{4}}(V_{3})^{i_{5}}f||_{L^{2}(H_{u})L^{2}(\mathcal{P}_{x})}+\frac{a}{|u|^{6}}
 \lesssim \frac{a}{|u|^{6}}.
\end{align}
and taking into account the signature of $T_{4E}$ we arrive at
\begin{align}
 %||\nablasl^{2}T_{EF}||_{L^{2}_{sc}(S_{u,\ubar})}\lesssim \frac{1}{a|u|} \implies   ||(a^{\frac{1}{2}}\nablasl)^{2}T_{EF}||_{L^{2}_{sc}(S_{u,\ubar})}\lesssim \frac{1}{|u|}~or~
 \frac{|u|}{a}||\mathcal{D}^{2}T_{EF}||_{L^{2}_{sc}(S_{u,\ubar})}\lesssim \frac{1}{a}\lesssim 1
\end{align}
and 
\begin{align}
 %||\nablasl^{3}T_{EF}||_{L^{2}_{sc}(H_{u})}\lesssim \frac{1}{a^{\frac{3}{2}}|u|}, ||a^{\frac{3}{2}}\nablasl^{3}T_{EF}||_{L^{2}_{sc}(H_{u})}\lesssim \frac{1}{|u|}~or~
 \frac{|u|}{a}||\mathcal{D}^{3}T_{EF}||_{L^{2}_{sc}(H_{u})}\lesssim \frac{1}{a}\lesssim 1.
\end{align}
Next, we consider $\mathcal{D}=\snabla_{4}$. We compute
\begin{align} \notag
&|\snabla^{2}_{4}T_{EF}|\notag \\ \notag   \lesssim& \Bigg|\int_{\mathcal{P}_{x}}\Bigg[V_{3}V_{3}f+\frac{a^{\frac{1}{2}}\Gamma}{|u|}V_{3}|u|V_{4+A}f+\frac{a\Gamma^{2}}{|u|^{2}}(|u|V_{4+A})(|u|V_{4+D})f\Bigg]p_{E}p_{F}\frac{\sqrt{\det\gslash}dp^{1}dp^{2}dp^{3}}{p^{3}}\Bigg|\\ \notag  +&\int_{\mathcal{P}_{x}}|f|\frac{\sqrt{\det \gslash}dp^{1}dp^{2}dp^{3}}{p^{3}}+l.o.t.\\
\lesssim& |u|^{-1}\Bigg[||V_{3}V_{3}f||_{L^{2}(\mathcal{P}_{x})}+\frac{a^{\frac{1}{2}}||\Gamma}{|u|}V_{3}(|u|V_{4+A}f)||_{L^{2}(\mathcal{P}_{x})}+\frac{a\Gamma^{2}}{|u|^{2}}||(|u|V_{4+A})(|u|V_{4+D})f||_{L^{2}(\mathcal{P}_{x})} \Bigg]+\frac{a}{|u|^{2}},
\end{align}
since $|\tr\chi|\lesssim \frac{\Gamma}{|u|}\lesssim 1$, so
\begin{align} \notag 
&||\snabla^{2}_{4}T_{EF}||_{L^{2}(S_{u,\ubar})}=||\sqrt{\snabla^{2}_{4}T_{IE}\snabla^{2}_{4}T_{HF}\gslash^{IH}\gslash^{EF}}||_{L^{2}(S_{u,\ubar})}\\  \notag  
\lesssim&  |u|^{-3}\Bigg[||V_{3}V_{3}f||_{L^{2}(S_{u,\ubar})L^{2}(\mathcal{P}_{x})}+\frac{a^{\frac{1}{2}}\Gamma}{|u|}V_{3}(|u|V_{4+A}f)||_{L^{2}(S_{u,\ubar})L^{2}(\mathcal{P}_{x})}\\+&\frac{a\Gamma^{2}}{|u|^{2}}||(|u|V_{4+A})(|u|V_{4+D})f||_{L^{2}(S_{u,\ubar})L^{2}(\mathcal{P}_{x})} \Bigg]+\frac{a}{|u|^{4}}
\lesssim \frac{a}{|u|^{4}}.
\end{align}
Similarly 
\begin{align}
 ||\snabla^{3}_{4}T_{AB}||_{L^{2}(H_{u})}\lesssim  |u|^{-3}||\sum_{i_{1}+i_{2}+i_{3}+i_{4}+i_{5}=3}||(V_{0})^{i_{1}}(|u|V_{4+A})^{i_{2}}(V_{4})^{i_{3}}(V_{A})^{i_{4}}(V_{3})^{i_{5}}f||_{L^{2}(H_{u})L^{2}(\mathcal{P}_{x})}+\frac{a}{|u|^{4}}   
\end{align}
Consequently, noting $s_{2}(\snabla^{2}_{4}T_{EF})=1=s_{2}(\snabla^{3}_{4}T_{EF})$, we obtain the scale-invariant estimates
\begin{align}
%||\snabla^{2}_{4}T_{EF}||_{L^{2}_{sc}(S_{u,\ubar})}\lesssim \frac{1}{|u|},~||\nabla^{3}_{4}T_{EF}||_{L^{2}_{sc}(H_{u})}\lesssim \frac{1}{|u|}~or~\\
\frac{|u|}{a}||\snabla^{2}_{4}T_{EF}||_{L^{2}_{sc}(S_{u,\ubar})}\lesssim \frac{1}{a}\lesssim 1, \hspace{3mm}
\frac{|u|}{a}||\snabla^{3}_{4}T_{EF}||_{L^{2}_{sc}(H_{u})}\lesssim \frac{1}{a}\lesssim 1.
\end{align}
\vspace{3mm}

\noindent For \textbf{$T_{43}$}, as with all previous components, 
\begin{align} \notag
|\mathcal{D}^{2}T_{43}|\lesssim& \Bigg|\int_{\mathcal{P}_x}\Bigg[V_{0}V_{0}f+|u|V_{4+A}V_{0}f+V_{4}V_{0}f+V_{0}(|u|V_{4+A}f)+|u|^{2}V_{4+A}V_{4+B}f\\ \notag  +&|u|^{-1}V_{4}|u|V_{4+A}f+V_{0}V_{4}f+|u|V_{4+A}V_{4}f+ |u|^{-2}V_{4}V_{4}f\Bigg]p^{4}p^{3}\sqrt{\det\gslash}\frac{dp^{1}dp^{2}dp^{3}}{p^{3}}|\\  \notag  +& \int_{\mathcal{P}_{x}}|f|p^{4}|p^{3}|\sqrt{\det\gslash}\frac{dp^{1}dp^{2}dp^{3}}{p^{3}}+l.o.t\\ \notag  
\lesssim& |u|^{-3}\Bigg[||V_{0}V_{0}f||_{L^{2}(\mathcal{P}_{x})}+|||u|V_{4+A}V_{0}f||_{L^{2}(\mathcal{P}_{x})}+||V_{4}V_{0}f||_{L^{2}(\mathcal{P}_{x})}+||V_{0}(|u|V_{4+A}f)||_{L^{2}(\mathcal{P}_{x})}\\ \notag  +&|||u|^{2}V_{4+A}V_{4+B}f||_{L^{2}(\mathcal{P}_{x})}\\   +&||V_{4}|u|V_{4+A}f||_{L^{2}(\mathcal{P}_{x})}+||V_{0}V_{4}f||_{L^{2}(\mathcal{P}_{x})}+|u|V_{4+A}V_{4}f||_{L^{2}(\mathcal{P}_{x})}+ ||V_{4}V_{4}f||_{L^{2}(\mathcal{P}_{x})}\Bigg]+\frac{a}{|u|^{4}},
\end{align}
and 
\begin{align} \notag  
||\mathcal{D}^{2}T_{43}||_{L^{2}(S_{u,\ubar})}\lesssim& |u|^{-3}\Bigg[||V_{0}V_{0}f||_{L^{2}(S_{u,\ubar})L^{2}(\mathcal{P}_{x})}+|||u|V_{4+A}V_{0}f||_{L^{2}(S_{u,\ubar})L^{2}(\mathcal{P}_{x})}\\ \notag  +&||V_{4}V_{0}f||_{L^{2}(S_{u,\ubar})L^{2}(\mathcal{P}_{x})}+||V_{0}(|u|V_{4+A}f)||_{L^{2}(S_{u,\ubar})L^{2}(\mathcal{P}_{x})}\\ \notag  +&|||u|^{2}V_{4+A}V_{4+B}f||_{L^{2}(S_{u,\ubar})L^{2}(\mathcal{P}_{x})}\\ \notag  +&||V_{4}|u|V_{4+A}f||_{L^{2}(S_{u,\ubar})L^{2}(\mathcal{P}_{x})}+||V_{0}V_{4}f||_{L^{2}(S_{u,\ubar})L^{2}(\mathcal{P}_{x})}\\  +&|u|V_{4+A}V_{4}f||_{L^{2}(S_{u,\ubar})L^{2}(\mathcal{P}_{x})}+ ||V_{4}V_{4}f||_{L^{2}(S_{u,\ubar})L^{2}(\mathcal{P}_{x})}\Bigg]+\frac{a}{|u|^{3}},    
\end{align}
and similarly,
\begin{align}
||\mathcal{D}^{3}T_{4A}||_{L^{2}(H_{u})}\lesssim  |u|^{-3}||\sum_{i_{1}+i_{2}+i_{3}+i_{4}+i_{5}=3}||(V_{0})^{i_{1}}(|u|V_{4+A})^{i_{2}}(V_{4})^{i_{3}}(V_{A})^{i_{4}}(V_{3})^{i_{5}}f||_{L^{2}(H_{u})L^{2}(\mathcal{P}_{x})}+\frac{a}{|u|^{4}}  
\end{align}
and by the estimates on Vlasov from the proposition \ref{prop:second_derivative_estimate}
\begin{align}
||\mathcal{D}^{2}T_{43}||_{L^{2}(S_{u,\ubar})}\lesssim \frac{a}{|u|^{4}},~||\mathcal{D}^{3}T_{4A}||_{L^{2}(H_{u})}\lesssim \frac{a}{|u|^{4}},     
\end{align}
or in scale invariant norm noting $s_{2}(\mathcal{D}^{2}T_{43})=s_{2}(T_{43})=1,
~s_{2}(\mathcal{D}^{3}T_{43})=s_{2}(T_{43})=1$
\begin{align}
||\mathcal{D}^{2}T_{43}||_{L^{2}_{sc}(S_{u,\ubar})}\lesssim \frac{1}{|u|},~||\mathcal{D}^{3}T_{4A}||_{L^{2}_{sc}(H_{u})}\lesssim \frac{1}{|u|}~or~\\
\frac{|u|}{a}||\mathcal{D}^{2}T_{43}||_{L^{2}_{sc}(S_{u,\ubar})}\lesssim \frac{1}{a}\lesssim 1,~\frac{|u|}{a}||\mathcal{D}^{3}T_{4A}||_{L^{2}_{sc}(H_{u})}\lesssim \frac{1}{a}\lesssim 1.
\end{align}
Next we consider $\mathcal{D}:=a^{\frac{1}{2}}\nablasl$ and compute 
\begin{align} \notag 
&|\nablasl^{2}T_{43}|\\ \notag  \lesssim& |\int_{\mathcal{P}_{x}}\Bigg[V_{A}V_{D}f+\frac{a^{\frac{1}{2}}\Gamma}{|u|}V_{A}(|u|V_{4+D}f)+\frac{a^{\frac{1}{2}}\Gamma}{|u|^{2}}(V_{A}V_{4}f+V_{A}V_{0}f \Bigg]p^{4}p^{3}\frac{\sqrt{\gslash}dp^{1}dp^{2}dp^{3}}{p^{3}}|+|\Gammaslash \nablasl T_{43}|+l.o.t.\\  
\lesssim& |u|^{-3}\Bigg[||V_{A}V_{D}f||_{L^{2}(\mathcal{P}_{x})}+\frac{a^{\frac{1}{2}}\Gamma}{|u|}||V_{A}|u|V_{4+D}f||_{L^{2}(\mathcal{P}_{x})}+\frac{a^{\frac{1}{2}}\Gamma}{|u|^{2}}||V_{A}V_{4}f||_{L^{2}(\mathcal{P}_{x})}+\frac{a^{\frac{1}{2}}\Gamma}{|u|^{2}}||V_{A}V_{0}f||_{L^{2}(\mathcal{P}_{x})} \Bigg]
+\frac{a}{|u|^{4}}
\end{align}
and recalling 
\begin{align} \notag 
 &||\nablasl^{2}T_{43}||_{L^{2}(S_{u,\ubar})}=||\sqrt{\gslash^{AC}\gslash^{BD}\nablasl_{A}\nablasl_{B}T_{43}\nablasl_{C}\nablasl_{D}T_{43}}||_{L^{2}(S_{u,\ubar})}\\ \notag 
 \lesssim& |u|^{-5}\Bigg[||V_{A}V_{D}f||_{L^{2}(S_{u,\ubar})L^{2}(\mathcal{P}_{x})}+\frac{a^{\frac{1}{2}}\Gamma}{|u|}||V_{A}|u|V_{4+D}f||_{L^{2}(S_{u,\ubar})L^{2}(\mathcal{P}_{x})}\\  +&\frac{a^{\frac{1}{2}}\Gamma}{|u|^{2}}||V_{A}V_{4}f||_{L^{2}(S_{u,\ubar})L^{2}(\mathcal{P}_{x})}+\frac{a^{\frac{1}{2}}\Gamma}{|u|^{2}}||V_{A}V_{0}f||_{L^{2}(S_{u,\ubar})L^{2}(\mathcal{P}_{x})} \Bigg]+\frac{a}{|u|^{5}}
 \lesssim \frac{a}{|u|^{5}}
\end{align}    
and similarly 
\begin{align} \notag 
||\mathcal{D}^{3}T_{43}||_{L^{2}(H_{u})}\lesssim&  |u|^{-5}||\sum_{i_{1}+i_{2}+i_{3}+i_{4}+i_{5}=3}||(V_{0})^{i_{1}}(|u|V_{4+A})^{i_{2}}(V_{4})^{i_{3}}(V_{A})^{i_{4}}(V_{3})^{i_{5}}f||_{L^{2}(H_{u})L^{2}(\mathcal{P}_{x})}+\frac{a}{|u|^{5}}\\
\lesssim& \frac{a}{|u|^{5}}.
\end{align}
Taking into account that $s_{2}(\nablasl^{2}T_{43})=1+1=2,~s_{2}(\nablasl^{3}T_{43})=\frac{3}{2}+1=\frac{5}{2}$,
\begin{align}
 ||\nablasl^{2}T_{43}||_{L^{2}_{sc}(S_{u,\ubar})}\lesssim \frac{1}{a|u|} \implies   ||(a^{\frac{1}{2}}\nablasl)^{2}T_{43}||_{L^{2}_{sc}(S_{u,\ubar})}\lesssim \frac{1}{|u|},\\
 ||\nablasl^{3}T_{43}||_{L^{2}_{sc}(H_{u})}\lesssim \frac{1}{a^{\frac{3}{2}}|u|} \implies   ||(a^{\frac{1}{2}}\nablasl)^{3}T_{43}||_{L^{2}_{sc}(S_{u,\ubar})}\lesssim \frac{1}{|u|},
\end{align}
or 
\begin{align}
\frac{|u|}{a}||\mathcal{D}^{2}T_{43}||_{L^{2}_{sc}(S_{u,\ubar})}\lesssim \frac{1}{a}\lesssim 1,~\frac{|u|}{a}||\mathcal{D}^{3}T_{43}||_{L^{2}_{sc}(H_{u})}\lesssim \frac{1}{a}\lesssim 1.    
\end{align}
Now we consider $\mathcal{D}=\snabla_{4}$ and evaluate 
\begin{align} \notag 
&|\snabla^{2}_{4}T_{43}|=\Bigg|\int_{\mathcal{P}_{x}}\Bigg[V_{3}V_{3}f+\frac{a^{\frac{1}{2}}\Gamma}{|u|}V_{3}|u|V_{4+A}f+\frac{a\Gamma^{2}}{|u|^{2}}(|u|V_{4+A})(|u|V_{4+D})f\Bigg]p^{4}p^{3}\frac{\sqrt{\det \gslash}dp^{1}dp^{2}dp^{3}}{p^{3}}\Bigg|+l.o.t.\\
\lesssim& |u|^{-3}\Bigg[||V_{3}V_{3}f||_{L^{2}(\mathcal{P}_{x})}+\frac{a^{\frac{1}{2}}\Gamma}{|u|}||V_{3}(|u|V_{4+A}f)||_{L^{2}(\mathcal{P}_{x})}+\frac{a\Gamma^{2}}{|u|^{2}}||(|u|V_{4+A})(|u|V_{4+D})f||_{L^{2}(\mathcal{P}_{x})} \Bigg]
\end{align}
and so 
\begin{align} \notag 
||\snabla^{2}_{4}T_{43}||_{L^{2}(S_{u,\ubar})}
\lesssim&  |u|^{-3}\Bigg[||V_{3}V_{3}f||_{L^{2}(S_{u,\ubar})L^{2}(\mathcal{P}_{x})}+\frac{a^{\frac{1}{2}}\Gamma}{|u|}V_{3}(|u|V_{4+A}f)||_{L^{2}(S_{u,\ubar})L^{2}(\mathcal{P}_{x})}\\+&\frac{a\Gamma^{2}}{|u|^{2}}||(|u|V_{4+A})(|u|V_{4+D})f||_{L^{2}(S_{u,\ubar})L^{2}(\mathcal{P}_{x})} \Bigg]
\lesssim \frac{a}{|u|^{3}}.
\end{align}
For the third derivatives, 
\begin{align}
||\mathcal{D}^{3}T_{43}||_{L^{2}(H_{u})}\lesssim  |u|^{-3}||\sum_{i_{1}+i_{2}+i_{3}+i_{4}+i_{5}=3}||(V_{0})^{i_{1}}(|u|V_{4+A})^{i_{2}}(V_{4})^{i_{3}}(V_{A})^{i_{4}}(V_{3})^{i_{5}}f||_{L^{2}(H_{u})L^{2}(\mathcal{P}_{x})}\\
\lesssim \frac{a}{|u|^{3}}
\end{align}
where we have used the Vlasov estimates from proposition \ref{prop:second_derivative_estimate}.
Consequently noting $s_{2}(\snabla^{2}_{4}T_{43})=1=s_{2}(\snabla^{3}_{4}T_{43})$
\begin{align}
||\snabla^{2}_{4}T_{EF}||_{L^{2}_{sc}(S_{u,\ubar})}\lesssim \frac{1}{|u|},~||\snabla^{3}_{4}T_{EF}||_{L^{2}_{sc}(H_{u})}\lesssim \frac{1}{|u|}~or~\\
\frac{|u|}{a}||\snabla^{2}_{4}T_{EF}||_{L^{2}_{sc}(S_{u,\ubar})}\lesssim \frac{1}{a}\lesssim 1,~\frac{|u|}{a}||\snabla^{3}_{4}T_{EF}||_{L^{2}_{sc}(H_{u})}\lesssim \frac{1}{a}\lesssim 1.
\end{align}
\textbf{$T_{33}$:} Recall that $T_{33}$ has the maximum decay rate among the stress-energy tensor components due to the appearance of $p^{4}p^{4}$. First consider $\mathcal{D}:=|u|\snabla_{3}$ and evaluate 
\begin{align} \notag 
|\mathcal{D}^{2}T_{33}|\lesssim& \Bigg|\int_{p}\Bigg[V_{0}V_{0}f+|u|V_{4+A}V_{0}f+V_{4}V_{0}f+V_{0}(|u|V_{4+A}f)+|u|^{2}V_{4+A}V_{4+B}f\\ \notag +&|u|^{-1}V_{4}|u|V_{4+A}f+V_{0}V_{4}f+|u|V_{4+A}V_{4}f+ |u|^{-2}V_{4}V_{4}f\Bigg]p^{4}p^{4}\sqrt{\det\gslash}\frac{dp^{1}dp^{2}dp^{3}}{p^{3}}\Bigg |\\ \notag 
+&\int_{\mathcal{P}_{x}}|fp^{4}p^{4}|\sqrt{\det\gslash}\frac{dp^{1}dp^{2}dp^{3}}{p^{3}}+l.o.t\\ \notag 
\lesssim& |u|^{-5}\Bigg[||V_{0}V_{0}f||_{L^{2}(\mathcal{P}_{x})}+|||u|V_{4+A}V_{0}f||_{L^{2}(\mathcal{P}_{x})}+||V_{4}V_{0}f||_{L^{2}(\mathcal{P}_{x})}+||V_{0}(|u|V_{4+A}f)||_{L^{2}(\mathcal{P}_{x})}\\ \notag +&|||u|^{2}V_{4+A}V_{4+B}f||_{L^{2}(\mathcal{P}_{x})}\\+&||V_{4}|u|V_{4+A}f||_{L^{2}(\mathcal{P}_{x})}+||V_{0}V_{4}f||_{L^{2}(\mathcal{P}_{x})}+|u|V_{4+A}V_{4}f||_{L^{2}(\mathcal{P}_{x})}+ ||V_{4}V_{4}f||_{L^{2}(\mathcal{P}_{x})}\Bigg]+\frac{a}{|u|^{6}},
\end{align}
hence, in the $L^2(S_{u,\ubar})-$norm,
\begin{align} \notag 
||\mathcal{D}^{2}T_{33}||_{L^{2}(S_{u,\ubar})}\lesssim& |u|^{-5}\Bigg[||V_{0}V_{0}f||_{L^{2}(S_{u,\ubar})L^{2}(\mathcal{P}_{x})}+|||u|V_{4+A}V_{0}f||_{L^{2}(S_{u,\ubar})L^{2}(\mathcal{P}_{x})}\\ \notag +&||V_{4}V_{0}f||_{L^{2}(S_{u,\ubar})L^{2}(\mathcal{P}_{x})}+||V_{0}(|u|V_{4+A}f)||_{L^{2}(S_{u,\ubar})L^{2}(\mathcal{P}_{x})}\\ \notag +&|||u|^{2}V_{4+A}V_{4+B}f||_{L^{2}(S_{u,\ubar})L^{2}(\mathcal{P}_{x})}\\ \notag +&||V_{4}|u|V_{4+A}f||_{L^{2}(S_{u,\ubar})L^{2}(\mathcal{P}_{x})}+||V_{0}V_{4}f||_{L^{2}(S_{u,\ubar})L^{2}(\mathcal{P}_{x})}\\  +&|u|V_{4+A}V_{4}f||_{L^{2}(S_{u,\ubar})L^{2}(\mathcal{P}_{x})}+ ||V_{4}V_{4}f||_{L^{2}(S_{u,\ubar})L^{2}(\mathcal{P}_{x})}\Bigg]+\frac{a}{|u|^{5}}.    
\end{align}
Similarly, for three derivatives
\begin{align}
||\mathcal{D}^{3}T_{33}||_{L^{2}(H_{u})}\lesssim  |u|^{-5}||\sum_{i_{1}+i_{2}+i_{3}+i_{4}+i_{5}=3}||(V_{0})^{i_{1}}(|u|V_{4+A})^{i_{2}}(V_{4})^{i_{3}}(V_{A})^{i_{4}}(V_{3})^{i_{5}}f||_{L^{2}(H_{u})L^{2}(\mathcal{P}_{x})}+\frac{a}{|u|^{5}}.    
\end{align}
By the estimates on Vlasov from Proposition \ref{prop:second_derivative_estimate}
\begin{align}
||\mathcal{D}^{2}T_{33}||_{L^{2}(S_{u,\ubar})}\lesssim \frac{a}{|u|^{5}},~ ||\mathcal{D}^{3}T_{33}||_{L^{2}(H_{u})}\lesssim \frac{a}{|u|^{5}},
\end{align}
or in scale invariant norms, noting $s_{2}(\mathcal{D}^{2}T_{33})=s_{2}(T_{33})=2=s_{2}(\mathcal{D}^{3}T_{33})$,
\begin{align} \notag 
||\mathcal{D}^{2}T_{33}||_{L^{2}_{sc}(S_{u,\ubar})}\lesssim \frac{1}{a|u|},~ ||\mathcal{D}^{3}T_{33}||_{L^{2}_{sc}(H_{u})}\lesssim \frac{1}{a|u|}~\text{or}~\\
\frac{|u|}{a}||\mathcal{D}^{2}T_{33}||_{L^{2}_{sc}(S_{u,\ubar})}\lesssim \frac{1}{a^{2}}\lesssim 1,~\frac{|u|}{a}||\mathcal{D}^{3}T_{33}||_{L^{2}_{sc}(H_{u})}\lesssim \frac{1}{a^{2}}\lesssim 1.
\end{align}
Now consider $\mathcal{D}=a^{\frac{1}{2}}\nablasl$ and evaluate
\begin{align} \notag 
&|\nablasl^{2}T_{33}|\\  \notag \lesssim& |\int_{\mathcal{P}_{x}}\Bigg[V_{A}V_{D}f+\frac{a^{\frac{1}{2}}\Gamma}{|u|}V_{A}(|u|V_{4+D}f)+\frac{a^{\frac{1}{2}}\Gamma}{|u|^{2}}(V_{A}V_{4}f+V_{A}V_{0}f \Bigg]p^{4}p^{4}\frac{\sqrt{\gslash}dp^{1}dp^{2}dp^{3}}{p^{3}}|+|\Gammaslash \nablasl T_{33}|+l.o.t.\\
\lesssim& |u|^{-5}\Bigg[||V_{A}V_{D}f||_{L^{2}(\mathcal{P}_{x})}+\frac{a^{\frac{1}{2}}\Gamma}{|u|}||V_{A}|u|V_{4+D}f||_{L^{2}(\mathcal{P}_{x})}+\frac{a^{\frac{1}{2}}\Gamma}{|u|^{2}}||V_{A}V_{4}f||_{L^{2}(\mathcal{P}_{x})}+\frac{a^{\frac{1}{2}}\Gamma}{|u|^{2}}||V_{A}V_{0}f||_{L^{2}(\mathcal{P}_{x})} \Bigg]
\end{align}
and recalling 
\begin{align} \notag 
 ||\nablasl^{2}T_{33}||_{L^{2}(S_{u,\ubar})}=&||\sqrt{\gslash^{AC}\gslash^{BD}\nablasl_{A}\nablasl_{B}T_{33}\nablasl_{C}\nablasl_{D}T_{33}}||_{L^{2}(S_{u,\ubar})}\\ \notag 
 \lesssim& |u|^{-7}\Bigg[||V_{A}V_{D}f||_{L^{2}(S_{u,\ubar})L^{2}(\mathcal{P}_{x})}+\frac{a^{\frac{1}{2}}\Gamma}{|u|}||V_{A}|u|V_{4+D}f||_{L^{2}(S_{u,\ubar})L^{2}(\mathcal{P}_{x})}\\+&\frac{a^{\frac{1}{2}}\Gamma}{|u|^{2}}||V_{A}V_{4}f||_{L^{2}(S_{u,\ubar})L^{2}(\mathcal{P}_{x})}+\frac{a^{\frac{1}{2}}\Gamma}{|u|^{2}}||V_{A}V_{0}f||_{L^{2}(S_{u,\ubar})L^{2}(\mathcal{P}_{x})} \Bigg]
 \lesssim \frac{a}{|u|^{7}}.
\end{align}    
Similarly, 
\begin{align}
 ||\nablasl^{3}T_{33}||_{L^{2}(H_{u})}\lesssim  |u|^{-8}||\sum_{i_{1}+i_{2}+i_{3}+i_{4}+i_{5}=3}||(V_{0})^{i_{1}}(|u|V_{4+A})^{i_{2}}(V_{4})^{i_{3}}(V_{A})^{i_{4}}(V_{3})^{i_{5}}f||_{L^{2}(H_{u})L^{2}(\mathcal{P}_{x})}
 \lesssim \frac{a}{|u|^{8}}
\end{align}
and so, noting $s_{2}(\nablasl^{2}T_{33})=1+2=3, s_{2}(\nablasl^{3}T_{33})=\frac{3}{2}+2=\frac{7}{2}$,
\begin{align}
  \notag ||\nablasl^{2}T_{33}||_{L^{2}_{sc}(S_{u,\ubar})}\lesssim \frac{1}{a^{2}|u|} \implies   ||(a^{\frac{1}{2}}\nablasl)^{2}T_{33}||_{L^{2}_{sc}(S_{u,\ubar})}\lesssim \frac{1}{a|u|},\\ \notag 
 ||\nablasl^{3}T_{33}||_{L^{2}_{sc}(H_{u})}\lesssim \frac{1}{a^{\frac{5}{2}}|u|}\implies ||(a^{\frac{1}{2}}\nablasl)^{3}T_{33}||_{L^{2}_{sc}(H_u)}\lesssim \frac{1}{a|u|},\end{align}
 or equivalently,
 \begin{align}
 \frac{|u|}{a}||\mathcal{D}^{2}T_{33}||_{L^{2}_{sc}(S_{u,\ubar})}\lesssim \frac{1}{a^{2}}\lesssim 1,~\frac{|u|}{a}||\mathcal{D}^{3}T_{33}||_{L^{2}_{sc}(H_{u})}\lesssim \frac{1}{a^{2}}\lesssim 1.
\end{align}
Finally, consider $\mathcal{D}=\snabla_{4}$ and compute 
\begin{align} \notag 
|\nabla^{2}_{4}T_{33}|=&
\Bigg|\int_{\mathcal{P}_{x}}\Bigg[V_{3}V_{3}f+\frac{a^{\frac{1}{2}}\Gamma}{|u|}V_{3}|u|V_{4+A}f+\frac{a\Gamma^{2}}{|u|^{2}}(|u|V_{4+A})(|u|V_{4+D})f\Bigg]p^{4}p^{4}\frac{\sqrt{\gslash}dp^{1}dp^{2}dp^{3}}{p^{3}}\Bigg|+l.o.t.\\
\lesssim &|u|^{-5}\Bigg[||V_{3}V_{3}f||_{L^{2}(\mathcal{P}_{x})}+\frac{a^{\frac{1}{2}}||\Gamma}{|u|}V_{3}(|u|V_{4+A}f)||_{L^{2}(\mathcal{P}_{x})}+\frac{a\Gamma^{2}}{|u|^{2}}||(|u|V_{4+A})(|u|V_{4+D})f||_{L^{2}(\mathcal{P}_{x})} \Bigg],
\end{align}
from where it follows that
\begin{align}\notag 
||\snabla^{2}_{4}T_{33}||_{L^{2}(S_{u,\ubar})}
\lesssim&  |u|^{-5}\Bigg[||V_{3}V_{3}f||_{L^{2}(S_{u,\ubar})L^{2}(\mathcal{P}_{x})}+\frac{a^{\frac{1}{2}}\Gamma}{|u|}V_{3}(|u|V_{4+A}f)||_{L^{2}(S_{u,\ubar})L^{2}(\mathcal{P}_{x})}\\+&\frac{a\Gamma^{2}}{|u|^{2}}||(|u|V_{4+A})(|u|V_{4+D})f||_{L^{2}(S_{u,\ubar})L^{2}(\mathcal{P}_{x})} \Bigg]
\lesssim \frac{a}{|u|^{5}}.
\end{align}
Similarly, for three derivatives, 
\begin{align}
||\snabla_{4}^{3}T_{33}||_{L^{2}(H_{u})}\lesssim&  |u|^{-5}||\sum_{i_{1}+i_{2}+i_{3}+i_{4}+i_{5}=3}||(|u|V_{0})^{i_{1}}(|u|V_{4+A})^{i_{2}}(V_{4})^{i_{3}}(V_{A})^{i_{4}}(V_{3})^{i_{5}}f||_{L^{2}(H_{u})L^{2}(\mathcal{P}_{x})}
\lesssim \frac{a}{|u|^{5}},
\end{align}
where the derivative estimates for the Vlasov fields from Proposition \ref{prop:second_derivative_estimate} are utilized.
Consequently, noting $s_{2}(\snabla^{2}_{4}T_{33})=2=s_{2}(\snabla^{3}_{4}T_{33})$
\begin{align}\notag 
||\snabla^{2}_{4}T_{33}||_{L^{2}_{sc}(S_{u,\ubar})}\lesssim \frac{1}{a|u|},~||\nabla_{4}^{3}T_{33}||_{L^{2}_{sc}(H_{u})}\lesssim \frac{1}{a|u|}~or~\\
\frac{|u|}{a}||\snabla^{2}_{4}T_{33}||_{L^{2}_{sc}(S_{u,\ubar})}\lesssim \frac{1}{a^{2}}\lesssim 1,~\frac{|u|}{a}||\snabla_{4}^{3}T_{33}||_{L^{2}_{sc}(H_{u})}\lesssim \frac{1}{a^{2}}\lesssim 1. 
\end{align}

\begin{remark}
Note that given the $L^{2}(S_{u,\ubar})$ estimates on the first and second derivatives of the stress-energy tensor components, $||\mathcal{D}T_{\mu\nu}||_{L^{4}_{sc}(S_{u,\ubar})}$ is controlled automatically by means of the scale-invariant Sobolev estimates. Notice that we do not have the same privilege for the Weyl curvature components since their second derivatives are controlled on the null hypersurfaces and therefore co-dimension one trace inequalities are indispensable. 
\end{remark}

\section{Energy Estimates for the Weyl curvature: Completion of the proof of the semi-global existence theorem}
In this final section, we complete the energy estimates for Weyl curvature components in terms of the initial data and therefore close the boot-strap. There are several ways to execute the energy estimates. Since Einstein's gravity is a variational theory, one expects the existence of a stress-energy tensor. However, due to the equivalence principle (or existence of geodesic normal coordinates), one can not simply find a stress-energy tensor for gravity. For the purpose of obtaining estimates, the Bel-Robinson tensor suffices. However, here we will adopt a direct integration by parts approach. 
First, recall the following generalized Gr\"onwall  type proposition 
\begin{proposition}\label{prop54}
Let $f(x,y), \hsp g(x,y)$ be positive functions defined on a rectangle \[U := \begin{Bmatrix} 0\leq x \leq x_0, \hsp 0\leq y \leq y_0 \end{Bmatrix}.       \] Suppose there exist constants $J ,c_1, c_2$ such that

\[ f(x,y)+g(x,y) \lesssim J + c_1 \int_0^{x} f(x^{\prime},y) \hsp \text{d}x^{\prime} +\int_0^{y} g(x,y^{\prime}) \hsp \text{d}y^{\prime},\]for all $(x,y) \in U$. Then there holds

\[ \forall (x,y) \in U: \hspace{5mm} f(x,y)+g(x,y) \lesssim J \mathrm{e}^{c_1 x + c_2y}. \]

\end{proposition}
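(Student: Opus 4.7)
The natural strategy is to reduce the two-variable inequality to a Neumann-type series and bound each term of the series inductively. Set $H(x,y) := f(x,y)+g(x,y)$, which inherits the inequality
\[
H(x,y) \;\le\; J + c_1\!\int_0^x H(x',y)\,\mathrm{d}x' + c_2\!\int_0^y H(x,y')\,\mathrm{d}y',
\]
and introduce the positive linear operator
\[
(L\varphi)(x,y) \;:=\; c_1\!\int_0^x \varphi(x',y)\,\mathrm{d}x' + c_2\!\int_0^y \varphi(x,y')\,\mathrm{d}y',
\]
which acts on continuous functions on $U$. The hypothesis then reads $H \le J + L H$. Iterating this $n$ times and using positivity of $L$ yields
\[
H \;\le\; J\sum_{k=0}^{n-1} L^k \mathbf{1} + L^n H,
\]
where $\mathbf{1}$ denotes the constant function equal to $1$. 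Since $H$ is bounded on the compact rectangle $U$ and $\|L^n H\|_{L^\infty(U)}$ decays super-exponentially in $n$ (as the next step will show), the remainder term vanishes in the limit, and the proof reduces to estimating the kernel $L^k \mathbf{1}$.

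The main technical step is the combinatorial bound
\[
(L^k \mathbf{1})(x,y) \;\le\; \frac{\bigl(2(c_1 x + c_2 y)\bigr)^k}{k!}, \qquad k \ge 0,
\]
which I would prove by induction on $k$. The base case $k=0$ is trivial. For the inductive step, assume the bound at level $k$ and compute
\[
(L^{k+1}\mathbf{1})(x,y) \le c_1\!\int_0^x \frac{(2(c_1 x'+c_2 y))^k}{k!}\,\mathrm{d}x' + c_2\!\int_0^y \frac{(2(c_1 x+c_2 y'))^k}{k!}\,\mathrm{d}y',
\]
and evaluate both integrals explicitly: each produces a term of the form $\frac{1}{k+1}\bigl[(2(c_1 x+c_2 y))^{k+1} - (\text{non-negative remainder})\bigr]$ divided by $2$, and the two combine to give the factor $\tfrac{2}{(k+1)!}(c_1 x + c_2 y)^{k+1}$ after absorbing the $2^k$ from the inductive hypothesis, establishing the claim at level $k+1$.

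Summing the resulting geometric-exponential series over $k$ gives
\[
H(x,y) \;\le\; J \sum_{k=0}^{\infty} \frac{(2(c_1 x + c_2 y))^k}{k!} \;=\; J\,\mathrm{e}^{2(c_1 x + c_2 y)},
\]
which is the desired conclusion once the factor $2$ in the exponent is absorbed into the implicit constant of $\lesssim$ (or, equivalently, into a redefinition of $c_1,c_2$). The principal obstacle is purely combinatorial: the two integral directions produce a cross-term that prevents the sharp bound $L^k\mathbf{1} \le (c_1 x + c_2 y)^k/k!$, and one must track the multiplicative factor $2^k$ carefully through the induction; an attempt at the tight exponent $c_1 x + c_2 y$ fails precisely because $(\mathrm{e}^{c_1 x}-1)(\mathrm{e}^{c_2 y}-1) > 0$ in the interior of $U$. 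Nothing deeper is required: once the kernel bound is in hand, the Neumann series closes and the inequality follows immediately.
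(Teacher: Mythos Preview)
Your proof is correct. The paper states this proposition without proof, treating it as a standard two-variable Gr\"onwall lemma, so there is no argument to compare against; your Neumann-series iteration is one of the standard ways to establish such results.

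Two minor remarks. First, you implicitly use that $H=f+g$ is bounded on $U$ in order to kill the remainder $L^n H$; this is not part of the stated hypotheses (which say only ``positive functions''), but in the paper's application the functions are continuous energy fluxes on a compact parameter range, so boundedness is automatic. Second, your absorption of the factor $2$ in the exponent into the implicit constant is legitimate precisely because $U$ is a fixed bounded rectangle: $e^{2(c_1 x + c_2 y)} \le e^{c_1 x_0 + c_2 y_0}\, e^{c_1 x + c_2 y}$, and the prefactor depends only on $c_1,c_2,x_0,y_0$. The same observation handles the implicit constant hidden in the hypothesis $\lesssim$, which effectively replaces $c_1,c_2$ by $Cc_1,Cc_2$ in your operator $L$.
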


\begin{proposition}\label{42}
For a Bianchi pair $(\Psi_1, \Psi_2)\in \left\{(\alpha,\widetilde{\beta}),(\widetilde{\beta},(\rho,\sigma)),((\rho,\sigma),\widetilde{\betabar}),(\widetilde{\betabar},\alphabar)\right\}$ satisfying
\[ \snabla_3 \snabla^i \Psi_1 + \left( \frac{i+1}{2} + s_2(\Psi_1) \right) \tr\chibar \snabla^i \Psi_1 - \widehat{\mathcal{D}}\snabla^i \Psi_2 = P_i,\]\[\snabla_4 \snabla^i \Psi_2 - \Hodge{\widehat{\mathcal{D}}} \snabla^i \Psi_1 = Q_i, \]the following holds true:
\end{proposition}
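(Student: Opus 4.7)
The plan is to derive the energy identity for the Bianchi pair by testing the $\snabla_3$ equation against $\snabla^i\Psi_1$ and the $\snabla_4$ equation against $\snabla^i\Psi_2$, adding the two identities, and integrating over the spacetime slab $\mathcal{D}_{u,\ubar}$. The crucial structural feature is that the Hodge operators $\widehat{\mathcal{D}}$ and $\Hodge{\widehat{\mathcal{D}}}$ are formal adjoints of one another on the spheres $S_{u',\ubar'}$; consequently, after angular integration by parts on each $S_{u',\ubar'}$, the cross terms $\langle \snabla^i\Psi_1,\widehat{\mathcal{D}}\snabla^i\Psi_2\rangle$ and $\langle \snabla^i\Psi_2,\Hodge{\widehat{\mathcal{D}}}\snabla^i\Psi_1\rangle$ cancel up to lower-order commutator terms involving $\Gammaslash$ and $\chi,\chibar$, which are harmless by the $\mathcal{O}_{0,\infty}$ and $\mathcal{O}_{0,4}$ estimates of Sections \ref{sectionO04}--\ref{section0infty}.

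First I would apply the first variational formulae for $\snabla_3$ and $\snabla_4$ on each $S_{u',\ubar'}$: these convert $\int \snabla_3|\snabla^i\Psi_1|^2$ into $e_3\int|\snabla^i\Psi_1|^2$ plus a $\tr\chibar|\snabla^i\Psi_1|^2$ term, and analogously for $\snabla_4$. Combining with the coefficient $(\tfrac{i+1}{2}+s_2(\Psi_1))\tr\chibar$ on the left-hand side of the first equation, one obtains after integration in $\ubar'$ along $H_u$ and in $u'$ along $\Hbar_{\ubar}$ the identity
\begin{align*}
\int_{H_{u}}|\snabla^i\Psi_1|^2 + \int_{\Hbar_{\ubar}}|\snabla^i\Psi_2|^2
\;\lesssim\;&\int_{H_{u_\infty}}|\snabla^i\Psi_1|^2 + \int_{\Hbar_{0}}|\snabla^i\Psi_2|^2 \\
&+ \int_{\mathcal{D}_{u,\ubar}}\!\!\bigl(|P_i||\snabla^i\Psi_1| + |Q_i||\snabla^i\Psi_2|\bigr) + \text{(lower-order)},
\end{align*}
where the lower-order terms gather the commutator corrections from $[\snabla_3,\snabla^i]$, $[\snabla_4,\snabla^i]$, $[\widehat{\mathcal{D}},\snabla^i]$ and the Ricci-coefficient factors produced by the variational formulae. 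Passing to scale-invariant norms and using signature conservation, the weights $a^{-s_2}|u|^{2s_2}$ on the two sides match precisely, so that the identity becomes a closed inequality for the $\mathcal{R}$ and $\underline{\mathcal{R}}$ norms of $\snabla^i\Psi_1,\snabla^i\Psi_2$.

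Next I would bound the source contributions $P_i,Q_i$. For $i\le 2$, these terms have the schematic structure $\mathcal{D}^{i_1}\psi^{i_2}\mathcal{D}^{i_3}\Psi + \mathcal{D}^{i_1}\psi^{i_2}\mathcal{D}^{i_3}\mathcal{T}$ coming from the commuted Bianchi equations, together with the matter terms involving $\mathcal{D}^{i+1}T_{\mu\nu}$. The lower-order nonlinear terms are controlled by the scale-invariant H\"older inequalities \eqref{257} in the form $\mathcal{L}^\infty_{(sc)}\cdot\mathcal{L}^2_{(sc)}$ or $\mathcal{L}^4_{(sc)}\cdot\mathcal{L}^4_{(sc)}$, using the results of Sections \ref{sectionO04}--\ref{sectionO22} together with the preliminary estimates of Proposition \ref{preliminary}. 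The matter source terms are bounded by the Vlasov norms established in Section \ref{vlasov} (in particular, the $L^2_{sc}(H_u)$ bounds of Proposition \ref{prop:StressEnergyHighOrder} for $\mathcal{D}^{i+1}T_{\mu\nu}$, which are at precisely the required threshold $\lesssim 1$). The borderline terms involve $\alpha\cdot\snabla^i\alpha$-type products on the incoming side, and $\alphabar\cdot\snabla^i\alphabar$ on the outgoing side, which are absorbed into the left-hand side by Cauchy--Schwarz after integration, leaving an integrable tail $\int_{u_\infty}^u |u'|^{-2}\,du'$ or $\int_0^{\ubar}d\ubar'$.

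The main obstruction I expect is the term $\int_{\mathcal{D}_{u,\ubar}}\snabla^i\Psi_1\cdot \snabla^{i+1}T$ arising when $\mathcal{D}^i$ lands on the matter source in the Bianchi equations: at the top order $i=2$ this requires $\snabla^3 T$, whose $L^2$-control is available only on the outgoing cones $H_u$ (see the discussion in the introduction about the fundamental asymmetry between $e_3$ and $e_4$ propagation of the Vlasov source). The resolution is to integrate this contribution first in $\ubar'$ along $H_{u'}$ — using $\|\mathcal{D}^3 T\|_{L^2_{sc}(H_{u'})}\lesssim 1$ from Proposition \ref{prop:StressEnergyHighOrder} — and then in $u'$, producing an integrable factor $\int|u'|^{-2}du'$ that closes the estimate. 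Once this is carried out, the final inequality takes the form
\[
\mathcal{R}_i^2 + \underline{\mathcal{R}}_i^2 \;\lesssim\; (\mathcal{R}^{(0)})^2 + 1 + \int_{u_\infty}^u |u'|^{-2}\bigl(\mathcal{R}_i^2+\underline{\mathcal{R}}_i^2\bigr)du' + \int_0^{\ubar}\bigl(\mathcal{R}_i^2+\underline{\mathcal{R}}_i^2\bigr)d\ubar',
\]
and the generalized Gr\"onwall inequality of Proposition \ref{prop54} then yields a constant bound, closing the bootstrap on curvature and completing the proof of the semi-global existence theorem.
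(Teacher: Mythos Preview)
Your outline conflates two distinct layers of the argument. Proposition~\ref{42} is only the \emph{abstract} scale-invariant energy inequality: it asserts that the fluxes of $\snabla^i\Psi_1$ on $H_u$ and $\snabla^i\Psi_2$ on $\Hbar_{\ubar}$ are controlled by the initial fluxes plus the spacetime integrals $\iint \langle\snabla^i\Psi_1,P_i\rangle$ and $\iint \langle\snabla^i\Psi_2,Q_i\rangle$, with $P_i,Q_i$ left completely unestimated. The paper does not even prove this here; it simply cites \cite{AnAth}. Your first paragraph captures exactly the content of that citation: multiply the $\snabla_3$-equation by $\snabla^i\Psi_1$, the $\snabla_4$-equation by $\snabla^i\Psi_2$, use the $L^2(S)$-adjointness of $\widehat{\mathcal D}$ and $\Hodge{\widehat{\mathcal D}}$ to cancel the principal cross terms, apply the first variational formulae, and translate the weights into scale-invariant form. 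That is the whole proposition.

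Everything from your third paragraph onward---the bounding of $P_i,Q_i$ via the schematic decompositions, the handling of the top-order Vlasov source $\snabla^{i+1}T$ on $H_u$, and the final Gr\"onwall closure---does not belong to Proposition~\ref{42} at all. In the paper that work is carried out separately in Propositions~\ref{alpha_energy_estimate} and~\ref{prop:energy_estimates_curvature_pairs}, one Bianchi pair at a time, precisely so that the anomalous weights (the $a^{-1/2}$ on $\alpha,\tbeta$) and the asymmetric Vlasov control can be tracked individually. Your compressed treatment is correct in spirit for those later propositions, but presenting it as the proof of Proposition~\ref{42} misidentifies what the lemma actually claims.
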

\begin{align} \notag 
&\int_{\Hu}\scaletwoSuubarprime{\snabla^i \Psi_1}^2 \dubarprime + \int_{\Hbu}\frac{a}{\upr^2} \scaletwoSuprime{\snabla^i \Psi_2}^2 \duprime \\  \notag \lesssim &\int_{H_{u_{\infty}}^{(0,\ubar)}}||\snabla^i \Psi_1||^{2}_{L^{2}_{sc}(S_{u,\ubar^{\prime}})} \dubarprime + \int_{\Hbar_0^{(u_{\infty},u)}} \frac{a}{\upr^2} ||\snabla^i \Psi_2||^{2}_{L^{2}_{sc}(S_{u,u^{\prime}})} \duprime \\&+   \notag \iint_{\mathcal{D}_{u,\ubar}}\frac{a}{\upr} \scaleoneSuprimeubarprime{\langle\snabla^i \Psi_1 , P_i\rangle}\duprime \dubarprime + \iint_{\mathcal{D}_{u,\ubar}}\frac{a}{\upr} \scaleoneSuprimeubarprime{\langle\snabla^i \Psi_2 ,Q_i\rangle}\duprime \dubarprime.
\end{align}
\begin{proof}
For the proof of this scale-invariant energy estimates, see \cite{AnAth}.
\end{proof}

\begin{proposition}
\label{alpha_energy_estimate}
Let the hypotheses of Theorem~\ref{mainone} be in force, and suppose in addition that the bootstrap assumptions \eqref{bootstrap} hold on the spacetime domain under consideration. Let 
\[
\mathcal{D} \in \big\{ \, |u|\, \snabla_{3}, \, a^{\frac12} \snabla, \, \snabla_{4} \, \big\}
\]
be one of the admissible commutation operators defined in Section~\ref{commute}, and let $i$ be an integer satisfying $0 \leq i \leq 2$. Then the following \emph{scale-invariant $L^{2}$ energy estimate} holds:
\begin{equation}
\label{eq:alpha_energy_estimate}
\frac{1}{a^{\frac{1}{2}}} \, \scaletwoHu{\mathcal{D}^{i} \alpha}
\;+\;
\frac{1}{a^{\frac{1}{2}}} \, \scaletwoHbaru{\mathcal{D}^{i} \widetilde{\beta}}
\;\; \lesssim \;\;
\frac{1}{a^{\frac{1}{2}}} \, \scaletwoHzero{\mathcal{D}^{i} \alpha}
\;+\;
\frac{1}{a^{\frac{1}{2}}} \, \scaletwoHbarzero{\mathcal{D}^{i} \widetilde{\beta}}
\;+\;
\frac{C}{a^{\frac13}}.
\end{equation}
In particular, for the case $\mathcal{D} = \mathrm{Id}$ (i.e.\ without commutation), \eqref{eq:alpha_energy_estimate} reads
\begin{equation}
\label{eq:alpha_energy_estimate_nocomm}
\frac{1}{a^{\frac{1}{2}}} \, \scaletwoHu{\alpha}
\;+\;
\frac{1}{a^{\frac{1}{2}}} \, \scaletwoHbaru{\widetilde{\beta}}
\;\; \lesssim \;\;
\frac{1}{a^{\frac{1}{2}}} \, \scaletwoHzero{\alpha}
\;+\;
\frac{1}{a^{\frac{1}{2}}} \, \scaletwoHbarzero{\widetilde{\beta}}
\;+\;
\frac{C}{a^{\frac13}}.
\end{equation}
Here:
\begin{itemize}
    \item $\scaletwoHu{\cdot}$ and $\scaletwoHbaru{\cdot}$ denote the scale-invariant $L^{2}$ flux norms along the outgoing and incoming null hypersurfaces $H_{u}$ and $\underline{H}_{\ubar}$, respectively.
    \item $\mathcal{D}^{i}$ denotes the $i$-fold composition of $\mathcal{D}$ with itself.
\end{itemize}
\end{proposition}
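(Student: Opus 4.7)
The plan is to apply the scale-invariant energy identity of Proposition \ref{42} to the Bianchi pair $(\alpha,\widetilde{\beta})$, commuted $i$ times with a generic operator $\mathcal{D}\in\{|u|\snabla_3,\,a^{1/2}\snabla,\,\snabla_4\}$. Using the commutation formulae of Section \ref{commute}, one sees that $\mathcal{D}^i$ applied to the renormalized Bianchi equations yields a system of the form
\[
\snabla_3 \mathcal{D}^i\alpha +\Bigl(\tfrac{i+1}{2}+s_2(\alpha)\Bigr)\tr\chibar\,\mathcal{D}^i\alpha-\widehat{\mathcal{D}}\,\mathcal{D}^i\widetilde{\beta}=P_i,\qquad
\snabla_4\mathcal{D}^i\widetilde{\beta}-\Hodge{\widehat{\mathcal{D}}}\mathcal{D}^i\alpha=Q_i,
\]
where $P_i,Q_i$ are schematic sums of products of Ricci coefficients $\psi$, Weyl components $\Psi$, matter terms $\mathcal{T}$, and their derivatives of order at most $i$. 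Proposition \ref{42} then produces, after multiplying by $1/a$, the fundamental flux identity
\[
\tfrac{1}{a}\scaletwoHu{\mathcal{D}^i\alpha}^2+\tfrac{1}{a}\scaletwoHbaru{\mathcal{D}^i\widetilde{\beta}}^2\lesssim \mathrm{Data}+\iint_{\mathcal{D}_{u,\ubar}}\!\tfrac{1}{|u'|}\bigl(\scaleoneSuprimeubarprime{\langle \mathcal{D}^i\alpha,P_i\rangle}+\scaleoneSuprimeubarprime{\langle \mathcal{D}^i\widetilde{\beta},Q_i\rangle}\bigr)du'd\ubar'.
\]

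The core of the proof is the term-by-term estimation of the bulk integrals. I would organize the terms in $P_i,Q_i$ into three classes. First, the \emph{purely geometric} nonlinearities of the form $\psi\cdot\Psi$, $\Psi\cdot\Psi$, or $\mathcal{D}^{i_1}\psi\cdot \mathcal{D}^{i_2}\Psi$ with $i_1+i_2\le i$; these are handled by Cauchy--Schwartz in the spacetime slab combined with the scale-invariant H\"older inequalities \eqref{257}, the $\mathcal{L}^\infty_{(sc)}$ bounds from Section \ref{section0infty}, the $\mathcal{L}^4_{(sc)}$ bounds from Section \ref{sectionO14}, the $\mathcal{L}^2_{(sc)}$ bounds of Section \ref{sectionO22}, and the elliptic estimates of Section \ref{ellipticsection}. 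The crucial book-keeping device is that the signature/scaling structure forces every such product to carry a factor of $a^{-\gamma}$ with $\gamma>0$ for some $\gamma\ge 1/3$, up to an absorbable $\int |\mathcal{D}^i\alpha|^2$ contribution that Gr\"onwall will dispose of via Proposition \ref{prop54}. Second, the \emph{matter contributions} of the form $\mathcal{D}^{i_1}\psi\cdot \mathcal{D}^{i_2}\mathcal{T}$ and $\mathcal{D}^{i_1}\mathcal{T}\cdot\mathcal{D}^{i_2}\Psi$ with $i_1+i_2\le i$; these are bounded using the Vlasov stress-energy estimates of Proposition \ref{prop:StressEnergyHighOrder}, which provide the sharp $L^2$-bounds $\tfrac{|u|}{a}\|\mathcal{D}^k T_{\mu\nu}\|_{\mathcal{L}^2_{(sc)}}\lesssim a^{-\kappa(\mu\nu)}$ with the requisite gain $\kappa\ge 0$. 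Third, the \emph{top-order curvature} terms $\mathcal{D}^i\alpha\cdot\alpha$ and $\mathcal{D}^i\widetilde{\beta}\cdot\psi$ are put into the Gr\"onwall loop: the $\alpha\cdot\alpha$ self-interaction is absorbed by the left-hand side flux after using the anomalous hierarchy $\frac{1}{a}\scaletwoHu{\mathcal{D}^i\alpha}\le \mathcal{R}_i[\alpha]$, while the $\widetilde{\beta}\cdot\psi$ term is controlled by a direct application of the scale-invariant H\"older inequality and the $\mathcal{L}^\infty_{(sc)}$ bound on $\psi$.

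The main analytic obstacle lies at the top order $i=2$ in dealing with the matter source term of the schematic form $\mathcal{D}^2 \mathcal{T}_{4}$ (appearing via the renormalization $\widetilde{\beta}=\beta-\tfrac12 R_{A4}$) and the angular-derivative term $\mathcal{D}\alpha\cdot\mathcal{D}\mathcal{T}$ that arises when $\mathcal{D}^2$ distributes across a product. The former can only be controlled on $H_u$ through the Vlasov estimate $\mathcal{V}_2[\mathcal{T}_4]$, so one must pair it with $\mathcal{D}^2\widetilde{\beta}$ using Cauchy--Schwartz along $H_u$ (not $\Hbar_{\ubar}$), exploiting the structural asymmetry between the $e_4$ and $e_3$ integrations that was highlighted in the introduction. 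The latter requires the codimension-$1$ trace inequality (Propositions \ref{codimension11}, \ref{codimension12}) to convert an $\mathcal{L}^4_{(sc)}(S)$ norm of $\mathcal{D}\alpha$ into an $L^2$ norm along $H$, together with the Sobolev embedding to promote $\mathcal{D}\mathcal{T}$ to $\mathcal{L}^\infty_{(sc)}$ on $S_{u,\ubar}$ via the second-order bound from Proposition \ref{prop:StressEnergyHighOrder}. Once these borderline pairings are arranged, every contribution of $P_i,Q_i$ carries either an absorbable $\int\|\mathcal{D}^i\alpha\|^2$ piece that Gr\"onwall kills, or a factor bounded by $C\,a^{-1/3}$ coming from the smallness of the various $a$-powers accumulated in the scale-invariant hierarchy.

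Finally, I would close the argument by applying Proposition \ref{prop54} to the inequality produced above, viewed as a coupled integral inequality in $(u,\ubar)$, to obtain \eqref{eq:alpha_energy_estimate}. The case $\mathcal{D}=\mathrm{Id}$ ($i=0$) of \eqref{eq:alpha_energy_estimate_nocomm} is a direct specialization of the same argument, with the simplification that no commutator terms appear and only the bare Bianchi source terms $\chihat\rho+\Hodge{\chihat}\sigma$, $(\zeta+4\eta)\hat{\otimes}\beta$, $\omegabar\alpha$ and the undifferentiated matter contributions $D_{\mu}R_{\nu\lambda}$ have to be estimated, all of which are immediately bounded by the same scheme.
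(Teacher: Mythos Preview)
Your proposal is correct and follows essentially the same strategy as the paper. Two minor differences are worth noting. First, the paper treats $\mathcal{D}=a^{1/2}\snabla$ separately and then, for $\mathcal{D}\in\{|u|\snabla_3,\snabla_4\}$, explicitly computes the commutator $[\mathcal{D}^k,\snabla]$ acting on the Hodge term to obtain a system of the form $\snabla_3(\snabla^j\mathcal{D}^i\alpha)+\tfrac{j+1}{2}\tr\chibar\,(\snabla^j\mathcal{D}^i\alpha)=\snabla\hat{\otimes}(\snabla^j\mathcal{D}^i\widetilde{\beta})+P_{i,j}$ with $i+j=2$; your stated coefficient $\tfrac{i+1}{2}+s_2(\alpha)$ is literally correct only when every $\mathcal{D}$ is angular, though the discrepancy is harmlessly absorbed into $P_i$. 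Second, the paper does not invoke Gr\"onwall here: it pulls out $\sup_{u'}\|\mathcal{D}^i\alpha\|_{\mathcal{L}^2_{(sc)}(H_{u'})}\le a^{1/2}R$ (respectively the $\Hbar$-flux of $\widetilde{\beta}$) via the bootstrap, shows each bulk contribution is bounded by a polynomial in $(\Gamma,R,\mathcal{V})$, and then uses $(\Gamma+R+\mathcal{V})^{20}\le a^{1/16}$ together with the overall $a^{-1/2}$ normalization to extract the $a^{-1/3}$ gain directly. Your phrasing ``pair $\mathcal{D}^2\mathcal{T}_4$ with $\mathcal{D}^2\widetilde{\beta}$ along $H_u$'' is also slightly off: one still pairs with the $\Hbar$-flux of $\widetilde{\beta}$, but the remaining spacetime integral of $|Q_i|^2$ has an inner $\ubar$-integration that produces exactly the $L^2_{sc}(H_{u'})$ norm of the matter source, which is what Proposition~\ref{prop:StressEnergyHighOrder} controls.
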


\begin{proof}
 The vital point to note here is that the bulk terms in estimating the energy involve a spacetime integral, and therefore the stress-energy terms involving the Vlasov source are always controllable on $H$. With this fundamental notion, we proceed to estimate the energy by means of Bianchi-pair integration. The energy estimates for $\nablasl$ are easy to prove. We consider the top derivatives.
 First, recall the equations
 \begin{align} \notag 
  &\snabla_3 \snabla^i \alpha + \frac{i+1}{2}\tr\chibar \snabla^i\alpha - \hat{\mathcal{D}}\snabla^{i}\tbeta\\\notag =&\sum_{i_{1}+i_{2}+i_{3}=i-1}\snabla^{i_{1}}\psi^{i_{2}+1}_{g}\snabla^{i_{3}+1}\beta^{R}+\sum_{i_{1}+i_{2}+i_{3}=i-1}\snabla^{i_{3}}\psi^{i_{2}+1}_{g}\snabla^{i_{3}}\alpha\\ \notag
  +&\sum_{i_{1}+i_{2}+i_{3}+i_{4}=i}\snabla^{i_{1}}\psi^{i_{2}}_{g}\snabla^{i_{3}}(\psi_{g},\chihat)\snabla^{i_{4}}(\rho,\sigma,\beta^{R})+\sum_{i_{1}+i_{2}+i_{3}+i_{4}=i}\snabla^{i_{1}}\psi^{i_{2}}_{g}\snabla^{i_{3}}(\chibarhat,\widetilde{\tr\chibar})\snabla^{i_{4}}\alpha\\
  \notag +&\sum_{i_{1}+i_{2}+i_{3}+i_{4}=i-1}\snabla^{i_{1}}\psi^{i_{2}+1}_{g}\snabla^{i_{3}}\tr\chibar\snabla^{i_{4}}\alpha+\sum_{i_{1}+i_{2}+i_{3}+i_{4}=i-2}\snabla^{i_{1}}\psi^{i_{2}+1}_{g}\snabla^{i_{3}}(\chibarhat,\tr\chi)\snabla^{i_{4}}\alpha\\
  +&\sum_{i_{1}+i_{2}+i_{3}=i}\snabla^{i_{1}}\psi^{i_{2}}_{g}\snabla^{i_{3}+1}T_{4A}+\sum_{i_{1}+i_{2}+i_{3}=i}\snabla^{i_{1}}\psi^{i_{2}}_{g}\snabla^{i_{3}}\nablasl_{4}T_{AB}+\sum_{i_{1}+i_{2}+i_{3}+i_{4}=i-1}\snabla^{i_{1}}\psi^{i_{2}}_{g}\snabla^{i_{3}}T_{A3}\snabla^{i_{4}}\alpha:=P_{i}.
 \end{align}
Similarly, we obtain for $\widetilde{\beta}$
\begin{align}\notag 
&\snabla_4 \snabla^i \tbeta - \Hodge{\hat{\mathcal{D}}}\snabla^i \alpha\\ \notag  =&\sum_{i_{1}+i_{2}+i_{3}+i_{4}=i}\snabla^{i_{1}}\psi^{i_{2}}_{g}\snabla^{i_{3}}(\psi_{g},\chihat)\snabla^{i_{4}}(\widetilde{\beta},\alpha)+\sum_{i_{1}+i_{2}+i_{3}+i_{4}=i}\snabla^{i_{1}}\psi^{i_{2}}_{g}\snabla^{i_{3}+1}T_{4A}\\ \notag 
+&\sum_{i_{1}+i_{2}+i_{3}+i_{4}=i}\snabla^{i_{1}}\psi^{i_{2}}_{g}\snabla^{i_{3}}(\psi_{g},\chihat)\snabla^{i_{4}}T_{4A}+\sum_{i_{1}+i_{2}+i_{3}+i_{4}=i}\snabla^{i_{1}}\psi^{i_{2}}_{g}\snabla^{i_{3}}\psi_{g}\snabla^{i_{4}}T_{4A}\\ \notag 
+&\sum_{i_{1}+i_{2}+i_{3}+i_{4}=i}\snabla^{i_{1}}\psi^{i_{2}}_{g}\snabla^{i_{3}}\nablasl_{4}T_{4A}+\sum_{i_{1}+i_{2}+i_{3}+i_{4}=i}\snabla^{i_{1}}\psi^{i_{2}}_{g}\snabla^{i_{3}}(\chihat,\tr\chi)\snabla^{i_{4}}\widetilde{\beta}\\ 
+&\sum_{i_{1}+i_{2}+i_{3}+i_{4}=i-2}\snabla^{i_{1}}\psi^{i_{2}+1}_{g}\snabla^{i_{3}}(\chihat,\tr\chi)\snabla^{i_{4}}\widetilde{\beta}:=Q_{i}
\end{align}
Now we apply the energy lemma. 
By H\"older's inequality, one has

\begin{equation}
    \begin{split}
         &\intubar \intu  \frac{a}{\upr} \scaleoneSuprimeubarprime{a^{\frac{i}{2}} P_i \cdot \aln \alpha }  \duprime \dubarprime \\\leq&\intu \frac{a}{\upr^2} \sum_{j=1}^{9}\left(  \intubar \scaletwoSuprimeubarprime{a^{\frac{i}{2}} P_i^j }^2\dubarprime \right)^{\frac{1}{2}}\duprime
        \cdot \sup_{u^{\prime}} \lVert \aln \alpha \rVert_{L^2_{(sc)}(H_{u^{\prime}}^{(0,\ubar)})}
    \end{split}
\end{equation}
Let us focus on the sum in the above line. For the first three terms, there holds

\[ \sum_{j=1}^3 \left( \intubar \scaletwoSuprimeubarprime{a^{\frac{i}{2}} P_i^j}^2 \dubarprime \right)^{\frac{1}{2}}\lesssim \frac{\al \Gamma \cdot R}{\upr}. \]

\noindent For the fourth term, there holds

\[ \left( \intubar \scaletwoSuprimeubarprime{a^{\frac{i}{2}} P_i^4}^2 \dubarprime \right)^{\frac{1}{2}}\lesssim \Gamma^2 + \Gamma R.\]For the fifth and sixth terms, there holds

\[ \left( \intubar \scaletwoSuprimeubarprime{a^{\frac{i}{2}} P_i^{5}}^2 \dubarprime \right)^{\frac{1}{2}} + \left( \intubar \scaletwoSuprimeubarprime{a^{\frac{i}{2}} P_i^{6}}^2 \dubarprime \right)^{\frac{1}{2}}\lesssim \Gamma^3 + \Gamma^2 R. \]
The seventh term is estimated as follows:
\begin{align}
a^{\frac{3}{2}}R \int_{u_{\infty}}^{u} \frac{1}{|u^{\prime}|^{2}}||a^{\frac{i}{2}}\nablasl^{i}\snabla_{4}T_{AB}||_{\mathcal{L}^{2}_{(sc)}(H_{u^{\prime}})}\text{d}u^{\prime} \lesssim a^{\frac{3}{2}}R\int_{u_{\infty}}^{u} \frac{1}{|u^{\prime}|^{2}}  \frac{1}{a|u^{\prime}|}\hsp a\hsp \text{d}u^{\prime} \lesssim \frac{a^{\frac{3}{2}}\hsp R\hsp \mathcal{V}}{|u|^{2}} \lesssim 1,
\end{align}
where we have used the top derivative Vlasov estimate from proposition \ref{prop:StressEnergyHighOrder}. Next, for the lower order terms in the following is estimated by $L^{4}(S)-L^{4}(S)$ type estimates and using the co-dimension-1 trace inequality or the Sobolev embedding
\begin{align}
\sum_{i_{1}+i_{2}+i_{3}=i}a^{\frac{3}{2}}R \int_{u_{\infty}}^{u} \frac{1}{|u^{\prime}|^{2}}||a^{\frac{i}{2}}\snabla^{i_{1}}\psi^{i_{2}}_{g}\nablasl^{i_{3}}\nabla_{4}T_{AB}||_{L^{2}_{sc}(H_{u^{\prime}})}\text{d}u^{\prime}   \lesssim \frac{a^{\frac{3}{2}}R\mathcal{V}}{|u|^{2}} \lesssim 1 
\end{align}
Next we estimate 
\begin{align}
\sum_{i_{1}+i_{2}+i_{3}=i}a^{\frac{3}{2}}R \int_{u_{\infty}}^{u} \frac{1}{|u^{\prime}|^{2}}||a^{\frac{i}{2}}\snabla^{i_{1}}\psi^{i_{2}}_{g}\nablasl^{i_{3}+1}T_{4A}||_{L^{2}_{sc}(H_{u^{\prime}})}\text{d}u^{\prime} \lesssim a^{\frac{3}{2}}R\mathcal{V}\int_{u_{\infty}}^{u}\frac{1}{a^{\frac{1}{2}}|u^{\prime}|}\frac{1}{|u^{\prime}|^{2}}\text{d}u^{\prime} \lesssim \frac{aR\mathcal{V}}{|u|^{2}}\lesssim 1
\end{align}
and for the ninth term 
\begin{align}
\sum_{i_{1}+i_{2}+i_{3}+i_{4}=i-1}a^{\frac{3}{2}}R\int_{u_{\infty}}^{u}\frac{1}{|u^{\prime}|^{2}}||a^{\frac{i}{2}}\snabla^{i_{1}}\psi^{i_{2}}_{g}\snabla^{i_{3}}T_{A3}\snabla^{i_{4}}\alpha||_{L^{2}_{sc}(H_{u^{\prime}})}\text{d}u^{\prime}     
\end{align}
we control in $\mathcal{L}^{4}_{sc}(S_{u,\ubar})-\mathcal{L}^{4}_{sc}(S_{u,\ubar})$ and use the scale-invariant codimension-1 trace inequality
 \begin{eqnarray}
  ||\varphi||_{L^{4}_{sc}(S_{u,\ubar})}\lesssim ||\varphi||_{L^{4}_{sc}(S_{u,0})}+||\snabla_{4}\varphi||^{\frac{1}{2}}_{L^{2}_{sc}(H)}\left(|||\varphi||^{\frac{1}{2}}_{L^{2}_{sc}(H)}+||a^{\frac{1}{2}}\nablasl\varphi||^{\frac{1}{2}}_{L^{2}_{sc}(H)}\right)  
\end{eqnarray}
to control $\nablasl\alpha$ in $L^{4}_{sc}(S_{u,\ubar})$
\begin{align}
||\nablasl\alpha||_{L^{4}_{sc}(S_{u,\ubar})}\lesssim ||\nablasl\alpha||_{L^{4}_{sc}(S_{u,0})}+\mathcal{R}
\end{align}
and so 
\begin{align}
\sum_{i_{1}+i_{2}+i_{3}+i_{4}=i-1}a^{\frac{3}{2}}R\int_{u_{\infty}}^{u}\frac{1}{|u^{\prime}|^{2}}||a^{\frac{i}{2}}\snabla^{i_{1}}\psi^{i_{2}}_{g}\snabla^{i_{3}}T_{A3}\snabla^{i_{4}}\alpha||_{L^{2}_{sc}(H_{u^{\prime}})}\text{d}u^{\prime}  \lesssim a^{\frac{3}{2}}\mathcal{R}\mathcal{V}\int_{u_{\infty}}^{u}\frac{1}{|u^{\prime}|^{2}}\frac{a}{|u^{\prime}|}\frac{1}{a^{\frac{5}{2}}}\text{d}u^{\prime} \lesssim \frac{\mathcal{R}\mathcal{V}}{|u|^{2}}\lesssim 1.
\end{align}
Collecting all the terms, we obtain \begin{equation} \label{alphabeta1} \intubar \intu \frac{a}{\upr} \scaleoneSuprimeubarprime{a^{\frac{i}{2}} P_i (\al\snabla)^i \alpha} \duprime \dubarprime\lesssim \Gamma^3 + \Gamma^2 R + \Gamma R+1.\end{equation}
Similarly, for the analogous term involving $\tbeta$, there holds

\begin{equation}
    \begin{split}
         &\intubar \intu  \frac{a}{\upr} \scaleoneSuprimeubarprime{a^{\frac{i}{2}} Q_i \cdot a^{\frac{i}{2}} \tbeta }  \duprime \dubarprime \\\leq&\sum_{j=1}^{7} \intu \frac{a}{\upr^2} \left(  \intubar \scaletwoSuprimeubarprime{a^{\frac{i}{2}} Q_i^j }^2\dubarprime \right)^{\frac{1}{2}}\duprime
        \cdot \sup_{u^{\prime}} \lVert (\al\snabla)^i  \tbeta \rVert_{L^2_{(sc)}(\Hb_{\ubar^{\prime}}^{(u_{\infty},u)})}\\
        &\lesssim a^{\frac{3}{2}}\mathcal{R}\intu \frac{a}{\upr^2} \sum_{j=1}^{7}\left(  \intubar \scaletwoSuprimeubarprime{a^{\frac{i}{2}} Q_i^j }^2\dubarprime \right)^{\frac{1}{2}}\duprime.
    \end{split}
    \end{equation}
    Note the important point that the Vlasov field can only be controlled on $H_{u}$, so naturally, we control the term $Q_{i}$ involving the Vlasov stress-energy terms along $H_{u}$. The first term is estimated as follows  
   \begin{equation}
    \intu \frac{a}{\upr^2}\left( \intubar \scaletwoSuprimeubarprime{a^{\frac{i}{2}} Q_i^1}^2 \dubarprime \right)^{\frac{1}{2}} \duprime \lesssim \frac{a\Gamma (R+\Gamma)}{\lvert u \rvert }.
\end{equation}
The sixth term is estimated as
\begin{equation}
    \intu \frac{a}{\upr^2}\left( \intubar \scaletwoSuprimeubarprime{a^{\frac{i}{2}} Q_i^6}^2 \dubarprime \right)^{\frac{1}{2}} \duprime \lesssim \frac{\al \Gamma (R+\Gamma)}{\lvert u \rvert}.
\end{equation}
For the seventh term, we have 
\begin{equation}
    \intu \frac{a}{\upr^2}\left( \intubar \scaletwoSuprimeubarprime{a^{\frac{i}{2}} Q_i^7}^2 \dubarprime \right)^{\frac{1}{2}} \duprime \lesssim \frac{a\Gamma^2 (R+\Gamma)}{\lvert u \rvert^2}
\end{equation}
Now the source terms are controlled. The second term is controlled as  
\begin{align}
&\intu \frac{a}{\upr^2}\Bigg(\int_{0}^{\ubar}||a^{\frac{i}{2}}\sum_{i_{1}+i_{2}+i_{3}=i}\snabla^{i_{1}}\psi^{i_{2}}_{g}\snabla^{i_{3}+1}T_{4A}||^{2}_{L^{2}_{sc}(S_{u^{\prime},\ubar^{\prime}})}\text{d}\ubar^{\prime} \Bigg)^{\frac{1}{2}}\text{d}u^{\prime} \notag \\
\lesssim& \int_{u_{\infty}}^{u}\frac{a}{|u^{\prime}|^{2}}\frac{\mathcal{V}(\mathcal{V}+\Gamma)}{a|u^{\prime}|}\text{d}u^{\prime} \lesssim \frac{\mathcal{V}(1+\Gamma)}{|u|^{2}}. 
\end{align}
The third term is estimated as 
\begin{align}
&\intu \frac{a}{\upr^2}\Bigg(\int_{0}^{\ubar}||a^{\frac{i}{2}}\sum_{i_{1}+i_{2}+i_{3}+i_{4}=i}\snabla^{i_{1}}\psi^{i_{2}}_{g}\snabla^{i_{3}}(\psi_{g},\chihat)\snabla^{i_{4}}T_{4A}||^{2}_{L^{2}_{sc}(S_{u^{\prime},\ubar^{\prime}})}\text{d}\ubar^{\prime} \Bigg)^{\frac{1}{2}}\text{d}u^{\prime}  \notag \\
\lesssim& \int_{u_{\infty}}^{u}\frac{a}{|u^{\prime}|^{2}}\frac{a}{|u^{\prime}|}\frac{\mathcal{V}\Gamma}{a|u^{\prime}|}\text{d}u^{\prime} \lesssim \frac{a\mathcal{V}\Gamma}{|u|^{3}}.
\end{align}
The fourth term is estimated as 
\begin{align}   \notag 
&\intu \frac{a}{\upr^2}\Bigg(\int_{0}^{\ubar}||a^{\frac{i}{2}}\sum_{i_{1}+i_{2}+i_{3}+i_{4}=i}\snabla^{i_{1}}\psi^{i_{2}}_{g}\snabla^{i_{3}}\psi_{g}\snabla^{i_{4}}T_{4A}||^{2}_{L^{2}_{sc}(S_{u^{\prime},\ubar^{\prime}})}\text{d}\ubar^{\prime} \Bigg)^{\frac{1}{2}}\text{d}u^{\prime}  \\\nonumber 
\lesssim& \int_{u_{\infty}}^{u}\frac{a}{|u^{\prime}|^{2}}\frac{a}{|u^{\prime}|}\frac{\mathcal{V}\Gamma}{a^{\frac{3}{2}}|u^{\prime}|}\text{d}u^{\prime} \lesssim \frac{a^{\frac{1}{2}}\mathcal{V}\Gamma}{|u|^{3}},
\end{align}
whereas finally the top order term (the fifth term)
\begin{align}\notag 
&\intu \frac{a}{\upr^2}\Bigg(\int_{0}^{\ubar}||a^{\frac{i}{2}}\sum_{i_{1}+i_{2}+i_{3}=i}\nabla^{i_{1}}\psi^{i_{2}}_{g}\nabla^{i_{3}}\nabla_{4}T_{AB}||^{2}_{L^{2}_{sc}(S_{u^{\prime},\ubar^{\prime}})}\text{d}\ubar^{\prime} \Bigg)^{\frac{1}{2}}\text{d}u^{\prime} \\
\lesssim& \int_{u_{\infty}}^{u}\frac{a}{|u^{\prime}|^{2}}a\frac{\mathcal{V}}{a|u^{\prime}|}\text{d}u^{\prime} +\int_{u_{\infty}}^{u}\frac{a}{|u^{\prime}|^{2}}a\frac{\mathcal{V}\Gamma}{a|u^{\prime}|^{2}}\text{d}u^{\prime} \lesssim \frac{a\mathcal{V}}{|u|^{2}}+\frac{a\mathcal{V}}{|u|^{3}} \lesssim \frac{a\mathcal{V}}{\modu^2}.   
\end{align}
Collecting all the terms together, we obtain 
\begin{equation}
    \begin{split}
        & \frac{1}{\al} \scaletwoHu{(a^{\frac{1}{2}} \snabla)^i \alpha} + \frac{1}{\al}\scaletwoHbaru{\aln \tbeta} \\ \lesssim & \frac{1}{\al} \scaletwoHzero{(a^{\frac{1}{2}} \snabla)^i\alpha} + \frac{1}{\al} \scaletwoHbarzero{\aln \tbeta} \\&+  \frac{1}{\al} \intubar \intu  \frac{a}{\upr} \scaleoneSuprimeubarprime{a^{\frac{i}{2}} P_i \cdot \aln \alpha }  \duprime \dubarprime
        \\ &+ \frac{1}{\al} \intubar \intu  \frac{a}{\upr} \scaleoneSuprimeubarprime{a^{\frac{i}{2}} Q_i \cdot \aln \tbeta }  \duprime \dubarprime \\ \lesssim &\frac{1}{\al} \scaletwoHzero{\aln \alpha} + \frac{1}{\al} \scaletwoHbarzero{\aln \tbeta} + \frac{1}{a^{\frac{1}{3}}}.
        \end{split}
\end{equation}
This proves the claim when $\mathcal{D}=a^{\frac{1}{2}}\snabla$. We actually need a commutation for $[\mathcal{D}^{k},\nablasl]$ appearing from the Hodge term in the Bianchi equation for $\mathcal{D}=|u|\nabla_{3},\nabla_{4}$. First, we obtain the relevant equations for the pair $(\mathcal{D}^{i}\alpha,\mathcal{D}^{i}\beta)$ for $\max{i}=2$. To this end, we recall the following lemma. 
\begin{lemma}

Assume a tensorfield $\phi$ satisfies an equation of the form \[ \snabla_3\phi+ a \tr\chibar \phi = G_0.    \] there holds:

\begin{align}
\snabla_3 \mathcal{D}^{k+\ell}\phi + a\tr\chibar \mathcal{D}^{k+\ell}\phi = G_i, \notag\end{align}where

\begin{align}
\nonumber G_i&= \sum_{i_1+i_2+i_3=i}\mathcal{D}^{i_1}\big(\eta,\etabar,1 \big)^{i_2} \mathcal{D}^{i_3}G_0 \\ \notag  +& \sum_{i_1+i_2+i_3+i_4+1=i}\dione\psi_g^{i_2}\dit \omegabar \mathcal{D}^{i_4+1}\phi \\ \notag +&\sumifm \dione \psi_g^{i_2}\dit(\eta,\etabar,\chibarhat)\dif \snabla \phi\\  \notag +&\sumifm \dione \psi_g^{i_2}\dit \sdiv \etabar \dif \phi\\ \notag +& \sumifim \dione \psi_g^{i_2}\dit \chihat \dif \chibarhat \difi \phi \\ \notag +& \sumifim \dione \psi_g^{i_2}\dit \tr\chi \dif \tr\chibar \difi \phi \\ \notag +& \sumifim \dione \psi_g^{i_2}\dit(\rho,T_{34})\dif \phi \\ \notag +& \sumifm \dione \psi_g^{i_2}\dit (\betabar,T_{3A})\dif \phi   \\ \notag &+\sum_{i_1+i_2+i_3+i_4+i_5=i-1}\mathcal{D}^{i_1}\psi_g^{i_2}\mathcal{D}^{i_3}(\eta,\etabar)\mathcal{D}^{i_4}(\eta,\etabar,\chibarhat, \tr\chibar)\mathcal{D}^{i_5}\phi \nonumber \\&+ \nonumber \sum_{i_1+i_2+i_3+i_4=i-1}\mathcal{D}^{i_1}\psi_g^{i_2}\mathcal{D}^{i_3}\sigma\mathcal{D}^{i_4}\phi \notag \\ &+\nonumber\sum_{i_1+i_2+i_3+i_4+i_5+i_6=i-1}\mathcal{D}^{i_1}\psi_g^{i_2}\mathcal{D}^{i_3}\tr\chibar\mathcal{D}^{i_4}\modu \mathcal{D}^{i_5}\tildetr\mathcal{D}^{i_6}\phi \\ &+\nonumber \sum_{i_1+i_2+i_3+i_4+i_5+i_6=i-1}\mathcal{D}^{i_1}\psi_g^{i_2}\mathcal{D}^{i_3} \modu\mathcal{D}^{i_4}\chibarhat\mathcal{D}^{i_5}\chibarhat\mathcal{D}^{i_6}\phi \notag \\ &+\nonumber \sum_{i_1+i_2+i_3+i_4+i_5+i_6=i-1}\mathcal{D}^{i_1}\psi_g^{i_2}\mathcal{D}^{i_3} \modu\mathcal{D}^{i_4}\omegabar\mathcal{D}^{i_5}\tr\chibar\mathcal{D}^{i_6}\phi  \\&+ \sum_{i_1+i_2+i_3+i_4+i_5+i_6=i-1}\mathcal{D}^{i_1}\psi_g^{i_2}\mathcal{D}^{i_3} \modu\mathcal{D}^{i_4}\slashed{T}_{33}\mathcal{D}^{i_5}\phi.
\end{align}
Here, $\psi_g \in \{\eta,\etabar,1\}.$
\end{lemma}
Now, the important point here is that for the Bianchi system, $G_{0}$ actually contains the Hodge term, which is principal.  
Let us recall the equations for $\alpha$ and $\beta$: 
\begin{align}
\snabla_{3}\alpha+\frac{1}{2}\tr\chibar \alpha=\snabla\widehat{\otimes}\beta+P,\\
\snabla_{4}\beta=\slashed{\div} \alpha+Q.
\end{align}
Here, $G_{0}$ is $\snabla\widehat{\otimes}\beta+P$. Now note the following commutation formula. Define, for an arbitrary $k$,
\be a^k := \notag [\De,\snabla]\De^{k-1}\Phi_2,
\ee
\be b^k := \mathcal{D}[\De^{k-1},\snabla]\Phi_2. \notag  \ee
We compute 
 \begin{align} \notag 
  [\mathcal{D}^{k},\nablasl]\Phi_{2}=&[\mathcal{D},\nablasl]\mathcal{D}^{k-1}\Phi_{2}+\mathcal{D}[\mathcal{D}^{k-1},\nablasl]\Phi_{2}=a^{k}+b^{k}=a^{k}+\mathcal{D}(a^{k-1}+b^{k-1})=\dots\\
  =&\sum_{l=0}^{k-1}\mathcal{D}^{l}a^{k-l}=\sum_{l=1}^{k}\mathcal{D}^{k-l}a^{l}.
 \end{align}
 Therefore, the goal is to evaluate $a^{j}$ for an arbitrary $j$. First, for $\mathcal{D}=|u|\snabla_{3}$,
 \begin{align} \notag 
  a^{j}=&[\mathcal{D},\nablasl]\mathcal{D}^{j-1}\Phi_{2}=|u|(\tr\chibar,\chibarhat)\nablasl\mathcal{D}^{j-1}\Phi_{2}+|u|\betabar \mathcal{D}^{j-1}\Phi_{2}+(\eta,\etabar)\mathcal{D}^{j}\Phi_{2}+|u|(\chihat,\tr\chi)\etabar \mathcal{D}^{j-1}\Phi_{2}\\
  +&|u|(\chibarhat,\tr\chibar)\eta\mathcal{D}^{j-1}\Phi_{2}
 \end{align}
 and for $\mathcal{D}=\snabla_{4}$,
 \begin{align}
  a^{j}=[\mathcal{D},\nablasl]\mathcal{D}^{j-1}\Phi_{2}=(\chihat,\tr\chi)\nablasl\mathcal{D}^{j-1}\Phi_{2}+\Bigg((\chihat,\tr\chi)\etabar+\beta+\Tslash_{4}\Bigg)\mathcal{D}^{j-1}\Phi_{2}.   
 \end{align}
 Hence, explicitly, the commutation reads for $\mathcal{D}:=|u|\snabla_{3}$,
 \begin{align} \notag 
[\mathcal{D}^{k},\nablasl]\Phi_{2}=&\sum_{j=1}^{k}\mathcal{D}^{k-j}\Bigg(|u|(\tr\chibar,\chibarhat)\nablasl\mathcal{D}^{j-1}\Phi_{2}+|u|\betabar \mathcal{D}^{j-1}\Phi_{2}+(\eta,\etabar)\mathcal{D}^{j}\Phi_{2}+|u|(\chihat,\tr\chi)\etabar \mathcal{D}^{j-1}\Phi_{2}\\ \notag 
  +&|u|(\chibarhat,\tr\chibar)\eta\mathcal{D}^{j-1}\Phi_{2}\Bigg)\\ \notag 
  =&\sum_{j=1}^{k}\sum_{i_{1}+i_{2}+i_{3}=k-j}\mathcal{D}^{i_{1}}|u|\mathcal{D}^{i_{2}}(\tr\chibar,\chibarhat)\mathcal{D}^{i_{3}}\nablasl\mathcal{D}^{j-1}\Phi_{2}+\sum_{j=1}^{k}\sum_{i_{1}+i_{2}+i_{3}=k-j}\mathcal{D}^{i_{1}}|u|\mathcal{D}^{i_{2}}\betabar\mathcal{D}^{i_{3}+j-1}\Phi_{2}\\ \notag 
  +&\sum_{j=1}^{k}\sum_{i_{1}+i_{2}=k-j}\mathcal{D}^{j_{1}}(\eta,\etabar)\mathcal{D}^{i_{2}+j}\Phi_{2}+\sum_{j=1}^{k}\sum_{i_{1}+i_{2}+i_{3}+i_{4}=k-j}\mathcal{D}^{i_{1}}|u|\mathcal{D}^{i_{2}}(\chihat,\tr\chi)\mathcal{D}^{i_{3}}\etabar\mathcal{D}^{i_{4}+j-1}\Phi_{2}\\
    +&\sum_{i_{1}+i_{2}+i_{3}+i_{4}=k-j}\mathcal{D}^{i_{1}}|u|\mathcal{D}^{i_{2}}(\chibarhat,\tr\chibar)\mathcal{D}^{i_{3}}\eta \mathcal{D}^{i_{4}+j-1}\Phi_{2},
 \end{align}
 while for $\mathcal{D}=\snabla_{4}$,
 \begin{align} \notag 
[\mathcal{D}^{k},\nablasl]\Phi_{2}=&\sum_{j=1}^{k}\sum_{i_{1}+i_{2}=k-j}\mathcal{D}^{i_{1}}(\chihat,\tr\chi)\mathcal{D}^{i_{2}}\nablasl\mathcal{D}^{j-1}\Phi_{2}+\sum_{j=1}^{k}\sum_{i_{1}+i_{2}+i_{3}=k-j} \mathcal{D}^{i_{1}}(\chihat,\tr\chi)\mathcal{D}^{i_{2}}\etabar\mathcal{D}^{i_{3}+j-1}\Phi_{2}\\ +&\sum_{j=1}^{k}\sum_{i_{1}+i_{2}=k-j}\mathcal{D}^{i_{1}}\beta\mathcal{D}^{i_{2}+j-1}\Phi_{2}+\sum_{j=1}^{k}\sum_{i_{1}+i_{2}=k-j}\mathcal{D}^{i_{1}}\Tslash_{4}\mathcal{D}^{i_{2}+j-1}\Phi_{2}.
 \end{align}
Therefore, recalling Proposition 
\ref{propositionnabla3}, the commuted equation for $\alpha$ reads 
\begin{align}\notag
\snabla_{3}\mathcal{D}^{i}\alpha+\frac{1}{2}\tr\chibar\mathcal{D}^{i}\alpha=\nabla\widehat{\otimes}\mathcal{D}^{i}\beta+P^{1}_{i},    
\end{align}
where $P^{1}_{i}$ reads 
\begin{align}
  \notag &P^{1}_{i}=\sum_{i_1+i_2+i_3=i-1}\mathcal{D}^{i_1}\big(\eta,\etabar,1 \big)^{i_2} \mathcal{D}^{i_3}\nabla \beta+\sum_{i_1+i_2+i_3=i}\mathcal{D}^{i_1}\big(\eta,\etabar,1 \big)^{i_2} \mathcal{D}^{i_3}P\\ \notag  +& \sum_{i_1+i_2+i_3+i_4+1=i}\dione\psi_g^{i_2}\dit \omegabar \mathcal{D}^{i_4+1}\alpha \\ \notag +&\sumifm \dione \psi_g^{i_2}\dit(\eta,\etabar,\chibarhat)\dif \snabla \alpha\\  \notag +&\sumifm \dione \psi_g^{i_2}\dit \sdiv \etabar \dif \alpha\\ \notag +& \sumifim \dione \psi_g^{i_2}\dit \chihat \dif \chibarhat \difi \alpha \\ \notag +& \sumifim \dione \psi_g^{i_2}\dit \tr\chi \dif \tr\chibar \difi \alpha \\ \notag +& \sumifim \dione \psi_g^{i_2}\dit(\rho,T_{34})\dif \alpha \\ \notag +& \sumifm \dione \psi_g^{i_2}\dit (\betabar,T_3A)\dif \alpha   \\ \notag &+\sum_{i_1+i_2+i_3+i_4+i_5=i-1}\mathcal{D}^{i_1}\psi_g^{i_2}\mathcal{D}^{i_3}(\eta,\etabar)\mathcal{D}^{i_4}(\eta,\etabar,\chibarhat, \tr\chibar)\mathcal{D}^{i_5}\alpha \nonumber \\&+ \nonumber \sum_{i_1+i_2+i_3+i_4=i-1}\mathcal{D}^{i_1}\psi_g^{i_2}\mathcal{D}^{i_3}\sigma\mathcal{D}^{i_4}\alpha \notag \\ &+\nonumber\sum_{i_1+i_2+i_3+i_4+i_5+i_6=i-1}\mathcal{D}^{i_1}\psi_g^{i_2}\mathcal{D}^{i_3}\tr\chibar\mathcal{D}^{i_4}\modu \mathcal{D}^{i_5}\tildetr\mathcal{D}^{i_6}\alpha \\ &+\nonumber \sum_{i_1+i_2+i_3+i_4+i_5+i_6=i-1}\mathcal{D}^{i_1}\psi_g^{i_2}\mathcal{D}^{i_3} \modu\mathcal{D}^{i_4}\chibarhat\mathcal{D}^{i_5}\chibarhat\mathcal{D}^{i_6}\alpha \notag \\ &+\nonumber \sum_{i_1+i_2+i_3+i_4+i_5+i_6=i-1}\mathcal{D}^{i_1}\psi_g^{i_2}\mathcal{D}^{i_3} \modu\mathcal{D}^{i_4}\omegabar\mathcal{D}^{i_5}\tr\chibar\mathcal{D}^{i_6}\alpha  \\&+ \sum_{i_1+i_2+i_3+i_4+i_5+i_6=i-1}\mathcal{D}^{i_1}\psi_g^{i_2}\mathcal{D}^{i_3} \modu\mathcal{D}^{i_4}\slashed{T}_{33}\mathcal{D}^{i_5}\alpha  \notag \\ \notag 
  &+\sum _{i_{1}+i_{2}+i_{3}+i_{4}=i-1}\mathcal{D}^{i_{1}}|u|\mathcal{D}^{i_{2}}(\tr\chibar,\chibarhat)\mathcal{D}^{i_{3}}\nablasl\mathcal{D}^{i_{4}}\beta \\
  \notag &+\sum_{i_{1}+i_{2}+i_{3}=i-1}\mathcal{D}^{i_{1}}|u|\mathcal{D}^{i_{2}}(\betabar,T_{A3})\mathcal{D}^{i_{3}}\beta \\
  &+\sum_{i_{1}+i_{2}=i}\mathcal{D}^{i_{1}}(\eta,\etabar)\mathcal{D}^{i_{2}}\beta \notag \\
  &+\sum_{i_{1}+i_{2}+i_{3}+i_{4}=i-1}\mathcal{D}^{i_{1}}|u|\mathcal{D}^{i_{2}}(\chihat,\tr\chi)\mathcal{D}^{i_{3}}\etabar\mathcal{D}^{i_{4}}\beta  \notag \\
  &+\sum_{i_{1}+i_{2}+i_{3}+i_{4}=i-1}\mathcal{D}^{i_{1}}|u|\mathcal{D}^{i_{2}}(\chibarhat,\tr\chibar)\mathcal{D}^{i_{3}}\eta\mathcal{D}^{i_{4}}\beta,
\end{align}
for $\mathcal{D}=|u|\snabla_{3}$ 
and 
\begin{align} \notag 
P^{1}_{i}= &\sum_{i_1+i_2+i_3=i-1}\mathcal{D}^{i_1}\big(\eta,\etabar,1 \big)^{i_2} \mathcal{D}^{i_3}\nabla\beta \\ \notag  +& \sum_{i_1+i_2+i_3+i_4+1=i}\dione\psi_g^{i_2}\dit \omegabar \mathcal{D}^{i_4+1}\alpha \\ \notag &+\frac{1}{\al}\sumifm \dione \psi_g^{i_2}\dit(\eta,\etabar,\chibarhat)\mathcal{D}^{i_4+1} \alpha\\  \notag +&\sumifm \dione \psi_g^{i_2}\dit \sdiv \etabar \dif \alpha\\ \notag +& \sumifim \dione \psi_g^{i_2}\dit \chihat \dif \chibarhat \difi \alpha \\ \notag +& \sumifim \dione \psi_g^{i_2}\dit \tr\chi \dif \tr\chibar \difi \alpha \\ \notag +& \sumifm \dione \psi_g^{i_2}\dit(\rho,\Te_{34})\dif \alpha \\ \notag +& \sumifm \dione \psi_g^{i_2}\dit (\betabar,\Te_3)\dif \alpha   \\ \notag +&\frac{1}{\al} \sumifm \dione \psi_g^{i_2}\mathcal{D}^{i_3+1}\tildetr \dif \alpha \\ \notag &+\sum_{i_1+i_2+i_3+i_4+i_5=i-1}\mathcal{D}^{i_1}\psi_g^{i_2}\mathcal{D}^{i_3}(\eta,\etabar)\mathcal{D}^{i_4}(\eta,\etabar,\chibarhat, \tr\chibar)\mathcal{D}^{i_5}\alpha \nonumber \\&+ \nonumber \sum_{i_1+i_2+i_3+i_4=i-1}\mathcal{D}^{i_1}\psi_g^{i_2}\mathcal{D}^{i_3}\sigma\mathcal{D}^{i_4}\alpha \notag \\ &+\nonumber\sum_{i_1+i_2+i_3+i_4+i_5+i_6=i-1}\mathcal{D}^{i_1}\psi_g^{i_2}\mathcal{D}^{i_3}\tr\chibar\mathcal{D}^{i_4}\modu \mathcal{D}^{i_5}\tildetr\mathcal{D}^{i_6}\alpha \\ &+\nonumber \sum_{i_1+i_2+i_3+i_4+i_5+i_6=i-1}\mathcal{D}^{i_1}\psi_g^{i_2}\mathcal{D}^{i_3} \modu\mathcal{D}^{i_4}\chibarhat\mathcal{D}^{i_5}\chibarhat\mathcal{D}^{i_6}\alpha \notag \\ \notag  &+\nonumber \sum_{i_1+i_2+i_3+i_4+i_5+i_6=i-1}\mathcal{D}^{i_1}\psi_g^{i_2}\mathcal{D}^{i_3} \modu\mathcal{D}^{i_4}\omegabar\mathcal{D}^{i_5}\tr\chibar\mathcal{D}^{i_6}\alpha  \\ \notag &+ \sum_{i_1+i_2+i_3+i_4+i_5+i_6=i-1}\mathcal{D}^{i_1}\psi_g^{i_2}\mathcal{D}^{i_3} \modu\mathcal{D}^{i_4}\slashed{T}_{33}\mathcal{D}^{i_5}\alpha\\ \notag 
&+\sum_{i_{1}+i_{2}=i-1}\mathcal{D}^{i_{1}}(\chihat,\tr\chi)\mathcal{D}^{i_{2}}\nablasl\beta\\ \notag &+\sum_{i_{1}+i_{2}+i_{3}=i-1} \mathcal{D}^{i_{1}}(\chihat,\tr\chi)\mathcal{D}^{i_{2}}\etabar\mathcal{D}^{i_{3}}\beta\\ &+\sum_{i_{1}+i_{2}=i-1}\mathcal{D}^{i_{1}}(\beta,\Tslash_{4})\mathcal{D}^{i_{2}}\beta
\end{align}
for $\mathcal{D}=\snabla_{4}$.
We now commute with $\nablasl^{j}$such that $i+j=2$
\begin{align}
 \snabla_{3}\nablasl^{j}\mathcal{D}^{i}\alpha+(\frac{j+1}{2})\tr\chibar \nablasl^{j}\mathcal{D}^{i}\alpha =\snabla\widehat{\otimes}\nablasl^{j}\mathcal{D}^{i}\beta+P_{i,j},   \notag 
\end{align}
where $P_{i,j}$ reads 
\begin{align} \notag 
 & P_{i,j}:=\sum_{j_{1}+j_{2}+j_{3}+j_{4}=j-1}\snabla^{j_{1}}(\eta+\etabar)^{j_{2}}\snabla^{j_{3}}\widetilde{\betabar}\snabla^{j_{4}}\mathcal{D}^{i}\alpha+\sum_{j_{1}+j_{2}+j_{3}=j}\snabla^{j_{1}}(\eta+\etabar)^{j_{2}}\snabla^{j_{3}}P^{1}_{i}\\ \notag 
 &+\sum_{j_{1}+j_{2}+j_{3}+j_{4}=j}\snabla^{j_{1}}(\eta+\etabar)^{j_{2}}\snabla^{j_{3}}\chibarhat\snabla^{j_{4}}\mathcal{D}^{i}\alpha+\sum_{j_{1}+j_{2}+j_{3}+j_{4}=j-1}\snabla^{j_{1}}(\eta+\etabar)^{j_{2}+1}\snabla^{j_{3}}\tr\chibar\snabla^{j_{4}}\mathcal{D}^{i}\alpha\\ \notag 
 &+\sum_{j_{1}+j_{2}+j_{3}=j-1}\snabla^{j_{1}}\psi^{j_{2}+1}_{g}\snabla^{j_{3}+1}\mathcal{D}^{i}\widetilde{\beta}+\sum_{j_{1}+j_{2}+j_{3}=j-1}\snabla^{j_{1}}\psi^{j_{2}+1}_{g}\snabla^{j_{3}}\mathcal{D}^{i}\alpha\\
 &+\sum_{j_{1}+j_{2}+j_{3}+j_{4}=j}\snabla^{j_{1}}\psi^{j_{2}}_{g}\snabla^{j_{3}}(\psi_{g},\chihat)\snabla^{j_{4}}(\rho,\sigma,\mathcal{D}^{i}\widetilde{\beta}),
\end{align}
with $P_{i,j}=P^{1}_{i}$ for $j=0$. Now we commute with the $\snabla_{4}$ equation, which is much simpler. First, recall the proposition.
Assume a tensor-field $\phi$ satisfies $\snabla_4 \phi = F_0$. Then, if $\mathcal{D} \in \{\modu \snabla_3, \snabla_4, \al \snabla \}$,  there holds $\snabla_4 \mathcal{D}^i \phi :=F_i,$ where

\begin{align}
\nonumber F_i =& \sum_{i_1+i_2+i_3=i}\mathcal{D}^{i_1}(\eta,\etabar,\modu\omegabar)^{i_2}\mathcal{D}^{i_3}F_0  \\  \nonumber +& \frac{1}{\al} \sum_{i_1+i_2+i_3+i_4+i_5=i-1}\mathcal{D}^{i_1}\psi_g^{i_2}\mathcal{D}^{i_3}\modu \mathcal{D}^{i_4}(\eta,\etabar) \mathcal{D}^{i_5+1}\phi \\ \nonumber +&\sum_{i_1+i_2+i_3+i_4+1=i}\mathcal{D}^{i_1}\psi_g^{i_2}\mathcal{D}^{i_3}(\tr\chi,\chihat)\mathcal{D}^{i_4+1}\phi \\ \nonumber &+ \al \sumifim \dione \psi_g^{i_2} \dit(\chihat,\tr\chi)\dif \etabar \difi \phi \\ \nonumber +& \al \sumifm \dione \psi_g^{i_2}\dit (\beta,\slashed{T}_{4}) \dif \phi  \\  \nonumber+&\sum_{i_1+i_2+i_3+i_4+i_5=i-1}\mathcal{D}^{i_1}\psi_g^{i_2}\mathcal{D}^{i_3}\modu \mathcal{D}^{i_4}\sigma\mathcal{D}^{i_5}\phi \\ +& \sum_{i_1+\dots+i_6=i-1}\mathcal{D}^{i_1}\psi_g^{i_2}\mathcal{D}^{i_3}\modu \mathcal{D}^{i_4}(\eta,\etabar)\mathcal{D}^{i_5}(\eta,\etabar)\mathcal{D}^{i_6}\phi. \label{commutationformulanabla42}
\end{align}We recall that here $\psi_g\in \{\eta,\etabar,\modu\hsp \omegabar\}$. We use it in the equation
\begin{align} \notag 
\snabla_{4}\widetilde{\beta}=\slashed{\div} \alpha+Q,    
\end{align}
to yield 
\begin{align}
 \snabla_{4}\mathcal{D}^{i}\widetilde{\beta}=\slashed{\div} \mathcal{D}^{i}\alpha+Q^{1}_{i},    
\end{align}
where $Q^{1}_{i}$ reads 
\begin{align}
 \notag Q^{1}_{i}:=&\sum_{i_1+i_2+i_3=i}\mathcal{D}^{i_1}(\eta,\etabar,\modu\omegabar)^{i_2}\mathcal{D}^{i_3}Q  \\  \nonumber +& \frac{1}{\al} \sum_{i_1+i_2+i_3+i_4+i_5=i-1}\mathcal{D}^{i_1}\psi_g^{i_2}\mathcal{D}^{i_3}\modu \mathcal{D}^{i_4}(\eta,\etabar) \mathcal{D}^{i_5+1}\widetilde{\beta} \\ \nonumber +&\sum_{i_1+i_2+i_3+i_4+1=i}\mathcal{D}^{i_1}\psi_g^{i_2}\mathcal{D}^{i_3}(\tr\chi,\chihat)\mathcal{D}^{i_4+1}\widetilde{\beta} \\ \nonumber &+ \al \sumifim \dione \psi_g^{i_2} \dit(\chihat,\tr\chi)\dif \etabar \difi \widetilde{\beta} \\ \nonumber +& \al \sumifm \dione \psi_g^{i_2}\dit (\beta,\slashed{T}_{4}) \dif \widetilde{\beta}  \\  \nonumber+&\sum_{i_1+i_2+i_3+i_4+i_5=i-1}\mathcal{D}^{i_1}\psi_g^{i_2}\mathcal{D}^{i_3}\modu \mathcal{D}^{i_4}\sigma\mathcal{D}^{i_5}\widetilde{\beta} \\ \notag  +& \sum_{i_1+\dots+i_6=i-1}\mathcal{D}^{i_1}\psi_g^{i_2}\mathcal{D}^{i_3}\modu \mathcal{D}^{i_4}(\eta,\etabar)\mathcal{D}^{i_5}(\eta,\etabar)\mathcal{D}^{i_6}\widetilde{\beta}\\ \notag 
 &+\sum _{i_{1}+i_{2}+i_{3}+i_{4}=i-1}\mathcal{D}^{i_{1}}|u|\mathcal{D}^{i_{2}}(\tr\chibar,\chibarhat)\mathcal{D}^{i_{3}}\nablasl\mathcal{D}^{i_{4}}\alpha \\ \notag 
  &+\sum_{i_{1}+i_{2}+i_{3}=i-1}\mathcal{D}^{i_{1}}|u|\mathcal{D}^{i_{2}}(\betabar,T_{A3})\mathcal{D}^{i_{3}}\beta\\
   \notag &+\sum_{i_{1}+i_{2}=i}\mathcal{D}^{i_{1}}(\eta,\etabar)\mathcal{D}^{i_{2}}\alpha\\
   \notag &+\sum_{i_{1}+i_{2}+i_{3}+i_{4}=i-1}\mathcal{D}^{i_{1}}|u|\mathcal{D}^{i_{2}}(\chihat,\tr\chi)\mathcal{D}^{i_{3}}\etabar\mathcal{D}^{i_{4}}\alpha\\ 
  &+\sum_{i_{1}+i_{2}+i_{3}+i_{4}=i-1}\mathcal{D}^{i_{1}}|u|\mathcal{D}^{i_{2}}(\chibarhat,\tr\chibar)\mathcal{D}^{i_{3}}\eta\mathcal{D}^{i_{4}}\alpha,
\end{align}
for $\mathcal{D}=|u|\snabla_{3}$ and 
\begin{align}  \notag 
Q^{1}_{i}:=&\sum_{i_{1}+i_{2}=i-1}\mathcal{D}^{i_{1}}(\chihat,\tr\chi)\mathcal{D}^{i_{2}}\nablasl\alpha\\ \notag &+\sum_{i_{1}+i_{2}+i_{3}=i-1} \mathcal{D}^{i_{1}}(\chihat,\tr\chi)\mathcal{D}^{i_{2}}\etabar\mathcal{D}^{i_{3}}\alpha\\
&+\sum_{i_{1}+i_{2}=i-1}\mathcal{D}^{i_{1}}(\beta,\Tslash_{4})\mathcal{D}^{i_{2}}\alpha,   
\end{align}
for $\mathcal{D}=\snabla_{4}$.
We now want to commute $\nablasl^{j}$ such that $i+j=2$, obtaining
\begin{align}
\snabla_{4}\nablasl^{j}\mathcal{D}^{i}\widetilde{\beta}=\slashed{\div} \nablasl^{j}\mathcal{D}^{i}\alpha+Q_{i,j},
\end{align}
where $Q_{i,j}$ reads 
\begin{align} \notag 
 & Q_{i,j}=\sum_{j_{1}+j_{2}+j_{3}=j}\snabla^{j_{1}}(\eta+\etabar)^{j_{2}}\snabla^{j_{3}}Q^{1}_{i}+\sum_{j_{1}+j_{2}+j_{3}+j_{4}=j}\snabla^{j_{1}}(\eta+\etabar)^{j_{2}}\snabla^{j_{3}}(\chihat,\tr\chi)\snabla^{j_{4}}\mathcal{D}^{i}\widetilde{\beta}\\ \notag 
 &+\sum_{j_{1}+j_{2}+j_{3}+j_{4}=j}\snabla^{j_{1}}\psi^{j_{2}}_{g}\snabla^{j_{3}}(\psi_{g},\chihat)\snabla^{j_{4}}(\mathcal{D}^{i}\widetilde{\beta},\mathcal{D}^{i}\alpha)+\sum_{j_{1}+j_{2}+j_{3}+j_{4}=j}\snabla^{j_{1}}\psi^{j_{2}}_{g}\snabla^{j_{3}}(\chihat,\tr\chi)\snabla^{j_{4}}\mathcal{D}^{i}\widetilde{\beta}\\
 &+\sum_{j_{1}+j_{2}+j_{3}+j_{4}=j-2}\snabla^{j_{1}}\psi^{j_{2}+1}_{g}\snabla^{j_{3}}(\chihat,\tr\chi)\snabla^{j_{4}}\mathcal{D}^{i}\widetilde{\beta}.
\end{align}
Here, the terms containing $j-2$ vanish when $j-2<0$.
Collecting the principal terms, we have the coupled Bianchi system 
\begin{gather}
\snabla_{3}\nablasl^{j}\mathcal{D}^{i}\alpha+(\frac{j+1}{2})\tr\chibar \nablasl^{j}\mathcal{D}^{i}\alpha =\snabla\widehat{\otimes}\nablasl^{j}\mathcal{D}^{i}\tbeta+P_{i,j},\\
\snabla_{4}\nablasl^{j}\mathcal{D}^{i}\widetilde{\beta}=\slashed{\div} \nablasl^{j}\mathcal{D}^{i}\alpha+Q_{i,j}.
\end{gather}
The vital point to note here is that $s_{2}(\Psi)=s_{2}(\mathcal{D}\Psi)$ and therefore we can apply the energy lemma for the fields $\mathcal{D}^{i}\alpha$ and $\mathcal{D}^{i}\tbeta$, namely
\begin{align}
&\int_{\Hu}\scaletwoSuubarprime{\snabla^j \mathcal{D}^{i}\alpha}^2 \dubarprime + \int_{\Hbu}\frac{a}{\upr^2} \scaletwoSuprime{\snabla^j \mathcal{D}^{i}\tbeta}^2 \duprime  \notag \\ \lesssim &\int_{H_{u_{\infty}}^{(0,\ubar)}}||\snabla^j \mathcal{D}^{i}\alpha||^{2}_{L^{2}_{sc}(S_{u,\ubar^{\prime}})} \dubarprime + \int_{\Hbar_0^{(u_{\infty},u)}} \frac{a}{\upr^2} ||\snabla^j \mathcal{D}^{i}\tbeta||^{2}_{L^{2}_{sc}(S_{u,u^{\prime}})} \duprime \notag \\+&  \iint_{\mathcal{D}_{u,\ubar}}\frac{a}{\upr} \scaleoneSuprimeubarprime{\langle\snabla^j \mathcal{D}^{i} \alpha, P_{i,j}\rangle}\duprime \dubarprime + \iint_{\mathcal{D}_{u,\ubar}}\frac{a}{\upr} \scaleoneSuprimeubarprime{\langle\snabla^j \mathcal{D}^{i}\widetilde{\beta} ,Q_{i,j}\rangle}\duprime \dubarprime.
\end{align}
By H\"older, one has 
\begin{equation}
    \begin{split}
         &\intubar \intu  \frac{a}{\upr} \scaleoneSuprimeubarprime{a^{\frac{j}{2}} P_{i,j} (a^{\frac{1}{2}}\snabla)^{j}\mathcal{D}^{i}\alpha }  \duprime \dubarprime \\\leq&\intu \frac{a}{\upr^2} \sum_{k}^{}\left(  \intubar \scaletwoSuprimeubarprime{a^{\frac{j}{2}} P_{i,j}^k }^2\dubarprime \right)^{\frac{1}{2}}\duprime
        \cdot \sup_{u^{\prime}} \lVert (a^{\frac{1}{2}}\snabla)^{j}\mathcal{D}^{i}\alpha \rVert_{L^2_{(sc)}(H_{u^{\prime}}^{(0,\ubar)})}
    \end{split}
\end{equation}
We only control explicitly the borderline terms here, while the rest are of higher decay. Consider the following terms 
\begin{align} \notag 
&\int_{u_{\infty}}^{u}\frac{a}{|u^{\prime}|^{2}}\Bigg(\int_{0}^{\ubar}||a^{\frac{j}{2}}\sum_{j_{1}+j_{2}+j_{3}+j_{4}=j-1}\snabla^{j_{1}}(\eta+\etabar)^{j_{2}}\snabla^{j_{3}}\widetilde{\betabar}\snabla^{j_{4}}\mathcal{D}^{i}\alpha||^{2}_{L^{2}_{sc}(S_{u^{\prime},\ubar^{\prime}})}\text{d}\ubar^{\prime} \Bigg)^{\frac{1}{2}}\text{d}u^{\prime} \\
\lesssim& \int_{u_{\infty}}^{u}\frac{a}{|u^{\prime}|^{2}}\frac{1}{|u^{\prime}|}a^{\frac{1}{2}}\mathcal{R}^{2}\text{d}u^{\prime} +\int_{u_{\infty}}^{u}\frac{a}{|u^{\prime}|^{2}} a^{\frac{1}{2}}\frac{1}{|u^{\prime}|^{2}}a^{\frac{1}{2}}\mathcal{R}^{2}\Gamma \text{d}u^{\prime} \lesssim \frac{a^{\frac{3}{2}}\mathcal{R}^{2}}{|u|^{2}}+\frac{a^{2}\Gamma\mathcal{R}^{2}}{|u|^{3}},
\end{align}
where we have used $L^{\infty}_{sc}(S_{u^{\prime},\ubar^{\prime}})-L^{2}_{sc}(S_{u^{\prime},\ubar^{\prime}})$ and $L^{\infty}_{sc}(S_{u^{\prime},\ubar^{\prime}})-L^{4}_{sc}(S_{u^{\prime},\ubar^{\prime}})-L^{4}_{sc}(S_{u^{\prime},\ubar^{\prime}})$ estimates. Consider the next term
\begin{align} \notag 
&\int_{u_{\infty}}^{u}\frac{a}{|u^{\prime}|^{2}}\Bigg(\int_{0}^{\ubar}||a^{\frac{j}{2}}\sum_{j_{1}+j_{2}+j_{3}+j_{4}=j}\snabla^{j_{1}}(\eta+\etabar)^{j_{2}}\snabla^{j_{3}}\chibarhat\snabla^{j_{4}}\mathcal{D}^{i}\alpha||^{2}_{L^{2}_{sc}(S_{u^{\prime},\ubar^{\prime}})} \text{d}\ubar^{\prime} \Bigg)^{\frac{1}{2}}\text{d}u^{\prime} \\
\lesssim& \int_{u_{\infty}}^{u}\frac{a}{|u^{\prime}|^{2}}\frac{1}{|u^{\prime}|}\frac{|u^{\prime}|}{a^{\frac{1}{2}}}\Gamma a^{\frac{1}{2}}\mathcal{R}\text{d}u^{\prime} +\int_{u_{\infty}}^{u}\frac{a}{|u^{\prime}|^{2}}\frac{a^{\frac{1}{2}}}{|u^{\prime}|}\frac{\Gamma|u^{\prime}|}{a^{\frac{1}{2}}}\Gamma\lesssim \frac{a\Gamma \mathcal{R}}{|u|}+\frac{a\Gamma^{2}}{|u|}.
\end{align}
The next borderline term is estimated as 
\begin{align} \notag 
&\int_{u_{\infty}}^{u}\frac{a}{|u^{\prime}|^{2}}\Bigg(\int_{0}^{\ubar}||a^{\frac{j}{2}}\sum_{j_{1}+j_{2}+j_{3}+j_{4}=j}\snabla^{j_{1}}(\eta+\etabar)^{j_{2}}\snabla^{j_{3}}\chibarhat\snabla^{j_{4}}\mathcal{D}^{i}\alpha||^{2}_{L^{2}_{sc}(S_{u^{\prime},\ubar^{\prime}})} \text{d}\ubar^{\prime} \Bigg)^{\frac{1}{2}}\text{d}u^{\prime} \\
\lesssim& \int_{u_{\infty}}^{u}\frac{a}{|u^{\prime}|^{2}}a^{\frac{1}{2}} \frac{1}{|u^{\prime}|}\Gamma\mathcal{R}\text{d}u^{\prime} +\int_{u_{\infty}}^{u}\frac{a}{|u^{\prime}|^{2}}\frac{1}{|u^{\prime}|^{2}}\Gamma \frac{\Gamma |u^{\prime}|}{a^{\frac{1}{2}}} a^{\frac{1}{2}}\mathcal{R}\text{d}u^{\prime} \lesssim \frac{a^{\frac{3}{2}}\Gamma \mathcal{R}}{|u|^{2}}+\frac{a\Gamma^{2}\mathcal{R}}{|u|^{2}}.
\end{align}
The next terms are estimated as (recall that $i+j=2$):
\begin{align}
     \notag &\int_{u_{\infty}}^{u}\frac{a}{|u^{\prime}|^{2}}\Bigg(\int_{0}^{\ubar}||a^{\frac{j}{2}}\sum_{\substack{
j_{1}+j_{2}+j_{3}+j_{4}=j \\
i_{1}+i_{2}+i_{3}+i_{4}=i-1 \\
k_{1}+k_{2}+k_{3}=j_{3}
}}
\snabla^{j_{1}}(\eta+\etabar)^{j_{2}}
\snabla^{k_{1}}\mathcal{D}^{i_{1}}\psi_{g}^{i_{2}}
\snabla^{k_{2}}\mathcal{D}^{i_{3}}\nablasl\etabar\,
\snabla^{k_{3}}\mathcal{D}^{i_{4}}\alpha||^{2}_{L^{2}_{sc}(S_{u^{\prime},\ubar^{\prime}})} \text{d}\ubar^{\prime} \Bigg)^{\frac{1}{2}}\text{d}u^{\prime} \\
\lesssim& \int_{u_{\infty}}^{u}\frac{a}{|u^{\prime}|^{2}}\frac{a^{\frac{1}{2}}}{|u^{\prime}|}\frac{\Gamma}{a}a^{\frac{1}{2}}\mathcal{R}\text{d}u^{\prime} +\int_{u_{\infty}}^{u}\frac{a}{|u^{\prime}|^{2}}\frac{a^{\frac{1}{2}}\Gamma}{|u^{\prime}|^{2}}\frac{\Gamma}{a^{\frac{1}{2}}}a^{\frac{1}{2}}\mathcal{R}\text{d}u^{\prime} \lesssim \frac{a\Gamma\mathcal{R}}{|u|^{2}}+\frac{a^{\frac{3}{2}}\Gamma^{2}\mathcal{R}}{|u|^{3}},
\end{align}
\begin{align} \notag 
&\int_{u_{\infty}}^{u}\frac{a}{|u^{\prime}|^{2}}\Bigg(\int_{0}^{\ubar}||a^{\frac{j}{2}}\sum_{\substack{
j_{1}+j_{2}+j_{3}+j_{4}=j \\
i_{1}+i_{2}+i_{3}+i_{4}=i-1 \\
k_{1}+k_{2}=j_{3}
}}
\snabla^{j_{1}}(\eta+\etabar)^{j_{2}}
\snabla^{k_{1}}\mathcal{D}^{i_{1}}(\eta,\etabar,1)^{i_{2}}
\snabla^{k_{2}}\mathcal{D}^{i_{3}}\nablasl\widetilde{\beta}||^{2}_{L^{2}_{sc}(S_{u^{\prime},\ubar^{\prime}})} \text{d}\ubar^{\prime} \Bigg)^{\frac{1}{2}}\text{d}u^{\prime} \\
\lesssim& \int_{u_{\infty}}^{u}\frac{a}{|u^{\prime}|^{2}}a^{\frac{1}{2}}\frac{\mathcal{R}}{a}\text{d}u^{\prime} +\int_{u_{\infty}}^{u}\frac{a}{|u^{\prime}|^{2}}a^{\frac{1}{2}}\frac{1}{|u^{\prime}|}\Gamma\frac{\mathcal{R}}{a^{\frac{1}{2}}}\text{d}u^{\prime} \lesssim \frac{a^{\frac{1}{2}}\mathcal{R}}{|u|}+\frac{a\Gamma\mathcal{R}}{|u|^{2}},
\end{align}

\begin{align} \notag
&\int_{u_{\infty}}^{u}\frac{a}{|u^{\prime}|^{2}}\Bigg(\int_{0}^{\ubar}||a^{\frac{j}{2}}\sum_{\substack{
j_{1}+j_{2}+j_{3}=j \\
i_{1}+i_{2}+i_{3}+i_{4}=i-1 \\
k_{1}+k_{2}+k_{3}=j_{3}
}}
\snabla^{j_{1}}(\eta+\etabar)^{j_{2}}
\snabla^{k_{1}}\mathcal{D}^{i_{1}}|u|
\snabla^{k_{2}}\mathcal{D}^{i_{2}}(\chibarhat,\tr\chibar)
\snabla^{k_{3}}\mathcal{D}^{i_{3}}\eta\snabla^{k_{4}}\mathcal{D}^{i_{4}}\widetilde{\beta}||^{2}_{L^{2}_{sc}(S_{u^{\prime},\ubar^{\prime}})} \text{d}\ubar^{\prime} \Bigg)^{\frac{1}{2}}\text{d}u^{\prime} \\
\lesssim& \int_{u_{\infty}}^{u}\frac{a}{|u^{\prime}|^{2}}\frac{a}{|u^{\prime}|}|u^{\prime}|\frac{1}{|u^{\prime}|^{3}}\frac{|u^{\prime}|^{2}}{a}\Gamma^{2}\mathcal{R}a^{-\frac{1}{2}}\text{d}u^{\prime} \lesssim \frac{a^{\frac{1}{2}}}{|u|^{2}}\Gamma^{2}\mathcal{R},
\end{align}

\begin{align} \notag 
&\int_{u_{\infty}}^{u}\frac{a}{|u^{\prime}|^{2}}\Bigg(\int_{0}^{\ubar}||a^{\frac{j}{2}}\sum_{\substack{
j_{1}+j_{2}+j_{3}=j \\
i_{1}+i_{2}+i_{3}+i_{4}=i-1 \\
k_{1}+k_{2}+k_{3}=j_{3}
}}
\snabla^{j_{1}}(\eta+\etabar)^{j_{2}}
\snabla^{k_{1}}\mathcal{D}^{i_{1}}|u|
\snabla^{k_{2}}\mathcal{D}^{i_{2}}(\chibarhat,\tr\chibar)
\snabla^{k_{3}}\mathcal{D}^{i_{3}}\nablasl\mathcal{D}^{i_{4}}\widetilde{\beta}||^{2}_{L^{2}_{sc}(S_{u^{\prime},\ubar^{\prime}})} \text{d}\ubar^{\prime} \Bigg)^{\frac{1}{2}}\text{d}u^{\prime} \\    \lesssim& \int_{u_{\infty}}^{u}\frac{a}{|u^{\prime}|^{2}}\frac{a}{|u^{\prime}|}|u^{\prime}|\frac{1}{|u^{\prime}|^{2}}\frac{|u^{\prime}|^{2}\Gamma}{a}\frac{\mathcal{R}}{a^{\frac{1}{2}}}\text{d}u^{\prime} 
\lesssim \frac{a^{\frac{1}{2}}\Gamma\mathcal{R}}{|u|},
\end{align}

\begin{align} \notag 
&\int_{u_{\infty}}^{u}\frac{a}{|u^{\prime}|^{2}}\Bigg(\int_{0}^{\ubar}||a^{\frac{j}{2}}\sum_{\substack{
j_{1}+j_{2}+j_{3}=j \\
i_{1}+i_{2}+i_{3}=i \\
k_{1}+k_{2}+k_{3}=j_{3}
}}
\snabla^{j_{1}}(\eta+\etabar)^{j_{2}}\snabla^{k_{1}}\mathcal{D}^{i_{1}}(\eta,\etabar,1)^{i_{2}}\snabla^{k_{2}}\mathcal{D}^{i_{3}}\snabla_{4}T_{AB} 
 ||^{2}_{L^{2}_{sc}(S_{u^{\prime},\ubar^{\prime}})} \text{d}\ubar^{\prime} \Bigg)^{\frac{1}{2}}\text{d}u^{\prime} \\
 \lesssim& \int_{u_{\infty}}^{u}\frac{a}{|u^{\prime}|^{2}}\frac{a^{\frac{1}{2}}}{|u^{\prime}|}\frac{\Gamma}{a^{\frac{1}{2}}}\frac{\mathcal{V}}{|u^{\prime}|}\text{d}u^{\prime} \lesssim \frac{a\Gamma\mathcal{V}}{|u|^{3}},
\end{align}
where we control the second derivative term $\snabla\mathcal{D}(\eta,\etabar)$ in $L^{2}_{sc}(S_{u,\ubar})$ and $\snabla_{4}T_{AB}$ in $L^{\infty}_{sc}(S_{u,\ubar})$ ,which in turn is controllable by means of $||\sum_{i\leq 2}(a^{\frac{1}{2}}\snabla)^{i}\snabla_{4}T_{AB}||_{L^{2}_{sc}(S_{u,\ubar})}$ by means of scale-invariant Sobolev embedding. The remaining terms enjoy similar or better estimates. Consequently, collecting all the terms, we have the following estimate for the $P_{i}$ terms
\begin{align}
\intubar \intu  \frac{a}{\upr} \scaleoneSuprimeubarprime{a^{\frac{j}{2}} P_{i,j} (a^{\frac{1}{2}}\snabla)^{j}\mathcal{D}^{i}\alpha }  \duprime \dubarprime\lesssim \Gamma^{2}+\Gamma \mathcal{R}+1.
\end{align}
Now we control the remaining error term 
\begin{equation}
    \begin{split}
         &\intubar \intu  \frac{a}{\upr} \scaleoneSuprimeubarprime{a^{\frac{j}{2}} Q_{i,j} \cdot (a^{\frac{1}{2}}\snabla)^{j}\tbeta }  \duprime \dubarprime \\\leq&\sum_{k} \intu \frac{a}{\upr^2} \left(  \intubar \scaletwoSuprimeubarprime{a^{\frac{j}{2}} Q^{k}_{i,j} }^2\dubarprime \right)^{\frac{1}{2}}\duprime
        \cdot \sup_{u^{\prime}} \lVert (a^{\frac{1}{2}}\snabla)^{j} \tbeta \rVert_{L^2_{(sc)}(\Hb_{\ubar^{\prime}}^{(u_{\infty},u)})}.
    \end{split}
\end{equation}
Now we control the worst terms, and the remaining terms verify better or similar estimates. 
\begin{align}
\intu \frac{a}{\upr^2} \left(  \intubar \scaletwoSuprimeubarprime{a^{\frac{j}{2}} Q^{k}_{i,j} }^2\dubarprime \right)^{\frac{1}{2}}\duprime.    
\end{align}
Again, we estimate term by term and provide the estimates for the terms with least decay
\begin{align}  \notag 
&\int_{u_{\infty}}^{u}\frac{a\hsp \text{d}u^{\prime}}{|u^{\prime}|^{2}}\Bigg(\int_{0}^{\ubar}||a^{\frac{j}{2}}\sum_{\substack{
j_{1}+j_{2}+j_{3}=j \\
i_{1}+i_{2}+i_{3}+i_{4}=i-1 \\
k_{1}+k_{2}+k_{3}=j_{3}
}}
\snabla^{j_{1}}(\eta+\etabar)^{j_{2}}
\snabla^{k_{1}}\mathcal{D}^{i_{1}}|u|
\snabla^{k_{2}}\mathcal{D}^{i_{2}}(\chibarhat,\tr\chibar)
\snabla^{k_{3}}\mathcal{D}^{i_{3}}\nablasl\mathcal{D}^{i_{4}}\alpha||^{2}_{L^{2}_{sc}(S_{u^{\prime},\ubar^{\prime}})} \text{d}\ubar^{\prime} \Bigg)^{\frac{1}{2}} \\
&\lesssim \int_{u_{\infty}}^{u}\frac{a}{|u^{\prime}|^{2}}\frac{1}{|u^{\prime}|^{2}}\frac{a}{|u^{\prime}|}|u^{\prime}|\frac{|u^{\prime}|^{2}\Gamma}{a}\mathcal{R}\text{d}u^{\prime} \lesssim \frac{a\Gamma\mathcal{R}}{|u|},
\end{align}

\begin{align} \notag 
 &\int_{u_{\infty}}^{u}\frac{a}{|u^{\prime}|^{2}}\Bigg(\int_{0}^{\ubar}||a^{\frac{j}{2}}\sum_{j_{1}+j_{2}+j_{3}+j_{4}=j}\snabla^{j_{1}}\psi^{j_{2}}_{g}\snabla^{j_{3}}(\chihat,\psi_{g})\snabla^{j_{4}}(\mathcal{D}^{i}\tbeta,\mathcal{D}^{i}\alpha)||^{2}_{L^{2}_{sc}(S_{u^{\prime},\ubar^{\prime}})} \text{d}\ubar^{\prime} \Bigg)^{\frac{1}{2}}\text{d}u^{\prime} \\ \lesssim&
 \int_{u_{\infty}}^{u}\frac{a}{|u^{\prime}|^{2}}\frac{1}{|u^{\prime}|}a^{\frac{1}{2}}\Gamma \mathcal{R}\text{d}u^{\prime} \lesssim \frac{a^{\frac{3}{2}}}{|u|^{2}}\Gamma\mathcal{R},
\end{align}

\begin{align} \notag 
&\int_{u_{\infty}}^{u}\frac{a}{|u^{\prime}|^{2}}\Bigg(\int_{0}^{\ubar}||a^{\frac{j}{2}}\sum_{j_{1}+j_{2}+j_{3}+j_{4}=j}\snabla^{j_{1}}\psi^{j_{2}}_{g}\snabla^{j_{3}}(\chihat,\tr\chi)\snabla^{j_{4}}\mathcal{D}^{i}\widetilde{\beta}||^{2}_{L^{2}_{sc}(S_{u^{\prime},\ubar^{\prime}})} \text{d}\ubar^{\prime} \Bigg)^{\frac{1}{2}}\text{d}u^{\prime} \\
\lesssim& \int_{u_{\infty}}^{u}\frac{a}{|u^{\prime}|^{2}}\frac{a^{\frac{1}{2}}}{|u^{\prime}|}a^{\frac{1}{2}}\Gamma \frac{\mathcal{R}}{a^{\frac{1}{2}}}\text{d}u^{\prime} \lesssim \frac{a^{\frac{3}{2}}\Gamma\mathcal{R}}{|u|^{2}},
\end{align}

\begin{align} \notag 
&\int_{u_{\infty}}^{u}\frac{a}{|u^{\prime}|^{2}}\Bigg(\int_{0}^{\ubar}||a^{\frac{j}{2}}\sum_{\substack{
j_{1}+j_{2}+j_{3}=j \\
i_{1}+i_{2}=i \\
k_{1}+k_{2}=j_{3}
}}
\snabla^{j_{1}}(\eta+\etabar)^{j_{2}}
\snabla^{k_{1}}\mathcal{D}^{i_{1}}(\eta,\etabar)
\snabla^{k_{2}}\mathcal{D}^{i_{2}}\alpha||^{2}_{L^{2}_{sc}(S_{u^{\prime},\ubar^{\prime}})} \text{d}\ubar^{\prime} \Bigg)^{\frac{1}{2}}\text{d}u^{\prime} \\
\lesssim& \int_{u_{\infty}}^{u}\frac{a}{|u^{\prime}|^{2}}\frac{a^{\frac{1}{2}}\Gamma}{|u^{\prime}|}\mathcal{R}\text{d}u^{\prime} \lesssim  \frac{a^{\frac{3}{2}}\Gamma\mathcal{R}}{|u|^{2}},
\end{align}

\begin{align}  \notag 
&\int_{u_{\infty}}^{u}\frac{a}{|u^{\prime}|^{2}}\Bigg(\int_{0}^{\ubar}||a^{\frac{j}{2}}\sum_{\substack{
j_{1}+j_{2}+j_{3}=j \\
i_{1}+i_{2}=i \\
k_{1}+k_{2}=j_{3}
}}
\snabla^{j_{1}}(\eta+\etabar)^{j_{2}}
\snabla^{k_{1}}\mathcal{D}^{i_{1}}(\eta,\etabar)
\snabla^{k_{2}}\mathcal{D}^{i_{2}}\alpha||^{2}_{L^{2}_{sc}(S_{u^{\prime},\ubar^{\prime}})} \text{d}\ubar^{\prime} \Bigg)^{\frac{1}{2}}\text{d}u^{\prime} \\
\lesssim& \int_{u_{\infty}}^{u}\frac{a}{|u^{\prime}|^{2}}\frac{a^{\frac{1}{2}}\Gamma}{|u^{\prime}|}\mathcal{R}\text{d}u^{\prime} \lesssim  \frac{a^{\frac{3}{2}}\Gamma\mathcal{R}}{|u|^{2}},  
\end{align}
\begin{align}  \notag 
&\int_{u_{\infty}}^{u}\frac{a}{|u^{\prime}|^{2}}\Bigg(\int_{0}^{\ubar}||a^{\frac{j}{2}}\sum_{\substack{
j_{1}+j_{2}+j_{3}=j \\
i_{1}+i_{2}+i_{3}+i_{4}=i-1 \\
k_{1}+k_{2}+k_{3}+k_{4}=j_{3}
}}
\snabla^{j_{1}}(\eta+\etabar)^{j_{2}}
\snabla^{k_{1}}\mathcal{D}^{i_{1}}|u^{\prime}|\snabla^{k_{2}}\mathcal{D}^{i_{2}}(\chibarhat,\tr\chibar)\snabla^{k_{3}}\mathcal{D}^{i_{3}}\eta\nabla^{{k_{4}}}\mathcal{D}^{i_{4}}\alpha||^{2}_{L^{2}_{sc}(S_{u^{\prime},\ubar^{\prime}})} \text{d}\ubar^{\prime} \Bigg)^{\frac{1}{2}}\text{d}u^{\prime} \\
\lesssim&\int_{u_{\infty}}^{u}\frac{a}{|u^{\prime}|^{2}}\frac{1}{|u^{\prime}|^{3}}\frac{a}{|u^{\prime}|}|u^{\prime}|\frac{|u^{\prime}|^{2}\Gamma}{a}\Gamma a^{\frac{1}{2}}\mathcal{R}\text{d}u^{\prime} \lesssim \frac{a^{\frac{3}{2}}\Gamma^{2}\mathcal{R}}{|u|^{2}}, 
\end{align}
while the worst matter term  (the one that contains $T_{44}$ that has the minimum decay among the stress-energy components) is controlled by
\begin{align}  \notag 
&\int_{u_{\infty}}^{u}\frac{a}{|u^{\prime}|^{2}}\Bigg(\int_{0}^{\ubar}||a^{\frac{j}{2}}\sum_{\substack{
j_{1}+j_{2}+j_{3}=j \\
i_{1}+i_{2}+i_{3}=i \\
k_{1}+k_{2}=j_{3}
}}
\snabla^{j_{1}}(\eta+\etabar)^{j_{2}}
\snabla^{k_{1}}\mathcal{D}^{i_{1}}(\eta,\etabar,|u^{\prime}|\omegabar)^{i_{2}}\snabla^{k_{2}}\mathcal{D}^{i_{3}}(\snabla_{4}T_{4A},\nablasl T_{44})||^{2}_{L^{2}_{sc}(S_{u^{\prime},\ubar^{\prime}})} \text{d}\ubar^{\prime} \Bigg)^{\frac{1}{2}}\text{d}u^{\prime}  \\
\lesssim& \int_{u_{\infty}}^{u}\frac{a}{|u^{\prime}|^{2}}\frac{\mathcal{V}}{a^{\frac{1}{2}}|u^{\prime}|}\text{d}u^{\prime} +\int_{u_{\infty}}^{u}\frac{a}{|u^{\prime}|^{2}}\frac{1}{|u^{\prime}|}\frac{1}{|u^{\prime}|}\frac{a}{|u^{\prime}|}|u^{\prime}|\Gamma\frac{\mathcal{V}}{a^{\frac{1}{2}}|u^{\prime}|}\text{d}u^{\prime} \lesssim \frac{a^{\frac{1}{2}}\mathcal{V}}{|u|^{2}}+\frac{a^{\frac{3}{2}}\Gamma\mathcal{V}}{|u|^{4}}.
\end{align}
Collecting, finally, all the terms, we estimate 
\begin{align}
\intubar \intu  \frac{a}{\upr} \scaleoneSuprimeubarprime{a^{\frac{j}{2}} Q_{i,j} \cdot (a^{\frac{1}{2}}\snabla)^{j}\tbeta }  \duprime \dubarprime\lesssim \Gamma\mathcal{R}+1.    
\end{align}
Therefore we obtain 
\begin{equation}
    \begin{split}
        &\frac{1}{\al} \scaletwoHu{\mathcal{D}^{i} \alpha} + \frac{1}{\al}\scaletwoHbaru{\mathcal{D}^{i} \tbeta}\\& \lesssim  \frac{1}{\al} \scaletwoHzero{\mathcal{D}^{i} \alpha} + \frac{1}{\al}\scaletwoHbarzero{\mathcal{D}^{i} \tbeta} + \frac{1}{a^{\frac{1}{3}}}, 
    \end{split}
\end{equation}
which completes the proof for $\mathcal{D}=|u|\snabla_{3}$ and $\mathcal{D}=a^{\frac{1}{2}}\nablasl$ (their joint appearance).

\end{proof}

\newpage 
\begin{proposition}
\label{prop:energy_estimates_curvature_pairs}
Let the hypotheses of Theorem~\ref{mainone} and the bootstrap assumptions \eqref{bootstrap} be in force on the spacetime region under consideration.  
Consider an ordered pair of null curvature components 
\[
(\Psi_{1}, \Psi_{2}) \in \big\{\, (\widetilde{\beta}, (\rho,\sigma)), \; ((\rho,\sigma), \tbeta), \; (\tbeta, \alphabar) \,\big\}.
\]  
Let $\mathcal{D} \in \big\{ \, |u|\, \snabla_{3}, \, a^{\frac12} \snabla, \, \snabla_{4} \, \big\}$ denote one of the admissible commutation operators introduced in Section~\ref{commute}, and let $i$ be an integer satisfying $0 \leq i \leq 2$.
\vspace{3mm}

\noindent Then the following \emph{scale-invariant $L^{2}$ energy inequality} holds:
\begin{equation}
\label{eq:energy_estimate_curvature_pairs}
\scaletwoHu{\mathcal{D}^{i} \Psi_{1}}
\;+\;
\scaletwoHbaru{\mathcal{D}^{i} \Psi_{2}}
\;\; \leq \;\;
\scaletwoHzero{\mathcal{D}^{i} \Psi_{1}}
\;+\;
\scaletwoHbarzero{\mathcal{D}^{i} \Psi_{2}}
\;+\;
\frac{C}{a^{\frac13}}.
\end{equation}

\noindent In particular, for the uncommuted case $i=0$, \eqref{eq:energy_estimate_curvature_pairs} reads
\begin{equation}
\label{eq:energy_estimate_curvature_pairs_nocomm}
\scaletwoHu{\Psi_{1}}
\;+\;
\scaletwoHbaru{\Psi_{2}}
\;\; \leq \;\;
\scaletwoHzero{\Psi_{1}}
\;+\;
\scaletwoHbarzero{\Psi_{2}}
\;+\;
\frac{C}{a^{\frac13}}.
\end{equation}

Here:
\begin{itemize}
    \item $\scaletwoHu{\cdot}$ and $\scaletwoHbaru{\cdot}$ denote the scale-invariant $L^{2}$-flux norms along the outgoing and incoming null hypersurfaces $H_{u}$ and $\underline{H}_{\ubar}$, respectively.
    \item $\mathcal{D}^{i}$ denotes the $i$-fold composition of $\mathcal{D}$.
\end{itemize}
\end{proposition}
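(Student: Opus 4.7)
The plan is to adapt the Bianchi-pair energy scheme used in Proposition~\ref{alpha_energy_estimate} for $(\alpha,\tbeta)$ to each of the three remaining Bianchi pairs. For a generic pair $(\Psi_1,\Psi_2)$ in the statement, we would first commute the defining $\snabla_3$-equation for $\Psi_1$ and $\snabla_4$-equation for $\Psi_2$ with operators of the form $\snabla^j\mathcal{D}^i$ satisfying $i+j\le 2$, using Propositions~\ref{commute4} and~\ref{propositionnabla3} together with the $[\mathcal{D}^k,\snabla]$ identity derived in the proof of Proposition~\ref{alpha_energy_estimate}. Signature conservation guarantees that the principal Hodge coupling $\hat{\mathcal{D}}\snabla^j\mathcal{D}^i\Psi_2$ on the right-hand side of the $\snabla_3$-equation and $\Hodge{\hat{\mathcal{D}}}\snabla^j\mathcal{D}^i\Psi_1$ on the right-hand side of the $\snabla_4$-equation survive unchanged, while commutator errors fall into the schematic classes already tabulated for the $(\alpha,\tbeta)$ argument. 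We then apply Proposition~\ref{42} to the commuted pair to obtain
\[
\scaletwoHu{\snabla^j\mathcal{D}^i\Psi_1}^2+\scaletwoHbaru{\snabla^j\mathcal{D}^i\Psi_2}^2\lesssim \text{initial data}+\iint_{\mathcal{D}_{u,\ubar}}\tfrac{a}{\upr}\big(\langle\snabla^j\mathcal{D}^i\Psi_1,P_{i,j}\rangle+\langle\snabla^j\mathcal{D}^i\Psi_2,Q_{i,j}\rangle\big)\hspace{.5mm} du'\,d\ubar'.
\]

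The bulk integrals would be estimated term by term exactly as in Proposition~\ref{alpha_energy_estimate}: the $P_{i,j}$ integrals are split along $H_{u'}$ with Hölder against $\sup_{u'}\scaletwoHu{\snabla^j\mathcal{D}^i\Psi_1}$, and the $Q_{i,j}$ integrals are split along $\Hbar_{\ubar'}$ against $\sup_{\ubar'}\scaletwoHbaru{\snabla^j\mathcal{D}^i\Psi_2}$. The geometric error terms are handled by the bounds of Proposition~\ref{preliminary} and Sections~\ref{sectionO04}--\ref{sectionO22}, together with the scale-invariant codimension-1 trace inequalities of Propositions~\ref{codimension11}--\ref{codimension12}. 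Crucially, each of the three new pairs has strictly larger total signature than $(\alpha,\tbeta)$, so every angular, null, and Ricci factor inherits a decay improvement of at least one factor of $|u'|^{-1}$, and the corresponding quadratic terms such as $\chibarhat\snabla\Psi$ and $\chihat\snabla\Psi$ close with an extra $a^{-1/2}|u'|^{-1}$ gain compared to the $(\alpha,\tbeta)$ analysis.

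The genuinely delicate step is the placement of the Vlasov source terms, since the stress-energy tensor derivatives are only controlled with integrable decay along $H_u$ at top order (Proposition~\ref{prop:StressEnergyHighOrder}). For the three pairs under consideration, however, the structure works in our favour: matter terms in the $\snabla_3\Psi_1$ equations are paired against $\Psi_1\in L^2(H_u)$, which naturally requires the $L^2(\Hbar_{\ubar'})L^2_{sc}(S)$ integrals of $\mathcal{D}^{\le 2}T_{\mu\nu}$ obtained from our $S$-norm estimates, while matter terms in the $\snabla_4\Psi_2$ equations are paired against $\Psi_2\in L^2(\Hbar_{\ubar})$, whose conjugate is an $L^2(H_{u'})$ integral and thus falls in precisely the range where the sharp $L^2(H_u)$ estimate of Proposition~\ref{prop:StressEnergyHighOrder} applies. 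The main obstacle will therefore be to verify, for each Bianchi pair and each commutation type, that the worst matter contribution (e.g.\ $\snabla^2 T_{44}$ in the $(\tbetabar,\alphabar)$ pair where $T_{44}$ has the least decay) lands on the hypersurface side where it is controlled; once this bookkeeping is done, the remaining estimates reduce to the same $a^{-1/3}$-type arithmetic as in Proposition~\ref{alpha_energy_estimate}, yielding \eqref{eq:energy_estimate_curvature_pairs}.
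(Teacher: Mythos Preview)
Your proposal is essentially the paper's approach, with one confusion worth flagging. In your Vlasov paragraph you claim the $P_{i,j}$ matter terms (from $\snabla_3\Psi_1$) require only $\mathcal{D}^{\le 2}T_{\mu\nu}$ in an $L^2(\Hbar_{\ubar'})$ direction via $S$-norms; in fact, at top order $P_{i,j}$ contains \emph{three} derivatives of $T$ (e.g.\ $\snabla^2\snabla T_{43}$, $\snabla^2\snabla_3 T_{A3}$), which are not controlled in $L^2_{sc}(S_{u,\ubar})$. This is harmless only because your own H\"older splitting in the preceding paragraph already places $P_{i,j}$ in $\intu\frac{a}{\upr^2}\bigl(\intubar\|P_{i,j}\|^2_{L^2_{sc}(S)}\bigr)^{1/2}\,du'$, i.e.\ in $L^2_{sc}(H_{u'})$, where Proposition~\ref{prop:StressEnergyHighOrder} does apply---so both the $P$- and $Q$-side Vlasov terms are in fact read along $H_{u'}$, never along $\Hbar$. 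The paper also treats the $Q_{i,j}$ integral slightly differently from what you describe: rather than pulling out $\sup_{\ubar'}\|\Psi_2\|_{\Hbar}$ via the bootstrap, it applies a double Cauchy--Schwarz plus Young to obtain $B+\tfrac14\intubar\|\snabla^j\mathcal{D}^i\Psi_2\|^2_{L^2_{sc}(\Hbar_{\ubar'})}\,d\ubar'$ with $B=\iint\frac{a}{\upr^2}\|Q_{i,j}\|^2$, and absorbs the second term by Gr\"onwall (Proposition~\ref{prop54}). Your bootstrap route would also close since each error carries genuine $a^{-\epsilon}$ smallness, but the Gr\"onwall argument is what the paper records.
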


\begin{proof}
The schematic equations for $\Psi_1, \Psi_2$ are:

\begin{align}
       \nonumber  \snabla_3 \Psi_1 + \left( \frac{1}{2} + s_2(\Psi_1)\right) \tr\chibar \Psi_1 - \mathcal{D} \Psi_2 =& (\psi,\chihat)\Psi + (\snabla T_{43},\snabla T_{3B},\snabla T_{33},\snabla_{4}T_{A3},\snabla_{3}T_{43},\snabla_{4}T_{33},\snabla_{3}T_{A3})\\ +&(\psi,\chihat,\chibarhat,\tr\chibar)(T_{43},T_{A3},T_{33}),
    \end{align}

\begin{equation}
    \begin{split}
        \snabla_4 \Psi_2- \Hodge{\mathcal{D}}\Psi_1 = (\psi,\chibarhat)(\Psi_u, \alpha) +(\nablasl T_{4A},\snabla_{3}T_{44},\snabla_{4}T_{43},\nablasl T_{43},\snabla_{3}T_{4A},\snabla_{3}T_{AB},\snabla_{A}T_{3B})\\ +(\psi,\chihat,\chibarhat,\tr\chibar)(T_{4A},T_{44},T_{43},T_{AB},T_{A3})
    \end{split}
\end{equation}Commuting with $i$ angular derivatives, for $\Psi_1$, we have:

\begin{equation}
    \begin{split}
        &\snabla_3 \snabla^i \Psi_1 + \left(\frac{i+1}{2}+s_2(\Psi_1)\right)\tr\chibar \snabla^i \Psi_1 - \mathcal{D}\snabla^i \Psi_2 \\= &\sum_{i_{1}+i_{2}+i_{3}=i-1}\snabla^{i_{1}}\psi^{i_{2}+1}_{g}\snabla^{i_{3}+1}\Psi_{2} +\sum_{i_{1}+i_{2}+i_{3}+i_{4}=i}\snabla^{i_{1}}\psi^{i_{2}}_{g}\snabla^{i_{3}}(\psi_{g},\chihat)\snabla^{i_{4}}\Psi_{2} \\ &+\sum_{i_{1}+i_{2}+i_{3}=i}\snabla^{i_{1}}\psi^{i_{2}}_{g}\snabla^{i_{3}+1}(T_{43},T_{3B},T_{33}) +\sum_{i_{1}+i_{2}+i_{3}=i}\snabla^{i_{1}}\psi^{i_{2}}_{g}\snabla^{i_{3}}\snabla_{4}(T_{A3},T_{33})\\
       & +\sum_{i_{1}+i_{2}+i_{3}=i}\snabla^{i_{1}}\psi^{i_{2}}_{g}\snabla^{i_{3}}\snabla_{3}(T_{43},T_{A3})+
       \sum_{i_{1}+i_{2}+i_{3}+i_{4}=i}\snabla^{i_{1}}\psi^{i_{2}}_{g}\snabla^{i_{3}}(\psi_{g},\chihat,\chibarhat,\tr\chibar)\snabla^{i_{4}}(T_{4A},T_{43})\\
       &+\sum_{i_{1}+i_{2}+i_{3}+i_{4}=i}\snabla^{i_{1}}\psi^{i_{2}}_{g}\snabla^{i_{3}}(\psi_{g},\chihat,\chibarhat,\tr\chibar)\snabla^{i_{4}}(T_{4A},T_{43})\\
       &+\sum_{i_{1}+i_{2}+i_{3}+i_{4}=i}\snabla^{i_{1}}\psi^{i_{2}}_{g}\snabla^{i_{3}}(\chibarhat,\widetilde{\tr\chibar})\snabla^{i_{4}}\Psi_{1}+\sum_{i_{1}+i_{2}+i_{3}+i_{4}=i}\snabla^{i_{1}}\psi^{i_{2}}_{g}\snabla^{i_{3}}(\chibarhat,\widetilde{\tr\chibar})\snabla^{i_{4}}\Psi_{1}\\
       &+\sum_{i_{1}+i_{2}+i_{3}+i_{4}=i-1}\snabla^{i_{1}}\psi^{i_{2}+1}_{g}\snabla^{i_{3}}\tr\chibar\snabla^{i_{4}}\Psi_{1}+\sum_{i_{1}+i_{2}+i_{3}+i_{4}=i-1}\snabla^{i_{1}}\psi^{i_{2}+1}_{g}\snabla^{i_{3}}(\chibarhat,\tr\chibar)\snabla^{i_{4}}\Psi_{1}\\
       &+\sum_{i_{1}+i_{2}+i_{3}+i_{4}=i-1}\snabla^{i_{1}}\psi^{i_{2}+1}_{g}\snabla^{i_{3}}T_{3A}\snabla^{i_{4}}\Psi_{1}:= P_i.
    \end{split}
\end{equation}Analogously, for $\Psi_2$, we have 

\begin{equation}
    \begin{split}
        &\snabla_4 \snabla^i \Psi_2 -\Hodge{ \mathcal{D}} \snabla^i \Psi_1 \\= &\sum_{i_{1}+i_{2}+i_{3}=i-1}\snabla^{i_{1}}\psi^{i_{2}+1}_{g}\snabla^{i_{3}+1}\Psi_{1}+\sum_{i_{1}+i_{2}+i_{3}+i_{4}=i}\snabla^{i_{1}}\psi^{i_{2}}_{g}\snabla^{i_{3}}(\psi_{g},\chibarhat)\snabla^{i_{4}}(\Psi_{1},\alpha)\\
        &+\sum_{i_{1}+i_{2}+i_{3}+i_{4}=i}\snabla^{i_{1}}\psi^{i_{2}}_{g}\snabla^{i_{3}+1}(T_{4A},T_{3A},T_{43})+\sum_{i_{1}+i_{2}+i_{3}+i_{4}=i}\snabla^{i_{1}}\psi^{i_{2}}_{g}\snabla^{i_{3}}(\psi_{g},\chihat,\chibarhat,\tr\chibar)\snabla^{i_{4}}(T_{4A},T_{3A},T_{43})\\
        &+\sum_{i_{1}+i_{2}+i_{3}+i_{4}=i}\snabla^{i_{1}}\psi^{i_{2}}_{g}\snabla^{i_{3}}\snabla_{4}(T_{43})+\sum_{i_{1}+i_{2}+i_{3}+i_{4}=i}\snabla^{i_{1}}\psi^{i_{2}}_{g}\snabla^{i_{3}}\snabla_{3}(T_{44},T_{4A},T_{AB})\\
        &+\sum_{i_{1}+i_{2}+i_{3}+i_{4}=i}\snabla^{i_{1}}\psi^{i_{2}}_{g}\snabla^{i_{3}}(\chihat,\tr\chi)\snabla^{i_{4}}\Psi_{2}+\sum_{i_{1}+i_{2}+i_{3}+i_{4}=i-2}\snabla^{i_{1}}\psi^{i_{2}+1}_{g}\snabla^{i_{3}}(\chihat,\tr\chi)\snabla^{i_{4}}\Psi_{2}\\
        &\sum_{i_{1}+i_{2}+i_{3}+i_{4}=i-1}\snabla^{i_{1}}\psi^{i_{2}+1}_{g}\snabla^{i_{3}}T_{4A}\snabla^{i_{4}}\Psi_{2}:= Q_i.
    \end{split}
\end{equation}
Making use of Proposition \ref{42} once again, we arrive at 

\begin{equation}\label{mainpsu}
    \begin{split}
        &\scaletwoHu{\aln \Psi_1}^2 + \scaletwoHbaru{\aln \Psi_2}^2 \\ \lesssim & \scaletwoHzero{\aln \Psi_1}^2 + \scaletwoHbarzero{\aln \Psi_2}^2 \\&+  \intubar \intu  \frac{a}{\upr} \scaleoneSuprimeubarprime{a^{\frac{i}{2}} P_i \cdot \aln \Psi_1 }  \duprime \dubarprime
        \\ &+\intubar \intu  \frac{a}{\upr} \scaleoneSuprimeubarprime{a^{\frac{i}{2}} Q_i \cdot \aln \Psi_2 }  \duprime \dubarprime .
        \end{split}
\end{equation}
For the first spacetime integral in the above, we estimate

\begin{equation}
    \begin{split}
       & \intubar \intu  \frac{a}{\upr} \scaleoneSuprimeubarprime{a^{\frac{i}{2}} P_i \cdot \aln \Psi_1 }  \duprime \dubarprime \\\lesssim & \intu \frac{a}{\upr^2} \left( \intubar \scaletwoSuprimeubarprime{a^{\frac{i}{2}} P_i}^2 \dubarprime \right)^{\frac{1}{2}} \duprime \cdot \left( \intubar \scaletwoSuprimeubarprime{\aln \Psi_1}^2 \dubarprime \right)^{\frac{1}{2}} \duprime.
    \end{split}
\end{equation}For the first term:

\begin{equation}
    \begin{split}
        \intu \frac{a}{\upr^2} \left( \intubar \scaletwoSuprimeubarprime{a^{\frac{i}{2}} \sumitm \snabla^{i_{1}}\psi^{i_{2}+1}_{g} \snabla^{i_3+1}\Psi_2 }^2 \dubarprime \right)^{\frac{1}{2}} \duprime\lesssim \frac{\mathcal{R}}{|u|^{\frac{1}{2}}}+\frac{a \Gamma^2}{|u|^2} \lesssim 1.
    \end{split}
\end{equation}
For the rest of the terms, we estimate using the same philosophy as appropriate. There holds  
\begin{equation}
    \begin{split}
        &\intu \frac{a}{\upr^2} \left( \intubar \scaletwoSuprimeubarprime{a^{\frac{i}{2}} \sumif \snabla^{i_{1}}\psi^{i_{2}}_{g} \snabla^{i_3}(\psi_g,\chihat) \snabla^{i_{4}}\Psi }^2 \dubarprime \right)^{\frac{1}{2}} \duprime \\ \lesssim & \frac{\al \Gamma (R+\Gamma)}{\lvert u \rvert}\lesssim 1.
    \end{split}
\end{equation}
Next we estimate
\begin{equation}
    \begin{split}
         &\intu \frac{a}{\upr^2} \left( \intubar \scaletwoSuprimeubarprime{a^{\frac{i}{2}} \sumifim \snabla^{i_{1}}\psi^{i_{2}}_{g} \snabla^{i_{3+1}} (\chibarhat,\tr\chibar) \snabla^{i_{4}} \Psi_1}^2 \dubarprime \right)^{\frac{1}{2}} \duprime \\ \lesssim& \lesssim \frac{a}{\lvert u \rvert}\Gamma[\eta,\etabar] \Gamma[\tr\chibar]\Gamma[\Psi_1] \lesssim \frac{a}{\lvert u \rvert}\scaletwoSu{\aln(\tbeta,\tbetabar)} \cdot 1 \cdot (\mathcal{R}[\alpha]+1)\\ \lesssim & (\mathcal{R}[\alpha]+1)^2 \lesssim 1.
         \end{split}
\end{equation}
Now we estimate the source terms (we only provide the worst estimates) 
\begin{align} \notag 
        &\intu \frac{a}{\upr^2} \left( \intubar \scaletwoSuprimeubarprime{a^{\frac{i}{2}} \sumitm \snabla^{i_{1}}\psi^{i_{2}}_{g} \snabla^{i_3+1}(T_{43},T_{33},T_{A3}}^2 \dubarprime \right)^{\frac{1}{2}} \duprime\\
        \lesssim& \int_{u_{\infty}}^{u}\frac{a}{|u^{\prime}|^{2}}\frac{a\mathcal{V}}{a^{\frac{5}{2}}|u^{\prime}|}\text{d}u^{\prime} +\int_{u_{\infty}}^{u}\frac{a}{|u^{\prime}|^{2}}\frac{1}{|u^{\prime}|}\frac{a\mathcal{V}\Gamma}{a^{2}|u^{\prime}|}\text{d}u^{\prime} \lesssim \frac{\mathcal{V}}{a^{\frac{1}{2}}|u|^{2}}+\frac{\Gamma\mathcal{V}}{|u|^{3}}\lesssim 1,
\end{align}
\begin{align}
\notag  &\intu \frac{a}{\upr^2} \left( \intubar \scaletwoSuprimeubarprime{a^{\frac{i}{2}} \sum_{i_{1}+i_{2}+i_{3}=i}\snabla^{i_{1}}\psi^{i_{2}}_{g}\snabla^{i_{3}}\snabla_{4}(T_{A3},T_{33})}^2 \dubarprime \right)^{\frac{1}{2}} \duprime \\
\lesssim& \int_{u_{\infty}}^{u}\frac{a}{|u^{\prime}|^{2}}\frac{a\mathcal{V}}{a^{\frac{5}{2}}|u^{\prime}|}\text{d}u^{\prime} +\int_{u_{\infty}}^{u}\frac{a}{|u^{\prime}|^{2}}\frac{1}{|u^{\prime}|}\frac{a\mathcal{V}\Gamma}{a^{2}|u^{\prime}|}\text{d}u^{\prime} \lesssim \frac{1}{a^{\frac{1}{2}}|u|^{2}}+\frac{\mathcal{V}\Gamma}{|u|^{3}}\lesssim 1,
\end{align}

\begin{align}\notag 
&\intu \frac{a}{\upr^2} \left( \intubar \scaletwoSuprimeubarprime{a^{\frac{i}{2}} \sum_{i_{1}+i_{2}+i_{3}=i}\snabla^{i_{1}}\psi^{i_{2}}_{g}\snabla^{i_{3}}\snabla_{3}(T_{43},T_{A3})}^2 \dubarprime \right)^{\frac{1}{2}} \duprime \\
\lesssim& \int_{u_{\infty}}^{u}\frac{a}{|u^{\prime}|^{2}}\frac{|u^{\prime}|^{2}}{a}a\frac{1}{|u^{\prime}|}\frac{\mathcal{V}}{a^{2}|u^{\prime}|}\text{d}u^{\prime} +\int_{u_{\infty}}^{u}\frac{a}{|u^{\prime}|^{2}}\frac{1}{|u^{\prime}|}\frac{1}{|u^{\prime}|}\frac{|u^{\prime}|^{2}}{a}\frac{a\mathcal{V}\Gamma}{a^{\frac{3}{2}}|u^{\prime}|}\text{d}u^{\prime} \lesssim \frac{\mathcal{V}}{a|u|}+\frac{\mathcal{V}\Gamma}{a^{\frac{1}{2}}|u|^{2}}\lesssim 1,
\end{align}

\begin{align} \notag 
&\intu \frac{a}{\upr^2} \left( \intubar \scaletwoSuprimeubarprime{a^{\frac{i}{2}}\sum_{i_{1}+i_{2}+i_{3}+i_{4}=i}\snabla^{i_{1}}\psi^{i_{2}}_{g}\snabla^{i_{3}}(\psi_{g},\chihat,\chibarhat,\tr\chibar)\snabla^{i_{4}}(T_{4A},T_{43})}^2 \dubarprime \right)^{\frac{1}{2}} \duprime \\
\lesssim& \int_{u_{\infty}}^{u}\frac{a}{|u^{\prime}|^{2}}\frac{|u^{\prime}|^{2}\Gamma}{a}\frac{1}{|u^{\prime}|}\frac{\mathcal{V}}{a^{\frac{3}{2}}|u^{\prime}|}\lesssim \frac{\Gamma\mathcal{V}}{a^{\frac{1}{2}}|u|}\lesssim 1, 
\end{align}

\begin{align} \notag 
 &\intu \frac{a}{\upr^2} \left( \intubar \scaletwoSuprimeubarprime{a^{\frac{i}{2}}\sum_{i_{1}+i_{2}+i_{3}+i_{4}=i-1}\snabla^{i_{1}}\psi^{i_{2}+1}_{g}\snabla^{i_{3}}T_{3A}\snabla^{i_{4}}\Psi_{1}}^2 \dubarprime \right)^{\frac{1}{2}} \duprime \\
 \lesssim& \int_{u_{\infty}}^{u}\frac{a}{|u^{\prime}|^{2}}\frac{a}{|u^{\prime}|^{2}}\frac{\Gamma\mathcal{V}\mathcal{R}}{a^{2}|u^{\prime}|}\text{d}u^{\prime} \lesssim \frac{\Gamma\mathcal{V}\mathcal{R}}{|u|^{4}}\lesssim 1.
\end{align}

For the second and last one, a double application of H\"older's inequality yields
\begin{equation}
    \begin{split}
        &\intubar \intu  \frac{a}{\upr} \scaleoneSuprimeubarprime{a^{\frac{i}{2}}Q_i \cdot \aln \Psi_2 }  \duprime \dubarprime \\ \lesssim & \intubar \left( \intu \frac{a}{\upr^2} \scaletwoSuprimeubarprime{a^{\frac{i}{2}}Q_i}^2 \duprime \right)^{\frac{1}{2}} \lVert \aln \Psi_2 \rVert_{L^2_{(sc)}(\Hbar_{\ubar^{\prime}}^{(u_{\infty}, u)})} \\ \lesssim& \left(\intubar \intu \frac{a}{\upr^2} \scaletwoSuprimeubarprime{a^{\frac{i}{2}}Q_i}^2 \duprime \dubarprime \right)^{\frac{1}{2}} \left( \intubar \lVert \aln \Psi_2 \rVert^2_{L^2_{(sc)}(\Hbar_{\ubar^{\prime}}^{(u_{\infty}, u)})} \dubarprime \right)^{\frac{1}{2}} \\ \lesssim &\intubar \intu \frac{a}{\upr^2} \scaletwoSuprimeubarprime{a^{\frac{i}{2}}Q_i}^2 \duprime \dubarprime + \frac{1}{4} \intubar \lVert \aln \Psi_2 \rVert^2_{L^2_{(sc)}(\Hbar_{\ubar^{\prime}}^{(u_{\infty}, u)})} \dubarprime
    \end{split}
\end{equation}Define $B:= \intubar \intu \frac{a}{\upr^2} \scaletwoSuprimeubarprime{a^{\frac{i}{2}}Q_i}^2 \duprime \dubarprime$. The most important point to note here is that the term $Q_{i}$ contains the Vlasov source terms and it appears as the spacetime integral and therefore, we can control the Vlasov on $H$.  We can then estimate $B$ term by term as follows. The first term in $B$ is estimated as

\begin{align} \notag 
    &\intubar \intu \frac{a}{\upr^2}\scaletwoSuprimeubarprime{a^{\frac{i}{2}}\sumitm \sum_{i_{1}+i_{2}+i_{3}=i-1}\snabla^{i_{1}}\psi^{i_{2}+1}_{g} \snabla^{i_3+1}\Psi_1}^2 \duprime \dubarprime \\ \notag  \lesssim &\intubar \intu \frac{a}{\upr^2}\scaletwoSuprimeubarprime{a^{\frac{i}{2}}\psi_g \snabla^i \Psi_1}^2 \duprime \dubarprime \\ \notag  +& \intubar \intu \frac{a}{\upr^2}\scaletwoSuprimeubarprime{a^{\frac{i}{2}}\snabla \psi_g \snabla^{i-1}\Psi_1 }^2 \duprime \dubarprime \\ \notag +& \intubar \intu \frac{a}{\upr^2}\scaletwoSuprimeubarprime{a^{\frac{i}{2}}\psi_g \hsp  \psi_g  \snabla^{i-1}\Psi_1}^2 \duprime \dubarprime \\ \notag +&\intubar \intu \frac{a}{\upr^2}\scaletwoSuprimeubarprime{a^{\frac{i}{2}}\sum_{\substack{i_1+i_2+i_3=i-1\\ i_3< i-2}} \snabla^{i_{1}}\psi^{i_{2}+1}_{g} \snabla^{i_3+1}\Psi_1}^2 \duprime \dubarprime \\ \lesssim& 1, 
\end{align}

\noindent where in the first integral we estimate $\psi_{g}$ in $L^{\infty}_{sc}(S_{u,\ubar})$ and $\snabla^{i}\Psi_{1}$ in the hypersurface norm $L^2_{(sc)}(H_u^{(0,\ubar)})$. In the second integral, we estimate $\snabla\psi_{g}$ in $L^{4}_{sc}(S_{u,\ubar})$ and $\snabla^{i-1}\Psi_{1}$ in $L^{4}_{sc}(S_{u,\ubar})$ and subsequent codimension-1 trace inequality on $H_u^{(0,\ubar)}$. The third and fourth integrals are straightforward to control.  For the second term in $B$ is estimated as

\begin{align}  \nonumber
    &\intubar \intu \frac{a}{\upr^2}\scaletwoSuprimeubarprime{a^{\frac{i}{2}} \sumif \snabla^{i_{1}}\psi^{i_{2}}_{g} \snabla^{i_{3}}(\psi_{g},\chibarhat) \snabla^{i_{4}}(\Psi, \alpha) }^2 \duprime \dubarprime \\ \lesssim & \left(\mathcal{R}[\alpha]+1 \right)^2 \lesssim 1,
\end{align} 
where we have used energy estimates on $\alpha$ from proposition \ref{alpha_energy_estimate}. Now we estimate the source terms. Here we notice that $i+1=3$ derivatives of Vlasov appear, which are controlled by means of the derivative estimates for Vlasov (proposition \ref{prop:StressEnergyHighOrder}) 
\begin{align} \notag 
&\intubar \intu \frac{a}{\upr^2}\scaletwoSuprimeubarprime{a^{\frac{i}{2}} \sum_{i_{1}+i_{2}+i_{3}+i_{4}=i}\snabla^{i_{1}}\psi^{i_{2}}_{g}\snabla^{i_{3}+1}(T_{4A},T_{3A},T_{43})  }^2 \duprime \dubarprime\\
\lesssim& \int_{u_{\infty}}^{u}\frac{a}{|u^{\prime}|^{2}}a^{2}\frac{\mathcal{V}}{a^{4}|u|^{2}}\text{d}u^{\prime} +\int_{u_{\infty}}^{u}\frac{a}{|u^{\prime}|^{2}}a^{2}\frac{1}{|u^{\prime}|^{2}}\frac{\mathcal{V}\Gamma}{a^{3}|u^{\prime}|^{2}}\text{d}u^{\prime} \lesssim \frac{\mathcal{V}}{a|u|^{3}}+\frac{\mathcal{V}\Gamma}{|u|^{5}}\lesssim 1.
\end{align}
We estimate the next term (lower order derivatives on the vlasov terms)
\begin{align} \notag 
&\intubar \intu \frac{a}{\upr^2}\scaletwoSuprimeubarprime{a^{\frac{i}{2}} \sum_{i_{1}+i_{2}+i_{3}+i_{4}=i}\snabla^{i_{1}}\psi^{i_{2}}_{g}\snabla^{i_{3}}(\psi_{g},\chihat,\chibarhat,\tr\chibar)\snabla^{i_{4}}(T_{4A},T_{3A},T_{43})  }^2 \duprime \dubarprime \\
\lesssim& \int_{u_{\infty}}^{u}\frac{a}{|u^{\prime}|^{2}}a^{2}\frac{1}{|u^{\prime}|^{2}}\frac{\Gamma |u^{\prime}|^{4}}{a^{2}}\frac{\mathcal{V}}{a^{3}|u^{\prime}|^{2}}\text{d}u^{\prime} \lesssim \frac{\mathcal{V}\Gamma}{a^{2}|u|}\lesssim 1.
\end{align}
The next 
\begin{align} \notag 
&\intubar \intu \frac{a}{\upr^2}\scaletwoSuprimeubarprime{a^{\frac{i}{2}} \sum_{i_{1}+i_{2}+i_{3}+i_{4}=i}\snabla^{i_{1}}\psi^{i_{2}}_{g}\snabla^{i_{3}}\snabla_{4}(T_{43})  }^2 \duprime \dubarprime \\
\lesssim& \int_{u_{\infty}}^{u}\frac{a}{|u^{\prime}|^{2}}a^{2}\frac{\mathcal{V}}{a^{4}|u^{\prime}|^{2}}\text{d}u^{\prime} +\int_{u_{\infty}}^{u}\frac{a}{|u^{\prime}|^{2}}a^{2}\frac{1}{|u^{\prime}|^{2}}\frac{\Gamma\mathcal{V}}{a^{3}|u^{\prime}|^{2}}\text{d}u^{\prime} \lesssim \frac{\mathcal{V}}{a|u|^{3}}+\frac{\Gamma\mathcal{V}}{|u|^{5}}\lesssim 1.
\end{align}
The next term is estimated as 
\begin{align} \notag 
 &\intubar \intu \frac{a}{\upr^2}\scaletwoSuprimeubarprime{a^{\frac{i}{2}} \sum_{i_{1}+i_{2}+i_{3}+i_{4}=i}\snabla^{i_{1}}\psi^{i_{2}}_{g}\snabla^{i_{3}}\snabla_{3}(T_{44},T_{4A},T_{AB})  }^2 \duprime \dubarprime \\ \notag 
 \lesssim& \int_{u_{\infty}}^{u}\frac{a}{|u^{\prime}|^{2}}a^{2}\frac{1}{|u^{\prime}|^{2}}\frac{|u^{\prime}|^{6}}{a^{2}}\frac{1}{|u^{\prime}|^{2}}\frac{\mathcal{V}}{a^{2}|u^{\prime}|^{2}}\text{d}u^{\prime} +\int_{u_{\infty}}^{u}\frac{a}{|u^{\prime}|^{2}}a^{2}\frac{1}{|u^{\prime}|^{2}}\frac{1}{|u^{\prime}|^{2}}\frac{|u^{\prime}|^{6}}{a^{2}}\frac{1}{|u^{\prime}|^{2}}\frac{\Gamma\mathcal{V}}{a|u^{\prime}|^{2}}\text{d}u^{\prime} \\
 \lesssim& \frac{\mathcal{V}}{a|u|}+\frac{\Gamma\mathcal{V}}{|u|^{3}}\lesssim 1,
\end{align}
and finally 
\begin{align}
\intubar \intu \frac{a}{\upr^2}\scaletwoSuprimeubarprime{a^{\frac{i}{2}} \sum_{i_{1}+i_{2}+i_{3}+i_{4}=i-1}\snabla^{i_{1}}\psi^{i_{2}+1}_{g}\snabla^{i_{3}}T_{4A}\snabla^{i_{4}}\Psi_{2}  }^2 \duprime \dubarprime .
\end{align}
Here,  we use $L^{\infty}_{sc}(S_{u,\ubar})-L^{4}_{sc}(S_{u,\ubar})-L^{4}_{sc}(S_{u,\ubar})$  type estimates together with the Sobolev embedding on $S_{u,\ubar}$ to yield 
\begin{align} \notag 
&\intubar \intu \frac{a}{\upr^2}\scaletwoSuprimeubarprime{a^{\frac{i}{2}} \sum_{i_{1}+i_{2}+i_{3}+i_{4}=i-1}\snabla^{i_{1}}\psi^{i_{2}+1}_{g}\snabla^{i_{3}}T_{4A}\snabla^{i_{4}}\Psi_{2}  }^2 \duprime \dubarprime\\
\lesssim& \int_{u_{\infty}}^{u}\frac{a}{|u^{\prime}|^{2}}a^{2}\frac{1}{|u^{\prime}|^{4}}\frac{\Gamma\mathcal{V}\mathcal{R}}{a}\text{d}u^{\prime} \lesssim \frac{a^{2}\Gamma\mathcal{V}\mathcal{R}}{|u|^{5}}\lesssim 1.
\end{align}
The rest of the terms can also be bounded above by $1$, using the same approach. We finally arrive at an estimate of the form 

\begin{equation}\label{comb2}
    \begin{split}
        &\intubar \intu  \frac{a}{\upr} \scaleoneSuprimeubarprime{a^{\frac{i}{2}} Q_i \cdot \aln \Psi_2 }  \duprime \dubarprime \\ \lesssim & 1 + \frac{1}{4} \intubar \lVert \aln \Psi_2 \rVert^2_{L^2_{(sc)}(\Hbar_{\ubar^{\prime}}^{(u_{\infty}, u)})} \dubarprime.
        \end{split}
\end{equation}
From here, collecting all the terms together and using Gronwall's inequality, we arrive at the desired result. The second energy estimate follows exactly in a similar fashion to the previous proposition. 

\end{proof}

\section{Dynamical Formation of a Trapped Surface}

We prove the trapped-surface formation statement. The argument is an ODE argument
along the incoming direction \(u\), followed by the Raychaudhuri equation along
the outgoing direction \(\ubar\). Throughout this section \(u<0\), all
implicit constants depend only on the constants appearing in the initial data bounds.

\noindent Assume that the initial flux satisfies, uniformly in \(\theta\in\mathbb S^2\),
\begin{equation}\label{initial-flux-lower}
\int_0^1
\left(
        |u_\infty|^2|\hat\chi|^2
        +
        |u_\infty|^2T_{44}
\right)
(u_\infty,\ubar',\theta)\,d\ubar'
\geq a .
\end{equation}

\noindent We first propagate the shear flux. Recall the null structure equation
\begin{equation}\label{chihat-e3-eq}
\snabla_3\hat\chi
+\frac12\tr\chibar\,\hat\chi
+\omegabar\hat\chi
=
\snabla\hat\otimes\eta
-\frac12\tr\chi\,\hat\chibar
+\eta\hat\otimes\eta
+\hat{\slashed T}.
\end{equation}
Contracting with \(\hat\chi\), and using
\(\omegabar=2\Omega^{-1}\snabla_3\Omega\), gives
\begin{align}
\snabla_3\left(\Omega^4|\hat\chi|^2\right)
+\Omega^4\tr\chibar|\hat\chi|^2
=
2\Omega^4\hat\chi\cdot
\left(
        \snabla\hat\otimes\eta
        -\frac12\tr\chi\,\hat\chibar
        +\eta\hat\otimes\eta
        +\hat{\slashed T}
\right).
\end{align}
Since \(\snabla_3=\partial_u+b^Ae_A\) on scalar functions, we have
\begin{align}
\partial_u\left(\Omega^4|\hat\chi|^2\right)
+\Omega^4\tr\chibar|\hat\chi|^2
=&\;
2\Omega^4\hat\chi\cdot
\left(
        \snabla\hat\otimes\eta
        -\frac12\tr\chi\,\hat\chibar
        +\eta\hat\otimes\eta
        +\hat{\slashed T}
\right)                                      \notag\\
&\;
-b^Ae_A\left(\Omega^4|\hat\chi|^2\right).
\end{align}
Using
\[
\Omega^4\tr\chibar
=
\Omega^4\left(\tr\chibar+\frac{2}{|u|}\right)
-\frac{2}{|u|}(\Omega^4-1)
-\frac{2}{|u|},
\]
we obtain
\begin{align}
\partial_u\left(|u|^2\Omega^4|\hat\chi|^2\right)
=&\;
2|u|^2\Omega^4\hat\chi\cdot
\left(
        \snabla\hat\otimes\eta
        -\frac12\tr\chi\,\hat\chibar
        +\eta\hat\otimes\eta
        +\hat{\slashed T}
\right)                                      \notag\\
&\;
-|u|^2b^Ae_A\left(\Omega^4|\hat\chi|^2\right) \notag\\
&\;
-|u|^2\Omega^4
\left(\tr\chibar+\frac{2}{|u|}\right)|\hat\chi|^2
+2|u|(\Omega^4-1)|\hat\chi|^2 .
\label{renormalized-shear-eq}
\end{align}
The estimates proved in the preceding sections give
\begin{align}
\int_{u_\infty}^{u}
|u'|^2|\hat\chi|
\left(
        |\snabla\eta|
        +|\tr\chi|\,|\hat\chibar|
        +|\eta|^2
        +|\hat{\slashed T}|
\right)\,du'
&\lesssim
\frac{a^{7/4}}{|u|}
-\frac{a^{7/4}}{|u_\infty|},                  \\
\int_{u_\infty}^{u}
|u'|^2|b|\,|\snabla(\Omega^4|\hat\chi|^2)|\,du'
&\lesssim
\frac{a^{7/4}}{|u|}
-\frac{a^{7/4}}{|u_\infty|},                  \\
\int_{u_\infty}^{u}
|u'|^2
\left|\tr\chibar+\frac{2}{|u'|}\right|
|\hat\chi|^2\,du'
&\lesssim
\frac{a^{7/4}}{|u|}
-\frac{a^{7/4}}{|u_\infty|},                  \\
\int_{u_\infty}^{u}
|u'|\,|\Omega^4-1|\,|\hat\chi|^2\,du'
&\lesssim
\frac{a^{7/4}}{|u|}
-\frac{a^{7/4}}{|u_\infty|}.
\end{align}
Therefore
\begin{equation}\label{shear-propagation}
\int_{u_\infty}^{u}
\left|
\partial_{u'}
\left(|u'|^2\Omega^4|\hat\chi|^2\right)
\right|\,du'
\lesssim
\frac{a^{7/4}}{|u|}
-\frac{a^{7/4}}{|u_\infty|}.
\end{equation}
Since \(\Omega=1+O(a^{1/2}|u|^{-1})\), the same estimate holds without the
factor \(\Omega^4\):
\begin{equation}\label{shear-propagation-no-omega}
\int_{u_\infty}^{u}
\left|
\partial_{u'}
\left(|u'|^2|\hat\chi|^2\right)
\right|\,du'
\lesssim
\frac{a^{7/4}}{|u|}
-\frac{a^{7/4}}{|u_\infty|}.
\end{equation}
Consequently,
\begin{equation}\label{shear-lower}
\int_0^1 |u|^2|\hat\chi|^2(u,\ubar',\theta)\,d\ubar'
\geq
\int_0^1 |u_\infty|^2|\hat\chi|^2(u_\infty,\ubar',\theta)\,d\ubar'
-
C\frac{a^{7/4}}{|u|}.
\end{equation}

\noindent We now propagate the Vlasov flux. We do the following renormalization. This is vital for our case. Without this renormalization, a vital cancellation structure is absent. Set
\[
        \mathcal T_{44}:=|u|^2T_{44}.
\]
Since \(\partial_u=\snabla_3-b^Ae_A\) and \(\partial_u|u|=-1\), we have
\begin{align}
\partial_u(|u|^2T_{44})
&=
|u|^2
\left(
        \snabla_3T_{44}
        -b^Ae_A(T_{44})
        -\frac{2}{|u|}T_{44}
\right).
\label{T44-renormalized-identity}
\end{align}
The cancellation in the first and third terms is exact. Indeed,
\(T_{44}=|u|^{-2}\mathcal T_{44}\), and hence
\[
\snabla_3T_{44}
=
\snabla_3(|u|^{-2}\mathcal T_{44})
=
\frac{2}{|u|}T_{44}
+
|u|^{-2}\snabla_3\mathcal T_{44}.
\]
Moreover \(e_A|u|=0\), so
\[
b^Ae_A(T_{44})=|u|^{-2}b^Ae_A\mathcal T_{44}.
\]
Thus
\begin{equation}\label{T44-renormalized-exact}
\partial_u(|u|^2T_{44})
=
\snabla_3\mathcal T_{44}
-
b^Ae_A\mathcal T_{44}.
\end{equation}

\noindent We estimate the two terms on the right. Parametrize the mass shell by
\((p^3,p^1,p^2)\). Since \(p_4=-2p^3\), the stress-energy component has the form
\[
T_{44}
=
C\int p^3 f\,\sqrt{\det\slashed g}\,dp^3dp^1dp^2 .
\]
Write
\[
\slashed g_{AB}=|u|^2\gamma_{AB},
\qquad
q^3=p^3,
\qquad
q^A=|u|^2p^A .
\]
Then
\[
\sqrt{\det\slashed g}\,dp^1dp^2
=
|u|^2\sqrt{\det\gamma}\cdot |u|^{-4}dq^1dq^2
=
|u|^{-2}\sqrt{\det\gamma}\,dq^1dq^2.
\]
Therefore
\begin{equation}\label{renorm-T44-moment}
\mathcal T_{44}
=
C\int_K q^3\sqrt{\det\gamma}\,F(u,\ubar,\theta,q)\,dq,
\end{equation}
where \(F(u,\ubar,\theta,q)=f(u,\ubar,\theta,p(u,q))\), and \(K\) is a fixed
compact momentum set.

\noindent We next compute the \(e_3\)-derivative of the weight
\(M=Cq^3\sqrt{\det\gamma}\). Since \(q\) is held fixed,
\[
\snabla_3M
=
Cq^3\snabla_3\sqrt{\det\gamma}.
\]
Now observe the elementary computations
\[
\snabla_3\gamma_{AB}
=
\snabla_3(|u|^{-2}\slashed g_{AB})
=
2|u|^{-3}\slashed g_{AB}
+
2|u|^{-2}\chibar_{AB}.
\]
Using
$
\chibar_{AB}
=
\frac12\tr\chibar\,\slashed g_{AB}
+
\hat\chibar_{AB},
$
we get
\[
\snabla_3\gamma_{AB}
=
\left(\tr\chibar+\frac{2}{|u|}\right)\gamma_{AB}
+
2|u|^{-2}\hat\chibar_{AB}.
\]
Taking the \(\gamma\)-trace, the trace-free part drops out, therefore yields
$
\gamma^{AB}\snabla_3\gamma_{AB}
=
2\left(\tr\chibar+\frac{2}{|u|}\right).
$
Hence
\[
\snabla_3\sqrt{\det\gamma}
=
\frac12\sqrt{\det\gamma}\,\gamma^{AB}\snabla_3\gamma_{AB}
=
\sqrt{\det\gamma}
\left(\tr\chibar+\frac{2}{|u|}\right).
\]
Thus
\[
|\snabla_3M|
\lesssim
|q^3|
\left|\tr\chibar+\frac{2}{|u|}\right|
\lesssim
\frac{1}{|u|^2}w(q).
\]
The commuted Vlasov equation in the renormalized momentum variables gives
\[
|\snabla_3F|
\lesssim
\frac{1}{|u|^2}\sum_{1\leq i\leq 7}|F_i f|,
\]
where we have noticed the crucial cancellation structure in terms of integrability as one of the important aspects of our work. Remarkably, this is also important to close this estimate at the level of the ODE argument of trapped surface formation.
Therefore, using \eqref{renorm-T44-moment},
\begin{align}
|\snabla_3\mathcal T_{44}|
&\leq
\int_K |\snabla_3M|\,|F|\,dq
+
\int_K |M|\,|\snabla_3F|\,dq                       \notag\\
&\lesssim
\frac{1}{|u|^2}
\int_K w(q)
\left(
        |f|+\sum_{1\leq i\leq 7}|F_i f|
\right)dq.
\end{align}
The Vlasov hierarchy and the codimension-one trace inequality imply at the worst order
\[
\sup_{\ubar,\theta}
\int_K w(q)
\left(
        |f|+\sum_{1\leq i\leq 7}|F_i f|
\right)dq
\lesssim a^{7/4}.
\]
Consequently,
\begin{equation}\label{nabla3-T44-renorm-bound}
|\snabla_3\mathcal T_{44}|
\lesssim
\frac{a^{7/4}}{|u|^2}.
\end{equation}

\noindent For the shift term, \(e_A|u|=0\), and hence
\[
|u|^2|b^Ae_AT_{44}|=|b^Ae_A\mathcal T_{44}|.
\]
Differentiating \eqref{renorm-T44-moment} angularly and using the first angular
commuted Vlasov estimate, together with the angular Ricci-coefficient estimates,
gives
\[
|e_A\mathcal T_{44}|
\lesssim
\frac{a^{5/4}}{|u|}.
\]
The shift estimate gives
\[
|b|\lesssim \frac{a^{1/2}}{|u|}.
\]
Therefore
\begin{equation}\label{shift-T44-renorm-bound}
|b^Ae_A\mathcal T_{44}|
\lesssim
\frac{a^{1/2}}{|u|}
\cdot
\frac{a^{5/4}}{|u|}
=
\frac{a^{7/4}}{|u|^2}.
\end{equation}
Combining \eqref{T44-renormalized-exact}, \eqref{nabla3-T44-renorm-bound}, and
\eqref{shift-T44-renorm-bound}, we obtain
\begin{equation}\label{T44-renormalized-derivative-bound}
\left|
\partial_u(|u|^2T_{44})
\right|
\lesssim
\frac{a^{7/4}}{|u|^2}.
\end{equation}
Hence
\begin{equation}\label{T44-propagation}
\int_{u_\infty}^{u}
\left|
\partial_{u'}(|u'|^2T_{44})
\right|du'
\lesssim
a^{7/4}\int_{u_\infty}^{u}\frac{du'}{|u'|^2}
=
\frac{a^{7/4}}{|u|}
-
\frac{a^{7/4}}{|u_\infty|}.
\end{equation}
Therefore
\begin{equation}\label{T44-lower}
\int_0^1 |u|^2T_{44}(u,\ubar',\theta)\,d\ubar'
\geq
\int_0^1 |u_\infty|^2T_{44}(u_\infty,\ubar',\theta)\,d\ubar'
-
C\frac{a^{7/4}}{|u|}.
\end{equation}

Combining \eqref{shear-lower}, \eqref{T44-lower}, and
\eqref{initial-flux-lower}, we get
\[
\int_0^1
\left(
        |u|^2|\hat\chi|^2
        +
        |u|^2T_{44}
\right)(u,\ubar',\theta)\,d\ubar'
\geq
a-C\frac{a^{7/4}}{|u|}.
\]
For \(u_\infty\leq u\leq -a/4\), we have \(|u|\geq a/4\), so
\[
C\frac{a^{7/4}}{|u|}
\leq
4Ca^{3/4}.
\]
Taking \(a\) sufficiently large, this is at most \(a/4\). Hence
\begin{equation}\label{renorm-flux-lower}
\int_0^1
\left(
        |u|^2|\hat\chi|^2
        +
        |u|^2T_{44}
\right)(u,\ubar',\theta)\,d\ubar'
\geq
\frac{3a}{4}.
\end{equation}
At \(u=-a/4\), this gives
\begin{equation}\label{final-unrenorm-flux}
\int_0^1
\left(
        |\hat\chi|^2+T_{44}
\right)\left(-\frac a4,\ubar',\theta\right)d\ubar'
\geq
\frac{12}{a}.
\end{equation}

\noindent We now use Raychaudhuri:
\[
\snabla_4\tr\chi+\frac12(\tr\chi)^2
=
-|\hat\chi|^2-T_{44}.
\]
Since \(e_4=\Omega^{-2}\partial_{\ubar}\),
\[
\partial_{\ubar}\tr\chi
=
-\Omega^2
\left(
        \frac12(\tr\chi)^2+|\hat\chi|^2+T_{44}
\right)
\leq
-\Omega^2\left(|\hat\chi|^2+T_{44}\right).
\]
Integrating from \(\ubar=0\) to \(\ubar=1\) on \(H_{-a/4}\), and using the
Minkowskian initial value
\[
\tr\chi\left(-\frac a4,0,\theta\right)=\frac{8}{a},
\]
we obtain
\[
\tr\chi\left(-\frac a4,1,\theta\right)
\leq
\frac{8}{a}
-
\int_0^1
\Omega^2
\left(|\hat\chi|^2+T_{44}\right)
\left(-\frac a4,\ubar',\theta\right)d\ubar'.
\]
By the lapse estimate, after increasing \(a\) if necessary, \(\Omega^2\geq5/6\).
Using \eqref{final-unrenorm-flux},
\[
\tr\chi\left(-\frac a4,1,\theta\right)
\leq
\frac{8}{a}
-
\frac56\cdot\frac{12}{a}
=
-\frac{2}{a}<0.
\]

\noindent Finally, the estimate
\[
\left|
\tr\chibar+\frac{2}{|u|}
\right|
\lesssim
\frac{1}{|u|^2}
\]
implies, at \(u=-a/4\),
\[
\tr\chibar\left(-\frac a4,1,\theta\right)
\leq
-\frac{8}{a}
+
C\frac{1}{a^2}
<0
\]
for \(a\) sufficiently large. Thus both null expansions are strictly negative on
\(S_{-a/4,1}\). Therefore \(S_{-a/4,1}\) is a trapped surface.

\end{document}